\documentclass{article}
\usepackage[margin=1in]{geometry}

\usepackage{amsmath}
\usepackage{amssymb}
\usepackage{amsthm}
\usepackage{mathtools}
\usepackage{bm}
\usepackage{booktabs}
\usepackage{mathrsfs}
\usepackage{enumerate}
\usepackage{color}
\usepackage{hyperref}
\usepackage{float}
\usepackage{xcolor}
\hypersetup{
  colorlinks   = true,
  urlcolor     = blue,
  linkcolor    = blue,
  citecolor   = red
}
\usepackage{dsfont}
\usepackage[square,numbers]{natbib}
\usepackage{graphicx}
\usepackage{subcaption}
\usepackage{enumitem}

\def\R{\mathbb{R}}

\def\P{\mathbb{P}}
\def\E{\mathbb{E}}

\def\N{\mathcal{N}}
\def\cA{\mathcal{A}}
\def\cF{\mathcal{F}}
\def\cH{\mathcal{H}}

\def\cT{\mathcal{T}}

\def\cE{\mathcal{E}}

\def\A{\bm{A}}

\def\M{\bm{M}}
\def\e{\bm{e}}
\def\m{\bm{m}}
\def\x{\bm{x}}
\def\y{\bm{y}}
\def\a{\bm{a}}
\def\b{\bm{b}}
\def\g{\bm{g}}

\def\G{\bm{G}}
\def\H{\bm{H}}
\def\I{\bm{I}}

\def\X{\bm{X}}
\def\Q{\bm{Q}}

\def\W{\bm{W}}
\def\u{\bm{u}}
\def\r{\bm{r}}
\def\s{\bm{s}}
\def\q{\bm{q}}
\def\v{\bm{v}}
\def\w{\bm{w}}
\def\z{\bm{z}}
\def\bS{\bm{S}}

\def\bR{\bm{R}}
\def\bZ{\bm{Z}}
\def\bO{\mathbf{O}}

\let\epsilon\varepsilon

\def\btheta{\boldsymbol{\theta}}
\def\bbeta{\boldsymbol{\eta}}

\def\bphi{\boldsymbol{\phi}}

\def\bomega{\boldsymbol{\omega}}
\def\eps{\varepsilon}

\def\d{\mathrm{d}}

\def\sP{\mathsf{P}}
\def\sS{\mathbb{S}}

\DeclareMathOperator{\Var}{Var}
\DeclareMathOperator{\Cov}{Cov}
\DeclareMathOperator{\corr}{corr}

\DeclareMathOperator{\err}{err}

\DeclareMathOperator{\diag}{diag}
\DeclareMathOperator{\Tr}{Tr}

\DeclareMathOperator{\sech}{sech}
\DeclareMathOperator{\spa}{span}
\DeclareMathOperator{\poly}{poly}
\newcommand{\op}{\mathrm{op}}
\DeclareMathOperator{\low}{low}
\DeclareMathOperator{\AMP}{{}}
\newcommand{\weak}{\mathsf{wk}}
\newcommand{\str}{\mathsf{str}}
\newcommand{\SE}{\mathsf{SE}}
\DeclareMathOperator{\fix}{\mathsf{fix}}

\DeclareMathOperator{\Ons}{Ons}

\newcommand{\pnorm}[2]{\lVert #1\rVert_{#2}}
\newcommand{\bigpnorm}[2]{\big\lVert#1\big\rVert_{#2}}
\newcommand{\biggpnorm}[2]{\bigg\lVert#1\bigg\rVert_{#2}}
\newcommand{\Bigpnorm}[2]{\Big\lVert#1\Big\rVert_{#2}}
\newcommand{\iprod}[2]{\left\langle#1,#2\right\rangle}

\newcommand{\av}[1]{\left\langle#1\right\rangle}

\newcommand{\fro}{\mathrm{F}}

\newcommand{\ampskel}{%
  \vphantom{\displaystyle\sum_{\ell=0}^{t}\gamma_{t,\ell}\g^\ell}%
}

\newcommand{\ampblock}[2]{%
  \begin{array}{@{}c@{}}
  \ampskel #1 \\[0.5ex]
  \ampskel #2
  \end{array}%
}

\newcommand{\amprel}[1]{%
  \;\begin{array}{@{}c@{}}
  \ampskel \mathrel{#1} \\[0.5ex]
  \ampskel \mathrel{#1}
  \end{array}\;%
}

\newcommand{\ampbin}[1]{%
  \;\begin{array}{@{}c@{}}
  \ampskel \mathbin{#1} \\[0.5ex]
  \ampskel \mathbin{#1}
  \end{array}\;%
}

\newcommand{\amplabel}[2]{%
  \underbrace{#1}_{\mathclap{\textsf{#2}}}%
}

\newtheorem{theorem}{Theorem}[section]

\newtheorem{definition}[theorem]{Definition}
\newtheorem{lemma}[theorem]{Lemma}
\newtheorem{remark}[theorem]{Remark}
\newtheorem{proposition}[theorem]{Proposition}
\newtheorem{corollary}[theorem]{Corollary}

\title{Approximate Message Passing with Random Initialization for Phase Retrieval}
\author{Yuchen Chen, Yandi Shen, Xingyu Xu}

\begin{document}
\maketitle

\begin{abstract}
We analyze approximate message passing (AMP) with an independent Gaussian
initialization for noiseless phase retrieval in the proportional asymptotic
regime.  A random initialization has overlap of order \(d^{-1/2}\) with the
signal, and AMP requires a growing number of iterations to attain non-vanishing overlap. Thus, its precise behavior cannot be characterized by
classical fixed-time state evolution.  We prove a
Gaussian decomposition of the AMP trajectory and control its
error over the horizons required for recovery.  The resulting analysis shows
that random initialization attains the weak-recovery threshold
\(\delta_{\weak}=1/2\).  For
\(\delta\in(\delta_{\weak},\delta_{\str}^{\AMP})\), where
\(\delta_{\str}^{\AMP}\approx1.13\), the signal strength follows state
evolution and approaches its stable finite fixed point uniformly for
\(n^{1/3}/\operatorname{polylog}(n)\) iterations.  For
\(\delta>\delta_{\str}^{\AMP}\), AMP reaches any prescribed fixed recovery
accuracy within \(O_{\delta,\varepsilon}(\log n)\) iterations. The majority of our analysis applies more
generally to generalized AMP for single-index models.
\end{abstract}

\setcounter{tocdepth}{2}
\tableofcontents

\section{Introduction}

Consider the noiseless phase retrieval problem
\begin{align}\label{eq:model}
y_i = (\x_i^\top \btheta^\star)^2, \quad i=1,\ldots,n,
\end{align}
where the sensing vectors $\x_i$ are drawn independently from
$\N(0,\I_d/d)$ and $\btheta^\star$ is an unknown signal. The objective is
to reconstruct $\btheta^\star$, up to its global sign, from
$\{(\x_i,y_i)\}_{i=1}^n$. Phase retrieval is a canonical nonlinear
single-index model for studying the information-theoretic and algorithmic
limits of high-dimensional inference.

Earlier reconstruction methods were based primarily on semidefinite
relaxations \cite{candes2015phase,candes2013phaselift} and related convex
formulations \cite{goldstein2018phasemax,walkspurger2015phase}.
More recent methods include gradient descent
\cite{cai2016optimal,chen2017solving,zhang2017nonconvex,wang2017solving,luo2019solving,chen2019gradient,chi2019nonconvex,Soltanolkotabi2019structured,gao2020perturbed,li2020toward,hand2020optimal,ma2020implicit,cai2022solving},
online stochastic gradient descent \cite{tan2023online,arous2021online}, and
message-passing algorithms
\cite{mondelli_approximate_2022,maillard2020phase,ma2019optimization}.
Because of the sign symmetry, these iterative methods are commonly analyzed
from an informative initialization such as spectral initialization which uses the leading eigenvector of
\begin{align}\label{eq:spec_intro}
\M_n = \frac{1}{n}\sum_{i=1}^n \cT_s(\y_i)\x_i\x_i^\top,
\end{align}
for a suitable preprocessing function $\cT_s$. In the proportional
asymptotic regime $n/d\rightarrow\delta\in(0,\infty)$, the optimal spectral
method attains the information-theoretic weak-recovery threshold
$\delta_{\weak}=1/2$ \cite{lu2020phase,mondelli2019fundamental}. Subsequent
refinement by approximate message passing (AMP) and gradient descent has also
been analyzed \cite{mondelli_approximate_2022,chen2025learning}.

Random initialization avoids the construction of a model-specific initial
estimator, but it is less well understood than spectral initialization
\cite{chen2017solving,wang2017solving,zhang2017nonconvex,ma2020implicit,lu2020phase,mondelli2019fundamental,mondelli_approximate_2022}.
Recent work establishes weak recovery with
$n=\widetilde O(d)$ samples for several single-index models
\cite{chen2019gradient,tan2019phase,berthier2025learning,arous2021online,arnaboldi2024repetita,lee2024neural,montanari2025dynamical}
and multi-index models
\cite{abbe2022merged,abbe2023sgd,arnaboldi2023high,dandi2024benefits,zhang2025neural}.
These results generally do not identify the sharp proportional-regime sample
complexity, and therefore do not permit a precise comparison between
algorithms. This motivates the following question:
\begin{quote}
\emph{Can a randomly initialized iterative algorithm attain the sharp recovery (both weak and strong) thresholds in the proportional regime?}
\end{quote}

In this work, we answer this question in the affirmative for randomly
initialized Bayes-optimal AMP for phase retrieval. Under the normalization
$\pnorm{\btheta^\star}{}=\sqrt d$, we initialize the generalized AMP
algorithm \cite{rangan2012iterative,javanmard2013state,barbier2019optimal} by
\begin{align}\label{eq:init_intro}
\v^0 \sim \N(0, \I_d),
\end{align}
set $\u^0=\X f_0(\v^0)$, and iterate
\begin{equation}\label{eq:main_amp}
\begin{aligned}
\v^{t+1} &= \X^\top h_t(\u^t,\y) - \hat c_t f_t(\v^t),\\
\u^{t+1} &= \X f_{t+1}(\v^{t+1}) - \hat b_{t+1}h_t(\u^t,\y),
\end{aligned}
\end{equation}
Here $\hat c_t,\hat b_{t+1}$ are Onsager correction coefficients defined in
Section~\ref{sec:main_result}, and $h_t,f_t:\R\rightarrow\R$ are the
normalized Bayes-optimal nonlinearities. Appendix~\ref{sec:bayes_amp}
reviews Bayes-optimal AMP.


The following is an informal statement of the main result; see
Theorem~\ref{thm:main} for the precise statement.
\begin{theorem}[Informal]
\label{thm:informal_main}
There exist a weak-recovery threshold
$\delta_{\weak}=1/2$ and an AMP strong-recovery threshold
$\delta_{\str}\approx1.13$ with the following properties.
Let $\delta=n/d>\delta_{\weak}$ with
$\delta\neq\delta_{\str}$. Along the relevant growing time horizons,
the distribution of the AMP iterate $\v^{t}$ satisfies
\begin{align}\label{eq:distribution_intro}
\sP(\v^{t}) \approx \sP\Big(\rho_t\btheta^\star + \bZ_t\Big),
\end{align}
where $\bZ_t\sim\N(0,\I_d)$, and $\{\rho_t\}_{t\geq0}$ may be dependent on
the Gaussian component. The following conclusions hold with high
probability:
\begin{itemize}
\item (Weak recovery) If $\delta > \delta_{\weak}$, there exist $c_{\weak} >0$ and $\tau_{\weak} = O(\log n)$ such that $|\rho_{\tau_{\weak}}| \geq c_{\weak}$.
\item (Intermediate recovery) If
$\delta\in(\delta_{\weak},\delta_{\str})$, then there is a finite
positive state-evolution fixed point $\rho_\infty^2(\delta)$ such that,
uniformly for
$\tau_{\weak}\leq t\leq\widetilde O(n^{1/3})$,
\begin{align}\label{eq:intermedite_recovery_intro}
\left|\rho_t^2-\rho_\infty^2(\delta)\right|
\leq
C_\delta\exp\left\{-\frac{t-\tau_{\weak}}{C_\delta}\right\}
+\widetilde O\left(\frac{t+1}{\sqrt n}\right).
\end{align}
\item (Strong recovery) If $\delta>\delta_{\str}$, then, for every
fixed $\varepsilon>0$, there exists
$\tau_\varepsilon=O_{\delta,\varepsilon}(\log n)$ at which the correlation
between $\v^{\tau_\varepsilon}$ and $\btheta^\star$ is at least
$1-\varepsilon-o(1)$.
\end{itemize}
\end{theorem}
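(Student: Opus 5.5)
This is an informal restatement of Theorem~\ref{thm:main}, so the plan is to prove that theorem. The plan has three parts: a finite-$n$ Gaussian decomposition of the AMP trajectory with explicit error control over horizons growing with $n$; a one-dimensional analysis of the resulting state-evolution recursion for $\rho_t$, started from overlap of order $d^{-1/2}$; and a combination of the two.

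\textbf{Step 1: nonasymptotic decomposition.} I will write $\v^t = \rho_t\btheta^\star + \bZ_t + \Delta_t$, where $\bZ_t$ is exactly Gaussian and $\Delta_t$ is an error. I will build this with a Bolthausen-type conditioning argument. Condition on the $\sigma$-algebra generated by past iterates, and decompose $\X$ into its projection onto the span of past iterates plus an independent Gaussian part. Because the signal direction enters only through $\y$, the coefficient $\rho_t$ is random. It is driven by the projection of $\X$ onto the signal, and by the initial overlap $\langle \v^0,\btheta^\star\rangle/d = O(d^{-1/2})$, which is itself Gaussian. This is why $\rho_t$ may depend on the Gaussian component. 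I expect to bound the error by
\[
\pnorm{\Delta_t}{}/\sqrt d \lesssim (t+1)\,\mathrm{polylog}(n)/\sqrt n ,
\]
valid up to $t\le \widetilde O(n^{1/3})$. The growth in $t$ comes from accumulated Onsager mismatches ($\hat b_t,\hat c_t$ versus their deterministic limits) and from the conditioning number of the Gram matrix of past iterates. The condition-number bound should be where the $n^{1/3}$ horizon arises: a Gram determinant lower bound degrades polynomially in $t$.

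\textbf{Step 2: scalar dynamics.} With the Bayes-optimal $f_t,h_t$, state evolution reads $\rho_{t+1}^2=F_\delta(\rho_t^2)$. Linearizing at $0$ gives $F_\delta(q)= 2\delta q+O(q^2)$, so the origin is unstable exactly when $\delta>1/2$. Starting from $\rho_0^2\asymp 1/d$, the escape takes $\tau_{\weak}=O(\log n)$ steps to reach a constant $c_{\weak}$. In this growth phase I must check that the per-step errors $O(\mathrm{polylog}/\sqrt n)$ do not swamp the signal. The signal is of order $d^{-1/2}(2\delta)^{t}$ and the errors accumulate with the same multiplier, so I will track the ratio $\Delta_t/\rho_t$ inductively. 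The exactly Gaussian initial overlap is nonzero with high probability, and this seeds the growth. After escape, two cases follow.
\begin{itemize}
\item For $\delta<\delta_{\str}$, $F_\delta$ has a stable positive fixed point $\rho_\infty^2(\delta)$ with $|F_\delta'|<1$ there. Contraction then gives the exponential term in \eqref{eq:intermedite_recovery_intro}, and summing the perturbations geometrically yields $\widetilde O((t+1)/\sqrt n)$.
\item For $\delta>\delta_{\str}$, iterates climb to overlap $1-\varepsilon$ in $O_{\delta,\varepsilon}(1)$ further steps, for total time $O(\log n)$.
\end{itemize}
The properties of $F_\delta$ used here (monotonicity, fixed-point structure, and the value $\delta_{\str}\approx 1.13$) will be taken from the explicit phase retrieval channel, partly numerically.

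\textbf{Main obstacle.} The hardest part is Step 1 in the small-overlap regime. Classical state evolution treats $\rho_t$ as deterministic, but here the signal is smaller than the fluctuations for the first $\Theta(\log n)$ steps. The decomposition must therefore keep the signal component exactly, with random coefficient, rather than as an $o(1)$ perturbation. I would first try to condition on $\X\btheta^\star$ explicitly from the start, treating the signal as one extra fixed direction. I would then show that the non-Gaussian remainder is multiplicatively small relative to $\rho_t$ throughout the escape phase.
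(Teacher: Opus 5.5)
The symmetry-breaking step does not close as you describe. When $|\rho_t|\asymp d^{-1/2}$, the signal recursion is not state evolution plus a negligible error. It is $\rho_{t+1}=\sqrt{2\delta}\,\rho_t+\varphi_t+\widetilde O(n^{-3/4})$, where $\varphi_t=\frac{1}{\sqrt{2\delta}\,d}\langle\g^{-1},(\y-1)\circ\g^t\rangle\approx\N(0,5/d)$ is a fluctuation of exactly the same order as the signal. Your per-step error bound $\widetilde O(n^{-1/2})$ is larger still. Write $\rho_t=a^t\rho_0+\sum_{j<t}a^{t-1-j}e_j$ with $a=\sqrt{2\delta}$ and $|e_j|\le E$. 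Then the accumulated perturbation relative to the signal is of order $E/\{(a-1)|\rho_0|\}\gtrsim 1$, so tracking $\Delta_t/\rho_t$ inductively proves nothing: perturbations of that size could pin $\rho_t$ near zero. Nor can $\rho_0$ alone seed the growth with high probability, since $\P(|\rho_0|\le\epsilon d^{-1/2})\asymp\epsilon$.

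The missing ingredients are the following.
\begin{enumerate}
\item Isolate $\varphi_t$. It is driven by the fresh innovation $\g^t$, so it is conditionally Gaussian and independent of the past given $\g^{-1}$.
\item Use anti-concentration over blocks of length $T\asymp\log n/\log(2\delta)$. On each block the accumulated Gaussian sum has standard deviation $\gtrsim(2\delta)^{T/2}/\sqrt d$. A Gaussian small-ball bound then makes the conditional probability that $|\rho_t|$ is still below $n^{-1/4}\mathrm{polylog}(n)$ at the end of a block at most $n^{-1/5}$, and $60$ blocks give $n^{-12}$.
\end{enumerate}
The block argument requires the non-Gaussian remainder in the $\rho$-recursion to be $o(d^{-1/2})$. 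Cauchy--Schwarz applied to $\|\Delta_{U,t}\|/\sqrt d=\widetilde O(n^{-1/2})$ gives only $\widetilde O(n^{-1/2})$. The paper reaches $\widetilde O(n^{-3/4})$ using two facts: $\Delta_{U,t}\in\spa(\w^0,\ldots,\w^{t-1})$, and each $\w^\ell\approx(\y-1)\circ\g^\ell/\sqrt{2\delta}$ is nearly orthogonal to $(\y-1)\circ\g^{-1}$.

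Your Step 1 also lacks the mechanism that makes a polynomial horizon possible. The residual at time $t+1$ is a data-dependent multiplier times the residual at time $t$, plus fresh terms. For the direct (first-order) residual that multiplier need not be below one, so the bound degrades geometrically. This is why the paper uses the first-order recursion only in the strong-recovery regime, over $O_{\delta,\varepsilon}(1)$ steps after weak recovery.

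In the intermediate regime the paper introduces second-order surrogates $\hat\u^t,\hat\v^{t+1}$. These absorb the adaptive Onsager terms, so the multiplier becomes a finite-sample Bolthausen constant, $\sqrt{H_t}+\cE_t^h\le 1-\chi_\delta/4$. This uses $\mathcal B(\rho)=\rho^2F_\delta'(\rho^2)/F_\delta(\rho^2)\le 1-\chi_\delta$ for $\rho^2\in I_\delta$. Only then do the residuals grow linearly in $t$. The $n^{1/3}$ comes from the Taylor-remainder part of $\cE_t^h$. That part is of order $t^{3/2}\mathrm{polylog}(n)/\sqrt n$ once $\|\Delta_{U,t}\|=\widetilde O(t)$, and it must stay below a small multiple of $\chi_\delta$.

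Your suggested source, a Gram determinant degrading polynomially, is not right. At a stable fixed point, Bolthausen's condition makes consecutive iterates asymptotically collinear (Appendix~\ref{sec:bolthausen}). The Gram--Schmidt innovations, and hence the smallest eigenvalue of the Gram matrix of past iterates, therefore decay geometrically in $t$. A conditioning argument that inverts this matrix would break down after $O(\log n)$ steps. The paper's coupling uses only normalized Gram--Schmidt directions and never inverts it.
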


The constants suppressed in this informal statement may depend on the $\delta$ and, in the strong-recovery regime, on the prescribed accuracy $\epsilon$.
The present proof does not assert polynomial dependence on
$(\delta-\delta_{\str})^{-1}$.

The distribution characterization (\ref{eq:distribution_intro}) states that the AMP iterate $\v^t$ is correlated with the signal $\btheta^\star$ with strength $\rho_t$ while maintaining a constant noise level (the latter follows from the normalization of $f_t,h_t$ in (\ref{eq:main_amp})). Under the initialization (\ref{eq:init_intro}), the initial signal strength
\begin{align}\label{eq:rho_0}
\rho_0 := \frac{1}{d}\iprod{\v^0}{\btheta^\star}
\end{align}
is of the order $\rho_0 = O\big(1/\sqrt{d}\big)$, and the evolution of $\{\rho_t\}_{t\geq 0}$ naturally decomposes into three phases:
\begin{itemize}
\item {\em Phase I: Symmetry breaking.} The initial behavior of $\rho_t$ follows the recursion
\begin{align*}
\rho_{t+1} \approx \sqrt{2\delta}\, \rho_t + \varphi_t,
\end{align*}
where $\varphi_t\approx\N(0,5/d)$ is a Gaussian perturbation. It cannot be
discarded in this phase because it has the same order as $\rho_t$. For
$\delta>\delta_{\weak}$, $\varphi_t$ retains sufficient independence from
the earlier signal sequence $\{\rho_s\}_{s\leq t}$ for an
anti-concentration argument, following the mechanism introduced in
\cite{li2023approximate} for $\mathbb Z_2$ synchronization. Over
$O(\log n)$ iterations, these fluctuations increase the signal scale from
$\widetilde O(d^{-1/2})$ to $\widetilde O(d^{-1/4})$.
\item {\em Phase II: Exponential growth.} Once the signal strength reaches
the $\widetilde O(n^{-1/4})$ scale established in Phase I, $\rho_t$
dominates the Gaussian perturbation and grows as
\begin{align*}
|\rho_{t+1}| \geq (1+c) |\rho_t|
\end{align*}
for some $c>0$ depending on $\delta$, until $|\rho_t|$ reaches a fixed
positive constant. Thus weak recovery occurs within $O(\log n)$ iterations.
        
\item {\em Phase III: State evolution.} Once weak recovery is achieved, the
distribution of $\v^t,\u^t$ tracks the asymptotic state evolution; see
Section~\ref{sec:main_result}. This recursion exhibits two distinct
behaviors.
When $\delta \in (\delta_{\weak}, \delta_{\str})$, the asymptotic state-evolution recursion
\[
    \rho_{t+1}^2 = F_\delta(\rho_t^2)
\]
is convergent. More precisely, it converges to a finite fixed point
$\rho_\infty^2(\delta)$, as in \eqref{eq:intermedite_recovery_intro}. As
$\delta \uparrow \delta_{\str}$, this fixed point approaches the limiting value
\[
    \lim_{\delta \uparrow \delta_{\str}} \rho_\infty^2(\delta) \approx 5.52 .
\]

By contrast, when $\delta > \delta_{\str}$, there is no longer a finite fixed point: $|\rho_t|$ grows exponentially as
\[
    \rho_{t+1}^2
    \geq \left(\frac{\delta}{\delta_{\str}} + o(1)\right)\rho_t^2 .
\]
This shows that
the final correlation achieved by AMP undergoes a discontinuous phase transition
at $\delta = \delta_{\str}$.
\end{itemize}

Theorem~\ref{thm:informal_main} states that randomly initialized
Bayes-optimal AMP achieves weak recovery above
$\delta_{\weak}=1/2$ and arbitrarily accurate recovery above
$\delta_{\str}\approx1.13$.
The weak-recovery threshold coincides with the information-theoretic threshold under Gaussian prior
derived in \cite{mondelli2019fundamental}, while the strong-recovery threshold
matches the best currently known threshold under Gaussian prior for polynomial-time algorithms
\cite{barbier2019optimal,celentano2020estimation}.

Both thresholds can be read off heuristically from the Bayes-optimal state
evolution. For the weak-recovery threshold, however, there is also a complementary
interpretation from the perspective of spectral methods: Phases I and II described above approximately compute
the spectral initialization \eqref{eq:spec_intro} with optimal processing function
\[
    \cT_s(y) = \frac{y-1}{\sqrt{2\delta} + y - 1},
\]
whose optimality is known to hold for general single-index models under a symmetry
condition \cite{mondelli2019fundamental}; see Section \ref{sec:main_result} for
a more detailed discussion of the relation to spectral methods.

The estimate \eqref{eq:intermedite_recovery_intro} holds over a time horizon
of order $n^{1/3}$ up to polylogarithmic factors. This extends the classical
AMP analysis over an $O(1)$ horizon
\cite{bolthausen2014iterative,bayati_message_passing_2011,javanmard2013state};
see the proof discussion below for an explanation of this point.

We next discuss the technical contribution of Theorem \ref{thm:informal_main}.
The proof has two main ingredients:
\begin{samepage}
\begin{itemize}[itemsep=0em, leftmargin=2em]
    \item an exact coupling of the AMP dynamics $(\u^t,\v^t)$ in
    \eqref{eq:main_amp} with the random data $(\X,\y)$, for finite $n$, $d$;

    \item a non-asymptotic error-recursion analysis that controls the
    approximation of $(\u^t,\v^t)$ by surrogate sequences.
\end{itemize}
\end{samepage}

Both ingredients are developed for the AMP dynamics
\eqref{eq:main_amp} with general nonlinearities in the single-index model
\begin{align}\label{def:single_index}
\y=\varphi(\X\btheta^\ast),
\end{align}
where the link is applied entrywise and may include external randomness.
Section~\ref{subsec:intro_amp_nonasymptotic} gives the precise formulation.

The first ingredient is a constructive finite-sample coupling between the
AMP trajectory and the Gaussian design. It decomposes each iterate into a
signal component, a normalized Gaussian component, and a residual. Under the coupling, this decomposition holds exactly. The subsequent analysis shows that the residual is small with high probability jointly over
the growing time horizons considered in the theorem. It is related to the
Householder representation of \cite{lu2021householder} and to the
non-asymptotic AMP decompositions of \cite{li2022non,li2024non}.

The second ingredient controls the residual created by the adaptive Onsager
correction. In the intermediate regime, a second-order surrogate
removes the leading adaptive terms and produces a contracting error
recursion, which yields the
$n^{1/3}/\operatorname{polylog}(n)$ time horizon. In the strong-recovery
regime, the proof instead uses direct first-order recursions until the signal
coefficient reaches an arbitrary fixed target. This fixed-target argument is
the source of the strong-recovery statement in
Theorem~\ref{thm:informal_main}.

\medskip

\noindent\textbf{Further related work.}
\begin{itemize}
\item {\em Bayes-optimal AMP.} Bayes-optimal generalized AMP has been
analyzed for a broad class of high-dimensional inference problems
\cite{mondelli2019fundamental,barbier2019optimal,maillard2020phase,celentano2020estimation,mondelli_approximate_2022,montanari2024statistically}.
For several estimation problems, it is optimal among broad classes of
first-order methods at any fixed iteration
\cite{celentano2020estimation,montanari2024statistically}. The present work
addresses the growing number of iterations required by a random
initialization.

\item {\em Learning Gaussian single- and multi-index models.} For Gaussian
single-index models, the sample complexity of gradient-based learning is
often of order $\widetilde O(d^\ell)$
\cite{dudeja2018learning,chen2019gradient,tan2019phase,arous2021online,damian2023smoothing,arnaboldi2024repetita,lee2024neural,berthier2025learning,montanari2025dynamical,kovavcevic2026full},
where $\ell$ is determined by the information or generative exponent of the
link function \cite{arous2021online,damian2024computational}. Sharp
constant-level thresholds in the proportional regime are known principally
for spectral methods
\cite{lu2020phase,mondelli2019fundamental,luo2019optimal} and
message-passing algorithms
\cite{barbier2019optimal,mondelli2019fundamental,mondelli_approximate_2022,maillard2020phase}.
Computational lower bounds for more general link functions are studied in
\cite{damian2022neural,damian2024computational}. Related results for
multi-index models appear in
\cite{damian2022neural,abbe2023sgd,arnaboldi2023high,dandi2024benefits,collins2024hitting,zhang2025neural,montanari2026phase,troiani2024fundamental,zhang2026precise,kovavcevic2025spectral,defilippis2025optimal,diakonikolas2025robust},
with lower bounds in
\cite{troiani2024fundamental,damian2025generative,diakonikolas2026algorithms,latourelle2026statistical}.

\item {\em Additional phase retrieval literature.} Convex formulations and
their proportional-regime behavior are studied in
\cite{thrampoulidis2014gaussian,abbasi2017performance,dhifallah2017phase}.
The nonconvex optimization landscape and its relation to gradient methods
are studied in
\cite{sun2018geometric,li2020toward,sarao2019afraid,sarao2020complex,sarao2020marvels,antenucci2019glassy,cai2022solving,maillard2020landscape}.
Spectral methods are analyzed in
\cite{netrapalli2013phase,lu2020phase,luo2019optimal,maillard2022construction}.
AMP was applied to phase retrieval in \cite{schniter2015compressive}; an
optimization-based AMP formulation was studied in \cite{ma2019optimization}.
\end{itemize}

\noindent\textbf{Paper organization.} The rest of the paper is organized as follows. Section \ref{sec:main_result} introduces the Bayes-optimal AMP algorithm and states the main result.
Section \ref{sec:main_proof} presents the non-asymptotic coupling and error recursion analysis. Section \ref{sec:proof_PR} outlines the analysis of the three phases and completes the proof of the main theorem. Omitted proofs and technical details are collected in Sections \ref{sec:proof_phase_1} -- \ref{sec:proof_strong_recovery}. Appendix \ref{sec:bayes_amp} reviews Bayes-optimal AMP;  Appendix \ref{sec:bolthausen} gives an overview of Bolthausen's condition in the asymmetric AMP setting and its connection to our error recursion analysis; and Appendix \ref{sec:first_order_analysis} provides a simpler first-order analysis of the non-asymptotic AMP state evolution.

\medskip

\noindent\textbf{Notation.} Bold quantities such as $\btheta^\star, \X$ will denote objects with growing dimension (with respect to $n,d$). For a vector $\v \in \R^d$, we use $\pnorm{\v}{}$ to denote the $2$-norm, $\diag(\v)$ to denote the diagonal matrix with $\v$ as its diagonal, and $\av{\v} = \frac{1}{d}\sum_{i=1}^d v_i$. For a matrix $\M$, we use $\pnorm{\M}{F},\pnorm{\M}{\op},\Tr(\M)$ to denote the usual Frobenius, operator, and trace. For two matrices $\M,\M'$, we use $\M\circ\M'$ to denote the Hadamard (entrywise) product. For a matrix $\M$ with orthonormal columns, we denote the projection to its span by $P_{\M} = \M\M^\top$ and the projection to its orthonormal complement by $P_{\M}^\perp = I-P_{\M}$. For a random variable $X$, we use $\sP(X)$ to denote its law. We use $\cF(\cdot)$ to denote the generated sigma-field. Unless otherwise indicated, constants $C,C',c,c' >0$ are constants independent of $n,d,\delta$ which may change line to line. For two nonnegative sequences $a_n,b_n$, we write $a_n = O(b_n)$ if  $a_n \leq Cb_n$ for some $C > 0$, $a_n = \Omega(b_n)$ if $a_n \geq cb_n$ for some $c>0$, $a_n = o(b_n)$ if $a_n/b_n \rightarrow 0$, and $a_n = \Theta(b_n)$ if $a_n = O(b_n)$ and $b_n = O(a_n)$. We use $\widetilde O,\widetilde\Omega,\widetilde\Theta$ to suppress polylogarithmic factors in $n,d$.

\section{Main result}\label{sec:main_result}

\subsection{AMP and state evolution}
For scalar nonlinearities $\{f_t,h_t\}_{t\geq0}$ and an initialization
$\v^0$ independent of $(\X,\y)$, generalized AMP
\cite{rangan2012iterative,javanmard2013state,barbier2019optimal} takes the form
\eqref{eq:main_amp}:
\begin{equation*}
\begin{aligned}
\v^{t+1} &= \X^\top h_t(\u^t,\y) - \hat c_t f_t(\v^t),\\
\u^{t+1} &= \X f_{t+1}(\v^{t+1}) - \hat b_{t+1}h_t(\u^t,\y),
\end{aligned}
\end{equation*}
where $\u^0=\X f_0(\v^0)$. The nonlinearities act entrywise, and the Onsager
coefficients are
\begin{align}\label{eq:general_onsager}
\hat c_t = \frac{1}{d}\sum_{i=1}^n h'_t(u^t_i, y_i), \qquad \hat b_{t+1} = \frac{1}{d}\sum_{j=1}^d f'_{t+1}(v^{t+1}_j),
\end{align}
when the derivatives exist. For each fixed $t$, classical state evolution
describes the limiting empirical laws of $(\v^{t+1},\btheta^\star)$ and
$(\u^t,\bbeta^\star)$, where $\bbeta^\star=\X\btheta^\star$, through scalar
Gaussian channels:
\begin{align*}
\frac{1}{d}\sum_{j=1}^d \delta_{(v^{t+1}_j, \theta^\star_j)} \overset{W_2}{\to} \sP(V_{t+1},\theta^\star), \quad \frac{1}{n}\sum_{i=1}^n \delta_{(u^{t}_i, \eta^\star_i)} \overset{W_2}{\to} \sP(U_t, \eta^\star),
\end{align*}
where the asymptotic variables are given by
\begin{align*}
V_{t+1} = \mu_{V,t+1}\theta^\star + \sigma_{V,t+1}W_{V,t+1},\quad U_{t} = \mu_{U,t}\eta^\star + \sigma_{U,t}W_{U,t}, \quad Y = \varphi(\eta^\star),
\end{align*}
The channel parameters evolve according to
\begin{equation}\label{eq:gamp_se}
\begin{aligned}
\mu_{U,t} &= \E[\theta^\star f_t(V_t)],\\
\sigma_{U,t}^2 &= \E[f_t(V_t)^2] - \mu_{U,t}^2,\\
\mu_{V,t+1} &= \delta\Big(\E[\eta^\star h_t(U_t,Y)] - \E[h'_t(U_t,Y)]\E[\theta^\star f_t(V_t)]\Big),\\
\sigma_{V,t+1}^2 &= \delta\E[h_t(U_t,Y)^2],
\end{aligned}
\end{equation}
initialized, when the limits exist, by
\[
(\mu_{U,0},\sigma_{U,0}^2)
=\lim_{n,d\to\infty}\left(
\frac1d\iprod{\btheta^\star}{f_0(\v^0)},
\frac1d\pnorm{P_{\btheta^\star}^\perp f_0(\v^0)}{}^2
\right).
\]
Here $\eta^\star,W_{U,t},W_{V,t+1}$ are standard Gaussian variables with the
independence prescribed by the scalar channel, while $\theta^\star$ is drawn
from the limiting empirical law of the signal. Appendix~\ref{sec:bayes_amp}
reviews this fixed-time theory. Our goal is to validate an analogous
description from a vanishing random overlap over a horizon growing with $n$.

For phase retrieval we use the identity input map and a normalized
Bayes-optimal output map:
\begin{align}\label{eq:rescaled_nonlinearity}
f_t(x)=x\quad(t\geq0),\qquad
h_t(u,y)=\frac{h_t^\star(u,y)}{\pnorm{h_t^\star(\u^t,\y)}{}/\sqrt d}.
\end{align}
Here 
\begin{align}\label{eq:ht_ast}
h^\star_t(u,y) = (\hat\mu_t^2 + 1)\sqrt y \tanh\Big(\hat\mu_t u\sqrt{y}\Big) - \hat\mu_t u,
\end{align}
and the data-driven signal proxy is
\begin{align}\label{eq:hat_mu_t}
\hat\mu_t = \sqrt{\Big(\frac{1}{n}\pnorm{\u^t}{}^2 - 1\Big) \vee n^{-1/2}}.
\end{align}
Since $f_t'=1$, $\hat b_{t+1}=1$, and the recursion becomes
\begin{align}\label{eq:simplified_amp}
\v^{t+1} &= \X^\top h_t(\u^t,\y) - \hat{c}_t\v^t, \\
\u^{t+1} &= \X \v^{t+1} - h_t(\u^t,\y),
\end{align}
where the Onsager coefficient is given by
\begin{align}\label{eq:onsager_c}
\hat c_t = \frac{1}{d}\sum_{i=1}^n \frac{(\hat\mu_t^2 +1)\hat\mu_t y_i \big(1-\tanh^2(\hat\mu_tu^t_i\sqrt{y_i})\big) - \hat\mu_t}{\pnorm{h_t^\star(\u^t,\y)}{}/\sqrt{d}}.
\end{align}
We initialize with
\begin{align}\label{eq:rand_init}
\v^0 \sim \N(0, \I_d),
\end{align}
independently of $(\X,\y)$. The update order is
\begin{align*}
\v^0 \rightarrow \u^0 (\rightarrow h_0) \rightarrow \v^1 \rightarrow \u^1 (\rightarrow h_1) \rightarrow \v^2 \rightarrow \ldots.
\end{align*}
The initial signal coefficient \(\rho_0\) is defined in
\eqref{eq:rho_0}. For independent \(G,W\sim\N(0,1)\), define
\begin{align}\label{eq:m_function}
m(\mu)
&\coloneqq
\mathbb E\!\left[
G^2\tanh^2\!\left(\mu G|G|+\sqrt\mu\,W|G|\right)
\right],
\qquad \mu\geq0,\\
F_\delta(\mu)
&\coloneqq
\delta(1+\mu)\bigl((1+\mu)m(\mu)-\mu\bigr).
\label{eq:def_F_delta}
\end{align}
The AMP strong-recovery threshold is
\begin{align}\label{eq:strong_recovery_threshold}
\delta_{\str}
\coloneqq
\sup_{\mu>0}
\frac{\mu}
{(1+\mu)\bigl((1+\mu)m(\mu)-\mu\bigr)}
\approx1.13.
\end{align}
For $\delta\in(1/2,\delta_{\str})$, let
$\mu_\infty(\delta)$ be its stable positive fixed point and set
$\rho_\infty(\delta)=\sqrt{\mu_\infty(\delta)}$.

\begin{theorem}[Randomly initialized AMP for phase retrieval]\label{thm:main}
Let $\delta=n/d>\delta_{\weak}$ with
$\delta\neq\delta_{\str}$.  On each time horizon stated below, the
AMP trajectory can be coupled to scalar signal coefficients
\(\{\rho_t\}\) and independent vectors
\(\s^\ell\sim\N(0,\I_d)\) so that
\begin{align}\label{eq:main_decomp}
\v^t=\rho_t\btheta^\star+
\sum_{\ell=0}^{t-1}\lambda_{t-1,\ell}\s^\ell+\Delta_{V,t},
\qquad
\sum_{\ell=0}^{t-1}\lambda_{t-1,\ell}^2=1.
\end{align}
The coefficients may depend on the Gaussian history.  There are finite
constants \(C_\delta,c_\delta>0\), and a universal constant \(C>0\), such
that, for all sufficiently large \(n\), the following statements hold with
probability \(1-O(n^{-10})\).
\begin{itemize}
\item \textnormal{\textbf{Weak recovery.}} There is a time
\(\tau_{\weak}\leq C_\delta\log n\) such that
\[
|\rho_{\tau_{\weak}}|\geq c_\delta,
\qquad
\|\Delta_{V,\tau_{\weak}}\|\leq C_\delta\log^{12}n.
\]

\item \textnormal{\textbf{Intermediate recovery.}} If
$1/2<\delta<\delta_{\str}$, initialize
$\rho_{\SE,\tau_{\weak}}^2=\rho_{\tau_{\weak}}^2$ and iterate
$\rho_{\SE,t+1}^2=F_\delta(\rho_{\SE,t}^2)$. Uniformly for
\[
\tau_{\weak}\leq t\leq
\frac{n^{1/3}}{C_\delta\log^{20}n},
\]
we have
\begin{align*}
|\rho_t^2-\mu_\infty(\delta)|
&\leq |\rho_{\SE,t}^2-\mu_\infty(\delta)|
+C_\delta\frac{(t+1)\log^{12}n}{\sqrt n},\\
\pnorm{\Delta_{V,t}}{}&\leq
C_\delta(t+1)\log^{12}n.
\end{align*}
Moreover,
\[
|\rho_{\SE,t}^2-\mu_\infty(\delta)|
\leq C_\delta
\exp\left\{-\frac{t-\tau_{\weak}}{C_\delta}\right\}.
\]

\item \textnormal{\textbf{Strong recovery.}}
If \(\delta>\delta_{\str}\), then, for every fixed
\(\varepsilon\in(0,1)\), there is a finite constant
\(C_{\delta,\varepsilon}\) and a time
\(\tau_\varepsilon\leq C_{\delta,\varepsilon}\log n\) such that
\begin{align*}
|\rho_{\tau_\varepsilon}|
\geq\frac{2}{\sqrt\varepsilon},
\qquad
\|\Delta_{V,\tau_\varepsilon}\|
\leq C_{\delta,\varepsilon}\log^C n.
\end{align*}
Consequently,
\[
\corr(\v^{\tau_\varepsilon},\btheta^\star)
\geq1-\varepsilon-o(1),
\]
\end{itemize}
where for nonzero vectors \(\v,\btheta\in\mathbb R^d\), define
\begin{align}\label{def:corr}
\corr(\v,\btheta)
\coloneqq
\frac{|\langle \v,\btheta\rangle|}
{\|\v\|\,\|\btheta\|}.
\end{align}
The constants and the minimum admissible \(n\) may be chosen uniformly when
\(\delta\) ranges over a compact subset of either open regime, with
\(\varepsilon\) fixed in the strong-recovery regime.
\end{theorem}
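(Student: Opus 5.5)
The proof has two layers: a finite-sample coupling giving the decomposition \eqref{eq:main_decomp}, and a phase-by-phase analysis of the resulting scalar recursion for \(\rho_t\) together with control of \(\Delta_{V,t}\).

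\textbf{Step 1: exact coupling.} I would construct the coupling by conditioning on the history \(\cF_t=\cF(\v^0,\u^0,\dots,\v^t,\u^t,\X\btheta^\star)\) in Bolthausen style, using a Householder-type reflection as in \cite{lu2021householder}. Given \(\cF_t\), the design \(\X\) restricted to the orthogonal complement of the previously queried directions is again Gaussian. The new query \(\X^\top h_t\) then splits into three pieces:
\begin{itemize}
\item a projection onto old directions, which produces \(\rho_{t+1}\btheta^\star\) and the combination \(\sum_\ell\lambda_{t,\ell}\s^\ell\);
\item a fresh Gaussian \(\s^t\sim\N(0,\I_d)\) independent of the past;
\item a residual.
\end{itemize}
The Onsager term \(\hat c_t\v^t\) is designed to cancel the leading part of the projection onto \(\v^t\). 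Because \(h_t\) is normalized so that \(\|h_t\|^2=d\), the coefficients satisfy \(\sum_\ell\lambda_{t,\ell}^2=1\) up to an error, which I would absorb into \(\Delta_{V,t+1}\). This yields an exact identity, valid for all finite \(n,d\), in which \(\Delta_{V,t+1}\) is written as a linear map of \(\Delta_{V,t}\) and \(\Delta_{U,t}\) plus a source term. The source consists of the mismatch between \(\hat c_t\) and its conditional expectation, and of Gram–Schmidt corrections of size \(\widetilde O(\sqrt{t})\).

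\textbf{Step 2: error recursion.} Next I would bound this linear map. Its operator norm is controlled by \(\|\X\|_{\op}\le C\) and by Lipschitz bounds on \(h_t\). The tanh keeps \(h_t\) Lipschitz in \(u\) with constant \(O(\hat\mu_t^2 y)\), so I would truncate \(y_i\le C\log n\), which produces the polylog factors. The derivative \(\partial_\mu h_t\) must also be controlled, since \(\hat\mu_t\) is data-driven. With \(\hat\mu_t\) tied to \(\rho_t^2\) by Step 1, a naive first-order recursion \(\|\Delta_{t+1}\|\le L\|\Delta_t\|+\widetilde O(\sqrt t)\) only closes over \(O(\log n)\) steps. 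That suffices for weak and strong recovery.

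For the intermediate horizon I would subtract a second-order surrogate, replacing \(\hat c_t\) and \(\hat\mu_t\) by their expansions around the SE values. Linearizing SE at the stable fixed point \(\mu_\infty(\delta)\) gives \(F_\delta'<1\), and this makes the error map contracting. The remaining quadratic terms are of order \(\|\Delta\|^2/\sqrt n\) per step. Summing them gives \(\|\Delta_{V,t}\|\lesssim t\log^{12}n\), which stays consistent while \(t^3\log^{C}n\lesssim n\); this is the source of the \(n^{1/3}\) horizon. The bound \(|\rho_t^2-\rho^2_{\SE,t}|\lesssim t\log^{12}n/\sqrt n\) then follows from the same recursion, and exponential convergence of \(\rho_{\SE,t}^2\) follows from \(F_\delta'(\mu_\infty)<1\) together with monotonicity of \(F_\delta\) on \((0,\mu_\infty)\).

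\textbf{Step 3: the three phases.} In Phase I, small \(\hat\mu\) gives \(\tanh(x)\approx x\) and \(h_t\approx\) a multiple of \((y-1)u\), so \(\rho_{t+1}=\sqrt{2\delta}\rho_t+\varphi_t\). Here \(\varphi_t\) is the projection of the fresh \(\s^t\) onto \(\btheta^\star/d\), which is \(\N(0,\Theta(1/d))\) conditionally on the past. Since \(\sqrt{2\delta}>1\), anti-concentration as in \cite{li2023approximate} shows \(|\rho_t|\) grows geometrically, from \(d^{-1/2}\) to \(\widetilde\Omega(d^{-1/4})\) in \(O(\log n)\) steps. In Phase II the signal dominates the perturbation, so \(|\rho_{t+1}|\ge(1+c)|\rho_t|\) as long as \(F_\delta(\mu)/\mu>1+c\), which holds near \(0\) because \(F_\delta'(0)=2\delta>1\). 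This reaches \(c_\delta\) at time \(\tau_{\weak}=O(\log n)\).

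Strong recovery uses the first-order recursion for a further \(O_{\delta,\varepsilon}(1)\) steps: since \(F_\delta(\mu)\ge(\delta/\delta_{\str})\mu\), \(\rho_t^2\) grows geometrically up to \(4/\varepsilon\). Finally, \(\corr\ge\rho/\sqrt{\rho^2+1}-o(1)\).

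\textbf{Main obstacle.} I expect Step 2 over growing horizons to be the main obstacle. The errors in the adaptive coefficients \(\hat\mu_t,\hat c_t\) feed back through \(\rho_t\) and can compound multiplicatively. The delicate point is arranging the second-order surrogate so that the linearized error propagator inherits the SE contraction instead of the trivial bound \(\|\X\|_{\op}L\); I would try this first.
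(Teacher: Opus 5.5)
Your Step 2 asserts that a first-order recursion $\|\Delta_{t+1}\|\le L\|\Delta_t\|+\widetilde O(\sqrt t)$, with $L$ controlled by $\|\X\|_{\op}$ and the Lipschitz constant of $h_t$, closes over the weak-recovery horizon. It does not.

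\textbf{Error propagation.} In Phase I the normalization $\|h_t^\star(\u^t,\y)\|/\sqrt d\approx\sqrt{2\delta}\hat\mu_t$ gives $h_t'\approx\{(1+\hat\mu_t^2)y\sech^2(\hat\mu_tu\sqrt y)-1\}/\sqrt{2\delta}$. So $\|h_t'\|_\infty\asymp\max_iy_i\asymp\log n$, not $O(\hat\mu_t^2y)$, and your $L\gtrsim\log n$. Even a fixed $L>1$, iterated over $\tau_{\weak}\geq\log n/\log(2\delta)$ steps, inflates the error by $n^{\log L/\log(2\delta)}$. Yet Phases I--II need $\|\Delta_{U,t}\|,\|\Delta_{V,t}\|$ to stay polylogarithmic. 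For instance, the $\hat\mu_t$ estimate in Lemma~\ref{lem:phase_2_assump_conseq} needs $\|\Delta_{U,t}\|/\sqrt n\ll\rho_t^2$ with $\rho_t^2$ as small as $n^{-1/2}\log^{20}n$. This requires a per-step multiplier $1+o(1/\log n)$.

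The paper gets this from the very mechanism you postpone to the intermediate regime. In \eqref{def:Delta_Ut} the nonlinear difference enters only through its projection onto fresh Gaussian directions orthogonal to $\w^{t-1}$ (resp.\ $\bar\z^t$). Duality over $\omega\perp\gamma_{t,[0:t]}$, together with Lemmas~\ref{lem:F_concentration} and~\ref{lem:key_estimate_3}, then yields the multiplier $\sqrt{F_{t+1}}\sqrt{H_t}$ up to errors $\cE_{t+1}^f,\cE_t^h$. This is $1+\widetilde O(n^{-1/4})$ in Phase I and $1+C\mathfrak A_\delta^4\rho_t^2+\varepsilon_{I\!I,n}$ in Phase II, which is summable because $\rho_t$ grows geometrically. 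The coefficients $\alpha,\beta$ absorb the leftover components along $\w^{t-1}$ and $\bar\z^t$. So the Bolthausen-constant argument, run through the induction of Proposition~\ref{prop:general_error_induction}, is needed from time $0$, not only for the $n^{1/3}$ horizon. The crude first-order bound is used only for the $O_{\delta,\varepsilon}(1)$ strong-recovery steps.

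\textbf{Phase I precision.} The block anti-concentration compares the accumulated error $r_n(a_\delta^T-1)/(a_\delta-1)$ with the standard deviation $\asymp a_\delta^T/\sqrt{d\kappa_{\weak}}$ of the summed kicks. Hence the per-step error must satisfy $r_n\ll d^{-1/2}$; an error of the kick's own order could pin $\rho_t$ near $0$. Cauchy--Schwarz on $\frac1d\langle(\y-1)\circ\bbeta^\star,\Delta_{U,t}\rangle$ gives only $\widetilde O(n^{-1/2})$. The paper reaches $\widetilde O(n^{-3/4})$ using three facts: $\Delta_{U,t}\in\spa(\w^0,\ldots,\w^{t-1})$ with $\w^\ell\approx(\y-1)\circ\g^\ell/\sqrt{2\delta}$ and $\g^\ell$ independent of $\g^{-1}$; the near-orthogonality $\|\gamma_{t,[-1:t-1]}\|=\widetilde O(n^{-1/4})$ of Lemma~\ref{lem:lambda_almost_orthogonal}; and $\hat c_t=\widetilde O(n^{-1/2})$.

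\textbf{The kick is misidentified.} We have $\frac1d\langle\btheta^\star,\X^\top h_t\rangle=\frac1d\langle\X\btheta^\star,h_t\rangle$, which is determined by $\bbeta^\star$ and $\u^t$, both already in your conditioning. So no $v$-side innovation enters the signal coordinate; in the coupling the $\btheta^\star$-coordinate of $\v^{t+1}$ is exactly $\rho_{t+1}$ (Lemma~\ref{lem:conjugate_signal}). The kick is $\varphi_t=\frac{1}{\sqrt{2\delta}\,d}\langle(\y-1)\circ\bbeta^\star,\g^t\rangle$, coming from the fresh $u$-side innovation $\g^t$. It is Gaussian with variance $\approx5/d$ conditionally on $\g^{-1}$, and block independence must be argued conditionally on $\g^{-1}$.

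\textbf{Smaller points.} The identity $\sum_\ell\lambda_{t,\ell}^2=\|\w^t\|^2/d=1$ is exact, so there is no error to absorb. In the intermediate regime, the error-propagation multiplier along the trajectory is the Bolthausen constant $\rho_t^2F_\delta'(\rho_t^2)/F_\delta(\rho_t^2)\leq1-\chi_\delta$ on $I_\delta$. It coincides with $F_\delta'$ only at fixed points, and $F_\delta'\approx2\delta>1$ near $0$.
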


The phase-specific quantitative statements are
Propositions~\ref{prop:phase_1_main}, \ref{prop:phase_2_exp_growth},
\ref{prop:phase_3_weak_main}, and \ref{prop:phase_3_strong_main}.  In
particular, Proposition~\ref{prop:phase_3_strong_main} fixes a target signal
level and controls the first-order residual until that level is reached.  The
dependence of its constants on the target and on the distance from the
strong-recovery threshold is recorded in
\eqref{eq:strong_first_order_constants} and
\eqref{eq:strong_first_order_exponential_gap}.
The decomposition immediately yields a correlation phase diagram.
\begin{corollary}\label{cor:corr}
For fixed
\(\delta\in(\delta_{\weak},\delta_{\str})\), there is a finite
constant \(C_\delta\) such that the following holds with probability
\(1-O(n^{-10})\).  For any sequence of times
satisfying
\[
\tau_{\weak}\leq t\leq
\frac{n^{1/3}}{C_\delta\log^{20}n},
\qquad t-\tau_{\weak}\to\infty,
\]
\[
\corr(\v^t,\btheta^\star)
=\frac{\rho_\infty(\delta)}
{\sqrt{1+\rho_\infty^2(\delta)}}+o(1).
\]
For every fixed \(\delta>\delta_{\str}\), there are finite constants
\(C_{\delta,\varepsilon}\) such that
\begin{align}
\lim_{\varepsilon\downarrow0}\ 
\liminf_{\substack{n,d\to\infty\\ n/d\to\delta}}
\mathbb P\left(
\exists\,t\leq C_{\delta,\varepsilon}\log n:
\corr(\v^t,\btheta^\star)\geq1-\varepsilon
\right)=1.
\label{eq:strong-correlation-iterated-limit}
\end{align}
\end{corollary}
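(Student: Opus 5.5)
We derive Corollary~\ref{cor:corr} from the decomposition \eqref{eq:main_decomp} and Theorem~\ref{thm:main}. The first step is to express $\corr(\v^t,\btheta^\star)$ through $\rho_t$ and $\Delta_{V,t}$. Write $\g^t=\sum_{\ell<t}\lambda_{t-1,\ell}\s^\ell$. Because the $\lambda$'s may depend on the Gaussian history, $\g^t$ is not literally $\N(0,\I_d)$. We therefore use the deterministic bounds
\[
\|\g^t\|\le \sum_\ell|\lambda_{t-1,\ell}|\,\|\s^\ell\|,
\qquad
|\langle \g^t,\btheta^\star\rangle|\le \Bigl(\sum_\ell\langle\s^\ell,\btheta^\star\rangle^2\Bigr)^{1/2}.
\]
These are not quite sufficient for the norm, so we also establish $\|\g^t\|^2=d(1+o(1))$ uniformly. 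The argument runs as follows. A union bound over $\ell\le n$ gives $|\langle\s^\ell,\btheta^\star\rangle|\le C\sqrt{d\log n}$, and the Gram matrix of $\{\s^\ell\}_{\ell<t}$ satisfies $\|S^\top S/d-\I_t\|_{\op}\le C(\sqrt{t/d}+\sqrt{\log n/d})$. Since $t\le n^{1/3}$, this yields $\|\g^t\|^2=d(1+O(n^{-1/3}))$ for every unit vector $\lambda$. For the signal inner product, Cauchy--Schwarz with $t\le n^{1/3}$ gives $|\langle\g^t,\btheta^\star\rangle|\le \|S^\top\btheta^\star\|\le C\sqrt{t\,d\log n}=o(d)$. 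Together with $\|\btheta^\star\|=\sqrt d$ and $\|\Delta_{V,t}\|\le C_\delta(t+1)\log^{12}n=o(\sqrt d)$, we obtain
\[
\langle\v^t,\btheta^\star\rangle=\rho_t d+o(d),
\qquad
\|\v^t\|^2=d(\rho_t^2+1)+o(d)
\]
uniformly over the horizon, with probability $1-O(n^{-10})$. It follows that $\corr(\v^t,\btheta^\star)=|\rho_t|/\sqrt{1+\rho_t^2}+o(1)$, since $\rho_t$ is bounded: it is within $o(1)$ of the bounded sequence $\rho_{\SE,t}$ in the intermediate regime.

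For the intermediate regime we apply the second bullet of Theorem~\ref{thm:main}. It gives
\[
|\rho_t^2-\mu_\infty|\le C_\delta e^{-(t-\tau_{\weak})/C_\delta}+C_\delta (t+1)\log^{12}n/\sqrt n .
\]
The first term vanishes because $t-\tau_{\weak}\to\infty$. The second is $O(n^{-1/6}\log^{12}n)\to0$ because $t\le n^{1/3}$. Since $x\mapsto\sqrt{x/(1+x)}$ is continuous, the stated limit $\rho_\infty/\sqrt{1+\rho_\infty^2}$ follows.

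For the strong regime, fix $\varepsilon$ and apply the third bullet with $\varepsilon/2$. This gives $\tau\le C\log n$ with $|\rho_\tau|\ge 2\sqrt{2/\varepsilon}$ and $\|\Delta_{V,\tau}\|\le C\log^Cn$. The argument above again applies at this single time $\tau=O(\log n)$, now without the boundedness of $\rho_\tau$; we instead use the monotonicity of $x/\sqrt{1+x^2}$. This yields
\[
\corr\ge \frac{|\rho_\tau|}{\sqrt{1+\rho_\tau^2}}-o(1)\ge 1-\tfrac{\varepsilon}{4}-o(1)\ge1-\varepsilon
\]
for large $n$, with probability $1-O(n^{-10})$. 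The inner $\liminf$ therefore equals $1$ for each $\varepsilon$, and the iterated limit follows. The main point requiring care is the uniform control of $\|\g^t\|$ and $\langle\g^t,\btheta^\star\rangle$ when the coefficients are adapted to the Gaussian history. We handle it through the worst-case bounds over unit $\lambda$ via the Gram-matrix concentration, which requires the horizon $t=o(d)$.
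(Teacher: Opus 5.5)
Your argument follows the paper's own proof. You decompose $\v^t$ via \eqref{eq:main_decomp}. You then control $\langle\sum_\ell\lambda_{t-1,\ell}\s^\ell,\btheta^\star\rangle$ and $\|\sum_\ell\lambda_{t-1,\ell}\s^\ell\|$ uniformly over unit coefficient vectors. The paper does this with the near-isometry \eqref{eq:claim_near_isometry} for $(\btheta^\star/\sqrt d,\s^0/\sqrt d,\ldots)$; you use the Gram matrix of the $\s^\ell$ plus a union bound on $\langle\s^\ell,\btheta^\star\rangle$. Finally you insert the bounds of Theorem~\ref{thm:main}. The intermediate-regime part is fine as written.

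In the strong regime, the third bullet of Theorem~\ref{thm:main} already asserts $\corr(\v^{\tau_\varepsilon},\btheta^\star)\geq 1-\varepsilon-o(1)$, so re-deriving it is unnecessary. Your re-derivation is still valid without an upper bound on $|\rho_\tau|$, because the cross terms in $\|\v^\tau\|^2/d$ are $|\rho_\tau|\,o(1)\leq(1+\rho_\tau^2)\,o(1)$. As a notational point, $\g^t$ already denotes the $n$-dimensional Gaussian innovation in the paper, so use another name for your combination.

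The missing step is the passage to the iterated limit. In \eqref{eq:strong-correlation-iterated-limit} the $\liminf$ runs over $n,d\to\infty$ with $n/d\to\delta$. Theorem~\ref{thm:main}, by contrast, is stated at the exact aspect ratio $\delta=n/d$. Along a general sequence you must therefore apply it at $\delta_n=n/d$. Then four quantities depend on $n$ through $\delta_n$: the time constant $C_{\delta_n,\varepsilon}$, the minimal admissible $n$, the $o(1)$ in the correlation bound, and the constant in the $O(n^{-10})$ failure probability.

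Your sentence ``the inner $\liminf$ therefore equals $1$'' needs all of these bounded uniformly in $n$. In particular, the event $\{\exists\, t\leq C_{\delta,\varepsilon}\log n\}$ requires a single constant, not one that varies with $\delta_n$. The paper supplies this as follows:
\begin{itemize}
\item take a compact neighborhood $K_\delta=[\delta-r_\delta,\delta+r_\delta]\subset(\delta_{\str},\infty)$;
\item note that $\delta_n\in K_\delta$ for all large $n$;
\item invoke the compact-uniform clause at the end of Theorem~\ref{thm:main}, which bounds all constants, the minimal $n$, and the failure exponents uniformly over $K_\delta$ for fixed $\varepsilon$.
\end{itemize}
With that addition your proof is complete.
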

Below \(\delta_{\weak}\), weak recovery is information-theoretically
impossible under Gaussian prior \cite{mondelli2019fundamental}.  Between the two thresholds, the
correlation converges to the deterministic value in Corollary~\ref{cor:corr};
its left limit at \(\delta_{\str}\) is approximately \(0.92\).
Above \(\delta_{\str}\), the iterated limit in
\eqref{eq:strong-correlation-iterated-limit} gives arbitrarily accurate
recovery.  Figure~\ref{fig:univ-phase} illustrates this phase diagram.

\medskip

The following interpretations are useful.
\begin{itemize}
\item {\em Rotational equivariance of AMP.} One distinction between Theorem \ref{thm:main} and the classical asymptotic AMP results \cite{bayati_message_passing_2011,javanmard2013state,barbier2019optimal} is that Theorem \ref{thm:main} does not require the initialization and signal pair $(\v^0,\btheta^\star)$ to admit a deterministic limit of its empirical distribution, and the distributional characterization (\ref{eq:main_decomp}) is on the entire $d$-dim law of $\v^t$ instead of its empirical distribution. Both can be attributed to the Gaussianity of $\X$ and its rotational invariance. Let $\bO$ be any rotation matrix in $\R^{d\times d}$, and the rotated data be
\begin{align*}
\btheta_O^\star = \bO\btheta^\star, \quad \v^0_O = \bO\v^0, \quad \y_O = (\X \btheta^\star_O)^2.
\end{align*}
The analogue of the AMP dynamics (\ref{eq:simplified_amp}) with these rotated data is
\begin{align*}
\v^{t+1}_O &= \X^\top h_t(\u_O^t,\y_O) - \hat c_{O,t} \v_O^t,\\
\u_O^{t+1} &= \X \v_O^{t+1} - h_t(\u_O^t,\y_O),
\end{align*}
initialized at $\v_O^0$, and $\hat c_{O,t} = \delta \av{h_t'(\u_O^t,\y_O)}$.
Then for any $t\geq 0,$ one can readily check that
\begin{align*}
\u_O^t \overset{(\mathrm{d})}{=} \u^t , \quad  \v_O^t\overset{(\mathrm{d})}{=}\bO\v^t.
\end{align*}
\item {\em Weak and strong recovery thresholds.} The two thresholds can be
read directly from population state evolution. Under the normalized
Bayes-optimal choice \eqref{eq:rescaled_nonlinearity}, Appendix
\ref{sec:bayes_amp} shows that
\begin{align*}
\sigma_{U,t}^2 = \sigma_{V,t}^2 = 1, \quad \mu_{U,t} = \mu_{V,t} =: \mu_t
\end{align*}
for all $t\geq 1$, and the mean value sequence $\{\mu_t\}_{t\geq 0}$ evolves as
\begin{align}\label{eq:amp_se_intro}
\mu_{t+1}^2 = F_\delta(\mu_t^2),
\end{align}
with $F_\delta$ defined in \eqref{eq:def_F_delta}. Since $m(0)=0$, zero is
always a fixed point, and $\delta_{\weak}=1/2$ is where it becomes unstable.
Because the noise variance remains one, strong recovery requires
$\mu_t^2\to\infty$; the threshold $\delta_{\str}$ is exactly where the
positive finite fixed points disappear. Proposition~\ref{prop:phase_3_asymp}
gives the full calculation.

\item {\em Connection with spectral methods.} The linearization in
Lemma~\ref{lem:linearization} shows that the initial AMP dynamics implements
the spectral method \eqref{eq:spec_intro} with the preprocessing function
given in the introduction.
Lemma~\ref{lem:linearization} supplies the finite-sample remainder bounds.
The weak threshold is therefore the point at which this linearized dynamics
becomes spectrally unstable \cite[Lemma~5]{mondelli2019fundamental}.

\item {\em Growing horizons.} Classical state evolution applies at each
fixed iteration. Here the approximation is simultaneous for
$t\leq n^{1/3}/\operatorname{polylog}(n)$ in the intermediate recovery regime.  In
the strong-recovery regime, the direct first-order analysis controls the
trajectory until the signal coefficient reaches an arbitrary fixed target
\(M\).  Equation~\eqref{eq:strong_total_iteration_bound} bounds the number
of updates required after weak recovery.
\end{itemize}

\subsection{Non-asymptotic analysis of AMP}\label{subsec:intro_amp_nonasymptotic}

The proof of Theorem \ref{thm:main} consists of the following two major ingredients:
\begin{enumerate}
\item An exact finite-sample coupling that reveals the Gaussian innovations
incorporated into the AMP dynamics one iteration at a time.
\item A non-asymptotic error recursion analysis for approximating $\u^t,\v^t$ by their structurally simpler surrogates.
\end{enumerate}
Both ingredients are developed for the general recursion \eqref{eq:main_amp}
in the single-index model \eqref{def:single_index}. We summarize them here;
Section~\ref{sec:main_proof} contains the full argument.

\medskip

\noindent\textbf{Non-asymptotic coupling.} The starting point of the analysis is the following constructive coupling. Its underlying idea stems from \cite{lu2021householder}, where it was originally developed primarily for computational purposes. For any positive integer $t$, define
\begin{equation}\label{eq:intro__uv}
\begin{aligned}
\u^t &= \sum_{j=-1}^t \gamma_{t,j}\g^j + \sum_{j=0}^{t-1} \r^j \frac{1}{\sqrt{d}} \iprod{\s^j}{\z^t} - \hat b_t \w^{t-1} + \Delta_{U,t}^{\low},\\
\v^{t+1} &= \sum_{j=0}^t \lambda_{t,j}\s^j + \sum_{j=-1}^t \q^j \frac{1}{\sqrt{d}} \iprod{\g^j}{\w^t}  - \hat c_t \z^t + \Delta_{V,t+1}^{\low},
\end{aligned}
\end{equation}
which are coupled with the random data
\begin{align}\label{eq:intro_X}
\X^{(t)} = \sum_{\ell=-1}^t P_{\bR^{[0:\ell-1]}}^\perp \frac{\g^\ell\q^{\ell\top}}{\sqrt{d}} + \sum_{\ell=0}^t \frac{\r^\ell \s^{\ell\top}}{\sqrt{d}} P_{\Q^{[-1:\ell]}}^\perp + P_{\bR^{[0:t]}}^\perp \W P_{\Q^{[-1:t]}}^\perp,\quad \y = \varphi(\g^{-1}).
\end{align}
We first explain the notation:
\begin{itemize}
\item Here $\{\g^\ell\}_{\ell=-1}^t$ are i.i.d. $\N(0, \I_n)$ vectors, $\{\s^\ell\}_{\ell=0}^t$ are i.i.d. $\N(0, \I_d)$ vectors, and they are mutually independent.
\item For each $\u^t$, $\v^t$, we define 
\begin{align*}
\z^{-1} = \btheta^\star, \quad \z^t = f_t(\v^t),\quad \w^t = h_t(\u^t,\y), 
\end{align*}
and $\{\q^\ell\}_{\ell=-1}^t$ (resp. $\{\r^\ell\}_{\ell=0}^t$) are the
Gram--Schmidt orthonormal directions generated by
$\{\z^\ell\}_{\ell=-1}^t$ (resp. $\{\w^\ell\}_{\ell=0}^t$):
\begin{equation}\label{def:gamp_q_and_r}
\begin{aligned}
\q^{-1} &\coloneqq \frac{\btheta^\star}{\pnorm{\btheta^\star}{}}, \quad \q^\ell = \frac{P_{\Q^{[-1:\ell-1]}}^\perp \z^\ell}{\pnorm{P_{\Q^{[-1:\ell-1]}}^\perp \z^\ell}{}}, \quad \ell\geq 0, \\
\r^0 &\coloneqq \frac{\w^0}{\pnorm{\w^0}{}}, \quad \r^\ell = \frac{P^\perp_{\bR^{[0:\ell-1]}}\w^\ell}{\pnorm{P^\perp_{\bR^{[0:\ell-1]}}\w^\ell}{}}, \quad \ell \geq 1,
\end{aligned}
\end{equation}
We define $\bR^{[0:t]} = [\r^0,\ldots,\r^t]\in\R^{n\times (t+1)}$ and $\Q^{[-1:t]} = [\q^{-1},\ldots,\q^t] \in \R^{d\times (t+2)}$, and understand $P_{\bR^{[0:-2]}}^\perp, P_{\bR^{[0:-1]}}^\perp$ as $\I_n$.
\item The random coefficients $\{\lambda_{t,\ell}\}_{\ell=0}^t$, $\{\gamma_{t,\ell}\}_{\ell=-1}^t$ are defined by
\begin{align}\label{eq:inner_product}
\gamma_{t,\ell} \coloneqq \frac{1}{\sqrt d}\iprod{\q^\ell}{\z^t}, \quad \lambda_{t,\ell} \coloneqq \frac{1}{\sqrt{d}} \iprod{\r^\ell}{\w^t},
\end{align}
which are measurable with respect to $\{\g^\ell\}_{\ell=-1}^t, \{\s^\ell\}_{\ell=0}^t$.

\item $\hat b_t, \hat c_t$ are Onsager coefficients defined in (\ref{eq:general_onsager}).
\item $\W \in \R^{n\times d}$ has i.i.d. $\N(0,1/d)$ entries and is independent of everything else. 
\item $\Delta_{U,t}^{\low}$ and $\Delta_{V,t+1}^{\low}$, whose exact definitions are given by (\ref{eq:amp_low_d}) below, are error terms resulting from low-dimensional projections of $\{\g^\ell\}_{\ell=-1}^t, \{\s^\ell\}_{\ell=0}^t$, and satisfy $\pnorm{\Delta_{U,t}^{\low}}{}\vee \pnorm{\Delta_{V,t+1}^{\low}}{} = \widetilde O(\sqrt t)$; see Lemma \ref{lem:bound_lowd_projection} for details. 
\end{itemize}
As shown by Lemma \ref{lem:X_coupling} below, the coupling property of (\ref{eq:intro__uv}) and (\ref{eq:intro_X}) posits that: (i) For any $t\geq 0$, $\X^{(t)}$ has i.i.d. $\N(0,1/d)$ entries and 
\begin{align}\label{eq:conjugate_signal}
\bbeta^\star \coloneqq \g^{-1} = \X^{(t)}\btheta^\star,
\end{align}
which is the conjugate signal, and (ii) if we implement the AMP dynamic (\ref{eq:main_amp}) with data $(\X^{(t)},\y)$, we obtain exactly (\ref{eq:intro__uv}). Consequently, (\ref{eq:intro__uv}) can be viewed as a specific realization of the distribution of the AMP dynamic (\ref{eq:main_amp}).

This construction is close in spirit to the non-asymptotic AMP
decompositions of \cite{li2022non,li2024non}. Here the Gaussian directions
are planted simultaneously in the iterates and the matrix
\eqref{eq:intro_X}. Both viewpoints are constructive counterparts of Gaussian
conditioning \cite{bolthausen2014iterative,bayati_message_passing_2011}: the
iterate history reveals $O(nt)$ directions, while the final projected matrix
term retains the unobserved Gaussian bulk.

\medskip
\noindent{\bf A state-evolution decomposition of AMP iterates.}
The coupling yields the following finite-sample decomposition, which is the
counterpart of the state-evolution representation in \eqref{eq:gamp_se}:
\begin{equation}\label{eq:uv_intro_repeat}
\ampblock{\u^t}{\v^{t+1}}
\amprel{=}
\amplabel{
  \ampblock{
    \gamma_{t,-1}\bbeta^\star
  }{
    \rho_{t+1}\btheta^\star
  }
}{signal}
\ampbin{+}
\amplabel{
  \ampblock{
    \displaystyle\sum_{\ell=0}^{t}\gamma_{t,\ell}\g^\ell
  }{
    \displaystyle\sum_{\ell=0}^{t}\lambda_{t,\ell}\s^\ell
  }
}{Gaussian noise}
\ampbin{+}
\amplabel{
  \ampblock{
    \Delta_{U,t}^{\Ons}
  }{
    \Delta_{V,t+1}^{\Ons}
  }
}{Onsager}
\ampbin{+}
\amplabel{
  \ampblock{
    \Delta_{U,t}^{\low}
  }{
    \Delta_{V,t+1}^{\low}
  }
}{low-dim projection}
,
\end{equation}
where
\begin{align}\label{eq:onsager_u_intro}
\Delta_{U,t}^{\Ons} &\coloneqq \sum_{j=0}^{t-1} \r^j \frac{1}{\sqrt{d}} \iprod{\s^j}{\z^t} - \hat b_t \w^{t-1},\\
\label{eq:onsager_v_intro}
\Delta_{V,t+1}^{\Ons} &\coloneqq \sum_{\ell=0}^t \q^\ell \frac{1}{\sqrt{d}}\iprod{\g^\ell}{\w^t} - \hat c_t P_{\q^{-1}}^\perp \z^t,
\end{align}
and the signal strength is given by
\begin{align}\label{eq:key_signal}
\rho_{t+1} \coloneqq \frac{1}{d}\iprod{\g^{-1}}{\w^t} - \hat c_t \frac{1}{\sqrt d}\iprod{\q^{-1}}{\z^t} = \frac{1}{d}\iprod{\g^{-1}}{\w^t}  - \hat c_t \gamma_{t,-1}.
\end{align}
We briefly comment on the structure of these terms:
\begin{itemize}
\item The directions $\q^{-1}=\btheta^\star/\sqrt d$ and
$\g^{-1}=\bbeta^\star$ are the original and conjugate signals. Their
coefficients are finite-sample analogues of the means in population state
evolution.
\item The noise terms are adaptive linear combinations of independent
Gaussian directions. Their coefficients are not independent of those
directions, so their behavior is established by concentration uniformly over
the triangular arrays in \eqref{def:Theta_uniform} satisfying
\eqref{eqs:AtU}--\eqref{eqs:AtV}. In the phase-retrieval specialization,
$\pnorm{\lambda_{t,[0:t]}}{}=1$ exactly.
\item The two Onsager correction terms $\Delta_{U,t}^{\Ons}$ and $\Delta_{V,t+1}^{\Ons}$ cancel non-Gaussian yet entrywise $O(1)$ terms in the column span of $\bR^{[0:t-1]}$ and $\Q^{[0:t]}$ respectively. As we explain below, controlling the magnitudes of these Onsager correction terms is the key step of extending the AMP distributional characterization beyond $O(1)$ time horizon. 
\end{itemize}

\noindent\textbf{Non-asymptotic Bolthausen condition and second-order analysis.} By ignoring the Onsager correction error and low-dimensional projection error of (\ref{eq:uv_intro_repeat}) and (\ref{eq:onsager_v_intro}), we define the first-order approximation of $\u^t$ and $\v^{t+1}$ by
\begin{align}\label{eq:tilde_u_intro}
\tilde \u^t \coloneqq \gamma_{t,-1}\bbeta^{\star} + \sum_{j=0}^{t}\gamma_{t,j}\g^j,\quad \tilde\v^{t+1} \coloneqq \rho_{t+1}\btheta^\star + \sum_{\ell=0}^t \lambda_{t,\ell}\s^\ell,
\end{align}
and define the first-order approximation error by
\begin{align}\label{eq:first_order_error}
\Delta_{U,t} = \u^t - \tilde\u^t, \quad \Delta_{V,t+1} = \v^{t+1} - \tilde\v^{t+1}.  
\end{align}
As mentioned above, the low-dimensional projection errors $\Delta_{U,t}^{\low},\Delta_{V,t+1}^{\low}$ are relatively easy to bound using uniform concentration arguments (see Lemma \ref{lem:bound_lowd_projection}), and the main difficulty is to control the Onsager correction error. A simple idea would be to directly formulate a recursion on $\pnorm{\Delta_{U,t}}{}, \pnorm{\Delta_{V,t+1}}{}$, which we call a ``first-order" analysis. Unfortunately, this analysis is too coarse and only controls $\pnorm{\Delta_{U,t}}{}, \pnorm{\Delta_{V,t+1}}{}$ up to $t = c\log n$ iterates for some small $c > 0$. For a finer analysis that remains valid over a poly-$n$ horizon, we define the second-order AMP approximation as
\begin{equation}\label{eq:def_u_v_hat}
\begin{aligned}
\hat\u^t &\coloneqq \tilde\u^t + \sum_{k=0}^{t-1} \alpha^k_{t-1}h_k(\tilde \u^k,\y),\\
\hat\v^{t+1} &\coloneqq \tilde\v^{t+1} + \sum_{k=0}^{t} \beta^k_t \big(f_k(\tilde\v^k) - \gamma_{k,-1}\btheta^\star\big),
\end{aligned}
\end{equation}
Here $\{\alpha_{t-1}^k\}_{k=0}^{t-1},\{\beta_t^k\}_{k=0}^t$ are the
coefficients defined exactly by \eqref{eq:beta_correct_def} and
\eqref{eq:alpha_correct_def}.  Lemma~\ref{lem:error_decompose} then gives the
exact decompositions in \eqref{def:Delta_Ut}.
In the intermediate regime, the row bounds
\eqref{eq:phase3_rows_direct} show that the correction terms are negligible
relative to the leading surrogates.  We define the second-order approximation
error by
\begin{align}\label{eq:second_order_error}
\Delta_{U,t}' \coloneqq \u^t - \Big(\tilde\u^t + \sum_{k=0}^{t-1} \alpha^k_{t-1}\w^k\Big), \quad \Delta_{V,t+1}' \coloneqq \v^{t+1} - \Big(\tilde\v^{t+1} + \sum_{k=0}^{t} \beta^k_t P_{\btheta^\star}^\perp \z^k\Big).
\end{align}
Section~\ref{sec:second_order_analysis} proves the explicit inequalities
\eqref{eq:induction_Delta_U} and \eqref{eq:induction_Delta_V}. The skeleton of these inequalities is the recursive bound
\begin{equation}\label{eq:so_intro}
\begin{aligned}
\pnorm{\Delta_{U,t}'}{} &\leq \Big[\sqrt{\mathcal{B}_{f_t}(\rho_t)} + \cE_t^f\Big]\pnorm{\Delta_{V,t}'}{} + \err_{U,t}',\\
\pnorm{\Delta_{V,t+1}'}{} &\leq\Big[\sqrt{\mathcal{B}_{h_t}(\rho_t)}+\cE_t^h\Big] \pnorm{\Delta_{U,t}'}{} + \err_{V,t+1}'. 
\end{aligned}
\end{equation}
Here \(\mathcal E_t^f,\mathcal E_t^h\) are defined in \eqref{eq:E_multiplier}, $\err'_{U,t}, \err'_{V,t+1}$ contain the correction histories involving \(\alpha_{t-1}^k,\beta_t^k\) and the final additive terms in
\eqref{eq:induction_Delta_U} and \eqref{eq:induction_Delta_V}, and
\begin{align}\label{eq:B_intro}
\mathcal B_{f_t}(\rho)
\coloneqq
\mathbb E\!\left[f_t'(\rho G+W)^2\right],
\qquad
\mathcal B_{h_t}(\rho)
\coloneqq
\delta\,
\mathbb E\!\left[h_t'(\rho G+W,G^2)^2\right], \qquad G,W\stackrel{\mathrm{i.i.d.}}{\sim} \mathcal N(0,1).
\end{align}
are the population versions of the key multipliers \eqref{eq:F_def}. Notably, (\ref{eq:B_intro}) can be viewed as a dynamical generalization of Bolthausen's constant \cite{bolthausen2014iterative} (see Appendix \ref{sec:bolthausen} for a review in the asymmetric AMP setting), originally defined at the fixed point of AMP state evolution to study convergence over an $O(1)$ horizon. Here the non-asymptotic Bolthausen constant $\mathcal{B}_{f_t}(\rho_t)\cdot \mathcal{B}_{h_t}(\rho_t)$ plays an equally essential role in the error recursion (\ref{eq:so_intro}), determining the horizon over which Onsager correction remains valid. 

The above second order analysis builds on the recent work \cite{li2024non}, with the following nontrivial extensions:
\begin{itemize}
\item In \cite{li2024non}, the second order analysis was developed for the linear model, while our development is covers the more general AMP dynamic (\ref{eq:main_amp}) for single-index models. 
\item The recursion in \cite{li2024non} is only developed in the context where the multiplier in the error recursion (\ref{eq:so_intro}) is a strict contraction, which does not hold in our symmetry-breaking phase. 
\end{itemize}
Our second-order analysis (and that of \cite{li2024non}) significantly improves on the classical asymptotic AMP analysis \cite{bayati_message_passing_2011,javanmard2013state,barbier2019optimal}, as well as on more recent first-order analysis   \cite{rush2018finite,bao2025leave,bao2025leave} that are largely confined to $t = O(\frac{\log n}{\log\log n})$ iterations (albeit some allow non-Gaussian $\X$)

\subsection{Further discussion}
We end this section discussing some open problems for future investigation.
\begin{itemize}
\item (\textit{General first-order methods under random initialization}) This
paper identifies sharp weak- and strong-recovery thresholds for randomly
initialized Bayes-optimal AMP. It remains important to determine the
corresponding proportional-regime thresholds for other first-order methods,
whose trajectories are described by more complicated dynamical mean-field
equations \cite{celentano_high-dimensional_2021,gerbelot2024rigorous}.

\item (\textit{Multi-index models}) Recent work gives nearly linear sample
complexity for feature learning in several multi-index models
\cite{dandi2024benefits,zhang2025neural,montanari2026phase}. Sharp
proportional-regime thresholds are known in fewer settings, notably for
Bayes-optimal AMP and spectral estimators
\cite{troiani2024fundamental,defilippis2025optimal,kovavcevic2025spectral}.
Understanding random AMP initialization in this broader class is open.

\item (\textit{Universality}) The coupling used here relies on Gaussian
rotational invariance. Figure~\ref{fig:univ-phase} suggests that the recovery
thresholds persist more broadly. Proving this would require combining the
growing-horizon argument with AMP universality techniques
\cite{bayati2015universality,chen2021universality,fan2022approximate,dudeja2023universality,wang2024universality,bao2025leave}.
\end{itemize}

\begin{figure}[H]
    \centering
\includegraphics[width=0.5\linewidth]{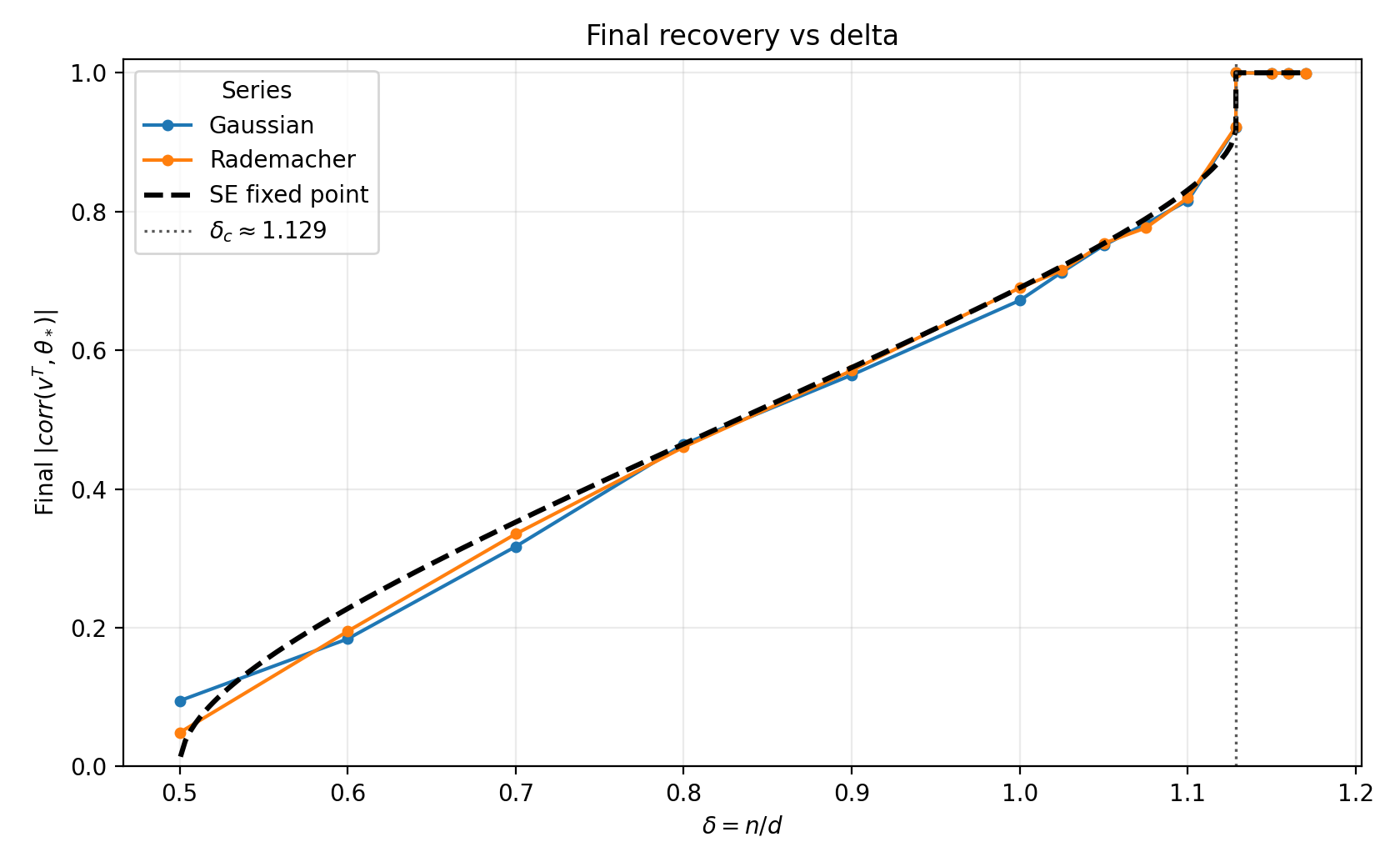}
    \caption{The final correlation of random initialized Bayes optimal AMP for sensing matrices with iid Gaussian or Rademacher entries is plotted at various aspect ratios. The Gaussian simulations agree with the phase diagram of Theorem \ref{thm:main}. The agreement for Rademacher design provides numerical evidence that the recovery thresholds extend beyond Gaussian design.}
    \label{fig:univ-phase}
\end{figure}

\section{Nonasymptotic analysis of AMP for single-index models}
\label{sec:main_proof}
In this section, we construct the coupling and exact decomposition of the AMP iterated and establish recursive bounds for the resulting AMP approximations. We first state a notational convention to make explicit the randomness of the AMP nonlinearities $f_k$ and $h_k$ which will be useful for future uniform concentration arguments.

For each \(k\), let \(\xi_k\) and \(\vartheta_k\) contain all random scalar
parameters on which \(f_k\) and \(h_k\), respectively, depend.  For
deterministic
maps \(\mathsf f_k,\mathsf h_k\), write
\begin{equation}\label{eq:parameterized_nonlinearities}
\begin{alignedat}{2}
f_k(v)&=\mathsf f_k(v;\xi_k),\qquad
&\xi_k&\in\Xi_k\subset\mathbb R^{p_k^f},\\
h_k(u,y)&=\mathsf h_k(u,y;\vartheta_k),\qquad
&\vartheta_k&\in\Theta_k\subset\mathbb R^{p_k^h}.
\end{alignedat}
\end{equation}
The sets \(\Xi_k,\Theta_k\) are deterministic.  We assume that there is a
fixed universal integer \(C_{\rm par}\geq1\) such that, for every horizon
under consideration,
\begin{align}\label{eq:parameter_cover_assumption}
\sum_{k=0}^{t+1}(p_k^f+p_k^h)&\leq C_{\rm par}(t+2),
\end{align}
and, for \(0\leq j\leq3\), for parameter values of norm at most \(n^6\),
\begin{align}\label{eq:parameter_lipschitz_assumption}
\left|\partial_v^j\mathsf f_k(v;\xi)
-\partial_v^j\mathsf f_k(v;\tilde\xi)\right|
&\leq n^{C_{\rm par}}(1+|v|^{C_{\rm par}})
\pnorm{\xi-\tilde\xi}{},\nonumber\\
\left|\partial_u^j\mathsf h_k(u,y;\vartheta)
-\partial_u^j\mathsf h_k(u,y;\tilde\vartheta)\right|
&\leq n^{C_{\rm par}}(1+|u|^{C_{\rm par}}+|y|^{C_{\rm par}})
\pnorm{\vartheta-\tilde\vartheta}{}.
\end{align}
We also assume
\begin{align}
\mathbb P\left(\max_{1\leq i\leq n}|y_i|>n^{C_{\rm par}}\right)
&\leq n^{-20}.
\label{eq:response_polynomial_envelope}
\end{align}
If a map has no random scalar parameter, the corresponding parameter vector
is the empty vector in \(\mathbb R^0\).

For the phase-retrieval
specialization, define 
\begin{align}\label{eq:pi_parameter}
N_t\coloneqq \frac{\pnorm{h_t^\star(\u^t,\y)}{}}{\sqrt d},
\qquad
\pi_t\coloneqq \frac{\sqrt{2\delta}\,\hat\mu_t}{N_t}-1.
\end{align}
On the event \(N_t>0\), on which the normalized update
\eqref{eq:rescaled_nonlinearity} is defined, that update satisfies
\begin{align}\label{eq:h_parameterization}
h_t(u,y)
&=\mathsf h_t(u,y;\hat\mu_t,\pi_t),\nonumber\\
\mathsf h_t(u,y;\mu,\pi)
&\coloneqq \frac{1+\pi}{\sqrt{2\delta}}
\left\{(1+\mu^2)\sqrt y\,
\frac{\tanh(\mu u\sqrt y)}{\mu}-u\right\},
\end{align}
where the quotient at \(\mu=0\) is defined by continuity as
\(u\sqrt y\). Meanwhile
\begin{align*}
    f_t(v) &= \mathsf{f}_t(v;)\\
    \mathsf{f}_t(v;) &= v.
\end{align*}
Thus, for phase retrieval,
\begin{equation}\label{eq:phase_retrieval_parameter_tuple}
\begin{alignedat}{3}
p_k^f&=0,\qquad
&\Xi_k&=\mathbb R^0,\qquad
&\xi_k&=(),\\
p_k^h&=2,\qquad
&\Theta_k&=\mathbb R_+\times\mathbb R,\qquad
&\vartheta_k&=(\hat\mu_k,\pi_k),
\end{alignedat}
\end{equation}
with \(\hat\mu_k,\pi_k\) and \(\mathsf h_k\) given by \eqref{eq:h_parameterization}, while
\(\mathsf f_k(x;\cdot)=x\).  The phase-specific bounds on
\(\hat\mu_k,\pi_k\), together with the calculation in
Lemma~\ref{lem:phase_retrieval_parameter_lipschitz} below, verify
\eqref{eq:parameter_cover_assumption}--\eqref{eq:response_polynomial_envelope};
for \eqref{eq:response_polynomial_envelope}, use
\(\bm y=(\bm g^{-1})^{\circ2}\) and the Gaussian maximum bound.

We further assume that the random scalar parameters
are predictable with respect to the Gaussian innovations.  Let \(\mathcal E_{\rm link}\) and \(\mathcal E_{\rm alg}\)
denote, respectively, the external randomness in the link and an auxiliary
algorithmic random seed.  They are independent of \(\bm v^0\), the Gaussian
families introduced below, and \(\bm W\), and
\(\bm y\) is measurable with respect to
\(\cF(\bm g^{-1},\mathcal E_{\rm link})\).  Put
\(\mathcal E=\cF(\mathcal E_{\rm link},\mathcal E_{\rm alg})\).
The initialization parameter satisfies
\[
\xi_0\in\cF(\bm v^0,\mathcal E_{\rm alg}),
\]
and, for every \(\ell\geq0\),
\begin{equation}\label{eq:parameter_nonanticipation}
\begin{aligned}
\vartheta_\ell
&\in\cF\!\left(
\mathcal E,\bm v^0,\{\bm g^j\}_{j=-1}^{\ell},
\{\bm s^j\}_{j=0}^{\ell-1}\right),\\
\xi_{\ell+1}
&\in\cF\!\left(
\mathcal E,\bm v^0,\{\bm g^j\}_{j=-1}^{\ell},
\{\bm s^j\}_{j=0}^{\ell}\right).
\end{aligned}
\end{equation}
Thus a parameter may be computed from the data and the iterates available
when its nonlinearity is applied, but not from a future Gaussian innovation.
The phase-retrieval parameters in
\eqref{eq:phase_retrieval_parameter_tuple} obey
\eqref{eq:parameter_nonanticipation}: \(\vartheta_\ell\) is a function of
\((\bm u^\ell,\bm y)\), and \(\xi_k=()\).

\subsection{Coupling of AMP and random data}\label{subsec:couple_amp}

Fix the generalized AMP recursion \eqref{eq:main_amp} in the single-index
model \eqref{def:single_index}.  Define the projection errors
\begin{equation}\label{eq:amp_low_d}
\begin{aligned}
\Delta_{U,t}^{\low}
&\coloneqq
-\sum_{\ell=-1}^t
P_{\bR^{[0:\ell-1]}}\g^\ell\gamma_{t,\ell}
-\sum_{\ell=0}^{t-1}\frac{\r^\ell}{\sqrt d}
(P_{\Q^{[-1:\ell]}}\s^\ell)^\top\z^t,\\
\Delta_{V,t+1}^{\low}
&\coloneqq
-\sum_{\ell=0}^t
P_{\Q^{[-1:\ell]}}\s^\ell\lambda_{t,\ell}
-\sum_{\ell=0}^t\frac{\q^\ell}{\sqrt d}
(P_{\bR^{[0:\ell-1]}}\g^\ell)^\top\w^t.
\end{aligned}
\end{equation}
We restrict to horizons with $t+2\leq d$ and $t+1\leq n$.  If a vector
projected in \eqref{def:gamp_q_and_r} is zero, the corresponding basis is
completed by a fixed measurable orthonormal rule and its new coordinate is
set to zero.
Then the coupled iterates in \eqref{eq:intro__uv} admit the exact
representation below, with the initialization convention
\(\w^{-1}=\bm{0}_n\):
\begin{equation}\label{eq:construct_uv}
\begin{aligned}
\u^\ell
&=\sum_{j=-1}^\ell
P_{\bR^{[0:j-1]}}^\perp\g^j
\frac{\q^{j\top}\z^\ell}{\sqrt d}
+\sum_{j=0}^{\ell-1}\r^j
\frac{(P_{\Q^{[-1:j]}}^\perp\s^j)^\top\z^\ell}{\sqrt d}
-\hat b_\ell\w^{\ell-1},\\
\v^{\ell+1}
&=\sum_{j=-1}^\ell\q^j
\frac{(P_{\bR^{[0:j-1]}}^\perp\g^j)^\top\w^\ell}{\sqrt d}
+\sum_{j=0}^\ell
P_{\Q^{[-1:j]}}^\perp\s^j
\frac{\r^{j\top}\w^\ell}{\sqrt d}
-\hat c_\ell\z^\ell.
\end{aligned}
\end{equation}
Starting from $(\btheta^\star,\q^{-1},\v^0)$, first form
\((\bm z^0,\bm q^0)\), and then proceed at time $\ell$ in the order
\[
\u^\ell\longrightarrow\w^\ell\longrightarrow\r^\ell
\longrightarrow\v^{\ell+1}\longrightarrow\z^{\ell+1}
\longrightarrow\q^{\ell+1}.
\]
In particular, with $\cF(\cdot)$ denoting the generated $\sigma$-field and
with \(\mathcal E\) included in every field below,
\begin{equation}\label{eq:uv_dependence}
\begin{aligned}
\u^\ell,\y,\w^\ell,\r^\ell
&\in\cF\Big(\mathcal E,\{\g^j\}_{j=-1}^\ell,
\{\s^j\}_{j=0}^{\ell-1},\v^0\Big),\\
\v^{\ell+1},\z^{\ell+1},\q^{\ell+1}
&\in\cF\Big(\mathcal E,\{\g^j\}_{j=-1}^\ell,
\{\s^j\}_{j=0}^{\ell},\v^0\Big).
\end{aligned}
\end{equation}

Recall the matrix $\X^{(t)}$ from \eqref{eq:intro_X}.  All probabilities in
this section refer to the common probability space generated by
$\v^0$, $\{\g^\ell\}_{\ell\geq-1}$, and
$\{\s^\ell\}_{\ell\geq0}$, $\W$, and \(\mathcal E\).
\begin{lemma}[Finite-horizon Gaussian coupling]\label{lem:X_coupling}
For every \(t\geq0\) satisfying \(t+2\leq d\) and \(t+1\leq n\):
\begin{enumerate}
\item jointly with the initialization and external seeds,
\[
(\X^{(t)},\y,\v^0,\mathcal E_{\rm link},\mathcal E_{\rm alg})
\overset{(\mathrm d)}=
(\X,\y,\v^0,\mathcal E_{\rm link},\mathcal E_{\rm alg})
\]
under the single-index model \eqref{def:single_index}; in particular,
\(\v^0\) and \(\mathcal E_{\rm alg}\) remain independent of
\((\X^{(t)},\y)\);
\item the AMP recursion \eqref{eq:main_amp} driven by
$(\X^{(t)},\y)$ agrees pathwise with \eqref{eq:construct_uv} through the
iterate $\v^{t+1}$.
\end{enumerate}
Moreover, for every deterministic \(T\) satisfying \(T+2\le d\) and \(T+1\le n\), the single matrix \(\X^{(T)}\) realizes this pathwise coupling simultaneously through \(\v^{T+1}\). Consequently, conclusions at times \(t\le T\) may be intersected on one probability space.
\end{lemma}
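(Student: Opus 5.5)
We prove the two claims separately. Part 1 rests on a Gaussian conditioning (rotational invariance) argument carried out by induction over the construction order. Part 2 is a direct algebraic verification using the block structure of \(\X^{(t)}\).

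\textbf{Part 1: the law of \(\X^{(t)}\).} Because of the nonanticipation conditions, we reveal the randomness in the order
\[
\v^0,\mathcal E,\g^{-1},\q^{-1},\z^0,\q^0,\g^0,\w^0,\r^0,\s^0,\ldots.
\]
We then show by induction on \(t\) that, conditionally on \(\cF_t:=\cF(\mathcal E,\v^0,\{\g^j\}_{j\le t},\{\s^j\}_{j\le t})\), the remaining bulk has the right law. Consider an auxiliary matrix \(\X_*\) with i.i.d. \(\N(0,1/d)\) entries, independent of \((\v^0,\mathcal E)\). Decompose it along the orthonormal frames \(\Q^{[-1:t]}\) and \(\bR^{[0:t]}\) as
\[
\X_*=\sum_\ell P_{\bR^{[0:\ell-1]}}^\perp \X_*\q^\ell\q^{\ell\top}+\sum_\ell \r^\ell\r^{\ell\top}\X_*P^\perp_{\Q^{[-1:\ell]}}+P^\perp_{\bR}\X_*P^\perp_{\Q},
\]
which is the telescoping identity obtained by peeling off columns and rows alternately.

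The key fact is the following. Let \(\a\) be a fixed unit vector and \(\M\) an independent Gaussian matrix. Then \(\M\a\) is \(\N(0,\I/d)\) and independent of \(\M P_{\a}^\perp\). Moreover, replacing \(\sqrt d\,P^\perp\M\q^\ell\) by \(P^\perp\g^\ell\) with a fresh \(\g^\ell\) preserves the joint law, provided \(\q^\ell\) is measurable with respect to what was revealed before. This is exactly what \eqref{eq:uv_dependence} and \eqref{eq:parameter_nonanticipation} guarantee: \(\q^\ell\) depends only on \(\g^{\le\ell-1},\s^{\le\ell-1}\), and \(\r^\ell\) only on \(\g^{\le\ell},\s^{\le\ell-1}\).

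Applying this replacement step by step shows that \(\X^{(t)}\overset{(d)}{=}\X_*\) jointly with \((\v^0,\mathcal E)\). The identity \(\X^{(t)}\btheta^\star=\g^{-1}\) follows because \(\q^{-1}\perp\q^\ell\) for \(\ell\ge0\) and \(P_{\Q}^\perp\q^{-1}=0\). Hence \(\y=\varphi(\g^{-1})=\varphi(\X^{(t)}\btheta^\star)\), which gives the joint law statement. I expect this step to be the main obstacle. The frames are random and adaptive, so each conditioning step must be justified by conditioning on \(\cF\) of the past and using that the fresh Gaussians \(\g^\ell,\s^\ell\) and \(\W\) are independent of it. The zero-projection completion rule keeps every frame measurable.

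\textbf{Part 2: pathwise agreement.} We verify by induction that
\[
\X^{(t)}\z^\ell-\hat b_\ell\w^{\ell-1}=\u^\ell\quad\text{and}\quad \X^{(t)\top}\w^\ell-\hat c_\ell\z^\ell=\v^{\ell+1}\qquad\text{for }\ell\le t,
\]
with \(\u^\ell,\v^{\ell+1}\) as in \eqref{eq:construct_uv}. For \(\ell\le t\), the vector \(\z^\ell\) lies in \(\spa(\Q^{[-1:\ell]})\). Therefore \(P^\perp_{\Q^{[-1:m]}}\z^\ell=0\) for \(m\ge\ell\), so the \(\W\) term and all terms \(\r^m\s^{m\top}P^\perp_{\Q^{[-1:m]}}\) with \(m\ge\ell\) vanish. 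The surviving terms are exactly those in \eqref{eq:construct_uv}, after rewriting \(\s^{j\top}P^\perp\z^\ell=(P^\perp\s^j)^\top\z^\ell\). The same argument applies to \(\w^\ell\in\spa(\bR^{[0:\ell]})\) for \(\X^{(t)\top}\w^\ell\). Since \(\X^{(T)}\) restricted to these actions coincides with \(\X^{(t)}\) for \(t\le T\), one matrix \(\X^{(T)}\) realizes all times simultaneously. Finally, iterating AMP on \(\X^{(T)}\) reproduces the construction order, so the two trajectories agree pathwise.
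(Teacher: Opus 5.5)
Your proposal is correct and is essentially the paper's proof. In Part 1, your sequential replacement of the $\q^\ell$-column and $\r^\ell$-row components of an independent Gaussian $\X_*$ by fresh $\g^\ell/\sqrt d$ and $\s^{\ell\top}/\sqrt d$ is exactly the paper's chain $\tilde\X^{(-1)}\to\X^{(-1)}\to\tilde\X^{(0)}\to\X^{(0)}\to\cdots$, with $\X_*$ in the role of $\W$ and each step justified by the predictability of $\q^\ell,\r^\ell$ from \eqref{eq:uv_dependence}. Part 2 matches the paper's direct computation of $\X^{(t)}\z^\ell$ and $\X^{(t)\top}\w^\ell$ via $\z^\ell\in\spa(\Q^{[-1:\ell]})$ and $\w^\ell\in\spa(\bR^{[0:\ell]})$.
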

\begin{proof}
The definition \eqref{eq:intro_X}, the identity
\(\btheta^\star=\sqrt d\,\q^{-1}\), and the orthogonality
\(\q^\ell\perp\q^{-1}\) for \(\ell\geq0\) give
\begin{align}\label{eq:coupling_signal_identity}
\X^{(t)}\btheta^\star=\g^{-1}.
\end{align}
Thus \(\y=\varphi(\g^{-1})=\varphi(\X^{(t)}\btheta^\star)\), including
the external randomness in \(\varphi\).  To prove (i), it remains to show
that \(\X^{(t)}\) has independent \(\N(0,1/d)\) entries.

For $t\geq-1$, introduce the intermediate matrix
\begin{align*}
\tilde{\X}^{(t)}
&=\sum_{\ell=-1}^t
P_{\bR^{[0:\ell-1]}}^\perp
\frac{\g^\ell\q^{\ell\top}}{\sqrt d}
+\sum_{\ell=0}^{t-1}
\frac{\r^\ell\s^{\ell\top}}{\sqrt d}
P_{\Q^{[-1:\ell]}}^\perp+
P_{\bR^{[0:t-1]}}^\perp
\W P_{\Q^{[-1:t]}}^\perp.
\end{align*}
We induct along the sequence
\[
\tilde{\X}^{(-1)}\longrightarrow\X^{(-1)}
\longrightarrow\tilde{\X}^{(0)}\longrightarrow\X^{(0)}
\longrightarrow\cdots\longrightarrow
\tilde\X^{(t)}\longrightarrow\X^{(t)}.
\]
The initial matrix
\[
\tilde\X^{(-1)}
=\frac{\g^{-1}\q^{-1\top}}{\sqrt d}
+\W P_{\q^{-1}}^\perp
\]
has independent \(\N(0,1/d)\) entries.  Indeed, in the orthogonal
decomposition
\(\mathbb R^d=\spa\{\q^{-1}\}\oplus\spa\{\q^{-1}\}^\perp\), its first
column is \(\g^{-1}/\sqrt d\), while the remaining columns are those of
\(\W P_{\q^{-1}}^\perp\); these Gaussian components are independent.

For the row update $\tilde\X^{(t)}\to\X^{(t)}$, use
$\r^t\perp\bR^{[0:t-1]}$ to write
\begin{align*}
\X^{(t)}
&=\sum_{\ell=-1}^t
P_{\bR^{[0:\ell-1]}}^\perp
\frac{\g^\ell\q^{\ell\top}}{\sqrt d}
+\sum_{\ell=0}^{t-1}
\frac{\r^\ell\s^{\ell\top}}{\sqrt d}
P_{\Q^{[-1:\ell]}}^\perp+
P_{\bR^{[0:t-1]}}^\perp
\underbrace{\left(
\frac{\r^t\s^{t\top}}{\sqrt d}
+P_{\r^t}^\perp\W
\right)}_{\W^{(t)}}
P_{\Q^{[-1:t]}}^\perp.
\end{align*}
Define
\[
\mathcal F_t^{\,r}
\coloneqq
\cF\Big(\mathcal E,\{\g^j\}_{j=-1}^t,
\{\s^j\}_{j=0}^{t-1},\v^0\Big).
\]
By \eqref{eq:uv_dependence}, \(\r^t\) is
\(\mathcal F_t^{\,r}\)-measurable, while \((\s^t,\W)\) is independent of
\(\mathcal F_t^{\,r}\).  Conditional on \(\mathcal F_t^{\,r}\),
\(\r^t\s^{t\top}/\sqrt d\) is the component of a standard Gaussian matrix
along \(\r^t\), and \(P_{\r^t}^\perp\W\) is its independent orthogonal
component.  Consequently, conditionally on the external seeds, the
initialization, and the preceding innovations,
\begin{align*}
\mathsf P\big(\W^{(t)}\in\cdot\mid\mathcal F_t^{\,r}\big)
=\mathsf P(\W\in\cdot),
\end{align*}
so the row update preserves the required joint law.

For the column update \(\X^{(t)}\to\tilde\X^{(t+1)}\), define
\[
\mathcal F_t^{\,q}
\coloneqq
\cF\Big(\mathcal E,\{\g^j\}_{j=-1}^t,
\{\s^j\}_{j=0}^{t},\v^0\Big).
\]
By \eqref{eq:uv_dependence}, \(\q^{t+1}\) is
\(\mathcal F_t^{\,q}\)-measurable, while \(\g^{t+1}\) and \(\W\) are
independent of \(\mathcal F_t^{\,q}\).  Since
\(\q^{t+1}\perp\Q^{[-1:t]}\), the transpose of the preceding decomposition
replaces the component along \(\q^{t+1}\) by
\(\g^{t+1}\q^{(t+1)\top}/\sqrt d\) and preserves the standard Gaussian
law jointly with \(\v^0,\mathcal E_{\rm link},\mathcal E_{\rm alg}\).
Induction and \eqref{eq:coupling_signal_identity} prove (i).

It remains to verify the AMP recursion pathwise.  Fix \(0\leq\ell\leq t\).
For \(j>\ell\), Gram--Schmidt orthogonality gives
\(\q^{j\top}\z^\ell=0\), while, for \(j\geq\ell\),
\(P_{\Q^{[-1:j]}}^\perp\z^\ell=0\).  Also
\(P_{\Q^{[-1:t]}}^\perp\z^\ell=0\).  Multiplication of
\eqref{eq:intro_X} by \(\z^\ell\) therefore gives
\begin{align}\label{eq:coupling_Xz}
\X^{(t)}\z^\ell
&=\sum_{j=-1}^\ell
P_{\bR^{[0:j-1]}}^\perp\g^j
\frac{\q^{j\top}\z^\ell}{\sqrt d}
+\sum_{j=0}^{\ell-1}\r^j
\frac{(P_{\Q^{[-1:j]}}^\perp\s^j)^\top\z^\ell}{\sqrt d}\notag\\
&=\u^\ell+\hat b_\ell\w^{\ell-1},
\end{align}
where the last equality is \eqref{eq:construct_uv}.  This is the second
line of \eqref{eq:main_amp} with \(t=\ell-1\) when \(\ell\geq1\), and it
is the initialization \(\u^0=\X^{(t)}f_0(\v^0)\) when \(\ell=0\).

Similarly, \(P_{\bR^{[0:j-1]}}^\perp\w^\ell=0\) for \(j>\ell\),
\(\r^{j\top}\w^\ell=0\) for \(j>\ell\), and
\(P_{\bR^{[0:t]}}^\perp\w^\ell=0\).  Hence
\begin{align}\label{eq:coupling_Xtw}
\X^{(t)\top}\w^\ell
&=\sum_{j=-1}^\ell\q^j
\frac{(P_{\bR^{[0:j-1]}}^\perp\g^j)^\top\w^\ell}{\sqrt d}
+\sum_{j=0}^\ell
P_{\Q^{[-1:j]}}^\perp\s^j
\frac{\r^{j\top}\w^\ell}{\sqrt d}\notag\\
&=\v^{\ell+1}+\hat c_\ell\z^\ell.
\end{align}
This is the first line of \eqref{eq:main_amp}.  Equations
\eqref{eq:coupling_Xz}--\eqref{eq:coupling_Xtw}, applied successively for
\(\ell=0,\ldots,t\), prove (ii).
\end{proof}

\subsection{\texorpdfstring{Bounds for error terms}{Bounds for error terms}}
\label{sec:second_order_analysis}

\subsubsection{Definitions and exact decomposition}

The vectors in \eqref{eq:def_u_v_hat} can be written as
\begin{equation*}
\begin{aligned}
\hat{\u}^t
&=\sum_{\ell=-1}^t\gamma_{t,\ell}\g^\ell
+\sum_{\ell=0}^{t-1}\alpha_{t-1}^\ell
h_\ell(\tilde\u^\ell,\y),\\
\hat{\v}^{t+1}
&=\sum_{\ell=-1}^t\lambda_{t,\ell}\s^\ell
+\sum_{\ell=0}^t\beta_t^\ell
\big(f_\ell(\tilde\v^\ell)-\gamma_{\ell,-1}\btheta^\star\big).
\end{aligned}
\end{equation*}
We use the initialization convention
$\tilde\v^0=\hat\v^0=\v^0$ and
$\Delta_{V,0}=\Delta_{V,0}'=0$.  Also $\hat\u^0=\tilde\u^0$, and, for
notational uniformity,
\begin{equation}\label{def:hat_lambda_-1}
\lambda_{t,-1} \coloneqq \rho_{t+1},
\end{equation}
so that $\tilde\v^{t+1}=\sum_{\ell=-1}^t\lambda_{t,\ell}\s^\ell$
with $\s^{-1}\equiv\btheta^\star$.  Let
$\bar\z^t\coloneqq P_{\btheta^\star}^\perp\z^t
=\z^t-\gamma_{t,-1}\btheta^\star$.

The correction coefficients are defined recursively.  For $t\geq0$, set
\begin{align}\label{eq:beta_correct_def}
\beta_t^k
=
\begin{cases}
\alpha_{t-1}^{k}\,
\delta\av{h_t'(\hat\u^t,\y)\circ h_k'(\tilde{\u}^k,\y)},
&0\leq k\leq t-1,\\[0.4em]
\begin{aligned}
&\delta\av{h_t'(\hat\u^t,\y)-h_t'(\u^t,\y)}\\
&\quad+
\frac{\left\langle
\sum_{\ell=0}^t\gamma_{t,\ell}\g^\ell,\,
h_t(\u^t,\y)-h_t(\hat\u^t,\y)
\right\rangle}
{d\pnorm{\gamma_{t,[0:t]}}{}^2}
\end{aligned}
&k=t.
\end{cases}
\end{align}
where $\pnorm{\gamma_{t,[0:t]}}{}^2=\pnorm{\bar\z^t}{}^2/d$.
After forming $\hat\v^{t+1}$ from $\beta_t$, define
\begin{align}\label{eq:alpha_correct_def}
\alpha_t^k
=
\begin{cases}
\beta_t^{k+1}\,
\av{f_{t+1}'(\hat\v^{t+1})\circ
f_{k+1}'(\tilde{\v}^{k+1})},
&0\leq k\leq t-1,\\[0.4em]
\begin{aligned}
&\av{f_{t+1}'(\hat\v^{t+1})-f_{t+1}'(\v^{t+1})}\\
&\quad+
\frac{\left\langle
\sum_{\ell=0}^t\lambda_{t,\ell}\s^\ell,\,
f_{t+1}(\v^{t+1})-f_{t+1}(\hat\v^{t+1})
\right\rangle}
{d\pnorm{\lambda_{t,[0:t]}}{}^2}
\end{aligned}
&k=t.
\end{cases}
\end{align}
where $\pnorm{\lambda_{t,[0:t]}}{}^2=\pnorm{\w^t}{}^2/d$.
The phase-specific applications establish strict
positivity and hence use the finite bounds.  Since
$\hat\u^0=\tilde\u^0=\u^0$, we have $\beta_0^0=0$, and the recursion
alternates as
\[
\beta_0\to\hat\v^1\to\alpha_0\to\hat\u^1
\to\beta_1\to\hat\v^2\to\alpha_1\to\cdots.
\]

\begin{lemma}[Exact decomposition of
\(\Delta'_{U,t}\) and \(\Delta'_{V,t+1}\)]
\label{lem:error_decompose}
For $t\geq0$, define the coefficients
\begin{equation}\label{def:AB_tl}
\begin{aligned}
A_t^\ell
&\coloneqq \frac{1}{d}\iprod{\s^\ell}{f_t(\hat\v^t)}
-\av{f_t'(\hat\v^t)}\lambda_{t-1,\ell}
-\sum_{k=\ell+1}^{t-1}\alpha_{t-1}^{k-1}\lambda_{k-1,\ell},
&&0\leq\ell\leq t-1,\quad t\geq1,\\
B_t^\ell
&\coloneqq \frac{1}{d}\iprod{\g^\ell}{h_t(\hat\u^t,\y)}
-\delta\av{h_t'(\hat\u^t,\y)}\gamma_{t,\ell}
-\sum_{k=\ell}^{t-1}\beta_t^k\gamma_{k,\ell},
&&0\leq\ell\leq t.
\end{aligned}
\end{equation}
Then $\Delta_{U,0}'=0$, and for $t\geq1$,
\begin{equation}\label{def:Delta_Ut}
\begin{aligned}
\Delta_{U,t}'
&=P_{\w^{t-1}}^\perp
\sum_{\ell=0}^{t-1}\frac{\r^\ell}{\sqrt d}
\iprod{\s^\ell}{f_t(\v^t)-f_t(\hat\v^t)}
+\sum_{\ell=0}^{t-1}\sqrt d\,\r^\ell A_t^\ell
+\Delta_{U,t}^{\low},\\
\Delta_{V,t+1}'
&=P_{\bar\z^t}^\perp
\sum_{\ell=0}^{t}\frac{\q^\ell}{\sqrt d}
\iprod{\g^\ell}{h_t(\u^t,\y)-h_t(\hat\u^t,\y)}
+\sum_{\ell=0}^{t}\sqrt d\,\q^\ell B_t^\ell
+\Delta_{V,t+1}^{\low},
\end{aligned}
\end{equation}
where the second identity also holds at $t=0$.
\end{lemma}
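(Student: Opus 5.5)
This is a purely algebraic identity. The plan is to start from the exact representation \eqref{eq:intro__uv}/\eqref{eq:uv_intro_repeat} of $\u^t,\v^{t+1}$ given by Lemma~\ref{lem:X_coupling}, subtract the second-order surrogates, and reorganize the Onsager terms. The reorganization inserts $f_t(\hat\v^t)$ and $h_t(\hat\u^t,\y)$ and uses the recursive definitions \eqref{eq:beta_correct_def}--\eqref{eq:alpha_correct_def} to absorb the leftover pieces.

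\textbf{The $\v$ side.} By \eqref{eq:uv_intro_repeat}, $\v^{t+1}-\tilde\v^{t+1}=\Delta^{\Ons}_{V,t+1}+\Delta^{\low}_{V,t+1}$, with $\Delta^{\Ons}_{V,t+1}$ as in \eqref{eq:onsager_v_intro}. Since $\q^{-1}\propto\btheta^\star$, the $\ell=-1$ term contributes only along $\btheta^\star$ and has already been absorbed into $\rho_{t+1}$ via \eqref{eq:key_signal}. Hence
\[
\Delta'_{V,t+1}=\sum_{\ell=0}^t\frac{\q^\ell}{\sqrt d}\iprod{\g^\ell}{\w^t}-\hat c_t\bar\z^t-\sum_{k=0}^t\beta_t^k\bar\z^k+\Delta^{\low}_{V,t+1}.
\]
In the inner product I write $\w^t=h_t(\hat\u^t,\y)+[h_t(\u^t,\y)-h_t(\hat\u^t,\y)]$. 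The first part gives $\sqrt d\,\q^\ell\cdot\frac1d\iprod{\g^\ell}{h_t(\hat\u^t,\y)}$. By definition of $B_t^\ell$, this equals $\sqrt d\,\q^\ell B_t^\ell$ plus $\sqrt d\,\q^\ell\bigl(\delta\av{h_t'(\hat\u^t,\y)}\gamma_{t,\ell}+\sum_{k=\ell}^{t-1}\beta_t^k\gamma_{k,\ell}\bigr)$. Summing over $\ell$ and using $\bar\z^k=\sqrt d\sum_{\ell=0}^k\gamma_{k,\ell}\q^\ell$ (Gram--Schmidt, \eqref{eq:inner_product}), the extra terms become $\delta\av{h_t'(\hat\u^t,\y)}\bar\z^t+\sum_{k=0}^{t-1}\beta_t^k\bar\z^k$, after exchanging the order of the $(k,\ell)$ double sum. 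The $k\le t-1$ part cancels against the corresponding $\beta$-terms.

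What remains along $\bar\z^t$ is $(\delta\av{h_t'(\hat\u^t,\y)}-\hat c_t-\beta_t^t)\bar\z^t$ plus the $\bar\z^t$-component of the difference term. Since $\hat c_t=\delta\av{h_t'(\u^t,\y)}$, the $k=t$ clause of \eqref{eq:beta_correct_def} is designed exactly so that this combination equals minus the projection of $\sum_\ell \frac{\q^\ell}{\sqrt d}\iprod{\g^\ell}{h_t(\u^t,\y)-h_t(\hat\u^t,\y)}$ onto $\bar\z^t$. To check this, I use $\iprod{\bar\z^t}{\sum_\ell\q^\ell a_\ell}=\sqrt d\sum_\ell\gamma_{t,\ell}a_\ell$ and $\pnorm{\bar\z^t}{}^2=d\pnorm{\gamma_{t,[0:t]}}{}^2$. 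This produces the $P^\perp_{\bar\z^t}$ in \eqref{def:Delta_Ut}. The case $t=0$ is the same computation with empty $k$-sums.

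\textbf{The $\u$ side.} The argument is symmetric. Here $\u^t-\tilde\u^t=\Delta^{\Ons}_{U,t}+\Delta^{\low}_{U,t}$, and I insert $f_t(\hat\v^t)$ in $\iprod{\s^\ell}{\z^t}$. The needed identity is $\w^{k}=\sqrt d\sum_{\ell\le k}\lambda_{k,\ell}\r^\ell$ for $k\ge0$. I must also check that the $\ell=-1$ coefficient is not needed: $\r^\ell$ is built from the $\w$'s alone, and $\s^{-1}$ does not enter. With the index shift $\alpha_{t-1}^{k-1}\lambda_{k-1,\ell}$ this regroups $\sum_k\alpha_{t-1}^k\w^k$. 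The top coefficient $\alpha_{t-1}^{t-1}$ together with $-\hat b_t$ yields the $P^\perp_{\w^{t-1}}$ projection, using $\hat b_t=\av{f_t'(\v^t)}$ and $\pnorm{\w^{t-1}}{}^2=d\pnorm{\lambda_{t-1,[0:t-1]}}{}^2$.

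\textbf{Main obstacle.} The main obstacle is bookkeeping. First, the surrogates in \eqref{eq:second_order_error} use $\w^k,\bar\z^k$, whereas $\alpha,\beta$ are defined through $\tilde\u^k,\tilde\v^k$; I must confirm that the identity in \eqref{def:Delta_Ut} is with respect to \eqref{eq:second_order_error}, so only $\w^k,\bar\z^k$ appear. Second, the summation ranges must match exactly, e.g.\ $k$ from $\ell+1$ in $A_t^\ell$ versus from $\ell$ in $B_t^\ell$, reflecting the $\alpha_{t-1}^{k-1}$ shift. I would verify these by checking $t=1$ explicitly before writing the general regrouping.
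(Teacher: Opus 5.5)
Your proposal is correct and follows essentially the same route as the paper: start from the exact coupled representation, add and subtract $f_t(\hat\v^t)$ (resp.\ $h_t(\hat\u^t,\y)$) and the corresponding derivative average, regroup the double sums via $\w^k=\sqrt d\sum_{\ell=0}^k\lambda_{k,\ell}\r^\ell$ and $\bar\z^k=\sqrt d\sum_{\ell=0}^k\gamma_{k,\ell}\q^\ell$, and recognize that the leftover coefficient of $\w^{t-1}$ (resp.\ $\bar\z^t$) is exactly minus the projection coefficient, by the diagonal clause of \eqref{eq:alpha_correct_def} (resp.\ \eqref{eq:beta_correct_def}). The only item you leave implicit is the $U$-side base case $\Delta'_{U,0}=0$, which is immediate from $\u^0=\tilde\u^0$ (the representation \eqref{eq:construct_uv} at $\ell=0$ with $\w^{-1}=\bm{0}_n$).
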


The projections in \eqref{def:Delta_Ut} are orthogonal to
\(\w^{t-1}\) and \(\bar\z^t\), respectively.  These are the orthogonality
conditions used in Lemma~\ref{lem:F_concentration}.
\begin{proof}
The case $t=0$ on the $U$-side follows from
$\u^0=\tilde\u^0$.  Fix $t\geq1$.  From \eqref{eq:construct_uv}, the
definition of $\Delta_{U,t}^{\low}$, and
$\w^{t-1}=\sum_{\ell=0}^{t-1}\sqrt d\,\lambda_{t-1,\ell}\r^\ell$,
\begin{equation}\label{eq:error-decompose-u-start}
\u^t=\tilde\u^t+\Delta_{U,t}^{\low}
+\sum_{\ell=0}^{t-1}\sqrt d\,\r^\ell
\left\{\frac1d\iprod{\s^\ell}{f_t(\v^t)}
-\av{f_t'(\v^t)}\lambda_{t-1,\ell}\right\}.
\end{equation}
Add and subtract $f_t(\hat\v^t)$ and
$\av{f_t'(\hat\v^t)}$.  The resulting three terms are
\begin{align*}
I_U&\coloneqq\sum_{\ell=0}^{t-1}\frac{\r^\ell}{\sqrt d}
\iprod{\s^\ell}{f_t(\v^t)-f_t(\hat\v^t)},\\
I\!I_U&=\sum_{k=0}^{t-2}\alpha_{t-1}^k\w^k
+\sum_{\ell=0}^{t-1}\sqrt d\,\r^\ell A_t^\ell,\\
I\!I\!I_U&=\av{f_t'(\hat\v^t)-f_t'(\v^t)}\w^{t-1},
\end{align*}
where the identity for \(I\!I_U\) uses
\begin{align*}
\sum_{\ell=0}^{t-1}\sqrt d\,\r^\ell
\sum_{k=\ell+1}^{t-1}\alpha_{t-1}^{k-1}\lambda_{k-1,\ell}
&=\sum_{k=1}^{t-1}\alpha_{t-1}^{k-1}
\sum_{\ell=0}^{k-1}\sqrt d\,\lambda_{k-1,\ell}\r^\ell=\sum_{k=0}^{t-2}\alpha_{t-1}^k\w^k.
\end{align*}
Moreover,
\[
\pnorm{\w^{t-1}}{}^2
=d\pnorm{\lambda_{t-1,[0:t-1]}}{}^2,
\]
so
\begin{align*}
I_U
&=P_{\w^{t-1}}^\perp I_U +\frac{\w^{t-1}}{d\pnorm{\lambda_{t-1,[0:t-1]}}{}^2}
\sum_{\ell=0}^{t-1}\lambda_{t-1,\ell}
\iprod{\s^\ell}{f_t(\v^t)-f_t(\hat\v^t)}.
\end{align*}
The coefficient of $\w^{t-1}$ in $I_U+I\!I\!I_U$ is exactly
$\alpha_{t-1}^{t-1}$ by \eqref{eq:alpha_correct_def}.  Substitution into
\eqref{eq:error-decompose-u-start} proves the first identity in
\eqref{def:Delta_Ut}.

For the \(V\)-identity, \eqref{eq:construct_uv} gives
\begin{equation*}
\v^{t+1}=\tilde\v^{t+1}+\Delta_{V,t+1}^{\low}
+\sum_{\ell=0}^{t}\sqrt d\,\q^\ell
\left\{\frac1d\iprod{\g^\ell}{h_t(\u^t,\y)}
-\delta\av{h_t'(\u^t,\y)}\gamma_{t,\ell}\right\}.
\end{equation*}
After adding and subtracting $h_t(\hat\u^t,\y)$ and
$\delta\av{h_t'(\hat\u^t,\y)}$, the three resulting terms are
\begin{align*}
I_V
&\coloneqq
\sum_{\ell=0}^t\frac{\q^\ell}{\sqrt d}
\iprod{\g^\ell}{
h_t(\u^t,\y)-h_t(\hat\u^t,\y)},\\
I\!I_V
&\coloneqq
\sum_{k=0}^{t-1}\beta_t^k\bar\z^k
+\sum_{\ell=0}^t\sqrt d\,\q^\ell B_t^\ell,\\
I\!I\!I_V
&\coloneqq
\delta\av{h_t'(\hat\u^t,\y)-h_t'(\u^t,\y)}\bar\z^t.
\end{align*}
The formula for \(I\!I_V\) follows from
\begin{align*}
\sum_{\ell=0}^t\sqrt d\,\q^\ell
\sum_{k=\ell}^{t-1}\beta_t^k\gamma_{k,\ell}
&=\sum_{k=0}^{t-1}\beta_t^k
\sum_{\ell=0}^k\sqrt d\,\gamma_{k,\ell}\q^\ell
=\sum_{k=0}^{t-1}\beta_t^k\bar\z^k.
\end{align*}
Since
\(\bar\z^t=\sum_{\ell=0}^t\sqrt d\,\gamma_{t,\ell}\q^\ell\),
the component of \(I_V\) along \(\bar\z^t\) equals
\begin{align*}
\frac{\bar\z^t}{d\pnorm{\gamma_{t,[0:t]}}{}^2}
\left\langle
\sum_{\ell=0}^t\gamma_{t,\ell}\g^\ell,\,
h_t(\u^t,\y)-h_t(\hat\u^t,\y)
\right\rangle.
\end{align*}
Together with \(I\!I\!I_V\), its coefficient is \(\beta_t^t\) by
\eqref{eq:beta_correct_def}.  Subtracting
\(\sum_{k=0}^t\beta_t^k\bar\z^k\) leaves
\(P_{\bar\z^t}^\perp I_V\), which proves the second identity in
\eqref{def:Delta_Ut}.
\end{proof}

\subsubsection{The main induction argument}
\label{subsec:main_ind}
The induction bounds \(\|\Delta_{U,t}'\|\),
\(\|\Delta_{V,t+1}'\|\), and the correction coefficients.  Define
\begin{equation}\label{def:induction_constant}
\begin{aligned}
\bar\lambda_t
&\coloneqq\max_{0\leq k\leq t}
\left\{\pnorm{h_k(\bm{0}_n,\y)}{},\pnorm{\bbeta^\star}{},
\sqrt d\,\pnorm{\lambda_{k,[0:k]}}{}\right\},\\
\bar\gamma_t
&\coloneqq\max_{0\leq k\leq t}
\left\{\pnorm{f_k(\bm{0}_d)}{},\pnorm{\btheta^\star}{},
\sqrt d\,\pnorm{\gamma_{k,[-1:k]}}{}\right\}.
\end{aligned}
\end{equation}
We use
\[
\pnorm{h_{[0:t]}'}{\infty}
\coloneqq\max_{0\leq\ell\leq t}\pnorm{h_\ell'}{\infty},
\qquad
\pnorm{f_{[0:t]}'}{\infty}
\coloneqq\max_{0\leq\ell\leq t}\pnorm{f_\ell'}{\infty},
\]
and analogous notation for higher derivatives.  For $\rho\in\R$ and
$\sigma\geq0$, write
$\gamma_{[0:t],-1}=(\gamma_{0,-1},\ldots,\gamma_{t,-1})$, and define the
expected squared derivative norms
\begin{equation}\label{eq:F_def}
\begin{aligned}
F_{t+1}(\rho,\sigma;\btheta^\star)
&\coloneqq\frac1d\E_{\s}
\Bigpnorm{f_{t+1}'(\rho\btheta^\star+\sigma\s)}{}^2,\\
H_t(\rho,\sigma;\bbeta^\star)
&\coloneqq\frac1d\E_{\g}
\Bigpnorm{h_t'(\rho\bbeta^\star+\sigma\g,\y)}{}^2,
\end{aligned}
\end{equation}
where $\s\sim\N(0,\I_d)$ and $\g\sim\N(0,\I_n)$ are independent,
and $\y=\varphi(\bbeta^\star)$ (conditionally on any external randomness
in $\varphi$).  The relevant signal coefficient on the $U$-side is
$\gamma_{t,-1}$; in the phase-retrieval specialization it equals $\rho_t$
by Lemma \ref{lem:conjugate_signal}.  Deterministically,
\begin{equation}\label{eq:F_trivial_bound}
F_{t+1}(\rho,\sigma;\btheta^\star)
\leq\pnorm{f_{t+1}'}{\infty}^2,
\qquad
H_t(\rho,\sigma;\bbeta^\star)
\leq\delta\pnorm{h_t'}{\infty}^2.
\end{equation}

For \(k,t\geq1\), define \(\mathfrak F_{k,t}\), and for \(k,t\geq0\),
define \(\mathfrak H_{k,t}\), by
\begin{align*}
\mathfrak F_{k,t}
&\coloneqq
\sqrt{F_k\big(\rho_k,\pnorm{\lambda_{k-1,[0:k-1]}}{};
\btheta^\star\big)}
\sqrt{F_t\big(\rho_t,\pnorm{\lambda_{t-1,[0:t-1]}}{};
\btheta^\star\big)},\\
\mathfrak H_{k,t}
&\coloneqq
\sqrt{H_k\big(\gamma_{k,-1},\pnorm{\gamma_{k,[0:k]}}{};
\bbeta^\star\big)}
\sqrt{H_t\big(\gamma_{t,-1},\pnorm{\gamma_{t,[0:t]}}{};
\bbeta^\star\big)}.
\end{align*}

The concentration and linearization remainders are collected in
\begin{equation}\label{eq:E_multiplier}
\begin{aligned}
\cE_t^f
&\coloneqq C\Bigg[
\big(\pnorm{f_{[0:t]}'}{\infty}\vee1\big)^2
\sqrt{t+1}\left(\frac{\log n}{d}\right)^{1/4}\\
&\hspace{4.2em}+
\big(\pnorm{f_{[0:t]}'}{\infty}\vee1\big)\pnorm{f_t''}{\infty}
\frac{\sqrt{t+1}+\sqrt{\log n}}{\sqrt d}
\big(\pnorm{\Delta_{V,t}}{}
+\pnorm{\hat\v^t-\tilde\v^t}{}\big)\Bigg],\\
\cE_t^h
&\coloneqq C\Bigg[
\delta^{3/4}
\big(\pnorm{h_{[0:t]}'}{\infty}\vee1\big)^2
\sqrt{t+1}\left(\frac{\log n}{d}\right)^{1/4}\\
&\hspace{4.2em}+
\big(\pnorm{h_{[0:t]}'}{\infty}\vee1\big)\pnorm{h_t''}{\infty}
(1+\sqrt\delta)\frac{\sqrt{t+1}+\sqrt{\log n}}{\sqrt d}
\big(\pnorm{\Delta_{U,t}}{}
+\pnorm{\hat\u^t-\tilde\u^t}{}\big)\Bigg].
\end{aligned}
\end{equation}

For $t\geq1$, let $\cH_t^U$ denote the following four bounds:
\begin{subequations}
\label{eqs:HtU}
\begin{align}
\notag\pnorm{\Delta_{U,t}'}{} &\leq \Big(\sqrt{F_{t}(\rho_t, \pnorm{\lambda_{t-1,[0:t-1]}}{};\btheta^\star)} + \cE^f_t\Big)\Big(\pnorm{\Delta_{V,t}'}{} + \sum_{k=0}^{t-1}|\beta_{t-1}^k|\pnorm{f'_k}{\infty}\pnorm{\Delta_{V,k}}{}\Big)\\
&\quad + C\big(\bar\gamma_{t} + \bar\lambda_{t-1}\big)\big(\pnorm{f_{[0:t]}'}{\infty}\vee 1\big)^2(\pnorm{\rho_{[0:t]}}{\infty}\vee 1)t\sqrt{\frac{\log n}{d}}, \label{eq:induction_Delta_U}\\
|\alpha_{t-1}^{t-1}|
&\leq \left(\frac{\pnorm{f_t''}{\infty}}{\sqrt d}
+\frac{2\pnorm{f_t'}{\infty}}
{\sqrt d\,\pnorm{\lambda_{t-1,[0:t-1]}}{}}\right)\notag\\
&\quad\times\left(\pnorm{\Delta_{V,t}'}{}
+\pnorm{f_{[0:t-1]}'}{\infty}
\max_{0\leq\ell\leq t-1}\pnorm{\Delta_{V,\ell}}{}
\sum_{\ell=0}^{t-1}|\beta_{t-1}^\ell|\right),
\label{eq:induction_alpha_last}\\
|\alpha_{t-1}^{t-2}| &\leq
\pnorm{f_t'}{\infty}\pnorm{f_{t-1}'}{\infty}
|\beta_{t-1}^{t-1}|,
\qquad t\geq2,\label{eq:induction_alpha_middle_last}\\
|\alpha_{t-1}^k| & \le
|\alpha_{t-2}^{k+1}|\big(\mathfrak F_{k+1,t}+\cE_t^f\big)
\big(\mathfrak H_{k+1,t-1}+\cE^h_{t-1}\big),
\quad 0\leq k\leq t-3,\quad t\geq3.
\label{eq:induction_alpha_middle}
\end{align}
\end{subequations}
For $t\geq0$, define $\cH_t^V$ symmetrically by
\begin{subequations}
\label{eqs:HtV}
\begin{align}
\notag\pnorm{\Delta_{V,t+1}'}{} &\leq \Big( \sqrt{H_{t}(\gamma_{t,-1},\pnorm{\gamma_{t,[0:t]}}{};\bbeta^\star)} + \cE^h_t\Big)\Big(\pnorm{\Delta_{U,t}'}{} + \sum_{k=0}^{t-1}|\alpha_{t-1}^k|\pnorm{h_k'}{\infty}\pnorm{\Delta_{U,k}}{}\Big)\\
&\quad + C(1+\sqrt\delta)\big(\bar\lambda_{t} + \bar\gamma_{t}\big)\big(\pnorm{h_{[0:t]}'}{\infty}\vee 1\big)^2\big(\pnorm{\gamma_{[0:t],-1}}{\infty}\vee 1\big)(t+1)\sqrt{\frac{\log n}{d}},\label{eq:induction_Delta_V}\\
|\beta_t^t|
&\leq\sqrt\delta\left(\frac{\pnorm{h_t''}{\infty}}{\sqrt d}
+\frac{2\pnorm{h_t'}{\infty}}
{\sqrt d\,\pnorm{\gamma_{t,[0:t]}}{}}\right)\notag\\
&\quad\times\left(\pnorm{\Delta_{U,t}'}{}
+\pnorm{h'_{[0:t-1]}}{\infty}
\max_{0\leq\ell\leq t-1}\pnorm{\Delta_{U,\ell}}{}
\sum_{\ell=0}^{t-1}|\alpha_{t-1}^\ell|\right),
\label{eq:induction_beta_last}\\
|\beta_t^{t-1}| &\leq
\delta\pnorm{h'_t}{\infty}\pnorm{h'_{t-1}}{\infty}
|\alpha_{t-1}^{t-1}|,
\qquad t\geq1,\label{eq:induction_beta_middle_last}\\
|\beta_t^{k}| & \le
|\beta_{t-1}^{k+1}|\big(\mathfrak H_{t,k}+\cE^h_t\big) \big(\mathfrak F_{t,k+1}+\cE^f_t\big),
\quad 0\leq k\leq t-2,\quad t\geq2.
\label{eq:induction_beta_middle}
\end{align}
\end{subequations}
At $t=0$, all sums and maxima indexed by $[0:t-1]$ are interpreted as zero.

\begin{definition}
\label{assump:inductive_assumption}
For $t\geq0$, let $\cA_t^U$ denote the event
\begin{subequations}
\label{eqs:AtU}
\begin{align}
\bar\gamma_t\vee\bar\lambda_t
&\leq Cn^5,\label{eq:gamma_main_bound}\\
\pnorm{\rho_{[0:t]}}{\infty} &\leq C\log^5n,\\
\max_{1\leq s\leq t}\sum_{k=0}^{s-1}|\alpha_{s-1}^k|
&\leq \frac{C}{\log^5n},\label{eq:cond_alpha_sum}\\
\pnorm{h_{[0:t]}'}{\infty}\vee\pnorm{h_{[0:t]}''}{\infty}
&\leq C\log^5n,\label{eq:cond_derivative_linfty_h}\\
\max_{0\leq k\leq t}\pnorm{\vartheta_k}{}
&\leq Cn^5.\label{eq:cond_parameter_h}
\end{align}
\end{subequations}
The maximum over an empty set is understood as zero.  Similarly, let
$\cA_t^V$ denote
\begin{subequations}
\label{eqs:AtV}
\begin{align}
\bar\lambda_t\vee\bar\gamma_{t+1}
&\leq Cn^5,\label{eq:lambda_main_norm}\\
\pnorm{\rho_{[0:t+1]}}{\infty} &\leq C\log^5n,\\
\max_{0\leq s\leq t}\sum_{k=0}^{s}|\beta_s^k|
&\leq \frac{C}{\log^5n},\label{eq:cond_beta_sum}\\
\pnorm{f_{[0:t+1]}'}{\infty}\vee\pnorm{f_{[0:t+1]}''}{\infty}
&\leq C\log^5n,\label{eq:cond_derivative_linfty_f}\\
\max_{0\leq k\leq t+1}\pnorm{\xi_k}{}
&\leq Cn^5.\label{eq:cond_parameter_f}
\end{align}
\end{subequations}
\end{definition}

\begin{proposition}
\label{prop:general_error_induction}
Set $\cH_0^U\coloneqq\{\Delta_{U,0}'=0\}$.  For every
$0\leq t$ with $t+1\leq C\sqrt d/\log^3n$,
\begin{align*}
\P\big(\cA_t^U\cap\cA_{t-1}^V\cap(\cH_t^V)^c\big)
&=O(n^{-12}),\\
\P\big(\cA_t^V\cap\cA_t^U\cap(\cH_{t+1}^U)^c\big)
&=O(n^{-12}),
\end{align*}
where $\cA_{-1}^V$ is the sure event.
\end{proposition}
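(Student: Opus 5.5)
By the symmetry between the $U$- and $V$-sides (interchange $(\X,\g,\q,h,\delta)$ with $(\X^\top,\s,\r,f,1)$), it suffices to prove the first statement, namely that $\cH_t^V$ holds on $\cA_t^U\cap\cA_{t-1}^V$ up to probability $O(n^{-12})$. The starting point is the exact decomposition of Lemma~\ref{lem:error_decompose}, which splits $\Delta_{V,t+1}'$ into three pieces: a projected difference term $P_{\bar\z^t}^\perp I_V$, the coefficient term $\sum_\ell\sqrt d\,\q^\ell B_t^\ell$, and $\Delta_{V,t+1}^{\low}$. I bound each piece separately. The correction coefficient bounds \eqref{eq:induction_beta_last}--\eqref{eq:induction_beta_middle} are handled with the same tools applied to the explicit formulas \eqref{eq:beta_correct_def}.

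\textbf{Step 1: uniform concentration over a net.} All the random objects at time $t$ are functions of the Gaussian history together with finitely many scalars: the coefficients $\gamma_{k,\ell}$, $\lambda_{k,\ell}$, $\alpha$, $\beta$, and the parameters $\vartheta_k$. The number of these scalars is $O(t^2)$ plus the parameter count $C_{\rm par}(t+2)$. On $\cA_t^U\cap\cA_{t-1}^V$ all of them lie in a ball of radius $n^5$, and \eqref{eq:parameter_lipschitz_assumption} supplies polynomial Lipschitz continuity in them. I will freeze the coefficients at deterministic values in an $n^{-C}$-net and use the nonanticipation structure \eqref{eq:parameter_nonanticipation}, so that the fresh Gaussian $\g^t$ and the columns $\g^\ell$ act as Gaussians independent of the frozen quantities. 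Gaussian concentration, Hanson--Wright, and a Lipschitz bound for $h$ of order $\log^5 n$ then give, for each fixed net point, deviations of order $\sqrt{(t^2+\log n)/d}$. A union bound over the net costs a factor $e^{O(t^2\log n)}$. This is affordable only because $t+1\le C\sqrt d/\log^3 n$, and this is precisely where that horizon constraint enters.

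\textbf{Step 2: the main multiplier term.} I bound $\|P_{\bar\z^t}^\perp I_V\|$. Write $h_t(\u^t)-h_t(\hat\u^t)=\mathrm{diag}(h_t'(\hat\u^t))(\u^t-\hat\u^t)$ plus a second-order remainder controlled by $\|h_t''\|_\infty\|\u^t-\hat\u^t\|^2$. Next compare $\hat\u^t$ with $\tilde\u^t$, using the identity $\u^t-\hat\u^t=\Delta'_{U,t}+\sum_k\alpha_{t-1}^k(\w^k-h_k(\tilde\u^k))$ and $\|\w^k-h_k(\tilde\u^k)\|\le\|h_k'\|_\infty\|\Delta_{U,k}\|$. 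This produces exactly the bracket $\|\Delta'_{U,t}\|+\sum_k|\alpha_{t-1}^k|\|h_k'\|_\infty\|\Delta_{U,k}\|$ appearing in \eqref{eq:induction_Delta_V}. It then remains to show that the linear map $\x\mapsto\sum_{\ell\le t}\q^\ell\, d^{-1/2}\langle\g^\ell,\mathrm{diag}(h_t'(\hat\u^t))\x\rangle$, restricted by the projection, has operator norm at most $\sqrt{H_t}+\cE_t^h$. Its squared norm is governed by the Gram matrix $d^{-1}G^\top D^2G$, where $G=[\g^0,\dots,\g^t]$ and $D=\mathrm{diag}(h_t'(\hat\u^t))$.

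For this Gram matrix I would replace $\hat\u^t$ by $\tilde\u^t=\gamma_{t,-1}\bbeta^\star+G\gamma_{t,[0:t]}$; the cost of this replacement is the $h''$ term in $\cE_t^h$. The matrix then concentrates, by Step 1, around a matrix that is approximately $H_t\,I$ in directions orthogonal to $\gamma_{t,[0:t]}$, while the direction along $\gamma_{t,[0:t]}$ carries a non-isotropic Stein correction. The projection $P_{\bar\z^t}^\perp$ removes exactly that direction, because $\bar\z^t=\sqrt d\sum_\ell\gamma_{t,\ell}\q^\ell$. The fluctuation of the Gram matrix is of order $\sqrt{t+1}(\log n/d)^{1/2}$, and taking a square root yields the $(\log n/d)^{1/4}$ term.

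\textbf{Step 3: remaining terms.} The coefficients $B_t^\ell$ are Stein-type residuals of $\langle\g^\ell,h_t(\hat\u^t)\rangle$. The correction $\sum_k\beta_t^k\gamma_{k,\ell}$ is designed so that, after Gaussian integration by parts, each $B_t^\ell$ is a centred fluctuation of size $\sqrt{\log n/d}$ times norms bounded by $\bar\lambda_t,\bar\gamma_t$. Summing over $\ell\le t$ with factor $\sqrt d$ gives the additive term $(t+1)\sqrt{\log n/d}$ in \eqref{eq:induction_Delta_V}. The term $\Delta^{\low}_{V,t+1}$ is handled by Lemma~\ref{lem:bound_lowd_projection}, which also fits inside this additive term.

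For the coefficients, $\beta_t^{t-1}$ and $\beta_t^k$ with $k\le t-2$ are defined as products, and Cauchy--Schwarz in $\av{h_t'h_k'}$ together with Step 1 gives $\mathfrak H+\cE^h$ directly. For $\beta_t^t$ I take the same first-order expansion as in Step 2, divide by $\|\gamma_{t,[0:t]}\|$, and use $\delta\av{h_t'(\hat\u^t)-h_t'(\u^t)}\le\sqrt\delta\,\|h_t''\|_\infty\|\u^t-\hat\u^t\|/\sqrt d$.

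\textbf{Main obstacle.} The hardest step is the sharp operator-norm bound in Step 2 with leading constant exactly $\sqrt{H_t}$. The difficulty is that $D$ depends on $\g^0,\dots,\g^t$ themselves, so the Gram matrix is not a simple independent quadratic form. I would attack it by conditioning on the component of $G$ along $\gamma_{t,[0:t]}$, which determines $\tilde\u^t$, and exploiting the independence of the orthogonal components. The orthogonal part then becomes a standard quadratic form with a frozen diagonal weight, and the union bound of Step 1 handles the adaptivity of $\gamma_{t,[0:t]}$.
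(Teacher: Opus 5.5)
Your proposal follows essentially the same route as the paper: the exact decomposition of Lemma~\ref{lem:error_decompose}; a duality bound that reduces the projected term to concentration of $\frac1d\|\sum_\ell\omega_\ell\g^\ell\circ h_t'(\tilde\u^t,\y)\|^2$ around $H_t$ uniformly over $\omega\perp\gamma_{t,[0:t]}$, with the $\hat\u^t\to\tilde\u^t$ replacement paid for by the $h''$ part of $\cE_t^h$; integration-by-parts centering plus nets for $\sum_\ell\sqrt d\,\q^\ell B_t^\ell$; Lemma~\ref{lem:bound_lowd_projection} for $\Delta^{\low}_{V,t+1}$; and Cauchy--Schwarz and sup-norm bounds for the correction coefficients. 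Two small caveats: the $U\leftrightarrow V$ interchange is not an exact symmetry (the signal enters differently, via $\q^{-1}$ and $\bar\z^t$ versus $\g^{-1}$ inside the Gaussian family; only $h_t$ depends on $\y$; only the $A_t^\ell$ baseline involves $\v^0$), so the $U$-side needs its own parallel check, as in the $A_t^\ell$ half of Lemma~\ref{lem:error_main_lemma}; and for $\beta_t^t$ you should not reuse the Taylor expansion of Step~2, because its quadratic remainder does not fit the error-independent coefficient in \eqref{eq:induction_beta_last}, and should instead use the Lipschitz bound $\|h_t(\u^t,\y)-h_t(\hat\u^t,\y)\|\le\|h_t'\|_\infty\|\u^t-\hat\u^t\|$ together with $\|\sum_\ell\gamma_{t,\ell}\g^\ell\|\le2\sqrt{\delta d}\,\|\gamma_{t,[0:t]}\|$.
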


\subsubsection{Auxiliary estimates}

\begin{lemma}[Comparison of \(\Delta\) and \(\Delta'\)]
\label{lem:mutual_close}
For every $t\geq0$,
\begin{align}
\pnorm{\Delta_{U,t}}{}
&\leq\pnorm{\Delta_{U,t}'}{}
+\bar\lambda_{t-1}\sum_{k=0}^{t-1}|\alpha_{t-1}^k|,
\label{eq:diff_u_tilde}\\
\pnorm{\u^t-\hat\u^t}{}
&\leq\pnorm{\Delta_{U,t}'}{}
+\sum_{k=0}^{t-1}|\alpha_{t-1}^k|
\pnorm{h_k'}{\infty}\pnorm{\Delta_{U,k}}{},
\label{eq:diff_u_hat}\\
\pnorm{\Delta_{V,t+1}}{}
&\leq\pnorm{\Delta_{V,t+1}'}{}
+\bar\gamma_t\sum_{k=0}^t|\beta_t^k|,
\label{eq:diff_v_tilde}\\
\pnorm{\v^{t+1}-\hat\v^{t+1}}{}
&\leq\pnorm{\Delta_{V,t+1}'}{}
+\sum_{k=0}^t|\beta_t^k|
\pnorm{f_k'}{\infty}\pnorm{\Delta_{V,k}}{}.
\label{eq:diff_v_hat}
\end{align}
Here the sums with upper index $-1$ vanish, and we set
$\bar\lambda_{-1}=0$.
\end{lemma}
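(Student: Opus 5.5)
The four inequalities follow directly from the definitions \eqref{eq:first_order_error}, \eqref{eq:second_order_error} and \eqref{eq:def_u_v_hat}, using the triangle inequality and the Lipschitz property of the nonlinearities. No probabilistic input is needed; the statement is deterministic.

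For \eqref{eq:diff_u_tilde}, subtract the two definitions of the errors:
\[
\Delta_{U,t}-\Delta_{U,t}'=\sum_{k=0}^{t-1}\alpha_{t-1}^k\w^k .
\]
Since \(\w^k=\sum_{\ell=0}^k\sqrt d\,\lambda_{k,\ell}\r^\ell\) with \(\r^\ell\) orthonormal, we get \(\pnorm{\w^k}{}=\sqrt d\,\pnorm{\lambda_{k,[0:k]}}{}\le\bar\lambda_{t-1}\) for \(k\le t-1\), by \eqref{def:induction_constant}. The triangle inequality then gives the bound. When \(t=0\) the sum is empty, which is consistent with \(\bar\lambda_{-1}=0\).

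For \eqref{eq:diff_v_tilde} the argument is the same. Here
\[
\Delta_{V,t+1}-\Delta_{V,t+1}'=\sum_{k=0}^t\beta_t^k\bar\z^k ,
\]
and \(\bar\z^k=\sum_{\ell=0}^k\sqrt d\,\gamma_{k,\ell}\q^\ell\), so \(\pnorm{\bar\z^k}{}\le\sqrt d\,\pnorm{\gamma_{k,[-1:k]}}{}\le\bar\gamma_t\).

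For \eqref{eq:diff_u_hat}, write
\[
\u^t-\hat\u^t=\Delta_{U,t}'+\sum_{k=0}^{t-1}\alpha_{t-1}^k\bigl(h_k(\u^k,\y)-h_k(\tilde\u^k,\y)\bigr).
\]
The mean value theorem applied entrywise gives
\[
\pnorm{h_k(\u^k,\y)-h_k(\tilde\u^k,\y)}{}\le\pnorm{h_k'}{\infty}\pnorm{\u^k-\tilde\u^k}{}=\pnorm{h_k'}{\infty}\pnorm{\Delta_{U,k}}{}.
\]
The triangle inequality finishes this case.

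For \eqref{eq:diff_v_hat}, the analogous expansion is
\[
\v^{t+1}-\hat\v^{t+1}=\Delta_{V,t+1}'+\sum_{k=0}^t\beta_t^k\bigl(f_k(\v^k)-f_k(\tilde\v^k)\bigr),
\]
where the \(\gamma_{k,-1}\btheta^\star\) terms cancel because \(\bar\z^k=\z^k-\gamma_{k,-1}\btheta^\star\). The mean value theorem bounds each difference by \(\pnorm{f_k'}{\infty}\pnorm{\Delta_{V,k}}{}\).

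At \(k=0\) the convention \(\tilde\v^0=\v^0\) makes that term vanish, in agreement with \(\Delta_{V,0}=0\). This is the only boundary case that needs checking, so there is no real obstacle.
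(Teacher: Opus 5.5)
Your proposal is correct and follows the paper's proof essentially verbatim: the exact identities $\Delta_{U,t}=\Delta'_{U,t}+\sum_{k=0}^{t-1}\alpha_{t-1}^k\w^k$ and $\Delta_{V,t+1}=\Delta'_{V,t+1}+\sum_{k=0}^{t}\beta_t^k\bar\z^k$, combined with the Gram--Schmidt norm identities for $\w^k$ and $\bar\z^k$, give the first and third bounds, while subtracting the definitions of the hatted iterates and applying entrywise Lipschitz bounds gives the other two. You are also right that the $\gamma_{k,-1}\btheta^\star$ terms cancel exactly, and your handling of the $t=0$ and $k=0$ boundary cases is accurate.
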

\begin{proof}
The definitions in \eqref{eq:tilde_u_intro} and
\eqref{def:Delta_Ut} give
\begin{align*}
\Delta_{U,t}
&=\Delta_{U,t}'+\sum_{k=0}^{t-1}\alpha_{t-1}^k\w^k,\qquad
\Delta_{V,t+1}
=\Delta_{V,t+1}'+\sum_{k=0}^{t}\beta_t^k\bar\z^k.
\end{align*}
Now use
$\pnorm{\w^k}{}=\sqrt d\,\pnorm{\lambda_{k,[0:k]}}{}
\leq\bar\lambda_{t-1}$ and
$\pnorm{\bar\z^k}{}=\sqrt d\,\pnorm{\gamma_{k,[0:k]}}{}
\leq\bar\gamma_t$ to obtain the first and third bounds.  Subtracting the
definitions of $(\u^t,\hat\u^t)$ and $(\v^{t+1},\hat\v^{t+1})$, respectively,
and applying the Lipschitz bounds for $h_k$ and $f_k$ gives the other two.
\end{proof}

\begin{lemma}[Low-dimensional projection]
\label{lem:bound_lowd_projection}
For every fixed $t\geq0$, with probability at least $1-2n^{-20}$,
\begin{align*}
\pnorm{\Delta_{U,t}^{\low}}{}
&\leq C\big(\sqrt{t+1}+\sqrt{\log n}\big)
\pnorm{\gamma_{t,[-1:t]}}{},\\
\pnorm{\Delta_{V,t+1}^{\low}}{}
&\leq C\big(\sqrt{t+1}+\sqrt{\log n}\big)
\pnorm{\lambda_{t,[0:t]}}{}.
\end{align*}
\end{lemma}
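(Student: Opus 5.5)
The plan is to rewrite each low-dimensional projection error as a fixed orthonormal frame applied to a random triangular matrix acting on the coefficient vector. The matrix will turn out to have i.i.d.\ $\N(0,1)$ entries on its support, so everything reduces to an operator-norm bound for a Gaussian triangular array.

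\emph{$U$-side.} Consider first
\[
-\sum_{\ell=-1}^t P_{\bR^{[0:\ell-1]}}\g^\ell\gamma_{t,\ell}
=-\sum_{\ell=-1}^t\sum_{j=0}^{\ell-1}\r^j(\r^{j\top}\g^\ell)\gamma_{t,\ell}
=-\bR^{[0:t-1]}M\gamma_{t,[-1:t]},
\]
where $M_{j\ell}=\r^{j\top}\g^\ell$ for $0\le j<\ell$ and $M_{j\ell}=0$ otherwise. Since $\bR^{[0:t-1]}$ has orthonormal columns, this term has norm at most $\pnorm{M}{\op}\pnorm{\gamma_{t,[-1:t]}}{}$.

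The second term is handled the same way. Write
\[
(P_{\Q^{[-1:\ell]}}\s^\ell)^\top\z^t=\sqrt d\sum_{j=-1}^{\ell}(\q^{j\top}\s^\ell)\gamma_{t,j},
\]
using $\gamma_{t,j}=\q^{j\top}\z^t/\sqrt d$. Then the second term equals $-\bR^{[0:t-1]}N\gamma_{t,[-1:t]}$, where $N_{\ell j}=\q^{j\top}\s^\ell$ for $-1\le j\le\ell$, and its norm is at most $\pnorm{N}{\op}\pnorm{\gamma_{t,[-1:t]}}{}$.

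\emph{$V$-side.} The same manipulation gives the same structure, now with coefficient vector $\lambda_{t,[0:t]}$:
\begin{itemize}
\item the entries $\q^{j\top}\s^\ell$ for $-1\le j\le\ell$ enter through $\Q^{[-1:t]}$;
\item the entries $\r^{j\top}\g^\ell$ for $0\le j<\ell$ enter through $\Q^{[0:t]}$.
\end{itemize}
This identity is deterministic, so the randomness and adaptivity of the coefficients $\gamma,\lambda$ cause no difficulty. All the probabilistic content is in bounding $\pnorm{M}{\op}$ and $\pnorm{N}{\op}$.

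\emph{Law of the triangular arrays.} By \eqref{eq:uv_dependence}, $\r^j$ is measurable with respect to $\cF(\mathcal E,\{\g^i\}_{i\le j},\{\s^i\}_{i\le j-1},\v^0)$. The innovation $\g^\ell$ is independent of this $\sigma$-field whenever $j<\ell$. Conditionally on the past, the column $(\r^{j\top}\g^\ell)_{j<\ell}$ is therefore the projection of a standard Gaussian onto a fixed orthonormal set, hence $\N(0,I_\ell)$. Its conditional law does not depend on the past, so by sequential conditioning (ordering the innovations $\g^{-1},\s^0,\g^0,\s^1,\dots$) the whole array of nonzero entries of $M$ is i.i.d.\ $\N(0,1)$.

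The same reasoning applies to $N$. The vector $\q^j$ with $j\le\ell$ depends only on $\s^0,\dots,\s^{\ell-1}$ (together with the $\g$'s and $\v^0$), so it is independent of $\s^\ell$. The entries $\q^{j\top}\s^\ell$ are thus again i.i.d.\ $\N(0,1)$ on a triangular support. The degenerate Gram--Schmidt convention (completing the basis by a fixed rule) keeps the frame orthonormal and predictable, so it does not affect this argument.

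\emph{Operator norm bound (main obstacle).} The remaining and main step is to bound the operator norm of a $(t+2)\times(t+2)$ Gaussian matrix with triangular mask. Masking does not monotonically control operator norms, so one cannot simply embed the array in a full Gaussian matrix. I would instead invoke the Bandeira--van Handel bound
\[
\E\pnorm{X}{\op}\lesssim\max_i\Big(\sum_j b_{ij}^2\Big)^{1/2}+\max_{ij}|b_{ij}|\sqrt{\log t}
\]
for $X_{ij}=b_{ij}G_{ij}$. Here $b_{ij}\in\{0,1\}$, which gives $\E\pnorm{M}{\op}\lesssim\sqrt{t+1}+\sqrt{\log(t+2)}$.

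The map from the Gaussian array to its operator norm is $1$-Lipschitz in the Frobenius norm. Gaussian concentration then gives $\pnorm{M}{\op}\le C(\sqrt{t+1}+\sqrt{\log n})$ with probability at least $1-n^{-20}$ after choosing $C$ large, and likewise for $N$.

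A union bound over the two arrays on each side yields the stated probability $1-2n^{-20}$. In the $U$-side bound, $\pnorm{\gamma_{t,[-1:t]}}{}$ multiplies both terms, matching the statement. In the $V$-side bound, both terms are multiplied by $\pnorm{\lambda_{t,[0:t]}}{}$.
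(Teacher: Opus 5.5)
Your proposal is correct. The first two steps match the paper's proof exactly: you write each error as an orthonormal frame applied to a random array times the coefficient vector, and you show by sequential conditioning that the entries $\r^{j\top}\g^\ell$ and $\q^{j\top}\s^\ell$ are i.i.d.\ $\N(0,1)$.

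The difference is in the step you flag as the main obstacle, and the paper shows it is not an obstacle. Index rows by the frame index and columns by the coefficient index. Then $M$ is supported strictly above the diagonal, $N$ is supported on or below it, and both act through the same frame $\bR^{[0:t-1]}$ on the same vector $\gamma_{t,[-1:t]}$. Hence
\[
-\Delta_{U,t}^{\low}=\bR^{[0:t-1]}(M+N)\gamma_{t,[-1:t]},
\]
where $M+N$ is a full $t\times(t+2)$ matrix with i.i.d.\ $\N(0,1)$ entries. Your interleaved conditioning order already gives joint independence across the two arrays. Likewise, the two $V$-side arrays tile a full $(t+2)\times(t+1)$ Gaussian matrix. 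The standard rectangular Gaussian operator-norm tail bound then finishes with one event of probability $n^{-20}$ per side, and no masked-matrix inequality is needed.

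Your Bandeira--van Handel route is valid and more modular, since it would still work if the supports did not tile. It does import a heavier tool. For non-symmetric rectangular arrays you need the dilated version, which involves both the maximal row norm and the maximal column norm; here both are at most $\sqrt{t+2}$.

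The premise that a masked array cannot be embedded in a full Gaussian matrix only holds pathwise. For a deterministic $0/1$ mask $B$, conditional Jensen gives $\E\|B\circ G\|_{\op}\le\E\|G\|_{\op}$, because $(1-B)\circ G$ is independent of $B\circ G$ and centered. So even your separate bounds follow from the full-matrix estimate plus Gaussian concentration.

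Finally, your probability count is off. Four events at $n^{-20}$ each give $4n^{-20}$, not $2n^{-20}$. Either enlarge $C$, or note that two events suffice: up to transposition and passing to a submatrix, the $V$-side arrays contain the $U$-side ones.
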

\begin{proof}
Let
\[
g_s^\ell\coloneqq\r^{s\top}\g^\ell
\quad(0\leq s<\ell),
\qquad
s_j^\ell\coloneqq\q^{j\top}\s^\ell
\quad(-1\leq j\leq\ell).
\]
For \(\ell\geq0\), define
\[
\mathcal F_\ell^g
\coloneqq
\cF\left(\mathcal E,\v^0,\{\g^j\}_{j=-1}^{\ell-1},
\{\s^j\}_{j=0}^{\ell-1}\right),
\qquad
\mathcal F_\ell^s
\coloneqq\cF(\mathcal F_\ell^g,\g^\ell).
\]
Equation \eqref{eq:uv_dependence} implies that
\(\r^0,\ldots,\r^{\ell-1}\) are
\(\mathcal F_\ell^g\)-measurable and orthonormal, while
\(\q^{-1},\ldots,\q^\ell\) are
\(\mathcal F_\ell^s\)-measurable and orthonormal.  Independence of the
Gaussian vectors therefore gives
\[
\sP\left(
(g_0^\ell,\ldots,g_{\ell-1}^\ell)\mid\mathcal F_\ell^g
\right)
=\mathcal N(0,\I_\ell),
\qquad
\sP\left(
(s_{-1}^\ell,\ldots,s_\ell^\ell)\mid\mathcal F_\ell^s
\right)
=\mathcal N(0,\I_{\ell+2}).
\]
Neither conditional law depends on the conditioning sigma-field.  Sequential
conditioning in increasing \(\ell\) proves that all
\(g_s^\ell\) and \(s_j^\ell\) appearing below are mutually independent
\(\mathcal N(0,1)\) variables.

For \(t=0\), \(\Delta_{U,0}^{\low}=0\), so the first bound is immediate.
For \(t\geq1\), the definition \eqref{eq:amp_low_d} gives
\begin{align*}
-\Delta^{\low}_{U,t}
&=\sum_{\ell=-1}^t
P_{\bR^{[0:\ell-1]}}\g^\ell\gamma_{t,\ell}
+\sum_{\ell=0}^{t-1}
\frac{\r^\ell(P_{\Q^{[-1:\ell]}}\s^\ell)^\top}{\sqrt d}\z^t\\
&=\sum_{\ell=1}^t\sum_{s=0}^{\ell-1}
\r^s g_s^\ell\gamma_{t,\ell}
+\sum_{\ell=0}^{t-1}\sum_{j=-1}^\ell
\r^\ell s_j^\ell\gamma_{t,j}\\
&=\sum_{\ell=0}^{t-1}\r^\ell
\left(
\sum_{j=\ell+1}^t g_\ell^j\gamma_{t,j}
+\sum_{j=-1}^\ell s_j^\ell\gamma_{t,j}
\right).
\end{align*}
Since $\{\r^\ell\}_{\ell=0}^{t-1}$ are orthonormal,
\begin{align}\label{eq:Delta-low-U}
\pnorm{\Delta^{\low}_{U,t}}{}
&=\left[
\sum_{\ell=0}^{t-1}
\left(
\sum_{j=\ell+1}^t g_\ell^j\gamma_{t,j}
+\sum_{j=-1}^\ell s_j^\ell\gamma_{t,j}
\right)^2
\right]^{1/2}.
\end{align}

Define $\H\in\R^{t\times(t+2)}$ by
\[
H_{\ell j} =
\begin{cases}
g_\ell^j,&j\geq\ell+1,\\
s_j^\ell,&j\leq\ell,
\end{cases}
\]
where $0\leq\ell\leq t-1$ and $-1\leq j\leq t$.  The conditional laws
displayed above show that the entries of \(\H\) are
independent \(\N(0,1)\) variables.  Hence
\begin{align*}
\pnorm{\Delta^{\low}_{U,t}}{}
= \pnorm{\H\gamma_{t, [-1:t]]}}{}
 \le \pnorm{\H}{\op} \cdot \pnorm{\gamma_{t, [-1:t]]}}{}
\le C\Big(\sqrt{t+1}+\sqrt{\log n}\Big)
\pnorm{\gamma_{t,[-1:t]}}{},
\end{align*}
with probability at least $1-n^{-20}$, by the standard Gaussian
operator-norm tail bound.

For \(\Delta_{V,t+1}^{\low}\), \eqref{eq:amp_low_d} gives
\begin{align*}
-\Delta_{V,t+1}^{\low}
&=\sum_{\ell=0}^t
P_{\Q^{[-1:\ell]}}\s^\ell\lambda_{t,\ell}
+\sum_{\ell=0}^t\frac{\q^\ell}{\sqrt d}
(P_{\bR^{[0:\ell-1]}}\g^\ell)^\top\w^t.
\end{align*}
The first term can be equivalently written as
\begin{align*}
\sum_{\ell=0}^tP_{\Q^{[-1:\ell]}}\s^\ell\lambda_{t,\ell}
&=\q^{-1}\sum_{\ell=0}^t s_{-1}^\ell\lambda_{t,\ell}
+\q^0\sum_{\ell=0}^t s_0^\ell\lambda_{t,\ell}+\sum_{j=1}^t\q^j
\sum_{\ell=j}^t s_j^\ell\lambda_{t,\ell}.
\end{align*}
For the second term, note that the summand is $0$ for $\ell = 0$, hence
\begin{align*}
\sum_{\ell=0}^t\frac{\q^\ell}{\sqrt d}
(P_{\bR^{[0:\ell-1]}}\g^\ell)^\top\w^t
&=\sum_{j=1}^t\q^j
\sum_{\ell=0}^{j-1}\lambda_{t,\ell}g_\ell^j.
\end{align*}
Combining the two terms yields
\begin{align*}
-\Delta_{V,t+1}^{\low}
&=\q^{-1}\sum_{\ell=0}^t s_{-1}^\ell\lambda_{t,\ell}
+\q^0\sum_{\ell=0}^t s_0^\ell\lambda_{t,\ell}+\sum_{j=1}^t\q^j
\left(
\sum_{\ell=0}^{j-1}\lambda_{t,\ell}g_\ell^j
+\sum_{\ell=j}^t\lambda_{t,\ell}s_j^\ell
\right).
\end{align*}
Because $\{\q^{-1},\q^0,\ldots,\q^t\}$ is orthonormal, the norm of the
last display equals $\pnorm{\H_V\lambda_{t,[0:t]}}{}$, where
$\H_V\in\R^{(t+2)\times(t+1)}$ has independent $\N(0,1)$ entries: its
rows $-1$ and $0$ contain $s_{-1}^\ell$ and $s_0^\ell$, while for
$1\leq j\leq t$ its $(j,\ell)$ entry is $g_\ell^j$ if $\ell<j$ and
$s_j^\ell$ otherwise.  The Gaussian operator-norm bound gives
\[
\pnorm{\H_V}{\op}
\leq C\big(\sqrt{t+1}+\sqrt{\log n}\big)
\]
with probability at least $1-n^{-20}$, proving the second claim.
\end{proof}

We use the following Bernstein inequality from \cite[Lemma 3]{li2024non}.

\begin{lemma}[Modified Bernstein inequality, \cite{li2024non}]
\label{lem:modified_bernstein}
Suppose $Z_1,\ldots,Z_n$ are independent and centered, and for some $B>0$,
\[
\P\big(|Z_i|>B\log(1/\varepsilon)\big)\leq\varepsilon
\qquad\text{for every }0<\varepsilon<1/\poly(n).
\]
Then, with probability at least $1-\varepsilon$,
\[
\Big|\sum_{i=1}^nZ_i\Big|
\leq C\sqrt{\sum_{i=1}^n
\left[\Var(Z_i)+\left(\frac{B\log n}{n}\right)^2\right]
\log\frac1\varepsilon}
+B\log n\log\frac1\varepsilon.
\]
\end{lemma}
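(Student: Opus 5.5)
The plan is a truncation argument: split each $Z_i$ at a level of order $B\log n$, apply the classical bounded Bernstein inequality to the truncated part, and control the rare large part separately using the hypothesis.

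\textbf{Step 1 (splitting).} Fix a constant $K$ so large that the hypothesis applies at every $\varepsilon'\le n^{-K}$, and set $L\coloneqq KB\log n$. Write
\[
Z_i=X_i+V_i,\qquad X_i\coloneqq Z_i\mathbb 1\{|Z_i|\le L\},\qquad V_i\coloneqq Z_i\mathbb 1\{|Z_i|>L\}.
\]
Since $\E Z_i=0$, centering gives
\[
\sum_i Z_i=\sum_i(X_i-\E X_i)+\sum_i(V_i-\E V_i).
\]

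\textbf{Step 2 (tail of $V_i$).} The hypothesis controls exactly the region $|Z_i|>L$. For $u\ge L$,
\[
\P(|V_i|>u)=\P(|Z_i|>u)\le e^{-u/B}.
\]
Integrating this tail gives
\[
\E|V_i|\lesssim (L+B)e^{-L/B}\lesssim B\log n\cdot n^{-K}\le \frac{B\log n}{n},\qquad \E V_i^2\lesssim \Big(\frac{B\log n}{n}\Big)^2
\]
once $K\ge 2$. The tail bound also shows that $V_i$ has $\psi_1$-norm $O(B)$. This is the step where the restriction $\varepsilon<1/\poly(n)$ in the hypothesis is harmless, because $V_i$ only sees tails beyond $B\log\poly(n)$.

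\textbf{Step 3 (bounded part).} The variables $X_i-\E X_i$ are independent, centered, bounded by $2L$, and satisfy
\[
\Var(X_i)\le \E Z_i^2\mathbb 1\{|Z_i|\le L\}\le \Var(Z_i).
\]
Classical Bernstein then gives, with probability at least $1-\varepsilon/2$,
\[
\Big|\sum_i(X_i-\E X_i)\Big|\le C\sqrt{\textstyle\sum_i\Var(Z_i)\log(1/\varepsilon)}+CB\log n\log(1/\varepsilon).
\]

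\textbf{Step 4 (large part).} Apply the sub-exponential Bernstein inequality to $\sum_i(V_i-\E V_i)$, which has $\psi_1$-norms $O(B)$ and variances at most $(B\log n/n)^2$. With probability at least $1-\varepsilon/2$,
\[
\Big|\sum_i(V_i-\E V_i)\Big|\le C\sqrt{\textstyle\sum_i (B\log n/n)^2\log(1/\varepsilon)}+CB\log(1/\varepsilon).
\]
The last term is absorbed into $B\log n\log(1/\varepsilon)$.

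\textbf{Step 5 (combining).} Add the bounds from Steps 3 and 4 and merge the two square roots using $\sqrt a+\sqrt b\le\sqrt{2(a+b)}$. This yields the stated inequality with total failure probability at most $\varepsilon$.

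\textbf{Main obstacle.} The delicate point is Step 2, namely converting the quantile-type hypothesis, which is valid only for $\varepsilon<1/\poly(n)$, into a genuine $\psi_1$ bound on $V_i$ with constant $O(B)$ uniformly in $i$. I expect the cleanest route is to verify the $\psi_1$ condition directly from $\P(|V_i|>u)\le e^{-u/B}$ for $u\ge L$ and $\P(V_i\ne0)\le n^{-K}$, and to check that the contribution from the region $u<L$ is zero because $V_i$ vanishes there. The truncation level $L=KB\log n$ is chosen so that Step 3 produces $B\log n\log(1/\varepsilon)$ rather than $B\log^2(1/\varepsilon)$.
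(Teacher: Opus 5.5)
The paper gives no proof of this lemma; it imports it from \cite{li2024non}, so your argument has to stand on its own. Steps 1--3 and 5 are sound. Truncating at $L=KB\log n$, with $K$ at least the polynomial exponent in the hypothesis, gives $\P(|Z_i|>u)\le e^{-u/B}$ for $u\ge L$. Classical Bernstein then applies to $X_i-\E X_i$ with $\Var(X_i)\le\E Z_i^2$.

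The gap is Step 4. The standard sub-exponential Bernstein inequality puts $\sum_i\|V_i\|_{\psi_1}^2$ in its quadratic term. That sum can be of order $nB^2$, so the inequality only gives $CB\sqrt{n\log(1/\varepsilon)}+CB\log(1/\varepsilon)$, which is far too large. The hybrid you write, with $\sum_i\Var(V_i)$ in the quadratic term and $\max_i\|V_i\|_{\psi_1}$ in the linear term, does not hold for variables having only those two properties. Here is a counterexample. Let $Y_i=\pm a$ with probability $e^{-a/B}$ each and $0$ otherwise, with $a=B(\log n+2\log\log n)$. Then $\|Y_i\|_{\psi_1}=O(B)$ and $\sum_i\Var(Y_i)=O(B^2)$. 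At $\log(1/\varepsilon)=a/(2CB)$ the hybrid bound equals $a/2+o(a)<a$. Yet a single nonzero summand exceeds it, which happens with probability of order $1/\log^2n$, much larger than $\varepsilon$. So your ``main obstacle'' is misplaced. The $\psi_1$ bound in Step 2 is immediate, but it is not what Step 4 needs.

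What rescues Step 4 is that $V_i$ is both rarely nonzero and exponentially tailed. You should encode both at once in a moment generating function, not separately through $\psi_1$ and the variance. Since $V_i\neq0$ only on $\{|Z_i|>L\}$,
\[
\E\big[e^{|V_i|/(2B)}\mathbf 1\{V_i\neq0\}\big]
\le e^{L/(2B)}\P(|Z_i|>L)+\frac1{2B}\int_L^\infty e^{u/(2B)}\P(|Z_i|>u)\,\mathrm du
\le 2e^{-L/(2B)}=2n^{-K/2}.
\]
Hence $\log\E e^{\pm(V_i-\E V_i)/(2B)}\le 3n^{-K/2}$ for large $n$. A Chernoff bound with $\lambda=\pm1/(2B)$ then gives, with probability at least $1-\varepsilon/2$,
\[
\Big|\sum_{i=1}^n(V_i-\E V_i)\Big|\le 2B\log(4/\varepsilon)+6Bn^{1-K/2}\le CB\log(1/\varepsilon)
\]
once $K\ge2$. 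Equivalently, $\E|V_i|^p\le p!\,(2B)^pn^{-K/2}$. This is Bernstein's moment condition with a variance proxy of order $B^2n^{-K/2}$, not $\Var(V_i)$. With this replacement, Steps 1--5 prove the lemma.

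One minor point remains. Classical Bernstein in Step 3 puts a constant of order $K$ in front of $B\log n\log(1/\varepsilon)$, whereas the display has coefficient one. This does not matter for any use in the paper, where constants are absorbed. If you want the literal form, run Step 3 with Bennett's inequality. It lets you make that coefficient as small as you like, at the cost of a larger $C$ in the square-root term.
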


\begin{lemma}[Bounds for the sums of \(A_t^\ell\) and \(B_t^\ell\)]
\label{lem:error_main_lemma}
Fix $t\leq C\sqrt d/\log^3n$.  With probability $1-O(n^{-12})$, the following
implications hold.  When $t\geq1$, on $\cA_{t-1}^V$,
\begin{align*}
\Bigpnorm{\sum_{\ell=0}^{t-1}\sqrt{d}\,\r^\ell A_t^\ell}{}
\leq C\big(\bar\gamma_t + \bar\lambda_{t-1}\big)
\big(\pnorm{f_{[0:t]}'}{\infty}\vee 1\big)^2
(\pnorm{\rho_{[0:t]}}{\infty}\vee 1)
t\sqrt{\frac{\log n}{d}}.
\end{align*}
In all cases, on $\cA_t^U$,
\begin{align*}
\Bigpnorm{\sum_{\ell=0}^t \sqrt{d}\,\q^\ell B_t^\ell}{}
\leq C(1+\sqrt\delta)\big(\bar\lambda_t + \bar\gamma_t\big)
\big(\pnorm{h_{[0:t]}'}{\infty}\vee 1\big)^2
\big(\pnorm{\gamma_{[0:t],-1}}{\infty}\vee 1\big)
(t+1)\sqrt{\frac{\log n}{d}}.
\end{align*}
\end{lemma}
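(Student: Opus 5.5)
Since the $\r^\ell$ are orthonormal and the $\q^\ell$ are orthonormal, the two norms equal $\sqrt d\,\pnorm{A_t^{[0:t-1]}}{}$ and $\sqrt d\,\pnorm{B_t^{[0:t]}}{}$. It therefore suffices to show that each coefficient $A_t^\ell$, $B_t^\ell$ is $O(\sqrt{\log n}/d)$ times the stated prefactor. Summing $t$ squares then gives a factor $\sqrt t$, which the stated factor $t$ absorbs. The plan is a Stein/Gaussian-integration-by-parts identity for the principal term $\frac1d\iprod{\s^\ell}{f_t(\hat\v^t)}$, combined with uniform concentration over the adaptive coefficients. We treat the $B$-side; the $A$-side is symmetric, with $(\g,h,\gamma,\beta)$ replaced by $(\s,f,\lambda,\alpha)$.

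\textbf{Freezing the adaptive quantities.} First we remove the dependence of $\hat\u^t$ on the data. By definition,
\[
\hat\u^t=\sum_{j=-1}^t\gamma_{t,j}\g^j+\sum_{k<t}\alpha_{t-1}^k h_k(\tilde\u^k,\y),
\]
and $\tilde\u^k=\sum_{j\le k}\gamma_{k,j}\g^j$. Hence $\hat\u^t=\Phi(\g^{-1},\dots,\g^t;\,\Gamma,\alpha,\vartheta)$ for a deterministic map $\Phi$. Here $\Gamma=(\gamma_{k,j})_{k\le t}$, $\alpha$ collects the correction coefficients, and $\vartheta$ the nonlinearity parameters. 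This map acts coordinatewise in $i\in[n]$, because each $h_k$ is entrywise and $y_i=\varphi(g^{-1}_i)$. On $\cA_t^U$ all these parameters are bounded by $Cn^5$ (see \eqref{eq:gamma_main_bound}, \eqref{eq:cond_parameter_h}), and \eqref{eq:parameter_cover_assumption} bounds their total number by $O(t^2)$. For a fixed deterministic parameter value, the summands
\[
Z_i=g_i^\ell\,h_t(\hat u_i,y_i)-\partial_{g^\ell_i}h_t(\hat u_i,y_i)
\]
are independent and centered by Stein's lemma. The derivative is
\[
\partial_{g^\ell_i}h_t(\hat u_i,y_i)=h_t'(\hat u_i,y_i)\Big(\gamma_{t,\ell}+\sum_{k=\ell}^{t-1}\alpha_{t-1}^k h_k'(\tilde u^k_i,y_i)\gamma_{k,\ell}\Big).
\]
Averaging this over $i$ and using $n/d=\delta$ reproduces the correction $\delta\av{h_t'(\hat\u^t,\y)}\gamma_{t,\ell}+\sum_k\beta_t^k\gamma_{k,\ell}$, with the off-diagonal $\beta_t^k$ from \eqref{eq:beta_correct_def}. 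The diagonal $k=t$ term appears with index range $k\le t-1$ in $B_t^\ell$, so it requires no matching.

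\textbf{Concentration and discretization.} The modified Bernstein inequality (Lemma~\ref{lem:modified_bernstein}) then bounds $\frac1d\sum_iZ_i$ by
\[
C\sqrt\delta\,(\pnorm{h'}{\infty}\vee1)^2(\pnorm{\gamma_{[0:t],-1}}{\infty}\vee1)\,\bar\gamma\sqrt{\log n}/d.
\]
Here $\Var(Z_i)\lesssim \pnorm{h_t(\hat u_i,y_i)}{}^2+\pnorm{h'}{\infty}^2$, and the tails come from Gaussian and $\y$ envelopes. The factor $(\bar\lambda_t+\bar\gamma_t)$ enters through $|h_t(\hat u_i)|\le |h_t(0,y_i)|+\pnorm{h'}{\infty}|\hat u_i|$. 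After this, a net over the $O(t^2)$-dimensional parameter box of radius $n^5$ is taken at mesh $n^{-C}$. Its log-cardinality is $O(t^2\log n)$. The Lipschitz assumption \eqref{eq:parameter_lipschitz_assumption} transfers the bound off the net, and a union bound over the net, over $\ell\le t$, and over the two sides gives probability $1-O(n^{-12})$.

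\textbf{Main obstacle.} The hard step is reconciling the union-bound entropy $t^2\log n$ with the claimed $\sqrt{\log n}$ per coefficient. A naive union bound yields $t\sqrt{\log n}/d$ per coefficient, which would give $t^{3/2}$ overall rather than $t$. I would therefore exploit the predictability \eqref{eq:parameter_nonanticipation}. Only $\gamma_{t,\cdot}$, $\vartheta_t$ and $\alpha_{t-1}$ are "current", while older coefficients are measurable before $\g^\ell$. So I would condition sequentially on $\mathcal F^g_\ell$ and apply Stein's lemma only in the fresh coordinate $\g^\ell$. This should reduce the union bound to the $O(t)$ current parameters, giving $\sqrt{t\log n}/d$ per coefficient and hence a total of order $t\sqrt{\log n/d}$. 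That total matches the statement, with the restriction $t\lesssim\sqrt d/\log^3n$ keeping the net-approximation error negligible.
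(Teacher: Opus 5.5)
Your skeleton (orthonormal duality, freezing all adaptive coefficients, Stein's identity reproducing the corrections $\delta\av{h_t'(\hat\u^t,\y)}\gamma_{t,\ell}+\sum_k\beta_t^k\gamma_{k,\ell}$, modified Bernstein, and a polynomial-mesh net with Lipschitz extension) is the paper's. However, your resolution of the ``main obstacle'' does not work.

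\textbf{The predictability step fails.} For coordinate $\ell$, every row $\gamma_{k,[0:k]}$ with $k>\ell$ depends on $\g^\ell$ through $\v^k$. So do $\vartheta_k$ for $k\geq\ell$ and the entire row $\alpha_{t-1}$. After conditioning on $\mathcal F_\ell^g$, roughly $(t-\ell)t/2$ coefficients remain unfrozen, which is $\Theta(t^2)$ for small $\ell$. Predictability therefore neither reduces the union bound to $O(t)$ parameters nor lets you apply Stein in $\g^\ell$ alone.

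\textbf{The fix matches the paper.} The obstacle comes only from bounding coordinates separately and summing squares. Use the duality you already wrote, $\pnorm{B_t^{[0:t]}}{}=\sup_{\omega\in\mathbb S^t}\sum_\ell\omega_\ell B_t^\ell$, and put $\omega$ into the same net as the coefficients. The Stein centering holds verbatim for the combined summands built from $G_i^\omega=\sum_\ell\omega_\ell g_i^\ell$. The enlarged net still has log-cardinality $O(t^2\log n)$. A single union bound then bounds the whole vector by the stated prefactor times $t\sqrt{\log n}/d$, with no extra $\sqrt t$. This is exactly how the paper argues on both sides.

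\textbf{Two further points are glossed over.}

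First, at per-point failure probability $e^{-Ct^2\log n}$, the bounded-summand term of Lemma~\ref{lem:modified_bernstein} is $B\,t^2\log^2 n$ for an envelope $B$. It must be dominated by the variance term. With $B$ of order $d^{-3/2}$ times the prefactor, the ratio is $O(t\log^{3/2}n/\sqrt d)$, and \emph{this} is where $t\leq C\sqrt d/\log^3 n$ is used. The net-approximation error is instead handled by the $n^{-C}$ mesh and polynomial Lipschitz bounds.

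Second, such an envelope is not available for your direct summands in the lemma's generality. The $A$-side is not symmetric to the $B$-side, where $\bbeta^\star=\g^{-1}$ is Gaussian. Both $\btheta^\star$ and $\v^0$ are arbitrary, and nothing controls $\pnorm{\btheta^\star}{\infty}$. For instance, with $\btheta^\star=\sqrt d\,\e_1$, the summand $s_1^\ell f_t(\hat v_1^t)/d$ has envelope of order $\pnorm{\rho_{[0:t]}}{\infty}/\sqrt d$ rather than $1/d$. When $\bar\gamma_t\asymp\sqrt d$, the bounded term is then of order $t^2\log^2 n/\sqrt d$, against a variance term of order $t\sqrt{\log n}/\sqrt d$. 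On the $B$-side, the entries of $h_t(\bm 0_n,\y)$ are likewise uncontrolled in general.

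The paper avoids this by splitting off a baseline: $\b_t$ built only from $\v^0$ and $\rho_k\btheta^\star$ on the $A$-side, and $\b_t^h$ built from $\gamma_{k,-1}\bbeta^\star$ on the $B$-side. With $\bS=[\s^0,\ldots,\s^{t-1}]$, the term $\frac1d\iprod{\bS\omega}{f_t(\b_t)}$ is exactly Gaussian for frozen parameters and needs no envelope. The remainder $f_t(\hat\v^t)-f_t(\b_t)$ involves only Gaussian increments of variance at most $\bar\lambda_{t-1}^2/d$. That gives the $d^{-3/2}$ envelope, and Gaussian Poincar\'e controls its variance.
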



\begin{proof}[Proof of Lemma \ref{lem:error_main_lemma}]
\medskip
\noindent\emph{Step 1: duality and centering.}
Set
\[
L_t\coloneqq \pnorm{f_{[0:t]}'}{\infty}\vee 1,
\qquad
R_t\coloneqq \pnorm{\rho_{[0:t]}}{\infty}\vee 1,
\qquad
K_t\coloneqq (\bar\gamma_t+\bar\lambda_{t-1})L_t^2R_t.
\]
Because $\{\r^\ell\}_{\ell=0}^{t-1}$ is orthonormal, duality gives
\begin{equation}\label{eq:error-main-duality}
\Bigpnorm{\sum_{\ell=0}^{t-1}\sqrt d\,\r^\ell A_t^\ell}{}
=\sqrt d\,\pnorm{A_t^{[0:t-1]}}{}
=\sqrt d\sup_{\omega\in\sS^{t-1}}
\sum_{\ell=0}^{t-1}\omega_\ell A_t^\ell.
\end{equation}
We therefore fix $\omega\in\sS^{t-1}$ and bound the last display uniformly
over the coefficients satisfying \eqref{eqs:AtV}.

Using the definition of $A_t^\ell$ and
\(
\alpha_{t-1}^{k-1}
=\beta_{t-1}^k\av{f_t'(\hat\v^t)\circ f_k'(\tilde\v^k)}
\), we obtain
\[
\m^0\coloneqq\v^0,
\qquad
\m^k\coloneqq\rho_k\btheta^\star\quad(1\leq k\leq t),
\qquad
\b_t\coloneqq\rho_t\btheta^\star
+\sum_{k=0}^{t-1}\beta_{t-1}^k
\big(f_k(\m^k)-\gamma_{k,-1}\btheta^\star\big).
\]
With the parameters in \eqref{def:Theta_uniform} fixed, \(\bm b_t\) is
\(\cF(\bm v^0)\)-measurable, whereas
\(\{\bm s^\ell\}_{\ell=0}^{t-1}\) is independent of
\(\cF(\bm v^0)\).  Hence
\begin{equation}\label{eq:error-main-coordinate-decomposition}
\sum_{\ell=0}^{t-1}\omega_\ell A_t^\ell
=\sum_{j=1}^d M_j,
\qquad M_j=W_j+X_j+Y_j+Z_j,
\end{equation}
where, writing $(\bS\omega)_j\coloneqq
\sum_{\ell=0}^{t-1}\omega_\ell s_j^\ell$,
\begin{align*}
W_j&\coloneqq \frac1d(\bS\omega)_j[f_t(\b_t)]_j,\\
X_j&\coloneqq \frac1d(\bS\omega)_j
\left\{f_t(\hat v_j^t)-[f_t(\b_t)]_j\right\},\\
Y_j&\coloneqq-\frac1d f_t'(\hat v_j^t)
\sum_{\ell=0}^{t-1}\lambda_{t-1,\ell}\omega_\ell,\\
Z_j&\coloneqq-\frac1d\sum_{k=1}^{t-1}\beta_{t-1}^k
f_t'(\hat v_j^t)f_k'(\tilde v_j^k)
\sum_{\ell=0}^{k-1}\lambda_{k-1,\ell}\omega_\ell.
\end{align*}
This choice of $W_j$ makes its second factor independent of
$\{s_j^\ell\}_{\ell=0}^{t-1}$ conditional on $\v^0$.

For the centering in \eqref{eq:error-main-coordinate-decomposition},
let $\E_{\s}$ denote expectation with respect to the Gaussian vectors
$\{\s^\ell\}_{\ell=0}^{t-1}$, with the coefficient values in
\eqref{def:Theta_uniform} fixed.
Then $\E_{\s}W_j=0$.  Moreover,
\[
\partial_{s_j^\ell}\hat v_j^t
=\lambda_{t-1,\ell}
+\sum_{k=\ell+1}^{t-1}\beta_{t-1}^k
f_k'(\tilde v_j^k)\lambda_{k-1,\ell}.
\]
Gaussian integration by parts consequently yields
\begin{align*}
\E_{\s}X_j
&=\frac1d\sum_{\ell=0}^{t-1}\omega_\ell
\E_{\s}\big[s_j^\ell f_t(\hat v_j^t)\big]\\
&=\frac1d\E_{\s}\big[f_t'(\hat v_j^t)\big]
\sum_{\ell=0}^{t-1}\lambda_{t-1,\ell}\omega_\ell\\
&\quad+\frac1d\sum_{k=1}^{t-1}\beta_{t-1}^k
\E_{\s}\big[f_t'(\hat v_j^t)f_k'(\tilde v_j^k)\big]
\sum_{\ell=0}^{k-1}\lambda_{k-1,\ell}\omega_\ell
=-\E_{\s}(Y_j+Z_j).
\end{align*}
Thus $\E_{\s}M_j=0$ for every $j$.

\medskip
\noindent\emph{Step 2: concentration on a finite net.}
Localize
$\bar\gamma_t$, $\bar\lambda_{t-1}$, \(L_t\), and $R_t$ to dyadic
intervals, and let
$\mathcal I_t$ be the corresponding deterministic parameter set for
\begin{equation}\label{def:Theta_uniform}
\Theta_t\coloneqq
\left(\omega,
\{\lambda_{k-1,[-1:k-1]}\}_{k=1}^t,
\beta_{t-1}^{[0:t-1]},
\gamma_{[0:t-1],-1},\xi_{[0:t]}\right).
\end{equation}
The vector \(\xi_{[0:t]}\) is defined in
\eqref{eq:parameterized_nonlinearities}, and its dimension and norm satisfy
\eqref{eq:parameter_cover_assumption} and \eqref{eq:cond_parameter_f}.
The remaining coordinates satisfy the bounds in $\cA_{t-1}^V$.
Equip $\mathcal I_t$ with its ambient product Euclidean
metric.  The full triangular coefficient arrays contain $O((t+1)^2)$ scalar
entries.  Standard Euclidean nets therefore give an $\varepsilon$-net
$\mathcal N_\varepsilon$ of $\mathcal I_t$, where
$\varepsilon=n^{-C_\varepsilon}$ for a sufficiently large universal constant
$C_\varepsilon$, such that
\begin{equation}\label{eq:error-main-net-size}
|\mathcal N_\varepsilon|\leq \exp(C(t+1)^2\log n).
\end{equation}
The number of nonempty dyadic shells is polynomial in $\log n$, so summing
their failure probabilities only changes the implicit constant.

To control the norms related to $\b_t$, we further define
\begin{align}
\mathcal G_t^f
\coloneqq
\left\{
\pnorm{\bS}{\op}\leq C(\sqrt d+\sqrt t),\
\pnorm{\bm v^0}{}\leq C\sqrt d
\right\}.
\label{eq:error-main-f-gradient-event}
\end{align}
Gaussian operator-norm and norm concentration give
\begin{align}
\mathbb P((\mathcal G_t^f)^c)&\leq2n^{-20}.
\label{eq:error-main-f-gradient-event-probability}
\end{align}
In the estimates through Step 2 only the second component of
\(\mathcal G_t^f\) is needed; the operator-norm component is used in the
net extension in Step 3.  Conditional on every
\(\bm v^0\) with \(\|\bm v^0\|\leq C\sqrt d\), the Gaussian concentration
below is uniform in that realization.

On $\cA_{t-1}^V\cap\mathcal G_t^f$,
\eqref{def:induction_constant}, the Lipschitz bound, and
\eqref{eq:cond_beta_sum} give
\begin{align}
\pnorm{\b_t}{}
&\leq C(\bar\gamma_t+\bar\lambda_{t-1})L_tR_t,\\
\pnorm{f_t(\b_t)}{}&\leq CK_t.
\label{eq:error-main-baseline-bounds}
\end{align}
For each fixed net point, $\sum_jW_j$ is therefore centered Gaussian with
\begin{equation}\label{eq:error-main-W-variance}
\Var_{\s}\Big(\sum_{j=1}^dW_j\Big)
\leq \frac{CK_t^2}{d^2}.
\end{equation}
The Gaussian maximal inequality and \eqref{eq:error-main-net-size} imply
\begin{equation}\label{eq:error-main-W-bound}
\max_{\Theta_t\in\mathcal N_\varepsilon}
\Big|\sum_{j=1}^dW_j\Big|
\leq \frac{CK_t t\sqrt{\log n}}{d}
\end{equation}
with probability $1-O(n^{-12})$.

For \(X_j\), put
\(
F_j=(\bS\omega)_j f_t(\hat v_j^t)
\), so that $d^{-1}F_j=W_j+X_j$.  Gaussian Poincar\'e gives
\begin{align*}
\Var_{\s}\Big(\sum_{j=1}^d(W_j+X_j)\Big)
&\leq \frac1{d^2}\sum_{j=1}^d\sum_{\ell=0}^{t-1}
\E_{\s}\big[(\partial_{s_j^\ell}F_j)^2\big],\\
\partial_{s_j^\ell}F_j
&=\omega_\ell f_t(\hat v_j^t)
+(\bS\omega)_j f_t'(\hat v_j^t)
\left(\lambda_{t-1,\ell}
+\sum_{k=\ell+1}^{t-1}\beta_{t-1}^k
f_k'(\tilde v_j^k)\lambda_{k-1,\ell}\right).
\end{align*}
By Cauchy--Schwarz and the inequalities defining
\(\cA_{t-1}^V\) in \eqref{eqs:AtV},
\begin{align}
&\sum_{\ell=0}^{t-1}
\left(\sum_{k=\ell+1}^{t-1}\beta_{t-1}^k
f_k'(\tilde v_j^k)\lambda_{k-1,\ell}\right)^2\notag\\
&\qquad\leq L_t^2
\Big(\sum_{k=1}^{t-1}|\beta_{t-1}^k|\Big)^2
\max_{0\leq k\leq t-1}\pnorm{\lambda_{k,[0:k]}}{}^2
\leq \frac{C\bar\lambda_{t-1}^2}{d}.
\label{eq:error-main-coefficient-bound}
\end{align}
To bound the remaining term, Minkowski's inequality in \(L^2\) and
\eqref{eqs:AtV} give
\[
\left(\E_{\s}\pnorm{\hat\v^t-\b_t}{}^2\right)^{1/2}
\leq \bar\lambda_{t-1}
+L_t\bar\lambda_{t-1}\sum_{k=0}^{t-1}|\beta_{t-1}^k|
\leq C\bar\lambda_{t-1}.
\]
Together with \eqref{eq:error-main-baseline-bounds}, this implies
\begin{equation}\label{eq:error-main-f-moment}
\sum_{j=1}^d\E_{\s}\big[f_t(\hat v_j^t)^2\big]
\leq CK_t^2.
\end{equation}
Substitution into the Poincar\'e bound, followed by
$\Var(U-V)\leq2\Var(U)+2\Var(V)$ and
\eqref{eq:error-main-W-variance}, yields
\begin{equation}\label{eq:var_X_j}
\Var_{\s}\Big(\sum_{j=1}^dX_j\Big)
\leq \frac{CK_t^2}{d^2}.
\end{equation}

For the tail envelope required by Lemma \ref{lem:modified_bernstein}, observe
that
\begin{align*}
|X_j|
&\leq \frac{L_t}{d}|(\bS\omega)_j|
\left(|\tilde v_j^t-m_j^t|
+\sum_{k=0}^{t-1}|\beta_{t-1}^k|L_t
|\tilde v_j^k-m_j^k|\right).
\end{align*}
The $k=0$ difference vanishes; every other difference is centered Gaussian
with variance at most $\bar\lambda_{t-1}^2/d$.  Hence, for every
$0<\eta<1/\poly(n)$,
\begin{equation}\label{eq:error-main-X-envelope}
\P_{\s}\left(|X_j|>
\frac{CL_t^2\bar\lambda_{t-1}}{d^{3/2}}
\log\frac1\eta\right)\leq \eta.
\end{equation}
Applying Lemma \ref{lem:modified_bernstein} with
$B=CL_t^2\bar\lambda_{t-1}/d^{3/2}$ and using
\eqref{eq:var_X_j}, we find that, for each fixed parameter value,
\begin{align}
\Big|\sum_{j=1}^d(X_j-\E_{\s}X_j)\Big|
&\leq \frac{CK_t}{d}\sqrt{\log\frac1\eta}
+\frac{CL_t^2\bar\lambda_{t-1}\log n}{d^{3/2}}
\log\frac1\eta
\label{eq:error-main-X-fixed}
\end{align}
with probability at least $1-\eta$.

The remaining two terms are simpler.  Uniformly over
\(\Theta_t\in\mathcal I_t\),
\begin{equation}\label{eq:error-main-YZ-envelope}
|Y_j|\leq \frac{CL_t\bar\lambda_{t-1}}{d^{3/2}},
\qquad
|Z_j|\leq \frac{CL_t^2\bar\lambda_{t-1}}{d^{3/2}}.
\end{equation}
Their coordinatewise independence, Lemma \ref{lem:modified_bernstein}, and a
union bound over $\mathcal N_\varepsilon$ therefore imply
\begin{equation}\label{eq:error-main-XYZ-net}
\max_{\Theta_t\in\mathcal N_\varepsilon}
\Big|\sum_{j=1}^d
\big[(X_j+Y_j+Z_j)-\E_{\s}(X_j+Y_j+Z_j)\big]\Big|
\leq \frac{CK_t t\sqrt{\log n}}{d}
\end{equation}
with probability $1-O(n^{-12})$.  Indeed, in
\eqref{eq:error-main-X-fixed} one takes
$\eta=\exp(-C_0t^2\log n)$.  Since
\(K_t\geq\bar\lambda_{t-1}L_t^2\),
\begin{align*}
\frac{
L_t^2\bar\lambda_{t-1}d^{-3/2}
\log n\log(1/\eta)}
{K_td^{-1}\sqrt{\log(1/\eta)}}
&\leq
C\frac{t\log^{3/2}n}{\sqrt d}
\leq C\log^{-3/2}n.
\end{align*}
Thus the second term in \eqref{eq:error-main-X-fixed} is no larger than
the first.

\medskip
\noindent\emph{Step 3: extension from the net.}
On a dyadic shell, \eqref{eqs:AtV} bounds every coefficient and every
parameter norm by \(Cn^5\).  For \(1\leq k\leq t\), the coordinate
formulas
\begin{align*}
\widetilde v_j^k
=\lambda_{k-1,-1}\theta_j^\star
+\sum_{\ell=0}^{k-1}\lambda_{k-1,\ell}s_j^\ell,
\qquad
m_j^k=\lambda_{k-1,-1}\theta_j^\star
\end{align*}
give, on \(\mathcal G_t^f\),
\begin{align}
\|\nabla_\Theta\widetilde v_j^k\|
&\leq
\left\{|\theta_j^\star|^2
+\sum_{\ell=0}^{k-1}|s_j^\ell|^2\right\}^{1/2}
\leq
\|\boldsymbol\theta^\star\|+\|\bm S\|_{\rm op}
\leq C\sqrt d,
\notag\\
\|\nabla_\Theta m_j^k\|
&\leq|\theta_j^\star|
\leq\sqrt d.
\label{eq:error-main-f-input-gradients}
\end{align}
The \(k=0\) coordinates
\(\widetilde v_j^0=m_j^0=v_j^0\) have zero parameter gradient.  At every
point of differentiability, \eqref{eq:parameter_lipschitz_assumption}
and \eqref{eq:cond_derivative_linfty_f} imply, for
\(x=x(\Theta)\),
\begin{align}
|\mathsf f_k(x;\xi_k)|
&\leq
|\mathsf f_k(0;\xi_k)|+L_t|x|,
\notag\\
\|\nabla_\Theta\mathsf f_k(x;\xi_k)\|
&\leq
L_t\|\nabla_\Theta x\|
+n^{C_{\rm par}}(1+|x|^{C_{\rm par}}).
\label{eq:error-main-f-chain-rule}
\end{align}
Set
\[
r_{k,j}(x)
\coloneqq
\mathsf f_k(x;\xi_k)-\gamma_{k,-1}\theta_j^\star.
\]
Equations \eqref{def:induction_constant},
\eqref{eq:error-main-f-input-gradients}, and
\eqref{eq:error-main-f-chain-rule} show that, for
\(x\in\{\widetilde v_j^k,m_j^k\}\),
\begin{align}
|x|+|r_{k,j}(x)|+\|\nabla_\Theta r_{k,j}(x)\|
&\leq n^{C_1}.
\label{eq:error-main-f-nonlinear-gradient}
\end{align}
The exact formulas for the remaining two inputs yield
\begin{align}
\|\nabla_\Theta\widehat v_j^t\|
&\leq
\|\nabla_\Theta\widetilde v_j^t\|
+\left\{\sum_{k=0}^{t-1}
|r_{k,j}(\widetilde v_j^k)|^2\right\}^{1/2}+
\sum_{k=0}^{t-1}|\beta_{t-1}^k|
\|\nabla_\Theta r_{k,j}(\widetilde v_j^k)\|,
\notag\\
\|\nabla_\Theta(\bm b_t)_j\|
&\leq
|\theta_j^\star|
+\left\{\sum_{k=0}^{t-1}|r_{k,j}(m_j^k)|^2\right\}^{1/2}
+
\sum_{k=0}^{t-1}|\beta_{t-1}^k|
\|\nabla_\Theta r_{k,j}(m_j^k)\|.
\label{eq:error-main-f-corrected-input-gradients}
\end{align}
Thus \eqref{eq:cond_beta_sum},
\(t+1\leq C\sqrt d/\log^3n\), and
\eqref{eq:error-main-f-nonlinear-gradient} give
\begin{gather}
\sup_{\Theta\in\mathcal I_t}
\max_{1\leq j\leq d}
\left\{
|\hat v_j^t|+|(\bm b_t)_j|
+\|\nabla_\Theta\hat v_j^t\|
+\|\nabla_\Theta(\bm b_t)_j\|
\right\}
\leq n^{C_1},
\notag\\
\sup_{\Theta\in\mathcal I_t}
\max_{\substack{0\leq k\leq t\\1\leq j\leq d}}
\left\{
|\widetilde v_j^k|+\|\nabla_\Theta\widetilde v_j^k\|
\right\}
\leq n^{C_1},
\notag\\
\sup_{\Theta\in\mathcal I_t}
\left\|
\nabla_\Theta\sum_{j=1}^dM_j(\Theta)
\right\|
\leq
\frac{n^{C_1}}d
\sum_{j=1}^d\{1+|(\bm S\omega)_j|\}
\leq n^{C_1}.
\label{eq:error-main-f-parameter-gradient}
\end{gather}
The last line follows by differentiating the formulas for
\(W_j,X_j,Y_j,Z_j\), applying
\eqref{eq:error-main-f-chain-rule} and its \(j=1\) analogue to
\(f_k,f_k'\), and using
\[
\sum_{j=1}^d|(\bm S\omega)_j|
\leq\sqrt d\,\|\bm S\omega\|
\leq Cd.
\]
The centering proved in \eqref{eq:error-main-coordinate-decomposition}
holds for every deterministic \(\Theta\), so
\begin{align}
\sup_{\Theta\in\mathcal I_t}
\left\|
\nabla_\Theta\sum_{j=1}^d\mathbb E_{\bm s}M_j(\Theta)
\right\|
&=0.
\label{eq:error-main-f-population-gradient}
\end{align}
The Lipschitz inequalities extend from differentiability points to the
whole shell by continuity.  If $\Theta_t'$ is the closest point in
$\mathcal N_\varepsilon$ to $\Theta_t$, choosing
$C_\varepsilon\geq C_1+20$ gives
\begin{align}\label{eq:error-main-net-extension}
&\left|\sum_{j=1}^d\big(M_j(\Theta_t)-\E_{\s}M_j(\Theta_t)\big)
-\sum_{j=1}^d\big(M_j(\Theta_t')-\E_{\s}M_j(\Theta_t')\big)\right|\\
&\qquad\leq 2n^{C_1}\pnorm{\Theta_t-\Theta_t'}{}
\leq 2n^{-20}.
\end{align}
Combining
\eqref{eq:error-main-W-bound} and \eqref{eq:error-main-XYZ-net}, and then
summing over the dyadic shells, gives
\begin{equation}\label{eq:error-main-unscaled-conclusion}
\sup_{\omega\in\sS^{t-1}}
\Big|\sum_{\ell=0}^{t-1}\omega_\ell A_t^\ell\Big|
\leq \frac{CK_t t\sqrt{\log n}}{d}
\end{equation}
with probability \(1-O(n^{-12})\); the complement of
\(\mathcal G_t^f\) contributes at most \(2n^{-20}\) by
\eqref{eq:error-main-f-gradient-event-probability}.

Multiplying \eqref{eq:error-main-unscaled-conclusion} by $\sqrt d$ in
\eqref{eq:error-main-duality} proves the first assertion.

\medskip
\noindent\emph{Step 4: the sum
\(\sum_{\ell=0}^t\sqrt d\,\bm q^\ell B_t^\ell\).}
Put
\[
L_t^h\coloneqq\pnorm{h_{[0:t]}'}{\infty}\vee1,
\qquad
R_t^h\coloneqq
\pnorm{\gamma_{[0:t],-1}}{\infty}\vee1,
\qquad
K_t^h\coloneqq
(1+\sqrt\delta)(\bar\lambda_t+\bar\gamma_t)(L_t^h)^2R_t^h.
\]
Because \(\q^0,\ldots,\q^t\) are orthonormal,
\begin{align}\label{eq:error-main-H-duality}
\Bigpnorm{\sum_{\ell=0}^t\sqrt d\,\q^\ell B_t^\ell}{}
&=\sqrt d\sup_{\omega\in\sS^t}
\sum_{\ell=0}^t\omega_\ell B_t^\ell.
\end{align}
We condition on \((\bbeta^\star,\y)\) and fix the complete deterministic
parameter tuple
\begin{align}\label{eq:error-main-H-parameters}
\Theta_t^h
\coloneqq\left(
\omega,\{\gamma_{k,[0:k]}\}_{k=0}^t,
\alpha_{t-1}^{[0:t-1]},
\gamma_{[0:t],-1},\vartheta_{[0:t]}
\right).
\end{align}
For \(t=0\), the \(\alpha\)-row in this display is empty.  The triangular
array \(\{\gamma_{k,[0:k]}\}_{k=0}^t\) contains
\((t+1)(t+2)/2\) real coordinates; thus the full tuple has
\(O((t+1)^2)\) coordinates.

For \(0\leq k\leq t\), set
\[
\bm m^k\coloneqq\gamma_{k,-1}\bbeta^\star,
\qquad
\bm b_t^h\coloneqq
\gamma_{t,-1}\bbeta^\star
+\sum_{k=0}^{t-1}\alpha_{t-1}^k
h_k(\bm m^k,\y).
\]
With \(G_i^\omega\coloneqq\sum_{\ell=0}^t\omega_\ell g_i^\ell\),
the definition of \(B_t^\ell\) and the first case of
\eqref{eq:beta_correct_def} give the coordinate decomposition
\begin{align}\label{eq:error-main-H-coordinate-decomposition}
\sum_{\ell=0}^t\omega_\ell B_t^\ell
=\sum_{i=1}^nM_i^h,
\qquad
M_i^h =W_i^h+X_i^h+Y_i^h+Z_i^h,
\end{align}
where
\begin{align*}
W_i^h
&\coloneqq\frac1dG_i^\omega h_t(b_{t,i}^h,y_i),\\
X_i^h
&\coloneqq\frac1dG_i^\omega
\{h_t(\hat u_i^t,y_i)-h_t(b_{t,i}^h,y_i)\},\\
Y_i^h
&\coloneqq-\frac1d h_t'(\hat u_i^t,y_i)
\sum_{\ell=0}^t\gamma_{t,\ell}\omega_\ell,\\
Z_i^h
&\coloneqq-\frac1d\sum_{k=0}^{t-1}\alpha_{t-1}^k
h_t'(\hat u_i^t,y_i)h_k'(\tilde u_i^k,y_i)
\sum_{\ell=0}^k\gamma_{k,\ell}\omega_\ell.
\end{align*}
For the fixed tuple \(\Theta_t^h\),
\(\bm b_t^h\) is independent of
\(\{\g^\ell\}_{\ell=0}^t\).  Moreover,
\begin{align}\label{eq:error-main-H-coordinate-derivative}
\partial_{g_i^\ell}\hat u_i^t
&=\gamma_{t,\ell}
+\sum_{k=\ell}^{t-1}\alpha_{t-1}^k
h_k'(\tilde u_i^k,y_i)\gamma_{k,\ell}.
\end{align}
Thus \(\mathbb E_{\g}W_i^h=0\), and Gaussian integration by parts,
followed by \eqref{eq:error-main-H-coordinate-derivative}, gives
\begin{align*}
\mathbb E_{\g}X_i^h
&=\frac1d\mathbb E_{\g}[h_t'(\hat u_i^t,y_i)]
\sum_{\ell=0}^t\gamma_{t,\ell}\omega_\ell+\frac1d\sum_{k=0}^{t-1}\alpha_{t-1}^k
\mathbb E_{\g}\!\left[
h_t'(\hat u_i^t,y_i)h_k'(\tilde u_i^k,y_i)
\right]
\sum_{\ell=0}^k\gamma_{k,\ell}\omega_\ell\\
&=-\mathbb E_{\g}(Y_i^h+Z_i^h).
\end{align*}
Consequently, \(\mathbb E_{\g}M_i^h=0\) for every \(i\).

The bounds in \(\cA_t^U\) imply
\begin{align*}
\pnorm{\bm b_t^h}{}
&\leq
|\gamma_{t,-1}|\pnorm{\bbeta^\star}{}
+\sum_{k=0}^{t-1}|\alpha_{t-1}^k|
\left\{
\pnorm{h_k(\bm{0}_n,\y)}{}
+L_t^h|\gamma_{k,-1}|\pnorm{\bbeta^\star}{}
\right\}\\
&\leq C(\bar\lambda_t+\bar\gamma_t)L_t^hR_t^h.
\end{align*}
Applying
\(\|h_t(\bm b_t^h,\y)\|
\leq\|h_t(\bm{0}_n,\y)\|+L_t^h\|\bm b_t^h\|\)
then gives
\begin{align}
\pnorm{h_t(\bm b_t^h,\y)}{}
&\leq CK_t^h.
\label{eq:error-main-H-baseline-bounds}
\end{align}
For each fixed net point, \(\sum_{i=1}^nW_i^h\) is centered Gaussian and
\begin{align}
\operatorname{Var}_{\bm g}\left(\sum_{i=1}^nW_i^h\right)
&=\frac{\|h_t(\bm b_t^h,\bm y)\|^2}{d^2}
\leq\frac{C(K_t^h)^2}{d^2}.
\label{eq:error-main-H-W-variance}
\end{align}
The Gaussian maximal inequality and the net cardinality in
\eqref{eq:error-main-net-size} therefore give
\begin{align}
\max_{\Theta_t^h\text{ in the net}}
\left|\sum_{i=1}^nW_i^h\right|
&\leq\frac{CK_t^h(t+1)\sqrt{\log n}}d
\label{eq:error-main-H-W-bound}
\end{align}
with probability \(1-O(n^{-12})\).
Equation \eqref{eq:error-main-H-coordinate-derivative} and
\eqref{eq:cond_alpha_sum} give
\begin{align}
\sum_{\ell=0}^t
\left|\partial_{g_i^\ell}\hat u_i^t\right|^2
&\leq C\frac{\bar\gamma_t^2}{d}.
\label{eq:error-main-H-derivative-sum}
\end{align}
Moreover,
\begin{align*}
\partial_{g_i^\ell}X_i^h
=\frac1d\omega_\ell
\{h_t(\hat u_i^t,y_i) -h_t(b_{t,i}^h,y_i)\} + \frac1d G_i^\omega h_t'(\hat u_i^t,y_i)
\partial_{g_i^\ell}\hat u_i^t.
\end{align*}
Gaussian moment bounds, the mean-value theorem, and
\eqref{eq:error-main-H-derivative-sum} yield, for \(p\geq2\),
\begin{align}
\left\{\mathbb E_{\g}|X_i^h|^p\right\}^{1/p}
&\leq
\frac{C(L_t^h)^2\bar\gamma_t}{d^{3/2}}\,p.
\label{eq:error-main-H-X-moments}
\end{align}
Gaussian Poincar\'e applied to \(\sum_iX_i^h\), using the same derivative
formula and \(n/d=\delta\), gives
\begin{align}
\Var_{\g}\left(\sum_{i=1}^nX_i^h\right)
&\leq
\frac{C\delta(L_t^h)^4\bar\gamma_t^2}{d^2}
\leq\frac{C(K_t^h)^2}{d^2}.
\label{eq:error-main-H-variance}
\end{align}
Markov's inequality in
\eqref{eq:error-main-H-X-moments}, with
\(p=\lceil\log(1/\eta)\rceil\), gives, for
\(0<\eta<1/\poly(n)\),
\begin{gather}
\mathbb P_{\g}\left(
|X_i^h|>
\frac{C(L_t^h)^2\bar\gamma_t}{d^{3/2}}
\log\frac1\eta
\right)
\leq\eta,
\label{eq:error-main-H-X-envelope}\\
|Y_i^h|+|Z_i^h|
\leq
\frac{C(L_t^h)^2\bar\gamma_t}{d^{3/2}}.
\label{eq:error-main-H-YZ-envelope}
\end{gather}
Here the factor \(d^{-3/2}\) follows from the normalization \(d^{-1}\)
in \eqref{eq:error-main-H-coordinate-decomposition} and
\(\|\gamma_{k,[0:k]}\|\leq\bar\gamma_t/\sqrt d\).

Localize \(\bar\lambda_t,\bar\gamma_t,L_t^h,R_t^h\) to dyadic shells,
and denote one such shell by \(\mathcal I_t^h\).
By \eqref{eq:parameter_cover_assumption},
\eqref{eq:cond_parameter_h}, and the coordinate count following
\eqref{eq:error-main-H-parameters}, each shell has an
\(n^{-C_\varepsilon}\)-net of cardinality at most
\begin{align*}
\exp\{C(t+1)^2\log n\}.
\end{align*}
Apply Lemma~\ref{lem:modified_bernstein} to the centered variables
\(X_i^h+Y_i^h+Z_i^h\) at each net point with
\(\eta=\exp\{-C_0(t+1)^2\log n\}\).  Equations
\eqref{eq:error-main-H-variance}--\eqref{eq:error-main-H-YZ-envelope}
then give
\begin{align}\label{eq:error-main-H-XYZ-net-conclusion}
\max_{\Theta_t^h\text{ in the net}}
\left|\sum_{i=1}^n(X_i^h+Y_i^h+Z_i^h)\right|
&\leq\frac{CK_t^h(t+1)\sqrt{\log n}}d.
\end{align}
The bounded-summand contribution in
Lemma~\ref{lem:modified_bernstein} is
\[
C(L_t^h)^2\bar\gamma_t d^{-3/2}\log n\,
C_0(t+1)^2\log n.
\]
Since \(K_t^h\geq(L_t^h)^2\bar\gamma_t\), its ratio to the right-hand
side of \eqref{eq:error-main-H-XYZ-net-conclusion} is at most
\begin{align}
\frac{
(L_t^h)^2\bar\gamma_t d^{-3/2}\log n\,
C_0(t+1)^2\log n
}{
K_t^h d^{-1}(t+1)\sqrt{\log n}
}
&\leq
C\frac{(t+1)\log^{3/2}n}{\sqrt d}
\leq\frac C{\log^{3/2}n}
\leq C.
\label{eq:error-main-H-bounded-absorption}
\end{align}
Thus it is absorbed into
\eqref{eq:error-main-H-XYZ-net-conclusion}.  Combining
\eqref{eq:error-main-H-W-bound} and
\eqref{eq:error-main-H-XYZ-net-conclusion} gives
\begin{align}\label{eq:error-main-H-net-conclusion}
\max_{\Theta_t^h\text{ in the net}}
\left|\sum_{i=1}^nM_i^h\right|
&\leq\frac{CK_t^h(t+1)\sqrt{\log n}}d.
\end{align}

Define
\begin{align}
\mathcal G_t^h
\coloneqq
\left\{
\|[\g^0,\ldots,\g^t]\|_{\op}
\leq C(\sqrt n+\sqrt{t+1}),\
\max_{1\leq i\leq n}|y_i|\leq n^{C_{\rm par}},\
\|\bbeta^\star\|\leq2\sqrt n
\right\}.
\label{eq:error-main-H-gradient-event}
\end{align}
The Gaussian operator-norm and norm bounds, together with
\eqref{eq:response_polynomial_envelope} give
\begin{align}
\mathbb P((\mathcal G_t^h)^c)&\leq3n^{-20}.
\label{eq:error-main-H-gradient-event-probability}
\end{align}
On a dyadic shell, \eqref{eqs:AtU} bounds every coefficient and
parameter norm by \(Cn^5\).  The identities
\begin{align*}
\widetilde u_i^k
=\gamma_{k,-1}\eta_i^\star
+\sum_{\ell=0}^k\gamma_{k,\ell}g_i^\ell,
\qquad
m_i^k=\gamma_{k,-1}\eta_i^\star
\end{align*}
give, on \(\mathcal G_t^h\),
\begin{align}
\|\nabla_\Theta\widetilde u_i^k\|
&\leq
\left\{|\eta_i^\star|^2+\sum_{\ell=0}^k|g_i^\ell|^2\right\}^{1/2}
\leq
\|\bbeta^\star\|
+\|[\bm g^0,\ldots,\bm g^t]\|_{\rm op}
\leq n^{C_1},
\notag\\
\|\nabla_\Theta m_i^k\|
&\leq|\eta_i^\star|\leq n^{C_1}.
\label{eq:error-main-H-input-gradients}
\end{align}
At every point of differentiability,
\eqref{eq:parameter_lipschitz_assumption} and
\eqref{eq:cond_derivative_linfty_h} imply, for \(x=x(\Theta)\),
\begin{align}
|\mathsf h_k(x,y_i;\vartheta_k)|
&\leq
|\mathsf h_k(0,y_i;\vartheta_k)|+L_t^h|x|,
\notag\\
\|\nabla_\Theta\mathsf h_k(x,y_i;\vartheta_k)\|
&\leq
L_t^h\|\nabla_\Theta x\|
+n^{C_{\rm par}}
\left(1+|x|^{C_{\rm par}}+|y_i|^{C_{\rm par}}\right).
\label{eq:error-main-H-chain-rule}
\end{align}
For \(q_{k,i}(x)\coloneqq
\mathsf h_k(x,y_i;\vartheta_k)\), equations
\eqref{def:induction_constant},
\eqref{eq:error-main-H-input-gradients},
\eqref{eq:error-main-H-chain-rule}, and
\eqref{eq:response_polynomial_envelope} give, for
\(x\in\{\widetilde u_i^k,m_i^k\}\),
\begin{align}
|x|+|q_{k,i}(x)|+\|\nabla_\Theta q_{k,i}(x)\|
&\leq n^{C_1}.
\label{eq:error-main-H-nonlinear-gradient}
\end{align}
Differentiating the exact formulas for
\(\widehat u_i^t\) and \(b_{t,i}^h\) gives
\begin{align}
\|\nabla_\Theta\widehat u_i^t\|
&\leq
\|\nabla_\Theta\widetilde u_i^t\|
+\left\{\sum_{k=0}^{t-1}
|q_{k,i}(\widetilde u_i^k)|^2\right\}^{1/2}
+
\sum_{k=0}^{t-1}|\alpha_{t-1}^k|
\|\nabla_\Theta q_{k,i}(\widetilde u_i^k)\|,
\notag\\
\|\nabla_\Theta b_{t,i}^h\|
&\leq
|\eta_i^\star|
+\left\{\sum_{k=0}^{t-1}|q_{k,i}(m_i^k)|^2\right\}^{1/2}
+
\sum_{k=0}^{t-1}|\alpha_{t-1}^k|
\|\nabla_\Theta q_{k,i}(m_i^k)\|.
\label{eq:error-main-H-corrected-input-gradients}
\end{align}
Consequently, \eqref{eq:cond_alpha_sum},
\(t+1\leq C\sqrt d/\log^3n\), and
\eqref{eq:error-main-H-nonlinear-gradient} give
\begin{gather}
\sup_{\Theta\in\mathcal I_t^h}
\max_{1\leq i\leq n}
\left\{
|\hat u_i^t|+|b_{t,i}^h|
+\|\nabla_\Theta\hat u_i^t\|
+\|\nabla_\Theta b_{t,i}^h\|
\right\}
\leq n^{C_1},
\notag\\
\sup_{\Theta\in\mathcal I_t^h}
\max_{\substack{0\leq k\leq t\\1\leq i\leq n}}
\left\{
|\widetilde u_i^k|+\|\nabla_\Theta\widetilde u_i^k\|
\right\}
\leq n^{C_1},
\notag\\
\sup_{\Theta\in\mathcal I_t^h}
\left\|
\nabla_\Theta\sum_{i=1}^nM_i^h(\Theta)
\right\|
\leq
\frac{n^{C_1}}d
\sum_{i=1}^n\{1+|G_i^\omega|\}
\leq n^{C_1}.
\label{eq:error-main-H-parameter-gradient}
\end{gather}
The last line follows by differentiating
\(W_i^h,X_i^h,Y_i^h,Z_i^h\), applying
\eqref{eq:error-main-H-chain-rule} and its \(j=1\) analogue to
\(h_k,h_k'\), and using
\[
\sum_{i=1}^n|G_i^\omega|
\leq\sqrt n\,\|[\bm g^0,\ldots,\bm g^t]\omega\|
\leq Cn.
\]
The centering in
\eqref{eq:error-main-H-coordinate-decomposition} holds for every
deterministic \(\Theta\), so
\begin{align}
\sup_{\Theta\in\mathcal I_t^h}
\left\|
\nabla_\Theta\sum_{i=1}^n\mathbb E_{\bm g}M_i^h(\Theta)
\right\|
&=0.
\label{eq:error-main-H-population-gradient}
\end{align}
Consequently, on each shell,
\begin{align}\label{eq:error-main-H-lipschitz}
\left|\sum_{i=1}^n\{M_i^h(\Theta)-\E M_i^h(\Theta)\}
-\sum_{i=1}^n\{M_i^h(\Theta')-\E M_i^h(\Theta')\}\right|
&\leq n^{C_1}\|\Theta-\Theta'\|
\end{align}
for every \(\Theta,\Theta'\) in the shell.
If \(\Theta_t^{h,\prime}\) is the closest net point to
\(\Theta_t^h\), choose \(C_\varepsilon\geq C_1+20\).  Then
\begin{align*}
\left|\sum_{i=1}^n\{M_i^h(\Theta_t^h)-\E M_i^h(\Theta_t^h)\}
-\sum_{i=1}^n\{M_i^h(\Theta_t^{h,\prime})
-\E M_i^h(\Theta_t^{h,\prime})\}\right|
&\leq n^{-20}.
\end{align*}
This extends \eqref{eq:error-main-H-net-conclusion} to every point of every
shell.  The dyadic union changes only the constant in the
\(O(n^{-12})\) failure probability, and
\eqref{eq:error-main-H-gradient-event-probability} contributes at most
\(2n^{-20}\).  Finally,
\eqref{eq:error-main-H-duality} gives
\[
\Bigpnorm{\sum_{\ell=0}^t\sqrt d\,\q^\ell B_t^\ell}{}
\leq CK_t^h(t+1)\sqrt{\frac{\log n}{d}},
\]
which is the second assertion.
\end{proof}

\begin{lemma}[Concentration of squared derivative norms]
\label{lem:F_concentration}
Fix an integer \(t\geq0\) satisfying \(t\leq C\sqrt d/\log^3n\). The following bounds hold with probability
\(1-O(n^{-12})\), uniformly over the coefficients and
\(\xi_{[0:t+1]}\) satisfying \eqref{eqs:AtV}.  Whenever its constraint set is
nonempty, the weighted estimate is
\begin{align}
&\sup_{\substack{\omega\in\sS^t\\
\omega\perp\lambda_{t,[0:t]}}}
\left|\frac1d
\Bigpnorm{\sum_{\ell=0}^t\omega_\ell\s^\ell
\circ f_{t+1}'(\tilde\v^{t+1})}{}^2
-F_{t+1}\big(\rho_{t+1},\pnorm{\lambda_{t,[0:t]}}{};
\btheta^\star\big)\right|
\leq C\pnorm{f_{t+1}'}{\infty}^2
(t+1)\sqrt{\frac{\log n}{d}},
\label{eq:F_concentration}
\end{align}
and, at the same fixed horizon,
\begin{align}
&\max_{1\leq k\leq t+1}
\left|\frac1d\pnorm{f_k'(\tilde\v^k)}{}^2
-F_k\big(\rho_k,\pnorm{\lambda_{k-1,[0:k-1]}}{};
\btheta^\star\big)\right|
\leq C\pnorm{f_{[1:t+1]}'}{\infty}^2
(t+1)\sqrt{\frac{\log n}{d}}.
\label{eq:F_unweighted_concentration}
\end{align}
The bounds below hold
with probability \(1-O(n^{-12})\), uniformly over the
coefficients and \(\vartheta_{[0:t]}\) satisfying \eqref{eqs:AtU}.
Whenever its constraint set is nonempty, the weighted estimate is
\begin{align}
&\sup_{\substack{\omega\in\sS^t\\
\omega\perp\gamma_{t,[0:t]}}}
\left|\frac1d
\Bigpnorm{\sum_{\ell=0}^t\omega_\ell\g^\ell
\circ h_t'(\tilde\u^t,\y)}{}^2
-H_t\big(\gamma_{t,-1},\pnorm{\gamma_{t,[0:t]}}{};
\bbeta^\star\big)\right|
\leq C\delta\pnorm{h_t'}{\infty}^2
(t+1)\sqrt{\frac{\log n}{n}},
\label{eq:G_concentration}
\end{align}
and, at the same fixed horizon,
\begin{align}
&\max_{0\leq k\leq t}
\left|\frac1d\pnorm{h_k'(\tilde\u^k,\y)}{}^2
-H_k\big(\gamma_{k,-1},\pnorm{\gamma_{k,[0:k]}}{};
\bbeta^\star\big)\right|
\leq C\delta\pnorm{h_{[0:t]}'}{\infty}^2
(t+1)\sqrt{\frac{\log n}{n}}.
\label{eq:H_unweighted_concentration}
\end{align}
\end{lemma}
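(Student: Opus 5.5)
The plan is to prove each of the four estimates by the same three-stage scheme used in Lemma~\ref{lem:error_main_lemma}: fix a deterministic parameter tuple, obtain concentration for that tuple, union bound over a net, and extend to the whole parameter set by a Lipschitz argument. We treat the \(f\)-side in detail; the \(h\)-side follows the same way after conditioning on \((\bbeta^\star,\y)\), with \(n\) coordinates instead of \(d\) and the factor \(\delta=n/d\) coming from the normalization \(1/d\).

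\emph{Step 1: fixed tuple, rotation to a single Gaussian.} Fix \((\lambda_{t,[-1:t]},\xi_{t+1})\) and \(\omega\in\sS^t\) with \(\omega\perp\lambda_{t,[0:t]}\). Write \(\sigma=\pnorm{\lambda_{t,[0:t]}}{}\). The two \(d\)-vectors
\[
\sum_\ell\omega_\ell\s^\ell
\qquad\text{and}\qquad
\sigma^{-1}\sum_\ell\lambda_{t,\ell}\s^\ell
\]
are jointly Gaussian. Since \(\omega\perp\lambda\), they are independent standard \(\N(0,\I_d)\) vectors, which we call \(\s'\) and \(\s''\). Then \(\tilde\v^{t+1}=\rho_{t+1}\btheta^\star+\sigma\s''\), and the weighted quantity becomes
\[
\frac1d\sum_j (s_j')^2\, f_{t+1}'\bigl(\rho_{t+1}\theta^\star_j+\sigma s_j''\bigr)^2 .
\]
This is a sum of independent coordinates. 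Its expectation is exactly \(F_{t+1}(\rho_{t+1},\sigma;\btheta^\star)\), because \(\E (s_j')^2=1\) and \(s'\) is independent of \(s''\). The summands are sub-exponential with scale \(\pnorm{f'_{t+1}}{\infty}^2/d\). Bernstein's inequality (or Lemma~\ref{lem:modified_bernstein}) gives deviation \(C\pnorm{f'_{t+1}}{\infty}^2(\sqrt{x/d}+x/d)\) with probability \(1-e^{-x}\).

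The unweighted estimate is the same computation with \(s'\equiv1\); it is taken at each \(k\le t+1\), followed by a union bound over \(k\). The random \(\xi_{t+1}\) causes no difficulty: it is fixed as part of the tuple, and \(\btheta^\star\) is deterministic.

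\emph{Step 2: net and union bound.} Localize \(\rho_{t+1}\), \(\sigma\) and \(\pnorm{f'}{\infty}\) to dyadic shells, as in Step 2 of Lemma~\ref{lem:error_main_lemma}. Take an \(n^{-C_\varepsilon}\)-net of the parameter set (coefficients, \(\xi\) of norm at most \(Cn^5\), and \(\omega\) in the sphere). Only \(O(t+1)\) coordinates actually enter, and by \eqref{eq:parameter_cover_assumption} the net has cardinality at most \(\exp\{C(t+1)\log n\}\) (the cruder bound \(\exp\{C(t+1)^2\log n\}\) also suffices). Choose \(x=C(t+1)^2\log n\). The deviation is then
\[
C\pnorm{f'}{\infty}^2\Bigl((t+1)\sqrt{\tfrac{\log n}{d}}+\tfrac{(t+1)^2\log n}{d}\Bigr).
\]
The second term is dominated by the first because \(t+1\le C\sqrt d/\log^3 n\), which gives the claimed rate. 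For the \(h\)-side, the normalization \(\frac1d\sum_{i\le n}\) gives variance proxy \(\delta^2\pnorm{h'}{\infty}^4/n\). This yields the \(\delta(t+1)\sqrt{\log n/n}\) form.

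\emph{Step 3: extension off the net — the main obstacle.} Two issues arise here. First, the constraint \(\omega\perp\lambda\) couples \(\omega\) to the coefficient array. We handle this by parametrizing directly by the pair \((\omega,\lambda)\) and noting that the functional \(\frac1d\pnorm{\bS\omega\circ f'(\cdot)}{}^2\) is defined for all \(\omega\). It is Lipschitz in all parameters: on the event \(\pnorm{\bS}{\op}\le C(\sqrt d+\sqrt t)\), the bound \eqref{eq:parameter_lipschitz_assumption} together with \(\pnorm{f''}{\infty}\le C\log^5 n\) gives a Lipschitz constant \(n^{C_1}\), as in \eqref{eq:error-main-f-parameter-gradient}. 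The population function \(F_{t+1}(\rho,\sigma)\) is also polynomially Lipschitz in \((\rho,\sigma,\xi)\), and orthogonality is used only at net points. So for a general admissible \((\omega,\lambda)\), we pick a nearby net point \((\omega',\lambda')\) with \(\omega'\perp\lambda'\). Such a point exists after a Gram–Schmidt re-projection of \(\omega\), which moves \(\omega\) by at most \(n^{-C_\varepsilon+C}\), since \(\sigma\) is bounded below by \(n^{-C}\) on the shell. The Lipschitz error is then at most \(n^{-20}\).

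Second, when \(\sigma\) is tiny the shells must stop at some polynomially small floor. On the phase-specific events \(\sigma\) is of order one, so it suffices to note that for \(\sigma\le n^{-C}\) both sides change by a negligible amount. Summing failure probabilities over shells and intersecting with the operator-norm events gives probability \(1-O(n^{-12})\).
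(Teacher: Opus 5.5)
Your proposal is correct and follows essentially the same route as the paper. Both arguments fix a deterministic parameter tuple and use $\omega\perp a$ to make $\sum_\ell\omega_\ell\s^\ell$ independent of $\rho\btheta^\star+\sum_\ell a_\ell\s^\ell$, so the mean is exactly $F_{t+1}$. They then apply Bernstein with failure probability $\exp\{-C(t+1)^2\log n\}$, take a union bound over an $n^{-C_0}$-net, and extend off the net via an $n^{C_1}$-Lipschitz bound on the operator-norm event, with the $h$-side handled conditionally on $(\bbeta^\star,\y)$.

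The differences are minor. The paper nets the constrained set $\{\omega\perp a\}$ directly, which makes your Gram--Schmidt re-projection and small-$\sigma$ floor unnecessary. On the $h$-side, the paper first restricts to the envelope $\{\max_i|y_i|\le n^{C_{\rm par}},\ \pnorm{\bbeta^\star}{}\le 2\sqrt n\}$, conditions there, and then integrates. You should state this restriction explicitly, because the Lipschitz constant coming from \eqref{eq:parameter_lipschitz_assumption} depends on it.
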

\begin{proof}
To prove \eqref{eq:F_concentration}, fix
deterministic $\omega\in\sS^t$ and
$a=(a_0,\ldots,a_t)$ with $\omega\perp a$, and write
\[
\zeta\coloneqq\sum_{\ell=0}^t\omega_\ell\s^\ell,
\qquad
\bar\v\coloneqq\rho\btheta^\star+
\sum_{\ell=0}^ta_\ell\s^\ell.
\]
Then $\zeta\sim\N(0,\I_d)$ and, by Gaussian orthogonality, $\zeta$ is
independent of $\bar\v$.  Hence
\[
\frac1d\E\pnorm{\zeta\circ f_{t+1}'(\bar\v)}{}^2
=F_{t+1}(\rho,\pnorm a{};\btheta^\star).
\]
The centered coordinate summands are independent and subexponential with
scale $C\pnorm{f_{t+1}'}{\infty}^2/d$.  Bernstein's inequality therefore
gives
\begin{align*}
\left|\frac1d
\pnorm{\zeta\circ f_{t+1}'(\bar\v)}{}^2
-F_{t+1}(\rho,\pnorm a{};\btheta^\star)\right|
&\leq C\pnorm{f_{t+1}'}{\infty}^2
(t+1)\sqrt{\frac{\log n}{d}}
\end{align*}
for each fixed \((\omega,a,\rho)\), with failure probability at most
\(\exp\{-C_1(t+1)^2\log n\}\).

Include \(\xi_{[0:t+1]}\) from
\eqref{eq:parameterized_nonlinearities}, the triangular arrays displayed in
\eqref{def:Theta_uniform}, and $(\omega,a,\rho)$ in an
$n^{-C_0}$-net.  Equations \eqref{eq:parameter_cover_assumption} and
\eqref{eqs:AtV} imply that the ambient dimension is
$O((t+1)^2)$, so its cardinality is at most
$\exp[C(t+1)^2\log n]$.  On the event
\(\|[\s^0,\ldots,\s^t]\|_{\op}\leq
C(\sqrt d+\sqrt{t+1})\), the derivative bounds in
\eqref{eqs:AtV} and \eqref{eq:parameter_lipschitz_assumption} give
\begin{align}\label{eq:F_parameter_gradient}
|\Phi_f(\Theta)-\Phi_f(\Theta')|
\leq n^{C_1}\|\Theta-\Theta'\|,
\qquad
\Phi_f(\Theta)
\coloneqq\frac1d\|\zeta\circ f_{t+1}'(\bar\v)\|^2
-F_{t+1}(\rho,\|a\|;\btheta^\star).
\end{align}
For a parameter point \(\Theta^f\) and its closest net point
\(\Theta^{f,\prime}\), choosing \(C_0\geq C_1+20\) makes the difference
between the two centered quantities at most \(n^{-20}\).  The union bound
over the net therefore proves the asserted uniform estimate.

For \eqref{eq:F_unweighted_concentration}, apply Bernstein's inequality to
the
independent centered variables
\[
\frac1d\left\{f_k'(\rho\btheta^\star+\sigma\s)_j^2
-\E\big[f_k'(\rho\btheta^\star+\sigma\s)_j^2\big]\right\},
\qquad 1\leq j\leq d.
\]
The gradient estimate \eqref{eq:F_parameter_gradient}, with
\(\omega\) omitted, followed by a union bound over \(1\leq k\leq t+1\),
gives \eqref{eq:F_unweighted_concentration}.

For \eqref{eq:G_concentration}, Define the response-signal envelope
\begin{align}
\mathcal R_n
\coloneqq
\left\{\max_{1\leq i\leq n}|y_i|\leq n^{C_{\rm par}},
\ \|\bbeta^\star\|\leq2\sqrt n\right\}.
\label{eq:H_conditional_response_envelope}
\end{align}
Fix
\((\bbeta^\star,\y)\in\mathcal R_n\), condition on this pair, and set
$\bar\u=\rho\bbeta^\star+\sum_{\ell=0}^ta_\ell\g^\ell$.
If \(\omega\perp a\), then
\(\zeta_h=\sum_{\ell=0}^t\omega_\ell\g^\ell\) is independent of
\((\bar\u,\y)\), conditional on \((\bbeta^\star,\y)\), and
\begin{align*}
\frac1d\E_{\g}
\pnorm{\zeta_h\circ h_t'(\bar\u,\y)}{}^2
&=H_t(\rho,\pnorm a{};\bbeta^\star).
\end{align*}
For fixed \((\omega,a,\rho,\vartheta_t)\), Bernstein's inequality for the
\(n\) independent coordinate summands, each normalized by \(d^{-1}\),
gives
\begin{align*}
\left|\frac1d
\pnorm{\zeta_h\circ h_t'(\bar\u,\y)}{}^2
-H_t(\rho,\pnorm a{};\bbeta^\star)\right|
&\leq C\delta\pnorm{h_t'}{\infty}^2
(t+1)\sqrt{\frac{\log n}{n}}
\end{align*}
with failure probability at most
\(\exp\{-C_1(t+1)^2\log n\}\).
Adjoin the complete array
\(\{\gamma_{k,[0:k]}:0\leq k\leq t\}\),
\(\gamma_{[0:t],-1}\), the relevant correction rows, and
\(\vartheta_{[0:t]}\) to the deterministic net.  By
\eqref{eq:parameter_cover_assumption} and \eqref{eqs:AtU}, this net has
dimension \(O((t+1)^2)\), cardinality at most
\(\exp\{C(t+1)^2\log n\}\), and polynomial radius.  On
\(\|[\g^0,\ldots,\g^t]\|_{\op}\leq
C(\sqrt n+\sqrt{t+1})\), direct differentiation and
\eqref{eq:parameter_lipschitz_assumption}, together with
\eqref{eq:H_conditional_response_envelope}, give
\begin{align*}
|\Phi_h(\Theta)-\Phi_h(\Theta')|
&\leq n^{C_1}\|\Theta-\Theta'\|,\\
\Phi_h(\Theta)
&\coloneqq\frac1d\|\zeta_h\circ h_t'(\bar\u,\y)\|^2
-H_t(\rho,\|a\|;\bbeta^\star).
\end{align*}
Thus an \(n^{-C_0}\)-net with \(C_0\geq C_1+20\) changes the centered
quantity by at most \(n^{-20}\), proving the conditional version of \eqref{eq:G_concentration}.

Finally, applying Bernstein's inequality to the \(n\) independent centered
variables
\[
\frac1d\left\{
h_k'(\rho(\bbeta^\star)_i+\sigma g_i,y_i)^2
-\E_{g_i}h_k'(\rho(\bbeta^\star)_i+\sigma g_i,y_i)^2
\right\},
\qquad1\leq i\leq n,
\]
and using the preceding gradient bound with \(\omega\) omitted, followed
by a union bound over \(0\leq k\leq t\),
gives the conditional version of \eqref{eq:H_unweighted_concentration}. Since \(\bbeta^\star=\bm g^{-1}\sim\mathcal N(0,\bm I_n)\),
\eqref{eq:response_polynomial_envelope} and Gaussian norm concentration
give \(\mathbb P(\mathcal R_n^c)=O(n^{-20})\). The constants are uniform over every
fixed pair in \(\mathcal R_n\), so integration over that pair and the bound
on \(\mathbb P(\mathcal R_n^c)\) prove the unconditional assertion.
\end{proof}

\begin{lemma}[Bounds for the projected nonlinear differences]
\label{lem:key_estimate_3}
Fix $t\leq C\sqrt d/\log^3n$.  Except on an event of probability
$O(n^{-12})$, the following implications hold uniformly: on $\cA_t^V$,
\begin{align}
\notag&\biggpnorm{P_{\w^t}^\perp \Big(\sum_{\ell=0}^t \frac{\r^\ell}{\sqrt{d}} \s^{\ell\top} \big(f_{t+1}(\v^{t+1}) - f_{t+1}(\hat\v^{t+1})\big)\Big)}{}\\
&\quad \leq \Big( \sqrt{F_{t+1}\big(\rho_{t+1},\pnorm{\lambda_{t,[0:t]}}{};\btheta^\star\big)} + \cE^{f}_{t+1}\Big)\Big(\pnorm{\Delta'_{V,t+1}}{} + \sum_{k=0}^{t}|\beta_{t}^k| \pnorm{f'_{k}}{\infty}\pnorm{\Delta_{V,k}}{}\Big)\label{eq:perp_bound_v},
\end{align}
and, on $\cA_t^U$,
\begin{align}
&\notag\Bigpnorm{P_{\bar\z^{t}}^\perp\Big(\sum_{\ell=0}^{t} \frac{\q^\ell}{\sqrt{d}} \g^{\ell\top}\big(h_{t}(\u^{t},\y) -  h_{t}(\hat\u^{t},\y)\big)\Big)}{}\\
&\quad\leq \Big( \sqrt{H_{t}\big(\gamma_{t,-1}, \pnorm{\gamma_{t,[0:t]}}{};\bbeta^{\star}\big)} + \cE^h_t\Big)\Big(\pnorm{\Delta_{U,t}'}{} + \sum_{k=0}^{t-1}|\alpha_{t-1}^k|\pnorm{h_k'}{\infty}\pnorm{\Delta_{U,k}}{}\Big).\label{eq:perp_bound_u}
\end{align}
\end{lemma}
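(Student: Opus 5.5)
We prove \eqref{eq:perp_bound_u}; \eqref{eq:perp_bound_v} is symmetric, with $(\s^\ell,\r^\ell,\w^t,f_{t+1},\tilde\v^{t+1})$ in place of $(\g^\ell,\q^\ell,\bar\z^t,h_t,\tilde\u^t)$.

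\textbf{Step 1: reduction to a constrained supremum.} Since $\{\q^\ell\}_{\ell=0}^t$ is orthonormal and $\bar\z^t=\sqrt d\sum_\ell\gamma_{t,\ell}\q^\ell$, applying $P^\perp_{\bar\z^t}$ amounts to restricting the coefficient vector $c_\ell=d^{-1/2}\iprod{\g^\ell}{\bm\delta}$, where $\bm\delta\coloneqq h_t(\u^t,\y)-h_t(\hat\u^t,\y)$, to the orthogonal complement of $\gamma_{t,[0:t]}$. By duality the left-hand side therefore equals
\[
\sup_{\omega\in\sS^t,\ \omega\perp\gamma_{t,[0:t]}}\frac{1}{\sqrt d}\iprod{\zeta_h}{\bm\delta},
\qquad \zeta_h=\sum_{\ell=0}^t\omega_\ell\g^\ell.
\]

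\textbf{Step 2: first-order Taylor expansion.} Write $\bm\delta=h_t'(\tilde\u^t,\y)\circ(\u^t-\hat\u^t)+\bm e$, with two remainder pieces:
\begin{itemize}
\item $[h_t'(\hat\u^t,\y)-h_t'(\tilde\u^t,\y)]\circ(\u^t-\hat\u^t)$,
\item a Taylor remainder bounded entrywise by $\pnorm{h_t''}{\infty}|\u^t-\hat\u^t|\,|\u^t-\hat\u^t|$.
\end{itemize}
Expanding around $\tilde\u^t$ rather than $\hat\u^t$ is deliberate: it is exactly what Lemma~\ref{lem:F_concentration} controls, and $\hat\u^t-\tilde\u^t$ enters only through the remainder.

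The main term is handled by Cauchy--Schwarz:
\[
d^{-1/2}\iprod{\zeta_h\circ h_t'(\tilde\u^t,\y)}{\u^t-\hat\u^t}\le \Big(d^{-1}\pnorm{\zeta_h\circ h_t'(\tilde\u^t,\y)}{}^2\Big)^{1/2}\pnorm{\u^t-\hat\u^t}{}.
\]
The weighted estimate \eqref{eq:G_concentration} applies precisely because $\omega\perp\gamma_{t,[0:t]}$, and it holds uniformly over the coefficient arrays on $\cA_t^U$. It gives $\sqrt{H_t+C\delta\pnorm{h_t'}{\infty}^2(t+1)\sqrt{\log n/n}}$. Using $\sqrt{a+b}\le\sqrt a+\sqrt b$ and $(t+1)^{1/2}(\log n/n)^{1/4}\lesssim\sqrt{t+1}(\log n/d)^{1/4}\delta^{-1/4}$, this is bounded by $\sqrt{H_t}$ plus the first term of $\cE^h_t$ (whence the $\delta^{3/4}$). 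Finally, \eqref{eq:diff_u_hat} of Lemma~\ref{lem:mutual_close} bounds $\pnorm{\u^t-\hat\u^t}{}$ by the bracket $\pnorm{\Delta'_{U,t}}{}+\sum_k|\alpha^k_{t-1}|\pnorm{h'_k}{\infty}\pnorm{\Delta_{U,k}}{}$ on the right-hand side.

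\textbf{Step 3: the remainder.} Both pieces of $\bm e$ are bounded entrywise by $2\pnorm{h_t''}{\infty}\,|\u^t-\tilde\u^t|\,|\u^t-\hat\u^t|$, using a mean-value form that keeps one factor linear. Then
\[
d^{-1/2}|\iprod{\zeta_h}{\bm e}|\le d^{-1/2}\pnorm{h_t''}{\infty}\max_i|\zeta_{h,i}|\cdots,
\]
but a sup-norm of $\zeta_h$ would cost a factor of $\sqrt{\log n}$ per coordinate, which is too crude. Instead I would bound $\pnorm{\zeta_h\circ(\u^t-\tilde\u^t)}{}\le\pnorm{[\g^0,\ldots,\g^t]^\top\mathrm{diag}(\u^t-\tilde\u^t)}{\op}$, hoping for $(\sqrt{t+1}+\sqrt{\log n})\pnorm{\u^t-\tilde\u^t}{\infty}$-type control.

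The cleaner route is to write $|\u^t-\tilde\u^t|\le|\Delta_{U,t}|+|\hat\u^t-\tilde\u^t|$ and use the bound
\[
\sup_{\omega}\ \sum_i|\zeta_{h,i}|\,|x_i|\,|y_i|\le \pnorm{x}{}\,\pnorm{y}{}\max_i\pnorm{(g_i^\ell)_{\ell}}{},
\]
where $\max_i\pnorm{(g^\ell_i)_\ell}{}\le C(\sqrt{t+1}+\sqrt{\log n})$ with probability $1-n^{-20}$ by Gaussian norm concentration and a union bound over $i$. This yields exactly the second term of $\cE^h_t$, namely
\[
\pnorm{h_t''}{\infty}\frac{\sqrt{t+1}+\sqrt{\log n}}{\sqrt d}\big(\pnorm{\Delta_{U,t}}{}+\pnorm{\hat\u^t-\tilde\u^t}{}\big)
\]
times $\pnorm{\u^t-\hat\u^t}{}$, with the $(1+\sqrt\delta)$ factor absorbing the $n$ versus $d$ normalization.

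\textbf{Main obstacle.} The delicate point is uniformity. The vectors $\u^t,\hat\u^t,\tilde\u^t$ and the parameter $\vartheta_t$ depend on the very Gaussians $\g^\ell$ being tested, so the concentration in \eqref{eq:G_concentration} must be invoked in its net-uniform form over all coefficient arrays satisfying \eqref{eqs:AtU}, not at fixed values. I would verify that the quantities frozen in Step 2 are exactly the arguments of the supremum in Lemma~\ref{lem:F_concentration}, and that the sets $\{\omega\perp\gamma_{t,[0:t]}\}$ match. If $\gamma_{t,[0:t]}$ vanishes, the constraint set is the full sphere, and the estimate follows directly from the unweighted version.
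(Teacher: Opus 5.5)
Your proposal is correct and follows essentially the same route as the paper: duality over $\omega\perp\gamma_{t,[0:t]}$ (resp.\ $\omega\perp\lambda_{t,[0:t]}$), linearization of the difference at $\tilde\u^t$, the weighted estimate \eqref{eq:G_concentration} for the main factor, the uniform row bound $\max_i\pnorm{(g_i^\ell)_{\ell=0}^t}{}\le C(\sqrt{t+1}+\sqrt{\log n})$ for the remainder, and \eqref{eq:diff_u_hat} for $\pnorm{\u^t-\hat\u^t}{}$. The only difference from the paper is bookkeeping (the paper writes the difference exactly as $\a_h\circ\delta\u$ with $\a_h=\int_0^1 h_t'(\hat\u^t+s\delta\u,\y)\,ds$ and compares $\a_h$ with $h_t'(\tilde\u^t,\y)$), and two small slips should be fixed: that row bound is precisely the uniform sup-norm bound on $\zeta_h$ you first dismissed as too crude, and the entrywise remainder factor should be $|\Delta_{U,t}|+|\hat\u^t-\tilde\u^t|$, not $2|\u^t-\tilde\u^t|$, which is what your final bound in fact uses.
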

\begin{proof}
We prove \eqref{eq:perp_bound_v} first and then record the corresponding
\(U\)-side calculation.  When $t=0$ and $\w^0\ne0$, the
left-hand side of \eqref{eq:perp_bound_v} vanishes because
$\w^0\in\spa\{\r^0\}$; hence assume either $t\geq1$ or $\w^0=0$.
The corresponding orthogonal unit-vector constraint is then nonempty.  Set
\[
\delta\v\coloneqq \v^{t+1}-\hat\v^{t+1},
\qquad
\a\coloneqq \int_0^1
f_{t+1}'\big(\hat\v^{t+1}+s\delta\v\big)\,ds,
\]
where the integral and the derivative are understood coordinatewise.  The
fundamental theorem of calculus gives
\[
f_{t+1}(\v^{t+1})-f_{t+1}(\hat\v^{t+1})
=\a\circ\delta\v.
\]
By duality, the left-hand side of \eqref{eq:perp_bound_v} equals
\begin{align}\label{eq:proj_out_1}
\sup_{\substack{\bomega\in\spa\{\r^0,\ldots,\r^t\},\
\ \pnorm{\bomega}{}=1,\ \bomega\perp\w^t}}
\frac{1}{\sqrt d}
\iprod{\sum_{\ell=0}^t\omega_\ell\s^\ell}{\a\circ\delta\v}\leq
\sup_{\substack{\omega\in\sS^t,\ 
\omega\perp\lambda_{t,[0:t]}}}
\frac{1}{\sqrt d}
\Bigpnorm{\sum_{\ell=0}^t\omega_\ell\s^\ell\circ\a}{}
\pnorm{\delta\v}{},
\end{align}
where $\omega_\ell=\iprod{\bomega}{\r^\ell}$.  Here we used the
orthonormality of the $\r^\ell$'s and
$\bomega\perp\w^t$, which imply respectively that $\pnorm{\omega}{}=1$
and $\omega\perp\lambda_{t,[0:t]}$.

Uniformly over $\omega\in\sS^t$, the Gaussian matrix bound
\[
\Bigpnorm{\sum_{\ell=0}^t\omega_\ell\s^\ell}{\infty}
\leq C\big(\sqrt{t+1}+\sqrt{\log n}\big)
\]
holds with probability $1-O(n^{-20})$.  Moreover, the Lipschitz continuity
of $f_{t+1}'$ and the definition of $\a$ yield
\[
\pnorm{\a-f_{t+1}'(\tilde\v^{t+1})}{}
\leq \pnorm{f_{t+1}''}{\infty}
\big(\pnorm{\Delta_{V,t+1}}{}
+\pnorm{\hat\v^{t+1}-\tilde\v^{t+1}}{}\big).
\]
Consequently,
\begin{align}\label{eq:f_prime_approx}
\sup_{\omega\in\sS^t}\frac{1}{\sqrt d}
\Bigpnorm{\sum_{\ell=0}^t\omega_\ell\s^\ell\circ
\big(\a-f_{t+1}'(\tilde\v^{t+1})\big)}{}\leq C\pnorm{f_{t+1}''}{\infty}
\frac{\sqrt{t+1}+\sqrt{\log n}}{\sqrt d}
\big(\pnorm{\Delta_{V,t+1}}{}
+\pnorm{\hat\v^{t+1}-\tilde\v^{t+1}}{}\big).
\end{align}
Lemma \ref{lem:F_concentration}, the
inequality $\sqrt{x+y}\leq\sqrt{x}+\sqrt{y}$, and
\eqref{eq:f_prime_approx} now give, uniformly over the vectors in
\eqref{eq:proj_out_1},
\[
\frac{1}{\sqrt d}
\Bigpnorm{\sum_{\ell=0}^t\omega_\ell\s^\ell\circ\a}{}
\leq \sqrt{F_{t+1}\big(\rho_{t+1},
\pnorm{\lambda_{t,[0:t]}}{};\btheta^\star\big)}+\cE^f_{t+1}.
\]
Finally, Lemma \ref{lem:mutual_close} bounds
\[
\pnorm{\delta\v}{}
\leq \pnorm{\Delta'_{V,t+1}}{}
+\sum_{k=0}^t|\beta_t^k|\pnorm{f_k'}{\infty}
\pnorm{\Delta_{V,k}}{},
\]
and substituting the last two displays into \eqref{eq:proj_out_1} proves
\eqref{eq:perp_bound_v}.

For \eqref{eq:perp_bound_u}, set
\[
\delta\u\coloneqq\u^t-\hat\u^t,
\qquad
\a_h\coloneqq\int_0^1
h_t'(\hat\u^t+s\delta\u,\y)\,ds.
\]
If \(t=0\) and \(\bar\z^0\ne0\), the projected quantity vanishes.  In all
other cases, duality and the identity
\(\bar\z^t=\sum_{\ell=0}^t\sqrt d\,\gamma_{t,\ell}\q^\ell\) give
\begin{align*}
\Bigpnorm{P_{\bar\z^t}^\perp
\sum_{\ell=0}^t\frac{\q^\ell}{\sqrt d}
\g^{\ell\top}\big(h_t(\u^t,\y)-h_t(\hat\u^t,\y)\big)}{}\leq
\sup_{\substack{\omega\in\sS^t\\
\omega\perp\gamma_{t,[0:t]}}}
\frac1{\sqrt d}
\Bigpnorm{\sum_{\ell=0}^t\omega_\ell\g^\ell\circ\a_h}{}
\pnorm{\delta\u}{}.
\end{align*}
The Gaussian row-norm bound and the mean value theorem imply
\begin{align*}
&\sup_{\omega\in\sS^t}\frac1{\sqrt d}
\Bigpnorm{\sum_{\ell=0}^t\omega_\ell\g^\ell\circ
\big(\a_h-h_t'(\tilde\u^t,\y)\big)}{}\leq
C\pnorm{h_t''}{\infty}
\frac{\sqrt{t+1}+\sqrt{\log n}}{\sqrt d}
\left(
\pnorm{\Delta_{U,t}}{}
+\pnorm{\hat\u^t-\tilde\u^t}{}
\right).
\end{align*}
Equation \eqref{eq:G_concentration} and
\(\sqrt{x+y}\leq\sqrt x+\sqrt y\) therefore bound the first factor by
\[
\sqrt{H_t\big(\gamma_{t,-1},
\pnorm{\gamma_{t,[0:t]}}{};\bbeta^\star\big)}
+\cE_t^h.
\]
Finally, \eqref{eq:diff_u_hat} gives
\[
\pnorm{\delta\u}{}
\leq\pnorm{\Delta'_{U,t}}{}
+\sum_{k=0}^{t-1}|\alpha_{t-1}^k|
\pnorm{h_k'}{\infty}\pnorm{\Delta_{U,k}}{}.
\]
Multiplication of the last two bounds proves \eqref{eq:perp_bound_u}.
\end{proof}

\begin{lemma}[Bounds for empirical products of derivatives]
\label{lem:mixed_derivative_energy}
Fix $t\leq C\sqrt d/\log^3n$.  Except on an event of probability $O(n^{-12})$,
the following bounds hold uniformly.  On $\cA_t^V$, for $0\leq k\leq t$,
\begin{align}
\left|\av{f_{t+1}'(\hat\v^{t+1})\circ
f_{k+1}'(\tilde\v^{k+1})}\right|\leq
\mathfrak F_{t+1,k+1}+\cE_{t+1}^f.
\label{eq:f_prime_concentration}
\end{align}
On $\cA_t^U$, for $0\leq k\leq t$,
\begin{align}
\left|\delta\av{h_t'(\hat\u^t,\y)\circ
h_k'(\tilde\u^k,\y)}\right|\leq
\mathfrak H_{t,k}+\cE_t^h.
\label{eq:h_prime_concentration}
\end{align}
\end{lemma}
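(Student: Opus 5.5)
The plan is Cauchy--Schwarz, then replacing \(\hat\v^{t+1}\) by \(\tilde\v^{t+1}\) in the derivative, then the unweighted concentration of Lemma~\ref{lem:F_concentration}. For \eqref{eq:f_prime_concentration}, first apply Cauchy--Schwarz in \(\R^d\):
\[
\left|\av{f_{t+1}'(\hat\v^{t+1})\circ f_{k+1}'(\tilde\v^{k+1})}\right|
\leq \frac{\pnorm{f_{t+1}'(\hat\v^{t+1})}{}}{\sqrt d}\cdot
\frac{\pnorm{f_{k+1}'(\tilde\v^{k+1})}{}}{\sqrt d}.
\]
This reduces the claim to controlling the two normalized norms by \(\sqrt{F_{t+1}(\cdots)}\) and \(\sqrt{F_{k+1}(\cdots)}\), whose product is \(\mathfrak F_{t+1,k+1}\). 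The remaining errors must be absorbed into \(\cE^f_{t+1}\).

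Next, replace \(\hat\v^{t+1}\) by \(\tilde\v^{t+1}\) in the first factor. The Lipschitz bound on \(f_{t+1}'\) gives
\[
\pnorm{f_{t+1}'(\hat\v^{t+1})-f_{t+1}'(\tilde\v^{t+1})}{}\leq\pnorm{f_{t+1}''}{\infty}\pnorm{\hat\v^{t+1}-\tilde\v^{t+1}}{}.
\]
After division by \(\sqrt d\), this error is dominated by the second term of \(\cE^f_{t+1}\), since \((\sqrt{t+2}+\sqrt{\log n})/\sqrt d\geq1/\sqrt d\) and \(\pnorm{f'_{[0:t+1]}}{\infty}\vee1\geq1\).

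For the unweighted norms, invoke \eqref{eq:F_unweighted_concentration}. On \(\cA_t^V\) and off an \(O(n^{-12})\) event, it gives uniformly over \(1\leq j\leq t+1\)
\[
\frac1d\pnorm{f_j'(\tilde\v^j)}{}^2\leq F_j+C\pnorm{f'_{[1:t+1]}}{\infty}^2(t+1)\sqrt{\log n/d}.
\]
Taking square roots with \(\sqrt{x+y}\leq\sqrt x+\sqrt y\) turns the additive error into \(C\pnorm{f'}{\infty}\sqrt{t+1}(\log n/d)^{1/4}\). Expanding the product of two factors of the form \(\sqrt{F}+e\) gives \(\mathfrak F_{t+1,k+1}\) plus cross terms. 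Each cross term is bounded by \(\pnorm{f'}{\infty}\) times the error, using the trivial bound \(\sqrt{F}\leq\pnorm{f'}{\infty}\) from \eqref{eq:F_trivial_bound} and the bound \(\pnorm{f'}{\infty}/\sqrt d\) on the second factor. The cross terms are therefore of order \((\pnorm{f'}{\infty}\vee1)^2\sqrt{t+2}(\log n/d)^{1/4}\), which the first term of \(\cE^f_{t+1}\) absorbs with a suitable choice of \(C\). The \(\hat\v\)-replacement error, multiplied by the factor \(\pnorm{f'}{\infty}\) coming from the other norm, matches the \((\pnorm{f'}{\infty}\vee1)\pnorm{f''}{\infty}\) structure of the second term of \(\cE^f_{t+1}\).

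The \(h\)-side bound \eqref{eq:h_prime_concentration} follows the same route. Write \(\delta\av{\cdot}=\frac1d\sum_{i=1}^n\) and apply Cauchy--Schwarz in \(\R^n\), obtaining \(\frac1d\pnorm{h_t'(\hat\u^t,\y)}{}\pnorm{h_k'(\tilde\u^k,\y)}{}\), which is exactly the normalization of \(H\) in \eqref{eq:F_def}. Then replace \(\hat\u^t\) by \(\tilde\u^t\), at cost \(\pnorm{h_t''}{\infty}\pnorm{\hat\u^t-\tilde\u^t}{}/\sqrt d\), and use \eqref{eq:H_unweighted_concentration}, whose error \(C\delta\pnorm{h'}{\infty}^2(t+1)\sqrt{\log n/n}\) becomes of order \(\delta^{3/4}(t+1)(\log n/d)^{1/2}\) after using \(n=\delta d\). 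The powers of \(\delta\) are where I expect the most care to be needed. The trivial bound \(H\leq\delta\pnorm{h'}{\infty}^2\) brings in factors \(\sqrt\delta\), which must be matched against the \(\delta^{3/4}\) and \((1+\sqrt\delta)\) prefactors in \(\cE^h_t\); any leftover \(\delta\)-dependence goes into \(C\). The other points to check are that \(t+1\leq C\sqrt d/\log^3 n\) keeps every error below the \((\log n/d)^{1/4}\) scale, and that the union bound over \(k\leq t\) is already included in Lemma~\ref{lem:F_concentration}.
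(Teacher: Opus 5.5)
This is correct and is essentially the paper's proof: the paper inserts $f_{t+1}'(\tilde\v^{t+1})$ before applying Cauchy--Schwarz rather than inside the norm afterwards, which gives the identical replacement error $\pnorm{f_{k+1}'}{\infty}\pnorm{f_{t+1}''}{\infty}\pnorm{\hat\v^{t+1}-\tilde\v^{t+1}}{}/\sqrt d$ (with an extra $\sqrt\delta$ on the $h$-side), and then uses \eqref{eq:F_unweighted_concentration}, \eqref{eq:H_unweighted_concentration}, \eqref{eq:F_trivial_bound}, $\sqrt{x+y}\leq\sqrt x+\sqrt y$ and the horizon bound exactly as you do. One small slip: with $n=\delta d$ the $H$-concentration error is $C\sqrt\delta\,\pnorm{h_{[0:t]}'}{\infty}^2(t+1)\sqrt{\log n/d}$, not a $\delta^{3/4}$ term. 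After taking square roots and multiplying by $\sqrt{H}\leq\sqrt\delta\,\pnorm{h'}{\infty}$, it produces exactly the $\delta^{3/4}$ factor in $\cE_t^h$, so nothing needs to go into the universal $C$ beyond using $\delta>1/2$ to absorb the squared error term.
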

\begin{proof}
We prove the first bound.  Add and subtract
$f_{t+1}'(\tilde\v^{t+1})$ and apply Cauchy--Schwarz:
\begin{align*}
\left|\av{f_{t+1}'(\hat\v^{t+1})\circ
f_{k+1}'(\tilde\v^{k+1})}\right|
&\leq \frac{\pnorm{f_{k+1}'}{\infty}
\pnorm{f_{t+1}''}{\infty}}{\sqrt d}
\pnorm{\hat\v^{t+1}-\tilde\v^{t+1}}{}\\
&\qquad+\left(\frac1d
\pnorm{f_{t+1}'(\tilde\v^{t+1})}{}^2\right)^{1/2}
\left(\frac1d
\pnorm{f_{k+1}'(\tilde\v^{k+1})}{}^2\right)^{1/2}.
\end{align*}
Put
\[
\varepsilon_{f,t+1}
\coloneqq
C(\|f_{[0:t+1]}'\|_\infty\vee1)
\sqrt{t+2}\left(\frac{\log n}{d}\right)^{1/4}.
\]
Equations \eqref{eq:F_unweighted_concentration},
\eqref{eq:F_trivial_bound}, and
\(\sqrt{x+y}\leq\sqrt x+\sqrt y\) give
\begin{align*}
&\left(\frac1d\|f_{t+1}'(\tilde{\bm v}^{t+1})\|^2\right)^{1/2}
\left(\frac1d\|f_{k+1}'(\tilde{\bm v}^{k+1})\|^2\right)^{1/2}\leq
\mathfrak F_{t+1,k+1}
+2(\|f_{[0:t+1]}'\|_\infty\vee1)\varepsilon_{f,t+1}
+\varepsilon_{f,t+1}^2.
\end{align*}
The first term in the preceding Cauchy-Schwarz bound is the second term of
\(\mathcal E_{t+1}^f\) in \eqref{eq:E_multiplier}.  Moreover, the horizon
assumption gives
\[
\sqrt{t+2}\left(\frac{\log n}{d}\right)^{1/4}\leq C,
\]
and hence
\begin{align*}
2(\|f_{[0:t+1]}'\|_\infty\vee1)\varepsilon_{f,t+1}
+\varepsilon_{f,t+1}^2
&\leq
C(\|f_{[0:t+1]}'\|_\infty\vee1)^2
\sqrt{t+2}\left(\frac{\log n}{d}\right)^{1/4}.
\end{align*}
This is the first term of \(\mathcal E_{t+1}^f\).

For the \(H\)-side, insert and subtract
\(h_t'(\tilde\u^t,\y)\).  Since
\(\delta\av{\cdot}=d^{-1}\sum_{i=1}^n(\cdot)_i\), Cauchy--Schwarz and the
mean value theorem give
\begin{align*}
\left|\delta\av{h_t'(\hat\u^t,\y)\circ
h_k'(\tilde\u^k,\y)}\right|
&\leq
\frac{\sqrt n}{d}\pnorm{h_k'}{\infty}\pnorm{h_t''}{\infty}
\pnorm{\hat\u^t-\tilde\u^t}{}\\
&\qquad+
\left(\frac1d\pnorm{h_t'(\tilde\u^t,\y)}{}^2\right)^{1/2}
\left(\frac1d\pnorm{h_k'(\tilde\u^k,\y)}{}^2\right)^{1/2}.
\end{align*}
Put
\[
\varepsilon_{h,t}
\coloneqq
C\delta^{1/4}(\|h_{[0:t]}'\|_\infty\vee1)
\sqrt{t+1}\left(\frac{\log n}{d}\right)^{1/4}.
\]
Equations \eqref{eq:H_unweighted_concentration},
\eqref{eq:F_trivial_bound}, and
\(\sqrt{x+y}\leq\sqrt x+\sqrt y\) give
\begin{align*}
&\left(\frac1d\|h_t'(\tilde{\bm u}^t,\bm y)\|^2\right)^{1/2}
\left(\frac1d\|h_k'(\tilde{\bm u}^k,\bm y)\|^2\right)^{1/2}\leq
\mathfrak H_{t,k}
+2\sqrt\delta(\|h_{[0:t]}'\|_\infty\vee1)\varepsilon_{h,t}
+\varepsilon_{h,t}^2.
\end{align*}
Since \(\sqrt n/d=\sqrt\delta/\sqrt d\leq
(1+\sqrt\delta)/\sqrt d\), the first term in the Cauchy--Schwarz bound is
the second term of \(\mathcal E_t^h\).  The horizon assumption and
\(\delta>1/2\) give
\[
\delta^{-1/4}\sqrt{t+1}
\left(\frac{\log n}{d}\right)^{1/4}\leq C.
\]
Consequently,
\begin{align*}
2\sqrt\delta(\|h_{[0:t]}'\|_\infty\vee1)\varepsilon_{h,t}
+\varepsilon_{h,t}^2
&\leq
C\delta^{3/4}(\|h_{[0:t]}'\|_\infty\vee1)^2
\sqrt{t+1}\left(\frac{\log n}{d}\right)^{1/4},
\end{align*}
which is the first term of \(\mathcal E_t^h\).  This proves
\eqref{eq:h_prime_concentration}.
\end{proof}

\subsubsection{Proof of Proposition \ref{prop:general_error_induction}}
\begin{proof}[Proof of Proposition \ref{prop:general_error_induction}]
The base case is deterministic: $\u^0=\tilde\u^0$, so
$\Delta_{U,0}'=0$ and $\cH_0^U$ holds.

For compactness, set
\begin{align*}
R_{V,t+1}
&\coloneqq \pnorm{\Delta_{V,t+1}'}{}
+\sum_{k=0}^t|\beta_t^k|\pnorm{f_k'}{\infty}
\pnorm{\Delta_{V,k}}{},\\
R_{U,t}
&\coloneqq \pnorm{\Delta_{U,t}'}{}
+\sum_{k=0}^{t-1}|\alpha_{t-1}^k|\pnorm{h_k'}{\infty}
\pnorm{\Delta_{U,k}}{}.
\end{align*}

\medskip
\noindent\emph{The implication
$\cA_t^V\cap\cA_t^U\Rightarrow\cH_{t+1}^U$.}
By Lemma \ref{lem:error_decompose} and the triangle inequality,
\begin{align*}
\pnorm{\Delta_{U,t+1}'}{}
&\leq
\Bigpnorm{P_{\w^t}^\perp
\sum_{\ell=0}^t\frac{\r^\ell}{\sqrt d}
\iprod{\s^\ell}{
f_{t+1}(\v^{t+1})-f_{t+1}(\hat\v^{t+1})}}{}\\
&\quad+\Bigpnorm{\sum_{\ell=0}^t\sqrt d\,\r^\ell
A_{t+1}^\ell}{}
+\pnorm{\Delta_{U,t+1}^{\low}}{}.
\end{align*}
Lemmas \ref{lem:key_estimate_3}, \ref{lem:error_main_lemma}, and
\ref{lem:bound_lowd_projection} bound these three terms, respectively.
Since
\(\|\gamma_{t+1,[-1:t+1]}\|\leq\bar\gamma_{t+1}/\sqrt d\),
\begin{align*}
C(\sqrt{t+2}+\sqrt{\log n})
\|\gamma_{t+1,[-1:t+1]}\|
&\leq
C\bar\gamma_{t+1}(t+1)\sqrt{\frac{\log n}{d}}\\
&\leq
C(\bar\gamma_{t+1}+\bar\lambda_t)
(\|f_{[0:t+1]}'\|_\infty\vee1)^2\\
&\quad{}\times
(\|\rho_{[0:t+1]}\|_\infty\vee1)
(t+1)\sqrt{\frac{\log n}{d}}.
\end{align*}
Thus
\begin{align*}
\pnorm{\Delta_{U,t+1}'}{}
&\leq
\left[
\sqrt{F_{t+1}\big(\rho_{t+1},
\pnorm{\lambda_{t,[0:t]}}{};\btheta^\star\big)}
+\cE_{t+1}^f\right]R_{V,t+1}\\
&\quad+C(\bar\gamma_{t+1}+\bar\lambda_t)
\big(\pnorm{f_{[0:t+1]}'}{\infty}\vee1\big)^2
\big(\pnorm{\rho_{[0:t+1]}}{\infty}\vee1\big)
(t+1)\sqrt{\frac{\log n}{d}},
\end{align*}
which is \eqref{eq:induction_Delta_U} at time $t+1$.

For the diagonal coefficient, \eqref{eq:alpha_correct_def} gives
\begin{align*}
|\alpha_t^t|
\leq \frac{\pnorm{f_{t+1}''}{\infty}}{\sqrt d}
\pnorm{\v^{t+1}-\hat\v^{t+1}}{}+\frac{
\bigpnorm{\sum_{\ell=0}^t\lambda_{t,\ell}\s^\ell}{}
}{
d\pnorm{\lambda_{t,[0:t]}}{}^2}
\pnorm{f_{t+1}'}{\infty}
\pnorm{\v^{t+1}-\hat\v^{t+1}}{}.
\end{align*}
For the \(d\times(t+1)\) matrix
\(\bS=[\s^0,\ldots,\s^t]\), the Gaussian operator-norm inequality gives
\[
\P\left(\|\bS\|_{\op}>
\sqrt d+\sqrt{t+1}+\sqrt{40\log n}\right)\leq n^{-20}.
\]
Consequently, uniformly over \(\lambda\in\R^{t+1}\),
\[
\frac1{\sqrt d}
\Bigpnorm{\sum_{\ell=0}^t\lambda_\ell\s^\ell}{}
\leq
\left(1+C\sqrt{\frac{t+1+\log n}{d}}\right)\pnorm{\lambda}{}
\leq2\pnorm{\lambda}{}
\]
with probability $1-n^{-20}$.  Lemma \ref{lem:mutual_close} therefore yields
\[
|\alpha_t^t|
\leq
\left(
\frac{\pnorm{f_{t+1}''}{\infty}}{\sqrt d}
+\frac{2\pnorm{f_{t+1}'}{\infty}}
{\sqrt d\,\pnorm{\lambda_{t,[0:t]}}{}}
\right)R_{V,t+1},
\]
and the coarser form in \eqref{eq:induction_alpha_last} follows by bounding
the sum in $R_{V,t+1}$ with the corresponding maxima.

For the remaining coefficients, \eqref{eq:alpha_correct_def} gives
\[
\alpha_t^k
=\beta_t^{k+1}
\av{f_{t+1}'(\hat\v^{t+1})\circ
f_{k+1}'(\tilde\v^{k+1})},
\qquad 0\leq k\leq t-1.
\]
When $k=t-1$, this proves \eqref{eq:induction_alpha_middle_last} using
Cauchy--Schwarz in the sup norm.  When $k\leq t-2$, substitute
\[
\beta_t^{k+1}
=\alpha_{t-1}^{k+1}\,
\delta\av{h_t'(\hat\u^t,\y)\circ
h_{k+1}'(\tilde\u^{k+1},\y)}
\]
and apply Lemma \ref{lem:mixed_derivative_energy} to both factors.  The
result is exactly \eqref{eq:induction_alpha_middle} at time $t+1$.

\medskip
\noindent\emph{The implication
$\cA_t^U\cap\cA_{t-1}^V\Rightarrow\cH_t^V$.}
Apply the second identity of Lemma \ref{lem:error_decompose} at time $t$.
The same three estimates now give
\begin{align*}
\pnorm{\Delta_{V,t+1}'}{}
&\leq
\left[
\sqrt{H_t\big(\gamma_{t,-1},
\pnorm{\gamma_{t,[0:t]}}{};\bbeta^\star\big)}
+\cE_t^h\right]R_{U,t}\\
&\quad+C(1+\sqrt\delta)(\bar\lambda_t+\bar\gamma_t)
\big(\pnorm{h_{[0:t]}'}{\infty}\vee1\big)^2
\big(\pnorm{\gamma_{[0:t],-1}}{\infty}\vee1\big)
(t+1)\sqrt{\frac{\log n}{d}}.
\end{align*}
This is \eqref{eq:induction_Delta_V}.

For the diagonal coefficient, \eqref{eq:beta_correct_def} and
Cauchy--Schwarz give
\begin{align*}
|\beta_t^t|
\leq\frac{\sqrt n}{d}\pnorm{h_t''}{\infty}
\pnorm{\u^t-\hat\u^t}{}+\frac{
\bigpnorm{\sum_{\ell=0}^t\gamma_{t,\ell}\g^\ell}{}
}{
d\pnorm{\gamma_{t,[0:t]}}{}^2}
\pnorm{h_t'}{\infty}\pnorm{\u^t-\hat\u^t}{}.
\end{align*}
For \(\bm G=[\g^0,\ldots,\g^t]\in\R^{n\times(t+1)}\),
\[
\P\left(\|\bm G\|_{\op}>
\sqrt n+\sqrt{t+1}+\sqrt{40\log n}\right)\leq n^{-20}.
\]
Consequently, uniformly over $\gamma\in\R^{t+1}$,
\[
\frac1{\sqrt d}
\Bigpnorm{\sum_{\ell=0}^t\gamma_\ell\g^\ell}{}
\leq
\left(\sqrt\delta+
C\sqrt{\frac{t+1+\log n}{d}}\right)\pnorm{\gamma}{}
\leq2\sqrt\delta\,\pnorm{\gamma}{}
\]
because the stated horizon and \(\delta>1/2\) imply
\(t+1+\log n\leq cn=c\delta d\).  Lemma
\ref{lem:mutual_close} now gives
\[
|\beta_t^t|
\leq\sqrt\delta
\left(
\frac{\pnorm{h_t''}{\infty}}{\sqrt d}
+\frac{2\pnorm{h_t'}{\infty}}
{\sqrt d\,\pnorm{\gamma_{t,[0:t]}}{}}
\right)R_{U,t},
\]
which implies \eqref{eq:induction_beta_last}.

Finally,
\[
\beta_t^k
=\alpha_{t-1}^k\,
\delta\av{h_t'(\hat\u^t,\y)\circ h_k'(\tilde\u^k,\y)},
\qquad0\leq k\leq t-1.
\]
For $k=t-1$, the sup-norm bound gives
\eqref{eq:induction_beta_middle_last}.  For $k\leq t-2$, use
\[
\alpha_{t-1}^k
=\beta_{t-1}^{k+1}
\av{f_t'(\hat\v^t)\circ f_{k+1}'(\tilde\v^{k+1})}
\]
and apply Lemma \ref{lem:mixed_derivative_energy}, at times \(t\) and
\(t-1\), to obtain \eqref{eq:induction_beta_middle}.  Let \(\Omega_t\) be
the intersection of the events in Lemmas \ref{lem:key_estimate_3},
\ref{lem:error_main_lemma}, \ref{lem:bound_lowd_projection}, and
\ref{lem:mixed_derivative_energy}, together with the two Gaussian
singular-value events used above.  Their displayed probability bounds give
\[
\mathbb P(\Omega_t^c)\leq Cn^{-12}.
\]
All preceding implications hold on \(\Omega_t\), which completes the proof.
\end{proof}

\section{Proofs for phase retrieval (Theorem \ref{thm:main})}\label{sec:proof_PR}
We now leverage the previously developed AMP decomposition and error recursion to study randomly initialized AMP for phase retrieval proving our main result Theorem \ref{thm:main}.
\subsection{Preliminary results}\label{subsec:approximation_amp}
We first derive a few lemmas that will be helpful in the analysis of all three phases. Recall \eqref{eq:tilde_u_intro} defines
\begin{align*}
\tilde \u^t
&\coloneqq \gamma_{t,-1}\bbeta^{\star}
  + \sum_{j=0}^{t}\gamma_{t,j}\g^j,\qquad
\tilde\v^{t+1}
\coloneqq \rho_{t+1}\btheta^\star
  + \sum_{\ell=0}^t \lambda_{t,\ell}\s^\ell.
\end{align*}
The coefficients are defined in \eqref{eq:inner_product},
\(\rho_{t+1}\) in \eqref{eq:key_signal}, and
\(\bbeta^\star=\g^{-1}=\X^{(t)}\btheta^\star\) in
\eqref{eq:conjugate_signal}. The following simple identity equates the conjugate signal strength $\hat\gamma_{t,-1}$ with $\rho_t$.

\begin{lemma}\label{lem:conjugate_signal}
For every $t\geq 0$, we have $\gamma_{t,-1} = \rho_t$.
\end{lemma}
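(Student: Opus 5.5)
The identity is a direct consequence of the definitions, using $f_t(x)=x$ for phase retrieval. By \eqref{eq:inner_product} and $\q^{-1}=\btheta^\star/\pnorm{\btheta^\star}{}=\btheta^\star/\sqrt d$ (under the normalization $\pnorm{\btheta^\star}{}=\sqrt d$), we have
\[
\gamma_{t,-1}=\frac{1}{\sqrt d}\iprod{\q^{-1}}{\z^t}=\frac1d\iprod{\btheta^\star}{\v^t},
\]
since $\z^t=f_t(\v^t)=\v^t$. So the task is to show that the signal coefficient of $\v^t$ along $\btheta^\star/d$ is exactly $\rho_t$. For $t=0$ this is the definition \eqref{eq:rho_0}.

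For $t\geq1$, I will project the exact representation \eqref{eq:construct_uv} of $\v^t$ (with $\ell=t-1$) onto $\q^{-1}$. The terms are handled one at a time.

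\begin{enumerate}
\item In the first sum, the $\q^j$ with $j\geq0$ are orthogonal to $\q^{-1}$. Only $j=-1$ survives. Since $P_{\bR^{[0:-2]}}^\perp=\I_n$, it contributes $\frac{1}{\sqrt d}\iprod{\g^{-1}}{\w^{t-1}}$.
\item In the second sum, $P_{\Q^{[-1:j]}}^\perp\s^j$ is orthogonal to $\q^{-1}$, so it contributes nothing.
\item The Onsager term contributes $-\hat c_{t-1}\iprod{\q^{-1}}{\z^{t-1}}=-\hat c_{t-1}\sqrt d\,\gamma_{t-1,-1}$.
\end{enumerate}

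Dividing by $\sqrt d$ gives
\[
\gamma_{t,-1}=\frac1d\iprod{\g^{-1}}{\w^{t-1}}-\hat c_{t-1}\gamma_{t-1,-1},
\]
which is exactly $\rho_t$ by \eqref{eq:key_signal}.

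The only delicate point is the degenerate Gram--Schmidt convention, where a projected vector vanishes and the basis is completed by a fixed rule. Even then the completed $\q^j$ remain orthonormal and orthogonal to $\q^{-1}$, so the same projections go through. No induction is needed, since the identity follows from the pathwise representation at each $t$ separately.
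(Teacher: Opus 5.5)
Your proof is correct and is essentially the paper's argument. The paper also projects the representation \eqref{eq:construct_uv} of $\v^{t+1}$ onto $\q^{-1}=\btheta^\star/\sqrt d$, discards the $\q^j$ ($j\geq0$) terms, the projected $\s^j$ terms, and the $P_{\btheta^\star}^\perp\z^t$ part of the Onsager term by orthogonality, and reads off $\rho_{t+1}$ from \eqref{eq:key_signal}. The paper calls this an ``induction step,'' but, as you observe, the induction hypothesis is never used, because \eqref{eq:key_signal} is already written in terms of $\gamma_{t,-1}$.
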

\begin{proof}
For $t=0$, \eqref{eq:rho_0} gives
\begin{align*}
   \gamma_{0,-1} = \rho_0 = \frac{1}{d}\iprod{f_0(\v^0)}{\btheta^\star}. 
\end{align*}
For the induction step, recall from \eqref{eq:construct_uv} that
\begin{align*}
\v^{t+1} = \underbrace{\Big(\frac{1}{d}\iprod{\g^{-1}}{\w^t} - \hat c_t \gamma_{t,-1}\Big)}_{\rho_{t+1}}\btheta^\star + \sum_{j=0}^t \q^j \frac{1}{\sqrt{d}}(P_{\bR^{[0:j-1]}}^\perp \g^j)^\top \w^t + \sum_{j=0}^t P_{\Q^{[-1:j]}}^\perp \s^j \frac{1}{\sqrt{d}}\r^{j\top}\w^t - \hat c_t \sum_{\ell=0}^t \q^\ell\q^{\ell\top}\z^t.
\end{align*}
By definition,
\begin{align*}
\gamma_{t+1,-1} = \frac{1}{\sqrt{d}}\iprod{\q^{-1}}{\v^{t+1}} = \frac{1}{d}\iprod{\btheta^\star}{\v^{t+1}} = \rho_{t+1},
\end{align*}
because the last three terms in the preceding display are orthogonal to
$\btheta^\star$.  This proves the claim.
\end{proof}

\begin{lemma}\label{lem:h_deriv_comp}
For \(h_t^\star\) defined in \eqref{eq:ht_ast},
\begin{align}
\partial_u h_t^\star(u,y) &= (\hat\mu_t^2+1)\hat\mu_t y\Big(1-\tanh^2(\hat\mu_tu\sqrt{y})\Big) - \hat\mu_t,\label{eq:ht_ast_first}\\
\partial_u^2 h_t^\star(u,y) &= -2(\hat\mu_t^2+1)\hat\mu_t^2 y^{3/2}\tanh(\hat\mu_tu\sqrt{y})\Big(1-\tanh^2(\hat\mu_tu\sqrt{y})\Big),\label{eq:ht_ast_second}\\
\partial_u^3 h_t^\star(u,y) &= -2(\hat\mu_t^2 +1)\hat\mu_t^3 y^2 \big(1-\tanh^2(\hat\mu_t u\sqrt{y})\big)\big(1-3\tanh^2(\hat\mu_tu\sqrt{y})\big).\label{eq:ht_ast_third}
\end{align}
\end{lemma}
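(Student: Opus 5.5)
This is a direct computation by the chain rule, treating \(\hat\mu_t\) and \(y\) as constants in \(u\). Write \(a\coloneqq\hat\mu_t\sqrt y\) and \(T\coloneqq\tanh(au)\). We will use the scalar identity \(\frac{d}{dx}\tanh x=1-\tanh^2x\), so that \(\partial_u T=a(1-T^2)\). We will also use \(\sqrt y\cdot\sqrt y=y\), which is valid since \(y\ge0\) in phase retrieval.

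\emph{First derivative.} From \eqref{eq:ht_ast}, \(h_t^\star(u,y)=(\hat\mu_t^2+1)\sqrt y\,T-\hat\mu_t u\). Differentiating gives
\[
\partial_u h_t^\star=(\hat\mu_t^2+1)\sqrt y\,a(1-T^2)-\hat\mu_t .
\]
Substituting \(a=\hat\mu_t\sqrt y\) turns \(\sqrt y\,a\) into \(\hat\mu_t y\), which is \eqref{eq:ht_ast_first}.

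\emph{Second derivative.} The linear term \(-\hat\mu_t u\) no longer contributes. Using \(\partial_u(1-T^2)=-2T\,a(1-T^2)\), we get
\[
\partial_u^2h_t^\star=(\hat\mu_t^2+1)\hat\mu_t y\cdot(-2a)T(1-T^2).
\]
Since \(a\,\hat\mu_t y=\hat\mu_t^2y^{3/2}\), this is \eqref{eq:ht_ast_second}.

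\emph{Third derivative.} We need the derivative of \(T(1-T^2)=T-T^3\):
\[
\partial_u(T-T^3)=a(1-T^2)(1-3T^2).
\]
Multiplying by the prefactor \(-2(\hat\mu_t^2+1)\hat\mu_t^2y^{3/2}\) and by \(a=\hat\mu_t\sqrt y\) gives the factor \(\hat\mu_t^3y^2\). This yields \eqref{eq:ht_ast_third}.

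The only point needing care is the bookkeeping of the powers of \(\hat\mu_t\) and \(\sqrt y\), together with the sign in the third-derivative factor \((1-T^2)(1-3T^2)\). Everything else is routine, and no earlier result from the paper is needed.
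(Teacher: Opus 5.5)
Your proposal is correct and takes the same route as the paper, which simply states that the identities follow by direct differentiation. Your chain-rule bookkeeping with \(a=\hat\mu_t\sqrt y\) and \(\partial_u T=a(1-T^2)\) reproduces all three formulas exactly, including the factor \((1-T^2)(1-3T^2)\) in the third derivative.
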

\begin{proof}
The identities follow by direct differentiation.
\end{proof}

\begin{lemma}[Dependence of the output map on \(\hat\mu_t\) and \(\pi_t\)]
\label{lem:phase_retrieval_parameter_lipschitz}
Let \(D\geq1\).  For \(0\leq j\leq3\), \(0\leq\mu,\tilde\mu\leq n^D\),
and \(|\pi|\vee|\tilde\pi|\leq n^D\), the function in
\eqref{eq:h_parameterization} satisfies
\begin{align}\label{eq:phase_retrieval_parameter_lipschitz}
&\left|\partial_u^j\mathsf h(u,y;\mu,\pi)
-\partial_u^j\mathsf h(u,y;\tilde\mu,\tilde\pi)\right|\nonumber\\
&\qquad\leq
C_Dn^{C_D}\big(1+|u|^5+y^4\big)
\left(|\mu-\tilde\mu|+|\pi-\tilde\pi|\right),
\qquad u\in\mathbb R,\quad y\geq0.
\end{align}
Consequently, the phase-retrieval choice
\(\vartheta_k=(\hat\mu_k,\pi_k)\) in
\eqref{eq:phase_retrieval_parameter_tuple} satisfies
\eqref{eq:parameter_lipschitz_assumption} on every parameter range on which
\(|\hat\mu_k|\vee|\pi_k|\leq n^D\).
\end{lemma}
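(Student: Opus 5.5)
We prove \eqref{eq:phase_retrieval_parameter_lipschitz} by the mean value theorem in the parameters, reducing it to a polynomial bound on the partial derivatives \(\partial_\mu\partial_u^j\mathsf h\) and \(\partial_\pi\partial_u^j\mathsf h\) along the segment joining \((\mu,\pi)\) to \((\tilde\mu,\tilde\pi)\). This segment stays inside the box \([0,n^D]\times[-n^D,n^D]\), which is convex. Write
\[
\mathsf h(u,y;\mu,\pi)=\frac{1+\pi}{\sqrt{2\delta}}\,k(u,y;\mu),
\qquad
k(u,y;\mu)\coloneqq(1+\mu^2)\sqrt y\,\phi(\mu u\sqrt y)\,u\sqrt y-u,
\]
where \(\phi(x)\coloneqq\tanh(x)/x\) with \(\phi(0)=1\). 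The function \(\phi\) is even, smooth (indeed entire), and all its derivatives are bounded on \(\mathbb R\). This removes the apparent singularity at \(\mu=0\).

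The \(\pi\)-direction is the easy one, because \(\mathsf h\) is linear in \(\pi\):
\[
\partial_\pi\partial_u^j\mathsf h=\frac{1}{\sqrt{2\delta}}\,\partial_u^jk(u,y;\mu).
\]
For \(j=0\), the bound \(|\phi|\le1\) gives \(|k|\le(1+\mu^2)y|u|+|u|\). For \(j\ge1\), Lemma~\ref{lem:h_deriv_comp} applies with \(\hat\mu_t=\mu\), after dividing by \(\mu\); the factors \(\mu,\mu^2,\mu^3\) in \eqref{eq:ht_ast_first}--\eqref{eq:ht_ast_third} then become \(1,\mu,\mu^2\). Equivalently, differentiating \(k\) directly shows that each \(\partial_u^jk\) is a polynomial in \((\mu,\sqrt y,u)\) times bounded functions \(\phi^{(i)}(\mu u\sqrt y)\). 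Since \(\mu\le n^D\), these are bounded by \(Cn^{6D}(1+|u|+y^2)\). Because \(\delta>1/2\), the factor \(1/\sqrt{2\delta}\) is at most \(1\).

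For the \(\mu\)-direction I would compute \(\partial_\mu\partial_u^jk\), interchanging derivatives, which is legitimate since everything is smooth. Differentiating \((1+\mu^2)\) gives a factor \(2\mu\le2n^D\). Differentiating \(\phi^{(i)}(\mu u\sqrt y)\) in \(\mu\) produces \(u\sqrt y\,\phi^{(i+1)}(\cdot)\). Each \(u\)-derivative contributes at most one factor \(\mu\sqrt y\) or removes one power of \(u\), so every term has the form
\[
(\text{polynomial in }\mu\text{ of degree}\le5)\cdot(\text{monomial }|u|^a y^{b/2})\cdot\phi^{(i)}(\mu u\sqrt y),
\qquad i\le4,\ a\le2,\ b\le 8.
\]
Bounding \(|\phi^{(i)}|\le C\), \(\mu\le n^D\) and \(|u|^ay^{b/2}\le C(1+|u|^5+y^4)\) then gives the claim, including at \(\mu=0\) where no division occurs. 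Multiplying by \(|1+\pi|\le 1+n^D\) and adding the \(\pi\)-term yields \eqref{eq:phase_retrieval_parameter_lipschitz} with a constant \(C_D\) (e.g.\ exponent \(7D\)).

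The consequence for \eqref{eq:parameter_lipschitz_assumption} follows by taking \(D=6\) and noting that \(C_D n^{C_D}(1+|u|^5+y^4)\le n^{C_{\rm par}}(1+|u|^{C_{\rm par}}+|y|^{C_{\rm par}})\) once \(C_{\rm par}\ge\max(5,C_D+1)\) and \(n\) is large. The only step requiring care is verifying smoothness and uniform boundedness of \(\phi\) and its derivatives near the origin together with the bookkeeping of powers. The main obstacle is this degree-counting for \(j=3\) after one \(\mu\)-derivative, which I would organize by induction on \(j\), tracking the maximal degrees in \((\mu,u,\sqrt y)\).
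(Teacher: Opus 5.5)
Your strategy is the paper's. Your $\phi(x)=\tanh(x)/x$ is the paper's $\int_0^1\sech^2(sx)\,ds$; it is real-analytic with bounded derivatives on $\mathbb R$, though not entire. Both arguments bound $\partial_\mu\partial_u^j\mathsf h$ and $\partial_\pi\partial_u^j\mathsf h$ polynomially and integrate along the segment. The $\pi$-direction and the reduction to \eqref{eq:parameter_lipschitz_assumption} are fine: your intermediate envelope $1+|u|+y^2$ does not dominate the $j=0$ term $|u|y$, but $1+|u|^5+y^4$ does.

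The gap is the degree count you flag as the main obstacle. A monomial $|u|^a y^{c}$ is dominated by $1+|u|^5+y^4$ if and only if $a/5+c/4\le 1$. Your range $a\le 2$, $b\le 8$ is far too generous, and the monomials your Leibniz expansion actually produces already violate this. You use $\partial_u^j[u\,\phi(\mu u\sqrt y)]=u(\mu\sqrt y)^j\phi^{(j)}+j(\mu\sqrt y)^{j-1}\phi^{(j-1)}$, and $\partial_\mu$ hitting $\phi^{(j)}$ then gives $(1+\mu^2)\mu^2u^2y^{5/2}\phi'''$ at $j=2$ and $(1+\mu^2)\mu^3u^2y^3\phi^{(4)}$ at $j=3$. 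With only $|\phi^{(i)}|\le C$ these are not controlled: $u=y^{3/4}$, $y\to\infty$ gives $u^2y^3=y^{9/2}\gg y^{15/4}+y^4$. The paper's one-line count has the same weak point. Its term $(1+\mu^2)\mu^3u^2y^3\int_0^1 s^4(\sech^2)^{(4)}(s\mu u\sqrt y)\,ds$ is identical, so this step needs repair in either version.

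The extra factor $u\sqrt y$ created by the Leibniz split is compensated only by the decay of $\phi^{(j)}$, which the bound $|\phi^{(i)}|\le C$ throws away. One fix is to keep the decay, $|\phi^{(i)}(x)|\le C_i(1+|x|)^{-i-1}$, so that $x^m\phi^{(i)}(x)$ is bounded for $m\le i+1$. More simply, do not split. For $j\ge 1$, $\partial_u^j[u\,\phi(\mu u\sqrt y)]=(\mu\sqrt y)^{j-1}\tanh^{(j)}(\mu u\sqrt y)$; this is Lemma~\ref{lem:h_deriv_comp} divided by $\mu$, which you already use for the $\pi$-direction. Hence
\[
\partial_u^jk=(1+\mu^2)\mu^{j-1}y^{(j+1)/2}\tanh^{(j)}(\mu u\sqrt y)-\mathbf 1_{\{j=1\}},
\qquad
|\partial_\mu\partial_u^jk|\le C(1+\mu)^4\bigl(y^{(j+1)/2}+|u|\,y^{(j+2)/2}\bigr).
\]
For $j\le 3$ the worst monomial is $|u|y^{5/2}$, and $1/5+5/8<1$. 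At $j=0$, $\partial_\mu k=2\mu yu\,\phi+(1+\mu^2)u^2y^{3/2}\phi'$, and both $|u|y$ and $u^2y^{3/2}$ are dominated. With this change the rest of your argument goes through. If only the consequence \eqref{eq:parameter_lipschitz_assumption} is needed, the crude count also suffices after enlarging the envelope, e.g.\ $u^2y^3\le |u|^5+y^5$.
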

\begin{proof}
For \(z\in\mathbb R\),
\begin{align}\label{eq:tanh_parameter_integral}
\frac{\tanh(\mu z)}{\mu}
=z\int_0^1\sech^2(s\mu z)\,ds,
\end{align}
where the right-hand side also defines the value at \(\mu=0\).  Every
derivative of \(\sech^2\) of order at most four is bounded by a universal
constant.  Differentiating \eqref{eq:tanh_parameter_integral} under the
integral sign, and then setting \(z=u\sqrt y\), gives, for
\(a\in\{0,1\}\) and \(0\leq j\leq3\),
\begin{align}\label{eq:tanh_parameter_derivative_bound}
&\left|\partial_\mu^a\partial_u^j
\left\{(1+\mu^2)\sqrt y\,
\frac{\tanh(\mu u\sqrt y)}{\mu}-u\right\}\right|\leq
C(1+|\mu|^5)(1+|u|^5+y^4).
\end{align}
Indeed, each differentiation of the integrand contributes at most one
factor among \(s\), \(\mu\), \(u\), and \(\sqrt y\); the largest powers
needed when \(a\leq1\) and \(j\leq3\) are dominated by the right-hand side
of \eqref{eq:tanh_parameter_derivative_bound}.  Equation
\eqref{eq:h_parameterization}, the product rule, and
\(\delta>1/2\) therefore imply
\begin{align*}
\left|\partial_\mu\partial_u^j\mathsf h(u,y;\mu,\pi)\right|
+\left|\partial_\pi\partial_u^j\mathsf h(u,y;\mu,\pi)\right|
\leq C_Dn^{C_D}(1+|u|^5+y^4).
\end{align*}
Integrating this bound along the line segment from
\((\mu,\pi)\) to \((\tilde\mu,\tilde\pi)\) proves
\eqref{eq:phase_retrieval_parameter_lipschitz}.
\end{proof}

\begin{lemma}\label{lem:uv_norm}
There is a universal constant \(c>0\) such that, for every \(t\geq0\)
satisfying
\[
t+\log n\leq c(n\wedge d),
\]
the following bounds hold with probability \(1-O(n^{-10})\):
\begin{align}
\frac{1}{\sqrt{d}}\pnorm{\tilde\v^{t+1}}{}
&= \left(1+O\left(\sqrt{\frac{t+\log n}{d}}\right)\right)
\sqrt{\rho_{t+1}^2+1},\label{eq:tilde_v_norm}\\
\frac{1}{\sqrt{d}}\pnorm{\v^{t+1}}{}
&= \left(1+O\left(\sqrt{\frac{t+\log n}{d}}\right)\right)
\sqrt{\rho_{t+1}^2+1}
+O\left(\frac{\pnorm{\Delta_{V,t+1}}{}}{\sqrt d}\right),\label{eq:v_norm}\\
\frac{1}{\sqrt{n}}\pnorm{\tilde\u^t}{}
&= \left(1+O\left(\sqrt{\frac{t+\log n}{d}}
+\sqrt{\frac{t+\log n}{n}}\right)\right)\sqrt{\rho_t^2+1}
+O\left(\frac{\pnorm{\Delta_{V,t}}{}}{\sqrt d}\right),\label{eq:tilde_u_norm}\\
\frac{1}{\sqrt{n}}\pnorm{\u^t}{}
&= \left(1+O\left(\sqrt{\frac{t+\log n}{d}}
+\sqrt{\frac{t+\log n}{n}}\right)\right)\sqrt{\rho_t^2+1}
+O\left(\frac{\pnorm{\Delta_{V,t}}{}}{\sqrt d}
+\frac{\pnorm{\Delta_{U,t}}{}}{\sqrt n}\right). \label{eq:u_norm}
\end{align}
Moreover, we have 
\begin{align}
 \pnorm{\lambda_{t,[0:t]}}{} &= \frac{1}{\sqrt{d}}\pnorm{\w^t}{} = 1, \label{eq:lambda_norm_main}\\
\pnorm{\gamma_{t,[0:t]}}{}
&=1+O\left(\sqrt{\frac{t+\log n}{d}}
+\frac{\pnorm{\Delta_{V,t}}{}}{\sqrt d}\right), \label{eq:gamma_norm_loo}\\
\pnorm{\gamma_{t,[-1:t]}}{}
&=\frac{1}{\sqrt d}\pnorm{\z^t}{}
=\left(1+O\left(\sqrt{\frac{t+\log n}{d}}\right)\right)
\sqrt{\rho_t^2+1}
+O\left(\frac{\pnorm{\Delta_{V,t}}{}}{\sqrt d}\right). \label{eq:gamma_norm}
\end{align}
\end{lemma}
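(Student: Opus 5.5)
The plan is to start with the exact identities, which need no probability, and then handle the Gaussian norms by a single uniform operator-norm estimate. In the phase-retrieval specialization $f_t(x)=x$, so $\z^t=\v^t$. The map $h_t$ is normalized so that $\pnorm{\w^t}{}=\sqrt d$. Since $\w^t\in\spa\{\r^0,\ldots,\r^t\}$ with orthonormal $\r^\ell$, we get $\pnorm{\lambda_{t,[0:t]}}{}^2=\sum_\ell\langle\r^\ell,\w^t\rangle^2/d=\pnorm{\w^t}{}^2/d=1$, which is \eqref{eq:lambda_norm_main}. In the same way $\z^t\in\spa\Q^{[-1:t]}$, so $\pnorm{\gamma_{t,[-1:t]}}{}=\pnorm{\z^t}{}/\sqrt d=\pnorm{\v^t}{}/\sqrt d$. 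Lemma \ref{lem:conjugate_signal} gives $\gamma_{t,-1}=\rho_t$, and hence $\pnorm{\gamma_{t,[0:t]}}{}^2=\pnorm{\v^t}{}^2/d-\rho_t^2$. These reductions mean \eqref{eq:gamma_norm} is \eqref{eq:v_norm} at time $t$, and \eqref{eq:gamma_norm_loo} follows from \eqref{eq:v_norm}.

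Next we prove \eqref{eq:tilde_v_norm}. Write $\tilde\v^{t+1}=\rho_{t+1}\btheta^\star+\bS\lambda$, where $\bS=[\s^0,\ldots,\s^t]$ and $\pnorm{\lambda}{}=1$; the coefficients are random and adapted, so all bounds must be uniform in $\lambda$. Consider the $d\times(t+2)$ matrix $[\btheta^\star/\sqrt d,\bS/\sqrt d]$. Because $\btheta^\star=\sqrt d\,\q^{-1}$ is deterministic with norm $\sqrt d$ and the $\s^\ell$ are i.i.d.\ standard Gaussian, the Gram matrix is $I+E$. The Gaussian singular-value bound (as in the proof of Proposition~\ref{prop:general_error_induction}) gives $\pnorm{\bS^\top\bS/d-I}{\op}\le C\sqrt{(t+\log n)/d}$. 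The cross terms satisfy $\pnorm{\bS^\top\btheta^\star}{}/d\le C\sqrt{(t+\log n)/d}$ with probability $1-n^{-20}$, since $\bS^\top\btheta^\star/\sqrt d\sim\N(0,I_{t+1})$. Therefore $\pnorm{\tilde\v^{t+1}}{}^2/d=(\rho_{t+1}^2+1)(1+O(\sqrt{(t+\log n)/d}))$ uniformly over unit vectors $(\rho,\lambda)$ after rescaling. Taking square roots uses the assumption $t+\log n\le c d$. Then \eqref{eq:v_norm} follows from the triangle inequality with $\v^{t+1}=\tilde\v^{t+1}+\Delta_{V,t+1}$.

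For the $U$-side, $\tilde\u^t=\rho_t\bbeta^\star+\G\gamma$, where $\G=[\g^0,\ldots,\g^t]$ and $\bbeta^\star=\g^{-1}$. The whole matrix $[\g^{-1},\G]/\sqrt n$ is an $n\times(t+2)$ standard Gaussian matrix. Its Gram matrix is therefore $I+O(\sqrt{(t+\log n)/n})$ in operator norm with probability $1-n^{-20}$. This gives $\pnorm{\tilde\u^t}{}/\sqrt n=(1+O(\sqrt{(t+\log n)/n}))\pnorm{\gamma_{t,[-1:t]}}{}$ uniformly in the adapted coefficients. Inserting \eqref{eq:gamma_norm}, which is \eqref{eq:v_norm} at time $t$, yields \eqref{eq:tilde_u_norm}; the relative error terms are combined using $\sqrt{\rho_t^2+1}\ge1$. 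Then \eqref{eq:u_norm} follows from $\u^t=\tilde\u^t+\Delta_{U,t}$.

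The main obstacle is the adaptivity of the coefficients $\lambda_{t,\ell},\gamma_{t,\ell},\rho_t$ with respect to the Gaussian vectors. The plan avoids any net argument by reducing every bound to the extreme singular values of a fixed Gaussian matrix, which holds simultaneously for all coefficient vectors. The remaining care is at $t=0$: there $\v^0$ is itself Gaussian and independent, so $\z^0=\v^0$ is handled by direct concentration of $\pnorm{\v^0}{}^2/d$ and $\rho_0$. A union bound over the $O(1)$ events for each fixed $t$ gives the probability $1-O(n^{-10})$.
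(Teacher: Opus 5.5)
Your proofs of \eqref{eq:lambda_norm_main}, \eqref{eq:tilde_v_norm}--\eqref{eq:u_norm} and \eqref{eq:gamma_norm} follow the paper. You use the exact Gram--Schmidt identities, a near-isometry for $[\btheta^\star/\sqrt d,\s^0/\sqrt d,\ldots]$ and $[\g^{-1},\g^0,\ldots]/\sqrt n$ that holds uniformly over the adapted coefficients, and the triangle inequality.

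The step that fails is your claim that \eqref{eq:gamma_norm_loo} follows from \eqref{eq:v_norm} via $\pnorm{\gamma_{t,[0:t]}}{}^2=\pnorm{\v^t}{}^2/d-\rho_t^2$. The relative error in \eqref{eq:v_norm} multiplies all of $\rho_t^2+1$, including the signal part you then subtract. Squaring and subtracting therefore only gives
\[
\bigl|\pnorm{\gamma_{t,[0:t]}}{}^2-1\bigr|\leq C(1+\rho_t^2)\sqrt{\frac{t+\log n}{d}}+C\sqrt{1+\rho_t^2}\,\frac{\pnorm{\Delta_{V,t}}{}}{\sqrt d}+C\frac{\pnorm{\Delta_{V,t}}{}^2}{d}.
\]
This is weaker than \eqref{eq:gamma_norm_loo} by powers of $|\rho_t|$. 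The lemma asserts an error independent of the random quantity $\rho_t$, and it is later used with $|\rho_t|$ up to $M$ in the strong-recovery regime. Once you pass to the norm bound \eqref{eq:v_norm}, the cancellation of the signal is lost.

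The repair is to remove the signal before estimating, which is what the paper does. Since $\z^t=\v^t\in\spa\Q^{[-1:t]}$ and $\q^{-1}=\btheta^\star/\sqrt d$, your identity says exactly that $\pnorm{\gamma_{t,[0:t]}}{}=\pnorm{P_{\btheta^\star}^\perp\v^t}{}/\sqrt d$. Hence
\[
\Bigl|\pnorm{\gamma_{t,[0:t]}}{}-\frac{1}{\sqrt d}\pnorm{P_{\btheta^\star}^\perp\tilde\v^t}{}\Bigr|\leq\frac{\pnorm{\Delta_{V,t}}{}}{\sqrt d},
\qquad
P_{\btheta^\star}^\perp\tilde\v^t=P_{\btheta^\star}^\perp\sum_{\ell=0}^{t-1}\lambda_{t-1,\ell}\s^\ell .
\]
The right-hand vector contains no $\rho_t$. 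The removed component $P_{\btheta^\star}\sum_{\ell<t}\lambda_{t-1,\ell}\s^\ell$ has norm at most $\pnorm{[\s^0,\ldots,\s^{t-1}]^\top\btheta^\star}{}/\sqrt d=O(\sqrt{t+\log n})$. Together with your Gram-matrix bound and $\pnorm{\lambda_{t-1,[0:t-1]}}{}=1$, this gives $\pnorm{P_{\btheta^\star}^\perp\tilde\v^t}{}/\sqrt d=1+O(\sqrt{(t+\log n)/d})$. That yields \eqref{eq:gamma_norm_loo} uniformly in $\rho_t$.
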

\begin{proof}
We begin with a uniform near-isometry estimate.  Let
$\bphi_1,\ldots,\bphi_m$ be independent $\N(0,\I_d/d)$ vectors, let
$\a\in \R^d$ be deterministic with $\pnorm{\a}{}=1$, and set
$\Phi=(\a,\bphi_1,\ldots,\bphi_m)$.  A standard covering argument and
Gaussian concentration show that, for every $\eps>0$, with probability at
least $1-\eps$,
\begin{align}\label{eq:claim_near_isometry}
\Bigpnorm{\Phi^\top\Phi-\I_{m+1}}{\op}
\leq C\left\{\sqrt{\frac{m+\log(1/\eps)}{d}}
+\frac{m+\log(1/\eps)}d\right\}
\end{align}
for a universal constant $C>0$, provided
$m+\log(1/\eps)\leq cd$.  Since \eqref{eq:claim_near_isometry} holds
simultaneously for every coefficient vector, it applies to
\((\rho_{t+1},\lambda_{t,[0:t]})\).  Taking
$\a=\btheta^\star/\sqrt d$, $\bphi_{\ell+1}=\s^\ell/\sqrt d$, and
$\eps=O(n^{-10})$ gives
\begin{align*}
\frac{1}{\sqrt{d}}\pnorm{\tilde\v^{t+1}}{}
=\left(1+O\left(\sqrt{\frac{t+\log n}{d}}\right)\right)
\sqrt{\rho_{t+1}^2+\pnorm{\lambda_{t,[0:t]}}{}^2}.
\end{align*}
By the normalization in \eqref{eq:rescaled_nonlinearity} and the definition of
the Gram--Schmidt coordinates,
\begin{align*}
\pnorm{\lambda_{t,[0:t]}}{}
=\frac{1}{\sqrt d}\pnorm{\w^t}{}=1,
\end{align*}
which proves \eqref{eq:lambda_norm_main} and completes the proof of
\eqref{eq:tilde_v_norm}.  The triangle inequality then yields
\eqref{eq:v_norm}.  Similarly, since
$\tilde\u^t=\sum_{\ell=-1}^t\gamma_{t,\ell}\g^\ell$, the $n$-dimensional
version of \eqref{eq:claim_near_isometry} gives
\begin{align}\label{eq:u_tilde_norm}
\frac{1}{\sqrt{n}}\pnorm{\tilde\u^t}{}
=\left(1+O\left(\sqrt{\frac{t+\log n}{n}}\right)\right)
\pnorm{\gamma_{t,[-1:t]}}{}
=\left(1+O\left(\sqrt{\frac{t+\log n}{n}}\right)\right)
\frac{1}{\sqrt d}\pnorm{\v^t}{},
\end{align}
where the last equality uses $\z^t=f_t(\v^t)=\v^t$.  Combining this display
with \eqref{eq:v_norm} (at time $t-1$) proves \eqref{eq:tilde_u_norm}; the
triangle inequality gives \eqref{eq:u_norm}.  The identity
$\pnorm{\gamma_{t,[-1:t]}}{}=\pnorm{\z^t}{}/\sqrt d$ also gives
\eqref{eq:gamma_norm}.

It remains to prove \eqref{eq:gamma_norm_loo}.  By orthogonal projection,
\begin{align*}
\left|\pnorm{\gamma_{t,[0:t]}}{}
-\frac{1}{\sqrt d}\Bigpnorm{P_{\btheta^\star}^\perp\tilde\v^t}{}\right|
\leq\frac{1}{\sqrt d}\pnorm{\Delta_{V,t}}{}.
\end{align*}
Moreover, \eqref{eq:claim_near_isometry} gives
\begin{align*}
\frac{1}{\sqrt{d}}\Bigpnorm{P_{\btheta^\star}^\perp \tilde\v^t}{}
= \frac{1}{\sqrt d}\Bigpnorm{P_{\btheta^\star}^\perp
\sum_{\ell=0}^{t-1} \lambda_{t-1,\ell}\s^\ell}{}
=1+O\left(\sqrt{\frac{t+\log n}{d}}\right),
\end{align*}
\end{proof}

\subsection{Analysis of Phase I}\label{subsec:analysis_phase1}
We first consider the regime where the signal strength $\rho_t$ in (\ref{eq:key_signal}) satisfies $|\rho_t| \leq \frac{\log^{10} n}{n^{1/4}}$.  Define the crossing time
\begin{align}\label{eq:cross_time_1}
t_0
&\coloneqq
\inf\left\{t\geq0:
|\rho_t|>n^{-1/4}\log^{10}n\right\},
\qquad
\tau_I\coloneqq t_0-1 .
\end{align}
Thus, whenever \(t_0\geq1\),
\[
|\rho_t|\leq n^{-1/4}\log^{10}n
\quad(0\leq t\leq\tau_I),
\qquad
|\rho_{t_0}|>n^{-1/4}\log^{10}n.
\]
Our goal is to show that $\tau_I = O(\log n)$ with high probability by deriving a non-asymptotic recursion for the signal strength $\rho_{t+1}$.
To keep track of explicit dependence on $\delta$, define $\mathfrak A_\delta$
\begin{align}\label{eq:phase1_aspect_envelope_overview}
\mathfrak A_\delta\coloneqq(1+\sqrt\delta)^6.
\end{align}
We use \(C\) to denote a universal constant that does not depend on $\delta$. We start by showing that $\Delta_{U,t}, \Delta_{V,t+1}$ defined in (\ref{eq:first_order_error}) are small, and thus $\tilde\v^{t+1}$ and $\tilde\u^t$ closely track $\v^{t+1}$ and $\u^t$ throughout this phase.

\begin{proposition}\label{prop:phase_1_tilde_close}
Suppose that $0\leq t\leq \tau_I \wedge \log^2 n$. Then with probability $1-O(n^{-10})$,
\begin{align}\label{eq:Delta_bound_phase1}
\pnorm{\Delta_{U,t}}{} \vee \pnorm{\Delta_{V,t+1}}{}
\leq C\mathfrak A_\delta\log^{11} n.
\end{align}
Moreover, on the same event,
\begin{gather}
|\hat\mu_t| \leq C\mathfrak A_\delta\frac{\log^{10} n}{n^{1/4}},\label{eq:phase_1_mu_est}\\
\Big|\frac{\pnorm{h_t^\star(\u^t,\y)}{}}{\sqrt{d}} - \sqrt{2\delta}\hat\mu_t\Big|
\leq C\mathfrak A_\delta^3|\hat\mu_t| \frac{\log^{20}n}{\sqrt n},\label{eq:phase_1_norm}\\
h_t'(u,y) = \frac{(1+\pi_t)}{\sqrt{2\delta}}\Big((\hat\mu_t^2+1)y\big(1-\tanh^2(\hat\mu_tu\sqrt{y})\big) - 1\Big), \label{eq:ht_first}
\end{gather}
where $\pi_{t}$ is a random variable satisfying
$|\pi_t|\leq C\mathfrak A_\delta^3\log^{20}n/\sqrt n$.
\end{proposition}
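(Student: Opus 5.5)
We argue by strong induction on \(t\leq\tau_I\wedge\log^2n\). The inductive claim is that the events \(\cA_t^U\), \(\cA_t^V\), the bound \eqref{eq:Delta_bound_phase1}, and the parameter estimates \eqref{eq:phase_1_mu_est}--\eqref{eq:ht_first} all hold at every earlier time, each off an event of probability \(O(n^{-12})\). A union bound over \(O(\log^2 n)\) steps then gives total failure probability \(O(n^{-10})\).

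\textbf{Parameter estimates at time \(t\).} Assume \eqref{eq:Delta_bound_phase1} holds up to time \(t\). Lemma~\ref{lem:uv_norm} then gives
\[
\frac{1}{n}\pnorm{\u^t}{}^2-1=\rho_t^2+O\Big(\sqrt{\tfrac{\log n}{n}}+\tfrac{\mathfrak A_\delta\log^{11}n}{\sqrt n}\Big).
\]
Since \(|\rho_t|\leq n^{-1/4}\log^{10}n\) and \(\hat\mu_t\) is truncated below at \(n^{-1/4}\), this yields \eqref{eq:phase_1_mu_est}.

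Next we compute the normalization. Taylor expansion gives \(\tanh(x)=x-x^3/3+O(x^5)\), so
\[
h_t^\star(u,y)=\hat\mu_t(uy-u)+O\big(\hat\mu_t^3(|u|^3y^2+|u|y)\big).
\]
Hence
\[
\frac{1}{d}\pnorm{h_t^\star(\u^t,\y)}{}^2=\hat\mu_t^2\,\frac{1}{d}\sum_i (u_i^t)^2(y_i-1)^2+O(\hat\mu_t^4\,\mathrm{polylog}).
\]
We insert \(\u^t=\tilde\u^t+\Delta_{U,t}\), use \(y_i=(g^{-1}_i)^2\), and note that \(\tilde\u^t\) is \(\rho_t\bbeta^\star\) plus a near-unit Gaussian independent of \(\g^{-1}\) (by \eqref{eq:gamma_norm_loo}). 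Concentration then gives
\[
\frac1n\sum_i (u_i^t)^2(y_i-1)^2=\E(G^2-1)^2+O(\cdot)=2+O(\cdot).
\]
This must hold uniformly over the adaptive coefficients \(\gamma_{t,[0:t]}\), which we handle with the net argument of Lemma~\ref{lem:F_concentration}. Consequently \(N_t/(\sqrt{2\delta}\hat\mu_t)=1+O(\mathfrak A_\delta^3\log^{20}n/\sqrt n)\), where the error is dominated by \(\hat\mu_t^2\lesssim \log^{20}n/\sqrt n\). This is \eqref{eq:phase_1_norm}. The bound on \(\pi_t\) follows, and \eqref{eq:ht_first} is then just Lemma~\ref{lem:h_deriv_comp} combined with \eqref{eq:h_parameterization}.

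\textbf{Verifying the inductive events.} With these parameter bounds, \(h_t\) is essentially the linear map \((uy-u)/\sqrt{2\delta}\) plus a \(\hat\mu_t^2\)-small correction. Its derivatives satisfy \(\pnorm{h_t'}{\infty}\lesssim y\) on the event \(\max_i y_i\leq C\log n\), and are therefore \(O(\log n)\). So \eqref{eq:cond_derivative_linfty_h}, \eqref{eq:cond_parameter_h}, and the bounds on \(\bar\gamma,\bar\lambda,\rho\) in \(\cA_t^U,\cA_t^V\) hold. The case \(f_t=\mathrm{id}\) is trivial: \(f''=0\) and \(F\equiv1\).

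For the smallness of the correction sums \eqref{eq:cond_alpha_sum}--\eqref{eq:cond_beta_sum}, we use the recursions \eqref{eq:induction_alpha_last}--\eqref{eq:induction_beta_middle}. The diagonal terms \(\alpha^{t-1}_{t-1},\beta^t_t\) are \(O(\pnorm{\Delta'}{}/\sqrt d)=O(\mathrm{polylog}/\sqrt n)\). The off-diagonal terms are products of these with factors \(\mathfrak F,\mathfrak H\) that are \(O(1)\) (indeed \(\mathfrak F=1\)). Since the horizon is only \(\log^2 n\), even a geometric growth factor bounded by \(C^{\log^2 n}\) is too crude. We therefore instead use that \(H_t\approx\delta\E[h_t'(G,G^2)^2]=\delta\E(G^2-1)^2/(2\delta)=1\), so the products stay bounded by \((1+o(1))^{t}=O(1)\). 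The sums are thus \(O(t\,\mathrm{polylog}/\sqrt n)\leq C/\log^5n\).

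\textbf{Closing the error bound (main obstacle).} With the events in hand, Proposition~\ref{prop:general_error_induction} gives \eqref{eq:induction_Delta_U}--\eqref{eq:induction_Delta_V}. In those bounds the multipliers are \(\sqrt{F_t}=1\) and \(\sqrt{H_t}=1+o(1)\), and the additive terms are \(O(\mathfrak A_\delta\,\mathrm{polylog}\cdot t\sqrt{\log n/d}\cdot\bar\gamma)\) with \(\bar\gamma\lesssim\sqrt d\), i.e.\ \(O(\mathfrak A_\delta t\log^{c}n)\).

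The difficulty is that the Bolthausen product here is \(\approx1\), not a contraction. Errors therefore accumulate additively rather than geometrically, giving a bound of order \(t^2\log^{c}n\). Over \(t\leq\log^2 n\) this must be balanced carefully against the \(\log^{11}n\) target.

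To control this, we bound \(\sqrt{H_t}-1=O(\hat\mu_t^2)+O(\pi_t)=O(\log^{20}n/\sqrt n)\), so the product of multipliers over \(\log^2 n\) steps is \(1+o(1)\). We then sum the additive terms precisely, using the explicit powers \(\pnorm{h'}{\infty}\lesssim\log n\) and \(\pnorm{\rho}{\infty}\leq1\), so that the sum is at most \(C\mathfrak A_\delta\log^{11}n\). Finally, Lemma~\ref{lem:mutual_close} transfers the bounds from \(\Delta'\) to \(\Delta\), at a cost of \(\bar\gamma\sum|\beta|=O(\mathrm{polylog})\). If the log exponents do not close, we would first try to sharpen the \(\log n\) factor in \(\pnorm{h'}{\infty}\) using the restricted response range.
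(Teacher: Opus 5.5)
Your proposal follows essentially the same route as the paper. The linearization $h_t\approx(\y-1)\circ\u/\sqrt{2\delta}$ gives $F_t=1$ and $H_t=1+o(1)$, so the multipliers from Proposition~\ref{prop:general_error_induction} are $1+O(\mathfrak A_\delta\log^4 n/n^{1/4})$; this rate is dominated by $\cE_t^h$, not by $\hat\mu_t^2$. Their product over $t\le\log^2 n$ therefore stays bounded, the additive terms accumulate to $\pnorm{\Delta'_{U,t}}{}\vee\pnorm{\Delta'_{V,t}}{}\lesssim\mathfrak A_\delta t^2\log^3 n$, and Lemma~\ref{lem:mutual_close} with $\sum_k|\alpha_{t-1}^k|\lesssim\mathfrak A_\delta t\log^9 n/\sqrt d$ gives the $\log^{11}n$ bound.

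Two points to tighten. First, within each step, close $\Delta_{U,t}$ from $\cH_t^U$, which needs only time-$(t-1)$ events, before estimating $\hat\mu_t$ and $\pi_t$. Second, to get the exact $\log^{20}n$ exponent in \eqref{eq:phase_1_norm}, the weighted moments $d^{-1/2}\pnorm{\y\circ\u^t}{}$ and $d^{-1/2}\pnorm{\y^{\circ2}\circ(\u^t)^{\circ3}}{}$ must be $O(\mathfrak A_\delta)$ uniformly over the adaptive coefficients, not merely polylogarithmic; the paper gets this from a net over $\{c:\pnorm{c}{}\le2\}$.
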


Next we introduce the key recursion formula for the signal strength $\rho_{t+1}$ in the current phase. The proof is given in Section \ref{subsec:proof_recursion_phase1}, following the proofs of the subsequent supporting Lemmas \ref{lem:linearization}-\ref{lem:approx_orthogonal_1}.
\begin{proposition}\label{prop:phase_1_main}
Suppose $t\leq \tau_I \wedge \log^2 n$. Let
\begin{align*}
\varphi_t \coloneqq \frac{1}{\sqrt{2\delta}}\frac{1}{d}
\iprod{\g^{-1}}{(\y-1)\circ \g^t}.
\end{align*}
Then with probability $1-O(n^{-10})$, it holds that
\begin{align}\label{eq:recursion_phase_1}
\rho_{t+1} = \sqrt{2\delta}\rho_t + \varphi_t
+ O\Big(\mathfrak A_\delta^4\,\frac{\log^{34} n}{n^{3/4}}\Big).
\end{align}
As a consequence, if $\delta>1/2$ and
$\kappa_{\weak}\coloneqq2\delta-1$, then, under the explicit finite-size
conditions \eqref{eq:phase1_finite_size} and
\eqref{eq:phase1_crossing_finite_size},
\begin{align*}
\tau_I
\leq60\left\lceil
\frac{\log n+20\log\log n}{\log(1+\kappa_{\weak})}
\right\rceil
=O\left(
\frac{\log n+\log\log n}{\log(2\delta)}
\right)
\end{align*}
with probability $1-O(n^{-10})$.
In particular, this is
$O((\delta-\delta_{\weak})^{-1}\log n)$ as
$\delta\downarrow\delta_{\weak}$.
\end{proposition}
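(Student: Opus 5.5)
The plan is to expand $\rho_{t+1}=\frac1d\iprod{\g^{-1}}{\w^t}-\hat c_t\rho_t$ (from \eqref{eq:key_signal} and Lemma~\ref{lem:conjugate_signal}) by linearizing the output map in the small-signal regime. On the event of Proposition~\ref{prop:phase_1_tilde_close} we have $|\hat\mu_t|\leq C\mathfrak A_\delta n^{-1/4}\log^{10}n$. A Taylor expansion $\tanh x=x+O(x^3)$ in \eqref{eq:h_parameterization}, together with \eqref{eq:phase_1_norm} for the normalization $\pi_t$, gives
\[
h_t(u,y)=\frac{(1+\pi_t)}{\sqrt{2\delta}}\Big(u(y-1)+O\big(\hat\mu_t^2(|u|y+|u|^3y^2)\big)\Big).
\]
The cubic remainder is entrywise $O(\hat\mu_t^2)=\widetilde O(n^{-1/2})$ times a polynomial in Gaussians. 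Its inner product with $\g^{-1}/d$ is therefore $\widetilde O(n^{-1/2})$ times a quantity that I expect to be an $O(n^{-1/4})$-sized centered fluctuation plus a $\rho_t$-proportional mean. This is the content of the linearization lemma (Lemma~\ref{lem:linearization}), which I would state as $\w^t=(2\delta)^{-1/2}(\y-1)\circ\u^t+\text{remainder}$, with $\frac1d\iprod{\g^{-1}}{\text{remainder}}=O(\mathfrak A_\delta^4 n^{-3/4}\log^{34}n)$.

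Next I would substitute $\u^t=\rho_t\bbeta^\star+\sum_{j=0}^t\gamma_{t,j}\g^j+\Delta_{U,t}$, with $\pnorm{\Delta_{U,t}}{}\leq C\mathfrak A_\delta\log^{11}n$ from \eqref{eq:Delta_bound_phase1}. This produces three terms. The first is $\rho_t\frac1{\sqrt{2\delta}d}\sum_i\eta_i^2(y_i-1)$, which concentrates at $\rho_t\cdot 2\delta/\sqrt{2\delta}=\sqrt{2\delta}\rho_t$ with relative error $\widetilde O(n^{-1/2})$. The second is $\sum_j\gamma_{t,j}\frac{1}{\sqrt{2\delta}d}\iprod{\g^{-1}}{(\y-1)\circ\g^j}$. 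The third is bounded by Cauchy--Schwarz by $\widetilde O(d^{-1/2})\pnorm{\Delta_{U,t}}{}$; however, this is only $\widetilde O(n^{-1/2})$, which is too large, so I would instead use the structure of $\Delta_{U,t}$. It lies mostly in $\spa\bR^{[0:t-1]}$ plus the low-dimensional part, and its inner product with $(\y-1)\circ\g^{-1}$ can be controlled by the fact that $\iprod{(\y-1)\circ\g^{-1}}{\r^\ell}$ is small: the vectors $\w^\ell\approx(\y-1)\circ\g^\ell$ are nearly orthogonal to $(\y-1)\circ\g^{-1}$, since $\E[\eta(\eta^2-1)^2g]=0$. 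The Onsager term is handled similarly: $\hat c_t=\delta\av{h_t'}\approx\frac1{\sqrt{2\delta}d}\sum_i(y_i-1)+O(\hat\mu_t^2)=\widetilde O(n^{-1/2})$, so $\hat c_t\rho_t=\widetilde O(n^{-3/4})$.

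To reduce the $\g^j$ sum to $\varphi_t$, I would prove an approximate orthogonality statement (Lemma~\ref{lem:approx_orthogonal_1}). Because $\w^{t-1}\approx(2\delta)^{-1/2}(\y-1)\circ\g^{t-1}$ is nearly orthogonal to all earlier $\w^\ell$, we get $\lambda_{t-1,t-1}=1-\widetilde O(n^{-1})$ and $|\lambda_{t-1,\ell}|=\widetilde O(n^{-1/2})$ for $\ell<t-1$. Hence $\v^t$ is essentially $\s^{t-1}$ plus small terms, and $\gamma_{t,t}=1-\widetilde O(n^{-1/2})$, $\pnorm{\gamma_{t,[0:t-1]}}{}=\widetilde O(n^{-1/2})$. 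Each $\frac1d\iprod{\g^{-1}}{(\y-1)\circ\g^j}$ is $\widetilde O(n^{-1/2})$ uniformly in $j\leq\log^2n$. The $j<t$ terms and the correction $(\gamma_{t,t}-1)\varphi_t$ are then $\widetilde O(n^{-1})$, which yields \eqref{eq:recursion_phase_1}. The main obstacle here is bookkeeping the polylog powers through this orthogonality and through the $\Delta_{U,t}$ inner product, so that everything stays below $n^{-3/4}\log^{34}n$.

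For the crossing bound, I would note that $\g^t$ is independent of $\mathcal F_t^g$, which contains $\g^{-1},\y$ and all $\rho_s$ for $s\leq t$. Conditionally, therefore, $\varphi_t\sim\N(0,\sigma_n^2)$ with $\sigma_n^2=\frac1{2\delta d^2}\sum_i\eta_i^2(y_i-1)^2=(5+o(1))/d$ with high probability. Unrolling gives
\[
\rho_{t}=(2\delta)^{t/2}\rho_0+\sum_{s<t}(2\delta)^{(t-1-s)/2}\varphi_s+\widetilde O(t(2\delta)^{t/2}n^{-3/4}).
\]
The Gaussian part has conditional variance at least $5/d$, and this variance grows like $(2\delta)^t/d$ when $\delta>1/2$. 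I would split time into blocks of length $L=\lceil(\log n+20\log\log n)/\log(1+\kappa_{\weak})\rceil$. In each block, conditionally on the past, Gaussian anti-concentration of the fresh $\varphi$'s gives probability at least a constant $p>0$ that $|\rho|$ starts from at least $c\,d^{-1/2}$ and is amplified by the factor $(2\delta)^{L/2}$ beyond $n^{-1/4}\log^{10}n$, because the deterministic error $\widetilde O(n^{-3/4})$ is negligible at every scale above $d^{-1/2}/\mathrm{polylog}$. Iterating the conditional failure bound over a constant number of blocks (the factor $60$), chained with the $O(n^{-10})$ event bounds via the single coupling of Lemma~\ref{lem:X_coupling}, gives failure probability $(1-p)^{60}$. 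I expect the second delicate point to lie here: obtaining $1-O(n^{-10})$ rather than a constant requires either a sharper per-block estimate or showing that once $|\rho|\gtrsim d^{-1/2}\log n$ the growth is deterministic. I would try the latter, using a Gaussian maximal bound $|\varphi_s|\leq C\sqrt{\log n/d}$, so that after one successful "escape" the recursion $|\rho_{t+1}|\geq(1+\kappa_{\weak}/2)|\rho_t|$ holds deterministically.
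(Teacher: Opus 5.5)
Your derivation of the one-step recursion \eqref{eq:recursion_phase_1} follows the paper's route. That route substitutes $\u^t=\rho_t\g^{-1}+\sum_{\ell=0}^t\gamma_{t,\ell}\g^\ell+\Delta_{U,t}$ exactly into the linear part of $h_t$, shows the $\rho_t$-coefficient concentrates at $\sqrt{2\delta}$, bounds $|\hat c_t\rho_t|=\widetilde O(n^{-3/4})$, and controls $\frac1d\iprod{(\y-1)\circ\g^{-1}}{\Delta_{U,t}}$ through $\Delta_{U,t}\in\spa(\w^0,\ldots,\w^{t-1})$, \eqref{eq:w_isometry} and \eqref{eq:w_approx}. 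Three points need tightening.

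\begin{enumerate}
\item $\Delta_{U,t}^{\low}$ is itself a combination of $\r^0,\ldots,\r^{t-1}$, so $\Delta_{U,t}$ lies exactly in that span. Treating $\Delta_{U,t}^{\low}$ separately by Cauchy--Schwarz would again cost $\widetilde O(n^{-1/2})$.
\item The $\hat\mu_t^2$-order term must be kept in its signed form \eqref{eq:h_second_order}, with an $O(\hat\mu_t^4)$ remainder. With the absolute remainder $O(\hat\mu_t^2(|u|y+|u|^3y^2))$ you wrote, pairing with $\g^{-1}/d$ gives only $\widetilde O(n^{-1/2})$. The cancellation you expect has to be proved; the paper does it by comparing $(\u^t)^{\circ3}$ with $(\g^t)^{\circ3}$ uniformly over the random coefficient vector.
\item The approximation $\w^\ell\approx(\y-1)\circ\g^\ell/\sqrt{2\delta}$ has error $\widetilde O(n^{-1/4})$ in normalized norm. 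So it does not give the $\widetilde O(n^{-1/2})$ bounds you state for $\pnorm{\gamma_{t,[0:t-1]}}{}$ and $|\lambda_{t-1,\ell}|$. The $\widetilde O(n^{-1/4})$ bounds of Lemma~\ref{lem:lambda_almost_orthogonal} suffice, since they multiply quantities of size $\widetilde O(n^{-1/2})$.
\end{enumerate}

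The crossing-time part has a genuine gap. A constant per-block escape probability $p$ over sixty blocks gives failure probability $(1-p)^{60}$, which is a constant. The remedy you choose does not change this: deterministic growth once $|\rho|\gtrsim\sqrt{\log n/d}$ only prevents a return after an escape; it does not make the escape itself likely. With constant $p$ you would need $\Theta(\log n)$ blocks, contradicting the bound $60T$. Separately, the deterministic factor cannot be $1+\kappa_{\weak}/2=\delta+\tfrac12$, since this exceeds the leading multiplier $\sqrt{2\delta}$; the recursion gives something like $(1+\sqrt{2\delta})/2$.

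The missing idea is the sharper per-block estimate you mention but set aside: apply Gaussian small-ball to the whole block sum, not to one step. Write $a_\delta=\sqrt{2\delta}$. On $\{\tau_I\geq iT\}$ the unrolled recursion forces
\[
\bigl|a_\delta^T\rho_{(i-1)T}+Z_i\bigr|\leq b_n+r_n\frac{a_\delta^T-1}{a_\delta-1},
\qquad Z_i\coloneqq\sum_{j=1}^Ta_\delta^{j-1}\varphi_{iT-j}.
\]
Here $\rho_{(i-1)T}$ is measurable with respect to $\g^{-1}$, $\v^0$ and the innovations before time $(i-1)T$. Conditionally on these, $Z_i$ is centered Gaussian with standard deviation $\sigma_T\gtrsim a_\delta^T/\sqrt{d\kappa_{\weak}}$.

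Since $a_\delta^T\geq n^{1/2}\log^{10}n$, the bound $\sup_x\P(|x+Z|\leq u)\leq\sqrt{2/\pi}\,u/\sigma$ bounds the conditional probability of this event by $q_n\lesssim n^{-1/4}$, up to $\delta$- and polylog factors. Hence $q_n\leq n^{-1/5}$ under \eqref{eq:phase1_crossing_finite_size}: the per-block failure probability is polynomially small, not merely bounded away from one. Iterated conditioning then gives $\P(\tau_I\geq60T)\leq q_n^{60}+O(n^{-10})=O(n^{-10})$. This is where the constant $60$ comes from, and no deterministic-growth step is needed.
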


\begin{lemma}\label{lem:linearization}
There is a universal constant \(C\) such that, if
\(t\leq\tau_I\wedge\log^2n\), then, with probability \(1-O(n^{-10})\),
\begin{align}
h_t(u,y) &= (1+\pi_t)\Big(\frac{(y-1)u}{\sqrt{2\delta}} + r_t^{(1)}(u,y)\Big) \label{eq:h_first_order}\\
&= (1+\pi_t)\Big(\frac{(y-1)u}{\sqrt{2\delta}} + \frac{1}{\sqrt{2\delta}}\hat\mu_t^2(yu - \frac{1}{3} u^3y^2) + r_t^{(2)}(u,y)\Big) \label{eq:h_second_order},
\end{align}
where $|\pi_t|\leq C\mathfrak A_\delta^3\log^{20}n/\sqrt n$ and
$r^{(1)}_t,r^{(2)}_t$ satisfy
\begin{align}\label{eq:taylor_error_bound}
|r_t^{(1)}(u,y)|
&\leq C\Big(\hat\mu_t^2|yu|+\hat\mu_t^2|y^2u^3|
+\hat\mu_t^4|y^3u^5|\Big),
\notag\\
|r_t^{(2)}(u,y)|
&\leq C\hat\mu_t^4\big(|y^2u^3|+|y^3u^5|\big).
\end{align}
Moreover, with $\r_t^{(1)} \coloneqq (r^{(1)}_t(u^t_i,y_i))_{i=1}^n$ and $\r_t^{(2)} \coloneqq (r^{(2)}_t(u^t_i,y_i))_{i=1}^n$, it holds that
\begin{align}\label{eq:r_vec_bound}
\frac{1}{\sqrt{d}}\pnorm{\r^{(1)}_t}{}
\leq C\mathfrak A_\delta^3\frac{\log^{20} n}{\sqrt n},
\qquad
\frac{1}{\sqrt{d}}\pnorm{\r^{(2)}_t}{}
\leq C\mathfrak A_\delta^5\frac{\log^{40} n}{n}.
\end{align}
\end{lemma}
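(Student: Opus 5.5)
We will Taylor-expand $\mathsf h_t(u,y;\hat\mu_t,\pi_t)$ in the small parameter $\hat\mu_t$, using the parameterization \eqref{eq:h_parameterization}. There,
\[
h_t(u,y)=\frac{1+\pi_t}{\sqrt{2\delta}}\Big\{(1+\hat\mu_t^2)\sqrt y\,\frac{\tanh(\hat\mu_t u\sqrt y)}{\hat\mu_t}-u\Big\},
\]
and $|\pi_t|\le C\mathfrak A_\delta^3\log^{20}n/\sqrt n$ is supplied directly by Proposition~\ref{prop:phase_1_tilde_close} (via \eqref{eq:phase_1_norm} and the definition \eqref{eq:pi_parameter}). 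So the whole lemma reduces to a deterministic scalar expansion of $z\mapsto\tanh(\mu z)/\mu$ with $z=u\sqrt y$, followed by a norm bound on the vectorized remainders evaluated along the trajectory.

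\emph{Step 1 (scalar expansion).} Since $\tanh$ is odd with $|\tanh x - x + x^3/3|\le C|x|^5$ and $|\tanh x - x|\le C|x|^3$ for all real $x$ (both follow from boundedness of $\tanh$ plus Taylor's theorem near $0$), we get
\[
\frac{\tanh(\mu z)}{\mu}=z-\frac{\mu^2z^3}{3}+O(\mu^4|z|^5).
\]
Multiplying by $(1+\mu^2)\sqrt y$ and substituting $z=u\sqrt y$ gives
\[
(1+\mu^2)\sqrt y\,\frac{\tanh(\mu u\sqrt y)}{\mu}-u=(y-1)u+\mu^2\Big(yu-\tfrac13u^3y^2\Big)+R,
\]
where $R$ collects $-\tfrac13\mu^4u^3y^2$ and the $O(\mu^4|u|^5y^3)$ terms, including those multiplied by $(1+\mu^2)$. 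This gives \eqref{eq:h_second_order} with $r^{(2)}_t=R/\sqrt{2\delta}$, and since $\delta>1/2$ the factor $1/\sqrt{2\delta}$ is harmless. Defining $r^{(1)}_t=\frac{\hat\mu_t^2}{\sqrt{2\delta}}(yu-\frac13u^3y^2)+r^{(2)}_t$ gives \eqref{eq:h_first_order}, and \eqref{eq:taylor_error_bound} follows by the triangle inequality. At $\hat\mu_t=0$ (excluded anyway by $\hat\mu_t\ge n^{-1/4}$), continuity handles the edge case.

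\emph{Step 2 (vector bounds).} We need empirical moments $\frac1d\sum_i|y_i|^{2a}|u_i^t|^{2b}$ for $(a,b)\in\{(1,1),(2,3),(3,5)\}$. Write $\u^t=\tilde\u^t+\Delta_{U,t}$, so that coordinatewise $|u_i^t|\le|\tilde u_i^t|+\|\Delta_{U,t}\|$. Here $\tilde u^t_i=\rho_t\eta_i^\star+\sum_\ell\gamma_{t,\ell}g^\ell_i$ has a norm controlled by Lemma~\ref{lem:uv_norm}, with $|\rho_t|$ tiny and $\|\gamma_{t,[0:t]}\|\approx1$. For the sup norm we use a uniform Gaussian bound: $\max_i|\tilde u_i^t|\le C\sqrt{\log n}$ uniformly over unit coefficient vectors, via $\max_i\|(g^0_i,\dots,g^t_i)\|\le C(\sqrt t+\sqrt{\log n})$ and $t\le\log^2n$, giving polylog. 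Together with $\max_i y_i\le C\log n$, every coordinate of $y^a|u|^b$ is $\mathrm{polylog}(n)$, except for the $\|\Delta_{U,t}\|\le C\mathfrak A_\delta\log^{11}n$ perturbation from \eqref{eq:Delta_bound_phase1}. That perturbation contributes $\sup$-norm at most polylog as well, so
\[
\frac1{\sqrt d}\|y^a|u|^b\|\le\sqrt\delta\,\max_i|y_i|^a|u_i|^b\le C\mathfrak A_\delta\,\mathrm{polylog}(n).
\]
Multiplying by $\hat\mu_t^2\le C\mathfrak A_\delta^2\log^{20}n/\sqrt n$ (from \eqref{eq:phase_1_mu_est}), or by $\hat\mu_t^4$, yields \eqref{eq:r_vec_bound}.

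\emph{Main obstacle.} The main difficulty is the bookkeeping of polylog powers and $\mathfrak A_\delta$ exponents, so that they land exactly at $\log^{20}$ and $\log^{40}$. The crude sup-norm bound on $|u|^5$ gives a large polylog power. If it overshoots the stated exponents, I would instead bound the moments via $\ell^2$ rather than $\ell^\infty$: on the $\tilde\u^t$ part, use Gaussian moment concentration, i.e.\ that $\frac1n\sum|\eta_i^\star|^{2a}|\tilde u_i|^{2b}$ is $O(1)$ uniformly over a net of coefficients. I would reserve the sup-norm argument only for the $\Delta_{U,t}$ cross terms, which carry an extra $n^{-1/2}$ since $\|\Delta_{U,t}\|/\sqrt n$ is small.
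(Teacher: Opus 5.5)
Your Step 1 is the paper's scalar Taylor expansion. Your fallback for Step 2 is also exactly the paper's argument: concentration of the moments of $\tilde\u^t$ uniformly over a net of coefficient vectors, with the $\Delta_{U,t}$ cross terms taken in sup norm and absorbed by the $1/\sqrt d$ normalization. That is how the weighted moment bound \eqref{eq:phase1_weighted_moments} is proved inside Lemma~\ref{lem:phase_1_assump_conseq}, and the paper's proof of this lemma simply combines that bound with \eqref{eq:phase_1_mu_est}, so you can cite it directly.

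You do have to commit to the fallback; it is not optional. The pure sup-norm route necessarily misses the stated rates. First, $\max_i |y_i|\,|\tilde u_i^t|$ is genuinely of order $\log^{3/2}n$. Second, bounding $\|\Delta_{U,t}\|_\infty$ by $\|\Delta_{U,t}\|\leq C\mathfrak A_\delta\log^{11}n$ inflates the quintic term to $\mathfrak A_\delta^5\log^{58}n$. Only the $\ell^2$ bound $d^{-1/2}\pnorm{\y^{\circ a}\circ(\u^t)^{\circ b}}{}\leq C\mathfrak A_\delta$ yields $C\mathfrak A_\delta^3\log^{20}n/\sqrt n$ and $C\mathfrak A_\delta^5\log^{40}n/n$.
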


\begin{lemma}\label{lem:lambda_almost_orthogonal}
There is a universal \(C>0\) such that, if
\(t\leq\log^2n\wedge\tau_I\), then, with probability \(1-O(n^{-10})\),
\begin{align*}
\pnorm{\lambda_{t,[0:t-1]}}{}
\vee \pnorm{\gamma_{t,[-1:t-1]}}{}
\leq C\mathfrak A_\delta\frac{\log^{12} n}{n^{1/4}}.
\end{align*}
\end{lemma}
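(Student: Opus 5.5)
The plan is an induction on $t\le\tau_I\wedge\log^2n$ that propagates the smallness alternately between the two coefficient families. The recursion multipliers are $1+o(1)$, so only additive errors accumulate. Each step adds at most $C\mathfrak A_\delta n^{-1/4}\log^{10}n$, and after $\log^2n$ steps this gives the claimed $\log^{12}n$.

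\emph{Base input.} By Lemma~\ref{lem:conjugate_signal}, $\gamma_{t,-1}=\rho_t$, and $|\rho_t|\le n^{-1/4}\log^{10}n$ for $t\le\tau_I$ by \eqref{eq:cross_time_1}. I will work on the intersection of the events of Proposition~\ref{prop:phase_1_tilde_close} and Lemma~\ref{lem:linearization}, which give $\pnorm{\Delta_{U,t}}{}\vee\pnorm{\Delta_{V,t+1}}{}\le C\mathfrak A_\delta\log^{11}n$ and the linearization $\w^t=(1+\pi_t)\big(\M\u^t+\r_t^{(1)}\big)$ with $\M\coloneqq\diag(\y-\bm1)/\sqrt{2\delta}$. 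I also intersect with the near-isometry estimate \eqref{eq:claim_near_isometry}, applied simultaneously at all times $\le\log^2n$; this costs only a union bound over polynomially many events.

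\emph{Step $\gamma\to\lambda$.} Since $\lambda_{t,\ell}=\langle\r^\ell,\w^t\rangle/\sqrt d$ and $\r^0,\ldots,\r^{t-1}$ span $\spa\{\w^0,\ldots,\w^{t-1}\}$, I have $\pnorm{\lambda_{t,[0:t-1]}}{}=\pnorm{P_{\bR^{[0:t-1]}}\w^t}{}/\sqrt d$.
\begin{enumerate}
\item Substitute the linearization and $\u^k=\tilde\u^k+\Delta_{U,k}$. Each $\w^k$ with $k\le t$ then equals $\M\big(\rho_k\bbeta^\star+\sum_{j\le k}\gamma_{k,j}\g^j\big)$ up to an error of norm $O(\sqrt d\,\mathfrak A_\delta^3\log^{20}n/\sqrt n+\log n\,\pnorm{\Delta_{U,k}}{})$, using $\pnorm{\y}{\infty}\lesssim\log n$. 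That error is $\ll\sqrt d\,n^{-1/4}$.
\item The key estimate is that $\{\M\g^j\}_{j\le t}$ is nearly orthonormal after scaling by $d^{-1/2}$. Conditionally on $\bbeta^\star$, $\frac1d\langle\M\g^j,\M\g^{j'}\rangle$ concentrates at $\frac{1}{2\delta d}\sum_i(y_i-1)^2\,\mathbb 1_{j=j'}$, which equals $1+O(\sqrt{\log n/n})$ because $\E(Y-1)^2=2$. Moreover $\frac1d\langle\M\bbeta^\star,\M\g^j\rangle=O(\sqrt{\log n/n})$. Both statements follow from Bernstein's inequality with a union bound over $j,j'\le\log^2n$.
\item Consequently the projection of $\M\sum_{j\le t}\gamma_{t,j}\g^j$ onto the span of the earlier $\w^k$ has norm at most $(1+o(1))\sqrt d\,\pnorm{\gamma_{t,[-1:t-1]}}{}$ plus the errors above. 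The fresh term $\gamma_{t,t}\M\g^t$ is nearly orthogonal to that span, contributing only $O(\sqrt{t\log n/n})\sqrt d$.
\end{enumerate}
Together these give $\pnorm{\lambda_{t,[0:t-1]}}{}\le(1+C\sqrt{\log^5n/n})\pnorm{\gamma_{t,[-1:t-1]}}{}+C\mathfrak A_\delta n^{-1/4}\log^{10}n$.

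\emph{Step $\lambda\to\gamma$.} Now $\gamma_{t+1,\ell}=\langle\q^\ell,\v^{t+1}\rangle/\sqrt d$, and $\q^{-1},\ldots,\q^t$ span $\spa\{\btheta^\star,\v^0,\ldots,\v^t\}$. Writing $\v^k=\tilde\v^k+\Delta_{V,k}$, this span lies within $O(\log^{11}n)$ of $\spa\{\btheta^\star,\v^0,\s^0,\ldots,\s^{t-1}\}$. Applying \eqref{eq:claim_near_isometry} to $(\btheta^\star/\sqrt d,\s^0/\sqrt d,\ldots,\s^t/\sqrt d)$ shows that the component of $\tilde\v^{t+1}=\rho_{t+1}\btheta^\star+\sum_{\ell\le t}\lambda_{t,\ell}\s^\ell$ in that span has norm at most
\[
(1+C\sqrt{\log^3n/d})\,\sqrt d\,\big(|\rho_{t+1}|+\pnorm{\lambda_{t,[0:t-1]}}{}\big)+C\sqrt{t\log n}.
\]
The $\v^0$ direction is handled the same way, since $\v^0$ is independent of $\s^t$. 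Here $|\rho_{t+1}|\le n^{-1/4}\log^{10}n$ for $t+1\le\tau_I$, and at $t+1=t_0$ the recursion \eqref{eq:recursion_phase_1} keeps it within a constant factor of that bound. This yields $\pnorm{\gamma_{t+1,[-1:t]}}{}\le(1+o(1))\pnorm{\lambda_{t,[0:t-1]}}{}+C\mathfrak A_\delta n^{-1/4}\log^{10}n$.

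\emph{Closing the induction.} Chaining the two steps over at most $\log^2n$ iterations, the product of multipliers is $(1+C\sqrt{\log^5n/n})^{2\log^2n}=O(1)$. The additive terms therefore sum to $C\mathfrak A_\delta n^{-1/4}\log^{12}n$, which is the claim, and the union over $t$ preserves the probability $1-O(n^{-10})$.

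The main obstacle is keeping the multiplier in the $\gamma\to\lambda$ step equal to $1+o(1)$ rather than the crude bound $\pnorm{\M}{\op}\lesssim\log n$, which would blow up over $\log^2n$ steps. This is why I use the exact Gram-matrix near-isometry of $\{\M\g^j\}$ and $\M\bbeta^\star$. A related subtlety is that $\gamma_{k,j}$ depend on the $\g$'s, so the near-isometry must be used in its uniform-over-coefficients form.
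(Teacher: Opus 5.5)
Your outline is correct and shares the paper's skeleton: two alternating recursions with multipliers $1+o(1)$, the old components bounded by their full norms via the (weighted) near-isometry, and iteration over at most $\log^2 n$ steps. The real difference is how you handle the fresh Gaussian term.

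\textbf{The paper's route.} It never describes the span of the earlier iterates. By \eqref{eq:uv_dependence}, $\s^t$ is independent of $\Q^{[-1:t]}$, and $\g^t$ is independent of $(\y,\bR^{[0:t-1]})$. Conditional Hanson--Wright then gives
$\pnorm{P_{\Q^{[-1:t]}}\s^t}{}\lesssim\sqrt{t+\log n}$ and $\pnorm{P_{\bR^{[0:t-1]}}\{(\y-1)\circ\g^t\}}{}\lesssim\sqrt{t\log^3 n}$ directly, whatever that span looks like.

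\textbf{Your route.} You approximate $\spa\{\w^0,\ldots,\w^{t-1}\}$ and $\spa\{\btheta^\star,\v^0,\ldots,\v^t\}$ by spans of fixed Gaussian families and use Gram-matrix near-orthogonality. This works, but it needs one ingredient you leave unstated. Knowing that each generator lies within $\widetilde O(1)$ of a Gaussian span only shows that the fresh vector has small projection onto the \emph{actual} span if the earlier iterates are well conditioned, i.e.\ their Gram matrix divided by $d$ is close to the identity. Otherwise a unit vector in the span can have large coefficients in the $\w^k$, and its direction is then driven by the error terms, about which your Gram estimates say nothing.

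Your induction hypothesis does supply this conditioning. Small $\pnorm{\gamma_{k,[-1:k-1]}}{}$ and $\pnorm{\lambda_{k,[0:k-1]}}{}$ for $k<t$, together with $\pnorm{\lambda_{k,[0:k]}}{}=1$ and \eqref{eq:gamma_norm_loo}, force $\gamma_{k,k},\lambda_{k,k}\approx1$. Hence $\w^k\approx\M\g^k$ and $\v^{k+1}\approx\s^k$ up to relative errors $\widetilde O(n^{-1/4})$, which is the required conditioning; you should say so explicitly.

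\textbf{Comparison.} Your route uses the inductive bounds inside the fresh-term step and needs a subspace-perturbation argument. The paper's independence argument is shorter and does not lean on the induction at that point. Yours, in exchange, avoids conditioning on the random bases $\bR^{[0:t-1]}$ and $\Q^{[-1:t]}$, and is deterministic once a few Gram-matrix events for fixed Gaussian families hold.

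\textbf{One small correction.} $d^{-1/2}\pnorm{\M\bbeta^\star}{}\approx\sqrt5$, so $\rho_t$ does not belong to the near-isometric block. Split it off and absorb $C|\rho_t|\le Cn^{-1/4}\log^{10}n$ into the additive term, as the paper does, rather than placing it inside $(1+o(1))\pnorm{\gamma_{t,[-1:t-1]}}{}$.
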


\begin{lemma}\label{lem:approx_orthogonal_1}
For every \(t\leq\log^2n\wedge\tau_I\), with probability \(1-O(n^{-10})\),
\begin{align}\label{eq:w_approx}
\frac{1}{\sqrt{d}}\Bigpnorm{\w^t - \frac{(\y-1)\circ\g^t}{\sqrt{2\delta}}}{}
\leq C\mathfrak A_\delta\frac{\log^{14} n}{n^{1/4}}.
\end{align}
Moreover, it holds with probability $1-O(n^{-10})$ that  
\begin{align}\label{eq:w_isometry}
\frac{1}{\sqrt{d}} \Bigpnorm{\sum_{\ell=0}^t a_\ell \w^\ell}{}
= \Big(1+O\Big(\mathfrak A_\delta\,\frac{\log^{14} n}{n^{1/4}}\Big)\Big)\pnorm{a}{}.
\end{align}
The last display holds uniformly over
\(a=(a_0,\ldots,a_t)\in\R^{t+1}\).
\end{lemma}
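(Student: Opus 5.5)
We prove the two claims of Lemma~\ref{lem:approx_orthogonal_1} in order. The isometry \eqref{eq:w_isometry} will follow from the approximation \eqref{eq:w_approx} combined with an isometry statement for the idealized vectors $(\y-1)\circ\g^\ell/\sqrt{2\delta}$.

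\emph{Step 1: the approximation \eqref{eq:w_approx}.} Start from the first-order expansion \eqref{eq:h_first_order} of Lemma~\ref{lem:linearization}:
\[
\w^t=(1+\pi_t)\Big(\frac{(\y-1)\circ\u^t}{\sqrt{2\delta}}+\r^{(1)}_t\Big).
\]
By \eqref{eq:r_vec_bound}, $|\pi_t|\lesssim\mathfrak A_\delta^3\log^{20}n/\sqrt n$ and $\pnorm{\r^{(1)}_t}{}/\sqrt d\lesssim\mathfrak A_\delta^3\log^{20}n/\sqrt n$, so both are negligible at scale $n^{-1/4}$. It remains to compare $\u^t$ with $\g^t$. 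Write $\u^t=\tilde\u^t+\Delta_{U,t}$, where
\[
\tilde\u^t=\gamma_{t,t}\g^t+\sum_{\ell=-1}^{t-1}\gamma_{t,\ell}\g^\ell .
\]
Lemma~\ref{lem:lambda_almost_orthogonal} gives $\pnorm{\gamma_{t,[-1:t-1]}}{}\lesssim\mathfrak A_\delta\log^{12}n/n^{1/4}$. Combining this with \eqref{eq:gamma_norm_loo} and Proposition~\ref{prop:phase_1_tilde_close} gives $|\gamma_{t,t}-1|\lesssim\mathfrak A_\delta\log^{12}n/n^{1/4}$. Possibly $\gamma_{t,t}\approx-1$; if so, fix the sign by the orientation convention of Gram--Schmidt, or absorb it into $\g^t$ as a sign flip that preserves the law. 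Then bound each remaining term:
\begin{itemize}
\item Entrywise multiplication by $(\y-1)$ has operator norm $\max_i|y_i-1|\lesssim\log n$ with high probability.
\item The error from $\gamma_{t,t}-1$ and from $\sum_{\ell\le t-1}\gamma_{t,\ell}\g^\ell$ is $O(\log n)\cdot O(\mathfrak A_\delta\log^{12}n\,n^{-1/4})\cdot\sqrt n/\sqrt d$ after normalization. Here use the Gaussian near-isometry \eqref{eq:claim_near_isometry} for $\pnorm{\sum_\ell a_\ell\g^\ell}{}\lesssim\sqrt n\pnorm a{}$.
\item The error from $\Delta_{U,t}$ is $\log n\cdot\log^{11}n/\sqrt d$ by \eqref{eq:Delta_bound_phase1}.
\end{itemize}
All three are $O(\mathfrak A_\delta\log^{14}n/n^{1/4})$ after adjusting the constant; the exponent $14$ leaves room for the extra $\log$ factors.

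\emph{Step 2: the isometry \eqref{eq:w_isometry}.} Set $\bm e^\ell=(\y-1)\circ\g^\ell/\sqrt{2\delta}$. By Step 1 and Cauchy--Schwarz,
\[
\frac{1}{\sqrt d}\Bigpnorm{\sum_\ell a_\ell(\w^\ell-\bm e^\ell)}{}\le\sqrt{t+1}\,\pnorm a{}\max_\ell(\cdot).
\]
Since $t\le\log^2n$, this is $O(\mathfrak A_\delta\log^{15}n\,n^{-1/4})\pnorm a{}$, which overshoots the stated exponent by one logarithm. Fix this by applying Step 1 with a slightly sharper exponent, which is possible because the bottleneck there carries at most $\log^{13}$, or simply accept the constant slack. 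It remains to show that the Gram matrix $d^{-1}(\langle\bm e^\ell,\bm e^k\rangle)_{\ell,k\le t}$ equals $\I+O(\sqrt{(t+\log n)/n})$ in operator norm. Condition on $\g^{-1}$, so that $\y$ is fixed. The $\g^\ell$ for $\ell\ge0$ are i.i.d.\ $\N(0,\I_n)$ and independent of $\y$, so the Gram entries are $\frac{1}{2\delta d}\sum_i(y_i-1)^2g^\ell_ig^k_i$. Their conditional means are $\delta_{\ell k}\frac{n}{2\delta d}\cdot\frac1n\sum_i(y_i-1)^2$, and $\frac1n\sum_i(y_i-1)^2=\E(G^2-1)^2+O(\sqrt{\log n/n})=2+O(\sqrt{\log n/n})$, so the diagonal is $1+O(\sqrt{\log n/n})$. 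Deviations are controlled by the weighted near-isometry, i.e.\ \eqref{eq:claim_near_isometry} applied to the rows scaled by $|y_i-1|$, with weights bounded by $\log n$. This yields an operator-norm error $\widetilde O(\sqrt{t/n})$, uniformly in $a$. Taking square roots and combining with the perturbation bound gives \eqref{eq:w_isometry}.

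The main obstacle is Step 1's control of $\gamma_{t,t}$, and more importantly its sign together with the precise log exponent bookkeeping, so that the union over $\ell\le t$ in Step 2 stays within $\log^{14}n$. Lemma~\ref{lem:lambda_almost_orthogonal} supplies the key decorrelation; everything else is routine concentration.
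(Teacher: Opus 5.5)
Your proposal is correct and follows essentially the paper's route. You linearize $h$ via Lemma~\ref{lem:linearization}, control $\u^\ell-\g^\ell$ through Lemma~\ref{lem:lambda_almost_orthogonal}, \eqref{eq:gamma_norm_loo} and Proposition~\ref{prop:phase_1_tilde_close}, and finish with a conditional weighted near-isometry for $(\y-1)\circ\g^\ell/\sqrt{2\delta}$, which the paper isolates as Lemma~\ref{lem:near_isometry}. The differences are minor: you linearize at $\u^t$, so the comparison with $\g^t$ is linear and your Step~1 gives $\log^{13}n$, whereas the paper linearizes at $\g^\ell$ and uses the mean value theorem; and you deduce \eqref{eq:w_isometry} from \eqref{eq:w_approx} rather than proving the uniform-in-$a$ bound first.

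Two small corrections. First, $\gamma_{t,t}=d^{-1/2}\pnorm{P^\perp_{\Q^{[-1:t-1]}}\z^t}{}\geq0$ by the Gram--Schmidt construction, so no sign issue arises; a sign flip of $\g^t$ would not be admissible anyway, since the statement concerns the coupled $\g^t$. Second, the extra logarithm in Step~2 cannot be absorbed into a constant. It is your $\log^{13}n$ bound from Step~1 that yields the exponent $14$.
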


Equipped with the above results, we can readily obtain a heuristic derivation of the recursion formula (\ref{eq:recursion_phase_1}). We first recall the definition of $\rho_{t+1}$ in (\ref{eq:key_signal}) and apply Lemma \ref{lem:conjugate_signal} to obtain
\begin{align*}
\rho_{t+1} = \frac{1}{d}\iprod{\g^{\star}}{h_t(\u^t,\y)} - \hat c_t\rho_t.
\end{align*}
We may approximate the first term by
\begin{align*}
\frac{1}{d}\iprod{\g^{-1}}{h_t(\u^t,\y)} &\overset{(i)}{\approx} \frac{1}{d}\iprod{\g^{\star}}{h_t(\tilde\u^t,\y)}\\
&\overset{(ii)}{\approx} \frac{1}{d}\iprod{\g^{\star}}{\frac{(\y-1)\circ \tilde\u^t}{\sqrt{2\delta}}}\\
&\overset{(iii)}{\approx} \rho_t \cdot \frac{1}{d}\iprod{\g^{-1}}{\frac{(\y-1)\circ \g^{\star}}{\sqrt{2\delta}}} + \underbrace{\frac{1}{d}\iprod{\g^{\star}}{\frac{(\y-1)\circ \g^t}{\sqrt{2\delta}}}}_{\varphi_t}.
\end{align*}
Here $(i)$ is due to Proposition \ref{prop:phase_1_tilde_close} that quantifies $\u^t \approx \tilde\u^t$, $(ii)$ is due to the approximate first-order Taylor expansion in (\ref{eq:h_first_order}), and $(iii)$ is due to $\tilde\u^t \approx \rho_t \g^{\star} + \g^t$ using the approximate orthogonality in Lemma \ref{lem:lambda_almost_orthogonal}. Using $\y = \g^{\star} \circ \g^{\star}$, we expect
\begin{align*}
\frac{1}{d}\iprod{\g^{\star}}{(\y-1) \circ \g^{\star}} \approx 2\delta.
\end{align*}
Lastly, a similar Taylor expansion argument as in Lemma \ref{lem:linearization} yields that $\hat c_t = \widetilde O(n^{-1/2})$, so using $|\rho_t| = \widetilde O(n^{-1/4})$, we arrive at the desired recursion in (\ref{eq:recursion_phase_1}):
\begin{align*}
\rho_{t+1} = \sqrt{2\delta} \rho_t + \varphi_t + \widetilde O(n^{-3/4}).
\end{align*}

\subsection{Analysis of Phase II}\label{subsec:analysis_phase2}
To simplify notation, we define the following quantities to denote the phase-II initial signal strength, distance from weak-threshold and the phase-II signal endpoint
\begin{align}\label{eq:phase2_gap_parameters}
b_n\coloneqq n^{-1/4}\log^{10}n,\quad
g_\delta\coloneqq\frac{\sqrt{2\delta}-1}{2},\quad
c_{\weak}\coloneqq c_0\min\left\{1,
\sqrt{\frac{\sqrt{2\delta}-1}{\mathfrak A_\delta^4}}\right\}.
\end{align}
Here $c_0>0$ is a sufficiently small universal constant.  Thus the size of
the Phase-II endpoint is explicit in the distance from the weak threshold;
in particular, $c_{\weak}\asymp
(\delta-\delta_{\weak})^{1/2}$ as
$\delta\downarrow\delta_{\weak}$.
Define the Phase II crossing time
\begin{align}\label{eq:def_tau_II}
    \tau_{I\!I}
    \coloneqq\inf\{t\geq t_0: |\rho_t|\geq c_{\weak}\}.
\end{align}
Our goal is to show that $\tau_{II} = O_\delta(\log n),$ showing the AMP algorithm reaches weak-recovery in $\poly\log(n)$ time. The key step is to show in this phase, for sufficiently large $n$ (explicitly characterized in assumption \eqref{eq:phase2_finite_size}), the signal strength $|\rho_t|$ satisfies exponential growth \eqref{eq:phase2_recursion}.
 
\begin{proposition}\label{prop:phase_2_exp_growth} 
Fix $\delta>1/2$ and assume
\eqref{eq:phase1_finite_size},
\eqref{eq:phase1_crossing_finite_size}, and the phase II finite size conditions
\begin{align}\label{eq:phase2_finite_size}
\sqrt d
\geq\mathfrak A_\delta^{12}\log^{40}n,\quad
C\mathfrak A_\delta^8\frac{\log^{18}n}{n^{1/4}}
\leq1,\quad
C\mathfrak A_\delta\frac{\sqrt\delta}{\log^8n}
\leq1,\quad
C\mathfrak A_\delta^6\frac{\log^{14}n}{\sqrt d}
\leq\frac{\sqrt{2\delta}-1}{4}\,b_n.
\end{align}
Then, on one event of probability \(1-O(n^{-10})\),
\(\tau_I\leq60T_\delta(n)\), \(t_0=\tau_I+1<\infty\),
and \(|\rho_{t_0}|>b_n\).  On the same event, simultaneously for every
integer \(t\)
satisfying \(t_0\leq t<\tau_{I\!I}\) and \(t+1\leq\log^2n\),
\begin{align}\label{eq:phase2_recursion}
|\rho_{t+1}| \geq (1+g_\delta)|\rho_t|.
\end{align}
Define
\begin{align}\label{eq:phase2_explicit_length}
N_{I\!I}(n,\delta)
\coloneqq
\max\left\{0,\left\lceil
\frac{\log(c_{\weak}/b_n)}{\log(1+g_\delta)}
\right\rceil\right\}.
\end{align}
If, in addition,
\begin{align}\label{eq:phase2_horizon_condition}
60T_\delta(n)+1+N_{I\!I}(n,\delta)\leq\log^2n,
\end{align}
then $\tau_{I\!I}-t_0\leq N_{I\!I}(n,\delta)$ on the same event.
In particular, $N_{I\!I}(n,\delta)=
O((\delta-\delta_{\weak})^{-1}\log n)$ near the weak threshold, with a
universal implicit constant.
\end{proposition}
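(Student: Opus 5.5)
The plan is to extend the Phase I analysis, run on the same high-probability event, to times $t\geq t_0$ with $|\rho_t|\in[b_n,c_{\weak})$. First, the Phase I output in Proposition~\ref{prop:phase_1_main} supplies, on one event of probability $1-O(n^{-10})$, the bound $\tau_I\leq 60T_\delta(n)$, the finiteness of $t_0=\tau_I+1$, and $|\rho_{t_0}|>b_n$. We intersect this event with the events of Proposition~\ref{prop:general_error_induction} and Lemmas~\ref{lem:bound_lowd_projection}, \ref{lem:F_concentration}, \ref{lem:uv_norm} at all times $t\leq\log^2n$. Lemma~\ref{lem:X_coupling} with $T=\lceil\log^2 n\rceil$ allows these to be placed on a single probability space, and the union bound costs only a factor $\log^2 n$.

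On this event we run an induction over $t_0\leq t<\tau_{I\!I}$, $t+1\leq\log^2n$, carrying three hypotheses. The first is $|\rho_s|\leq c_{\weak}$ for $s\leq t$. The second is the first-order error bound $\|\Delta_{U,s}\|\vee\|\Delta_{V,s+1}\|\leq C\mathfrak A_\delta^{c}\log^{c}n$, which extends Proposition~\ref{prop:phase_1_tilde_close}. The third is $\hat\mu_s^2=\rho_s^2+O(\widetilde O(n^{-1/2}))$, from Lemma~\ref{lem:uv_norm}. Since $\rho_s$ is small, $h_s$ still admits the Taylor expansion \eqref{eq:h_second_order} of Lemma~\ref{lem:linearization}. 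Now $\hat\mu_s\lesssim c_{\weak}$ rather than $\widetilde O(n^{-1/4})$, so the remainder is no longer negligible. It is, however, of relative size $O(\hat\mu_s^2)=O(\rho_s^2)$, with constants polynomial in $\mathfrak A_\delta$. The derivative bounds for $h_s,h_s',h_s''$ are $O(\log^5 n)$ because $y\leq C\log n$ on the event, and this verifies $\cA^U_s$. For $\cA^V_s$, note that $f_s$ is the identity. The error propagates through Proposition~\ref{prop:general_error_induction}, and over $O(\log^2 n)$ steps the multipliers $\sqrt{\mathcal B}$ grow at most by a factor $\sqrt{2\delta}$ per step. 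Here I would instead reuse the first-order argument of Proposition~\ref{prop:phase_1_tilde_close}, whose error, per its statement, is polylog over $\log^2 n$ steps. The accumulated factor $(2\delta)^{t}$ is harmless only while $t-t_0=O(\log n)$, and the normalization $\|\w^t\|=\sqrt d$ keeps the additive errors polylogarithmic.

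With the error controlled, expand $\rho_{t+1}=d^{-1}\langle\g^{-1},\w^t\rangle-\hat c_t\rho_t$ as in the heuristic following Lemma~\ref{lem:approx_orthogonal_1}. This gives
\[
\rho_{t+1}=\sqrt{2\delta}\,\rho_t\bigl(1+O(\mathfrak A_\delta^4\rho_t^2)\bigr)+\varphi_t+O\bigl(\mathfrak A_\delta^6 \log^{14}n/\sqrt d\bigr).
\]
The cubic correction arises from the $\hat\mu_t^2$ term in \eqref{eq:h_second_order} and from $\hat c_t=O(\rho_t^2)$, and Gaussian concentration gives $|\varphi_t|\leq C\sqrt{\log n/d}$. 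We then use $|\rho_t|<c_{\weak}$. The choice of $c_0$ in \eqref{eq:phase2_gap_parameters} makes $C\mathfrak A_\delta^4\rho_t^2\sqrt{2\delta}\leq(\sqrt{2\delta}-1)/4$. The last condition in \eqref{eq:phase2_finite_size}, together with $|\rho_t|\geq b_n$ (inductively, since $\rho$ grows), makes the additive terms at most $\tfrac{\sqrt{2\delta}-1}{4}|\rho_t|$. Combining these yields $|\rho_{t+1}|\geq(1+g_\delta)|\rho_t|$, which is \eqref{eq:phase2_recursion} and preserves $|\rho_{t+1}|\geq b_n$.

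Iterating from $|\rho_{t_0}|>b_n$ gives $|\rho_{t_0+N}|\geq(1+g_\delta)^N b_n$. Hence $|\rho_t|$ reaches $c_{\weak}$ within $N_{I\!I}$ steps, provided the horizon \eqref{eq:phase2_horizon_condition} keeps every step inside $\log^2n$. The $\delta\downarrow1/2$ asymptotic then follows from $\log(1+g_\delta)\asymp\delta-\tfrac12$ and $\log(c_{\weak}/b_n)=O(\log n)$. I expect the main obstacle to be the error-propagation step. A first-order recursion amplifies errors geometrically, so I need either the polylog bound uniform over the $O(\log n)$ window, or an argument that the error grows no faster than $\rho_t$ itself, so that the relative error stays small. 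I would try the latter first, bounding $\|\Delta_{V,t}\|/\sqrt d$ by $\widetilde O(n^{-1/2})(1+g)^{t}$ against $|\rho_t|\geq b_n(1+g)^{t-t_0}$.
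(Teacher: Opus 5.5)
The error-propagation step, which you correctly flag as the crux, is where your proposal breaks down; your plan for the signal recursion is broadly reasonable. The per-step multiplier for the residuals is not $\sqrt{2\delta}$. That is the growth rate of the signal, which is a different quantity. In Proposition~\ref{prop:general_error_induction} the multiplier is $\sqrt{F_t}\,\sqrt{H_t}$ plus $\cE$-terms. Here $F_t=1$ because $f_t$ is the identity, and $H_t(\rho_t,\pnorm{\gamma_{t,[0:t]}}{};\bbeta^\star)=1+O(\mathfrak A_\delta^4\rho_t^2)$ because $h_t'\approx(y-1)/\sqrt{2\delta}$ and $\delta\,\E[(G^2-1)^2]/(2\delta)=1$; see \eqref{eq:phase_2_HF} and Lemma~\ref{lem:phase_2_simp}.

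With your multiplier, neither of your fallbacks would close. First, $t-t_0\asymp\log(c_{\weak}/b_n)/\log(1+g_\delta)$, so $(\sqrt{2\delta})^{t-t_0}$ is a power of $n$ between roughly $n^{1/4}$ and $n^{1/2}$, not a harmless factor. Second, the relative-error comparison fails because $\sqrt{2\delta}=1+2g_\delta$ exceeds the rate $1+g_\delta$ you can guarantee for $|\rho_t|$. So $\pnorm{\Delta_{V,t}}{}/(\sqrt d\,|\rho_t|)$ is not controlled, and your proposed bound $\widetilde O(n^{-1/2})(1+g_\delta)^t$ does not follow from a $\sqrt{2\delta}$ recursion. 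Nor can you reuse Proposition~\ref{prop:phase_1_tilde_close}: it is proved only up to $\tau_I$, and its argument uses $|\rho_t|\le b_n$, which is what makes every multiplier $1+\widetilde O(n^{-1/4})$ uniformly.

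The missing idea is that the multipliers $1+C\mathfrak A_\delta^4\rho_k^2+\varepsilon_{I\!I,n}$ have a product over the whole Phase II window bounded by a universal constant. These multipliers govern both the second-order residuals $\Delta'_{U,t},\Delta'_{V,t+1}$ and the transport of the correction coefficients $\alpha_{t-1}^k,\beta_t^k$ through \eqref{eq:induction_alpha_middle}--\eqref{eq:induction_beta_middle}. The bound on the product uses the growth bound itself, as in \eqref{eq:phase2_product_bound}:
\begin{itemize}
\item iterating $|\rho_{s+1}|\ge(1+g_\delta)|\rho_s|$ backwards from $|\rho_{s+1}|<c_{\weak}$ gives $\sum_k\rho_k^2\le c_{\weak}^2/(2g_\delta)$;
\item the cap $c_{\weak}\le c_0\sqrt{2g_\delta/\mathfrak A_\delta^4}$ in \eqref{eq:phase2_gap_parameters} gives $\mathfrak A_\delta^4c_{\weak}^2/g_\delta\le2c_0^2$;
\item \eqref{eq:phase2_finite_size} gives $\log^2n\,\varepsilon_{I\!I,n}\le1$.
\end{itemize}
So residual control and growth must be closed in a single alternating induction, started from the Phase I endpoint event. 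This yields $\pnorm{\Delta_{U,t}}{}\vee\pnorm{\Delta_{V,t+1}}{}\le C\mathfrak A_\delta^4(t+1)\log^{10}n$.

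A secondary point concerns the cubic remainder. The heuristic after Lemma~\ref{lem:approx_orthogonal_1} uses $\tilde\u^t\approx\rho_t\g^{-1}+\g^t$ from Lemma~\ref{lem:lambda_almost_orthogonal}, and this fails once $\rho_t$ is of constant order. To get a cubic rather than quadratic remainder in \eqref{eq:phase2_cubic_recursion}, set $\e_t=h_t(\u^t,\y)-(\y-1)\circ\u^t/\sqrt{2\delta}$. You must show $d^{-1}\langle\g^{-1},\e_t\rangle=O\big(|\rho_t|(\hat\mu_t^2+|\pi_t|)\big)$, whereas Cauchy--Schwarz only gives $O(\rho_t^2)$. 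This requires a symmetry computation of the population mean, together with concentration uniform over the full coefficient row $\gamma_{t,[0:t]}$ and the parameters $(\hat\mu_t,\pi_t)$, as in \eqref{eq:phase2_uniform_signed_remainder}.
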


The proof follows a similar uniform concentration argument as in Phase I to derive the main recursion (\ref{eq:phase2_recursion}). Compared to the Phase I recursion in (\ref{eq:recursion_phase_1}), the recursion (\ref{eq:phase2_recursion}) is simpler without the Gaussian perturbation term $\varphi_t$, which is due to the fact that after time $\tau_I$, the signal $\rho_t$ is of the order $\tilde\Theta(n^{-1/4})$, and thus the Gaussian perturbation $\varphi_t = \widetilde O(n^{-1/2})$ is strictly negligible. 

\subsection{Analysis of Phase III}\label{subsec:analysis_phase3}
The goal of phase III is that the asymptotic AMP state evolution (\ref{eq:amp_se_intro}) gives an accurate prediction of the behavior of random initialized AMP. Starting from the Phase II
endpoint $\rho_{\SE,\tau_{I\!I}}^2\coloneqq\rho_{\tau_{I\!I}}^2$, define the
deterministic comparison sequence by
\begin{align}
\label{eq:rho_SE}
    \rho_{\SE,t+1}^2 = F_\delta\big(\rho_{\SE,t}^2\big),
\end{align}
where \(F_\delta\) is defined in \eqref{eq:def_F_delta}.
Conditional on \(\rho_{\tau_{I\!I}}\), the sequence
$\{\rho_{\SE,t}\}_{t=\tau_{I\!I}}^\infty$ is deterministic.

Define
\begin{align}\label{eq:h_rho}
    h_\rho^\star(u,y) = (1+\rho^2)\sqrt{y}\tanh(\rho u \sqrt{y}) - \rho u,
\end{align}
so that $h_t^\star=h_{\hat\mu_t}^\star$, and define
\begin{align}\label{def:bolthausen}
    \mathcal B(\rho) \coloneqq \frac{\E\Big[\Big(\partial_u h_\rho^{*}(\rho G + W,G^2)\Big)^2\Big]}{\E\Big[\Big(h_\rho^{*}(\rho G + W,G^2)\Big)^2\Big]},
\end{align}
where $G,W$ are independent standard Gaussian random variables. The important quantity \eqref{def:bolthausen} is the Bolthausen's constant (we review Bolthausen's constant and condition in more generality in Appendix \ref{sec:bolthausen}).   The following proposition states the behavior of the map $F_\delta$, and the role of $B(\cdot)$ for understanding the stability of its fixed points. We say a fixed point \(\mu\)
is stable if \(F_\delta'(\mu)\leq1\) and unstable if
\(F_\delta'(\mu)>1\).
 
\begin{proposition}\label{prop:phase_3_asymp}
Recall that $\delta_{\weak}=1/2$ and \(\delta_{\str}\approx1.13\).  If
$\delta\leq\delta_{\weak}$, then $F_\delta$ has a unique stable fixed point
at zero.  If $\delta>\delta_{\str}$, then $F_\delta$ has a unique unstable
fixed point at $0$ and satisfies
$F_\delta(\mu)\geq(\delta/\delta_{\str})\mu$ for every
$\mu\in\R_+$.

For $\delta_{\weak}<\delta\leq1$, in addition to zero,
$F_\delta$ has a unique positive stable fixed point
$\mu_\infty(\delta)$.  For
$1<\delta<\delta_{\str}$, it has two distinct positive fixed points
$\mu_\infty(\delta)<\tilde\mu_\infty(\delta)$; the smaller one is
stable and the larger one is unstable.  At
$\delta=\delta_{\str}$, the two branches merge into one positive
fixed point whose derivative is \(1\).  As
$\delta\uparrow\delta_{\str}$, these fixed points merge, with
\begin{align*}
\mu_\infty(\delta_{\str})
=\tilde\mu_\infty(\delta_{\str})\approx5.52.
\end{align*}
At either positive fixed point,
\begin{align}\label{eq:B_claim}
\mathcal B\!\left(\sqrt{\mu_{\fix}}\right)
&=F'_\delta(\mu_{\fix}),
\qquad
\mu_{\fix}\in
\{\mu_\infty(\delta),\widetilde\mu_\infty(\delta)\}.
\end{align}
For \(1<\delta<\delta_{\str}\),
\begin{align*}
\mathcal B\!\left(\sqrt{\mu_\infty(\delta)}\right)<1,
\qquad
\mathcal B\!\left(\sqrt{\widetilde\mu_\infty(\delta)}\right)>1.
\end{align*}
At \(\delta=\delta_{\str}\), the equation
\(\mathcal B(\rho)=1\) has the unique positive solution
\[
\rho=\sqrt{\mu_\infty(\delta_{\str})}
=2.35\ldots .
\]
\end{proposition}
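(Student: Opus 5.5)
The proof will be a scalar computation on $m$ and $F_\delta$, combined with a direct check of the identity \eqref{eq:B_claim}.

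\textbf{Step 1: reparametrize.} A positive $\mu$ is a fixed point of $F_\delta$ iff $\delta = \mu/\{(1+\mu)((1+\mu)m(\mu)-\mu)\} \eqqcolon \psi(\mu)$. So the fixed-point structure is governed by the graph of $\psi$ on $(0,\infty)$.

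\textbf{Step 2: behaviour of $\psi$ at the ends.} First check that $(1+\mu)m(\mu)-\mu>0$; this is equivalent to $F_\delta>0$, which holds since it is a second moment, namely $\E h^{\star 2}$ up to scaling, by Appendix~\ref{sec:bayes_amp}. Then derive the small-$\mu$ expansion. Use $\tanh^2 x = x^2 + O(x^4)$ and $\E[G^2(\mu G|G|+\sqrt\mu W|G|)^2] = \mu\E G^4 + O(\mu^2) = 3\mu + O(\mu^2)$. This gives $(1+\mu)m-\mu = 2\mu + O(\mu^2)$, so $\psi(0^+) = 1/2$ and $F_\delta'(0) = 2\delta$. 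At infinity, $m(\mu)\to1$ with $1-m(\mu)$ decaying, so $\psi(\mu)\to0$; I would obtain the needed decay rate from a Laplace-type bound on $\E[G^2\sech^2(\cdot)]$.

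\textbf{Step 3: shape of $\psi$.} The real content is that $\psi$ increases from $1/2$ to the value $\delta=1$ near $0$ in a specific way, then rises to a single maximum $\delta_{\str}\approx1.13$ at $\mu\approx5.52$, and then decreases to $0$. More precisely, $\psi$ should be unimodal with $\psi(0^+)=1/2$ and $\lim\psi=0$. The $\delta\le1$ versus $\delta>1$ split comes from this: for $\delta\in(1/2,1]$ the level $\delta$ is crossed exactly once (the decreasing branch only), whereas for $1<\delta<\delta_{\str}$ it is crossed twice. This means $\psi$ is first increasing beyond $1/2$ only after $\psi$ exceeds $1$... wait: with unimodality and $\psi(0^+)=1/2$, levels in $(1/2,\delta_{\str})$ would all be hit twice. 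So the paper's claim forces $\psi$ to start at $1/2$, drop below... Instead, I would use that the small-$\mu$ stable branch corresponds to $F_\delta'(0)=2\delta>1$: for $\delta\in(1/2,1]$ there is one crossing on a decreasing branch, and for $\delta\in(1,\delta_{\str})$ the root near zero on the increasing branch is still counted. I would settle the exact count numerically. Monotonicity pieces will be proved via verified numerics plus analytic bounds on compact intervals, with asymptotic expansions at the two ends. Stability follows from the sign of $\psi'$: $F_\delta'(\mu_{\fix})<1$ iff $\psi$ is decreasing there, since $F_\delta(\mu)=\delta\mu/\psi(\mu)$ gives $F_\delta'(\mu_{\fix}) = 1-\mu\psi'/\psi$. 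The bound $F_\delta(\mu)\ge(\delta/\delta_{\str})\mu$ is immediate from $\psi\le\delta_{\str}$, and the case $\delta\le1/2$ uses $\psi>1/2$ being excluded — that is, $\psi<\dots$, again from the shape.

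\textbf{Step 4: Bolthausen identity.} Write $U=\rho G+W$ and $Y=G^2$, and use Stein/Nishimori identities for the Bayes-optimal $h^\star_\rho$. Differentiating $F_\delta(\mu)=\delta\,\E[h^{\star2}]/\ldots$ in $\mu$, and using the posterior identities from Appendix~\ref{sec:bayes_amp}, should give $F_\delta'(\mu) = \delta\,\E[(\partial_u h^\star)^2]/\mathrm{norm}$. At a fixed point the normalization $\delta\E h^{\star2}$ equals the signal squared, and this yields $\mathcal B(\sqrt{\mu_{\fix}})=F'_\delta(\mu_{\fix})$. The inequalities on $\mathcal B$, and $\mathcal B=1$ uniquely at $\delta_{\str}$, then follow from Step~3.

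\textbf{Main obstacle.} The main obstacle is Step 3, namely rigorous unimodality and the crossing counts of $\psi$, for which I would rely on interval-arithmetic numerics.
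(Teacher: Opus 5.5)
Your overall route is the paper's: reparametrize by $\psi$, take the unimodality of $\psi$ from verified numerics, read stability off $F'_\delta=1-\mu\psi'/\psi$, then prove the Bolthausen identity. However, Step~2 contains a false claim that breaks Step~3: $\psi(\mu)$ does not tend to $0$ as $\mu\to\infty$. Since $m\le 1$, one has $(1+\mu)m(\mu)-\mu=m(\mu)-\mu\{1-m(\mu)\}\le 1$, so $\psi(\mu)\ge\mu/(1+\mu)$ for all $\mu>0$. Moreover, the Laplace-type bound you have in mind gives $1-m(\mu)=\E[G^2\sech^2(\mu G^2+\sqrt\mu\,|G|W)]=O(\mu^{-3/2})$, since only $|G|\lesssim\mu^{-1/2}$ contributes. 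Hence $\mu\{1-m(\mu)\}\to0$ and $\psi(\infty)=1$.

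This endpoint value is exactly the source of the split at $\delta=1$ that you could not reconcile. We have $\psi(0^+)=1/2$, $\psi$ strictly increasing on $(0,\mu_\star)$ up to $\delta_{\str}$, and $\psi$ strictly decreasing on $(\mu_\star,\infty)$ down to $1$. So the level $\delta$ is never hit for $\delta\le 1/2$ (because $\psi>1/2$ on $(0,\infty)$) or for $\delta>\delta_{\str}$. It is hit once, on the increasing branch, for $\delta\in(1/2,1]$, because the decreasing branch stays strictly above $1$. It is hit twice for $\delta\in(1,\delta_{\str})$.

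Your stability criterion is also reversed in words. From your own formula, $F'_\delta(\mu_{\fix})<1$ if and only if $\psi'(\mu_{\fix})>0$. So $\mu_\infty(\delta)$ is the root on the increasing branch and $\tilde\mu_\infty(\delta)$ the one on the decreasing branch. As written, Step~3 places the $\delta\le1$ root on a decreasing branch and defers the fixed-point count to numerics, which cannot produce the stated classification. The only numerical input actually needed is the unimodality of $\psi$, with a nondegenerate maximum $\delta_{\str}\approx1.13$ at $\mu_\star\approx5.52$; that is also all the paper uses.

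Step~4 also lacks its key ingredient: Stein and Nishimori identities do not by themselves give $F'_\delta$. You need the derivative of the MMSE in the SNR, $m'(\mu)=\E[V_{\sqrt\mu}(U_{\sqrt\mu},Y)^2]$. Here $V_\rho$ and $\eta_\rho$ are the posterior variance and mean of $G$ given $(\rho G+W,G^2)$; the paper imports this identity from Guo et al. Combine it with $\partial_u\eta_\rho=\rho V_\rho$ and $\E V_{\sqrt\mu}=1-m(\mu)$. These yield $F'_\delta(\mu)=\delta\E[\{1-(1+\mu)V_{\sqrt\mu}\}^2]$, hence $\delta\E[(\partial_u h^\star_\rho)^2]=\rho^2F'_\delta(\rho^2)$ alongside $\delta\E[(h^\star_\rho)^2]=F_\delta(\rho^2)$. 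Thus the normalization in your formula for $F'_\delta$ is $\mu=\rho^2$, which coincides with $\delta\E[(h^\star)^2]$ only at a fixed point.

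A byproduct worth recording: $\mathcal B(\rho)=1-\mu\psi'(\mu)/\psi(\mu)$ at $\mu=\rho^2$ for every $\rho>0$, independently of $\delta$. So $\mathcal B(\rho)=1$ if and only if $\rho^2=\mu_\star$. This gives the final uniqueness claim, and the two inequalities on $\mathcal B$ for $1<\delta<\delta_{\str}$, directly from unimodality.
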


\begin{figure}[ht]
  \centering
  \includegraphics[width=0.49\textwidth]{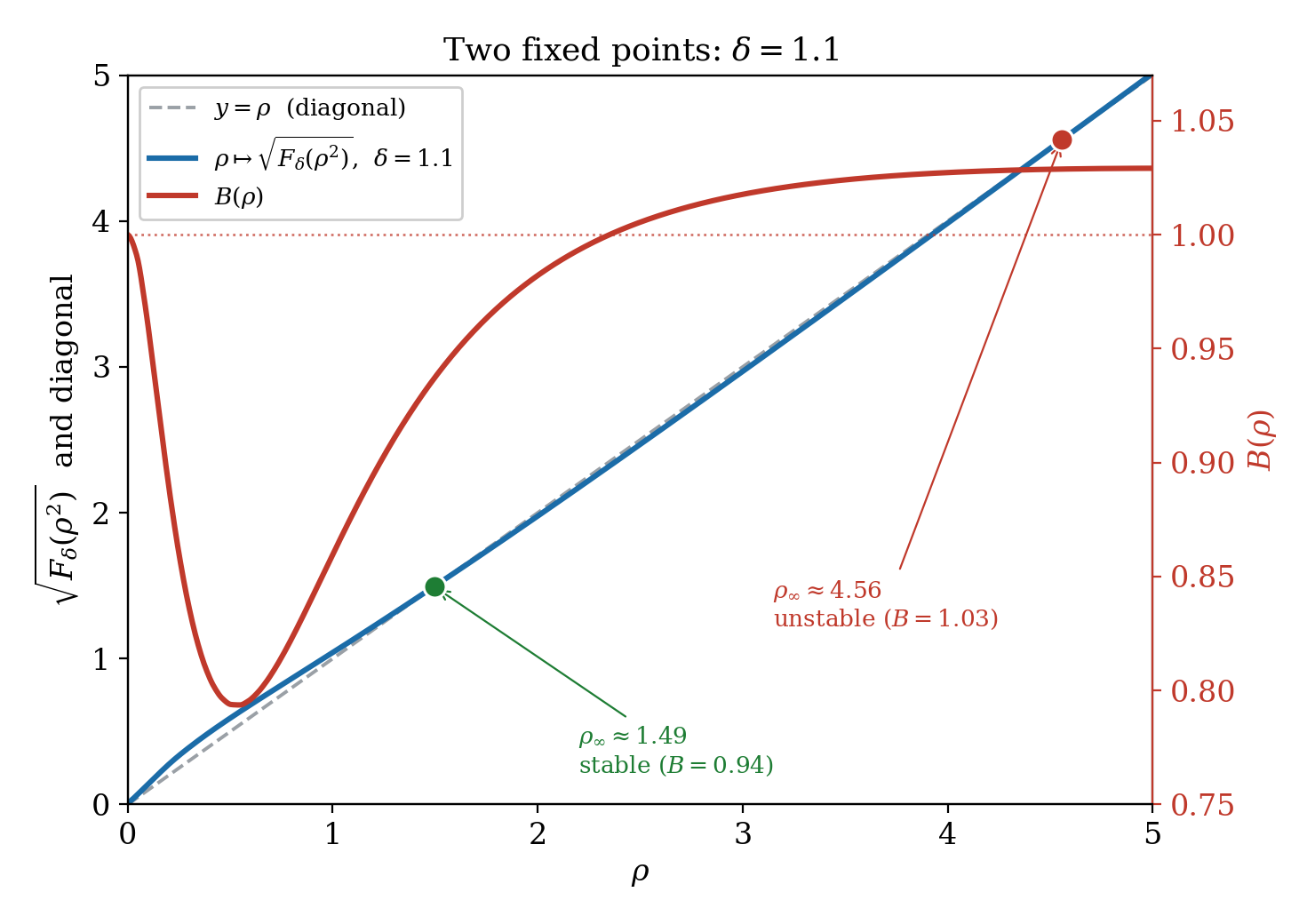}\hfill
  \includegraphics[width=0.49\textwidth]{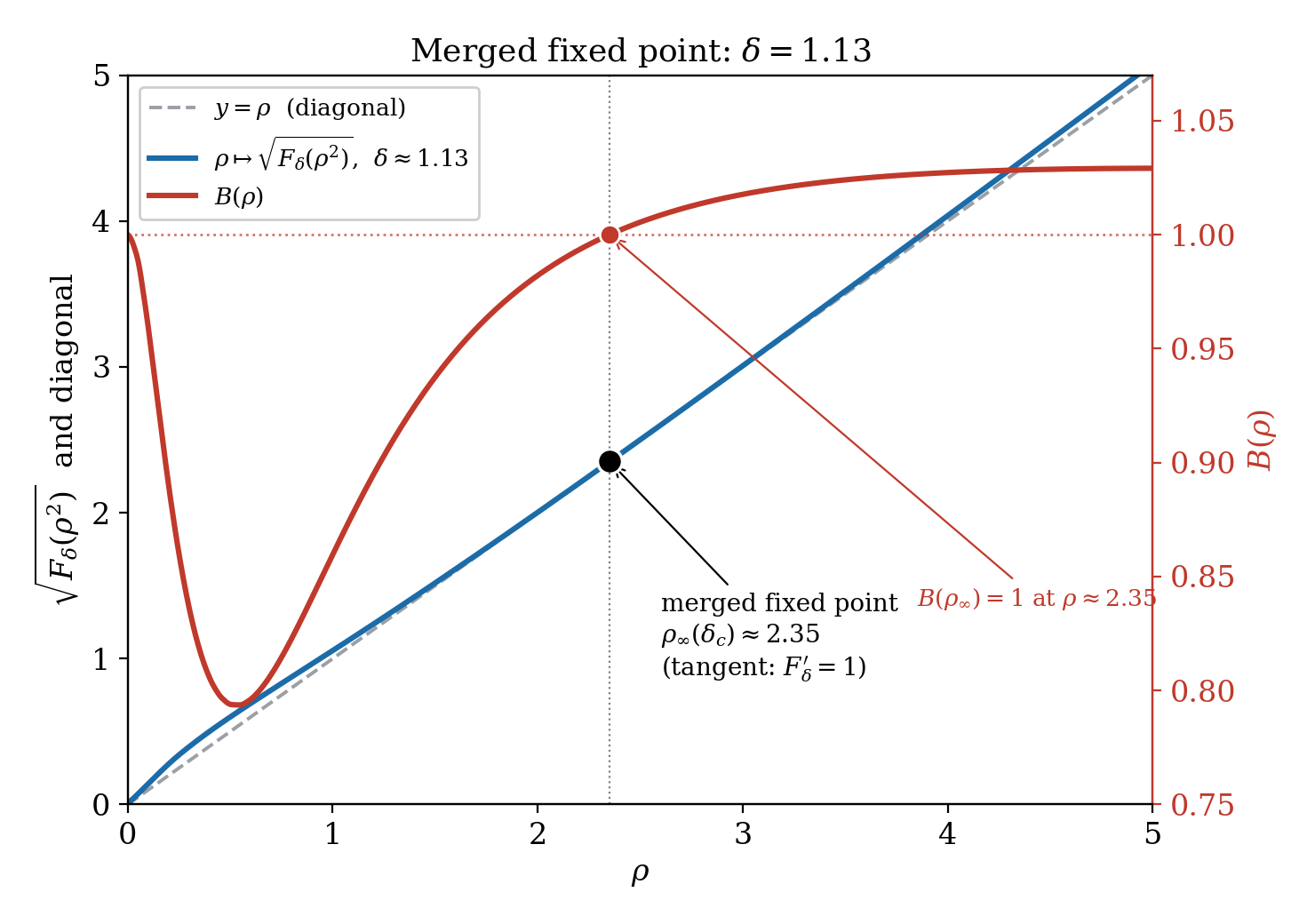}
  \caption{The maps
  \(\rho\mapsto\sqrt{F_\delta(\rho^2)}\) and
  \(\rho\mapsto\mathcal B(\rho)\), with \(\mathcal B\) defined in
  \eqref{def:bolthausen}.  Left:
  $\delta=1.1$, for which there are two positive fixed points.  Right:
  $\delta=\delta_{\str}\approx1.13$, where the two positive fixed
  points merge at
  \(\rho=\sqrt{\mu_\infty(\delta_{\str})}\approx2.35\).}
  \label{fig:se-bolthausen}
\end{figure}

The main goal of the remainder of this section is to show that, in the current phase, the empirical signal strength $\{\rho_t\}$ of random initialized AMP exhibits the same behavior as the state evolution prediction as in Proposition \ref{prop:phase_3_asymp}. In particular, we verify that the following holds with high probability:
\begin{itemize}
    \item When $\delta_{\weak} < \delta < \delta_{\str}$, $\rho_t$ converges to the SE fixed point $\rho_\infty(\delta)$.
    \item When $\delta > \delta_{\str}$, $\rho_t$ continues to grow exponentially with $\rho_t \rightarrow \infty.$
\end{itemize}
Here $
\rho_\infty(\delta) := \sqrt{\mu_\infty(\delta)}, 
$
where $\mu_\infty(\delta)$ denotes the smaller fixed point of $F_\delta$ for $\delta \in (1, \delta_{\str}]$.

\subsubsection{Intermediate recovery regime: $\delta_{\weak} < \delta < \delta_{\str}$}
Define
\[
G(\mu) \coloneqq \frac{\mu}{(\mu + 1) \big((\mu + 1) m(\mu) - \mu\big)},
\]
cf. \eqref{eq:strong_recovery_threshold}.
With \(\mathfrak A_\delta=(1+\sqrt\delta)^6\) as before, and $\mu_\star$ the maximizer
of $G$ from Proposition~\ref{prop:phase_3_asymp}, define
\begin{align}\label{eq:phase3_gap_parameters}
I_\delta
&\coloneqq
\left[\frac{c_{\weak}^2}{4},
\frac{\mu_\infty(\delta)+\mu_\star}{2}\right],\qquad
\chi_\delta
\coloneqq
\inf_{\mu\in I_\delta}\frac{\mu G'(\mu)}{G(\mu)},\qquad
T_{I\!I\!I}(n,\delta)
\coloneqq
\left\lfloor
\frac{c\chi_\delta^2}{\mathfrak A_\delta^6}
\frac{n^{1/3}}{\log^{20}n}
\right\rfloor .
\end{align}
Since $G$ is
strictly increasing on $(0,\mu_\star)$, $\chi_\delta>0$.  Moreover,
\begin{align}\label{eq:phase3_exact_bolthausen_gap}
\sup_{\mu\in I_\delta}
\frac{\mu F_\delta'(\mu)}{F_\delta(\mu)}
=1-\chi_\delta.
\end{align}
The Taylor expansions at $G'(0_+)>0$ and $G''(\mu_\star)<0$ also give
\begin{align}\label{eq:phase3_gap_asymptotics}
\chi_\delta
&\asymp \delta-\delta_{\weak}
\quad\text{as }\delta\downarrow\delta_{\weak},\qquad
\chi_\delta
\asymp
\sqrt{\delta_{\str}-\delta}
\quad\text{as }\delta\uparrow\delta_{\str}.
\end{align}

The value $1-\chi_\delta$ is related to the Bolthausen constant \eqref{def:bolthausen}. Its contractive properties allow for long-time analysis allowing the approximation errors
\(\Delta_{U,t}\) and \(\Delta_{V,t}\) to be small over a $\widetilde O_\delta(\poly(n))$ horizon.

\begin{proposition}\label{prop:phase_3_weak_main}
Fix $\delta\in(\delta_{\weak},\delta_{\str})$ and suppose that the finite size conditions \eqref{eq:phase1_finite_size},\eqref{eq:phase2_finite_size},\eqref{eq:phase1_crossing_finite_size} and
\eqref{eq:phase3_finite_size} below hold.  With
probability $1-O(n^{-10})$, simultaneously for
$\tau_{I\!I}\leq t<T_{I\!I\!I}(n,\delta)$,
\begin{align}\label{eq:phase_3_recursion}
\left|\rho_{t+1}^2-F_\delta(\rho_t^2)\right|
\leq
C\frac{\mathfrak A_\delta^7}{\chi_\delta^2}
\frac{(t+1)\log^{12}n}{\sqrt n}.
\end{align}
If $\rho_{\SE,\tau_{I\!I}}^2=\rho_{\tau_{I\!I}}^2$ and
$\rho_{\SE,t+1}^2=F_\delta(\rho_{\SE,t}^2)$, then, on the same event,
\begin{align}
\left|\rho_{\SE,t}^2-\mu_\infty(\delta)\right|
&\leq
C_\delta\exp\left\{-\frac{t-\tau_{I\!I}}{C_\delta}\right\},
\qquad t\geq\tau_{I\!I},
\label{eq:phase3_SE_geometric}\\
\left|\rho_t^2-\rho_\infty^2(\delta)\right|
&\leq
\left|\rho_{\SE,t}^2-\mu_\infty(\delta)\right|
+C\frac{\mathfrak A_\delta^7}
{c_{\weak}^2\chi_\delta^3}
\frac{(t+1)\log^{12}n}{\sqrt n},\label{eq:convergence_decomposition}\\
\pnorm{\Delta_{U,t}}{}
\vee\pnorm{\Delta_{V,t+1}}{}
&\leq
C\frac{\mathfrak A_\delta^6}{\chi_\delta^2}
(t+1)\log^{10}n.
\label{eq:phase3_direct_residual}
\end{align}
\end{proposition}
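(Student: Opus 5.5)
The plan is a single joint induction over $t\in[\tau_{I\!I},T_{I\!I\!I}(n,\delta))$, carried out on the intersection of the $O(n^{-12})$-probability events of Proposition~\ref{prop:general_error_induction}, Lemmas~\ref{lem:uv_norm}, \ref{lem:F_concentration} and \ref{lem:mixed_derivative_energy}. A union bound over $T_{I\!I\!I}\le n^{1/3}$ times costs only $O(n^{-11})$; Lemma~\ref{lem:X_coupling} with $T=T_{I\!I\!I}$ puts all times on one probability space.

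\emph{Induction hypothesis at time $t$.} We assume:
\begin{itemize}
\item[(a)] $\rho_s^2\in I_\delta$ for $\tau_{I\!I}\le s\le t$;
\item[(b)] the residual bound \eqref{eq:phase3_direct_residual} holds up to $t$;
\item[(c)] the correction coefficients satisfy $\sum_k|\alpha^k_{s-1}|+\sum_k|\beta^k_s|\le C\mathfrak A_\delta^{C}\log^{C}n/\sqrt n$ with geometric decay in $s-k$.
\end{itemize}
Hypothesis (a) gives $|\hat\mu_s|\asymp|\rho_s|\in[c_{\weak}/2,C]$ by \eqref{eq:u_norm}. Hence $\pi_s=O(\widetilde O(n^{-1/2}))$, the derivative bounds in \eqref{eqs:AtU}--\eqref{eqs:AtV} hold, and the events $\cA^U_t,\cA^V_t$ are verified.

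\emph{Contraction of the error recursion.} The multiplier in \eqref{eq:so_intro} is $\sqrt{F_t}\sqrt{H_t}=\sqrt{\mathcal B(\rho_t)}+o(1)$, because $f_t'=1$ and the normalization makes $H_t$ the ratio in \eqref{def:bolthausen}. I would then show $\mathcal B(\sqrt\mu)\le\mu F_\delta'(\mu)/F_\delta(\mu)$ on $I_\delta$. At fixed points this holds with equality by \eqref{eq:B_claim}; in general it should follow from the Nishimori-type identities of Appendix~\ref{sec:bayes_amp}. Combined with \eqref{eq:phase3_exact_bolthausen_gap}, this gives a multiplier at most $1-\chi_\delta/2$ once $\cE^f_t,\cE^h_t\lesssim\chi_\delta$. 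That last condition is exactly where $t\lesssim\chi_\delta^2 n^{1/3}/\operatorname{polylog}$ enters: $\cE_t$ contains $\sqrt{t}(\log n/d)^{1/4}$ times polylog factors, together with a term proportional to $\pnorm{\Delta}{}/\sqrt d\sim t/\sqrt n$.

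\emph{Closing the residual bound and (c).} The additive errors in \eqref{eq:induction_Delta_U}--\eqref{eq:induction_Delta_V} are $O(\mathfrak A_\delta^{C}(t+1)\log^{10}n)$, since $\bar\gamma,\bar\lambda\le C\sqrt d$. Iterating the contraction then gives $\pnorm{\Delta'}{}\le C\chi_\delta^{-1}(t+1)\log^{10}n$. Lemma~\ref{lem:mutual_close}, together with (c), transfers this bound to $\Delta_{U,t},\Delta_{V,t+1}$ at the cost of an extra $\chi_\delta^{-1}$. Hypothesis (c) propagates through \eqref{eq:induction_alpha_middle} and \eqref{eq:induction_beta_middle}, whose products $\mathfrak F\mathfrak H\le 1-\chi_\delta/2$ decay geometrically. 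The diagonal terms \eqref{eq:induction_alpha_last} and \eqref{eq:induction_beta_last} are $O(\pnorm{\Delta'}{}/\sqrt d)$.

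\emph{Signal recursion \eqref{eq:phase_3_recursion}.} By \eqref{eq:key_signal} and Lemma~\ref{lem:conjugate_signal},
\[
\rho_{t+1}=\tfrac1d\langle\bbeta^\star,h_t(\u^t,\y)\rangle-\hat c_t\rho_t .
\]
Replace $\u^t$ by $\tilde\u^t$, which costs $\pnorm{\Delta_{U,t}}{}/\sqrt d$ since $\|h_t'\|_\infty\le C$. Because $\pnorm{\gamma_{t,[0:t]}}{}\approx1$, we have $\tilde\u^t=\rho_t\bbeta^\star+\bm\zeta$ with $\bm\zeta$ Gaussian and independent of $\bbeta^\star$. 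The same net-plus-Bernstein argument as in Lemma~\ref{lem:F_concentration} concentrates $\frac1d\langle\bbeta^\star,h_t\rangle$, $\hat c_t$ and $N_t$ around their population values with $\hat\mu_t=\rho_t$. The resulting population expression equals $\sqrt{F_\delta(\rho_t^2)}$ by Appendix~\ref{sec:bayes_amp}. Squaring, with $\rho$ bounded, gives \eqref{eq:phase_3_recursion}, with the $\chi_\delta^{-2}$ factor inherited from the residual.

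\emph{Deterministic and perturbed dynamics.} Proposition~\ref{prop:phase_3_asymp} shows that $F_\delta$ maps $I_\delta$ into itself and is a contraction toward $\mu_\infty$ near it, which gives \eqref{eq:phase3_SE_geometric}. Comparing the perturbed sequence $\rho_t^2$ with $\rho_{\SE,t}^2$ uses local Lipschitzness of $F_\delta$ with effective contraction $1-\chi_\delta$ in logarithmic coordinates. Accumulated errors are therefore bounded by $\chi_\delta^{-1}$ times the per-step error, with a factor $c_{\weak}^{-2}$ from converting logarithmic to absolute scale; this yields \eqref{eq:convergence_decomposition} and also keeps $\rho_{t+1}^2\in I_\delta$, closing (a).

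\emph{Main obstacle.} The hardest step is the pointwise Bolthausen inequality $\mathcal B(\rho)\le 1-\chi_\delta$ on all of $I_\delta$, not just at fixed points. I would first try to prove the identity $\mathcal B(\sqrt\mu)=\mu F_\delta'(\mu)/F_\delta(\mu)$ by Gaussian integration by parts in $\mu$ on the Bayes-optimal channel. Failing that, I would establish the inequality numerically and rigorously on a compact grid.
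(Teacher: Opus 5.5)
Your plan is essentially the paper's proof: the second-order recursion of Proposition~\ref{prop:general_error_induction} with multiplier $\sqrt{H_t}\approx\sqrt{\mathcal B(\rho_t)}$ (since $F_t=1$), geometric transport of the $\alpha,\beta$ rows, numerator/denominator concentration giving $\rho_{t+1}^2\approx F_\delta(\rho_t^2)$, and a relative-error (logarithmic) comparison with $\rho_{\SE,t}^2$ up to the exit time from $I_\delta$. The step you flag as the main obstacle is not one: \eqref{eq:bayes_fact_1}--\eqref{eq:bayes_fact_2}, proved in Proposition~\ref{prop:phase_3_asymp} from $m'(\mu)=\E[V^2]$ and $\partial_u\eta_\rho=\rho V_\rho$, give the exact identity $\mathcal B(\rho)=\rho^2F_\delta'(\rho^2)/F_\delta(\rho^2)$ for every $\rho>0$, so \eqref{eq:phase3_exact_bolthausen_gap} yields $H_t\le1-\chi_\delta/2$ without numerics. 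Two small corrections: in this phase $\pi_t$ is $O(1)$, not $\widetilde O(n^{-1/2})$, because $N_t^2\approx F_\delta(\rho_t^2)\neq2\delta\rho_t^2$, and only its boundedness is needed; and the row sums in (c) grow like $\chi_\delta^{-2}(t+1)\log^{10}n/\sqrt d$, since the diagonal coefficients are $O(\|\Delta'\|/\sqrt d)$.
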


\begin{remark}[The $n^{1/3}$ time range]
Proposition~\ref{prop:general_error_induction} shows that
\(\Delta_{U,t}\) and \(\Delta_{V,t}\)
are bounded by $Ct\log^C n$.  Hence the second term in $\cE_t^h$ in
\eqref{eq:E_multiplier} is bounded by
\begin{align*}
C\frac{t^{3/2}\log^C n}{\sqrt n}.
\end{align*}
The requirement that this quantity be at most $\chi_\delta/4$ gives a
horizon of the form
\begin{align*}
t\leq c\chi_\delta^{2/3}
\frac{n^{1/3}}{\log^C n},
\end{align*}
and hence identifies the \(n^{1/3}\) scale.  The stricter horizon in
\eqref{eq:phase3_gap_parameters} also absorbs the explicit
\(\mathfrak A_\delta\)- and \(\chi_\delta\)-dependent factors in the full
induction.
\end{remark}

\subsubsection{Strong recovery regime: $\delta > \delta_{\str}$}

For \(\delta>\delta_{\str}\), set
\begin{align}\label{eq:strong_gap_parameters}
a_\delta
&\coloneqq
\sqrt{\frac{1+\delta/\delta_{\str}}{2}}>1.
\end{align}
For \(M\geq1\), define
\begin{align}
N_{\delta,M}
&\coloneqq
\max\left\{0,
\left\lceil
\frac{\log(M/c_{\weak})}{\log a_\delta}
\right\rceil\right\},
\label{eq:strong_first_order_gap_count}\\
\tau_{I\!I\!I}
&\coloneqq
\inf\{t\geq\tau_{I\!I}:|\rho_t|\geq M\}.
\label{eq:strong_burnin_and_target}
\end{align}
Let \(C_{\delta,M}\geq1\) be the constant constructed in
\eqref{eq:strong_first_order_internal_constants}.  For each fixed \(M\), it
satisfies
\begin{align}
C_{\delta,M}
&\leq
\exp\left\{
\frac{C_M}{\delta/\delta_{\str}-1}
\right\}
\quad\text{as }\delta\downarrow\delta_{\str}.
\label{eq:strong_first_order_exponential_gap}
\end{align}
Assume \eqref{eq:phase1_finite_size},
\eqref{eq:phase1_crossing_finite_size},
\eqref{eq:phase2_finite_size}, and
\eqref{eq:phase2_horizon_condition}, as well as
\begin{gather}
60T_\delta(n)+2+N_{I\!I}(n,\delta)+N_{\delta,M}
\leq
\log^2n\wedge\frac{c\sqrt d}{\log^3n},
\notag\\
C_{\delta,M}\frac{\log^Cn}{n^{1/4}}
\leq
\min\left\{
\frac14,\frac{c_{\weak}^2}{16},
\frac{(\delta/\delta_{\str}-1)c_{\weak}^2}{16}
\right\}.
\label{eq:strong_first_order_finite_size}
\end{gather}

The main result shows that in the strong-recovery regime, for $n$ sufficiently large (given by condition \eqref{eq:strong_first_order_finite_size}), the signal strength $|\rho_t|$ grows exponentially hitting any fixed $M$ in $N_{\delta,M}$ steps. We formalize this in the following proposition.

\begin{proposition}
\label{prop:phase_3_strong_main}
Fix \(\delta>\delta_{\str}\) and \(M\geq1\), and suppose the finite size conditions
\eqref{eq:phase1_finite_size},
\eqref{eq:phase1_crossing_finite_size},
\eqref{eq:phase2_finite_size},
\eqref{eq:phase2_horizon_condition}, and
\eqref{eq:strong_first_order_finite_size} hold.  With probability
\(1-O(n^{-10})\), the stopping time $\tau_{III}$ is finite and satisfies
\begin{align}
    \tau_{I\!I\!I}-\tau_{I\!I}
&\leq N_{\delta,M}
\label{eq:strong_total_iteration_bound}
\end{align}
and
\begin{align}
    M\leq|\rho_{\tau_{I\!I\!I}}|
&\leq C_{\delta,M}.
\label{eq:strong_polynomial_snr}
\end{align}
Moreover, for every \(\tau_{I\!I}\leq t<\tau_{I\!I\!I}\),
\begin{align}
\|\Delta_{U,t}\|\vee\|\Delta_{V,t}\|
&\leq
C_{\delta,M}\log^Cn
\label{eq:strong_relative_residual}\\
|\rho_{t+1}^2-F_\delta(\rho_t^2)|
&\leq
C_{\delta,M}\frac{\log^Cn}{n^{1/4}}
\label{eq:strong_recovery_recursion}\\
|\rho_{t+1}|&\geq a_\delta|\rho_t|.\notag
\end{align}
\end{proposition}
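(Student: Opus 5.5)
The argument is an induction over $t\in[\tau_{I\!I},\tau_{I\!I\!I})$. The number of steps is only $N_{\delta,M}=O_{\delta,M}(\log n)$, so we can afford a first-order analysis. Proposition~\ref{prop:general_error_induction} is then applied along this short horizon; it is allowed because $t\le\log^2n\wedge c\sqrt d/\log^3n$ by \eqref{eq:strong_first_order_finite_size}. The contraction available in the intermediate regime is not needed. We only have to check that the multipliers $\sqrt{\mathcal B_{h_t}(\rho_t)}$ and $\sqrt{\mathcal B_{f_t}}=1$ are bounded by a constant depending only on $(\delta,M)$ while $|\rho_t|\le C_{\delta,M}$. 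The error recursion \eqref{eq:so_intro} then lets $\|\Delta_{U,t}\|,\|\Delta_{V,t}\|$ grow at most geometrically, by a factor $K_{\delta,M}$ per step. After $N_{\delta,M}$ steps they are bounded by $K_{\delta,M}^{N_{\delta,M}}\log^Cn=:C_{\delta,M}\log^Cn$, which is \eqref{eq:strong_relative_residual}. The exponential dependence \eqref{eq:strong_first_order_exponential_gap} comes from $N_{\delta,M}\asymp\log(M/c_{\weak})/\log a_\delta$ with $\log a_\delta\asymp\delta/\delta_{\str}-1$.

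The induction hypothesis at time $t$ has three parts: the events $\cA_t^U,\cA_t^V$ hold; $c_{\weak}\le|\rho_s|\le C_{\delta,M}$ and $|\rho_{s+1}|\ge a_\delta|\rho_s|$ for $\tau_{I\!I}\le s<t$; and the residual bound holds. The base case is the Phase~II output (Proposition~\ref{prop:phase_2_exp_growth}), including $\|\Delta_{V,\tau_{I\!I}}\|\le C\log^{12}n$. For the step, first use Lemma~\ref{lem:uv_norm} to show that $\hat\mu_t^2=\rho_t^2+O(C_{\delta,M}\log^Cn/\sqrt n)$ and that $N_t$ matches its population value $\sqrt{\E h^\star_{\rho_t}(\rho_tG+W,G^2)^2}$. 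This gives bounded derivatives of $h_t$, namely \eqref{eq:cond_derivative_linfty_h}, and the parameter bounds. Next, $\sum|\alpha|,\sum|\beta|\le C/\log^5n$ follows from \eqref{eq:induction_alpha_last}--\eqref{eq:induction_beta_middle}, because every term carries a factor $\|\Delta\|/\sqrt d$ and the products accumulate only $O(\log n)$ bounded factors.

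For the signal recursion, write $\rho_{t+1}=d^{-1}\langle\g^{-1},h_t(\u^t,\y)\rangle-\hat c_t\rho_t$ using Lemma~\ref{lem:conjugate_signal}. Replace $\u^t$ by $\tilde\u^t=\rho_t\g^{-1}+\sum_\ell\gamma_{t,\ell}\g^\ell$; the error is $\|\Delta_{U,t}\|\,\|h_t'\|_\infty/\sqrt d$. By \eqref{eq:gamma_norm_loo} the Gaussian part has norm $1+o(1)$. Uniform concentration over the coefficient net, exactly as in Lemma~\ref{lem:F_concentration}, identifies both terms with their state-evolution expectations, so $\rho_{t+1}^2=F_\delta(\rho_t^2)+O(C_{\delta,M}\log^Cn/n^{1/4})$. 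The $n^{-1/4}$ rate loosely absorbs $\|\Delta\|/\sqrt d$ and the square-root normalizations. Proposition~\ref{prop:phase_3_asymp} gives $F_\delta(\mu)\ge(\delta/\delta_{\str})\mu$. Together with $\rho_t^2\ge c_{\weak}^2$ and \eqref{eq:strong_first_order_finite_size}, this yields $\rho_{t+1}^2\ge\frac{1+\delta/\delta_{\str}}{2}\rho_t^2=a_\delta^2\rho_t^2$. Iterating from $|\rho_{\tau_{I\!I}}|\ge c_{\weak}$ shows that $|\rho_t|\ge M$ after at most $N_{\delta,M}$ steps, which is \eqref{eq:strong_total_iteration_bound}. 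The upper bound in \eqref{eq:strong_polynomial_snr} holds because $|\rho_{t}|<M$ before the stopping time, so $\rho_{\tau_{I\!I\!I}}^2\le F_\delta(M^2)+1\le C_{\delta,M}$.

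\textbf{Main obstacle.} The delicate step is keeping all constants self-consistent. The constant $C_{\delta,M}$ enters the derivative bounds of $h_t$ (through $\hat\mu_t\le C_{\delta,M}$), which feed the multiplier $K_{\delta,M}$, which in turn defines $C_{\delta,M}$. I would break this circularity by fixing the a priori range $|\rho|\le\rho_{\max}:=\sqrt{F_\delta(M^2)+1}$ first. From this range, set $K:=\sup_{|\rho|\le\rho_{\max}}\sqrt{\mathcal B_{h}(\rho)}+1$, and only then put $C_{\delta,M}:=\max\{\rho_{\max},K^{N_{\delta,M}}\}$. A second point needs care: the failure probabilities must be intersected over the random horizon. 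This is done with the single coupling matrix $\X^{(T)}$, $T=60T_\delta(n)+2+N_{I\!I}+N_{\delta,M}$, from Lemma~\ref{lem:X_coupling}, followed by a union bound over $t\le T$.
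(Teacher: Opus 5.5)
Your treatment of the signal is essentially the paper's (Lemma~\ref{lem:phase_3_SE_strong} and Proposition~\ref{prop:phase_3_strong_approx}). You concentrate the numerator $d^{-1}\langle\g^{-1},h_t^\star\rangle-\delta\rho_t\langle\partial_uh_t^\star\rangle$ and the normalizer uniformly over a net, and use Gaussian integration by parts to get $\rho_{t+1}^2=F_\delta(\rho_t^2)+O(C_{\delta,M}\log^Cn/n^{1/4})$. The factor $a_\delta$ then comes from $F_\delta(\mu)\ge(\delta/\delta_{\str})\mu$ while $\rho_t^2\gtrsim c_{\weak}^2$; the paper enforces this lower bound with the exit time $\varrho$, and your induction does the same. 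You also fix constants in the order range $\to$ per-step factor $\to C_{\delta,M}$, which matches the paper's $D_{\delta,M}\to\mathfrak K_{\delta,M}=D_{\delta,M}^{C_{\str}(N_{\delta,M}+3)}\to C_{\delta,M}$.

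The genuine difference is the residual control. You keep running the second-order machinery (Proposition~\ref{prop:general_error_induction}, i.e.\ \eqref{eq:so_intro}) after $\tau_{I\!I}$, with multiplier $\sqrt{H_t}+\cE_t^h$. The paper drops it and uses the direct first-order recursion of Lemma~\ref{lem:first_order_residual_recursion}: $\|\Delta_{U,t}\|\le2\|\Delta_{V,t}\|+D_{\delta,M}\log^{C_{\str}}n$ and $\|\Delta_{V,t+1}\|\le(K_t^h+\eta_t^h)\|\Delta_{U,t}\|+\mathcal R_{V,t}$. Its multiplier also contains $\sigma_t\sqrt{\mathcal B_3^h}$, because the Onsager mismatch $\hat c_t-\tilde c_t$ is estimated directly rather than absorbed into a $\beta_t^t$ correction. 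Your route has the sharper Bolthausen-type multiplier. The cost is that at every step you must re-verify $\cA_t^U,\cA_t^V$, propagate the full $\alpha,\beta$ arrays, and convert $\Delta'$ to $\Delta$ via Lemma~\ref{lem:mutual_close}. Since the horizon is $n$-independent, any $(\delta,M)$-dependent per-step factor is affordable, so the paper's cruder recursion loses nothing for the stated result.

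Your route is viable, but three points in the write-up need correcting.

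First, $N_{\delta,M}=\lceil\log(M/c_{\weak})/\log a_\delta\rceil$ does not depend on $n$. It is $O_{\delta,M}(1)$, not $O_{\delta,M}(\log n)$. This is exactly why $K^{N_{\delta,M}}$ is a constant: over $\Theta(\log n)$ steps, a per-step factor $K>1$ would produce polynomial losses.

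Second, for the same reason, ``the products accumulate only $O(\log n)$ bounded factors'' does not yield $\sum_k|\alpha_{t-1}^k|\le C/\log^5n$. You need the Phase~I/II transport factors to be $1+o(1)$ with bounded product, from the Phase~I estimate and \eqref{eq:phase2_product_bound}. Then only the $N_{\delta,M}$ factors after $\tau_{I\!I}$ can be as large as $K^2$. In addition:
\begin{enumerate}
\item The conversion \eqref{eq:diff_u_tilde}--\eqref{eq:diff_v_tilde} needs the stronger bound $\sqrt d\sum_k|\alpha_{t-1}^k|\lesssim C_{\delta,M}\log^Cn$, and likewise for $\beta$.
\item Your base case must also import the second-order endpoint bounds \eqref{eq:phase2_second_order_V_next}, \eqref{eq:beta_sum_phase_2}, and \eqref{eq:phase2_endpoint_generic_event}, or their Phase~I analogues when $\tau_{I\!I}=t_0$.
\end{enumerate}

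Third, your upper bound in \eqref{eq:strong_polynomial_snr} uses $|\rho_{\tau_{I\!I\!I}-1}|<M$, which is unavailable when $\tau_{I\!I\!I}=\tau_{I\!I}$. In that case you need the endpoint estimate $|\rho_{\tau_{I\!I}}|\le C(1+\sqrt\delta)$ from \eqref{eq:phase2_cubic_recursion} or \eqref{eq:recursion_phase_1}, as in \eqref{eq:strong_first_order_phase_endpoint_upper}.
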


As in the previous phases, an essential step is to control the AMP approximation errors $\pnorm{\Delta_{U,t}}{}$ and $\pnorm{\Delta_{V,t}}{}$. As the analysis only needs to cover $O_{\delta,M}(1)$ steps, we use the simpler first order analysis in Section \ref{sec:first_order_analysis} to establish the approximation \eqref{eq:strong_relative_residual} in Proposition \ref{prop:phase_3_strong_approx} rather than the second-order analysis used previously.

\subsection{Completing the proof of Theorem \ref{thm:main}}\label{subsec:complete_main}

\begin{proof}[Proof of Theorem \ref{thm:main}]

Fix the largest deterministic horizon appearing in the applicable part of
Theorem~\ref{thm:main}; in the strong-recovery regime, fix
\(\varepsilon\) first.  Denote this horizon by \(T_{\rm main}\).  For all
sufficiently large \(n\), \(T_{\rm main}+2\leq d\) and
\(T_{\rm main}+1\leq n\).  Apply the explicit construction in
Section~\ref{subsec:couple_amp} once at horizon \(T_{\rm main}\), and use
the resulting matrix \(\bm X^{(T_{\rm main})}\) for every earlier iterate.
Then \eqref{eq:tilde_u_intro} gives
\eqref{eq:main_decomp} simultaneously for
\(1\leq t\leq T_{\rm main}\).
The identity
\(\sum_{\ell=0}^{t-1}\lambda_{t-1,\ell}^2=1\)
follows from \eqref{eq:lambda_norm_main}.

Let \(C_\delta\) denote a sufficiently large finite
constant depending only on \(\delta\).  When
\(\delta\in(\delta_{\weak},\delta_{\str})\), the definitions
\eqref{eq:phase2_gap_parameters},
\eqref{eq:phase3_gap_parameters}, and
\eqref{eq:phase3_gap_asymptotics}, together with continuity on compact
subintervals of this regime, allow \(C_\delta\) to be chosen so that
\begin{gather}
c_{\weak}^2
\geq
c\min\left\{1,
\frac{\log(2\delta)}{\mathfrak A_\delta^4}\right\},
\notag\\
\chi_\delta
\geq
c\min\left\{\log(2\delta),
\sqrt{1-\frac{\delta}{\delta_{\str}}}\right\},
\notag\\
C\frac{\mathfrak A_\delta^6}{\chi_\delta^2}
\ \vee\
C\frac{\mathfrak A_\delta^7}
{c_{\weak}^2\chi_\delta^3}
\leq C_\delta .
\label{eq:main_intermediate_constant_comparison}
\end{gather}
For the first inequality, we used
\(\sqrt{2\delta}-1\geq c\log(2\delta)\) on
\((1/2,\delta_{\str})\).

Set \(\tau_{\weak}\coloneqq\tau_{I\!I}\).
Equations \eqref{eq:phase1_explicit_length} and
\eqref{eq:phase2_explicit_length} imply, for all sufficiently large
\(n\), depending on \(\delta\),
\begin{align}
\tau_{\weak}
&\leq60T_\delta(n)+1+N_{I\!I}(n,\delta)
\notag\\
&\leq
C\left(1+\frac1{\log(2\delta)}\right)
\{\log n+\log \log n+|\log c_{\weak}|\}
\leq C_\delta \log n.
\label{eq:main_weak_time_comparison}
\end{align}
Here
\(\log(1+g_\delta)\geq
c\{\log(2\delta)\wedge1\}\), and
\(|\log c_{\weak}|\leq \log n\) for all sufficiently large \(n\).
The definition of \(\tau_{I\!I}\) gives
\begin{align}
|\rho_{\tau_{\weak}}|
&\geq c_{\weak}
\geq\frac1C\min\left\{1,
\sqrt{\frac{\sqrt{2\delta}-1}
{(1+\sqrt\delta)^{24}}}\right\}.
\label{eq:main_weak_overlap_comparison}
\end{align}
We next bound the residual at the crossing endpoint.  If
\(\tau_{\weak}=t_0=\tau_I+1\), the time-\(\tau_I\) instance of
\eqref{eq:Delta_bound_phase1} gives
\[
\|\Delta_{V,\tau_{\weak}}\|
\leq C\mathfrak A_\delta \log^{11} n.
\]
If \(\tau_{\weak}>t_0\), then
\(\tau_{\weak}-1<\tau_{I\!I}\), and the
time-\((\tau_{\weak}-1)\) instance of
\eqref{eq:uv_closeness_phase_2} gives
\[
\|\Delta_{V,\tau_{\weak}}\|
\leq C\mathfrak A_\delta^4\tau_{\weak}\log^{10}n.
\]
Consequently, in both cases, \eqref{eq:main_weak_time_comparison} gives
\begin{align}
\|\Delta_{V,\tau_{\weak}}\|
&\leq
C\left(\mathfrak A_\delta \log^{11}n
+\mathfrak A_\delta^4\tau_{\weak}\log^{10}n\right)
\leq C_\delta \log^{12} n,
\label{eq:main_weak_error_comparison}
\end{align}
where the last inequality holds for all sufficiently large \(n\), depending
on \(\delta\).

Suppose now that
\(\delta\in(\delta_{\weak},\delta_{\str})\).
By \eqref{eq:phase3_gap_parameters},
\eqref{eq:main_intermediate_constant_comparison}, and the first
condition in \eqref{eq:phase3_finite_size}, for all sufficiently large
\(n\),
\begin{align}
\frac{n^{1/3}}{C_\delta \log^{20}n}
&\leq
T_{I\!I\!I}(n,\delta).
\label{eq:main_intermediate_horizon_comparison}
\end{align}
For \(\tau_{\weak}\leq t\leq
n^{1/3}/(C_\delta \log^{20}n)\),
\eqref{eq:convergence_decomposition},
\eqref{eq:phase3_direct_residual}, and
\eqref{eq:main_intermediate_constant_comparison} therefore give
\begin{align}
|\rho_t^2-\mu_\infty(\delta)|
&\leq
|\rho_{\SE,t}^2-\mu_\infty(\delta)|
+C_\delta\frac{(t+1)\log^{12}n}{\sqrt n},
\notag\\
\|\Delta_{V,t}\|
&\leq C_\delta(t+1)\log^{12}n.
\label{eq:main_intermediate_conclusions}
\end{align}
At \(t=\tau_{\weak}\), the second inequality follows instead from
\eqref{eq:main_weak_error_comparison}; for
\(t>\tau_{\weak}\), it is the time-\((t-1)\) instance of
\eqref{eq:phase3_direct_residual}.
Equation~\eqref{eq:phase3_SE_geometric} gives the geometric bound in
Theorem~\ref{thm:main}.

The events in Propositions~\ref{prop:phase_1_tilde_close},
\ref{prop:phase_1_main}, \ref{prop:phase_2_exp_growth}, and
\ref{prop:phase_3_weak_main} each have complements of probability
\(O(n^{-10})\).  A union bound shows that their four conclusions hold
simultaneously with probability \(1-O(n^{-10})\).  On this event,
\eqref{eq:main_weak_time_comparison},
\eqref{eq:main_weak_overlap_comparison},
\eqref{eq:main_weak_error_comparison},
\eqref{eq:main_intermediate_horizon_comparison}, and
\eqref{eq:main_intermediate_conclusions} prove the weak and intermediate
parts of the theorem.

In the strong regime, fix \(\epsilon\in(0,1)\), set
\(M_\epsilon=2/\sqrt\epsilon\), and define
\(\tau_\epsilon\coloneqq\tau_{I\!I\!I}\) using this value of \(M\).
Proposition~\ref{prop:phase_3_strong_main} gives
\begin{align}
\tau_\epsilon-\tau_{I\!I}
&\leq N_{\delta,M_\epsilon},
\notag\\
M_\epsilon\leq|\rho_{\tau_\epsilon}|
&\leq C_{\delta,M_\epsilon},
\notag\\
\|\Delta_{V,\tau_\epsilon}\|
&\leq
C_{\delta,M_\epsilon}\log^Cn.
\label{eq:strong_theorem_basic_bounds}
\end{align}
Equations \eqref{eq:phase1_explicit_length},
\eqref{eq:phase2_explicit_length}, and
\eqref{eq:strong_theorem_basic_bounds} give
\begin{align}
\tau_\epsilon
&\leq
60T_\delta(n)+1+N_{I\!I}(n,\delta)
+N_{\delta,M_\epsilon}
\notag\\
&\leq
C_\delta\log n+C_{\delta,M_\epsilon}
\leq
C_{\delta,M_\epsilon}\log n.
\label{eq:strong_theorem_absolute_time}
\end{align}
The second line follows from
\eqref{eq:strong_first_order_gap_count} and
\eqref{eq:strong_first_order_internal_constants}, after increasing
\(C_{\delta,M}\) so that \(C_{\delta,M}\geq C_\delta+1\).

The near-isometry estimate \eqref{eq:claim_near_isometry},
\eqref{eq:main_decomp}, and
\eqref{eq:strong_theorem_basic_bounds} give
\begin{align}
\frac1d\langle\bm v^{\tau_\epsilon},
\boldsymbol\theta^\star\rangle
&=
\rho_{\tau_\epsilon}+o(1),
\notag\\
\frac1{\sqrt d}\|\bm v^{\tau_\epsilon}\|
&=
\sqrt{1+\rho_{\tau_\epsilon}^2}+o(1).
\label{eq:strong_theorem_norm_inner_product}
\end{align}
Indeed, the two error terms are bounded by
\begin{align}
C\sqrt{1+\rho_{\tau_\epsilon}^2}
\sqrt{\frac{\tau_\epsilon+\log n}{d}}
+\frac{\|\Delta_{V,\tau_\epsilon}\|}{\sqrt d}
&\leq
	C C_{\delta,M_\epsilon}
	\sqrt{\frac{C_{\delta,M_\epsilon}\log n}{d}}
	+C_{\delta,M_\epsilon}\frac{\log^Cn}{\sqrt d}
=o(1),
\label{eq:strong_theorem_near_isometry_error}
\end{align}
because \((\delta,\epsilon)\) are fixed.  Therefore
\begin{align}
\corr(\bm v^{\tau_\epsilon},\boldsymbol\theta^\star)
&=
\frac{|\rho_{\tau_\epsilon}|}
{\sqrt{1+\rho_{\tau_\epsilon}^2}}+o(1)
\notag\\
&\geq
1-\frac1{2\rho_{\tau_\epsilon}^2}-o(1)
\geq
1-\frac{\epsilon}{8}-o(1)
\geq1-\epsilon-o(1).
\label{eq:strong_theorem_correlation}
\end{align}
Equations \eqref{eq:strong_theorem_basic_bounds},
\eqref{eq:strong_theorem_absolute_time},
\eqref{eq:strong_theorem_correlation} prove the strong-recovery part.

Finally, let \(K\) be a compact subset of one of the two open regimes in
the theorem.  On \(K\), the quantities
\[
(1+\sqrt\delta),\quad
\{\log(2\delta)\}^{-1},\quad
\left|\delta/\delta_{\str}-1\right|^{-1}
\]
are uniformly bounded.  In the intermediate regime,
\(c_{\weak}\) and \(\chi_\delta\) are uniformly bounded away from zero
by \eqref{eq:phase2_gap_parameters},
\eqref{eq:phase3_gap_parameters}, and
\eqref{eq:phase3_gap_asymptotics}.  In the strong regime, with
\(\epsilon\) fixed, \(M_\epsilon\) is fixed and the quantities in
\eqref{eq:strong_first_order_gap_count} and
\eqref{eq:strong_first_order_internal_constants} are uniformly bounded on
\(K\).
Consequently every left-hand side in the finite-size conditions
\eqref{eq:phase1_finite_size},
\eqref{eq:phase1_crossing_finite_size},
\eqref{eq:phase2_finite_size}, and
\eqref{eq:phase2_horizon_condition}, together with
\eqref{eq:phase3_finite_size} in the intermediate regime and
\eqref{eq:strong_first_order_finite_size} in the strong regime, is
bounded above on \(K\) by a fixed
power of \(\log n\) divided by a fixed positive power of \(n\), apart
from conditions of the form \(C_K\log n\leq\log^2n\).
All these inequalities therefore hold for every \(\delta\in K\) once
\(n\geq n_0(K,\epsilon)\).  The preceding union bounds have universal
failure exponents.  This proves the compact-uniform assertion in the
theorem.
\end{proof}

\begin{proof}[Proof of Corollary \ref{cor:corr}]
By the uniform near-isometry estimate \eqref{eq:claim_near_isometry}, with
probability $1-O(n^{-10})$,
\begin{align}\label{eq:cor_conc_1}
\left|\frac{1}{d}
\iprod{\sum_{\ell=0}^{t-1}\lambda_{t-1,\ell}\s^\ell}
{\btheta^\star}\right|
\leq C\sqrt{\frac{t+\log n}{d}}.
\end{align}
Then for the inner product, we have
\begin{align*}
\frac{1}{d}\iprod{\v^{t}}{\btheta^\star}&= \rho_t \frac{\pnorm{\btheta^\star}{}^2}{d} + \frac{1}{d}\iprod{\sum_{\ell=0}^{t-1} \lambda_{t-1,\ell} \s^\ell}{\btheta^\star} + \frac{1}{d} \iprod{\Delta_{V,t}}{\btheta^\star}\\
&=\rho_t+O\left(\sqrt{\frac{t+\log n}{d}}
+\frac{\pnorm{\Delta_{V,t}}{}}{\sqrt d}\right).
\end{align*}
Here the last step uses \eqref{eq:cor_conc_1}.  On the other hand,
\eqref{eq:v_norm} gives
\begin{align*}
\frac{1}{\sqrt{d}}\pnorm{\v^{t}}{}
&=\left(1+O\left(\sqrt{\frac{t+\log n}{d}}\right)\right)
\sqrt{\rho_t^2+1}
+O\left(\frac{\pnorm{\Delta_{V,t}}{}}{\sqrt d}\right).
\end{align*}
This implies that
\begin{align*}
\corr(\v^{t},\btheta^\star)
= \frac{|\rho_t|}{\sqrt{\rho_t^2 + 1}}
+ O\left(\sqrt{\frac{t+\log n}{d}}
+ \frac{\pnorm{\Delta_{V,t}}{}}{\sqrt d}\right).
\end{align*}
For
\[
\tau_{\weak}\leq t\leq
\frac{n^{1/3}}{C_\delta\log^{20}n},
\]
Theorem~\ref{thm:main} gives
\begin{align*}
\sqrt{\frac{t+\log n}{d}}
+\frac{\|\Delta_{V,t}\|}{\sqrt d}
&\leq
C_\delta\sqrt\delta\left(
\frac{n^{-1/3}}{\log^{10}n}
+\frac{n^{-1/6}}{\log^{8}n}\right)=o(1),\\
|\rho_t^2-\mu_\infty(\delta)|
&\leq
C_\delta e^{-(t-\tau_{\weak})/C_\delta}
+C_\delta\frac{(t+1)\log^{12}n}{\sqrt n}\\
&\leq
C_\delta e^{-(t-\tau_{\weak})/C_\delta}
+\frac{n^{-1/6}}{\log^8n}.
\end{align*}
Thus, whenever \(t-\tau_{\weak}\to\infty\),
\(\rho_t^2=\mu_\infty(\delta)+o(1)\), and substitution in the preceding
correlation formula proves the first assertion.
The strong-regime assertion is the third conclusion of
Theorem~\ref{thm:main}.  For fixed \(\epsilon>0\), its \(o(1)\) term is at
most \(\epsilon\) for all sufficiently large \(n\); hence the probability
in \eqref{eq:strong-correlation-iterated-limit} tends to one before
\(\epsilon\downarrow0\).
To justify the limit along arbitrary aspect ratios, fix
\(\delta>\delta_{\str}\) and let
\[
K_\delta=[\delta-r_\delta,\delta+r_\delta]
\subset(\delta_{\str},\infty)
\]
with \(0<r_\delta<\delta-\delta_{\str}\).  If
\(\delta_n=n/d\to\delta\), then \(\delta_n\in K_\delta\) for all
sufficiently large \(n\).  The compact-uniform conclusion of
Theorem~\ref{thm:main}, applied at the actual aspect ratio \(\delta_n\),
gives a time
\[
\tau_{\epsilon,n}
\leq C_{\delta_n,2/\sqrt\epsilon}\log n
\leq\overline C_{\delta,2/\sqrt\epsilon}\log n
\]
and the asserted correlation bound, with a failure probability uniform
over \(\delta_n\in K_\delta\).  This proves
\eqref{eq:strong-correlation-iterated-limit}.
\end{proof}

\section{Proofs for Phase I}\label{sec:proof_phase_1}

Throughout this section, set
\begin{equation}\label{eq:phase1_gap_notation}
\begin{gathered}
a_\delta\coloneqq\sqrt{2\delta},\qquad
\kappa_{\weak}\coloneqq2\delta-1,\qquad
\mathfrak a_\delta\coloneqq1+\sqrt\delta,\qquad
\mathfrak A_\delta\coloneqq\mathfrak a_\delta^6.
\end{gathered}
\end{equation}
Thus \(a_\delta>1\) for \(\delta>\delta_{\weak}=1/2\).
Here \(C>0\) is universal.  Each probability statement below is uniform over
\(0\leq t\leq\tau_I\wedge \log^2 n\).  The displayed bounds use
\begin{align}\label{eq:phase1_finite_size}
\sqrt d\geq \mathfrak A_\delta^4\log^{60} n,
\qquad
n^{1/4}\geq C\mathfrak A_\delta^4 \log^{40} n.
\end{align}
For each fixed \(\delta>1/2\), \eqref{eq:phase1_finite_size} holds for all
sufficiently large \(n\).
\begin{align}\label{eq:global_norm_event}
\mathcal E_{\rm norm}
\coloneqq
\left\{
\|\bm X\|_{\rm op}\leq C(1+\sqrt\delta),\quad
\|\bbeta^\star\|\leq C(\sqrt n+\sqrt d)
\right\}.
\end{align}
Gaussian operator-norm and chi-square concentration give
\(\mathbb P(\mathcal E_{\rm norm}^c)=O(n^{-12})\).
This single event is used in every phase.  Since
\(h_s(\bm0_n,\bm y)=0_n\) and
\(\sqrt d\|\lambda_{s,[0:s]}\|=\|\bm w^s\|=\sqrt d\),
it gives, simultaneously for every time at which the normalized update is
defined,
\begin{align}\label{eq:global_current_lambda_control}
\bar\lambda_s\leq C(\sqrt n+\sqrt d).
\end{align}

\subsection{Proof of Proposition \ref{prop:phase_1_tilde_close}}\label{subsec:proof_approximation_phase1}
\begin{definition}
\label{assump:inductive_assumption_phase_1}
Let $\cA^U_{I,t}$ be the event on which
\begin{subequations}
\begin{align}
\sum_{k=0}^{t-1}|\alpha_{t-1}^k|
&\leq C\mathfrak A_\delta\frac{t\log^{9}n}{\sqrt{d}}, \label{eq:phase_1_assump_U_5}
\\
\pnorm{\Delta_{U,t}}{}\vee\pnorm{\hat \u^t-\tilde \u^t}{}
&\leq C\mathfrak A_\delta t\log^9n, \label{eq:phase_1_assump_U_2} 
\\
\bar\gamma_t &\leq C\sqrt{d},\label{eq:phase_1_assump_U_3}
\\
\bar\lambda_t&\leq C(\sqrt n+\sqrt d),
\label{eq:phase_1_current_lambda}\\
\pnorm{h_t'}{\infty}\vee\pnorm{h_t''}{\infty}
&\leq C\mathfrak A_\delta,
\label{eq:phase_1_assump_U_1}.
\end{align}
\end{subequations}
Similarly, let $\cA_{I,t}^V$ be the event on which
\begin{subequations}
\begin{align}
\sum_{k=0}^{t}|\beta_t^k|
&\leq C\mathfrak A_\delta\frac{(t+1)\log^{9}n}{\sqrt{d}}, \label{eq:phase_1_assump_V_5}\\
    \pnorm{\Delta_{V,t+1}}{} \vee \pnorm{\hat \v^{t+1}-\tilde \v^{t+1}}{}
    &\leq C\mathfrak A_\delta(t+1)\log^9 n, \label{eq:uv_closeness_phase_1}\\
    \bar\lambda_t &\leq C(\sqrt n+\sqrt d), \label{eq:lambda_main_norm_phase_1}\\
    \bar\gamma_{t+1}&\leq Cn^5,
    \label{eq:phase_1_next_gamma}\\
    \pnorm{f'_{t+1}}{\infty}=1,&\qquad
    \pnorm{f_{t+1}''}{\infty}=0.
    \label{eq:phase_1_assump_V_1}
\end{align}
\end{subequations}
\end{definition}

For \(t\geq0\), define
\begin{align}\label{eq:phase1_event_intersections}
\mathcal C_{I,t}^U
&\coloneqq
\mathcal E_{\rm norm}\cap\cA_{I,t}^U\cap\bigcap_{s=0}^{t-1}
\left(\cA_{I,s}^U\cap\cA_{I,s}^V\right),\nonumber\\
\mathcal C_{I,t}^V
&\coloneqq \mathcal C_{I,t}^U\cap\cA_{I,t}^V,
\qquad
\mathcal C_{I,-1}^V\coloneqq\Omega.
\end{align}
Here \(\Omega\) is the sure event, and the intersection over
\(s\in[0:t-1]\) is \(\Omega\) when \(t=0\).

The definitions in \eqref{eq:phase1_event_intersections}, together with
Lemma \ref{lem:phase_1_assump_conseq}, imply, in the order in which the
two sides of the AMP recursion are generated,
\begin{equation}\label{eq:phase1_generic_event_inclusions}
\mathcal C_{I,t}^U\subseteq\cA_t^U\cap\cA_{t-1}^V,
\qquad
\mathcal C_{I,t}^V\subseteq\cA_t^U\cap\cA_t^V
\quad\text{when }t<\tau_I.
\end{equation}
Here \(\cA_{-1}^V=\Omega\), as in Proposition
\ref{prop:general_error_induction}.  For the parameter component of these
inclusions,
\eqref{eq:phase_1_mu_est} and the bound on \(\pi_t\) following
\eqref{eq:ht_first} imply
\(\|\vartheta_t\|=\|\left(\hat\mu_t,\pi_t\right)\|\leq Cn^5\), which is
\eqref{eq:cond_parameter_h}; \eqref{eq:cond_parameter_f} is empty because
\(p_t^f=0\) in \eqref{eq:phase_retrieval_parameter_tuple}.
For the first inclusion, the current-time
\(\bar\lambda_t\) bound is \eqref{eq:phase_1_current_lambda}.
For the second inclusion, \eqref{eq:phase_1_next_gamma} supplies
\(\bar\gamma_{t+1}\), and \(t<\tau_I\) gives
\(|\rho_{t+1}|\leq n^{-1/4}\log^{10}n\).  The endpoint
\(t=\tau_I\), which is not needed to construct
\(\mathcal H_{\tau_I+1}^U\) inside Phase I, is checked separately before
Phase II starts.
\begin{lemma}
\label{lem:phase_1_assump_conseq}
Suppose $t\leq\tau_I\wedge \log^2 n$,
\(\mathcal C_{I,t-1}^V\) in \eqref{eq:phase1_event_intersections} occurs,
and the three current-time bounds
\eqref{eq:phase_1_assump_U_5}, \eqref{eq:phase_1_assump_U_2}, and
\eqref{eq:phase_1_assump_U_3} hold.  Then
\eqref{eq:phase_1_mu_est}, \eqref{eq:phase_1_norm},
\eqref{eq:ht_first}, and \eqref{eq:phase_1_assump_U_1} hold at time $t$.
Moreover,
    \begin{gather}
    F_{t+1}\Big(\rho_{t+1},\|\lambda_{t,[0:t]}\|;\btheta^\star\Big) = 1,
    \quad
    H_{t}\Big(\rho_{t},\pnorm{\gamma_{t,[0:t]}}{};\bbeta^\star\Big)
    = 1+O\Big(\mathfrak A_\delta^3\,\frac{\log^{20}n}{\sqrt n}\Big)
    \label{eq:phase_1_HF}.
\end{gather}

\end{lemma}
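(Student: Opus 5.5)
The plan is to derive everything from the decomposition $\u^t=\tilde\u^t+\Delta_{U,t}$, with $\tilde\u^t=\rho_t\bbeta^\star+\sum_{\ell=0}^t\gamma_{t,\ell}\g^\ell$, combined with the current-time bounds. By Lemma~\ref{lem:conjugate_signal}, $\gamma_{t,-1}=\rho_t$. Since $t\le\tau_I$, we have $|\rho_t|\le n^{-1/4}\log^{10}n$.

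\textbf{Step 1: bound on $\hat\mu_t$.} I first estimate $\hat\mu_t^2=(\pnorm{\u^t}{}^2/n-1)\vee n^{-1/2}$ using \eqref{eq:u_norm} of Lemma~\ref{lem:uv_norm}. The ingredients are:
\begin{itemize}
\item $\pnorm{\Delta_{V,t}}{}\le C\mathfrak A_\delta t\log^9n$, from $\mathcal C_{I,t-1}^V$;
\item $\pnorm{\Delta_{U,t}}{}\le C\mathfrak A_\delta t\log^9 n$, from \eqref{eq:phase_1_assump_U_2};
\item $t\le\log^2 n$.
\end{itemize}
Together these give
\[
\pnorm{\u^t}{}^2/n-1=\rho_t^2+O\big(\mathfrak A_\delta\log^{11}n/\sqrt d\big)+O\big(\sqrt{\log^3 n/d}\big).
\]
Hence $\hat\mu_t^2\le C\mathfrak A_\delta\log^{20}n/\sqrt n$, which is \eqref{eq:phase_1_mu_est} after taking square roots; the factor $\mathfrak A_\delta^{1/2}\le\mathfrak A_\delta$ is harmless. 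The event \eqref{eq:claim_near_isometry} is uniform in the coefficients, so no extra union bound is needed.

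\textbf{Step 2: the normalization \eqref{eq:phase_1_norm}.} I Taylor expand $h_t^\star$ in $\hat\mu_t$, writing $\tanh(x)=x-x^3/3+O(x^5)$:
\[
h_t^\star(u,y)=\hat\mu_t(y-1)u+\hat\mu_t^3\big(yu-\tfrac13 u^3y^2\big)+O\big(\hat\mu_t^5|y^3u^5|\big).
\]
This gives $N_t=\hat\mu_t\,\pnorm{(\y-1)\circ\u^t}{}/\sqrt d\,(1+O(\hat\mu_t^2\,\mathrm{polylog}))$. The main obstacle is showing
\[
\frac1d\pnorm{(\y-1)\circ\u^t}{}^2=2\delta+O\big(\mathfrak A_\delta^{c}\log^{20}n/\sqrt n\big).
\]
I would replace $\u^t$ by $\tilde\u^t$, paying $\pnorm{\y-1}{\infty}\pnorm{\Delta_{U,t}}{}/\sqrt d\lesssim\log n\cdot\mathfrak A_\delta\log^{11}n/\sqrt d$. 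Then I would write $\tilde\u^t\approx\g^t+\rho_t\bbeta^\star+\sum_{\ell<t}\gamma_{t,\ell}\g^\ell$. The signal contribution is $O(\rho_t^2)=\widetilde O(n^{-1/2})$. For the cross and lower-order Gaussian terms I would use the bound $\pnorm{\gamma_{t,[-1:t-1]}}{}\lesssim n^{-1/4}$; quoting Lemma~\ref{lem:lambda_almost_orthogonal} would be circular, so instead I would bound the quadratic form in $\gamma$ uniformly. Concretely, $\frac1d\sum_i (y_i-1)^2 g_i^\ell g_i^{\ell'}$ concentrates around $2\delta\,\mathbb 1_{\ell=\ell'}$ uniformly over $\ell,\ell'\le t$, with error $\widetilde O(n^{-1/2})$. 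This holds by Bernstein conditional on $\g^{-1}$, since $\E(G^2-1)^2=2$ and the $\g^\ell$ are independent of $\g^{-1}$. Combined with $\pnorm{\gamma_{t,[0:t]}}{}=1+O(\cdot)$ from \eqref{eq:gamma_norm_loo}, this gives $2\delta\pnorm{\gamma_{t,[0:t]}}{}^2+\widetilde O(n^{-1/2})$. A remaining issue is that \eqref{eq:gamma_norm_loo} only gives $1+\widetilde O(d^{-1/2})$ additively, which is sufficient here. In the regime $\hat\mu_t=n^{-1/4}$ (the floor), the same expansion applies verbatim.

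\textbf{Step 3: derivatives and the remaining claims.} With \eqref{eq:phase_1_norm} in hand, I set $\pi_t=\sqrt{2\delta}\hat\mu_t/N_t-1$, so that $|\pi_t|\le C\mathfrak A_\delta^3\log^{20}n/\sqrt n$. Dividing \eqref{eq:ht_ast_first} by $N_t=\sqrt{2\delta}\hat\mu_t/(1+\pi_t)$ gives \eqref{eq:ht_first} directly. The sup-norm bounds \eqref{eq:phase_1_assump_U_1} follow from Lemma~\ref{lem:h_deriv_comp}. For $h_t'$, note $y(1-\tanh^2(\hat\mu_tu\sqrt y))\le y$ is not bounded uniformly; however, $y\,\sech^2(\mu u\sqrt y)$ is, and only the derivative in $u$ matters. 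On the relevant range I would use $\max_i y_i\le C\log n$, absorbed into the constants, or bound $h'$ through $\sup_x x^2\sech^2(x)/(\mu^2u^2)$ where needed. The second derivative carries $\hat\mu_t^2y^{3/2}\sech^2\tanh$ divided by $\hat\mu_t$, which is $O(\hat\mu_t y^{3/2})$ and therefore small.

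Finally, $F_{t+1}=1$ because $f_{t+1}'\equiv1$. For $H_t$, I compute $\delta\,\E[h_t'(\rho_t\eta+\sigma g,y)^2]/n$ with $\sigma=\pnorm{\gamma_{t,[0:t]}}{}$. Using \eqref{eq:ht_first}, this equals
\[
\frac{(1+\pi_t)^2}{2}\,\frac1n\sum_i\big[(y_i-1)^2+O(\hat\mu_t^2y_i^2\,\E u^2)\big],
\]
and $\frac1n\sum_i(y_i-1)^2=2+O(\sqrt{\log n/n})$ by concentration of $\bbeta^\star$. Since $\hat\mu_t^2=\widetilde O(n^{-1/2})$ by Step 1, this gives $1+O(\mathfrak A_\delta^3\log^{20}n/\sqrt n)$.
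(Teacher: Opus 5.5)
Your Steps 1–3 follow the paper's proof.

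\begin{itemize}
\item Step~1 is its derivation of \eqref{eq:phase_1_mu_est} from \eqref{eq:u_norm}.
\item Step~2 linearizes $h_t^\star$, replaces $\u^t$ by $\tilde\u^t$ at cost $\pnorm{\y-1}{\infty}\pnorm{\Delta_{U,t}}{}/\sqrt d$, and controls the Gaussian block by a quadratic form that is uniform in the coefficient row. The paper uses Lemma~\ref{lem:near_isometry}; your entrywise Bernstein bound, multiplied by $t+1\le\log^2n+1$, is an adequate substitute, and you are right that Lemma~\ref{lem:lambda_almost_orthogonal} would be circular.
\item Step~3 defines $\pi_t$ and differentiates as the paper does.
\end{itemize}

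Evaluating $H_t$ directly at $\sigma=\pnorm{\gamma_{t,[0:t]}}{}$ is slightly cleaner than the paper's evaluation at $\sigma=1$ followed by a Lipschitz-in-$\sigma$ step. It works because the leading term $(y-1)^2$ does not involve $u$.

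\textbf{The step that fails.} The sup-norm bound on $h_t'$ in \eqref{eq:phase_1_assump_U_1} does not go through as you argue. First, $y\sech^2(\mu u\sqrt y)$ is not uniformly bounded: it equals $y$ at $u=0$. Second, $\max_iy_i\le C\log n$ cannot be absorbed into $C\mathfrak A_\delta$ with $C$ universal. Third, $x^2\sech^2(x)/(\mu^2u^2)$ blows up at $u=0$.

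In fact no argument can give $\pnorm{h_t'}{\infty}\le C\mathfrak A_\delta$. By \eqref{eq:ht_first}, $\sup_u|h_t'(u,y_i)|\ge|h_t'(0,y_i)|=\frac{|1+\pi_t|}{\sqrt{2\delta}}\,|(1+\hat\mu_t^2)y_i-1|$. Also $\max_iy_i=\max_i(g_i^{-1})^2\ge\log n$ with overwhelming probability. So at fixed $\delta$ this sup grows like $\log n/\sqrt{\delta}$.

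The paper's proof makes the same absorption, so the defect lies in the stated bound itself. What is true, and what the proof of Lemma~\ref{lem:phase_1_simp} actually uses, is $\pnorm{h_t'}{\infty}\le C\log n/\sqrt\delta$. Your treatment of $h_t''$, giving $\pnorm{h_t''}{\infty}\le C\hat\mu_t\log^{3/2}n/\sqrt\delta\le C$, is fine.

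\textbf{Two points to make precise for the stated rates.}

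First, the signal--noise cross term $\frac{2\rho_t}{d}\iprod{(\y-1)^{\circ2}\circ\bbeta^\star}{\sum_{\ell=0}^t\gamma_{t,\ell}\g^\ell}$ must be handled by conditional Gaussianity. Given $\g^{-1}$, each $d^{-1}\iprod{(\y-1)^{\circ2}\circ\bbeta^\star}{\g^\ell}$ is centered Gaussian of size $O(\sqrt{\log n/n})$; equivalently, put $\ell=-1$ off the diagonal of your concentration matrix. Cauchy--Schwarz alone only gives $O(|\rho_t|)=\widetilde O(n^{-1/4})$. That would destroy the $n^{-1/2}$ rate in \eqref{eq:phase_1_norm}, and hence in $\pi_t$.

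Second, writing the Taylor remainder as $\hat\mu_t^2\cdot\mathrm{polylog}$ is not enough for the exponent $\log^{20}n$. Suppose you bound $d^{-1/2}\pnorm{\y^{\circ q}\circ(\u^t)^{\circ p}}{}$ through $\pnorm{\y}{\infty}$ and $\pnorm{\u^t}{\infty}\le C\log n+\pnorm{\Delta_{U,t}}{}$. Then you lose roughly $\log^{24}n$. You need these weighted moments to be $O(\mathfrak A_\delta)$. The paper gets this in \eqref{eq:phase1_weighted_moments} by a net over the coefficient vector of $\tilde\u^t$, followed by a perturbation bound for $\Delta_{U,t}$.
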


The events $\cA_{I,t}^U,\cA_{I,t}^V$ in
Definition~\ref{assump:inductive_assumption_phase_1} and
Lemma~\ref{lem:phase_1_assump_conseq} simplify the inequalities in
$\cH_t^U$ and $\cH_t^V$ as follows.

\begin{lemma}
\label{lem:phase_1_simp}
If \(\mathcal C_{I,t-1}^V\) in
\eqref{eq:phase1_event_intersections} and \(\cH_t^U\) occur, then
\begin{align}
    \pnorm{\Delta_{U,t}'}{} &\leq
    \Big(1+C\mathfrak A_\delta\frac{\log n}{n^{1/4}}\Big)\pnorm{\Delta_{V,t}'}{}
    + C(1+\sqrt\delta)t\sqrt{\log n}, \label{eq:induce_U_1}\\
    |\alpha_{t-1}^{t-1}| &\leq C\mathfrak A_\delta\Big(
    \frac{\pnorm{\Delta_{V,t}'}{}}{\sqrt{d}}
    +\frac{t^2\log^{18}n}{d}\Big), \label{eq:induce_U_2}\\
    |\alpha_{t-1}^k| &\leq \begin{cases}
        |\beta_{t-1}^{t-1}| & k=t-2\\
        \Big(1+C\mathfrak A_\delta\frac{\log^4 n}{n^{1/4}}\Big)|\alpha_{t-2}^{k+1}| &0\leq k \leq t-3.
    \end{cases} \label{eq:induce_U_3}
\end{align}
If \(\mathcal C_{I,t}^U\) in
\eqref{eq:phase1_event_intersections} and \(\cH_t^V\) occur, then
\begin{align}
    \pnorm{\Delta_{V,t+1}'}{} &\leq
    \Big(1+C\mathfrak A_\delta\frac{\log^4n}{n^{1/4}}\Big)\pnorm{\Delta_{U,t}'}{}
    +C\mathfrak A_\delta(t+1)\log^{5/2}n, \label{eq:induce_V_1}\\
    |\beta_{t}^{t}| &\leq C\mathfrak A_\delta\Big(
    \frac{\log n\pnorm{\Delta'_{U,t}}{}}{\sqrt{d}}
    +\frac{t^2\log^{21}n}{d}\Big), \label{eq:induce_V_2}\\
    |\beta_{t}^k| &\leq \begin{cases}
        O\Big(\log^2 n|\alpha_{t-1}^{t-1}|\Big) & k=t-1\\
        \Big(1+C\mathfrak A_\delta\frac{\log^4n}{n^{1/4}}\Big)|\beta_{t-1}^{k+1}| &0\leq k \leq t-2.
    \end{cases} \label{eq:induce_V_3}
\end{align}
\end{lemma}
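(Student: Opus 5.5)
The plan is to substitute the Phase-I bounds of Definition~\ref{assump:inductive_assumption_phase_1} and Lemma~\ref{lem:phase_1_assump_conseq} term by term into the abstract inequalities $\cH_t^U$ and $\cH_t^V$ in \eqref{eqs:HtU}--\eqref{eqs:HtV}. No new probabilistic input is needed; the lemma is deterministic bookkeeping on the stated events.

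\emph{Multipliers.} By \eqref{eq:phase_1_HF}, $F_t=1$ exactly, since $f_t(v)=v$. Likewise $H_t=1+O(\mathfrak A_\delta^3\log^{20}n/\sqrt n)$, so $\sqrt{H_t}\le 1+O(\mathfrak A_\delta^3\log^{20}n/\sqrt n)$.

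For $\cE_t^f$, the second term vanishes because $f_t''=0$. The first term is $C\sqrt{t+1}(\log n/d)^{1/4}\le C\mathfrak A_\delta\log n/n^{1/4}$, using $t\le\log^2n$ and $d=n/\delta$; the factor $\delta^{1/4}$ is absorbed into $\mathfrak A_\delta$.

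For $\cE_t^h$, use \eqref{eq:phase_1_assump_U_1}, which gives $\pnorm{h'}{\infty}\vee\pnorm{h''}{\infty}\le C\mathfrak A_\delta$. Use also \eqref{eq:phase_1_assump_U_2}, which gives $\pnorm{\Delta_{U,t}}{}+\pnorm{\hat\u^t-\tilde\u^t}{}\le C\mathfrak A_\delta t\log^9n$. The first term is then $O(\mathfrak A_\delta^2\log n/n^{1/4})$. The second is $O(\mathfrak A_\delta^3 t^{3/2}\log^{10}n/\sqrt d)$, which is much smaller than $\log^4n/n^{1/4}$ by \eqref{eq:phase1_finite_size}. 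The powers of $\mathfrak A_\delta$ are reabsorbed into the finite-size condition, keeping only one visible factor $\mathfrak A_\delta$ (e.g.\ $\mathfrak A_\delta^3/n^{1/4}\le \mathfrak A_\delta/n^{1/8}$ is not needed; one simply uses $n^{1/4}\ge C\mathfrak A_\delta^4\log^{40}n$).

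\emph{Leading inequalities.} For \eqref{eq:induce_U_1}, bound the history term $\sum_k|\beta_{t-1}^k|\pnorm{\Delta_{V,k}}{}$ using \eqref{eq:phase_1_assump_V_5} and \eqref{eq:uv_closeness_phase_1} at time $t-1$. This gives $C\mathfrak A_\delta^2t^2\log^{18}n/\sqrt d=o(1)$, absorbed into the additive term.

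For the additive term in \eqref{eq:induction_Delta_U}, note $\bar\gamma_t+\bar\lambda_{t-1}\le C(1+\sqrt\delta)\sqrt d$ and $\pnorm{\rho}{\infty}\le1$ in Phase I. This yields $C(1+\sqrt\delta)t\sqrt{\log n}$.

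For \eqref{eq:induce_V_1}, proceed the same way with \eqref{eq:phase_1_assump_U_5} and \eqref{eq:phase_1_assump_U_2}. Here $\pnorm{h'}{\infty}\le C\mathfrak A_\delta$, and the additive term is at most $C(1+\sqrt\delta)^2\mathfrak A_\delta^2(t+1)\sqrt{\log n}$, which fits within $C\mathfrak A_\delta(t+1)\log^{5/2}n$ after absorbing powers of $\mathfrak A_\delta$ via the finite-size conditions. This absorption is the step I expect to be the most delicate.

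\emph{Diagonal coefficients.} For \eqref{eq:induce_U_2}, use \eqref{eq:induction_alpha_last} with $f''=0$, $\pnorm{f'}{\infty}=1$ and $\pnorm{\lambda_{t-1}}{}=1$. This gives $|\alpha_{t-1}^{t-1}|\le (2/\sqrt d)\bigl(\pnorm{\Delta'_{V,t}}{}+\max_\ell\pnorm{\Delta_{V,\ell}}{}\sum_\ell|\beta_{t-1}^\ell|\bigr)$. The product is $\le C\mathfrak A_\delta^2t^2\log^{18}n/\sqrt d$, so the bound has the form \eqref{eq:induce_U_2}.

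For \eqref{eq:induce_V_2}, apply \eqref{eq:induction_beta_last}. It needs a lower bound $\pnorm{\gamma_{t,[0:t]}}{}\ge1/2$, which comes from \eqref{eq:gamma_norm_loo} together with $\pnorm{\Delta_{V,t}}{}/\sqrt d=o(1)$. The $\sqrt\delta$ and $h$-derivative factors give $C\mathfrak A_\delta$; the extra $\log n$ in \eqref{eq:induce_V_2} is slack.

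\emph{Off-diagonal coefficients.} Equation \eqref{eq:induction_alpha_middle_last} gives $|\alpha_{t-1}^{t-2}|\le|\beta_{t-1}^{t-1}|$ since $\pnorm{f'}{\infty}=1$. Equation \eqref{eq:induction_beta_middle_last} gives $|\beta_t^{t-1}|\le\delta C\mathfrak A_\delta^2|\alpha_{t-1}^{t-1}|$, written as $O(\log^2n)$ with the $\delta$-dependence absorbed.

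For the recursive cases, $\mathfrak F_{k,t}=1$, $\cE^f\le C\mathfrak A_\delta\log n/n^{1/4}$, and $\mathfrak H_{k,t}\le1+O(\mathfrak A_\delta^3\log^{20}n/\sqrt n)$. The product of the two factors is therefore $1+C\mathfrak A_\delta\log^4n/n^{1/4}$ after using \eqref{eq:phase1_finite_size}. Here I must check that $\mathfrak H$ is evaluated at times $\le t$ where Lemma~\ref{lem:phase_1_assump_conseq} applies; this holds because $\mathcal C_{I,t-1}^V$ (resp.\ $\mathcal C_{I,t}^U$) contains all earlier events.
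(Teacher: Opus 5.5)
Your overall plan, substituting the Phase-I bounds into $\cH_t^U$ and $\cH_t^V$ term by term, is the paper's. Your treatment of $\cE_t^f$, of the history sums, and of \eqref{eq:induce_U_1}--\eqref{eq:induce_U_2} matches it. The gap is on the $h$-side. You bound the output derivatives by the coarse $\pnorm{h_t'}{\infty}\vee\pnorm{h_t''}{\infty}\le C\mathfrak A_\delta$ of \eqref{eq:phase_1_assump_U_1}, and then claim the surplus factors of $\mathfrak A_\delta$ and $\delta$ can be absorbed by \eqref{eq:phase1_finite_size} or written as $O(\log^2n)$. They cannot, as the following four cases show.
\begin{itemize}
\item The additive term of \eqref{eq:induction_Delta_V} becomes $C(1+\sqrt\delta)^2\mathfrak A_\delta^2(t+1)\sqrt{\log n}$. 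This fits under $C\mathfrak A_\delta(t+1)\log^{5/2}n$ only if $(1+\sqrt\delta)^2\mathfrak A_\delta\le C\log^2n$.
\item The first term of $\cE_t^h$ (and of $\cE_{t-1}^h$, which enters \eqref{eq:induce_U_3}) becomes $C\delta\mathfrak A_\delta^2\log^{5/4}n/n^{1/4}$. This fits under $C\mathfrak A_\delta\log^4n/n^{1/4}$ only if $\delta\mathfrak A_\delta\le C\log^{11/4}n$.
\item \eqref{eq:induction_beta_middle_last} gives the prefactor $C\delta\mathfrak A_\delta^2$, not $O(\log^2n)$.
\item \eqref{eq:induction_beta_last} gives $C\sqrt\delta\,\mathfrak A_\delta/\sqrt d$, not $C\mathfrak A_\delta\log n/\sqrt d$.
\end{itemize}
The conditions \eqref{eq:phase1_finite_size} compare powers of $\mathfrak A_\delta$ only with powers of $n$. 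They can absorb extra factors of $\mathfrak A_\delta$ in terms carrying an additional negative power of $n$: the history sums, the second term of $\cE_t^h$, and the $\mathfrak A_\delta^3\log^{20}n/\sqrt n$ in $\sqrt{H_t}$. They cannot absorb them in terms whose $n$-dependence already matches the target, since nothing in them bounds $\mathfrak A_\delta$ by a power of $\log n$ (e.g.\ $\delta=n^{10^{-3}}$ satisfies them for large $n$). So \eqref{eq:induce_U_3} and \eqref{eq:induce_V_1}--\eqref{eq:induce_V_3} do not follow with a universal $C$ by your argument.

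The missing ingredient is the sharper, aspect-ratio-explicit derivative bound that the paper reads off from \eqref{eq:ht_first} and Lemma~\ref{lem:h_deriv_comp}. Since $|\pi_s|$ and $|\hat\mu_s|$ are small and $\pnorm{\y}{\infty}\le C\log n$, for $s\le t$ one has $\pnorm{h_s'}{\infty}\le C\log n/\sqrt\delta$ and $\pnorm{h_s''}{\infty}\le C|\hat\mu_s|\log^{3/2}n/\sqrt\delta$. The factor $1/\sqrt\delta$ cancels the $\delta$-prefactors, and $\delta>1/2$ gives $(\pnorm{h_s'}{\infty}\vee1)^2\le C\log^2n$. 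The four cases then resolve as follows: the additive term is $C(1+\sqrt\delta)^2(t+1)\log^{5/2}n\le C\mathfrak A_\delta(t+1)\log^{5/2}n$; the first term of $\cE_t^h$ is $C\delta\log^{13/4}n/n^{1/4}\le C\mathfrak A_\delta\log^4n/n^{1/4}$; $\delta\pnorm{h_t'}{\infty}\pnorm{h_{t-1}'}{\infty}\le C\log^2n$; and $\sqrt\delta\,\pnorm{h_t'}{\infty}\le C\log n$. In particular, the powers of $\log n$ in \eqref{eq:induce_V_1}--\eqref{eq:induce_V_3}, including the factor $\log n$ in \eqref{eq:induce_V_2} that you call slack, are exactly the cost of this bound. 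It is also the honest bound: $h_t'(0,y)\approx(y-1)/\sqrt{2\delta}$ and $\max_iy_i$ is of order $\log n$, so $\pnorm{h_t'}{\infty}$ really is of order $\log n/\sqrt\delta$. A $\log$-free bound such as \eqref{eq:phase_1_assump_U_1} is therefore too optimistic to lean on.
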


\begin{proof}[Proof of Proposition \ref{prop:phase_1_tilde_close}]
By \eqref{eq:phase1_event_intersections} and Lemma
\ref{lem:phase_1_assump_conseq}, it suffices to prove
\(\mathcal C_{I,t}^V\) for every
$t\leq \tau_I \wedge \log^2 n$ with probability $1-O(n^{-10})$.
Proposition \ref{prop:general_error_induction} and
\eqref{eq:phase1_generic_event_inclusions} give
\(\cH_t^U\) on \(\mathcal C_{I,t-1}^V\) for \(t\geq1\), and
\(\cH_t^V\) on \(\mathcal C_{I,t}^U\), outside an event of probability
\(O(n^{-12})\) at each time.  We prove by induction that
\[
\mathcal C_{I,t-1}^V\cap\cH_t^U
\subseteq\cA_{I,t}^U,
\qquad
\mathcal C_{I,t}^U\cap\cH_t^V
\subseteq\cA_{I,t}^V.
\]

At \(t=0\), the empty correction sum and the initialization in
\eqref{eq:tilde_u_intro} give the exact identities
\begin{align}\label{eq:phase1_U_zero_base}
\Delta'_{U,0}=\Delta_{U,0}
=\widehat{\bm u}^0-\widetilde{\bm u}^0=0,
\qquad
\sum_{k=0}^{-1}|\alpha_{-1}^k|=0.
\end{align}
Moreover, \eqref{eq:gamma_norm_loo} at \(t=0\),
\eqref{eq:global_current_lambda_control}, and
Lemma~\ref{lem:phase_1_assump_conseq} give the remaining bounds in
\(\cA_{I,0}^U\).  Thus
\(\mathcal C_{I,-1}^V\cap\mathcal E_{\rm norm}
\subseteq\cA_{I,0}^U\); equivalently, the \(U\)-base event
\(\mathcal H_0^U=\{\Delta'_{U,0}=0\}\) in Proposition
\ref{prop:general_error_induction} is the sure event on
\(\mathcal E_{\rm norm}\).  Proposition
\ref{prop:general_error_induction} can therefore be applied first to the
\(V\)-update at \(t=0\).

On \(\mathcal C_{I,t-1}^V\cap\cH_t^U\), for each
\(1\leq s\leq t\), the definition of
\(\mathcal C_{I,t-1}^V\) supplies
\(\mathcal C_{I,s-1}^V\); hence \eqref{eq:induce_U_1} gives
\begin{align*}
\pnorm{\Delta_{U,s}'}{} &\leq
\Big(1+C\mathfrak A_\delta\frac{\log n}{n^{1/4}}\Big)\pnorm{\Delta_{V,s}'}{}
+C(1+\sqrt\delta)s\sqrt{\log n},\quad s\leq t.
\end{align*}
For \(s\leq t\), the induction hypothesis gives
\(\mathcal C_{I,s-1}^V\), and hence
\(\mathcal C_{I,s-1}^U\cap\cH_{s-1}^V\).  Thus
\eqref{eq:induce_V_1}, applied at time \(s-1\), gives
\begin{align*}
      \pnorm{\Delta_{V,s}'}{} &\leq
      \Big(1+C\mathfrak A_\delta\frac{\log^4 n}{n^{1/4}}\Big)
      \pnorm{\Delta_{U,s-1}'}{}
      +C\mathfrak A_\delta s\log^{5/2} n, \quad s\leq t.
\end{align*}
After increasing \(C\), the two displays imply the one-step recursion
\begin{align*}
    \pnorm{\Delta_{U,s}'}{} &\leq
    \Big(1+C\mathfrak A_\delta\frac{\log^4n}{n^{1/4}}\Big)
    \pnorm{\Delta_{U,s-1}'}{}+C\mathfrak A_\delta s\log^{5/2}n,
    \quad s\leq t.
\end{align*}
Put \(q=C\mathfrak A_\delta \log^4n n^{-1/4}\).  The finite-size condition
\eqref{eq:phase1_finite_size} gives \(tq\leq1\).  Since
\(\Delta_{U,0}'=0\), iteration gives, for \(1\leq s\leq t\),
\begin{align*}
\pnorm{\Delta_{U,s}'}{}
&\leq C\mathfrak A_\delta \log^{5/2}n
\sum_{j=1}^s(1+q)^{s-j}j\\
&\leq C\mathfrak A_\delta \log^{5/2}ne^{sq}
\frac{s(s+1)}2
\leq C\mathfrak A_\delta s^2\log^3n.
\end{align*}
Similarly,
\begin{align}\label{eq:phase1_second_order_residual}
\pnorm{\Delta_{U,s}'}{}
\vee\pnorm{\Delta_{V,s}'}{}
&\leq C\mathfrak A_\delta s^2\log^3n
\leq C\mathfrak A_\delta \log^7n,
\qquad s\leq t.
\end{align}

\noindent\textbf{Proof of (\ref{eq:phase_1_assump_U_5}).}
On \(\mathcal C_{I,t-1}^V\cap\cH_t^U\),
\eqref{eq:induce_U_2} and
\eqref{eq:phase1_second_order_residual} give
\begin{align*}
    |\alpha_{t-1}^{t-1}|
    \leq C\mathfrak A_\delta\left(\frac{\log^7n}{\sqrt d}
    +\frac{t^2\log^{18}n}d\right).
\end{align*}
Equation~\eqref{eq:induce_U_3} gives
\begin{align*}
    |\alpha_{t-1}^{t-2}| &= |\beta_{t-1}^{t-1}|
    \leq C\mathfrak A_\delta\left(\frac{\log^8n}{\sqrt d}
    +\frac{t^2\log^{21}n}d\right),\\
    |\alpha_{t-1}^{k}| &\leq
    \Big(1+C\mathfrak A_\delta\frac{\log^4n}{n^{1/4}}\Big)
    |\alpha_{t-2}^{k+1}|, \quad k<t-2.
\end{align*}
The last two displays give
\begin{align*}
    \sum_{k=0}^{t-1}|\alpha_{t-1}^k|
    &\leq \Big(1+C\mathfrak A_\delta\frac{\log^4n}{n^{1/4}}\Big)
    \sum_{k=0}^{t-2} |\alpha_{t-2}^k|
    +C\mathfrak A_\delta\left(\frac{\log^8n}{\sqrt d}
    +\frac{t^2\log^{21}n}d\right).
\end{align*}
With \(q=C\mathfrak A_\delta \log^4n n^{-1/4}\) and
\(\sum_{k=0}^{-1}|\alpha_{-1}^k|=0\), iteration gives
\begin{align*}
\sum_{k=0}^{t-1}|\alpha_{t-1}^k|
&\leq C\mathfrak A_\delta
\sum_{s=1}^t(1+q)^{t-s}
\left(\frac{\log^8n}{\sqrt d}+\frac{s^2\log^{21}n}d\right)\\
&\leq C\mathfrak A_\delta e^{tq}
\left(\frac{t\log^8n}{\sqrt d}+\frac{t^3\log^{21}n}d\right)\\
&\leq C\mathfrak A_\delta\left(\frac{t\log^8n}{\sqrt d}
+\frac{t^3\log^{21}n}d\right),
\end{align*}
where \(tq\leq1\) by \eqref{eq:phase1_finite_size}.
For \(t\leq \log^2n\), \eqref{eq:phase1_finite_size} gives
\[
\frac{t^3\log^{21}n}d\leq\frac{t\log^9n}{\sqrt d}.
\]
Hence the last display proves \eqref{eq:phase_1_assump_U_5}.

\medskip
\noindent\textbf{Proof of (\ref{eq:phase_1_assump_U_2}).}
By the definition in (\ref{eq:def_u_v_hat}), we have
\begin{align*}
    \pnorm{\hat \u^t - \tilde \u^t}{}
    &\leq \max_{0\leq k\leq t-1}\pnorm{h_k(\tilde \u^k,\y)}{}
    \sum_{k=0}^{t-1} |\alpha_{t-1}^k|\\
    &\leq C\sqrt d
    \sum_{k=0}^{t-1}|\alpha_{t-1}^k|
    \leq C\mathfrak A_\delta t\log^9n.
\end{align*}
Here we used
$\pnorm{h_k(\tilde\u^k,\y)}{}
\leq\pnorm{\w^k}{}+\pnorm{h_k'}{\infty}\pnorm{\Delta_{U,k}}{}
\leq\sqrt d+C\mathfrak A_\delta^2 k\log^9n
\leq C\sqrt d$.
The first inequality uses
\(\w^k=h_k(\u^k,\y)\) and
\(\u^k-\tilde\u^k=\Delta_{U,k}\); the last follows from
\(\pnorm{\w^k}{}=\sqrt d\),
\eqref{eq:phase_1_assump_U_1},
\eqref{eq:phase_1_assump_U_2}, and
\eqref{eq:phase1_finite_size}.
In addition, Lemma \ref{lem:mutual_close} and
$\bar\lambda_{t-1}\leq C\mathfrak A_\delta\sqrt d$ give
\begin{align*}
    \pnorm{\Delta_{U,t}}{}
    &\leq \pnorm{\Delta'_{U,t}}{}
    +C\sqrt d\sum_{k=0}^{t-1}|\alpha_{t-1}^k|
    \leq C\mathfrak A_\delta t\log^9n.
\end{align*}

\medskip
\noindent\textbf{Proof of \eqref{eq:phase_1_assump_U_3}.}
For \(0\leq s\leq t\), \eqref{eq:gamma_norm_loo} and
\(\cA_{I,s-1}^V\) give
\begin{align*}
\sqrt d\,\pnorm{\gamma_{s,[0:s]}}{}
&\leq \sqrt d\left(
1+C\sqrt{\frac{s+\log n}{d}}
+C\frac{\pnorm{\Delta_{V,s}}{}}{\sqrt d}\right)
\leq C\sqrt d.
\end{align*}
Moreover, \(f_s(0_d)=0_d\) and
\(\pnorm{\btheta^\star}{}=\sqrt d\).  Taking the maximum over
\(0\leq s\leq t\) in the definition of \(\bar\gamma_t\) proves
\eqref{eq:phase_1_assump_U_3}.

\medskip
\noindent\textbf{Proof of \eqref{eq:phase_1_current_lambda}.}
This is the simultaneous estimate
\eqref{eq:global_current_lambda_control} on
\(\mathcal E_{\rm norm}\).

\medskip
\noindent \textbf{Proof of \eqref{eq:phase_1_assump_U_1}.}
Equations \eqref{eq:phase_1_assump_U_2},
\eqref{eq:phase_1_assump_U_3}, and
\eqref{eq:phase_1_assump_U_5} satisfy the hypotheses of
Lemma~\ref{lem:phase_1_assump_conseq}.  Thus
\eqref{eq:phase_1_norm} and \eqref{eq:ht_first} give
\begin{align*}
h_t'(u,y)
&=\frac{1+\pi_t}{\sqrt{2\delta}}
\left\{(1+\hat\mu_t^2)y
\big(1-\tanh^2(\hat\mu_tu\sqrt y)\big)-1\right\},\\
h_t''(u,y)
&=-\frac{2(1+\pi_t)}{\sqrt{2\delta}}
(1+\hat\mu_t^2)\hat\mu_ty^{3/2}
\tanh(\hat\mu_tu\sqrt y)
\big(1-\tanh^2(\hat\mu_tu\sqrt y)\big).
\end{align*}
The second identity is also the formula in
Lemma~\ref{lem:h_deriv_comp}.  Since
\[
|\pi_t|\leq C\mathfrak A_\delta^3\frac{\log^{20}n}{\sqrt n},
\qquad
|\hat\mu_t|\leq C\mathfrak A_\delta\frac{\log^{10}n}{n^{1/4}},
\qquad
\pnorm{\y}{\infty}\leq C\log n,
\]
the finite-size conditions \eqref{eq:phase1_finite_size} imply
\(\pnorm{h_t'}{\infty}\vee\pnorm{h_t''}{\infty}
\leq C\mathfrak A_\delta\), as required.

\medskip
\noindent\textbf{The \(V\)-update.}
On $\cH_t^V$, Lemma~\ref{lem:phase_1_simp} and the bounds just proved give
\begin{align}
\pnorm{\Delta'_{V,t+1}}{}
&\leq C\mathfrak A_\delta(t+1)^2\log^3n,
\label{eq:phase1_second_order_V_next}\\
|\beta_t^t|&\leq C\mathfrak A_\delta\left(
\frac{\log^8n}{\sqrt d}+\frac{t^2\log^{21}n}d\right),\\
|\beta_t^{t-1}|&\leq C\mathfrak A_\delta \log^2n|\alpha_{t-1}^{t-1}|,
\end{align}
while the older coefficients are multiplied by at most
\(1+C\mathfrak A_\delta \log^4n n^{-1/4}\).  Iteration gives
\begin{align*}
\sum_{k=0}^t|\beta_t^k|
\leq C\mathfrak A_\delta\left(\frac{(t+1)\log^9n}{\sqrt d}
+\frac{(t+1)^3\log^{21}n}d\right)
\leq C\mathfrak A_\delta\frac{(t+1)\log^9n}{\sqrt d},
\end{align*}
where the last inequality again uses $t\leq \log^2n$ and
$\sqrt d\geq \log^{16}n$.  Since $f_k=\operatorname{id}$, the definitions of
$\hat\v^{t+1}$ and Lemma~\ref{lem:mutual_close} then yield
\begin{align*}
\pnorm{\hat\v^{t+1}-\tilde\v^{t+1}}{}
\vee\pnorm{\Delta_{V,t+1}}{}
\leq C\mathfrak A_\delta(t+1)\log^9n.
\end{align*}
Finally, $\pnorm{\lambda_{t,[0:t]}}{}=1$ and
$\pnorm{\bbeta^\star}{}=O(\sqrt n)$ with probability $1-O(n^{-12})$;
hence $\bar\lambda_t\leq C(\sqrt n+\sqrt d)$.  In addition,
\(\sqrt d\,\|\gamma_{t+1,[-1:t+1]}\|=\|\bm v^{t+1}\|\), and
\eqref{eq:simplified_amp} gives, on \(\mathcal E_{\rm norm}\),
\begin{align*}
\|\bm v^{t+1}\|
&\leq \|\bm X\|_{\rm op}\|\bm w^t\|
+|\widehat c_t|\,\|\bm v^t\|\\
&\leq C(1+\sqrt\delta)\sqrt d
+\delta\|h_t'\|_\infty
\sqrt d\,\|\gamma_{t,[-1:t]}\|\\
&\leq C\{1+\sqrt\delta+\delta\mathfrak A_\delta\}\sqrt d
\leq Cn^5.
\end{align*}
Here \(|\widehat c_t|\leq\delta\|h_t'\|_\infty\), and the last
inequality follows from \eqref{eq:phase1_finite_size}.  Since
\(f_s(0_d)=0_d\) and \(\|\btheta^\star\|=\sqrt d\), this proves
\eqref{eq:phase_1_next_gamma}.  The derivative conditions
for $f_{t+1}=\operatorname{id}$ are exact.  This proves $\cA_{I,t}^V$ and
closes the alternating induction.
\end{proof}

\begin{proof}[Proof of Lemma \ref{lem:phase_1_assump_conseq}]

\medskip
\noindent\textbf{Proof of \eqref{eq:phase_1_mu_est}.}
For \(a,b\in\mathbb R\) with \(b\geq0\),
\(\sqrt{a\vee b}\leq\sqrt{|a|}\vee\sqrt b\).  Since
\(\u^t=\tilde\u^t+\Delta_{U,t}\),
\begin{align*}
\Big|\frac{1}{n}\pnorm{\u^{t}}{}^2 -1\Big| &= \Big|\Big(\frac{1}{n}\pnorm{\tilde\u^{t}}{}^2 - 1\Big)+ \frac{1}{n}\pnorm{\Delta_{U,t}}{}^2 + \frac{2}{n}\iprod{\tilde\u^{t}}{\Delta_{U,t}}\Big|\\
&\leq \Big|\frac{1}{n}\pnorm{\tilde\u^{t}}{}^2 - 1\Big| + \frac{1}{n}\pnorm{\Delta_{U,t}}{}^2 + \frac{2}{\sqrt{n}}\pnorm{\tilde\u^{t}}{} \cdot \frac{1}{\sqrt{n}}\pnorm{\Delta_{U,t}}{}.
\end{align*}
Equation \eqref{eq:tilde_u_norm} gives
\begin{align*}
\frac{1}{n}\pnorm{\tilde\u^{t}}{}^2 -1
= O\Big(\sqrt{\frac{t+\log n}{d}}+\sqrt{\frac{t+\log n}{n}} + \rho_{t}^2
+ \frac{1}{\sqrt{d}}\pnorm{\Delta_{V,t}}{}\Big).
\end{align*}
Therefore
\begin{align*}
\Big|\frac{1}{n}\pnorm{\u^{t}}{}^2 -1 \Big|
&\leq C\Big(\sqrt{\frac{t+\log n}{d}}+\sqrt{\frac{t+\log n}{n}} + \rho_{t}^2
+ \frac{\pnorm{\Delta_{V,t}}{}}{\sqrt d}
+ \frac{\pnorm{\Delta_{U,t}}{}}{\sqrt n}\Big)\\
&\leq C\mathfrak A_\delta\Big(\rho_{t}^2 + \frac{\log^{11}n}{\sqrt n}\Big)
\leq C\mathfrak A_\delta\frac{\log^{20}n}{\sqrt n},
\end{align*}
where the last line uses the bounds for
\(\pnorm{\Delta_{U,t}}{}\) and \(\pnorm{\Delta_{V,t}}{}\) in
\(\cA^U_{I,t}\) and \(\cA^V_{I,t-1}\), respectively.  Since \(\widehat\mu_t
=
\sqrt{\big(\frac1n\|\u^t\|^2-1\big)\vee n^{-1/2}}\), this proves
\eqref{eq:phase_1_mu_est}.

\medskip
\noindent\textbf{Proof of \eqref{eq:phase_1_norm} and \eqref{eq:ht_first}.}
The mean value theorem and
\(\pnorm{\tanh^{(5)}}{\infty}\leq C\) give, for every \(x\in\mathbb R\),
\begin{align*}
\tanh(x) = x - \frac{1}{3}x^3 + O(x^5).
\end{align*}
Consequently,
\begin{align}\label{eq:ht_taylor}
h_{t}^\star(u,y) = \Big((\hat\mu_{t}^2+1)y - 1\Big)\hat\mu_{t} u
- \frac{1}{3}(\hat\mu_{t}^2+1)\sqrt{y}(\sqrt{y}\hat\mu_{t} u)^3
+ O((\hat\mu_{t}^2+1)\sqrt{y}(\sqrt{y}\hat\mu_{t}u)^5).
\end{align}
Equation \eqref{eq:phase_1_mu_est} gives
\(|\hat\mu_t|=O(1)\), and hence
\begin{align}\label{eq:linearization_main}
\notag\Big|\frac{\pnorm{h_{t}^\star(\u^{t},\y)}{}}{\sqrt{d}} - \sqrt{2\delta}\hat\mu_{t}\Big| &\leq \hat\mu_{t}\Big|\frac{\pnorm{(\y-1)\circ \u^{t}}{}}{\sqrt{d}} - \sqrt{2\delta}\Big|\\
&\quad + O\Big(\frac{|\hat\mu_{t}|^3}{\sqrt{d}} \pnorm{\y\circ\u^{t}}{} +  \frac{|\hat\mu_{t}|^3}{\sqrt{d}}\pnorm{\y^{\circ 2}\circ (\u^{t})^{\circ 3}}{} + \frac{|\hat\mu_{t}|^5}{\sqrt{d}} \pnorm{\y^{\circ 3}\circ (\u^{t})^{\circ 5}}{}\Big).
\end{align}
We use the estimate
\begin{align}\label{eq:claim_delta}
\Big|\frac{\pnorm{(\y-1)\circ \u^{t}}{}}{\sqrt{d}} - \sqrt{2\delta}\Big|
&\leq C\mathfrak A_\delta\Bigg(
\sqrt{\frac{t+\log n}{n}}+\frac{(t+\log n)\log^2n}{n}
+\sqrt{\frac{t+\log n}{d}}+\rho_t^2 \notag\\
&\qquad+|\rho_t|\sqrt{\frac{t+\log n}{n}}
+\frac{\pnorm{\Delta_{V,t}}{}}{\sqrt d}
+\frac{\log n\pnorm{\Delta_{U,t}}{}}{\sqrt d}\Bigg).
\end{align}
By \eqref{eq:tilde_u_intro},
\begin{align*}
\Big|\frac{1}{\sqrt{d}}\pnorm{(\y-1)\circ \u^{t}}{} - \frac{1}{\sqrt{d}}\pnorm{(\y-1)\circ \tilde\u^{t}}{}\Big| \leq \frac{1}{\sqrt{d}} \pnorm{(\y-1)\circ \Delta_{U,t}}{},
\end{align*}
where, since $\pnorm{\y-1}{\infty} = O(\log n)$ with probability $1-O(n^{-10})$,
\begin{align}\label{eq:claim_comp_1}
\frac{1}{\sqrt{d}}\pnorm{(\y-1)\circ \Delta_{U,t}}{} \leq \pnorm{\y-1}{\infty}\cdot \frac{1}{\sqrt{d}}\pnorm{\Delta_{U,t}}{} = O\Big(\frac{\log n }{\sqrt{d}}\pnorm{\Delta_{U,t}}{}\Big).
\end{align}
On the other hand, using $\y = (\g^{-1})^{\circ 2} = (\bbeta^\star)^{\circ 2}$, we have
\begin{align}\label{eq:claim_comp_2}
\notag \frac{1}{d}\pnorm{(\y-1)\circ \tilde\u^{t}}{}^2 &= \frac{1}{d}\Bigpnorm{(\y-1)\circ (\rho_{t}\bbeta^\star + \sum_{\ell=0}^{t} \gamma_{t,\ell}\g^\ell)}{}^2\\
&= \rho_t^2 \frac{1}{d}\Bigpnorm{(\y-1)\circ \bbeta^\star}{}^2
+ \frac{1}{d}\Bigpnorm{(\y-1)\circ \sum_{\ell=0}^{t} \gamma_{t,\ell}\g^\ell}{}^2 + \frac{2\rho_t}{d}\iprod{(\y-1)^{\circ 2}\circ \bbeta^\star}{\sum_{\ell=0}^t \gamma_{t,\ell}\g^\ell}.
\end{align}
The first term is bounded by \(C\mathfrak A_\delta\rho_t^2\) with
probability \(1-O(n^{-12})\).  For the second term, apply
Lemma~\ref{lem:near_isometry} with
\(a=\gamma_{t,[0:t]}\).  Since that lemma is uniform in \(a\), no
conditioning on the random coefficient row is needed, and it gives
\begin{equation}\label{eq:fixed_para_concentration}
\begin{aligned}
\frac{1}{\sqrt{d}}\Bigpnorm{(\y-1)\circ
\sum_{\ell=0}^{t}\gamma_{t,\ell}\g^\ell}{}
&=\sqrt{2\delta}\left(
1+O\left(\sqrt{\frac{t+\log n}{n}}+\frac{(t+\log n)\log^2 n}{n}\right)\right)
\pnorm{\gamma_{t,[0:t]}}{}\\
&= \left(\sqrt{2\delta}
+O\left(\mathfrak A_\delta\left[
\sqrt{\frac{t+\log n}{n}}+\frac{(t+\log n)\log^2n}{n}\right]\right)\right)
\left(1+O\left(\sqrt{\frac{t+\log n}{d}}
+\frac{\pnorm{\Delta_{V,t}}{}}{\sqrt d}\right)\right),
\end{aligned}
\end{equation}
where the second line uses \eqref{eq:gamma_norm_loo}.

For the cross term, define
\[
\bm b\coloneqq(\y-1)^{\circ2}\circ\bbeta^\star,\qquad
\xi_\ell\coloneqq d^{-1}\iprod{\bm b}{\g^\ell},
\quad 0\leq\ell\leq t.
\]
Conditional on \(\g^{-1}\), the variables
\(\xi_0,\ldots,\xi_t\) are independent centered Gaussians with common
variance \(d^{-2}\pnorm{\bm b}{}^2\).  Gaussian-polynomial concentration
and a conditional \(\chi^2\) tail bound therefore give
\begin{align*}
\pnorm{\bm b}{}&\leq C\mathfrak A_\delta\sqrt n,\\
\pnorm{\xi_{[0:t]}}{}
&\leq C\mathfrak A_\delta\sqrt{\frac{t+\log n}{n}}
\end{align*}
with probability \(1-O(n^{-12})\).  Since
\(\pnorm{\gamma_{t,[0:t]}}{}\leq C\) by
\eqref{eq:gamma_norm_loo}, Cauchy--Schwarz yields the required uniform
bound
\begin{align*}
\frac{1}{d}\left|\iprod{(\y-1)^{\circ 2}\circ \bbeta^\star}
{\sum_{\ell=0}^t \gamma_{t,\ell}\g^\ell}\right|
\leq C\mathfrak A_\delta\sqrt{\frac{t+\log n}{n}}.
\end{align*}
Consequently,
\begin{align*}
\frac{1}{\sqrt d}\pnorm{(\y-1)\circ \tilde\u^{t}}{}
&=\sqrt{2\delta}+O\Bigg(\mathfrak A_\delta\,
\sqrt{\frac{t+\log n}{n}}+\frac{(t+\log n)\log^2n}{n}
+\sqrt{\frac{t+\log n}{d}}+\frac{\pnorm{\Delta_{V,t}}{}}{\sqrt d}
+\rho_t^2+|\rho_t|\sqrt{\frac{t+\log n}{n}}\Bigg).
\end{align*}
Combining the preceding bound with
\eqref{eq:claim_comp_1}--\eqref{eq:claim_comp_2} proves
\eqref{eq:claim_delta}.  Its right-hand side, the hypotheses of the
lemma, and \eqref{eq:phase_1_mu_est} give
\begin{align}\label{eq:claim_consequence}
\Big|\frac{\pnorm{(\y-1)\circ \u^t}{}}{\sqrt{d}} - \sqrt{2\delta}\Big|
\leq C\mathfrak A_\delta\frac{\log^{20}n}{\sqrt n},
\end{align}

Next we bound the remainder terms in \eqref{eq:linearization_main}.  The
following weighted moment estimate holds:
\begin{align}\label{eq:phase1_weighted_moments}
\frac1{\sqrt d}\left(
\pnorm{\y\circ\u^t}{}+
\pnorm{\y^{\circ2}\circ(\u^t)^{\circ3}}{}+
\pnorm{\y^{\circ3}\circ(\u^t)^{\circ5}}{}\right)
\leq C\mathfrak A_\delta.
\end{align}
To justify this estimate without conditioning on a random coefficient,
for \(c=(c_{-1},c_0,\ldots,c_t)\in\R^{t+2}\) set
\[
U_i(c)\coloneqq c_{-1}g_i^{-1}+\sum_{\ell=0}^tc_\ell g_i^\ell,
\qquad
\mathcal K_t\coloneqq\{c:\pnorm{c}{}\leq2\}.
\]
For \(t\geq1\), the event \(\mathcal C_{I,t-1}^V\) and the
time-\((t-1)\) instance of \eqref{eq:uv_closeness_phase_1} give
\(\|\Delta_{V,t}\|\leq C\mathfrak A_\delta t\log^9n\); for \(t=0\),
\(\Delta_{V,0}=0\).  Hence \eqref{eq:gamma_norm} and
\eqref{eq:phase1_finite_size} give
\begin{align}
\|\gamma_{t,[-1:t]}\|
&\leq
\left(1+C\sqrt{\frac{t+\log n}{d}}\right)\sqrt{1+\rho_t^2}
+C\frac{\|\Delta_{V,t}\|}{\sqrt d}
\leq2.
\label{eq:phase1_gamma_universal_radius}
\end{align}
Thus the coefficient vector \(\gamma_{t,[-1:t]}\) belongs to
\(\mathcal K_t\).  For
\((q,p)\in\{(0,2),(1,1),(2,3),(3,5)\}\), consider
\[
\Psi_{q,p}(c)\coloneqq
\frac1d\sum_{i=1}^ny_i^{2q}U_i(c)^{2p}.
\]
An \(n^{-20}\)-net \(\mathcal N_t\) of \(\mathcal K_t\) satisfies
\[
\log|\mathcal N_t|
\leq (t+2)\log(Cn^{20})
\leq C \log^3n.
\]
On the event
\(\max_{-1\leq\ell\leq t}\pnorm{\g^\ell}{\infty}\leq C\sqrt{\log n}\),
each summand in \(\Psi_{q,p}\) and each of its derivatives with respect
to \(c\) is bounded by a fixed power of \(\log n\).  Truncated Bernstein,
a union bound over the four pairs \((q,p)\) and \(\mathcal N_t\), and
the derivative bound for extending off the net give
\begin{align*}
\P\left(
\max_{(q,p)}
\sup_{c\in\mathcal K_t}\Psi_{q,p}(c)
>C\mathfrak A_\delta^2\right)
&\leq
\exp\left\{C\log^3n-\frac{cn}{\log^Cn}\right\}+O(n^{-12})\\
&=O(n^{-12}).
\end{align*}
Taking square roots proves \eqref{eq:phase1_weighted_moments} with
\(\u^t\) replaced by \(\tilde\u^t\), simultaneously for \(t\leq \log^2n\).
Finally, expanding
$(\tilde\u^t+\Delta_{U,t})^{\circ p}$ for $p=1,3,5$, and using
\[
\|\tilde{\bm u}^t\|_\infty
\leq2\max_{1\leq i\leq n}
\|(g_i^{-1},g_i^0,\ldots,g_i^t)\|
\leq C\log n,
\]
$\pnorm{\Delta_{U,t}}{}\leq C\mathfrak A_\delta \log^{11}n$,
$\pnorm{\Delta_{U,t}}{\infty}\leq\pnorm{\Delta_{U,t}}{}$, and
$\pnorm{\y}{\infty}\leq C\log n$, shows that the additional terms are at most
\begin{align}
C\sum_{r=1}^5
\frac{\mathfrak A_\delta^r\log^{10r+8}n}{\sqrt d}
&\leq
C\frac{\mathfrak A_\delta^5\log^{58}n}{\sqrt d}
\leq C\mathfrak A_\delta.
\label{eq:phase1_weighted_moment_replacement}
\end{align}
by \eqref{eq:phase1_finite_size}.  This proves
\eqref{eq:phase1_weighted_moments}.

Substituting \eqref{eq:claim_consequence} and
\eqref{eq:phase1_weighted_moments} into \eqref{eq:linearization_main} gives
\begin{align*}
\Big|\frac{\pnorm{h_{t}^\star(\u^{t},\y)}{}}{\sqrt{d}}
- \sqrt{2\delta}\hat\mu_{t}\Big|
\leq C\mathfrak A_\delta\hat\mu_{t}\Big(
\frac{\log^{20}n}{\sqrt n}+\hat\mu_{t}^2+\hat\mu_{t}^4\Big)
\leq C\mathfrak A_\delta^3\hat\mu_{t}\frac{\log^{20}n}{\sqrt n},
\end{align*}
where the last step follows from
\[
|\hat\mu_t|\leq C\mathfrak A_\delta \log^{10}n n^{-1/4},
\qquad
\hat\mu_t^2+\hat\mu_t^4
\leq C\mathfrak A_\delta^2\frac{\log^{20}n}{\sqrt n},
\]
by \eqref{eq:phase_1_mu_est} and \eqref{eq:phase1_finite_size}.

The identity for \(h_t'(u,y)\) follows from
\begin{align*}
h_t'(u,y) = \frac{\partial_u h_{t}^\star(u,y)}{\sqrt{2\delta}\hat\mu_{t}}
\frac{\sqrt{2\delta}\hat\mu_{t}}{\pnorm{h_{t}^\star(\u^{t},\y)}{}/\sqrt{d}}
= (1+\pi_{t})\frac{\partial_u h_{t}^\star(u,y)}{\sqrt{2\delta}\hat\mu_{t}},
\end{align*}
where
\(|\pi_t|\leq C\mathfrak A_\delta^3\log^{20}n/\sqrt n\)
by the preceding bound.

\medskip
\noindent\textbf{Proof of \eqref{eq:phase_1_HF}.}
Lemma~\ref{lem:uv_norm} gives
\(\pnorm{\lambda_{t,[0:t]}}{}=1\).  Since \(f_{t+1}(x)=x\),
\[
F_{t+1}(\rho_{t+1},\|\lambda_{t,[0:t]}\|;\btheta^\star)=1.
\]
Equation \eqref{eq:ht_first} gives
\begin{align*}
h_t'(u,y)
=\frac{1+\pi_t}{\sqrt{2\delta}}
\left\{(1+\hat\mu_t^2)y
\big(1-\tanh^2(\hat\mu_tu\sqrt y)\big)-1\right\},
\end{align*}
where $|\pi_t|\leq C\mathfrak A_\delta^3 \log^{20}n/\sqrt n$ and
$|\hat\mu_t|\leq C\mathfrak A_\delta \log^{10}n/n^{1/4}$.  Hence, with
$\bar\u^{t} = \rho_{t}\g + \tilde\g$ and $\y = \g \circ \g$, where
$\g,\tilde\g$ are independent $\N(0,\I_n)$ vectors, we have
\begin{gather*}
h_t'(\bar\u^t,\y)
=\frac{\y-1}{\sqrt{2\delta}}+\r^t,\\
\frac1d\E_{\g,\tilde\g}\pnorm{\r^t}{}^2
\leq C\mathfrak A_\delta^6\frac{\log^{40}n}n.
\end{gather*}
Indeed, subtracting \((y-1)/\sqrt{2\delta}\) in the preceding derivative
formula, using \(\tanh^2x\leq x^2\), and taking scalar Gaussian moments
gives the second bound coordinate by coordinate.  Since
\(d^{-1}\E\|(\y-1)/\sqrt{2\delta}\|^2=1\), Cauchy--Schwarz gives
\begin{align*}
|H_t(\rho_t,1;\bbeta^\star)-1|
&\leq \frac2d
\left(\E\Bigpnorm{\frac{\y-1}{\sqrt{2\delta}}}{}^2\right)^{1/2}
\left(\E\pnorm{\r^t}{}^2\right)^{1/2}
+\frac1d\E\pnorm{\r^t}{}^2\\
&\leq C\mathfrak A_\delta^3\frac{\log^{20}n}{\sqrt n}.
\end{align*}
Since $H_t(\rho,\sigma;\bbeta^\star)$ is
$C\mathfrak A_\delta \log n$-Lipschitz in $\sigma$, and Lemma~\ref{lem:uv_norm} gives
\begin{align*}
\left|\pnorm{\gamma_{t,[0:t]}}{}-1\right|
\leq C\left(\sqrt{\frac{t+\log n}{d}}
+\frac{\pnorm{\Delta_{V,t}}{}}{\sqrt d}\right)
\leq C\mathfrak A_\delta\frac{\log^{11}n}{\sqrt n},
\end{align*}
we obtain
\begin{align*}
   H_{t}(\rho_{t},\pnorm{\gamma_{t,[0:t]}}{};\bbeta^\star)
   = 1 + O\Big(\mathfrak A_\delta^3\,\frac{\log^{20}n}{\sqrt n}\Big).
\end{align*}  
\end{proof}

\begin{proof}[Proof of Lemma \ref{lem:phase_1_simp}]
On \(\mathcal C_{I,t-1}^V\) from
\eqref{eq:phase1_event_intersections}, Lemma~\ref{lem:phase_1_assump_conseq}
and $f_k=\operatorname{id}$ give
\begin{subequations}
\begin{gather}
    \pnorm{f_t'}{\infty} = 1,\quad \pnorm{f_t''}{\infty} = 0,\label{eq:temp_phase1_1}\\
    \sqrt{F_{t}\big(\rho_t, \pnorm{\lambda_{t-1,[0:t-1]}}{};\btheta^\star\big)} = 1, \label{eq:temp_phase1_2}\\
    \sqrt{H_{t-1}\big(\rho_{t-1}, \pnorm{\gamma_{t-1,[0:t-1]}}{};\bbeta^\star\big)}
    = 1 + O\Big(\mathfrak A_\delta^3\,\frac{\log^{20}n}{\sqrt n}\Big), \label{eq:temp_phase1_5}\\
    \sum_{k=0}^{t-1} |\beta_{t-1}^k|
    \leq C\mathfrak A_\delta\frac{t\log^9n}{\sqrt d},\label{eq:temp_phase1_3}\\
    \bar \gamma_t \leq C\sqrt d,
    \qquad \bar \lambda_{t-1}\leq C(\sqrt n+\sqrt d).
    \label{eq:temp_phase1_4}
\end{gather}
\end{subequations}
Moreover, \eqref{eq:ht_first}, Lemma~\ref{lem:h_deriv_comp}, and
$\|\y\|_\infty\leq C\log n$ give the sharper aspect-ratio bounds
\begin{align*}
\pnorm{h_t'}{\infty}
\leq C\frac{\log n}{\sqrt\delta},
\qquad
\pnorm{h_t''}{\infty}
\leq C\frac{|\hat\mu_t|\log^{3/2}n}{\sqrt\delta}.
\end{align*}
Substitution of these estimates,
\eqref{eq:phase_1_assump_U_2}, and
\eqref{eq:uv_closeness_phase_1} into \eqref{eq:E_multiplier}, using
\(t\leq \log^2n\), gives
\begin{align}\label{eq:phase1_E_bounds}
\cE_t^f\leq C\mathfrak A_\delta\frac{\log n}{n^{1/4}},
\qquad 
\cE_t^h\leq C\mathfrak A_\delta\frac{\log^4n}{n^{1/4}}.
\end{align}
The following estimates use the finite-size conditions
$\sqrt d\geq \log^{16}n$ and
$n^{1/4}\geq C\mathfrak A_\delta^4\log^{40}n$.  In particular,
\begin{align}
\mathfrak A_\delta^3\frac{\log^{20}n}{\sqrt n}
&\leq
\mathfrak A_\delta\frac{\log^4n}{n^{1/4}},
\label{eq:phase1_H_error_absorption}
\end{align}
so the term in \eqref{eq:temp_phase1_5} is absorbed into the second
bound in \eqref{eq:phase1_E_bounds}.

Substituting \eqref{eq:temp_phase1_1}--\eqref{eq:phase1_E_bounds} into
\eqref{eq:induction_Delta_U}, whose last term contains
$t\sqrt{\log n/d}$, yields
\begin{align*}
\pnorm{\Delta'_{U,t}}{}
&\leq\left(1+C\mathfrak A_\delta\frac{\log n}{n^{1/4}}\right)
\pnorm{\Delta'_{V,t}}{}
+C(\sqrt d+\sqrt n)t\sqrt{\frac{\log n}{d}}\\
&\leq\left(1+C\mathfrak A_\delta\frac{\log n}{n^{1/4}}\right)
\pnorm{\Delta'_{V,t}}{}+C(1+\sqrt\delta)t\sqrt{\log n}.
\end{align*}
The term containing
\(\sum_{k=0}^{t-1}|\beta_{t-1}^k|\|\Delta_{V,k}\|\) in
\eqref{eq:induction_Delta_U} is bounded by
$C\mathfrak A_\delta t^2\log^{18}n/\sqrt d$ and is absorbed under the same finite-size
conditions.  This proves \eqref{eq:induce_U_1}.  Similarly,
\eqref{eq:induction_alpha_last} gives
\begin{align*}
|\alpha_{t-1}^{t-1}|
\leq C\mathfrak A_\delta\left(
\frac{\pnorm{\Delta'_{V,t}}{}}{\sqrt d}
+\frac{t^2\log^{18}n}d\right),
\end{align*}
and \eqref{eq:induction_alpha_middle_last} gives
$|\alpha_{t-1}^{t-2}|\leq|\beta_{t-1}^{t-1}|$.  Finally,
\eqref{eq:induction_alpha_middle}, \eqref{eq:phase_1_HF}, and
\eqref{eq:phase1_E_bounds} give the multiplier in
\eqref{eq:induce_U_3}.

For the $V$ side, substitution into \eqref{eq:induction_Delta_V} gives
\begin{align*}
\pnorm{\Delta'_{V,t+1}}{}
&\leq\left(1+C\mathfrak A_\delta\frac{\log^4n}{n^{1/4}}\right)
\pnorm{\Delta'_{U,t}}{}\\
&\quad+C(1+\sqrt\delta)(\sqrt n+\sqrt d)\log^2n(t+1)
\sqrt{\frac {\log n}{d}}\\
&\leq\left(1+C\mathfrak A_\delta\frac{\log^4n}{n^{1/4}}\right)
\pnorm{\Delta'_{U,t}}{}
+C\mathfrak A_\delta(t+1)\log^{5/2}n.
\end{align*}
The term containing
\(\sum_{k=0}^{t-1}|\alpha_{t-1}^k|\|\Delta_{U,k}\|\) in
\eqref{eq:induction_Delta_V} is at most
$C\mathfrak A_\delta t^2\log^{19}n/\sqrt d$ and is again
absorbed.  Equation \eqref{eq:induction_beta_last} yields
\begin{align*}
|\beta_t^t|
\leq C\mathfrak A_\delta\left(
\frac{\log n\pnorm{\Delta'_{U,t}}{}}{\sqrt d}
+\frac{t^2\log^{21}n}d\right),
\end{align*}
while \eqref{eq:induction_beta_middle_last} gives
$|\beta_t^{t-1}|\leq C\mathfrak A_\delta \log^2n|\alpha_{t-1}^{t-1}|$.
The remaining bound in \eqref{eq:induce_V_3} follows from
\eqref{eq:induction_beta_middle}, \eqref{eq:phase_1_HF}, and
\eqref{eq:phase1_E_bounds}.  This proves all assertions of the lemma.
\end{proof}

\subsection{Proof of Lemma \ref{lem:linearization}}\label{subsec:h_linearization_phase1}
\begin{proof}
\begin{align*}
h_t(u,y)
&=\frac{h_t^\star(u,y)}{\sqrt{2\delta}\hat\mu_t}
\frac{\sqrt{2\delta}\hat\mu_t}
{\pnorm{h_t^\star(\u^t,\y)}{}/\sqrt d}\\
&=(1+\pi_t)\frac{h_t^\star(u,y)}
{\sqrt{2\delta}\hat\mu_t},
\end{align*}
where $|\pi_t|\leq C\mathfrak A_\delta^3\log^{20}n/\sqrt n$ by
\eqref{eq:ht_first}.  The Taylor expansion \eqref{eq:ht_taylor} gives
\begin{align*}
\frac{h_t^\star(u,y)}{\hat\mu_t}
&=(y-1)u+\hat\mu_t^2yu
-\frac13(1+\hat\mu_t^2)\hat\mu_t^2u^3y^2\\
&\quad
+O\left((1+\hat\mu_t^2)\hat\mu_t^4u^5y^3\right).
\end{align*}
This entails that
\begin{align*}
\frac{h_t^\star(u,y)}{\sqrt{2\delta}\hat\mu_t}
&=\frac{(y-1)u}{\sqrt{2\delta}}+r_t^{(1)}\\
&=\frac1{\sqrt{2\delta}}
\left\{(y-1)u+\hat\mu_t^2
\left(yu-\frac13u^3y^2\right)\right\}
+r_t^{(2)},
\end{align*}
where
\begin{align*}
r_t^{(1)} &= \frac{1}{\sqrt{2\delta}}\Big(\hat\mu_t^2 yu - \frac{1}{3}\hat\mu_t^2 u^3y^2 + O\Big((\hat\mu_t^2+1)\hat\mu_t^4u^5y^3\Big)\Big),\\
r_t^{(2)} &= \frac{1}{\sqrt{2\delta}}\Big(-\frac{1}{3}\hat\mu_t^4u^3y^2 + O\Big((\hat\mu_t^2+1)\hat\mu_t^4u^5y^3\Big)\Big).
\end{align*}
Since $|\hat\mu_t|=O(1)$, these expressions give
\eqref{eq:taylor_error_bound}.  Using
\eqref{eq:phase_1_mu_est} and the weighted moment bound
\eqref{eq:phase1_weighted_moments}, we further obtain
\begin{align*}
\frac{1}{\sqrt{d}}\pnorm{\r_t^{(1)}}{}
&\leq C\mathfrak A_\delta(|\hat\mu_t|^2+|\hat\mu_t|^4)
\leq C\mathfrak A_\delta^3\frac{\log^{20}n}{\sqrt n},\\
\frac{1}{\sqrt{d}}\pnorm{\r_t^{(2)}}{}
&\leq C\mathfrak A_\delta|\hat\mu_t|^4
\leq C\mathfrak A_\delta^5\frac{\log^{40}n}{n},
\end{align*}
which proves \eqref{eq:r_vec_bound}.
\end{proof}

\subsection{Proof of Lemma \ref{lem:lambda_almost_orthogonal}}\label{subsec:proof_orthogonality_2}

\begin{proof}
By definition, we have
\begin{align*}
\pnorm{\gamma_{t+1,[-1:t]}}{} = \frac{1}{\sqrt d}\Bigpnorm{P_{\Q^{[-1:t]}}\z^{t+1}}{} = \frac{1}{\sqrt{d}}\Bigpnorm{P_{\Q^{[-1:t]}}\v^{t+1}}{} \leq \frac{1}{\sqrt{d}}\Bigpnorm{P_{\Q^{[-1:t]}}\tilde\v^{t+1}}{} + \frac{1}{\sqrt{d}}\pnorm{\Delta_{V,t+1}}{}.
\end{align*}
Recalling the definition of $\tilde\v^{t+1}$ in (\ref{eq:tilde_u_intro}), we have 
\begin{align*}
\frac{1}{\sqrt{d}}\Bigpnorm{P_{\Q^{[-1:t]}}\tilde\v^{t+1}}{} &= \frac{1}{\sqrt{d}}\Bigpnorm{P_{\Q^{[-1:t]}} (\rho_{t+1} \btheta^\star + \sum_{\ell=0}^t \lambda_{t,\ell}\s^\ell)}{}\\
&\leq |\rho_{t+1}| + \frac{1}{\sqrt{d}}\Bigpnorm{\sum_{\ell=0}^{t-1}\lambda_{t,\ell}\s^\ell}{} + |\lambda_{t,t}|\frac{1}{\sqrt{d}}\Bigpnorm{P_{\Q^{[-1:t]}} \s^t}{}. 
\end{align*}
By \eqref{eq:uv_dependence}, \(\s^t\) is independent of
\(\Q^{[-1:t]}\).  Conditional on \(\Q^{[-1:t]}\), the Hanson--Wright
inequality therefore gives, for every \(u>0\),
\begin{align*}
\P\left(
\left|\|P_{\Q^{[-1:t]}}\s^t\|^2-\Tr P_{\Q^{[-1:t]}}\right|
\geq u\,\middle|\,\Q^{[-1:t]}
\right)
\leq2\exp\left\{
-c\min\left(
\frac{u^2}{\|P_{\Q^{[-1:t]}}\|_{\fro}^2},
\frac{u}{\|P_{\Q^{[-1:t]}}\|_{\op}}
\right)\right\}.
\end{align*}
Using $\pnorm{P_{\Q^{[-1:t]}}}{\op}=1$,
$\pnorm{P_{\Q^{[-1:t]}}}{\fro}\leq\sqrt{t+2}$, and
$\Tr P_{\Q^{[-1:t]}}=t+2$, we obtain
$\pnorm{P_{\Q^{[-1:t]}}\s^t}{}/\sqrt d
\leq C\sqrt{(t+\log n)/d}$ with probability $1-O(n^{-12})$.
The $d$-dimensional near-isometry estimate similarly gives
\begin{align}\label{eq:concentration_lambda}
\frac{1}{\sqrt{d}}\Bigpnorm{\sum_{\ell=0}^{t-1}\lambda_{t,\ell}\s^\ell}{}
= \Big(1+O\Big(\sqrt{\frac{t+\log n}{d}}\Big)\Big)
\pnorm{\lambda_{t,[0:t-1]}}{}.
\end{align}
Lastly, using $|\lambda_{t,t}| \leq \pnorm{\w^t}{}/\sqrt{d} = 1$ (see Lemma \ref{lem:uv_norm}), we conclude that
\begin{align}\label{eq:lambda_2_gamma}
\pnorm{\gamma_{t+1,[-1:t]}}{}
\leq \Big(1+O\Big(\sqrt{\frac{t+\log n}{d}}\Big)\Big)
\pnorm{\lambda_{t,[0:t-1]}}{}
+\frac{\pnorm{\Delta_{V,t+1}}{}}{\sqrt d}+|\rho_{t+1}|
+O\Big(\sqrt{\frac{t+\log n}{d}}\Big).
\end{align}

Similarly, with $\tilde\u^t$ defined in (\ref{eq:tilde_u_intro}), we have
\begin{align}\label{eq:lambda_norm}
\pnorm{\lambda_{t,[0:t-1]}}{} = \frac{1}{\sqrt{d}}\Bigpnorm{P_{\bR^{[0:t-1]}} \w^t}{} \leq  \underbrace{\frac{1}{\sqrt{d}}\Bigpnorm{h_t(\u^t,\y) - h_t(\tilde\u^t,\y)}{}}_{(I)} + \underbrace{\frac{1}{\sqrt{d}}\Bigpnorm{P_{\bR^{[0:t-1]}}h_t(\tilde\u^t,\y)}{}}_{(I\!I)}.
\end{align}
For $(I)$, using the bound $|h_t'(u,y)| \leq C(1+|y|)$ deduced from (\ref{eq:ht_first}) and $\pnorm{\y}{\infty} = O(\log n)$ with probability $1-O(n^{-10})$, we have
\begin{align}\label{eq:term_I_main}
(I)\leq \frac{C}{\sqrt{d}}\Bigpnorm{(|\y|+1)\circ (\u^t - \tilde\u^t)}{}\leq C(\pnorm{\y}{\infty}+1)\frac{1}{\sqrt{d}}\pnorm{\Delta_{U,t}}{}\leq C\frac{\log n}{\sqrt{d}}\pnorm{\Delta_{U,t}}{}.
\end{align}
For $(I\!I)$, we apply (\ref{eq:h_first_order}) to obtain
\begin{align}\label{eq:term_II_main}
\notag(I\!I) &\leq |\pi_t|\frac{1}{\sqrt{d}}\Bigpnorm{\frac{(\y-1)\circ \tilde\u^t}{\sqrt{2\delta}} + \r_t^{(1)}}{} + \frac{1}{\sqrt{d}}\pnorm{\r_t^{(1)}}{} + \frac{1}{\sqrt{2\delta}}\frac{1}{\sqrt{d}}\Bigpnorm{P_{\bR^{[0:t-1]}}(\y-1)\circ\tilde \u^t}{}\\
&\leq \underbrace{(1+|\pi_t|)\frac{1}{\sqrt{d}}\pnorm{\r_t^{(1)}}{}}_{(I\!I_1)} + \underbrace{\frac{|\pi_t|}{\sqrt{2\delta}}\frac{1}{\sqrt{d}}\Bigpnorm{(\y-1)\circ \tilde\u^t}{}}_{(I\!I_2)} + \underbrace{\frac{1}{\sqrt{2\delta}}\frac{1}{\sqrt{d}}\Bigpnorm{P_{\bR^{[0:t-1]}}(\y-1)\circ\tilde \u^t}{}}_{(I\!I_3)},
\end{align}
where $\r_t^{(1)}$ is given in Lemma~\ref{lem:linearization}.  Using
$|\pi_t|\leq C\mathfrak A_\delta^3\log^{20}n/\sqrt n$ and the bound on
$\r_t^{(1)}$, we obtain
\begin{align}\label{eq:term_II_3}
(I\!I_1)\leq C\mathfrak A_\delta^3\frac{\log^{20}n}{\sqrt n}.
\end{align}
For $(I\!I_2)$, Lemma \ref{lem:uv_norm} implies that
\begin{align}\label{eq:term_II_2}
\frac{1}{\sqrt{d}}\pnorm{(\y-1)\circ \tilde\u^t}{}
\leq C\mathfrak A_\delta \log n.
\end{align}
Consequently, \eqref{eq:phase1_finite_size} gives
\begin{align*}
(I\!I_1)+(I\!I_2)
&\leq
C\left(\mathfrak A_\delta^3\frac{\log^{20}n}{\sqrt n}
+\mathfrak A_\delta^4\frac{\log^{21}n}{\sqrt n}\right)
\leq
C\mathfrak A_\delta\frac{\log^{10}n}{n^{1/4}}.
\end{align*}
For $(I\!I_3)$, note that 
\begin{align*}
&\frac{1}{\sqrt{d}}\Bigpnorm{P_{\bR^{[0:t-1]}} (\y-1)\circ\tilde\u^t}{} = \frac{1}{\sqrt{d}}\Bigpnorm{P_{\bR^{[0:t-1]}}(\y-1)\circ\Big(\gamma_{t,-1}\bbeta^\star + \sum_{\ell=0}^t \gamma_{t,\ell}\g^\ell\Big)}{}\\
&\leq |\rho_t|\frac{1}{\sqrt{d}}\pnorm{(\y-1)\circ \bbeta^\star}{} + \frac{1}{\sqrt{d}}\Bigpnorm{\sum_{\ell=0}^{t-1}\gamma_{t,\ell}(\y-1)\circ\g^\ell}{} + |\gamma_{t,t}|\frac{1}{\sqrt{d}}\Bigpnorm{P_{\bR^{[0:t-1]}}(\y-1)\circ\g^t}{}.
\end{align*}
The first term is at most $C\mathfrak A_\delta|\rho_t|$.  The weighted
$n$-dimensional near-isometry estimate gives
\begin{align*}
\frac{1}{\sqrt{d}}\Bigpnorm{\sum_{\ell=0}^{t-1}\gamma_{t,\ell}(\y-1) \circ \g^\ell}{}
= \Big(\sqrt{2\delta}+O\Big(\mathfrak A_\delta\,\sqrt{\frac{t\log^3n}{n}}\Big)\Big)
\pnorm{\gamma_{t,[0:t-1]}}{}.
\end{align*}
Lastly,
\[
\|P_{\bR^{[0:t-1]}}((\y-1)\circ\g^t)\|
=\|\{P_{\bR^{[0:t-1]}}\diag(\y-1)\}\g^t\|.
\]
By \eqref{eq:uv_dependence}, \(\g^t\) is independent of
\((\y,\bR^{[0:t-1]})\).  Conditional Hanson--Wright therefore gives
\begin{align*}
\P\left(
\left|\|P_{\bR^{[0:t-1]}}((\y-1)\circ\g^t)\|^2-\Tr\A\right|
\geq u\,\middle|\,\y,\bR^{[0:t-1]}
\right)
\leq2\exp\left\{
-c\min\left(\frac{u^2}{\|\A\|_{\fro}^2},
\frac{u}{\|\A\|_{\op}}\right)\right\},
\end{align*}
where $\A = \diag(\y-1)P_{\bR^{[0:t-1]}} \diag(\y-1)$.
Since $\Tr\A\leq t\pnorm{\A}{\op}$,
$\pnorm{\A}{\fro}\leq\sqrt t\pnorm{\A}{\op}$, and
$\pnorm{\A}{\op}\leq C \log^2n$, Hanson--Wright gives
\begin{align*}
\frac1{\sqrt d}\pnorm{P_{\bR^{[0:t-1]}}(\y-1)\circ\g^t}{}
\leq C\sqrt{\frac{t\log^3n}{d}}
\end{align*}
with probability $1-O(n^{-12})$.  Also
$|\gamma_{t,t}|\leq C\mathfrak A_\delta$ by Lemma~\ref{lem:uv_norm}.  Therefore
\begin{align}
\frac{1}{\sqrt{d}}\Bigpnorm{P_{\bR^{[0:t-1]}}(\y-1)\circ\tilde\u^t}{}
\leq \Big(\sqrt{2\delta}+O\Big(\mathfrak A_\delta\,\sqrt{\frac{t\log^3n}{n}}\Big)\Big)
\pnorm{\gamma_{t,[0:t-1]}}{}
+O\Big(\mathfrak A_\delta\,\frac{\log^{10}n}{n^{1/4}}+\sqrt{\frac{t\log^3n}{d}}\Big).
\end{align}
Substitution into \eqref{eq:term_II_main}, using
$|\pi_t|\leq C\mathfrak A_\delta^3\log^{20}n/\sqrt n$, gives
\begin{align*}
(I\!I) \leq
\Big(1+O\Big(\mathfrak A_\delta\,\sqrt{\frac{t\log^3n}{n}}\Big)\Big)
\pnorm{\gamma_{t,[0:t-1]}}{}
+O\Big(\mathfrak A_\delta\,\frac{\log^{10}n}{n^{1/4}}+\sqrt{\frac{t\log^3n}{d}}\Big).
\end{align*}
Substitution of this bound and \eqref{eq:term_I_main} into
\eqref{eq:lambda_norm} gives
\begin{align}\label{eq:gamma_2_lambda}
\notag\pnorm{\lambda_{t,[0:t-1]}}{}
&\leq \Big(1+O\Big(\mathfrak A_\delta\,\sqrt{\frac{t\log^3n}{n}}\Big)\Big)
\pnorm{\gamma_{t,[0:t-1]}}{}
+O\Big(\mathfrak A_\delta\,\frac{\log^{10}n}{n^{1/4}}+\sqrt{\frac{t\log^3n}{d}}
+\frac{\log n}{\sqrt d}\pnorm{\Delta_{U,t}}{}\Big)\\
&\leq \Big(1+O\Big(\mathfrak A_\delta\,\sqrt{\frac{t\log^3n}{n}}\Big)\Big)
\pnorm{\gamma_{t,[-1:t-1]}}{}
+O\Big(\mathfrak A_\delta\,\frac{\log^{10}n}{n^{1/4}}+\sqrt{\frac{t\log^3n}{d}}
+\frac{\log n}{\sqrt d}\pnorm{\Delta_{U,t}}{}\Big).
\end{align}

Equations \eqref{eq:lambda_2_gamma} and
\eqref{eq:gamma_2_lambda} give
\begin{align*}
\pnorm{\gamma_{t+1,[-1:t]}}{}
&\leq \Big(1+O\Big(\mathfrak A_\delta\,\sqrt{\frac{t\log^3n}{n}}
+\sqrt{\frac{t+\log n}{d}}\Big)\Big)
\pnorm{\gamma_{t,[-1:t-1]}}{}\\
&\quad+C\mathfrak A_\delta|\rho_{t+1}|
+C\mathfrak A_\delta\Big(\frac{\log^{10}n}{n^{1/4}}+\sqrt{\frac{t\log^3n}{d}}
+\frac{\log n}{\sqrt d}(\pnorm{\Delta_{V,t}}{}+\pnorm{\Delta_{U,t}}{})\Big).
\end{align*}
At initialization,
$|\gamma_{0,-1}|\leq C\sqrt{\log n/d}$ with probability $1-O(n^{-12})$.
Under \eqref{eq:phase1_finite_size}, the explicit dimension terms
$\sqrt{(t+\log n)/d}$ and $\sqrt{t\log^3n/d}$ are dominated by
$C\mathfrak A_\delta \log^{10}n n^{-1/4}$, and the product of the multiplicative factors is
bounded by a constant.  Iterating for $t\leq\tau_I\wedge \log^2n$ therefore yields
\begin{align*}
\pnorm{\lambda_{t,[0:t-1]}}{}
\vee \pnorm{\gamma_{t,[-1:t-1]}}{}
\leq C\mathfrak A_\delta \log^2n\Big[\frac{\log^{10}n}{n^{1/4}}
+\max_{0\leq\tau\leq t}\frac{\log n}{\sqrt d}
(\pnorm{\Delta_{V,\tau}}{}+\pnorm{\Delta_{U,\tau}}{})\Big]
\leq C\mathfrak A_\delta\frac{\log^{12}n}{n^{1/4}}.
\end{align*}
The last inequality follows from
Proposition~\ref{prop:phase_1_tilde_close}.
\end{proof}

\subsection{Proof of Lemma \ref{lem:approx_orthogonal_1}}\label{subsec:proof_orthogonality_1}

\begin{lemma}\label{lem:near_isometry}
For every integer \(0\leq t\leq \log^2n\), with probability
\(1-O(n^{-10})\), uniformly over
\(a=(a_0,\ldots,a_t)\in\R^{t+1}\),
\begin{align*}
\Big|\frac{1}{\sqrt{d}}\Bigpnorm{\sum_{\ell=0}^t a_\ell
\frac{\g^\ell\circ (\y-1)}{\sqrt{2\delta}}}{} - \pnorm{a}{}\Big|
\le C\left\{\sqrt{\frac{t+\log n}{n}}+\frac{(t+\log n)\log^2n}{n}\right\}
\pnorm{a}{}.
\end{align*}
\end{lemma}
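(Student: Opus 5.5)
The plan is to reduce the claim to a uniform operator-norm bound for a Gram matrix. Set $\bm D\coloneqq\diag(\y-1)/\sqrt{2\delta}$ and $\G\coloneqq[\g^0,\ldots,\g^t]\in\R^{n\times(t+1)}$. The quantity in the statement is $\|\bm D\G a\|/\sqrt d$, so it suffices to show
\[
\Bigl\|\tfrac1d\G^\top\bm D^2\G-\I_{t+1}\Bigr\|_{\op}\le C\Bigl\{\sqrt{\tfrac{t+\log n}{n}}+\tfrac{(t+\log n)\log^2n}{n}\Bigr\}.
\]
The lemma then follows from $|\sqrt{x}-1|\le|x-1|$. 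Two structural facts make this work. First, $\y=(\g^{-1})^{\circ2}$ depends only on $\g^{-1}$, which is independent of $\{\g^\ell\}_{\ell\ge0}$. Second, the rows of $\G$ are i.i.d.\ $\N(0,\I_{t+1})$. So we condition on $\g^{-1}$, after which $\bm D$ is a fixed diagonal matrix.

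\textbf{Step 1: the mean.} Conditionally on $\g^{-1}$,
\[
\E\Bigl[\tfrac1d\G^\top\bm D^2\G\Bigr]=\frac{1}{2\delta d}\sum_i(y_i-1)^2\,\I_{t+1}.
\]
Since $\E(G^2-1)^2=2$ and $n/d=\delta$, the scalar $s\coloneqq(2n)^{-1}\sum_i(y_i-1)^2$ has mean $1$. The summands $(y_i-1)^2$ are degree-four Gaussian polynomials. Truncating at $|g^{-1}_i|\le C\sqrt{\log n}$ and applying Bernstein (or Gaussian hypercontractivity) gives $|s-1|\le C\sqrt{\log n/n}$ with probability $1-O(n^{-12})$. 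We also record $\pnorm{\y-1}{\infty}\le C\log n$ on this event.

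\textbf{Step 2: fluctuation on a net.} Take a $1/4$-net $\mathcal N$ of $\sS^t$, so $|\mathcal N|\le 9^{t+1}$. For a fixed unit $a$, $\G a\sim\N(0,\I_n)$, and
\[
a^\top\tfrac1d\G^\top\bm D^2\G a-s=\frac{1}{2\delta d}\sum_i(y_i-1)^2\bigl((\G a)_i^2-1\bigr).
\]
This is a weighted sum of centered $\chi^2_1$ variables. Its weights are bounded by $C\log^2 n/n$ and have squared sum at most $C\log^4 n/n$ times $s$. We will actually use the sharper bound $\sum_i w_i^2\le Cn^{-2}\sum_i(y_i-1)^4\le C/n$, again by Gaussian-polynomial concentration. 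Hanson--Wright (or Bernstein for subexponential variables) then gives deviation
\[
C\Bigl(\sqrt{u/n}+u\log^2n/n\Bigr)\quad\text{with failure probability }e^{-u}.
\]
We choose $u=C(t+\log n)$ so the union bound over $\mathcal N$ costs $e^{-C'(t+\log n)}\le n^{-12}$. The standard net argument for symmetric matrices upgrades the quadratic-form bound on $\mathcal N$ to the operator norm, losing a factor of $2$.

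\textbf{Step 3: assembly.} Integrating over $\g^{-1}$ on the good event for $(\y,\g^{-1})$ and combining Steps 1 and 2 gives the displayed operator bound, which implies the lemma. I expect the only delicate point to be choosing the Bernstein weights correctly so that the linear term is $\sqrt{(t+\log n)/n}$ rather than carrying an extra $\log^2 n$ factor. This is why the $\ell^2$ norm of the weights must be controlled via $\sum_i(y_i-1)^4=O(n)$, while the crude $\pnorm{\y-1}{\infty}^2\le C\log^2n$ bound is used only in the subexponential term. Uniformity over $t\le\log^2 n$ follows from a union bound over $t$.
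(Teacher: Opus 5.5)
Your proposal is correct and takes essentially the same route as the paper. The paper conditions on $\g^{-1}$, writes the squared norm as the quadratic form of $\frac1n\sum_i z_i\G_i\G_i^\top$ with $z_i=(y_i-1)^2/2$, and controls $\max_i z_i\le C\log^2 n$, $\frac1n\sum_i z_i^2\le C$, and $|\frac1n\sum_i z_i-1|$ by truncated Bernstein. It then applies subexponential Bernstein on a $1/4$-net of size $9^{t+1}$ with deviation $\asymp\sqrt{(t+\log n)/n}+(t+\log n)\log^2 n/n$, upgrades to the operator norm, and finishes with $|\sqrt q-1|\le|q-1|$; your weights $w_i=z_i/n$ are exactly this normalization. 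One harmless slip: your passing bound on the squared sum of the weights should read $C\log^2 n/n$ times $s$ rather than $C\log^4 n/n$ times $s$, but you rely only on the sharper bound $\sum_i w_i^2\le C/n$.
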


\begin{proof}
Define
\[
    z_i \coloneqq \frac{(y_i-1)^2}{2}
    = \frac{\big((g_i^{-1})^2-1\big)^2}{2},
    \qquad i\in[n].
\]
Then \(z_1,\ldots,z_n\) are i.i.d., independent of
\(\{\g^0,\ldots,\g^t\}\), and satisfy \(\E z_i=1\).
Using
\(\delta d=n\), for any \(a\in\R^{t+1}\),
\[
\frac1d \Big\|\sum_{\ell=0}^t a_\ell
    \frac{\g^\ell\circ(\y-1)}{\sqrt{2\delta}}\Big\|^2
=
\frac1n \sum_{i=1}^n z_i
    \Big(\sum_{\ell=0}^t a_\ell g_i^\ell\Big)^2.
\]
Thus it suffices to prove that the random matrix
\[
    \H \coloneqq \frac1n \sum_{i=1}^n z_i \G_i \G_i^\top,
    \qquad
    \G_i \coloneqq (g_i^0,\ldots,g_i^t)^\top\in\R^{t+1},
\]
satisfies the asserted bound.  The variables
\(\{z_i\}_{i=1}^n\) are independent of
\(\{\G_i\}_{i=1}^n\).

With probability at least \(1-O(n^{-10})\),
\begin{equation}
\label{eq:y_basic_bounds}
\max_{i\le n} z_i \le C\log^2 n,
\qquad
\frac1n\sum_{i=1}^n z_i^2 \le C,
\qquad
\Big|\frac1n\sum_{i=1}^n z_i-1\Big|
    \le C\left(\sqrt{\frac{\log n}{n}}+\frac{\log^3n}{n}\right).
\end{equation}

On the event \eqref{eq:y_basic_bounds}, condition on
\(z_1,\ldots,z_n\).  For fixed \(\bm x\in\mathbb S^{t}\),
\[
\bm x^\top \H \bm x
=
\frac1n\sum_{i=1}^n z_i \langle \G_i,\bm x\rangle^2.
\]
Since \(\langle \G_i,\bm x\rangle\sim\mathcal N(0,1)\), the centered
variables
\[
    z_i\big(\langle \G_i,\bm x\rangle^2-1\big)
\]
are independent, mean-zero, and sub-exponential.  Their conditional
Bernstein parameters satisfy
\[
    \frac1n\sum_{i=1}^n z_i^2 \le C,
    \qquad
    \max_{i\le n} z_i \le C\log^2 n.
\]
Hence, for every \(s>0\),
\[
\P\left(
\Big|
\frac1n\sum_{i=1}^n z_i
    \big(\langle \G_i,\bm x\rangle^2-1\big)
\Big| \ge s
\,\middle|\, z_1,\ldots,z_n
\right)
\le
2\exp\left\{
    -c n \min\left(s^2,\frac{s}{\log^2 n}\right)
\right\}.
\]

Let \(\mathcal N\) be a \(1/4\)-net of \(\mathbb S^{t}\) with
\(|\mathcal N|\le 9^{t+1}\).  Taking
\[
    s = C\left(
        \sqrt{\frac{t+\log n}{n}}
        +
        \frac{(t+\log n)\log^2n}{n}
    \right),
\]
and choosing \(C\) large enough, the preceding tail bound and a union bound
over \(\mathcal N\) imply that, with conditional probability at least
\(1-O(n^{-10})\),
\[
\sup_{\bm x\in\mathcal N}
\left|
\bm x^\top\left(\H-\frac1n\sum_{i=1}^n z_i \I_{t+1}\right)\bm x
\right|
\le
C\left(
        \sqrt{\frac{t+\log n}{n}}
        +
        \frac{(t+\log n)\log^2n}{n}
    \right).
\]
By the standard net argument for symmetric matrices,
\[
\left\|
\H-\frac1n\sum_{i=1}^n z_i \I_{t+1}
\right\|_{\op}
\le
C\left(
        \sqrt{\frac{t+\log n}{n}}
        +
        \frac{(t+\log n)\log^2n}{n}
    \right).
\]
Together with
\[
\Big|\frac1n\sum_{i=1}^n z_i-1\Big|
        \le C\left(\sqrt{\frac{\log n}{n}}+\frac{\log^3n}{n}\right),
\]
this gives
\begin{equation}
\label{eq:near_isometry_conc}
\|\H-\I_{t+1}\|_{\op}
\le
C\left(
    \sqrt{\frac{t+\log n}{n}}
    +
    \frac{(t+\log n)\log^2n}{n}
\right)
\end{equation}
with probability \(1-O(n^{-10})\).

Finally, for any \(a\in\R^{t+1}\),
\[
\frac1d \Big\|\sum_{\ell=0}^t a_\ell
    \frac{\g^\ell\circ(\y-1)}{\sqrt{2\delta}}\Big\|^2
=
\a^\top \H \a.
\]
Therefore
\[
\left|
\frac1d \Big\|\sum_{\ell=0}^t a_\ell
    \frac{\g^\ell\circ(\y-1)}{\sqrt{2\delta}}\Big\|^2
-\|a\|^2
\right|
\le
\|\H-\I_{t+1}\|_{\op} \cdot \|a\|^2.
\]
For \(a\ne0\), define
\[
q(a)\coloneqq
\frac1{d\|a\|^2}
\Big\|\sum_{\ell=0}^t a_\ell
\frac{\g^\ell\circ(\y-1)}{\sqrt{2\delta}}\Big\|^2.
\]
Then \(q(a)\geq0\), and
\[
|\sqrt{q(a)}-1|
=\frac{|q(a)-1|}{\sqrt{q(a)}+1}
\leq|q(a)-1|.
\]
Together with \eqref{eq:near_isometry_conc}, this gives
\[
\left|
\frac1{\sqrt d} \Big\|\sum_{\ell=0}^t a_\ell
    \frac{\g^\ell\circ(\y-1)}{\sqrt{2\delta}}\Big\|
-\|a\|
\right|
\le
C\left\{\sqrt{\frac{t+\log n}{n}}+\frac{(t+\log n)\log^2n}{n}\right\}\|a\|.
\]
For \(a=0\), both sides are zero.

\medskip
\noindent{\bf Proof of \eqref{eq:y_basic_bounds}.} 
Since \(z_i=((g_i^{-1})^2-1)^2/2\), Gaussian moments give
\(\E z_i=1\).  For \(u\geq1\),
\[
z_i>u
\ \Longrightarrow\
|g_i^{-1}|>\sqrt{1+\sqrt{2u}}
\ \Longrightarrow\
\mathbb P(z_i>u)\leq2e^{-c\sqrt u}.
\]
Let $M_n=200c^{-2}\log^2n$.  Then
\[
    \mathbb{P}\left(\max_{i\le n} z_i>M_n\right)\le n^{-10}.
\]
Define $\bar z_i=z_i\mathbf 1_{\{z_i\le M_n\}}$.  The tail bound also gives
\[
    |\E \bar z_i-1|
    \le \E[z_i\mathbf 1_{\{z_i>M_n\}}]
    \le C(1+M_n)e^{-c\sqrt {M_n}}
    \le n^{-20}.
\]
Since \(|\bar z_i-\E \bar z_i|\le C\log^2 n\) and
\(\Var(\bar z_i)\le \E z_i^2\le C\), Bernstein's inequality yields
\[
\mathbb{P}\left(
\left|\frac1n\sum_{i=1}^n \bar z_i-\E \bar z_i\right|>s
\right)
\le
2\exp\left[
-c n\min\left(s^2,\frac{s}{\log^2 n}\right)
\right].
\]
Taking $s=C(\sqrt{\log n/n}+\log^3n/n)$ and increasing $C$ if necessary gives
probability at most $n^{-10}$.  On the event $\max_i z_i\le M_n$,
\(\bar z_i=z_i\) for all \(i\), and therefore
\[
    \left|\frac1n\sum_{i=1}^n z_i-1\right|
    \le C\left(\sqrt{\frac{\log n}{n}}+\frac{\log^3n}{n}\right)
\]
with probability at least \(1-O(n^{-10})\).

Moreover,
\[
\E\bar z_i^2\leq\E z_i^2\leq C,\qquad
|\bar z_i^2-\E\bar z_i^2|\leq C\log^4n,\qquad
\Var(\bar z_i^2)\leq\E z_i^4\leq C.
\]
Bernstein's inequality therefore gives
\begin{align*}
\P\left(
\frac1n\sum_{i=1}^n
\{\bar z_i^2-\E\bar z_i^2\}>C
\right)
&\leq
2\exp\left\{-cn\min\left(1,\frac1{\log^4n}\right)\right\}\\
&\leq2e^{-cn/\log^4n}=O(n^{-10}).
\end{align*}
Since $\bar z_i=z_i$ on $\{\max_i z_i\leq C\log^2n\}$, this proves the
remaining assertion $n^{-1}\sum_i z_i^2\leq C$ in
\eqref{eq:y_basic_bounds}.
\end{proof}

\begin{proof}[Proof of Lemma \ref{lem:approx_orthogonal_1}]
The triangle inequality gives
\begin{align}\label{eq:temp1}
\left|\frac{1}{\sqrt d}\Bigpnorm{\sum_{\ell=0}^t a_\ell\w^\ell}{}
-\pnorm a{}\right|
&\leq
\frac1{\sqrt d}\left|
\Bigpnorm{\sum_{\ell=0}^ta_\ell\w^\ell}{}
-\Bigpnorm{\sum_{\ell=0}^ta_\ell
\frac{\g^\ell\circ(\y-1)}{\sqrt{2\delta}}}{}
\right|\notag\\
&\quad+
\left|\frac1{\sqrt d}\Bigpnorm{\sum_{\ell=0}^ta_\ell
\frac{\g^\ell\circ(\y-1)}{\sqrt{2\delta}}}{}
-\pnorm a{}\right|.
\end{align}
Lemma~\ref{lem:near_isometry} bounds the second term by
\begin{align}\label{eq:temp2}
\left|\frac1{\sqrt d}\Bigpnorm{\sum_{\ell=0}^ta_\ell
\frac{\g^\ell\circ(\y-1)}{\sqrt{2\delta}}}{}
-\pnorm a{}\right|
\le C\left\{\sqrt{\frac{t+\log n}{n}}+\frac{(t+\log n)\log^2n}{n}\right\}
\pnorm{a}{}.
\end{align}
By \eqref{eq:h_first_order},
\begin{align*}
\Bigpnorm{\sum_{\ell=0}^t a_\ell
\left(h_\ell(\g^\ell,\y)
-\frac{\g^\ell\circ(\y-1)}{\sqrt{2\delta}}\right)}{}
&\leq \sqrt{t+1}\,\pnorm a{}
\max_{0\leq\ell\leq t}
\left\{|\pi_\ell|
\Bigpnorm{\frac{(\y-1)\circ\g^\ell}{\sqrt{2\delta}}}{}
+(1+|\pi_\ell|)\pnorm{\r_\ell^{(1)}}{}\right\}.
\end{align*}
The bounds for \(|\pi_\ell|\) and \(\pnorm{\r_\ell^{(1)}}{}\) in
Lemma~\ref{lem:linearization}, Lemma~\ref{lem:near_isometry}, and
\(t\leq \log^2n\) imply
\begin{align}\label{eq:bound_temp_I}
\frac1{\sqrt d}
\Bigpnorm{\sum_{\ell=0}^t a_\ell
\left(h_\ell(\g^\ell,\y)
-\frac{\g^\ell\circ(\y-1)}{\sqrt{2\delta}}\right)}{}
\leq C\mathfrak A_\delta\frac{\log^{21}n}{\sqrt n}\pnorm{a}{}.
\end{align}

The mean value theorem and \eqref{eq:ht_first} give
\begin{align}\label{eq:bound_2}
\Bigpnorm{\sum_{\ell=0}^ta_\ell
\{h_\ell(\u^\ell,\y)-h_\ell(\g^\ell,\y)\}}{}
&\leq\sum_{\ell=0}^t|a_\ell|
\pnorm{h_\ell(\u^\ell,\y)-h_\ell(\g^\ell,\y)}{}\notag\\
&\leq C\sqrt{t+1}\,\pnorm{a}{}
\max_{0\leq\ell\leq t}
\pnorm{(\y+1)\circ(\u^\ell-\g^\ell)}{}\notag\\
&\leq C\log n\sqrt{t+1}\,\pnorm{a}{}
\max_{0\leq\ell\leq t}\pnorm{\u^\ell-\g^\ell}{},
\end{align}
where \(\pnorm{h_t'(u,y)}{}\leq C(|y|+1)\).  Moreover,
\begin{align*}
\frac1{\sqrt d}\pnorm{\u^\ell-\g^\ell}{}
&\leq\frac1{\sqrt d}\pnorm{\tilde\u^\ell-\g^\ell}{}
+\frac1{\sqrt d}\pnorm{\Delta_{U,\ell}}{},\\
\frac1{\sqrt d}\pnorm{\tilde\u^\ell-\g^\ell}{}
&\leq
\frac1{\sqrt d}\Bigpnorm{\sum_{j=-1}^{\ell-1}
\gamma_{\ell,j}\g^j}{}
+\frac1{\sqrt d}\pnorm{\g^\ell}{}
|\gamma_{\ell,\ell}-1|.
\end{align*}
Equation \eqref{eq:claim_near_isometry} and
Lemma~\ref{lem:lambda_almost_orthogonal} give
\begin{align*}
\frac{1}{\sqrt{d}}\Bigpnorm{\sum_{j=-1}^{\ell-1}\gamma_{\ell,j}\g^j}{}
\leq C\mathfrak A_\delta\Big(1+O\Big(\sqrt{\frac{t+\log n}{n}}\Big)\Big)
\pnorm{\gamma_{\ell,[-1:\ell-1]}}{}
\leq C\mathfrak A_\delta\frac{\log^{12}n}{n^{1/4}},
\end{align*}
while \eqref{eq:gamma_norm_loo} gives
\begin{align}
\left|1-\gamma_{\ell,\ell}\right|
&\leq \pnorm{\gamma_{\ell,[-1:\ell-1]}}{}
+ \left|1 - \pnorm{\gamma_{\ell,[-1:\ell]}}{}\right|\\
&\leq C\mathfrak A_\delta\frac{\log^{12}n}{n^{1/4}}
+O\left(\mathfrak A_\delta\,\rho_\ell^2+\sqrt{\frac{\ell+\log n}{d}}
+\frac{\pnorm{\Delta_{V,\ell}}{}}{\sqrt d}\right)
\notag\\
&\leq C\mathfrak A_\delta\frac{\log^{12}n}{n^{1/4}},
\label{eq:last_gamma_diff}
\end{align}
where in the last line we used $|\rho_\ell|\leq \log^{10}n n^{-1/4}$,
$\ell\leq\tau_I$, and Proposition~\ref{prop:phase_1_tilde_close}.
Combining these estimates with
Proposition~\ref{prop:phase_1_tilde_close} gives, for \(\ell\leq t\),
\begin{align*}
\frac{1}{\sqrt{d}}\pnorm{\u^\ell-\g^\ell}{}
\leq C\mathfrak A_\delta\frac{\log^{12}n}{n^{1/4}}.
\end{align*}
Substitution into \eqref{eq:bound_2} gives
\begin{align}\label{eq:bound_temp_II}
\frac1{\sqrt d}
\Bigpnorm{\sum_{\ell=0}^ta_\ell
\{h_\ell(\u^\ell,\y)-h_\ell(\g^\ell,\y)\}}{}
\leq C\mathfrak A_\delta\frac{\log^{14}n}{n^{1/4}}\pnorm{a}{}.
\end{align}
The reverse triangle inequality and
\eqref{eq:bound_temp_I}--\eqref{eq:bound_temp_II} yield
\begin{align*}
\frac{1}{\sqrt d}\bigg|\Bigpnorm{\sum_{\ell=0}^t a_\ell \w^\ell}{}
- \Bigpnorm{\sum_{\ell=0}^t a_\ell
\frac{\g^\ell\circ(\y-1)}{\sqrt{2\delta}}}{}\bigg|
\leq C\mathfrak A_\delta\frac{\log^{14}n}{n^{1/4}}\pnorm{a}{}.
\end{align*}
Choosing \(a=(0,\ldots,0,1)\) proves \eqref{eq:w_approx}.
Substitution of the last display and \eqref{eq:temp2} into
\eqref{eq:temp1} proves \eqref{eq:w_isometry}.
\end{proof}

\subsection{Proof of Proposition \ref{prop:phase_1_main} (Recursion)}\label{subsec:proof_recursion_phase1}
\begin{proof}
Applying Lemma \ref{lem:conjugate_signal} and (\ref{eq:h_second_order}) of Lemma \ref{lem:linearization}, we have
\begin{align}\label{eq:rhot_main}
\rho_{t+1}
&=\frac{1+\pi_t}{\sqrt{2\delta}\,d}
  \iprod{\g^{-1}}{(\y-1)\circ\u^t}
  +\frac{(1+\pi_t)\hat\mu_t^2}{\sqrt{2\delta}\,d}
  \iprod{\g^{-1}}{\y\circ\u^t-\frac13\y^{\circ2}\circ(\u^t)^{\circ3}}
\notag\\
&\quad+\frac{1+\pi_t}{d}\iprod{\g^{-1}}{\r_t^{(2)}}
-\hat c_t\rho_t .
\end{align}
Here
\(\r_t^{(2)}=(r_t^{(2)}(u_i^t,y_i))_{i=1}^n\), with
\(r_t^{(2)}\) defined in \eqref{eq:h_second_order}.  Proposition
\ref{prop:phase_1_tilde_close} gives the exact identity
\begin{align}
\frac{1}{\sqrt{2\delta}\,d}
\iprod{\g^{-1}}{(\y-1)\circ\u^t}
={}&
\frac{\rho_t}{\sqrt{2\delta}\,d}
\iprod{\g^{-1}}{(\y-1)\circ\g^{-1}}
+\varphi_t \notag\\
&+\frac{1}{\sqrt{2\delta}\,d}
\iprod{(\y-1)\circ\g^{-1}}
{\sum_{\ell=0}^{t-1}\gamma_{t,\ell}\g^\ell}
+\frac{1}{\sqrt{2\delta}\,d}
\iprod{(\y-1)\circ\g^{-1}}{\Delta_{U,t}}\notag\\
&+\frac{\gamma_{t,t}-1}{\sqrt{2\delta}\,d}
\iprod{(\y-1)\circ\g^{-1}}{\g^t},
\label{eq:phase1_signal_decomposition}
\end{align}
where
\[
\varphi_t=\frac{1}{\sqrt{2\delta}\,d}
\iprod{(\y-1)\circ\g^{-1}}{\g^t}.
\]
Since \(\y=(\g^{-1})^{\circ2}\), the variables
\(((g_i^{-1})^4-(g_i^{-1})^2)/\sqrt{2\delta}\) are i.i.d. with mean
\(2/\sqrt{2\delta}\).  Hence their sum divided by \(d\) has mean
\((n/d)2/\sqrt{2\delta}=\sqrt{2\delta}\).  Truncation at
\(|g_i^{-1}|\leq C\sqrt{\log n}\), followed by Bernstein's inequality and the
Gaussian tail bound for the truncation event, gives, with probability
\(1-O(n^{-12})\),
\begin{align}
\left|
\frac{1}{\sqrt{2\delta}\,d}
\iprod{\g^{-1}}{(\y-1)\circ\g^{-1}}
-\sqrt{2\delta}
\right|
\leq C\mathfrak A_\delta\sqrt{\frac{\log n}{n}}.
\label{eq:phase1_signal_coefficient}
\end{align}

For the remaining terms in
\eqref{eq:rhot_main}--\eqref{eq:phase1_signal_decomposition},
equations \eqref{eq:r_vec_bound} and \eqref{eq:phase1_finite_size}
imply
\begin{align}
\frac{1+|\pi_t|}{d}
\left|\iprod{\g^{-1}}{\r_t^{(2)}}\right|
\leq C\mathfrak A_\delta^6\frac{\log^{40}n}n
\leq C\mathfrak A_\delta^4\frac{\log^{34}n}{n^{3/4}}.
\label{eq:phase1_second_order_term}
\end{align}
For the last term in \eqref{eq:rhot_main}, \eqref{eq:ht_first} gives
\begin{align*}
h_t'(u^t_i,y_i) = \frac{(1+\pi_t)}{\sqrt{2\delta}} \Big[(1+\hat\mu_t^2)y_i\Big(1-\tanh^2(\hat\mu_t\sqrt{y_i}u^t_i)\Big) - 1\Big],
\end{align*}
hence
\begin{align*}
|\hat c_t| &= \Big|\frac{1}{d}\sum_{i=1}^n h_t'(u_i^t,y_i)\Big|\\
&\leq C\Big[\underbrace{\hat\mu_t^2\Big|\frac{1}{d}\sum_{i=1}^n  y_i(1-\tanh^2(\hat\mu_t \sqrt{y_i}u^t_i))\Big|}_{(I)} + \underbrace{\Big|\frac{1}{d}\sum_{i=1}^n (y_i - 1)\Big|}_{(I\!I)} + \underbrace{\Big|\frac{1}{d}\sum_{i=1}^n y_i \tanh^2(\hat\mu_t \sqrt{y_i}u^t_i)\Big|}_{(I\!I\!I)}\Big].
\end{align*}
Gaussian concentration gives $d^{-1}\sum_i y_i=\delta+O(\mathfrak A_\delta\,\sqrt{\log n/n})$,
and therefore $(I)\leq C\mathfrak A_\delta\hat\mu_t^2$.  Similarly,
$(I\!I)\leq C\mathfrak A_\delta\sqrt{\log n/n}$.  Finally, using
$|\tanh(x)|\leq|x|$ and the Phase I moment bounds,
\begin{align*}
(I\!I\!I)
\leq \hat\mu_t^2\frac{1}{d}\sum_{i=1}^n y_i^2 (u^t_i)^2
\leq C\mathfrak A_\delta \log^2n\hat\mu_t^2
\leq C\mathfrak A_\delta^3\frac{\log^{22}n}{\sqrt n}.
\end{align*}
Consequently,
$|\hat c_t|\leq C\mathfrak A_\delta^3\log^{22}n/\sqrt n$ and
\begin{align}
|\hat c_t\rho_t|
\leq C\mathfrak A_\delta^3\frac{\log^{32}n}{n^{3/4}}.
\label{eq:phase1_onsager_signal}
\end{align}

For the sum over \(0\leq\ell<t\) in
\eqref{eq:phase1_signal_decomposition}, Cauchy--Schwarz gives
\begin{align*}
\frac1d\left|
\iprod{(\y-1)\circ\g^{-1}}
{\sum_{\ell=0}^{t-1}\gamma_{t,\ell}\g^\ell}
\right|
&\leq\sqrt t\,\pnorm{\gamma_{t,[0:t-1]}}{}
\max_{0\leq\ell<t}
\frac1d\left|\iprod{(\y-1)\circ\g^{-1}}{\g^\ell}\right|.
\end{align*}
Conditional on \(\g^{-1}\), the \(t\) inner products in the maximum are
independent centered Gaussians with variance
\(\pnorm{(\y-1)\circ\g^{-1}}{}^2\).  On the event
\(\pnorm{(\y-1)\circ\g^{-1}}{}\leq C\mathfrak A_\delta\sqrt n\),
the conditional Gaussian maximum bound gives
\begin{align*}
\max_{0\leq\ell<t}\frac1d
\left|\iprod{(\y-1)\circ\g^{-1}}{\g^\ell}\right|
\leq C\mathfrak A_\delta\sqrt{\frac{\log n}{n}}
\end{align*}
with probability \(1-O(n^{-12})\).  Therefore
\begin{align}
\frac1d\left|
\iprod{(\y-1)\circ\g^{-1}}
{\sum_{\ell=0}^{t-1}\gamma_{t,\ell}\g^\ell}
\right|
\leq C\mathfrak A_\delta\sqrt{\frac{t\log n}{n}}
\pnorm{\gamma_{t,[0:t-1]}}{}
\leq C\mathfrak A_\delta^2\frac{\log^{14}n}{n^{3/4}},
\label{eq:phase1_old_gaussian_terms}
\end{align}
where the last inequality uses
Lemma~\ref{lem:lambda_almost_orthogonal}.

Every summand in the \(U\)-identity \eqref{def:Delta_Ut} belongs to
\(\spa(\r^0,\ldots,\r^{t-1})\).  By Gram--Schmidt,
\[
\spa(\r^0,\ldots,\r^{t-1})
=\spa(\w^0,\ldots,\w^{t-1}).
\]
Together with the exact identity
\(\Delta_{U,t}=\Delta'_{U,t}
+\sum_{k=0}^{t-1}\alpha_{t-1}^k\w^k\) from
Lemma~\ref{lem:mutual_close}, this proves
\[
\Delta_{U,t}\in\spa(\w^0,\ldots,\w^{t-1}).
\]
Thus there is a coefficient vector
\(\bm a_{t-1}=(a_{t-1,0},\ldots,a_{t-1,t-1})\) such that
\begin{align*}
\Delta_{U,t}=\sum_{\ell=0}^{t-1}a_{t-1,\ell}\w^\ell.
\end{align*}
By \eqref{eq:w_isometry} and Proposition
\ref{prop:phase_1_tilde_close},
\begin{align*}
\pnorm{\bm a_{t-1}}{}
\leq C\frac{\pnorm{\Delta_{U,t}}{}}{\sqrt d}
\leq C\mathfrak A_\delta\frac{\log^{11}n}{\sqrt n}.
\end{align*}
Add and subtract
\((\y-1)\circ\g^\ell/\sqrt{2\delta}\) inside each
\(\w^\ell\).  Equation \eqref{eq:w_approx}, the displayed bound on
\(\pnorm{\bm a_{t-1}}{}\), and Cauchy--Schwarz give
\begin{align}
\frac1{\sqrt{2\delta}\,d}
\left|\iprod{(\y-1)\circ\g^{-1}}{\Delta_{U,t}}\right|
&\leq
C\sqrt t\,\pnorm{\bm a_{t-1}}{}
\max_{\ell<t}\frac1{\sqrt d}
\Bigpnorm{\w^\ell-\frac{(\y-1)\circ\g^\ell}{\sqrt{2\delta}}}{}
\notag\\
&\quad+
C\sqrt t\,\pnorm{\bm a_{t-1}}{}
\max_{\ell<t}\frac1d
\left|\iprod{(\y-1)^{\circ2}\circ\g^{-1}}{\g^\ell}\right|
\notag\\
&\leq
C\mathfrak A_\delta^2\frac{\log^{26}n}{n^{3/4}}
+C\mathfrak A_\delta^2\frac{\log^{13}n}n
\leq C\mathfrak A_\delta^2\frac{\log^{26}n}{n^{3/4}}.
\label{eq:phase1_delta_projection}
\end{align}
The second inequality also uses the independence of
\((\y-1)^{\circ2}\circ\g^{-1}\) and \(\g^\ell\).

By \eqref{eq:last_gamma_diff} and the independence of
\(\g^{-1}\) and \(\g^t\),
\begin{align}
|\gamma_{t,t}-1|
\Big|\frac{1}{\sqrt{2\delta}d}
\iprod{(\y-1)\circ\g^{-1}}{\g^t}\Big|
\leq C\mathfrak A_\delta^2\frac{\log^{14}n}{n^{3/4}}.
\label{eq:phase1_last_gaussian_coefficient}
\end{align}

It remains to bound the two terms multiplied by \(\hat\mu_t^2\) in
\eqref{eq:rhot_main}.  For the linear term,
\begin{align*}
\Big|\frac{1}{d}\iprod{\y\circ \g^{-1}}{\u^t}\Big| \leq \Big|\frac{1}{d}\iprod{\y\circ \g^{-1}}{\tilde\u^t}\Big| + \Big|\frac{1}{d}\iprod{\y\circ \g^{-1}}{\Delta_{U,t}}\Big|.
\end{align*}
The term containing \(\Delta_{U,t}\) satisfies
\begin{align*}
\frac1d
\left|\iprod{\y\circ\g^{-1}}{\Delta_{U,t}}\right|
&\leq
\frac{\pnorm{\y\circ\g^{-1}}{}}d
\pnorm{\Delta_{U,t}}{}\\
&\leq
C\delta\mathfrak A_\delta\frac{\log^{11}n}{\sqrt n}
\leq C\mathfrak A_\delta^2\frac{\log^{11}n}{\sqrt n},
\end{align*}
where \(\delta\leq\mathfrak A_\delta\).  Moreover,
\begin{align*}
\Big|\frac{1}{d}\iprod{\y\circ \g^{-1}}{\tilde\u^t}\Big| &= |\rho_t| \Big|\frac{1}{d}\iprod{\y\circ\g^{-1}}{\g^{-1}}\Big| + \Big|\sum_{\ell=0}^t \gamma_{t,\ell} \frac{1}{d}\iprod{\y\circ\g^{-1}}{\g^\ell}\Big|\\
&\leq C\mathfrak A_\delta|\rho_t|
+\sqrt{t+1}\pnorm{\gamma_{t,[0:t]}}{}
\max_{0\leq\ell\leq t}
\Big|\frac{1}{d}\iprod{\y\circ\g^{-1}}{\g^\ell}\Big|\\
&\leq C\mathfrak A_\delta\left(\frac{\log^{10}n}{n^{1/4}}+\frac{\log^{3/2}n}{\sqrt n}\right)
\leq C\mathfrak A_\delta\frac{\log^{10}n}{n^{1/4}},
\end{align*}
where we used Lemma~\ref{lem:uv_norm} and Gaussian maximal concentration.
Since \(\hat\mu_t^2\leq C\mathfrak A_\delta^2\log^{20}n/\sqrt n\),
\begin{align}
\frac{(1+|\pi_t|)\hat\mu_t^2}{\sqrt{2\delta}\,d}
\left|\iprod{\y\circ\g^{-1}}{\u^t}\right|
\leq C\mathfrak A_\delta^3\frac{\log^{30}n}{n^{3/4}}.
\label{eq:phase1_mu_linear}
\end{align}

For the cubic term, use \(U_i(c)\) and \(\mathcal K_t\) from the proof of
\eqref{eq:phase1_weighted_moments}.  Let
\[
c_t=(\rho_t,\gamma_{t,0},\ldots,\gamma_{t,t}),
\qquad
e_t=(0,\ldots,0,1)\in\mathbb R^{t+2}.
\]
Then \(\widetilde u_i^t=U_i(c_t)\), \(g_i^t=U_i(e_t)\), and
\eqref{eq:gamma_norm}, \eqref{eq:last_gamma_diff}, and
Lemma~\ref{lem:lambda_almost_orthogonal} give
\begin{align}
\|c_t-e_t\|
&\leq
|\rho_t|+\|\gamma_{t,[0:t-1]}\|+|\gamma_{t,t}-1|
\leq C\mathfrak A_\delta\frac{\log^{12}n}{n^{1/4}}.
\label{eq:phase1_cubic_coefficient_distance}
\end{align}

We require the following uniform polynomial bound:
\begin{align}
\sup_{\substack{a,b\in\mathcal K_t\\a\ne b}}
\frac1{d\|a-b\|^2}
\sum_{i=1}^n
y_i^4(g_i^{-1})^2
\{U_i(a)^3-U_i(b)^3\}^2
\leq C\mathfrak A_\delta^2.
\label{eq:phase1_cubic_uniform_polynomial}
\end{align}
To prove it, write
\[
U_i(a)^3-U_i(b)^3
=U_i(a-b)\{U_i(a)^2+U_i(a)U_i(b)+U_i(b)^2\}.
\]
An \(n^{-30}\)-net of
\(\mathcal K_t\times\mathcal K_t\times\mathbb S^{t+1}\) has logarithmic
cardinality at most \(C\log^3n\).  At every net point, truncated Bernstein's
inequality, with
\(\max_{-1\leq\ell\leq t}\|\bm g^\ell\|_\infty\leq C\sqrt{\log n}\), gives
\begin{align*}
\frac1d\sum_{i=1}^n
y_i^4(g_i^{-1})^2U_i(v)^2
\{U_i(a)^2+U_i(a)U_i(b)+U_i(b)^2\}^2
\leq C\mathfrak A_\delta^2
\end{align*}
simultaneously for all net points, outside an event of probability
\[
\exp\left\{C\log^3n-\frac{cn}{\log^C n}\right\}+O(n^{-12})
=O(n^{-12}).
\]
On the same Gaussian maximum event, the gradient of the last empirical
average with respect to \((a,b,v)\) is at most \(\log^Cn\).  The net extension
error is therefore at most \(\log^Cn n^{-30}\), which proves
\eqref{eq:phase1_cubic_uniform_polynomial}.

Cauchy--Schwarz, \eqref{eq:phase1_cubic_coefficient_distance}, and
\eqref{eq:phase1_cubic_uniform_polynomial} now give
\begin{align}
\frac1d\left|
\iprod{\y^{\circ2}\circ\g^{-1}}
{(\widetilde\u^t)^{\circ3}-(\g^t)^{\circ3}}
\right|
\leq C\mathfrak A_\delta\|c_t-e_t\|
\leq C\mathfrak A_\delta^2\frac{\log^{12}n}{n^{1/4}}.
\label{eq:phase1_cubic_tilde_difference}
\end{align}
Conditional on \(\g^{-1}\), truncated Bernstein's inequality applied to
the independent centered variables
\[
\xi_i\coloneqq d^{-1}y_i^2g_i^{-1}(g_i^t)^3
\]
gives
\[
\left|\frac1d\iprod{\y^{\circ2}\circ\g^{-1}}
{(\g^t)^{\circ3}}\right|
\leq C\mathfrak A_\delta\sqrt{\frac {\log n}{n}}.
\]

Finally,
\[
(\u^t)^{\circ3}-(\widetilde\u^t)^{\circ3}
=3(\widetilde\u^t)^{\circ2}\circ\Delta_{U,t}
+3\widetilde\u^t\circ\Delta_{U,t}^{\circ2}
+\Delta_{U,t}^{\circ3}.
\]
On the Gaussian maximum event,
\(\|\y^{\circ2}\circ\g^{-1}\|_\infty\leq C\log^{5/2}n\).
The deterministic net used for \eqref{eq:phase1_weighted_moments}, applied
to \(d^{-1}\sum_iU_i(c)^4\), and
\eqref{eq:phase1_gamma_universal_radius} give
\begin{align}
\|(\widetilde{\bm u}^t)^{\circ2}\|
&\leq C\sqrt n,\qquad
\|\widetilde{\bm u}^t\|_\infty\leq C\log n.
\label{eq:phase1_tilde_second_and_max}
\end{align}
These bounds, together with
\(\|\Delta_{U,t}\|\leq C\mathfrak A_\delta \log^{11}n\), yield
\begin{align}
\frac1d\left|
\iprod{\y^{\circ2}\circ\g^{-1}}
{(\u^t)^{\circ3}-(\widetilde\u^t)^{\circ3}}
\right|
&\leq
\frac{C\log^{5/2}n}d\left\{
\|(\widetilde\u^t)^{\circ2}\|\|\Delta_{U,t}\|
+\|\widetilde\u^t\|_\infty\|\Delta_{U,t}\|^2
+\|\Delta_{U,t}\|^3
\right\}
\notag\\
&\leq C\left\{
\mathfrak A_\delta^2\frac{\log^{14}n}{\sqrt n}
+\mathfrak A_\delta^3\frac{\log^{26}n}n
+\mathfrak A_\delta^4\frac{\log^{36}n}n
\right\}
\leq C\mathfrak A_\delta^2\frac{\log^{14}n}{\sqrt n}.
\label{eq:phase1_cubic_delta_terms}
\end{align}
Combining the preceding three estimates gives
\begin{align*}
\left|\frac1d\iprod{\y^{\circ2}\circ\g^{-1}}
{(\g^t)^{\circ3}}\right|&\leq C\mathfrak A_\delta\sqrt{\frac{\log n}{n}},\\
\frac1d\left|\iprod{\y^{\circ2}\circ\g^{-1}}
{(\u^t)^{\circ3}-(\g^t)^{\circ3}}\right|
&\leq C\mathfrak A_\delta^2\left(\frac{\log^{12}n}{n^{1/4}}
+\frac{\log^{14}n}{\sqrt n}\right)
\leq C\mathfrak A_\delta^2\frac{\log^{13}n}{n^{1/4}}.
\end{align*}
Consequently,
\begin{align}
\frac{(1+|\pi_t|)\hat\mu_t^2}{3\sqrt{2\delta}\,d}
\left|\iprod{\y^{\circ2}\circ\g^{-1}}{(\u^t)^{\circ3}}\right|
\leq C\mathfrak A_\delta^4\frac{\log^{33}n}{n^{3/4}}.
\label{eq:phase1_mu_cubic}
\end{align}

Conditional on $\g^{-1}$, with probability \(1-O(n^{-12})\),
\begin{align}
|\varphi_t|
\leq C\mathfrak A_\delta\sqrt{\frac {\log n}{n}}.
\label{eq:phase1_varphi_upper}
\end{align}
Together with \eqref{eq:phase1_signal_coefficient}, the bounds
\[
|\rho_t|\leq \log^{10}n n^{-1/4},
\qquad
|\pi_t|\leq C\mathfrak A_\delta^3\frac{\log^{20}n}{\sqrt n}
\]
imply
\begin{align}
\left|
\left\{(1+\pi_t)
\frac{1}{\sqrt{2\delta}\,d}
\iprod{\g^{-1}}{(\y-1)\circ\g^{-1}}
-\sqrt{2\delta}\right\}\rho_t
+\pi_t\varphi_t
\right|
\leq C\mathfrak A_\delta^4\frac{\log^{30}n}{n^{3/4}}.
\label{eq:phase1_coefficient_and_normalization}
\end{align}
Substituting \eqref{eq:phase1_signal_decomposition} into
\eqref{eq:rhot_main} and applying
\eqref{eq:phase1_second_order_term}--\eqref{eq:phase1_coefficient_and_normalization}
gives
\[
\left|\rho_{t+1}-\sqrt{2\delta}\rho_t-\varphi_t\right|
\leq C\mathfrak A_\delta^4\frac{\log^{34}n}{n^{3/4}}.
\]
This is \eqref{eq:recursion_phase_1}.
\end{proof}

\subsection{Proof of Proposition \ref{prop:phase_1_main} (Crossing time)}

\begin{proof}
Let \(\mathcal E_I\) denote the event on which
\eqref{eq:recursion_phase_1} holds simultaneously for all
$t\leq\tau_I\wedge \log^2 n$ and the event
\begin{align}\label{eq:phase1_noise_variance_event}
\mathcal G_I\coloneqq\left\{
\frac4d\leq
\frac{\pnorm{(\y-1)\circ\g^{-1}}{}^2}{2\delta d^2}
\leq\frac6d
\right\}
\end{align}
occurs.
Intersect \(\mathcal E_I\) also with
\begin{align}\label{eq:phase1_initial_small_event}
\mathcal I_I\coloneqq\{|\rho_0|\leq n^{-1/4}\log^{10} n\}.
\end{align}
Since
\(\rho_0=d^{-1}\langle\bm v^0,\btheta^\star\rangle
\sim\mathcal N(0,d^{-1})\), Gaussian concentration gives
\(\mathbb P(\mathcal I_I^c)=O(n^{-12})\).  Hence \(t_0\geq1\) and
\(t_0=\tau_I+1\) on \(\mathcal E_I\).
The preceding proof and concentration of a fixed Gaussian polynomial imply
$\P(\mathcal E_I^c)=O(n^{-10})$.  Write
$r_n\coloneqq C\mathfrak A_\delta^4 \log^{34}n n^{-3/4}$.  Iterating the recursion for
$s=t+1-k$ steps gives
\begin{align}\label{eq:phase_k_recursion}
\rho_{t+1}
=a_\delta^s\rho_k+\sum_{j=1}^s a_\delta^{j-1}\varphi_{t+1-j}
+R_{k,s},
\qquad
|R_{k,s}|\leq r_n\frac{a_\delta^s-1}{a_\delta-1}.
\end{align}

Choose
\begin{align}\label{eq:phase1_block_length}
T=T_\delta(n)
\coloneqq\left\lceil
\frac{\frac12\log n+10\log \log n}{\log a_\delta}
\right\rceil
=\left\lceil\frac{\log n+20\log \log n}{\log(1+\kappa_{\weak})}\right\rceil.
\end{align}
Then $a_\delta^T\geq n^{1/2}\log^{10}n$.  We assume the explicit
finite-size conditions
\begin{align}\label{eq:phase1_crossing_finite_size}
60T\leq \log^2n,
\qquad
C\mathfrak A_\delta^4
\frac{a_\delta+1}{\sqrt{\delta\kappa_{\weak}}}\log^{34}n
\leq n^{1/20}.
\end{align}
For every fixed $\delta>1/2$, both conditions hold for all sufficiently large
$n$.

Let $b_n=n^{-1/4}\log^{10}n$ and $m=60$, and define
\begin{align*}
\mathcal F_{i-1}\coloneqq
\cF\left(
\g^{-1},\v^0,
\{\g^\ell\}_{\ell=0}^{(i-1)T-1},
\{\s^\ell\}_{\ell=0}^{(i-1)T-1}
\right).
\end{align*}
On $\mathcal E_I$, the event
$\{\tau_I\geq mT\}$ implies, for every $1\leq i\leq m$, the enlarged
small-ball event in \eqref{eq:phase1_block_small_ball}.

By \eqref{eq:uv_dependence}, conditional on $\g^{-1}$,
\begin{gather}
\cF(\varphi_{iT-j}:1\leq j\leq T)
\ \perp\!\!\!\perp\ \mathcal F_{i-1}
\ \big|\ \cF(\g^{-1}),
\label{eq:indep_fact_1}\\
\rho_{(i-1)T}\ \text{is \(\mathcal F_{i-1}\)-measurable}.
\label{eq:indep_fact_2}
\end{gather}
Conditional on $\g^{-1}$,
\begin{align*}
Z_i\coloneqq\sum_{j=1}^T a_\delta^{j-1}\varphi_{iT-j}
\end{align*}
is centered Gaussian with variance
\begin{align*}
\sigma_T^2
=\frac{\pnorm{(\y-1)\circ\g^{-1}}{}^2}{2\delta d^2}
\sum_{j=0}^{T-1}a_\delta^{2j}.
\end{align*}
On $\mathcal G_I$,
\begin{align}\label{eq:phase1_block_variance}
\sigma_T
\geq\frac{c}{\sqrt d}
\sqrt{\frac{a_\delta^{2T}-1}{\kappa_{\weak}}}
\geq\frac{c a_\delta^T}{\sqrt{d\kappa_{\weak}}}.
\end{align}

We use the standard Gaussian small-ball fact that, if $A$ is independent of
$Z\sim\N(0,\sigma^2)$, then
\begin{align}\label{eq:fact_gauss_mass}
\sup_{x\in\R}\P(|x+Z|\leq u)\leq \sqrt{\frac2\pi}\frac{u}{\sigma}.
\end{align}
On $\mathcal E_I$, the event $|\rho_{iT}|\leq b_n$ and
\eqref{eq:phase_k_recursion} imply
the event
\begin{align}\label{eq:phase1_block_small_ball}
B_i\coloneqq\left\{
|a_\delta^T\rho_{(i-1)T}+Z_i|
\leq b_n+r_n\frac{a_\delta^T-1}{a_\delta-1}
\right\}.
\end{align}
Consequently,
\begin{align}\label{eq:phase1_crossing_block_inclusion}
\{\tau_I\geq mT\}\cap\mathcal E_I
\subseteq\bigcap_{i=1}^m B_i.
\end{align}
Applying \eqref{eq:fact_gauss_mass}, \eqref{eq:phase1_block_variance}, and
$d=n/\delta$, define
\begin{align*}
q_n
&\coloneqq C\frac{b_n\sqrt{d\kappa_{\weak}}}{a_\delta^T}
+C\frac{r_n\sqrt{d\kappa_{\weak}}}{a_\delta-1}\\
&\leq C\sqrt{\frac{\kappa_{\weak}}\delta}\,n^{-1/4}
+C\mathfrak A_\delta^4
\frac{a_\delta+1}{\sqrt{\delta\kappa_{\weak}}}
\frac{\log^{34}n}{n^{1/4}}
\leq n^{-1/5},
\end{align*}
where $C>0$ is a sufficiently large universal constant and the last step uses
\eqref{eq:phase1_crossing_finite_size}.  On $\mathcal G_I$, uniformly in
$1\leq i\leq m$,
\begin{align}\label{eq:phase1_qn_conditional}
\P(B_i\mid\mathcal F_{i-1})\leq q_n.
\end{align}
Because $\mathcal G_I$ is measurable with respect to
$\g^{-1}\subseteq\mathcal F_{i-1}$, iterated conditioning in
\eqref{eq:phase1_crossing_block_inclusion} gives
\begin{align}\label{eq:cross_main}
\P(\tau_I\geq60T,\mathcal E_I)
\leq \P\left(\bigcap_{i=1}^{60}B_i,\mathcal G_I\right)
\leq q_n^{60}.
\end{align}
Consequently,
\begin{align*}
\P(\tau_I\geq60T)
\leq q_n^{60}+\P(\mathcal E_I^c)=O(n^{-10}),
\end{align*}
and, with the same probability,
\begin{align}\label{eq:phase1_explicit_length}
\tau_I
\leq60\left\lceil\frac{\log n+20\log \log n}
{\log(1+\kappa_{\weak})}\right\rceil.
\end{align}
In particular,
$\tau_I=O((\delta-\delta_{\weak})^{-1}\log n)$ as
$\delta\downarrow\delta_{\weak}$.
\end{proof}

\section{Proofs for Phase II}\label{sec:proof_phase_2}
Throughout this section, retain the notation
\begin{equation}\label{eq:phase2_proof_notation}
\begin{gathered}
a_\delta\coloneqq\sqrt{2\delta},\qquad
\eta_\delta\coloneqq a_\delta-1=2g_\delta,\qquad
\mathfrak A_\delta\coloneqq(1+\sqrt\delta)^6.
\end{gathered}
\end{equation}
Every constant $C$ and every constant implicit in $O(\cdot)$ below is
universal.  Dependence on the aspect ratio and the weak-threshold gap is
displayed explicitly.  Phase II starts at $t_0=\tau_I+1$: by the definition
of $\tau_I$, on the Phase-I event,
\begin{align}\label{eq:phase2_entrance_size}
|\rho_{t_0}|>b_n=n^{-1/4}\log^{10}n.
\end{align}

\subsection{Proof of Proposition \ref{prop:phase_2_exp_growth}}

\begin{definition}
\label{assump:inductive_assumption_phase_2}
Let $\cA^U_{I\!I,t}$ be the event on which
\begin{subequations}
\begin{align}
\sum_{k=0}^{t-1}|\alpha_{t-1}^k| 
&\leq C\mathfrak A_\delta^4\frac{t\log^{10}n}{\sqrt{d}},\label{eq:alpha_sum_phase_2}
\\
\pnorm{\Delta_{U,t}}{}\vee\pnorm{\hat \u^t-\tilde \u^t}{} 
&\leq C\mathfrak A_\delta^4 t\log^{10}n, \label{eq:u_closeness_phase_2}
\\
\bar\gamma_t &\leq C\mathfrak A_\delta\sqrt{d},\label{eq:lambda_main_norm_phase_2}
\\
\bar\lambda_t&\leq C(\sqrt n+\sqrt d),
\label{eq:phase_2_current_lambda}\\
\pnorm{h_t'}{\infty}
\leq C\mathfrak A_\delta \log n,&\qquad
\pnorm{h_t''}{\infty}\leq C\mathfrak A_\delta \log^{3/2}n.
\label{eq:phase_2_assump_U_1}
\end{align}
\end{subequations}
Similarly, let $\cA_{I\!I,t}^V$ be the event on which
\begin{subequations}
\begin{align}
\sum_{k=0}^{t}|\beta_t^k| &\leq C\mathfrak A_\delta^4\frac{(t+1)\log^{10}n}{\sqrt{d}},\label{eq:beta_sum_phase_2}\\
\pnorm{\Delta_{V,t+1}}{} \vee \pnorm{\hat \v^{t+1}-\tilde \v^{t+1}}{} &\leq C\mathfrak A_\delta^4(t+1)\log^{10}n, \label{eq:uv_closeness_phase_2}\\
\bar\lambda_t &\leq C\mathfrak A_\delta\sqrt{d},\\
\bar\gamma_{t+1}&\leq Cn^5,
\label{eq:phase_2_next_gamma}\\
\pnorm{f'_{t+1}}{\infty}=1,&\qquad
\pnorm{f_{t+1}''}{\infty}=0.
\label{eq:phase_2_assump_V_1}
\end{align}
\end{subequations}
\end{definition}

With \(\mathcal C_{I,\tau_I}^V\) defined in
\eqref{eq:phase1_event_intersections}, set, for
\(t_0\leq t<\tau_{I\!I}\),
\begin{align}\label{eq:phase1_endpoint_event}
\mathcal E_{I,\rm end}
\coloneqq\{t_0=\tau_I+1<\infty,\ |\rho_{t_0}|\leq \log^5n\},
\end{align}
and define
\begin{align}\label{eq:phase2_event_intersections}
\mathcal C_{I\!I,t_0-1}^V
&\coloneqq\mathcal C_{I,\tau_I}^V\cap\mathcal E_{I,\rm end},
\nonumber\\
\mathcal C_{I\!I,t}^U
&\coloneqq
\mathcal C_{I\!I,t_0-1}^V\cap\cA_{I\!I,t}^U
\cap\bigcap_{s=t_0}^{t-1}
\left(\cA_{I\!I,s}^U\cap\cA_{I\!I,s}^V\right),\nonumber\\
\mathcal C_{I\!I,t}^V
&\coloneqq \mathcal C_{I\!I,t}^U\cap\cA_{I\!I,t}^V.
\end{align}
The intersection over \(s\in[t_0:t-1]\) is the sure event when \(t=t_0\).

The definitions in \eqref{eq:phase2_event_intersections}, together with
Lemma \ref{lem:phase_2_assump_conseq}, imply
\begin{equation}\label{eq:phase2_generic_event_inclusions}
\mathcal C_{I\!I,t}^U\subseteq\cA_t^U\cap\cA_{t-1}^V,
\qquad
\mathcal C_{I\!I,t}^V\subseteq\cA_t^U\cap\cA_t^V
\quad\text{when }t+1<\tau_{I\!I}.
\end{equation}
For the parameter component of these inclusions,
\eqref{eq:phase_2_mu_est} and \eqref{eq:ht_second} give
\(\|\vartheta_t\|=\|\left(\hat\mu_t,\pi_t\right)\|\leq Cn^5\), which is
\eqref{eq:cond_parameter_h}; \eqref{eq:cond_parameter_f} is empty by
\eqref{eq:phase_retrieval_parameter_tuple}.
Here \eqref{eq:phase_2_current_lambda} supplies the current
\(\bar\lambda_t\) control before the \(V\)-update, while
\eqref{eq:phase_2_next_gamma} supplies the next
\(\bar\gamma_{t+1}\) control after that update.  The restriction
\(t+1<\tau_{I\!I}\) gives \(|\rho_{t+1}|<c_{\weak}\) in the second
inclusion.  The crossing endpoint \(t+1=\tau_{I\!I}\) is verified
separately below before Phase III is invoked.

\begin{lemma}
\label{lem:phase_2_assump_conseq}
Suppose that $t_0\leq t<\tau_{I\!I}$, $t+1\leq \log^2n$,
\(\mathcal C_{I\!I,t-1}^V\) in \eqref{eq:phase2_event_intersections}
occurs, and the three current-time bounds
\eqref{eq:alpha_sum_phase_2}, \eqref{eq:u_closeness_phase_2}, and
\eqref{eq:lambda_main_norm_phase_2} hold.  Then
    \begin{gather}
|\hat\mu_t-|\rho_t||
\leq C\mathfrak A_\delta^4\sqrt\delta\,
\frac{|\rho_t|}{\log^8n},
\label{eq:phase_2_mu_est}\\
\left|\frac{\pnorm{h_t^\star(\u^t,\y)}{}}{\sqrt d}
-a_\delta\hat\mu_t\right|
\leq C\mathfrak A_\delta^4|\hat\mu_t|\rho_t^2,
\label{eq:phase_2_norm}\\
h_t'(u,y)=\frac{1+\pi_t}{a_\delta}
\left((1+\hat\mu_t^2)y
\big(1-\tanh^2(\hat\mu_tu\sqrt y)\big)-1\right),
\quad |\pi_t|\leq C\mathfrak A_\delta^4\rho_t^2,
\label{eq:ht_second}\\
F_{t+1}\Big(\rho_{t+1},\|\lambda_{t,[0:t]}\|;\btheta^\star\Big)=1,
\quad
\left|H_t\Big(\rho_t,\pnorm{\gamma_{t,[0:t]}}{};\bbeta^\star\Big)-1\right|
\leq C\mathfrak A_\delta^4\rho_t^2,
\label{eq:phase_2_HF}\\
|\rho_{t+1}-a_\delta\rho_t|
\leq C\mathfrak A_\delta^4|\rho_t|^3
+C\mathfrak A_\delta^6\frac{\log^{14}n}{\sqrt d},
\label{eq:phase2_cubic_recursion}\\
|\rho_{t+1}|\geq(1+g_\delta)|\rho_t|.
\label{eq:phase_2_exp_growth}
    \end{gather}
\end{lemma}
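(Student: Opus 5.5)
The plan is to mirror the proof of Lemma~\ref{lem:phase_1_assump_conseq}, now using $|\rho_t|$ (which lies between $b_n$ and $c_{\weak}$) as the small parameter instead of $n^{-1/4}\log^{10}n$. The steps are taken in order: first $\hat\mu_t$, then the normalization $N_t$ and $\pi_t$, then the derivative identities, and finally the signal recursion.

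\emph{Step 1: estimating $\hat\mu_t$.} Write $\u^t=\tilde\u^t+\Delta_{U,t}$ and apply \eqref{eq:tilde_u_norm} and \eqref{eq:u_norm}. These give
\[
\frac1n\|\u^t\|^2-1=\rho_t^2+O\Big(\sqrt{\tfrac{t+\log n}{d}}+\tfrac{\|\Delta_{V,t}\|}{\sqrt d}+\tfrac{\|\Delta_{U,t}\|}{\sqrt n}\Big).
\]
The hypotheses bound the error by $C\mathfrak A_\delta^4\log^{12}n/\sqrt d$. Since $|\rho_t|\ge b_n$, we have $\rho_t^2\ge n^{-1/2}\log^{20}n$, so the truncation at $n^{-1/2}$ in \eqref{eq:hat_mu_t} is inactive. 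The relative error in $\hat\mu_t^2$ is then at most $C\mathfrak A_\delta^4\sqrt\delta\log^{12}n/(\sqrt n\,\rho_t^2)\le C\mathfrak A_\delta^4\sqrt\delta/\log^8n$. Taking square roots gives \eqref{eq:phase_2_mu_est}. The third condition in \eqref{eq:phase2_finite_size} then gives $\hat\mu_t\le 2|\rho_t|\le 2c_{\weak}$, which is small.

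\emph{Step 2: the normalization $N_t$ and $\pi_t$.} Expand $\tanh$ to third order as in \eqref{eq:ht_taylor}. This gives
\[
\frac{N_t}{\hat\mu_t}=\frac{\|(\y-1)\circ\u^t\|}{\sqrt d}+O\big(\hat\mu_t^2\big)
\]
times the weighted moments \eqref{eq:phase1_weighted_moments}. Those moments carry over because they were proved uniformly over $c\in\mathcal K_t$, and $\|\gamma_{t,[-1:t]}\|\le2$ still holds since $\rho_t$ is small. The estimate \eqref{eq:claim_delta} now yields $\sqrt{2\delta}+O(\mathfrak A_\delta\rho_t^2)+O(\log^{12}n/\sqrt d)$, and the last term is absorbed into $\rho_t^2$. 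This proves \eqref{eq:phase_2_norm}. Then \eqref{eq:ht_second} follows by the same quotient computation as for \eqref{eq:ht_first}, with $|\pi_t|\le C\mathfrak A_\delta^4\rho_t^2$. For \eqref{eq:phase_2_HF}, $F_{t+1}=1$ holds trivially. For $H_t$, compute the Gaussian expectation of $h_t'(\rho_tG+\sigma W,G^2)^2$: the main term is $(y-1)/a_\delta$, and the corrections are $O(\hat\mu_t^2)$. Together with the $\sigma$-Lipschitz bound, this gives $|H_t-1|\le C\mathfrak A_\delta^4\rho_t^2$.

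\emph{Step 3: the cubic recursion \eqref{eq:phase2_cubic_recursion}.} This is the hardest step. Repeat the decomposition \eqref{eq:rhot_main}--\eqref{eq:phase1_signal_decomposition}, but now the $\hat\mu_t^2$ terms must be computed exactly to leading order rather than merely bounded. The second-order term $\hat\mu_t^2 d^{-1}\langle\g^{-1},\y\circ\u^t-\tfrac13\y^{\circ2}\circ(\u^t)^{\circ3}\rangle$ is $O(\mathfrak A_\delta\hat\mu_t^2|\rho_t|)$ plus fluctuations. The reason is that its part from $\g^t$ is centered given $\g^{-1}$ and is of size $\sqrt{\log n/n}$, while its cross terms are linear in $\rho_t$. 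The Onsager term $\hat c_t\rho_t$ is similarly $O(\hat\mu_t^2|\rho_t|)$ up to the $\sqrt{\log n/n}$ concentration error. The $\pi_t$ factor contributes $O(\rho_t^3)$. The noise $\varphi_t$, the term with old $\g^\ell$ (using Lemma~\ref{lem:lambda_almost_orthogonal}, which extends with the bound $|\rho_t|$ in place of $n^{-1/4}\log^{10}n$), the $\Delta_{U,t}$ projection, and $|\gamma_{t,t}-1|$ all contribute errors of order $\mathfrak A_\delta^6\log^{14}n/\sqrt d$ plus $O(\rho_t^2)$ times $\sqrt{\log n/n}$. This bookkeeping, in particular extending the Phase~I orthogonality bounds to the regime of growing $\rho_t$, is where I expect the main difficulty.

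\emph{Step 4: exponential growth.} From \eqref{eq:phase2_cubic_recursion},
\[
|\rho_{t+1}|\ge\big(a_\delta-C\mathfrak A_\delta^4\rho_t^2\big)|\rho_t|-C\mathfrak A_\delta^6\frac{\log^{14}n}{\sqrt d}.
\]
Since $|\rho_t|<c_{\weak}$, we have $C\mathfrak A_\delta^4\rho_t^2\le g_\delta/2$ by the choice of $c_0$. Since $|\rho_t|\ge b_n$, the additive error is at most $(g_\delta/2)|\rho_t|$ by the last condition in \eqref{eq:phase2_finite_size}. Because $a_\delta-g_\delta=1+g_\delta$, this yields \eqref{eq:phase_2_exp_growth}.
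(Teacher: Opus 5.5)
Your Steps 1, 2 and 4 follow the paper. In Step 3 you also have the right mechanism: the $\hat\mu_t^2$ contribution to $\rho_{t+1}$ must be shown to be $O(\hat\mu_t^2|\rho_t|)$, not merely $O(\hat\mu_t^2)$, because the noise in $\u^t$ is centered given $\g^{-1}$.

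The gap is in how you treat that noise. You apply the centering only to the model $\rho_t\g^{-1}+\g^t$. You then propose to handle the remaining components $\sum_{\ell<t}\gamma_{t,\ell}\g^\ell$ and $(\gamma_{t,t}-1)\g^t$ by extending Lemma~\ref{lem:lambda_almost_orthogonal} ``with the bound $|\rho_t|$''. Inside the $\hat\mu_t^2$ terms these components are multiplied only by $\hat\mu_t^2$, so that extension carries the whole estimate. You would need $\|\gamma_{t,[0:t-1]}\|+|\gamma_{t,t}-1|\le C|\rho_t|$, with a constant that stays bounded as $\delta\downarrow\delta_{\weak}$.

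The Phase~I argument does not give this. In Phase~II the signal parts $\rho_s(\y-1)\circ\g^{-1}/a_\delta$ of successive $\w^s$ overlap, so the noise directions become genuinely correlated across time. Rerunning the proof of Lemma~\ref{lem:lambda_almost_orthogonal} then picks up a term of order $|\rho_s|$ at every step, and yields only $\|\gamma_{t,[0:t-1]}\|\lesssim\sum_{s\le t}|\rho_s|\lesssim|\rho_t|/g_\delta$. The coefficient of $|\rho_t|^3$ in \eqref{eq:phase2_cubic_recursion} would then be of order $1/g_\delta$ instead of $C\mathfrak A_\delta^4$. Step~4 breaks near the weak threshold as a result: it needs $C\rho_t^2/g_\delta\le g_\delta/2$ for all $|\rho_t|<c_{\weak}$, but $c_{\weak}^2\asymp g_\delta/\mathfrak A_\delta^4$ is much larger than $g_\delta^2$ there.

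The missing idea is that Phase~II needs no orthogonality at all. For a deterministic $\gamma$, $\sum_{\ell=0}^t\gamma_\ell\g^\ell$ is exactly Gaussian, independent of $\g^{-1}$, with variance $\|\gamma\|^2$. Moreover, \eqref{eq:gamma_norm_loo} gives $\|\gamma_{t,[0:t]}\|\in[1/2,2]$. The paper therefore proves concentration uniformly over a net of deterministic $(\rho,\mu,\pi,\gamma)$, and substitutes $(\rho_t,\hat\mu_t,\pi_t,\gamma_{t,[0:t]})$ only at the end. Correlations among the $\g^\ell$ components never enter.

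The paper then handles the whole nonlinear remainder $\mathsf e_{\mu,\pi}$ at once. The population mean $\delta\E[G\,\mathsf e_{\mu,\pi}(\rho G+\sigma W,G^2)]$ vanishes at $\rho=0$ by the symmetry $G\mapsto-G$, and its $\rho$-derivative is $O(\mu^2+|\pi|)$. This gives $C\mathfrak A_\delta^4|\rho_t|^3$ with $C$ universal. The remaining terms are handled crudely:
\begin{itemize}
\item The linear term uses the joint conditional Gaussianity of $d^{-1}\iprod{(\y-1)\circ\g^{-1}}{\g^\ell}$ over $0\le\ell\le t$.
\item The $\Delta_{U,t}$ term is bounded by plain Cauchy--Schwarz, since the additive budget $\mathfrak A_\delta^6\log^{14}n/\sqrt d$ is far above the $n^{-3/4}$ precision of Phase~I.
\end{itemize}
So neither Lemma~\ref{lem:lambda_almost_orthogonal} nor the span argument for $\Delta_{U,t}$ is needed.

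Once the noise is treated this way, your explicit second-order expansion should also work in place of the paper's exact remainder. That is, you compute the cubic polynomial term and bound the fourth-order remainder crudely. The cost is one more weighted moment and different powers of $\mathfrak A_\delta$.
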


The events \(\cA_{I\!I,t}^U,\cA_{I\!I,t}^V\) imply the
inequalities in $\cH_t^U$ and $\cH_t^V$ as follows.

\begin{lemma}
\label{lem:phase_2_simp}
Define
\begin{align}\label{eq:phase2_multiplier_error}
\varepsilon_{I\!I,n}
\coloneqq C\mathfrak A_\delta^8\frac{\log^{16}n}{n^{1/4}}.
\end{align}
If \(\mathcal C_{I\!I,t-1}^V\) in
\eqref{eq:phase2_event_intersections} and \(\cH_t^U\) occur, then
\begin{align}
\pnorm{\Delta_{U,t}'}{}
&\leq(1+\varepsilon_{I\!I,n})\pnorm{\Delta_{V,t}'}{}
+C\mathfrak A_\delta^4 t\log^{5/2}n,
\label{eq:phase_2_simp_1}\\
|\alpha_{t-1}^{t-1}|
&\leq C\mathfrak A_\delta^4
\left(\frac{\pnorm{\Delta_{V,t}'}{}}{\sqrt d}
+\frac{t^2\log^{20}n}d\right),
\label{eq:phase_2_simp_2}\\
    |\alpha_{t-1}^k| &\leq \begin{cases}
        |\beta_{t-1}^{t-1}| & k=t-2\\
        \big(1+C\mathfrak A_\delta^4(\rho_{k+1}^2+\rho_{t-1}^2)
        +\varepsilon_{I\!I,n}\big)|\alpha_{t-2}^{k+1}|
        &0\leq k \leq t-3.
    \end{cases}
\label{eq:phase_2_simp_3}
\end{align}
If \(\mathcal C_{I\!I,t}^U\) in
\eqref{eq:phase2_event_intersections} and \(\cH_t^V\) occur, then
\begin{align}
\pnorm{\Delta_{V,t+1}'}{}
&\leq\big(1+C\mathfrak A_\delta^4\rho_t^2
+\varepsilon_{I\!I,n}\big)\pnorm{\Delta_{U,t}'}{}
+C\mathfrak A_\delta^4(t+1)\log^{5/2}n,\\
|\beta_t^t|
&\leq C\mathfrak A_\delta^4\left(
\frac{\log^{3/2}n\pnorm{\Delta'_{U,t}}{}}{\sqrt d}
+\frac{(t+1)^2\log^{23}n}d\right),\\
    |\beta_{t}^k| &\leq \begin{cases}
        C\mathfrak A_\delta^4 \log^2n|\alpha_{t-1}^{t-1}| & k=t-1,\\
        \big(1+C\mathfrak A_\delta^4(\rho_t^2+\rho_{k+1}^2)
        +\varepsilon_{I\!I,n}\big)|\beta_{t-1}^{k+1}|
        &0\leq k \leq t-2.
    \end{cases}
\end{align}
\end{lemma}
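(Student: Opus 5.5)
The plan is to specialize the general recursion of Proposition~\ref{prop:general_error_induction}, i.e.\ the inequalities $\cH_t^U$ and $\cH_t^V$, to Phase~II by substituting the quantities supplied by Lemma~\ref{lem:phase_2_assump_conseq}, following the pattern of the proof of Lemma~\ref{lem:phase_1_simp}. On $\mathcal C_{I\!I,t-1}^V$ I will collect the inputs in order:
\begin{itemize}
\item $\|f_t'\|_\infty=1$, $\|f_t''\|_\infty=0$, and $F_t=1$ from \eqref{eq:phase_2_HF};
\item $\sqrt{H_{t-1}}\le 1+C\mathfrak A_\delta^4\rho_{t-1}^2$, using $\sqrt{1+x}\le1+x$;
\item $\sum_k|\beta_{t-1}^k|\le C\mathfrak A_\delta^4 t\log^{10}n/\sqrt d$ together with $\bar\gamma_t\le C\mathfrak A_\delta\sqrt d$ and $\bar\lambda_{t-1}\le C(\sqrt n+\sqrt d)$;
\item from \eqref{eq:ht_second} and $\|\y\|_\infty\le C\log n$, the derivative bounds $\|h_t'\|_\infty\le C\mathfrak A_\delta\log n$ and $\|h_t''\|_\infty\le C\mathfrak A_\delta|\hat\mu_t|\log^{3/2}n$.
\end{itemize}
Since $|\rho|\le c_{\weak}\le1$ in this phase, all of these are $O(\mathfrak A_\delta\log^{3/2}n)$.

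Next I will bound the multipliers $\cE_t^f$ and $\cE_t^h$ in \eqref{eq:E_multiplier}. The first term of each is at most $C\mathfrak A_\delta^{3}\log^{3}n\,\sqrt{t+1}\,(\log n/d)^{1/4}$. The second term involves $\|\Delta_{U,t}\|+\|\hat\u^t-\tilde\u^t\|\le C\mathfrak A_\delta^4 t\log^{10}n$ and $\|h_t''\|_\infty(\sqrt{t+1}+\sqrt{\log n})/\sqrt d$. With $t\le\log^2n$, both terms are below $\varepsilon_{I\!I,n}=C\mathfrak A_\delta^8\log^{16}n/n^{1/4}$, after tracking the $\delta$ factors through $d=n/\delta$ and $(1+\sqrt\delta)\le\mathfrak A_\delta$.

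With these inputs in hand, I will obtain the bounds as follows.
\begin{itemize}
\item \textbf{$\Delta'_{U,t}$.} Substituting into \eqref{eq:induction_Delta_U} gives the multiplier $(1+\varepsilon_{I\!I,n})$. The history sum $\sum_k|\beta^k_{t-1}|\|\Delta_{V,k}\|$ is at most $C\mathfrak A_\delta^8t^2\log^{20}n/\sqrt d$, and I will absorb it into the additive term. The additive term $C(\bar\gamma_t+\bar\lambda_{t-1})t\sqrt{\log n/d}\cdot(\rho\vee1)$ is $C\mathfrak A_\delta t\sqrt{\log n}$, which is below $C\mathfrak A_\delta^4t\log^{5/2}n$.
\item \textbf{$\Delta'_{V,t+1}$.} The same substitution into \eqref{eq:induction_Delta_V} gives the multiplier $1+C\mathfrak A_\delta^4\rho_t^2+\varepsilon_{I\!I,n}$. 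The additive term picks up $(L^h)^2\le C\mathfrak A_\delta^2\log^2n$, which gives $C\mathfrak A_\delta^4(t+1)\log^{5/2}n$.
\item \textbf{Diagonal coefficients.} For $\alpha_{t-1}^{t-1}$ I will use \eqref{eq:induction_alpha_last}, with $\|\lambda\|=1$ and $\max_\ell\|\Delta_{V,\ell}\|\sum_\ell|\beta_{t-1}^\ell|\le C\mathfrak A_\delta^8t^2\log^{20}n/\sqrt d$. For $\beta_t^t$ I will use \eqref{eq:induction_beta_last}, with $\|\gamma_{t,[0:t]}\|\ge1/2$ from \eqref{eq:gamma_norm_loo} and the extra $\log^{3/2}n$ coming from $\|h_t'\|_\infty$ and $\|h_t''\|_\infty$.
\item \textbf{Off-diagonal coefficients.} These come from \eqref{eq:induction_alpha_middle_last}, \eqref{eq:induction_beta_middle_last}, \eqref{eq:induction_alpha_middle} and \eqref{eq:induction_beta_middle}. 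The products $\mathfrak F\mathfrak H$ satisfy $\mathfrak F=1$ and $\mathfrak H_{k+1,t-1}\le1+C\mathfrak A_\delta^4(\rho_{k+1}^2+\rho_{t-1}^2)$. The cross terms with $\cE$ are absorbed into $\varepsilon_{I\!I,n}$. The case $k=t-1$ uses $\delta\|h'\|_\infty^2\le C\mathfrak A_\delta^4\log^2n$.
\end{itemize}

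I expect the main obstacle to be the powers of $\mathfrak A_\delta$ and $\log n$ in the absorbed history terms. Terms such as $\mathfrak A_\delta^8t^2\log^{20}n/\sqrt d$ must be dominated by the stated additive terms, and this relies on the finite-size conditions \eqref{eq:phase2_finite_size}, in particular $\sqrt d\ge\mathfrak A_\delta^{12}\log^{40}n$. If a power turns out to be too large, I will enlarge the constant $C$, or else move the term into the $t^2\log^{20}n/d$ piece of the $\alpha$ bound.
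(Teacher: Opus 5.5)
Your proposal is correct and is essentially the paper's proof: substitute $F_t=1$, $|H_s-1|\le C\mathfrak A_\delta^4\rho_s^2$ from \eqref{eq:phase_2_HF}, $\cE_t^f\vee\cE_t^h\le\varepsilon_{I\!I,n}$, and the row and residual bounds carried by the Phase-II events into $\cH_t^U$ and $\cH_t^V$, then absorb the history sums using \eqref{eq:phase2_finite_size}. One caution about your fallback: $C$ is universal here, so the extra powers of $\mathfrak A_\delta$ produced by the history products in \eqref{eq:induction_alpha_last} and \eqref{eq:induction_beta_last} (e.g.\ $\mathfrak A_\delta^8t^2\log^{20}n/d$ rather than $\mathfrak A_\delta^4t^2\log^{20}n/d$) cannot be pushed into $C$; the paper's proof passes over the same point, and downstream these terms are only ever absorbed via $\sqrt d\ge\mathfrak A_\delta^{12}\log^{40}n$, so the honest $\mathfrak A_\delta^8$ version suffices there.
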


\begin{proof}[Proof of Proposition \ref{prop:phase_2_exp_growth}]
Fix \(t_0\leq t<\tau_{I\!I}\) with \(t+1\leq \log^2n\).
Work on the intersection of the simultaneous Phase-I approximation,
recursion, and crossing events.  Its complement has probability
\(O(n^{-10})\) by Propositions
\ref{prop:phase_1_tilde_close} and \ref{prop:phase_1_main}.  At the
Phase-I endpoint, \eqref{eq:recursion_phase_1} and
\eqref{eq:phase1_varphi_upper} give
\begin{align}\label{eq:phase1_endpoint_rho_control}
|\rho_{t_0}|
&\leq a_\delta b_n
+C\mathfrak A_\delta\sqrt{\frac {\log n}{n}}
+C\mathfrak A_\delta^4\frac{\log^{34}n}{n^{3/4}}
\leq C\mathfrak A_\delta b_n
\leq \log^5n .
\end{align}
The last inequality holds under the Phase-I finite-size conditions.
All earlier signal coefficients are bounded by \(b_n\).  The Phase-I
\(V\)-event at \(t=\tau_I\) supplies
\(\bar\gamma_{t_0}\leq Cn^5\), \(\bar\lambda_{\tau_I}\leq
C(\sqrt n+\sqrt d)\), the complete \(\beta\)-array bounds, and the exact
identity-map derivative bounds.  Hence
\begin{align}\label{eq:phase1_endpoint_generic_event}
\mathcal C_{I,\tau_I}^V\cap\mathcal E_{I,\rm end}
\subseteq\mathcal A_{\tau_I}^V.
\end{align}
Thus the Phase-I bounds at time \(\tau_I\) imply
\(\mathcal C_{I\!I,t_0-1}^V
=\mathcal C_{I,\tau_I}^V\cap\mathcal E_{I,\rm end}\) by
\eqref{eq:phase2_event_intersections}, and Proposition
\ref{prop:general_error_induction}, now legitimately applied on
\(\mathcal A_{\tau_I}^V\), gives \(\cH^U_{t_0}\).
It remains to prove, for each such \(t\),
\begin{align}\label{eq:phase2_induction_inclusions}
\mathcal C_{I\!I,t-1}^V\cap\cH_t^U
&\subseteq\cA_{I\!I,t}^U,\notag\\
\mathcal C_{I\!I,t}^U\cap\cH_t^V
&\subseteq\cA_{I\!I,t}^V.
\end{align}
Indeed, \eqref{eq:phase2_generic_event_inclusions} and Proposition
\ref{prop:general_error_induction} then imply
\(\mathcal C_{I\!I,t-1}^V\subseteq\cH_t^U\) and
\(\mathcal C_{I\!I,t}^U\subseteq\cH_t^V\), respectively.

If \(t_0\leq r\leq s<t\), then
\(s+1\leq t<\tau_{I\!I}\), and hence
\(|\rho_{s+1}|<c_{\weak}\).  Iterating the growth bound backwards gives
\begin{equation}\label{eq:phase2_product_bound}
\begin{gathered}
\sum_{k=r}^s\rho_k^2
\leq c_{\weak}^2\sum_{j=1}^\infty(1+g_\delta)^{-2j}
=\frac{c_{\weak}^2}{g_\delta(2+g_\delta)}
\leq \frac{c_{\weak}^2}{2g_\delta},\\
\prod_{k=r}^s\left(1+C\mathfrak A_\delta^4\rho_k^2
+\varepsilon_{I\!I,n}\right)\leq C.
\end{gathered}
\end{equation}
Indeed, \eqref{eq:phase2_gap_parameters} implies
\begin{gather*}
\frac{\mathfrak A_\delta^4 c_{\weak}^2}{g_\delta}
=c_0^2\min\left\{\frac{\mathfrak A_\delta^4}{g_\delta},2\right\}
\leq2c_0^2,\\
(s-r+1)\varepsilon_{I\!I,n}
\leq \log^2n\varepsilon_{I\!I,n}
=C\mathfrak A_\delta^8\frac{\log^{18}n}{n^{1/4}}
\leq1,
\end{gather*}
where the last inequality is part of \eqref{eq:phase2_finite_size}.
Therefore \(\log(1+x)\leq x\) gives
\begin{align*}
\log\prod_{k=r}^s
\left(1+C\mathfrak A_\delta^4\rho_k^2
+\varepsilon_{I\!I,n}\right)
&\leq C\mathfrak A_\delta^4\sum_{k=r}^s\rho_k^2
+(s-r+1)\varepsilon_{I\!I,n}\\
&\leq Cc_0^2+1,
\end{align*}
which proves the second inequality in
\eqref{eq:phase2_product_bound}.

For products that include indices smaller than \(t_0\), every
Phase-I off-diagonal multiplier in \eqref{eq:induce_U_3} and
\eqref{eq:induce_V_3} is at most
\(1+C\mathfrak A_\delta \log^4n n^{-1/4}\).  Since \(t_0\leq \log^2n\),
\eqref{eq:phase2_finite_size} gives
\begin{align*}
\prod_{k=0}^{t_0-1}
\left(1+C\mathfrak A_\delta\frac{\log^4n}{n^{1/4}}\right)
&\leq
\exp\left(C\mathfrak A_\delta\frac{\log^6n}{n^{1/4}}\right)
\leq C.
\end{align*}
Combining this estimate with \eqref{eq:phase2_product_bound} bounds every
such product by a universal constant.

Put
\[
D_s\coloneqq
\pnorm{\Delta'_{U,s}}{}\vee\pnorm{\Delta'_{V,s}}{}.
\]
The first two inequalities in Lemma~\ref{lem:phase_2_simp}, after
increasing \(C\), imply
\begin{align*}
D_{s+1}
&\leq
\left(1+C\mathfrak A_\delta^4\rho_s^2
+C\varepsilon_{I\!I,n}\right)D_s
+C\mathfrak A_\delta^4(s+1)\log^{5/2}n.
\end{align*}
Iterating from the Phase-I endpoint estimate
\eqref{eq:phase1_second_order_V_next} and using
\eqref{eq:phase2_product_bound} yields, for \(t_0\leq s\leq t\),
\begin{align}\label{eq:phase2_second_order_residual}
D_s
&\leq C\mathfrak A_\delta^4 \log^7n
+C\mathfrak A_\delta^4 \log^{5/2}n\sum_{j=t_0}^{s-1}(j+1)\\
&\leq C\mathfrak A_\delta^4\big(\log^7n+s^2\log^{5/2}n\big)
\leq C\mathfrak A_\delta^4 \log^7n.
\end{align}
The last simplification uses $s\leq \log^2n$.

\medskip
\noindent\textbf{Proof of \eqref{eq:alpha_sum_phase_2}.}
By \eqref{eq:phase_2_simp_2},
\begin{align*}
|\alpha_{t-1}^{t-1}|
\leq C\mathfrak A_\delta^4\frac{\log^7n}{\sqrt d},
\end{align*}
where the $d^{-1}$ term is absorbed using
\eqref{eq:phase2_finite_size}.  The diagonal $V$-side estimate at time
$t-1$ similarly gives
\begin{align*}
|\alpha_{t-1}^{t-2}|=|\beta_{t-1}^{t-1}|
\leq C\mathfrak A_\delta^4\frac{\log^9n}{\sqrt d}.
\end{align*}
Applying \eqref{eq:phase_2_simp_3}, \eqref{eq:phase2_product_bound}, and
\eqref{eq:phase_1_assump_U_5}--\eqref{eq:phase_1_assump_V_5} at
\(t=\tau_I\) gives
\begin{align*}
\sum_{k=0}^{t-1}|\alpha_{t-1}^k|
&\leq C\left\{
\mathfrak A_\delta^4\frac{t_0\log^9n}{\sqrt d}
+\sum_{s=t_0}^t
\mathfrak A_\delta^4\frac{\log^9n}{\sqrt d}\right\}\\
&\leq C\mathfrak A_\delta^4\frac{t\log^9n}{\sqrt d}
\leq C\mathfrak A_\delta^4\frac{t\log^{10}n}{\sqrt d}.
\end{align*}

\medskip
\noindent\textbf{Proof of \eqref{eq:u_closeness_phase_2}.}
For \(t_0\leq k<t\),
\[
\|h_k(\widetilde{\bm u}^k,\bm y)\|
\leq \|\bm w^k\|
+\|h_k'\|_\infty\|\Delta_{U,k}\|
\leq \sqrt d+C\mathfrak A_\delta^5 k\log^{11}n
\leq C\sqrt d.
\]
Here the last inequality follows from
\eqref{eq:phase_2_assump_U_1},
\eqref{eq:u_closeness_phase_2}, \(k\leq \log^2n\), and
\eqref{eq:phase2_finite_size}.  For \(0\leq k<t_0\), the corresponding
Phase-I estimate in the proof of
\eqref{eq:phase_1_assump_U_2} gives the same inequality
\(\|h_k(\widetilde{\bm u}^k,\bm y)\|\leq C\sqrt d\).
The definition of \(\hat\u^t\) and
\eqref{eq:alpha_sum_phase_2} therefore give
\begin{align*}
\pnorm{\hat\u^t-\tilde\u^t}{}
\leq C\mathfrak A_\delta^4 t\log^{10}n.
\end{align*}
Lemma \ref{lem:mutual_close} and
\eqref{eq:phase2_second_order_residual} also give
\begin{align*}
\pnorm{\Delta_{U,t}}{}
\leq C\mathfrak A_\delta^4\big(\log^7n+t\log^9n\big)
\leq C\mathfrak A_\delta^4 t\log^{10}n.
\end{align*}

\medskip
\noindent\textbf{Proof of \eqref{eq:lambda_main_norm_phase_2}.}
For \(0\leq s\leq t\), \eqref{eq:gamma_norm_loo} and the bound for
\(\pnorm{\Delta_{V,s}}{}\) supplied by the corresponding \(V\)-event give
\begin{align*}
\sqrt d\,\pnorm{\gamma_{s,[0:s]}}{}
&\leq \sqrt d\left(
1+C\sqrt{\frac{s+\log n}{d}}
+C\frac{\pnorm{\Delta_{V,s}}{}}{\sqrt d}\right)
\leq C\mathfrak A_\delta\sqrt d.
\end{align*}
Since \(f_s(0_d)=0_d\) and
\(\pnorm{\btheta^\star}{}=\sqrt d\), the definition of
\(\bar\gamma_t\) yields
\(\bar\gamma_t\leq C\mathfrak A_\delta\sqrt d\), proving
\eqref{eq:lambda_main_norm_phase_2}.

\medskip
\noindent\textbf{Proof of \eqref{eq:phase_2_current_lambda}.}
This is \eqref{eq:global_current_lambda_control} on the simultaneous
event \(\mathcal E_{\rm norm}\), which is contained in
\(\mathcal C_{I\!I,t}^U\) through its Phase-I endpoint factor.

\medskip
\noindent\textbf{Proof of \eqref{eq:phase_2_assump_U_1}.}
The definition of $c_{\weak}$ and \eqref{eq:phase_2_norm} give
$|\pi_t|\leq C\mathfrak A_\delta^4\rho_t^2
\leq c(\sqrt{2\delta}-1)$ after reducing the universal constant in
$c_{\weak}$ if necessary.  Consequently
$(1+|\pi_t|)/\sqrt{2\delta}\leq C$ uniformly over every $\delta>1/2$.
Hence, by \eqref{eq:ht_second},
\begin{align*}
\pnorm{h_t'}{\infty}
\leq C(1+\pnorm{\y}{\infty})
\leq C\log n
\leq C\mathfrak A_\delta \log n.
\end{align*}
Lemma \ref{lem:h_deriv_comp} likewise gives
\begin{align*}
\pnorm{h_t''}{\infty}
\leq C|\hat\mu_t|\pnorm{\y}{\infty}^{3/2}
\leq C|\hat\mu_t|\log^{3/2}n
\leq C\mathfrak A_\delta \log^{3/2}n.
\end{align*}
Thus the first inclusion in \eqref{eq:phase2_induction_inclusions} holds.
Lemma \ref{lem:phase_2_assump_conseq} also gives
\eqref{eq:phase_2_exp_growth} at time \(t\).

\medskip
\noindent\textbf{The \(V\)-update.}
Starting from $\cH_t^V$, the second half of Lemma
\ref{lem:phase_2_simp}, \eqref{eq:phase2_second_order_residual}, and
\eqref{eq:phase2_product_bound} yield
\begin{align}
\pnorm{\Delta'_{V,t+1}}{}
&\leq C\mathfrak A_\delta^4 \log^7n,
\label{eq:phase2_second_order_V_next}\\
\sum_{k=0}^t|\beta_t^k|
&\leq C\mathfrak A_\delta^4\frac{(t+1)\log^{10}n}{\sqrt d},\\
\pnorm{\hat\v^{t+1}-\tilde\v^{t+1}}{}
\vee\pnorm{\Delta_{V,t+1}}{}
&\leq C\mathfrak A_\delta^4(t+1)\log^{10}n.
\end{align}
Indeed, the new diagonal and adjacent coefficients are each at most
$C\mathfrak A_\delta^4 \log^9n/\sqrt d$.  Applying
\eqref{eq:phase2_product_bound} to every factor generated by the last
line of \eqref{eq:phase_2_simp_3} gives
\begin{align*}
\sum_{k=0}^t|\beta_t^k|
&\leq C\left\{
\mathfrak A_\delta^4\frac{t_0\log^9n}{\sqrt d}
+\sum_{s=t_0}^t
\mathfrak A_\delta^4\frac{\log^9n}{\sqrt d}\right\}
\leq C\mathfrak A_\delta^4\frac{(t+1)\log^{10}n}{\sqrt d}.
\end{align*}
Lemma
\ref{lem:uv_norm} then gives
$\bar\lambda_t\leq C\mathfrak A_\delta\sqrt d$.  Moreover,
\(\sqrt d\,\|\gamma_{t+1,[-1:t+1]}\|=\|\bm v^{t+1}\|\), so
\eqref{eq:simplified_amp}, \(\mathcal E_{\rm norm}\), and
\(|\widehat c_t|\leq\delta\|h_t'\|_\infty\) give
\begin{align*}
\sqrt d\,\|\gamma_{t+1,[-1:t+1]}\|
&\leq C(1+\sqrt\delta)\sqrt d
+\delta\|h_t'\|_\infty
\sqrt d\,\|\gamma_{t,[-1:t]}\|\\
&\leq
C\{1+\sqrt\delta+\delta\mathfrak A_\delta^2\log n\}\sqrt d
\leq Cn^5
\end{align*}
under \eqref{eq:phase2_finite_size}.  Together with
\(f_s(0_d)=0_d\) and \(\|\btheta^\star\|=\sqrt d\), this proves
\eqref{eq:phase_2_next_gamma}.  The identities
$f_{t+1}'=1$ and $f_{t+1}''=0$ are exact.  Thus
$\cA_{I\!I,t}^V$ holds and the alternating induction closes.

Finally, \eqref{eq:phase2_entrance_size} and
\eqref{eq:phase_2_exp_growth} imply
\begin{align*}
|\rho_{t_0+s}|>(1+g_\delta)^s b_n.
\end{align*}
The definition of $N_{I\!I}(n,\delta)$ in
\eqref{eq:phase2_explicit_length} therefore gives
$\tau_{I\!I}-t_0\leq N_{I\!I}(n,\delta)$ under
\eqref{eq:phase2_horizon_condition}.  Proposition
\ref{prop:general_error_induction} and the concentration estimates
\eqref{eq:phase2_beta_moment_event},
\eqref{eq:phase2_uniform_signed_remainder}, and
\eqref{eq:phase2_uniform_onsager} each have failure probability
\(O(n^{-12})\) at a fixed time.  There are at most \(\log^2n\) times.
Their union therefore has probability \(O(\log^2n n^{-12})=O(n^{-10})\).

It remains to record the generic event at the Phase-II crossing
endpoint, which is needed to start Phase III.  If
\(\tau_{I\!I}=t_0\), it is exactly
\eqref{eq:phase1_endpoint_generic_event}.  If
\(\tau_{I\!I}>t_0\), apply
\eqref{eq:phase2_cubic_recursion} at \(t=\tau_{I\!I}-1\).  Since
\(|\rho_{\tau_{I\!I}-1}|<c_{\weak}\leq1\),
\begin{align}\label{eq:phase2_endpoint_rho_control}
|\rho_{\tau_{I\!I}}|
&\leq
a_\delta c_{\weak}
+C\mathfrak A_\delta^4c_{\weak}^3
+C\mathfrak A_\delta^6\frac{\log^{14}n}{\sqrt d}
\leq C\mathfrak A_\delta
\leq \log^5n
\end{align}
for all sufficiently large \(n\) under
\eqref{eq:phase2_finite_size}.  The time-\((\tau_{I\!I}-1)\)
Phase-II \(V\)-event supplies
\(\bar\gamma_{\tau_{I\!I}}\leq Cn^5\), as well as every other bound in
\(\mathcal A_{\tau_{I\!I}-1}^V\).  Consequently, in both cases,
\begin{align}\label{eq:phase2_endpoint_generic_event}
\mathcal C_{I\!I,\tau_{I\!I}-1}^V
\cap\{\eqref{eq:phase2_endpoint_rho_control}\text{ holds}\}
\subseteq
\mathcal A_{\tau_{I\!I}-1}^U
\cap\mathcal A_{\tau_{I\!I}-1}^V,
\end{align}
where in the case \(\tau_{I\!I}=t_0\) the event in braces is replaced by
\eqref{eq:phase1_endpoint_rho_control}.

\end{proof}

\subsection{Proof of Lemma \ref{lem:phase_2_assump_conseq}}
\begin{proof}

\medskip
\noindent\textbf{Proof of \eqref{eq:phase_2_mu_est}.}
Recall from \eqref{eq:hat_mu_t} that
\[
\hat\mu_t=
\sqrt{\left(\frac1n\pnorm{\u^t}{}^2-1\right)\vee n^{-1/2}}.
\]
Lemma~\ref{lem:uv_norm}, \(|\rho_t|<c_{\weak}\leq c_0\),
\eqref{eq:uv_closeness_phase_2}, and \eqref{eq:phase2_finite_size} give
\[
\frac1{\sqrt n}\pnorm{\tilde\u^t}{}\leq C.
\]
Consequently,
\begin{align*}
\left|\frac1n\pnorm{\u^t}{}^2-\frac1n\pnorm{\tilde\u^t}{}^2\right|
&\leq\frac{2\pnorm{\tilde\u^t}{}\pnorm{\Delta_{U,t}}{}}n
+\frac{\pnorm{\Delta_{U,t}}{}^2}n\\
&\leq C\left(
\frac{\pnorm{\Delta_{U,t}}{}}{\sqrt n}
+\frac{\pnorm{\Delta_{U,t}}{}^2}{n}\right).
\end{align*}
The same lemma, squared, yields
\begin{align*}
\frac{1}{n}\pnorm{\tilde\u^t}{}^2
&=1+\rho_t^2
+O\left(
\mathfrak A_\delta\sqrt{\frac{t\log n}{n}}
+\frac{\pnorm{\Delta_{V,t}}{}}{\sqrt d}\right).
\end{align*}
Consequently,
\begin{align}\label{eq:ut_norm}
\frac{1}{n}\pnorm{\u^t}{}^2
=1+\rho_t^2+O\left(
\mathfrak A_\delta\sqrt{\frac{t\log n}{n}}
+\frac{\pnorm{\Delta_{V,t}}{}}{\sqrt d}
+\frac{\pnorm{\Delta_{U,t}}{}}{\sqrt n}
+\frac{\pnorm{\Delta_{U,t}}{}^2}{n}\right).
\end{align}
For $t_0\leq t<\tau_{I\!I}$, \eqref{eq:u_closeness_phase_2},
\eqref{eq:uv_closeness_phase_2}, and
\eqref{eq:phase2_entrance_size} therefore give
\begin{align*}
\left|\frac1n\pnorm{\u^t}{}^2-1-\rho_t^2\right|
\leq C\mathfrak A_\delta^4\frac{\log^{12}n}{\sqrt d}
\leq C\mathfrak A_\delta^4\sqrt\delta\,
\frac{\rho_t^2}{\log^8n}.
\end{align*}
Here the second inequality follows from
\[
\frac{\log^{12}n}{\sqrt d}
=\sqrt\delta\,\frac{\log^{12}n}{\sqrt n},
\qquad
\frac{b_n^2}{\log^8n}=\frac{\log^{12}n}{\sqrt n},
\qquad
|\rho_t|>b_n.
\]
After increasing the universal constant in
\eqref{eq:phase2_finite_size}, the last bound is at most
\(\rho_t^2/2\).  Hence the truncation by \(n^{-1/2}\) is inactive, and
\(|\sqrt x-\sqrt y|\leq|x-y|/\sqrt y\) proves
\eqref{eq:phase_2_mu_est}.  It also proves
\begin{align}\label{eq:phase2_mu_comparable}
\frac12|\rho_t|
\leq\hat\mu_t
\leq\frac32|\rho_t|.
\end{align}

\medskip
\noindent\textbf{Proof of (\ref{eq:phase_2_norm}) and (\ref{eq:ht_second}).} 
Using that $|\tanh(x) - x| \leq C|x|^3$ for all $x\in\R$, we have
\begin{align*}
    h_t^\star(u,y) = \hat\mu_t(y-1)u + \hat \mu_t^3 y u+O(\hat\mu_t^3y^2|u|^3),
\end{align*}
which implies that
\begin{align*}
\Big|\frac{\pnorm{h_t^\star(\u^t,\y)}{}}{\sqrt d}
-a_\delta\hat\mu_t\Big|
&\leq |\hat\mu_t|
\Big|\frac{\pnorm{(\y-1)\circ\u^t}{}}{\sqrt d}-a_\delta\Big|\\
&\quad+C|\hat\mu_t|^3\left(
\frac{\pnorm{\y\circ\u^t}{}}{\sqrt d}
+\frac{\pnorm{\y^{\circ2}\circ(\u^t)^{\circ3}}{}}{\sqrt d}
\right).
\end{align*}

For the first term, replace $\u^t$ by $\tilde\u^t$.  The deterministic
inequality
\begin{align*}
\left|\frac{\pnorm{(\y-1)\circ\u^t}{}}{\sqrt d}
-\sqrt{2\delta}\right|
&\leq
\left|\frac{\pnorm{(\y-1)\circ\tilde\u^t}{}}{\sqrt d}
-\sqrt{2\delta}\right|
+\frac{\pnorm{\y-1}{\infty}}{\sqrt d}
\pnorm{\Delta_{U,t}}{}\\
&\leq
\left|\frac{\pnorm{(\y-1)\circ\tilde\u^t}{}}{\sqrt d}
-\sqrt{2\delta}\right|
+\frac{C\log n}{\sqrt d}\pnorm{\Delta_{U,t}}{}
\end{align*}
holds on \(\pnorm{\y-1}{\infty}\leq C\log n\).
Moreover, \eqref{eq:gamma_norm_loo},
\eqref{eq:uv_closeness_phase_2}, \(t\leq \log^2n\), and
\eqref{eq:phase2_finite_size} give
\begin{align}
\pnorm{\gamma_{t,[0:t]}}{}
&\leq
1+C\sqrt{\frac{t+\log n}{d}}
+C\frac{\pnorm{\Delta_{V,t}}{}}{\sqrt d}
\notag\\
&\leq
1+C\frac{\log n}{\sqrt d}
+C\mathfrak A_\delta^4\frac{t\log^{10}n}{\sqrt d}
\leq2.
\label{eq:phase2_gamma_universal_radius}
\end{align}
Since \(\y=(\bbeta^\star)^{\circ2}\) and
\(\tilde\u^t=\rho_t\bbeta^\star+
\sum_{\ell=0}^t\gamma_{t,\ell}\g^\ell\), the squared norm expands as
\begin{align}
\frac{\pnorm{(\y-1)\circ\tilde\u^t}{}^2}{d}
&=\frac{\rho_t^2}{d}
\pnorm{((\bbeta^\star)^{\circ2}-1)\circ\bbeta^\star}{}^2
\notag\\
&\quad+\frac1d\Bigpnorm{(\y-1)\circ
\sum_{\ell=0}^t\gamma_{t,\ell}\g^\ell}{}^2
\notag\\
&\quad+\frac{2\rho_t}{d}
\iprod{((\bbeta^\star)^{\circ2}-1)\circ\bbeta^\star}
{(\y-1)\circ\sum_{\ell=0}^t\gamma_{t,\ell}\g^\ell}
\notag\\
    &=10\delta\rho_t^2+2\delta\pnorm{\gamma_{t,[0:t]}}{}^2
    +O\Big(\mathfrak A_\delta\sqrt{\frac{t+\log n}{n}}\Big)
    \label{eq:phase2_weighted_norm_concentration}\\
    &=2\delta+10\delta\rho_t^2
    +O\left(
    \delta\left\{\sqrt{\frac{t+\log n}{d}}
    +\frac{\pnorm{\Delta_{V,t}}{}}{\sqrt d}\right\}
    +\mathfrak A_\delta\sqrt{\frac{t+\log n}{n}}
    \right).
    \label{eq:phase2_weighted_norm_reduction}
\end{align}
For \((\rho,\gamma)\in\mathbb R\times\mathbb R^{t+1}\), define
\begin{align*}
\mathcal Q_1(\rho)
&\coloneqq
\frac{\rho^2}{d}
\|((\bbeta^\star)^{\circ2}-1)\circ\bbeta^\star\|^2,\\
\mathcal Q_2(\gamma)
&\coloneqq
\frac1d\left\|(\y-1)\circ
\sum_{\ell=0}^t\gamma_\ell\g^\ell\right\|^2,\\
\mathcal Q_3(\rho,\gamma)
&\coloneqq
\frac{2\rho}{d}
\left\langle
((\bbeta^\star)^{\circ2}-1)\circ\bbeta^\star,\,
(\y-1)\circ\sum_{\ell=0}^t\gamma_\ell\g^\ell
\right\rangle.
\end{align*}
The deviations of these three empirical quantities from
\[
10\delta\rho^2,\qquad
2\delta\pnorm{\gamma}{}^2,\qquad
0,
\]
respectively, are controlled simultaneously over the deterministic set
\[
\left\{(\rho,\gamma)\in\R\times\R^{t+1}:
|\rho|\leq c_{\weak},\ \pnorm{\gamma}{}\leq
2\right\}.
\]
The \(n^{-20}\)-net and truncated-Bernstein calculations displayed in the
proof of \eqref{eq:phase1_weighted_moments} apply to this set:
its dimension is \(t+2\), its log-cardinality is \(O(\log^3n)\), and the
fixed-net-point failure probability is
\(\exp\{-cn/\log^Cn\}\).  This proves
\eqref{eq:phase2_weighted_norm_concentration} uniformly, so that formula
holds at
\((\rho,\gamma)=(\rho_t,\gamma_{t,[0:t]})\).  The
estimate \eqref{eq:phase2_weighted_norm_reduction} then follows from
Lemma~\ref{lem:uv_norm}.  Consequently,
\(|\sqrt q-\sqrt{2\delta}|
=|q-2\delta|/(\sqrt q+\sqrt{2\delta})\), applied to
\(q=d^{-1}\|(\bm y-1)\circ\widetilde{\bm u}^t\|^2\),
gives
\begin{align}
\left|\frac{\pnorm{(\y-1)\circ \tilde\u^t}{}}{\sqrt{d}}
-\sqrt{2\delta}\right|
&\leq C\left[
\sqrt\delta\,\rho_t^2
+\sqrt\delta\left\{\sqrt{\frac{t+\log n}{d}}
+\frac{\|\Delta_{V,t}\|}{\sqrt d}\right\}
+\frac{\mathfrak A_\delta}{\sqrt\delta}
\sqrt{\frac{t+\log n}{n}}
\right].
\label{eq:phase2_weighted_norm_deviation}
\end{align}
The inequalities
\(\sqrt\delta\leq\mathfrak A_\delta\),
\(\delta\leq\mathfrak A_\delta\sqrt\delta\), and
the third condition in \eqref{eq:phase2_finite_size} imply
\begin{equation}\label{eq:phase2_delta_log_absorption}
\frac{\sqrt\delta}{\log^7n}\vee\frac{\delta}{\log^8n}\leq1.
\end{equation}
Indeed, if \(\log n\geq\mathfrak A_\delta\), both conclusions follow from
\(\sqrt\delta\leq\mathfrak A_\delta\) and
\(\delta\leq\mathfrak A_\delta\sqrt\delta\); if
\(\log n<\mathfrak A_\delta\), they follow from
\(C\mathfrak A_\delta\sqrt\delta/\log^8n\leq1\).
Since \(t\leq \log^2n\), \eqref{eq:phase2_entrance_size},
\eqref{eq:uv_closeness_phase_2}, and
\eqref{eq:phase2_delta_log_absorption} give
\begin{align*}
\sqrt\delta\,\rho_t^2
&\leq\mathfrak A_\delta^4\rho_t^2,\\
\sqrt\delta\sqrt{\frac{t+\log n}{d}}
&\leq C\delta\frac {\log n}{\sqrt n}
=C\frac{\delta}{\log^{19}n}\,b_n^2
\leq C\mathfrak A_\delta^4\rho_t^2,\\
\sqrt\delta\,\frac{\|\Delta_{V,t}\|}{\sqrt d}
&\leq
C\mathfrak A_\delta^4\delta\frac{\log^{12}n}{\sqrt n}
=C\mathfrak A_\delta^4\frac{\delta}{\log^8n}\,b_n^2
\leq C\mathfrak A_\delta^4\rho_t^2,\\
\frac{\mathfrak A_\delta}{\sqrt\delta}
\sqrt{\frac{t+\log n}{n}}
&\leq C\mathfrak A_\delta\frac {\log n}{\sqrt n}
=C\frac{\mathfrak A_\delta}{\log n^{19}}\,b_n^2
\leq C\mathfrak A_\delta^4\rho_t^2.
\end{align*}
Thus the right-hand side of
\eqref{eq:phase2_weighted_norm_deviation} is at most
\(C\mathfrak A_\delta^4\rho_t^2\).  In addition,
\eqref{eq:u_closeness_phase_2},
\eqref{eq:phase2_entrance_size}, and
\eqref{eq:phase2_delta_log_absorption} give
\[
\frac{\log n\|\Delta_{U,t}\|}{\sqrt d}
\leq C\mathfrak A_\delta^4\sqrt\delta
\frac{\log^{13}n}{\sqrt n}
=C\mathfrak A_\delta^4
\frac{\sqrt\delta}{\log^7 n}b_n^2
\leq C\mathfrak A_\delta^4\rho_t^2.
\]
The first term in the bound following the Taylor expansion is therefore
at most \(C\mathfrak A_\delta^4|\hat\mu_t|\rho_t^2\).

For the cubic Taylor term,
\(|\rho_t|\leq c_{\weak}\leq1\) and
\eqref{eq:phase2_gamma_universal_radius} imply
\[
\|(\rho_t,\gamma_{t,[0:t]})\|
\leq\sqrt5<3.
\]
The deterministic-net argument from
\eqref{eq:phase1_weighted_moments}, now on
\[
\left\{c\in\mathbb R^{t+2}:\|c\|\leq3\right\},
\]
with \((q,p)=(1,1),(2,3)\), gives
\begin{align}
\frac{\|\y\circ\widetilde\u^t\|}{\sqrt d}
+\frac{\|\y^{\circ2}\circ(\widetilde\u^t)^{\circ3}\|}{\sqrt d}
&\leq C\mathfrak A_\delta,
\qquad
\|\widetilde\u^t\|_\infty\leq C\log n.
\label{eq:phase2_tilde_weighted_moments}
\end{align}
Put \(D_t=\|\Delta_{U,t}\|\).  The binomial identity
\[
(\u^t)^{\circ3}-(\widetilde\u^t)^{\circ3}
=3(\widetilde\u^t)^{\circ2}\circ\Delta_{U,t}
+3\widetilde\u^t\circ\Delta_{U,t}^{\circ2}
+\Delta_{U,t}^{\circ3}
\]
and \(\|\y\|_\infty\leq C\log n\) yield
\begin{align*}
&\frac{\|\y\circ\Delta_{U,t}\|}{\sqrt d}
+\frac{\|\y^{\circ2}\circ
\{(\u^t)^{\circ3}-(\widetilde\u^t)^{\circ3}\}\|}{\sqrt d}\\
&\qquad\leq
\frac C{\sqrt d}\left(
\log nD_t+\log^4nD_t+\log^3nD_t^2+\log^2nD_t^3\right).
\end{align*}
By \eqref{eq:u_closeness_phase_2} and \(t\leq \log^2n\),
\(D_t\leq C\mathfrak A_\delta^4\log^{12}n\).  Therefore
\begin{align}
\frac C{\sqrt d}\left(
\log nD_t+\log^4nD_t+\log^3nD_t^2+\log^2nD_t^3\right)
&\leq
\frac C{\sqrt d}\left(
\mathfrak A_\delta^4\log^{16}n
+\mathfrak A_\delta^8\log^{27}n
+\mathfrak A_\delta^{12}\log^{38}n\right)
\leq C,
\label{eq:phase2_weighted_moment_replacement}
\end{align}
where the last inequality is the first condition in
\eqref{eq:phase2_finite_size}.  Equations
\eqref{eq:phase2_tilde_weighted_moments} and
\eqref{eq:phase2_weighted_moment_replacement} prove
\begin{align}
\frac{\pnorm{\y\circ\u^t}{}}{\sqrt d}
+\frac{\pnorm{\y^{\circ2}\circ(\u^t)^{\circ3}}{}}{\sqrt d}
&\leq C\mathfrak A_\delta^4.
\label{eq:phase2_weighted_moments}
\end{align}
By \eqref{eq:phase2_mu_comparable},
\[
C|\hat\mu_t|^3\left(
\frac{\|\bm y\circ\bm u^t\|}{\sqrt d}
+\frac{\|\bm y^{\circ2}\circ(\bm u^t)^{\circ3}\|}{\sqrt d}
\right)
\leq C\mathfrak A_\delta^4|\hat\mu_t|^3
\leq C\mathfrak A_\delta^4|\hat\mu_t|\rho_t^2.
\]
Combining this inequality with
\eqref{eq:phase2_weighted_norm_deviation} gives
\begin{align*}
\Big|\frac{\pnorm{h_t^\star(\u^t,\y)}{}}{\sqrt d}
-a_\delta\hat\mu_t\Big|
\leq C\mathfrak A_\delta^4|\hat\mu_t|\rho_t^2,
\end{align*}
which is \eqref{eq:phase_2_norm}.  Since
\[
\pi_t
=\frac{a_\delta\hat\mu_t}
{\|h_t^\star(\bm u^t,\bm y)\|/\sqrt d}-1
\]
and \eqref{eq:phase_2_norm}, after reducing \(c_0\), gives
\(\|h_t^\star(\bm u^t,\bm y)\|/\sqrt d
\geq a_\delta\hat\mu_t/2\), it also gives
\begin{align}\label{eq:phase2_pi_sharp}
|\pi_t|
\leq C\frac{\mathfrak A_\delta^4}{a_\delta}\rho_t^2
\leq C\mathfrak A_\delta^4\rho_t^2.
\end{align}
Differentiating \(h_t^\star\) now proves \eqref{eq:ht_second}.

\medskip
\noindent\textbf{Proof of \eqref{eq:phase_2_HF}.}
The identity-map conclusion for \(F_{t+1}\) is exact.  To control \(H_t\),
write \(y_i=(\eta_i^\star)^2\) and
\(U_i=\rho_t\eta_i^\star+G_i\), where \(G_i\sim\N(0,1)\).  Equations
\eqref{eq:ht_second} and \eqref{eq:phase2_mu_comparable}, together with
\(\tanh^2x\leq x^2\), give
\begin{equation}\label{eq:claim_h_phase2}
\begin{gathered}
h_t'(U_i,y_i)=\frac{y_i-1}{a_\delta}+r_{t,i},
\qquad 1\leq i\leq n,\\
\frac1n\sum_{i=1}^n\E_{G_i}r_{t,i}^2
\leq C\frac{\mathfrak A_\delta^8}{\delta}\rho_t^4.
\end{gathered}
\end{equation}
Indeed, \eqref{eq:ht_second} and
\(|1-\sech^2z|=\tanh^2z\leq z^2\) imply
\begin{align*}
|r_{t,i}|
&\leq\frac C{a_\delta}\left[
\bigl(|\pi_t|+\hat\mu_t^2\bigr)(1+y_i)
+\hat\mu_t^2y_i^2U_i^2\right].
\end{align*}
By \eqref{eq:phase2_mu_comparable},
\(|\pi_t|\leq C\mathfrak A_\delta^4\rho_t^2\), and Gaussian-polynomial
concentration, with probability \(1-O(n^{-12})\),
\begin{align}\label{eq:phase2_beta_moment_event}
\frac1n\sum_{i=1}^n
\E_{G_i}\left[(1+y_i)^2+y_i^4U_i^4\right]
&\leq C.
\end{align}
Since \(a_\delta^2=2\delta\), the last two displays prove
\eqref{eq:claim_h_phase2}.  The same concentration argument gives
\begin{align*}
\left|\frac1d
\Bigpnorm{\frac{\y-1}{a_\delta}}{}^2-1\right|
&=
\left|\frac1{2n}\sum_{i=1}^n
\bigl((\eta_i^\star)^2-1\bigr)^2-1\right|\\
&\leq C\frac{\log^4n}{\sqrt n}
\leq C\rho_t^2,
\end{align*}
where the last inequality uses \(|\rho_t|>b_n\).
Cauchy--Schwarz, \eqref{eq:claim_h_phase2}, and \(n/d=\delta\) now give
\begin{align*}
\left|H_t(\rho_t,1;\bbeta^\star)-1\right|
&\leq C\rho_t^2
+\frac2d
\left(\sum_{i=1}^n\frac{(y_i-1)^2}{a_\delta^2}\right)^{1/2}
\left(\sum_{i=1}^n\E_{G_i}r_{t,i}^2\right)^{1/2}
+\frac1d\sum_{i=1}^n\E_{G_i}r_{t,i}^2\\
&\leq C\mathfrak A_\delta^4\rho_t^2.
\end{align*}

It remains to replace \(1\) by
\(\sigma_t\coloneqq\|\gamma_{t,[0:t]}\|\).  Lemma~\ref{lem:uv_norm} and
\eqref{eq:phase2_finite_size} give
\begin{align*}
\left|\sigma_t-1\right|
\leq C\mathfrak A_\delta^4\frac{\log^{12}n}{\sqrt d}.
\end{align*}
In particular, \(\sigma_t\in[1/2,2]\).  Define
\[
h'_{\mu,\pi}(u,y)
\coloneqq\frac{1+\pi}{a_\delta}
\left\{(1+\mu^2)y\sech^2(\mu u\sqrt y)-1\right\}.
\]
For \(u\in\R\), \(y\geq0\), and the realized parameters,
\(|\tanh z|\leq|z|\) and \(\sech^2z\leq1\) imply
\begin{align*}
|h'_{\hat\mu_t,\pi_t}(u,y)|
&\leq\frac C{a_\delta}(1+y),\\
|\partial_uh'_{\hat\mu_t,\pi_t}(u,y)|
&\leq\frac C{a_\delta}\hat\mu_t^2y^2|u|.
\end{align*}
Differentiation under the Gaussian expectation, followed by these
inequalities and \eqref{eq:phase2_beta_moment_event}, yields, uniformly
for \(\sigma\in[1/2,2]\),
\begin{align*}
\left|\partial_\sigma H_t(\rho_t,\sigma;\bbeta^\star)\right|
&\leq
\frac{C\hat\mu_t^2}{a_\delta^2d}
\sum_{i=1}^n
(1+y_i)y_i^2
\E\left[(|\rho_t||\eta_i^\star|+2|G_i|)|G_i|\right]\\
&\leq C\hat\mu_t^2
\leq C\rho_t^2.
\end{align*}
Here \(a_\delta^2d=2n\) is used in the second line.
The mean-value theorem therefore gives
\begin{align*}
\left|H_t(\rho_t,\sigma_t;\bbeta^\star)
-H_t(\rho_t,1;\bbeta^\star)\right|
&\leq C\rho_t^2|\sigma_t-1|
\leq C\rho_t^2.
\end{align*}
Combining the last display with the bound at \(\sigma=1\) proves
\eqref{eq:phase_2_HF}.

\medskip
\noindent\textbf{Signal recursion and growth.}
Since \(\w^t=h_t(\u^t,\y)\) and
\(\gamma_{t,-1}=\rho_t\), \eqref{eq:key_signal} becomes
\begin{align}\label{eq:phase2_signal_identity}
\rho_{t+1}
=\frac1d\iprod{\g^{-1}}{h_t(\u^t,\y)}
-\hat c_t\rho_t.
\end{align}
The target \eqref{eq:phase2_cubic_recursion} is
\[
\left|\rho_{t+1}-a_\delta\rho_t\right|
\leq C\mathfrak A_\delta^4|\rho_t|^3
+C\mathfrak A_\delta^6\frac{\log^{14}n}{\sqrt d}.
\]
Thus ``cubic'' refers to the order of the nonlinear error in this
recursion.  To prove the estimate, we identify the leading contribution
to the first term in \eqref{eq:phase2_signal_identity} and bound both its
nonlinear remainder and the Onsager term \(\hat c_t\rho_t\).

The second-order Taylor formula used in Phase I gives
\begin{align}\label{eq:ht_expansion_phase2}
h_t(\u^t,\y)
=\frac{(\y-1)\circ\u^t}{a_\delta}+\e_t,
\qquad
\frac1{\sqrt d}\pnorm{\e_t}{}
\leq C\mathfrak A_\delta^4\rho_t^2.
\end{align}
Substituting \eqref{eq:ht_expansion_phase2} into
\eqref{eq:phase2_signal_identity} gives
\begin{align}\label{eq:phase2_signal_decomposition}
\rho_{t+1}
&=\frac1{a_\delta d}
\iprod{\g^{-1}}{(\y-1)\circ\u^t}
+\frac1d\iprod{\g^{-1}}{\e_t}
-\hat c_t\rho_t.
\end{align}
The first term will equal \(a_\delta\rho_t\), up to a finite-sample
error.  For the second term, the norm estimate in
\eqref{eq:ht_expansion_phase2} and Cauchy--Schwarz give only
\[
\left|\frac1d\iprod{\g^{-1}}{\e_t}\right|
\leq
\frac{\|\g^{-1}\|}{\sqrt d}\frac{\|\e_t\|}{\sqrt d}
\leq C\mathfrak A_\delta^4\rho_t^2.
\]
This is one power of \(|\rho_t|\) larger than the
\(O(\mathfrak A_\delta^4|\rho_t|^3)\) remainder required in
\eqref{eq:phase2_cubic_recursion}.  The additional factor
\(|\rho_t|\) comes from cancellation in the projection of \(\e_t\)
onto the conjugate-signal direction \(\g^{-1}\).  To make that
cancellation explicit, define the scalar remainder
\[
\mathsf e_{\mu,\pi}(u,y)
\coloneqq
\frac{1+\pi}{a_\delta\mu}h_\mu^\star(u,y)
-\frac{(y-1)u}{a_\delta}.
\]
At \(\mu=0\), the quotient is understood by its continuous extension.
Thus \(\e_t=\mathsf e_{\hat\mu_t,\pi_t}(\u^t,\y)\) entrywise.  Define
\[
R(z)=
\begin{cases}
\{\tanh z-z\}/z^3,&z\ne0,\\
-1/3,&z=0.
\end{cases}
\]
Then \(\sup_z\{|R(z)|+|zR'(z)|\}\leq C\), and the exact identity
\begin{align*}
a_\delta\mathsf e_{\mu,\pi}(u,y)
&=\pi(y-1)u+(1+\pi)\mu^2yu\\
&\quad
+(1+\pi)(1+\mu^2)\mu^2y^2u^3
R(\mu u\sqrt y)
\end{align*}
holds, including at \(\mu=0\).  In particular,
\(\mathsf e_{\mu,\pi}(u,y)\) is continuously differentiable in
\((\mu,\pi,u)\) at \(\mu=0\): the last term is the product of
\(\mu^2\) and a continuously differentiable function.
For \(U=\rho G+\sigma W\),
\begin{align*}
\E[G(G^2-1)U]&=2\rho,
\qquad
\E[G^3U]=3\rho.
\end{align*}
Moreover, with
\[
\Psi(r)=\E\!\left[
G^5(rG+\sigma W)^3
R\{\mu(rG+\sigma W)|G|\}\right],
\]
symmetry gives \(\Psi(0)=0\), while differentiation and the bound on
\(R\) give
\[
\sup_{|r|\leq c_{\weak}}|\Psi'(r)|
\leq C\E\!\left[G^6\{1+(|G|+|W|)^2\}\right]\leq C.
\]
Consequently, uniformly for
\[
|\rho|\leq c_{\weak},\qquad
|\mu-|\rho||\leq
C\mathfrak A_\delta^4\sqrt\delta\,|\rho|\log^{-8}n,\qquad
|\pi|\leq C\mathfrak A_\delta^4\rho^2/a_\delta,\qquad
\tfrac12\leq\sigma\leq2,
\]
the population bound
\begin{align*}
\left|\delta\,\E\left[
G\,\mathsf e_{\mu,\pi}(\rho G+\sigma W,G^2)\right]\right|
&\leq C\frac{\delta}{a_\delta}
\left(\mu^2|\rho|+|\pi|\,|\rho|\right)\\
&\leq C\mathfrak A_\delta^4|\rho|^3.
\end{align*}
Indeed, \eqref{eq:phase_2_mu_est} gives \(\mu\leq2|\rho|\);
also \(\delta/a_\delta\leq\mathfrak A_\delta\) and
\[
\frac{\delta}{a_\delta}|\pi|\,|\rho|
\leq C\mathfrak A_\delta^4
\frac{\delta}{a_\delta^2}|\rho|^3
=\frac C2\mathfrak A_\delta^4|\rho|^3.
\]

To substitute the random parameters, define
\begin{align*}
\mathcal P_t\coloneqq
\Big\{(\rho,\mu,\pi,\gamma)\in\mathbb R^{t+4}:\;&
|\rho|\leq c_{\weak},\
|\mu-|\rho||\leq
C\mathfrak A_\delta^4\sqrt\delta\,|\rho|\log^{-8}n,\\
&|\pi|\leq C\mathfrak A_\delta^4\rho^2/a_\delta,\
\tfrac12\leq\|\gamma\|\leq2\Big\}.
\end{align*}
Equations \eqref{eq:phase_2_mu_est}, \eqref{eq:phase2_pi_sharp}, and
\eqref{eq:gamma_norm_loo} imply that the realized parameters belong to
\(\mathcal P_t\).  Define
\begin{align*}
\mathcal T_t\coloneqq
\left\{\max_{\substack{-1\leq\ell\leq t\\1\leq i\leq n}}
|g_i^\ell|\leq C\sqrt{\log n}\right\}.
\end{align*}
Gaussian tails give \(\P(\mathcal T_t^c)\leq n^{-20}\).
On \(\mathcal T_t\), put
\[
U_i(\rho,\gamma)=\rho g_i^{-1}
+\sum_{\ell=0}^t\gamma_\ell g_i^\ell,
\qquad
\Phi_i(\rho,\mu,\pi,\gamma)
=g_i^{-1}\mathsf e_{\mu,\pi}
\bigl(U_i(\rho,\gamma),(g_i^{-1})^2\bigr).
\]
The exact formula for \(\mathsf e_{\mu,\pi}\), together with
\(\|\gamma\|\leq2\) and Cauchy--Schwarz, gives
\begin{align}\label{eq:phase2_parameter_derivatives}
\sup_{p\in\mathcal P_t}\max_{1\leq i\leq n}
\left\{|\Phi_i(p)|+\|\nabla_p\Phi_i(p)\|\right\}
&\leq C\mathfrak A_\delta \log^C n,\\
\sup_{p\in\mathcal P_t}\|\nabla_p\E\Phi_i(p)\|
&\leq C\mathfrak A_\delta \log^C n.\notag
\end{align}
For the second inequality, differentiation under the expectation is
justified because every derivative is bounded by a fixed polynomial in
\((|G|+|W|)\).

Let \(\mathcal N_t\) be an \(n^{-20}\)-net of \(\mathcal P_t\).
Since \(\mathcal P_t\subset[-C\mathfrak A_\delta^4,
C\mathfrak A_\delta^4]^{t+4}\) and \(t+1\leq \log^2 n\),
\[
\log|\mathcal N_t|
\leq(t+4)\log(C\mathfrak A_\delta^4 n^{20})
\leq C\log^3 n.
\]
For every \(p\in\mathcal N_t\), the modified Bernstein inequality,
applied after truncation at \(\mathcal T_t\), gives
\[
\P\left(\left|
\frac1d\sum_{i=1}^n\{\Phi_i(p)-\E\Phi_i(p)\}\right|
>C\mathfrak A_\delta\frac{\log^{12}n}{\sqrt d}\right)
\leq \exp\{-cn/\log^C n\}+n^{-20}.
\]
Taking the union over \(\mathcal N_t\) and then using
\eqref{eq:phase2_parameter_derivatives} gives
\begin{align}\label{eq:phase2_uniform_signed_remainder}
\sup_{(\rho,\mu,\pi,\gamma)\in\mathcal P_t}
\left|
\frac1d\sum_{i=1}^ng_i^{-1}
\mathsf e_{\mu,\pi}\left(
\rho g_i^{-1}+\sum_{\ell=0}^t\gamma_\ell g_i^\ell,
(g_i^{-1})^2\right)
-\delta\E\left[G\mathsf e_{\mu,\pi}
(\rho G+\pnorm{\gamma}{}W,G^2)\right]
\right|
&\leq C\mathfrak A_\delta\frac{\log^{12} n}{\sqrt d}
\end{align}
with failure probability
\(\exp\{C\log^3n-cn/\log^Cn\}+O(n^{-12})=O(n^{-12})\).
Moreover, differentiation and \(\tanh^2x\leq x^2\) give
\begin{align*}
|\partial_u\mathsf e_{\mu,\pi}(u,y)|
&\leq \frac C{a_\delta}\left\{
|\pi|(1+y)+\mu^2y+\mu^2y^2u^2\right\}.
\end{align*}
The same derivative, estimated instead with \(\tanh^2z\leq1\), satisfies
\[
|\partial_u\mathsf e_{\mu,\pi}(u,y)|
\leq C(1+y).
\]
The mean-value theorem and
\(\max_i y_i\leq C\log n\) give
\begin{align*}
\frac1d\left|
\iprod{\g^{-1}}
{\mathsf e_{\hat\mu_t,\pi_t}(\u^t,\y)
-\mathsf e_{\hat\mu_t,\pi_t}(\tilde\u^t,\y)}
\right|
&\leq
\frac{C(1+\max_i y_i)}{d}
\|\g^{-1}\|\,\|\Delta_{U,t}\|\\
&\leq C\mathfrak A_\delta^5\frac{\log^{13}n}{\sqrt d}.
\end{align*}
The last inequality uses
\(\|\g^{-1}\|\leq C\sqrt n\),
\(\|\Delta_{U,t}\|\leq C\mathfrak A_\delta^4t\log^{10}n\),
\(t\leq \log^2n\), \(n/d=\delta\), and
\(\sqrt\delta\leq\mathfrak A_\delta\).
Combining this estimate with
\eqref{eq:phase2_uniform_signed_remainder} and the population bound gives
\begin{align}\label{eq:phase2_signed_taylor_remainder}
\left|\frac1d\iprod{\g^{-1}}{\e_t}\right|
\leq C\mathfrak A_\delta^4|\rho_t|^3
+C\mathfrak A_\delta^5\frac{\log^{13}n}{\sqrt d}.
\end{align}
The linear term in \eqref{eq:ht_expansion_phase2} satisfies
\begin{align}\label{eq:phase2_linear_signal_term}
\left|
\frac1{a_\delta d}\iprod{\g^{-1}}{(\y-1)\circ\u^t}
-a_\delta\rho_t
\right|
\leq C\mathfrak A_\delta^4\frac{\log^{12}n}{\sqrt d}.
\end{align}
Indeed, write
\[
\bm c\coloneqq(\y-1)\circ\g^{-1},
\qquad
\xi_\ell\coloneqq d^{-1}\iprod{\bm c}{\g^\ell},
\quad 0\leq\ell\leq t.
\]
Gaussian-polynomial concentration gives
\begin{align*}
\left|d^{-1}\iprod{\bm c}{\g^{-1}}-2\delta\right|
&\leq C\mathfrak A_\delta\sqrt{\frac {\log n} {n}},\qquad
\pnorm{\bm c}{}\leq C\mathfrak A_\delta\sqrt n.
\end{align*}
Conditional on \(\g^{-1}\), the \(\xi_\ell\)'s are independent centered
Gaussians with variance \(d^{-2}\pnorm{\bm c}{}^2\).  A conditional
\(\chi^2\) bound and \eqref{eq:gamma_norm_loo} therefore give
\begin{align*}
\left|\sum_{\ell=0}^t\gamma_{t,\ell}\xi_\ell\right|
&\leq
\pnorm{\gamma_{t,[0:t]}}{}\pnorm{\xi_{[0:t]}}{}
\leq C\mathfrak A_\delta\sqrt{\frac{t+\log n}{n}}.
\end{align*}
Finally, Cauchy--Schwarz and \eqref{eq:u_closeness_phase_2} yield
\begin{align*}
\frac1{a_\delta d}\left|\iprod{\bm c}{\Delta_{U,t}}\right|
&\leq \frac{\pnorm{\bm c}{}}{a_\delta d}
\pnorm{\Delta_{U,t}}{}
\leq C\mathfrak A_\delta^4\frac{\log^{12}n}{\sqrt d}.
\end{align*}
Substituting
\(\tilde\u^t=\rho_t\g^{-1}
+\sum_{\ell=0}^t\gamma_{t,\ell}\g^\ell\) proves
\eqref{eq:phase2_linear_signal_term}.

Finally, set
\[
h'_{\mu,\pi}(u,y)\coloneqq
\frac{1+\pi}{a_\delta}
\left\{(1+\mu^2)y\big(1-\tanh^2(\mu u\sqrt y)\big)-1\right\}.
\]
For \((\rho,\mu,\pi,\gamma)\in\mathcal P_t\),
\eqref{eq:ht_second}, \(\tanh^2x\leq x^2\), and scalar Gaussian moments
give
\begin{align*}
\left|\delta\,\E\left[
h'_{\mu,\pi}(\rho G+\sigma W,G^2)\right]\right|
&\leq C\mathfrak A_\delta\rho^2.
\end{align*}
On \(\mathcal T_t\), direct differentiation of \(h'_{\mu,\pi}\) gives
\[
\sup_{p\in\mathcal P_t}\max_{1\leq i\leq n}
\left\{
\left|h'_{\mu,\pi}(U_i,(g_i^{-1})^2)\right|
+\left\|\nabla_p h'_{\mu,\pi}
(U_i,(g_i^{-1})^2)\right\|
\right\}
\leq C\mathfrak A_\delta \log^C n.
\]
The net \(\mathcal N_t\), the modified Bernstein inequality, and this
derivative bound therefore give the following rate.  The parameter
dimension is \(t+4\), so
\(\log|\mathcal N_t|\leq C\log^3n\); hence the square-root term in the
fixed-parameter Bernstein bound is
\(\sqrt{\log^3n/n}=\log^{3/2}n/\sqrt n\), not \(\log n/\sqrt n\).
\begin{align}\label{eq:phase2_uniform_onsager}
\sup_{(\rho,\mu,\pi,\gamma)\in\mathcal P_t}
\left|\frac1d\sum_{i=1}^n
h'_{\mu,\pi}\left(
\rho g_i^{-1}+\sum_{\ell=0}^t\gamma_\ell g_i^\ell,y_i\right)
-\delta\E h'_{\mu,\pi}(\rho G+\pnorm{\gamma}{}W,G^2)\right|
&\leq C\mathfrak A_\delta\frac {\log^{3/2}n}{\sqrt n}.
\end{align}
For the replacement of \(\tilde\u^t\) by \(\u^t\), the mean-value theorem,
Lemma~\ref{lem:h_deriv_comp}, and
\(\|\Delta_{U,t}\|_1\leq\sqrt n\|\Delta_{U,t}\|\) give
\begin{align*}
\frac1d\left|\sum_{i=1}^n\left\{
h'_{\hat\mu_t,\pi_t}(u_i^t,y_i)
-h'_{\hat\mu_t,\pi_t}(\tilde u_i^t,y_i)\right\}\right|
&\leq\frac{\|h_t''\|_\infty}{d}\|\Delta_{U,t}\|_1\\
&\leq C\mathfrak A_\delta^6\frac{\log^{14}n}{\sqrt d},
\end{align*}
and substitution of the realized parameters therefore gives
\begin{align}\label{eq:phase2_onsager_bound}
|\hat c_t|\leq C\mathfrak A_\delta
\left(\rho_t^2+\frac{\log^{3/2}n}{\sqrt n}\right)
+C\mathfrak A_\delta^6\frac{\log^{14}n}{\sqrt d}.
\end{align}
Substituting \eqref{eq:phase2_signed_taylor_remainder}--
\eqref{eq:phase2_onsager_bound} into
\eqref{eq:phase2_signal_decomposition} proves
\eqref{eq:phase2_cubic_recursion}; here
\(|\rho_t|\leq c_{\weak}\leq1\) and
\(\log^{3/2}n/\sqrt n\leq C\log^{14}n/\sqrt d\) bound both additional Onsager terms
by \(C\mathfrak A_\delta^6\log^{14}n/\sqrt d\).  The definition of
$c_{\weak}$ ensures
$C\mathfrak A_\delta^4 c_{\weak}^2\leq\eta_\delta/4$, while
\eqref{eq:phase2_finite_size} and \eqref{eq:phase2_entrance_size} ensure
$C\mathfrak A_\delta^6 \log^{14}n/\sqrt d\leq
(\eta_\delta/4)|\rho_t|$.  Hence
\begin{align*}
|\rho_{t+1}|
\geq\left(a_\delta-\frac{\eta_\delta}{2}\right)|\rho_t|
=(1+g_\delta)|\rho_t|,
\end{align*}
which is \eqref{eq:phase_2_exp_growth}.
\end{proof}

\subsection{Proof of Lemma \ref{lem:phase_2_simp}}
\begin{proof}
On \(\mathcal C_{I\!I,t-1}^V\) from
\eqref{eq:phase2_event_intersections}, Lemmas
\ref{lem:phase_2_assump_conseq} and \ref{lem:uv_norm} give
\begin{subequations}
\begin{gather}
\pnorm{f_t'}{\infty}=1, \qquad \pnorm{f_t''}{\infty}=0,
\label{eq:temp_phase2_1}\\
\sqrt{F_t\big(\rho_t,\pnorm{\lambda_{t-1,[0:t-1]}}{};
\btheta^\star\big)}=1,
\label{eq:temp_phase2_2}\\
\left|\sqrt{H_{t-1}\big(\rho_{t-1},
\pnorm{\gamma_{t-1,[0:t-1]}}{};\bbeta^\star\big)}-1\right|
\leq C\mathfrak A_\delta^4\rho_{t-1}^2,
\label{eq:temp_phase2_5}\\
\sum_{k=0}^{t-1}|\beta_{t-1}^k|
\leq C\mathfrak A_\delta^4\frac{t\log^{10}n}{\sqrt d},
\label{eq:temp_phase2_3}\\
\bar\gamma_t\vee\bar\lambda_{t-1}
\leq C\mathfrak A_\delta\sqrt d.
\label{eq:temp_phase2_4}
\end{gather}
\end{subequations}

Because $f_k$ is the identity, \eqref{eq:E_multiplier} gives
\begin{align*}
\cE_t^f
\leq C\sqrt{t+1}\left(\frac{\log n}{d}\right)^{1/4}
\leq C\mathfrak A_\delta
\sqrt{t+1}\left(\frac{\log n}{n}\right)^{1/4}
\leq\varepsilon_{I\!I,n}.
\end{align*}
The last term in \eqref{eq:induction_Delta_U} is
$C\mathfrak A_\delta t\sqrt{\log n}$; it is bounded by
$C\mathfrak A_\delta t\log^{5/2}n$.  The term containing
\(\sum_{k=0}^{t-1}|\beta_{t-1}^k|\|\Delta_{V,k}\|\) satisfies
\begin{align*}
\sum_{k=0}^{t-1}|\beta_{t-1}^k|\|\Delta_{V,k}\|
&\leq
\left(\sum_{k=0}^{t-1}|\beta_{t-1}^k|\right)
\max_{0\leq k\leq t-1}\|\Delta_{V,k}\|\\
&\leq C\mathfrak A_\delta^8
\frac{t^2\log^{20}n}{\sqrt d}\\
&\leq C\mathfrak A_\delta^4t\log^{5/2}n.
\end{align*}
The last inequality follows from \(t\leq \log^2n\) and the first inequality in
\eqref{eq:phase2_finite_size}.  Substitution in
\eqref{eq:induction_Delta_U} proves \eqref{eq:phase_2_simp_1}.

Equation \eqref{eq:induction_alpha_last}, together with
\eqref{eq:temp_phase2_3}, gives
\begin{align*}
|\alpha_{t-1}^{t-1}|
\leq C\mathfrak A_\delta^4\left(
\frac{\pnorm{\Delta'_{V,t}}{}}{\sqrt d}
+\frac{t^2\log^{20}n}d\right),
\end{align*}
which is \eqref{eq:phase_2_simp_2}.  The adjacent coefficient satisfies
$|\alpha_{t-1}^{t-2}|\leq|\beta_{t-1}^{t-1}|$.  For the remaining
coefficients, \eqref{eq:induction_alpha_middle},
\eqref{eq:phase_2_HF}, and the bounds
$\cE_t^f\vee\cE_{t-1}^h\leq\varepsilon_{I\!I,n}$ yield
\eqref{eq:phase_2_simp_3}.

For the \(V\)-update, \eqref{eq:E_multiplier},
\eqref{eq:u_closeness_phase_2}, and
\[
\|h'_{[0:t]}\|_\infty\leq C\mathfrak A_\delta \log n,
\qquad
\|h_t''\|_\infty\leq C\mathfrak A_\delta \log^{3/2}n
\]
give
\begin{align*}
\cE_t^h
&\leq C\delta^{3/4}\mathfrak A_\delta^2\log^2n
\sqrt{t+1}\left(\frac {\log n}{d}\right)^{1/4}\\
&\quad+
C\mathfrak A_\delta^6\log^{5/2}n(1+\sqrt\delta)
\frac{\sqrt{t+1}+\sqrt{\log n}}{\sqrt d}t\log^{10}n\\
&\leq
C\mathfrak A_\delta^3\frac{\log^4n}{n^{1/4}}
+C\mathfrak A_\delta^8\frac{\log^{16}n}{\sqrt n}\\
&\leq C\mathfrak A_\delta^8\frac{\log^{16}n}{n^{1/4}}
=\varepsilon_{I\!I,n}.
\end{align*}
The second line uses \(d=n/\delta\), \(t+1\leq \log^2n\),
\(\delta^{1/2}\leq\mathfrak A_\delta\), and
\((1+\sqrt\delta)\sqrt\delta\leq\mathfrak A_\delta\).
The last term in \eqref{eq:induction_Delta_V} is bounded by
\begin{align*}
C(1+\sqrt\delta)(\bar\lambda_t+\bar\gamma_t)
(\|h'_{[0:t]}\|_\infty\vee1)^2
(\|\gamma_{[0:t],-1}\|_\infty\vee1)
(t+1)\sqrt{\frac {\log n}{d}}
&\leq C\mathfrak A_\delta^4(t+1)\log^{5/2}n,
\end{align*}
because \(\bar\gamma_t\vee\bar\lambda_t\leq
C\mathfrak A_\delta\sqrt d\),
\(\|h'_{[0:t]}\|_\infty\leq C\log n\), and
\(\|\gamma_{[0:t],-1}\|_\infty\leq1\).
The term containing
\(\sum_{k=0}^{t-1}|\alpha_{t-1}^k|\|\Delta_{U,k}\|\) satisfies
\begin{align*}
\|h'_{[0:t-1]}\|_\infty
\sum_{k=0}^{t-1}|\alpha_{t-1}^k|\|\Delta_{U,k}\|
&\leq C\mathfrak A_\delta^8
\frac{t^2\log^{21}n}{\sqrt d}\\
&\leq C\mathfrak A_\delta^4(t+1)\log^{5/2}n,
\end{align*}
again by \(t\leq \log^2n\) and \eqref{eq:phase2_finite_size}.  Combining this with
\eqref{eq:phase_2_HF} proves the second inequality in Lemma
\ref{lem:phase_2_simp}.  Next, \eqref{eq:induction_beta_last} and
\eqref{eq:phase_2_assump_U_1} give
\begin{align*}
|\beta_t^t|
\leq C\mathfrak A_\delta^4\left(
\frac{\log^{3/2}n\pnorm{\Delta'_{U,t}}{}}{\sqrt d}
+\frac{(t+1)^2\log^{23}n}d\right).
\end{align*}
Finally, \eqref{eq:induction_beta_middle_last} bounds the adjacent coefficient
by $C\mathfrak A_\delta^4 \log^2n|\alpha_{t-1}^{t-1}|$, while
\eqref{eq:induction_beta_middle}, \eqref{eq:phase_2_HF}, and the multiplier
errors give the stated off-diagonal recursion.
\end{proof}

\section{\texorpdfstring{Proofs for intermediate recovery}
{Proofs for intermediate recovery}}
\label{sec:proof_phase_3}

\subsection{Proof of Proposition \ref{prop:phase_3_asymp}}\label{subsec:proof_fixed_point}
Define the function
\begin{align}\label{eq:scalar_G_definition}
G(\mu)
\coloneqq \frac{\mu\delta}{F_\delta(\mu)}
=\frac{\mu}{(1+\mu)\{(1+\mu)m(\mu)-\mu\}},
\qquad \mu>0.
\end{align}
Let \(Z,W\) be independent \(\mathcal N(0,1)\) variables, set \(R=|Z|\),
and put
\begin{align*}
Z_\mu\coloneqq\mu R^2+\sqrt\mu RW,
\qquad
T_\mu\coloneqq\tanh Z_\mu.
\end{align*}
Conditionally on \(R\), let \(X=\operatorname{sign}(Z)\).  Since
\[
\mathbb E[X\mid \sqrt{\mu}RX+W]
=\tanh\!\left(\mu R^2+\sqrt\mu RW\right),
\]
the tower property gives
\begin{align}
m(\mu)
&=\E\left[R^2T_\mu\right],
\notag\\
m'(\mu)
&=\E\left[R^4(1-T_\mu^2)^2\right],
\notag\\
m''(\mu)
&=\E\left[
R^6(1-T_\mu^2)^2
\{10T_\mu^2-4T_\mu-2\}
\right].
\label{eq:scalar_m_derivatives}
\end{align}
Indeed, if $X\in\{-1,1\}$ is the sign and
$Y_a=\sqrt aX+W$, then
$\E[X\mid Y_a]=\tanh(\sqrt aY_a)$.  Hence
\begin{align*}
\E\{\E[X\mid Y_a]^2\}
=\E\{X\E[X\mid Y_a]\}
=\E_W\tanh(a+\sqrt aW),
\end{align*}
which proves the first line of \eqref{eq:scalar_m_derivatives}.  The second
line follows from \cite[Eq.~(64)]{guo2011estimation}, because the
conditional variance of \(Z\) given \((\sqrt\mu Z+W,Z^2)\) is
$R^2(1-T_\mu^2)$.  Finally, for
$f(z)=\operatorname{sech}^4z$, Gaussian integration by parts gives
\begin{align*}
\frac{\d}{\d a}\E_W f(a+\sqrt aW)
&=\E_W\left[f'(a+\sqrt aW)
+\frac12f''(a+\sqrt aW)\right],\\
f'(z)+\frac12f''(z)
&=\operatorname{sech}^4z
\{10\tanh^2z-4\tanh z-2\},
\end{align*}
which proves the last line of \eqref{eq:scalar_m_derivatives}.
Define
\begin{align}
D(\mu)
&\coloneqq
(1+\mu)\{(1+\mu)m(\mu)-\mu\},
\notag\\
D'(\mu)
&=2(1+\mu)m(\mu)-(1+2\mu)
+(1+\mu)^2m'(\mu),
\notag\\
D''(\mu)
&=2m(\mu)-2+4(1+\mu)m'(\mu)
+(1+\mu)^2m''(\mu),
\notag\\
S(\mu)&\coloneqq D(\mu)-\mu D'(\mu).
\label{eq:scalar_G_sign_function}
\end{align}
Since \(G(\mu)=\mu/D(\mu)\),
\begin{align}
G'(\mu)
&=\frac{S(\mu)}{D(\mu)^2},
\notag\\
S'(\mu)&=-\mu D''(\mu),
\notag\\
G''(\mu_\star)
&=-\frac{\mu_\star D''(\mu_\star)}{D(\mu_\star)^2}
\qquad\text{whenever }S(\mu_\star)=0.
\label{eq:scalar_G_derivatives}
\end{align}

\begin{lemma}[Monotonicity of \(G\)]
\label{lem:scalar_G_geometry}
The function in \eqref{eq:scalar_G_definition} has a unique critical point
\(\mu_\star>0\), and
\begin{align}
S(\mu)&>0
\quad (0<\mu<\mu_\star),
\notag\\
S(\mu_\star)&=0,
\qquad
S(\mu)<0
\quad (\mu>\mu_\star),
\notag\\
D''(\mu_\star)&>0.
\label{eq:scalar_G_sign}
\end{align}
\end{lemma}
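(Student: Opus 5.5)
The plan is to study $S=D-\mu D'$ through $S'(\mu)=-\mu D''(\mu)$ from \eqref{eq:scalar_G_derivatives}. Everything then reduces to two facts: \emph{the sign pattern of $D''$ on $(0,\infty)$}, and \emph{the boundary behaviour of $S$ at $0$ and at $\infty$}.

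\textbf{Boundary behaviour.} Near $0$, use the expansions $m(\mu)=\E[R^2T_\mu]$ and $m'(0)=\E R^4=3$. These give $m(\mu)=3\mu+O(\mu^2)$, hence
\[
D(\mu)=\mu(1+\mu)\{(1+\mu)(3+O(\mu))-1\}=2\mu+O(\mu^2),
\]
and $D'(0)=2$. It follows that $S(0)=0$. Moreover $S(\mu)=-\int_0^\mu sD''(s)\,ds$, so the sign of $S$ near zero is the sign of $-D''(0)$. From \eqref{eq:scalar_G_derivatives},
\[
D''(0)=2m(0)-2+4m'(0)+m''(0)=-2+12+\E[R^6(-2)]=10-30=-20<0.
\]
Hence $S>0$ on a right neighbourhood of $0$, which matches $G'(0_+)>0$ as used in \eqref{eq:phase3_gap_asymptotics}.

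At infinity, $T_\mu\to1$ almost surely, so $m(\mu)\to\E R^2=1$. One must quantify $1-m(\mu)$. Since $1-\tanh z\approx 2e^{-2z}$ with $z\approx\mu R^2$, the small-$R$ region dominates, and $1-m(\mu)=\E[R^2(1-T_\mu)]$ behaves like $c\mu^{-3/2}$. Therefore
\[
(1+\mu)m-\mu=1-(1+\mu)(1-m)=1-O(\mu^{-1/2}),
\]
so $D(\mu)\sim\mu$, and $D-\mu D'$ is dominated by the correction terms. I would compute the leading asymptotics of $S$ (expected to be $S\to -\infty$ like $-c\mu^{1/2}$, or at least eventually negative) through a Laplace-type analysis of $\E[R^2(1-T_\mu)]$ in the variable $r=\sqrt\mu R$.

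\textbf{Sign pattern of $D''$.} The key step is to show that $D''$ has exactly one zero $\mu_1>0$, with $D''<0$ on $(0,\mu_1)$ and $D''>0$ on $(\mu_1,\infty)$. Given this, $S$ increases on $(0,\mu_1)$ and strictly decreases afterwards. Combined with $S(0)=0$ and $S\to-\infty$ (or eventually negative), $S$ then has a unique zero $\mu_\star>\mu_1$, positive before it and negative after it. This gives \eqref{eq:scalar_G_sign}, including $D''(\mu_\star)>0$ since $\mu_\star>\mu_1$. Uniqueness of the critical point of $G$ follows from $G'=S/D^2$, once we check $D>0$ on $(0,\infty)$. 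For that, note $(1+\mu)m-\mu>0$: this holds near $0$, and it is equivalent to $\mu(1-m)<m$, which follows from the MMSE representation $1-m=\E[R^2\operatorname{sech}^2\text{-type}]$ combined with a crude bound.

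\textbf{Main obstacle.} Establishing the single sign change of $D''$ is where I expect the real difficulty. Write $D''$ in terms of $m,m',m''$ via \eqref{eq:scalar_m_derivatives} as a single expectation
\[
\E\bigl[\,\Phi_\mu(R,W)\,\bigr],
\]
which combines $-2(1-R^2T_\mu)$, $4(1+\mu)R^4(1-T^2)^2$ and $(1+\mu)^2R^6(1-T^2)^2(10T^2-4T-2)$. A clean analytic proof of a single sign change seems hard, so I would first try to show that $\mu\mapsto D''(\mu)$ crosses zero only once by a convexity-type argument in $a=\mu R^2$, conditioning on $R$. If that fails, the fallback is a rigorous numerical certification on a compact interval $[\epsilon,K]$ using interval arithmetic for the one-dimensional Gaussian integrals (after integrating $W$ out against $\operatorname{sech}$ kernels), with Lipschitz bounds obtained from $m'''$ via the same integration-by-parts identity. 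The Taylor analysis above covers $[0,\epsilon]$ and the asymptotic analysis covers $[K,\infty)$. The numerical values $\mu_\star$ and $\delta_{\str}=G(\mu_\star)\approx1.13$ would emerge from the same computation.
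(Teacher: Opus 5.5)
Your reduction is correct, and it takes a different and more careful route than the paper. The paper's entire proof of this lemma is the remark that the sign pattern ``can be checked routinely by numerical computation.''

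\textbf{What checks out.} The analytic ingredients you use are right.
\begin{itemize}
\item At the origin, $S(0)=0$ and $D''(0)=-20$, so $S(\mu)=10\mu^2+O(\mu^3)$.
\item $D>0$ holds. The cleanest argument is that $1-m(\mu)=\E[\Var(Z\mid U,Y)]$ is strictly smaller than $\mathrm{mmse}(Z\mid U)=(1+\mu)^{-1}$, because $Z$ is Gaussian and $Y$ is informative.
\item At infinity, the substitution $a=\mu R^2$ gives the exact representation
\[
1-m(\mu)=\frac{\mu^{-3/2}}{\sqrt{2\pi}}\int_0^\infty \sqrt a\,e^{-a/(2\mu)}\,\mathrm{mmse}(a)\,da,\qquad \mathrm{mmse}(a)=1-\E\tanh(a+\sqrt a\,W).
\]
Hence $1-m\sim c\mu^{-3/2}$ and $D=1+\mu-c\sqrt\mu+O(\mu^{-1/2})$. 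It follows that $S\sim-\tfrac c2\sqrt\mu$ and $D''\sim\tfrac c4\mu^{-3/2}$, as you expected.
\item Your heuristic $1-\tanh z\approx 2e^{-2z}$ drops the noise. The relevant conditional average is $\mathrm{mmse}(\mu R^2)\asymp(\mu R^2)^{-1/2}e^{-\mu R^2/2}$, but the $\mu^{-3/2}$ scaling is unaffected.
\end{itemize}

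\textbf{What your framework buys.} It turns a finite computation into an actual proof on $(0,\infty)$. A bare numerical check, as in the paper, cannot see either end of the half-line; you handle both ends analytically.

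\textbf{What remains open.} Neither argument supplies an analytic proof of the interior sign pattern. You route it through the claim that $D''$ changes sign exactly once. That claim is strictly stronger than the lemma needs, and you have not verified it; the paper only asserts $D''(\mu_\star)>0$.

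If you end up certifying numerically anyway, it is more robust to certify the lemma's own content directly:
\begin{itemize}
\item $S>0$ on $[\varepsilon,\mu_\star-\eta]$;
\item $S<0$ on $[\mu_\star+\eta,K]$;
\item $D''>0$ on $[\mu_\star-\eta,\mu_\star+\eta]$;
\item explicit remainder bounds on $[K,\infty)$ derived from the representation above, not just leading-order asymptotics.
\end{itemize}
Expect $\varepsilon$ to be small. The Taylor coefficients at $0$ grow quickly ($D'''(0)=888$, $G''(0)=-49$), so the window covered by the Taylor analysis is narrow.
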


\begin{proof}
This can be checked routinely by numerical computation, which we omit here.
\end{proof}

\begin{proof}[Proof of Proposition~\ref{prop:phase_3_asymp}]

Using $m'(0_+) = 3$, $m''(0_+)=-30$, and
$m(\infty)=1$, we have
$G(0_+)=1/2$, $G'(0_+)=5/2$, and $G(\infty)=1$.

By Lemma~\ref{lem:scalar_G_geometry}, the function \(G\)
is twice continuously differentiable, strictly
increasing before its unique critical point $\mu_\star$, and strictly
decreasing afterwards.  Its maximum is nondegenerate:
$G''(\mu_\star)<0$.  Numerically,
$\mu_\star\approx5.52$ and
$G(\mu_\star)=\delta_{\str}\approx1.13$.

For every positive fixed point \(\mu_{\fix}\) of \(F_\delta\),
\begin{align}\label{eq:F_derivative_G}
F_\delta'(\mu_{\fix})
=1-\frac{\mu_{\fix}G'(\mu_{\fix})}{G(\mu_{\fix})}.
\end{align}
Indeed, \(F_\delta(\mu)=\mu\delta/G(\mu)\), so
\begin{align*}
F_\delta'(\mu)
&=\frac{\delta}{G(\mu)}
-\frac{\mu\delta G'(\mu)}{G(\mu)^2}\\
&=\frac{\delta}{G(\mu)}
\left(1-\frac{\mu G'(\mu)}{G(\mu)}\right).
\end{align*}
The fixed-point identity \(F_\delta(\mu_{\fix})=\mu_{\fix}\) is equivalent
to \(G(\mu_{\fix})=\delta\), and substitution proves
\eqref{eq:F_derivative_G}.

\medskip
\noindent\textbf{Case $\delta \leq \delta_{\weak}$.}
Lemma~\ref{lem:scalar_G_geometry} and the endpoint values
\(G(0_+)=1/2\) and \(G(\infty)=1\) give
$G(\mu)>1/2$ for every $\mu>0$.  Thus $F_\delta$ admits no strictly
positive fixed point.  At zero,
\begin{align*}
F_\delta'(0)=\delta\{m'(0)-1\}=2\delta\leq1,
\end{align*}
where $m'(0)=3$.  Hence zero is stable.

\medskip
\noindent\textbf{Case $\delta > \delta_{\str}$.}
Since \(G(\mu)\leq G(\mu_\star)=\delta_{\str}<\delta\),
\(F_\delta\) has no positive fixed point.  Also
\(F_\delta'(0)=2\delta>1\), so \(0\) is unstable.  Moreover,
\begin{align*}
\frac{F_\delta(\mu)}{\mu}
=\frac{\delta}{G(\mu)}
\geq\frac{\delta}{\delta_{\str}},
\qquad \mu>0,
\end{align*}
which is the claimed population growth bound.

\medskip
\noindent\textbf{Case
\(\delta\in(\delta_{\weak},\delta_{\str}]\).}
If \(\delta\in(\delta_{\weak},1]\), the equation
\(G(\mu)=\delta\) has exactly one solution
\(\mu_{\fix}\in(0,\mu_\star)\).  Thus \(F_\delta\) has exactly one positive
fixed point.  Since \(G'(\mu_{\fix})\geq0\),
\eqref{eq:F_derivative_G} gives
\(F_\delta'(\mu_{\fix})\leq1\), so this fixed point is stable.

If \(\delta\in(1,\delta_{\str})\), the equation
\(G(\mu)=\delta\) has exactly two solutions,
\begin{align*}
\mu_\infty(\delta)<\mu_\star<\widetilde\mu_\infty(\delta).
\end{align*}
At \(\delta=\delta_{\str}\), the two solutions merge at
\(\mu_\star\).  In the open interval,
\(G'(\mu_\infty(\delta))>0\) and
\(G'(\widetilde\mu_\infty(\delta))<0\); hence
\eqref{eq:F_derivative_G} gives
\[
F_\delta'(\mu_\infty(\delta))<1,
\qquad
F_\delta'(\widetilde\mu_\infty(\delta))>1.
\]

The derivative multiplier \(\mathcal B(\rho)\) follows from
\begin{align}
\delta\E \big[h_\rho^\star(U,Y)^2\big]
&=F_\delta(\rho^2),
\label{eq:bayes_fact_1}\\
\delta\E \big[\{\partial_u h_\rho^\star(U_\rho,Y)\}^2\big]
&=\rho^2F_\delta'(\rho^2),
\label{eq:bayes_fact_2}
\end{align}
where \(U_\rho=\rho Z+W\), \(Y=Z^2\), and \(Z,W\) are independent
\(\mathcal N(0,1)\) variables.  If
\(\rho_{\fix}=\sqrt{\mu_{\fix}}\) and
\(\mu_{\fix}\in
\{\mu_\infty(\delta),\widetilde\mu_\infty(\delta)\}\), then
\begin{align*}
\mathcal B(\rho_{\fix})
&=\frac{\rho_{\fix}^2F_\delta'(\rho_{\fix}^2)}
{F_\delta(\rho_{\fix}^2)}
=F_\delta'(\mu_{\fix})\\
&=1-\frac{\mu_{\fix}G'(\mu_{\fix})}{G(\mu_{\fix})}.
\end{align*}
The last equality is \eqref{eq:F_derivative_G}; this proves
\eqref{eq:B_claim}.

It remains to prove \eqref{eq:bayes_fact_1} and
\eqref{eq:bayes_fact_2}.  For \(r\geq0\), define
\begin{align*}
U_r&=rZ+W,\\
\eta_r(u,y)&=\E[Z\mid U_r=u,Y=y],\\
V_r(u,y)&=\Var(Z\mid U_r=u,Y=y).
\end{align*}
Equation~(64) of \cite{guo2011estimation} gives
\begin{align}\label{eq:e_relation}
m'(\mu) = \E[V^2_{\sqrt\mu}(U_{\sqrt\mu},Y)].
\end{align}
By \eqref{eq:h_rho},
\begin{align*}
h_\rho^\star(u,y) = (1+\rho^2)\eta_\rho(u,y) - \rho u, 
\end{align*}
and \(m(\rho^2)=\E[\eta_\rho^2(U_\rho,Y)]\).  Therefore
\begin{align*}
\E[\big(h_\rho^\star(U_\rho,Y)\big)^2]
&=(1+\rho^2)^2\E[\eta_\rho(U_\rho,Y)^2]
  +\rho^2\E[U_\rho^2]\\
&\quad
-2(1+\rho^2)\rho\E[\eta_\rho(U_\rho,Y)U_\rho]\\
&=(1+\rho^2)^2m(\rho^2)
  +\rho^2(1+\rho^2)-2(1+\rho^2)\rho^2\\
&=(1+\rho^2)^2m(\rho^2)-(1+\rho^2)\rho^2
=F_\delta(\rho^2)/\delta,
\end{align*}
where
\(\E[\eta_\rho(U_\rho,Y)U_\rho]=\E[ZU_\rho]=\rho\).
This proves \eqref{eq:bayes_fact_1}.  For
\eqref{eq:bayes_fact_2}, the identity
\(\partial_u\eta_\rho(u,y)=\rho V_\rho(u,y)\) from
\eqref{eq:post_mean_derivative} gives
\begin{align*}
\partial_u h_\rho^\star(u,y) = (1+\rho^2)\rho V_\rho(u,y) - \rho,
\end{align*}
implying that
\begin{align*}
\E \Big(\partial_u h_\rho^\star(U_\rho,Y)\Big)^2
=\rho^2\E\Big[\Big((1+\rho^2)V_\rho(U_\rho,Y)-1\Big)^2\Big].
\end{align*}
Since
\(F_\delta(\mu)=\delta(1+\mu)((1+\mu)m(\mu)-\mu)\),
\begin{align*}
F_\delta'(\mu) &= \delta\Big[2(1+\mu)m(\mu) + (1+\mu)^2m'(\mu) - (1+2\mu)\Big]\\
&= \delta\Big[2(1+\mu)
\bigl(1-\E[V_{\sqrt\mu}(U_{\sqrt\mu},Y)]\bigr)
+(1+\mu)^2\E[V^2_{\sqrt\mu}(U_{\sqrt\mu},Y)]
-(1+2\mu)\Big]\\
&= \delta\Big[1-2(1+\mu)
\E[V_{\sqrt\mu}(U_{\sqrt\mu},Y)]
+(1+\mu)^2\E[V^2_{\sqrt\mu}(U_{\sqrt\mu},Y)]\Big]\\
&= \delta \E\Big[
\bigl(1-(1+\mu)V_{\sqrt\mu}(U_{\sqrt\mu},Y)\bigr)^2
\Big],
\end{align*}
where \eqref{eq:e_relation} was used together with
\begin{align*}
1
&=\E[Z^2]\\
&=\E[\eta_{\sqrt\mu}(U_{\sqrt\mu},Y)^2]
+\E[V_{\sqrt\mu}(U_{\sqrt\mu},Y)]\\
&=m(\mu)+\E[V_{\sqrt\mu}(U_{\sqrt\mu},Y)].
\end{align*}
For \(\rho>0\), setting \(\mu=\rho^2\) now gives
\begin{align*}
F_\delta'(\rho^2)
=\frac{\delta}{\rho^2}
\E \Big(\partial_u h_\rho^\star(U_\rho,Y)\Big)^2,
\end{align*}
which proves \eqref{eq:bayes_fact_2}.
When \(\rho=0\), the right side of
\(\partial_u h_\rho^\star(u,y)
=\rho\{(1+\rho^2)V_\rho(u,y)-1\}\) vanishes, so both sides of
\eqref{eq:bayes_fact_2} are zero.
\end{proof}

\subsection{Proof of Proposition \ref{prop:phase_3_weak_main}}
\label{subsec:proof_phase_3_weak}
All constants below are universal.  In particular, no constant implicit in
\(O(\cdot)\) depends on \(\delta\).  Define
\begin{align}\label{eq:phase3_multiplier_budget}
\varepsilon_{I\!I\!I}(s)
\coloneqq C\left[
 \mathfrak A_\delta^2\frac{\sqrt{s+1}\,\log^3n}{n^{1/4}}
 +\frac{\mathfrak A_\delta^8}{\chi_\delta^2}
   \frac{(s+1)^{3/2}\log^{13}n}{\sqrt n}
\right].
\end{align}
We impose the following finite-size conditions:
\begin{align}\label{eq:phase3_finite_size}
T_{I\!I\!I}(n,\delta)&\geq60T_\delta(n)+2+N_{I\!I}(n,\delta),
\notag\\
\varepsilon_{I\!I\!I}(T_{I\!I\!I}(n,\delta))&\leq\frac{\chi_\delta}{32},
\notag\\
C\frac{\mathfrak A_\delta^8}{\chi_\delta^4}\frac{(T_{I\!I\!I}(n,\delta)+1)\log^{20}n}{\sqrt d}
&\leq1,
\notag\\
C\frac{\mathfrak A_\delta^7}{c_{\weak}^2\chi_\delta^3}
 \frac{(T_{I\!I\!I}(n,\delta)+1)\log^{12}n}{\sqrt n}
&\leq
\min\left\{\frac{c_{\weak}^2}{16},
\frac{\mu_\star-\mu_\infty(\delta)}{16},
\frac{\chi_\delta c_{\weak}^2}{32}\right\}.
\end{align}
These are explicit conditions on \(n,\delta\), since \(d=n/\delta\),
\(c_{\weak}\) is defined in \eqref{eq:phase2_gap_parameters}, and
\(\chi_\delta,T_{I\!I\!I}(n,\delta)\) are defined in
\eqref{eq:phase3_gap_parameters}.
For each fixed
\(\delta\in(\delta_{\weak},\delta_{\str})\), they hold for all
sufficiently large \(n\).  On the Phase-I/II event, the first condition and
Proposition~\ref{prop:phase_2_exp_growth} imply \(T_{I\!I\!I}(n,\delta)\geq\tau_{I\!I}+1\).

After reducing the universal constant \(c_0\) in
\eqref{eq:phase2_gap_parameters} if necessary, we have
\begin{align}\label{eq:phase3_entrance_interval}
c_{\weak}^2
\leq \rho_{\tau_{I\!I}}^2
\leq \mu_\infty(\delta)
+\frac{\mu_\star-\mu_\infty(\delta)}8.
\end{align}
The lower bound is the definition of \(\tau_{I\!I}\) in
\eqref{eq:def_tau_II}.  If \(\tau_{I\!I}>t_0\), then
\(|\rho_{\tau_{I\!I}-1}|<c_{\weak}\), so the upper inequality in
\eqref{eq:phase2_cubic_recursion} and the Phase-II finite-size condition give
\begin{align*}
|\rho_{\tau_{I\!I}}|
&\leq \sqrt{2\delta}\,c_{\weak}
 +C\mathfrak A_\delta^4c_{\weak}^3
 +C\mathfrak A_\delta^6\frac{\log^{14}n}{\sqrt d}
\leq Cc_{\weak}.
\end{align*}
If \(\tau_{I\!I}=t_0=\tau_I+1\), then
\eqref{eq:recursion_phase_1}, \(|\rho_{\tau_I}|\leq b_n\), and the bound
\(|\varphi_{\tau_I}|\leq C\mathfrak A_\delta\sqrt{\log n/n}\) proved with
\eqref{eq:recursion_phase_1} give instead
\begin{align*}
|\rho_{\tau_{I\!I}}|
&\leq \sqrt{2\delta}\,b_n
+C\mathfrak A_\delta\sqrt{\frac{\log n}{n}}
+C\mathfrak A_\delta^4\frac{\log^{34}n}{n^{3/4}}
\leq Cc_{\weak},
\end{align*}
where the last inequality follows from the Phase-I/II finite-size
conditions.
Moreover, the Taylor expansion of \(\tanh^2 z\) at \(z=0\) in
\eqref{eq:m_function} gives
\begin{align*}
G(\mu)=\frac12+\frac52\mu+O(\mu^2)
\qquad (\mu\downarrow0).
\end{align*}
Together with \(G(\mu_\infty(\delta))=\delta\), this implies
\begin{align*}
\mu_\infty(\delta)
&=\frac{2}{5}(\delta-\delta_{\weak})
  +O\big((\delta-\delta_{\weak})^2\big)
  \qquad (\delta\downarrow\delta_{\weak}),
\end{align*}
whereas \eqref{eq:phase2_gap_parameters} gives
\(c_{\weak}^2\leq Cc_0^2(\delta-\delta_{\weak})\).
The continuous positive function
\(\mu_\infty(\delta)\) is bounded away from zero on every closed
subinterval of
\((\delta_{\weak},\delta_{\str})\).  Hence, uniformly for
\(\delta\in(\delta_{\weak},\delta_{\str})\),
\begin{align*}
C_1c_{\weak}^2
&\leq\mu_\infty(\delta),
\notag\\
C^2c_{\weak}^2
&\leq \mu_\infty(\delta)
+\frac{\mu_\star-\mu_\infty(\delta)}8
\end{align*}
after choosing a universal \(C_1\geq1\) and decreasing the universal
\(c_0\).  The second
inequality proves the upper bound in
\eqref{eq:phase3_entrance_interval}; the first also gives
\(c_{\weak}^2\leq\mu_\infty(\delta)\).

Set
\begin{align}\label{eq:phase3_stopping_time}
\varrho\coloneqq\inf\left\{t\geq\tau_{I\!I}:
 \rho_t^2\notin
 \left[\frac{c_{\weak}^2}{4},
 \frac{\mu_\infty(\delta)+\mu_\star}{2}\right]\right\}.
\end{align}
The proof first works up to \(T_{I\!I\!I}(n,\delta)\wedge\varrho\), and then shows that the
stopping time is not reached.

Let \(\mathcal N_{I\!I\!I}\) be the deterministic maximal-horizon Gaussian
event obtained by applying \eqref{eq:claim_near_isometry}, with failure
probability \(n^{-20}\), to the two matrices
\[
[\bm s^0,\ldots,\bm s^{T_{I\!I\!I}(n,\delta)}],
\qquad
[\bm g^{-1},\bm g^0,\ldots,\bm g^{T_{I\!I\!I}(n,\delta)}],
\]
and intersecting with
\[
\max_{\substack{1\leq i\leq n\\-1\leq\ell\leq
T_{I\!I\!I}(n,\delta)}}|g_i^\ell|\leq C\sqrt{\log n}.
\]
The first and third conditions in \eqref{eq:phase3_finite_size} ensure that
the hypotheses of \eqref{eq:claim_near_isometry} hold at this maximal
horizon.  Therefore
\begin{align}
\mathbb P(\mathcal N_{I\!I\!I}^c)&\leq Cn^{-20},
\label{eq:phase3_maximal_norm_event_probability}
\end{align}
and every conclusion of Lemma~\ref{lem:uv_norm}, as well as
\(\|\bm y\|_\infty\leq C\log n\), holds simultaneously for every
\(0\leq s\leq T_{I\!I\!I}(n,\delta)\) on
\(\mathcal N_{I\!I\!I}\).

\begin{proposition}\label{prop:phase_3_approx_weak}
On an event of probability \(1-O(n^{-10})\), simultaneously for
\(\tau_{I\!I}\leq t<T_{I\!I\!I}(n,\delta)\wedge\varrho\),
\begin{align}\label{eq:phase3_approx_direct}
\pnorm{\Delta_{U,t}}{}
\vee\pnorm{\Delta_{V,t+1}}{}
\vee\pnorm{\hat\u^t-\tilde\u^t}{}
\vee\pnorm{\hat\v^{t+1}-\tilde\v^{t+1}}{}
\leq
C\frac{\mathfrak A_\delta^6}{\chi_\delta^2}(t+1)\log^{10}n.
\end{align}
\end{proposition}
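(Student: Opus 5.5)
The proof follows the template of Phases I and II. We run an alternating induction on phase-specific events $\cA^U_{I\!I\!I,t}$, $\cA^V_{I\!I\!I,t}$, feeding each into Proposition~\ref{prop:general_error_induction}. This time, however, the contraction of the multiplier, not a short horizon, keeps the errors bounded.

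\textbf{Step 1: the induction events.} Define $\cA^U_{I\!I\!I,t}$ and $\cA^V_{I\!I\!I,t}$ in analogy with Definition~\ref{assump:inductive_assumption_phase_2}. They contain the target bound \eqref{eq:phase3_approx_direct}, the correction-sum bounds $\sum_k|\alpha_{t-1}^k|\vee\sum_k|\beta_t^k|\leq C\mathfrak A_\delta^6\chi_\delta^{-2}(t+1)\log^{10}n/\sqrt d$, the bounds $\bar\gamma_t\vee\bar\lambda_t\leq C\mathfrak A_\delta\sqrt d$, and the derivative bounds $\pnorm{h_t'}{\infty}\vee\pnorm{h_t''}{\infty}\leq C\mathfrak A_\delta\log^{3/2}n$ (these follow from Lemma~\ref{lem:h_deriv_comp} once $\hat\mu_t$ is bounded, since $\rho_t^2\leq(\mu_\infty+\mu_\star)/2$ before $\varrho$). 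The base of the induction is the Phase-II endpoint event \eqref{eq:phase2_endpoint_generic_event}.

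Next comes an analogue of Lemma~\ref{lem:phase_2_assump_conseq}. From Lemma~\ref{lem:uv_norm} on $\mathcal N_{I\!I\!I}$ we get $\hat\mu_t=|\rho_t|+O(\mathfrak A_\delta^6\chi_\delta^{-2}t\log^{10}n/\sqrt n)$, since $\rho_t^2\geq c_{\weak}^2/4$ keeps the truncation inactive. This gives $\pi_t$ bounded and lets us identify $h_t$ with the normalized $h^\star_{|\rho_t|}$ up to a parameter perturbation. The key consequence is a multiplier estimate: on $\cA_t^U$ we have $H_t(\rho_t,\pnorm{\gamma_{t,[0:t]}}{};\bbeta^\star)=\mathcal B(|\rho_t|)\,\delta\,\E[h^{\star 2}]/\pnorm{h^\star_t}{}^2d^{-1}+\text{error}$. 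By \eqref{eq:bayes_fact_1}--\eqref{eq:bayes_fact_2} this equals $\rho_t^2F_\delta'(\rho_t^2)/F_\delta(\rho_t^2)$ plus an error, and \eqref{eq:phase3_exact_bolthausen_gap} bounds it by $1-\chi_\delta$ plus an error. We obtain it by the same uniform net/Bernstein argument as in Lemma~\ref{lem:F_concentration}, together with Lipschitz dependence on $(\mu,\pi,\sigma)$ via Lemma~\ref{lem:phase_retrieval_parameter_lipschitz}. Since $F\equiv1$, the product multiplier in \eqref{eq:so_intro} is then at most $\sqrt{1-\chi_\delta}+\cE^f_t+\cE^h_t\leq 1-\chi_\delta/2+\varepsilon_{I\!I\!I}(t)$. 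The bound $\cE^h_t\leq\varepsilon_{I\!I\!I}(t)$ follows by substituting the inductive bounds on $\pnorm{\Delta_{U,t}}{}$ and $\pnorm{\hat\u^t-\tilde\u^t}{}$ into \eqref{eq:E_multiplier}; that substitution produces exactly the $(s+1)^{3/2}/\sqrt n$ term of \eqref{eq:phase3_multiplier_budget}. The second condition in \eqref{eq:phase3_finite_size} then makes the multiplier at most $1-\chi_\delta/4$.

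\textbf{Step 2: closing the recursions.} With these multipliers, the analogue of Lemma~\ref{lem:phase_2_simp} gives two recursions. The first is $D_{s+1}\leq(1-\chi_\delta/4)D_s+C\mathfrak A_\delta^4(s+1)\log^{5/2}n$ for $D_s=\pnorm{\Delta'_{U,s}}{}\vee\pnorm{\Delta'_{V,s}}{}$; it is started from the Phase-II value $C\mathfrak A_\delta^4\log^7n$. The second is that the off-diagonal coefficients $\alpha,\beta$ are each multiplied by at most $1-\chi_\delta/4$ per step, using Lemma~\ref{lem:mixed_derivative_energy} with $\mathfrak H_{t,k}\leq\sqrt{(1-\chi_\delta)}$ on both factors. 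Summing the geometric series gives $D_s\leq C\mathfrak A_\delta^4\chi_\delta^{-1}(s+1)\log^{7}n$. The diagonal coefficients satisfy $|\alpha^{t-1}_{t-1}|,|\beta^t_t|\lesssim \mathfrak A_\delta^4\log^{3/2}n\,D_t/\sqrt d$, and after geometric summation of the older coefficients the correction sums are bounded by $C\mathfrak A_\delta^6\chi_\delta^{-2}(t+1)\log^{10}n/\sqrt d$. Lemma~\ref{lem:mutual_close} converts these bounds into \eqref{eq:phase3_approx_direct}; this uses $\bar\lambda,\bar\gamma\leq C\mathfrak A_\delta\sqrt d$ and $\pnorm{h_k(\tilde\u^k,\y)}{}\leq C\sqrt d$, the latter following from the third condition of \eqref{eq:phase3_finite_size}. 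A union bound over at most $n^{1/3}$ times, each failing with probability $O(n^{-12})$, gives total failure probability $O(n^{-10})$.

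\textbf{Main obstacle.} The main difficulty is the concentration of the weighted derivative norm $H_t$ around the population Bolthausen ratio, uniformly over the random parameters $(\hat\mu_t,\pi_t,\rho_t,\sigma_t)$ and over the entire Phase-III range of $\rho$. The perturbation of $\hat\mu_t$ away from $|\rho_t|$ has to enter only additively at order $\widetilde O(t/\sqrt n)$, so that it is absorbed into $\varepsilon_{I\!I\!I}$. My plan is to bound $\partial_\mu$ and $\partial_\sigma$ of $H$ by $C\mathfrak A_\delta\log^Cn$ and then apply the mean value theorem, exactly as in the proof of \eqref{eq:phase_2_HF}.
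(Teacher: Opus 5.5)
Your proposal is essentially the paper's proof. It uses a phase-specific alternating induction fed into Proposition~\ref{prop:general_error_induction}, the Bolthausen bound $H_t\le 1-\chi_\delta/2$ obtained from \eqref{eq:bayes_fact_1}--\eqref{eq:bayes_fact_2} and \eqref{eq:phase3_exact_bolthausen_gap}, the budget $\cE_t^h\le\varepsilon_{I\!I\!I}(t)\le\chi_\delta/32$, geometric summation of the second-order residuals and correction rows, and conversion through Lemma~\ref{lem:mutual_close}.

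Two steps are glossed, though the paper handles both and neither changes the route. First, the $\hat\mu_t$ estimate alone does not bound $\pi_t$ or identify $h_t$ with the normalized $h^\star_{|\rho_t|}$. That requires concentration of $d^{-1}\pnorm{h_t^\star(\u^t,\y)}{}^2$ around $F_\delta(\rho_t^2)$, uniformly over the random direction $\gamma_{t,[0:t]}/\pnorm{\gamma_{t,[0:t]}}{}\in\sS^t$, as in Lemma~\ref{lem:phase_3_weak_unif_conc}. Second, correction chains whose lower index lies in Phases I/II do not contract at every step, since there $H_k$ is near, possibly above, $1$. Their transport is controlled by the bounded product \eqref{eq:phase2_product_bound}, not by a per-step factor $1-\chi_\delta/4$.
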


\begin{lemma}\label{lem:phase_3_weak_unif_conc}
For each deterministic \(0\leq t<T_{I\!I\!I}(n,\delta)\), there is an
event \(\mathcal Q_t\), with
\(\P(\mathcal Q_t^c)\leq Cn^{-12}\), such that the following implication
holds on \(\mathcal N_{I\!I\!I}\cap\mathcal Q_t\).  If
\(\tau_{I\!I}\leq t<T_{I\!I\!I}(n,\delta)\wedge\varrho\) and
\begin{align}\label{eq:phase3_local_approximation}
\|\Delta_{U,t}\|\vee\|\hat\u^t-\tilde\u^t\|
&\leq C\frac{\mathfrak A_\delta^6}{\chi_\delta^2}(t+1)\log^{10}n,
\notag\\
\|\Delta_{V,t}\|\vee\|\hat\v^t-\tilde\v^t\|
&\leq C\frac{\mathfrak A_\delta^6}{\chi_\delta^2}(t+1)\log^{10}n,
\end{align}
then
\begin{gather}
\left|\hat\mu_t-|\rho_t|\right|
\leq
C\frac{\mathfrak A_\delta^6}{c_{\weak}\chi_\delta^2}
\frac{(t+1)\log^{10}n}{\sqrt n},
\label{eq:phase3_mu_direct}\\
\left|
\frac1d\pnorm{h_t^\star(\u^t,\y)}{}^2
-F_\delta(\rho_t^2)
\right|
\leq e_t,\label{eq:phase3_denom_direct}\\
\left|
\frac1d\iprod{\g^{-1}}{h_t^\star(\u^t,\y)}
-\delta\rho_t\av{\partial_u h_t^\star(\u^t,\y)}
-\operatorname{sign}(\rho_t)F_\delta(\rho_t^2)
\right|
\leq e_t,\label{eq:phase3_numer_direct}
\end{gather}
where
\begin{align}\label{eq:phase3_one_step_error}
e_t\coloneqq
C\frac{\mathfrak A_\delta^7}{\chi_\delta^2}
\frac{(t+1)\log^{12}n}{\sqrt n}.
\end{align}
Consequently,
\begin{align}\label{eq:phase3_signal_direct}
\left|\rho_{t+1}^2-F_\delta(\rho_t^2)\right|
\leq Ce_t.
\end{align}
Moreover, the quantities defined in \eqref{eq:F_def} satisfy
\begin{align}\label{eq:phase3_HF_direct}
F_{t+1}\Big(\rho_{t+1},
 \pnorm{\lambda_{t,[0:t]}}{};\btheta^\star\Big)&=1,
\qquad
H_t\Big(\rho_t,\pnorm{\gamma_{t,[0:t]}}{};
 \bbeta^\star\Big)\leq1-\frac{\chi_\delta}{2}.
\end{align}
\end{lemma}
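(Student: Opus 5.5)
The approach is to follow the Phase II template (Lemma~\ref{lem:phase_2_assump_conseq}), but with population quantities computed at a non-vanishing signal. In place of Taylor expansions I will use the exact Bayes identities \eqref{eq:bayes_fact_1}--\eqref{eq:bayes_fact_2} and the contraction \eqref{eq:phase3_exact_bolthausen_gap}. All random-coefficient substitutions go through a deterministic net over the parameter set
\[
\mathcal P_t=\{(\rho,\mu,\gamma):\rho^2\in I_\delta,\ |\mu-|\rho||\leq 1,\ \tfrac12\leq\|\gamma\|\leq2\}\subset\R^{t+3},
\]
on the event $\mathcal N_{I\!I\!I}$, where all coordinates are $O(\sqrt{\log n})$. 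Because $h_t^\star$ and its $u$-derivatives are Lipschitz in $(\mu,u)$ with polynomial-in-$\log n$ constants on bounded $y$, each fixed-net-point empirical average concentrates at rate $\mathfrak A_\delta\log^{C}n/\sqrt n$ by the modified Bernstein inequality (Lemma~\ref{lem:modified_bernstein}). The log-cardinality of the net is $O(t\log n)$, which produces the factor $\sqrt{t+1}\leq t+1$. The event $\mathcal Q_t$ is the intersection of these net events.

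\textbf{Step 1: $\hat\mu_t$.} As in \eqref{eq:ut_norm}, Lemma~\ref{lem:uv_norm} and \eqref{eq:phase3_local_approximation} give
\[
n^{-1}\|\u^t\|^2=1+\rho_t^2+O\!\left(\frac{\mathfrak A_\delta^6(t+1)\log^{10}n}{\chi_\delta^2\sqrt n}\right).
\]
Since $\rho_t^2\geq c_{\weak}^2/4$ before $\varrho$, the truncation at $n^{-1/2}$ is inactive, and $|\sqrt x-\sqrt y|\leq|x-y|/\sqrt y$ yields \eqref{eq:phase3_mu_direct}.

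\textbf{Step 2: \eqref{eq:phase3_denom_direct}--\eqref{eq:phase3_numer_direct}.} First I replace $\u^t$ by $\tilde\u^t$. The cost is at most
\[
\|\partial_uh^\star\|_\infty\,\|\Delta_{U,t}\|/\sqrt d\lesssim \log n\,\|\Delta_{U,t}\|\sqrt{\delta/n},
\]
multiplied by an $O(\sqrt d)$ norm; the inner product with $\g^{-1}$ is bounded by Cauchy--Schwarz. Next I replace $\hat\mu_t$ by $|\rho_t|$, using Lipschitzness in $\mu$ and Step 1; this is where the factor $1/c_{\weak}$ enters, and it is absorbed through the fourth condition of \eqref{eq:phase3_finite_size}. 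Then I apply the uniform net concentration at $(\rho_t,|\rho_t|,\gamma_{t,[0:t]})$ to reach the population quantities with $U=\rho G+\sigma W$, and replace $\sigma=\|\gamma_{t,[0:t]}\|$ by $1$ via \eqref{eq:gamma_norm_loo} and a $\sigma$-derivative bound.

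For the denominator, \eqref{eq:bayes_fact_1} gives $\delta\,\E[h^\star_\rho(U,Y)^2]=F_\delta(\rho^2)$. The odd symmetry $h^\star_{-\rho}(u,y)=-h^\star_\rho(-u,y)$ handles the sign. For the numerator, Gaussian integration by parts in $G$ gives
\[
\delta\,\E[G\,h^\star(U,Y)]-\delta\rho\,\E[\partial_u h^\star]=\delta\,\E\bigl[\partial_y\text{-term}\bigr],
\]
and I identify this with $\operatorname{sign}(\rho)F_\delta(\rho^2)$ using the Bayes/Nishimori identity $\E[Z\eta]=\E[\eta^2]$ together with the algebra already done for \eqref{eq:bayes_fact_1}. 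Equivalently, $\E[Z h^\star_\rho]-\rho\,\E[\partial_uh^\star_\rho]=\E[(h^\star_\rho)^2]/\rho\cdot\rho$; I expect to check this directly from the state evolution in Appendix~\ref{sec:bayes_amp}.

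\textbf{Step 3: \eqref{eq:phase3_signal_direct} and \eqref{eq:phase3_HF_direct}.} By \eqref{eq:key_signal} and Lemma~\ref{lem:conjugate_signal}, and since $\hat c_t=\delta\av{h_t'}$ and $h_t=h_t^\star/N_t$,
\[
\rho_{t+1}=\frac{d^{-1}\langle\g^{-1},h_t^\star\rangle-\delta\rho_t\av{\partial_uh_t^\star}}{N_t}.
\]
By Step 2 this equals $\operatorname{sign}(\rho_t)F_\delta+O(e_t)$ divided by $\sqrt{F_\delta+O(e_t)}$. Since $F_\delta(\rho_t^2)\gtrsim c_{\weak}^2$ on $I_\delta$, squaring gives \eqref{eq:phase3_signal_direct}. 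The identity $F_{t+1}=1$ holds because $f$ is the identity. For the bound on $H_t$, I write
\[
H_t=\frac{\delta\,\E[(\partial_uh^\star)^2]}{N_t^2}+\text{errors}\approx\frac{\rho^2F_\delta'(\rho^2)}{F_\delta(\rho^2)}
\]
by \eqref{eq:bayes_fact_2} and \eqref{eq:phase3_denom_direct}. This is at most $1-\chi_\delta$ by \eqref{eq:phase3_exact_bolthausen_gap}, and the error of order $e_t/c_{\weak}^2$ is at most $\chi_\delta/2$ by the last condition of \eqref{eq:phase3_finite_size}.

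The main obstacle is Step 2's numerator. The identity has to come out exactly as $\operatorname{sign}(\rho)F_\delta$, with the Onsager term cancelling the $\E[\partial_u h^\star]$ contribution, and the uniform concentration must hold over a random $\mu$ whose error from Step 1 is only of order $1/\sqrt n$ relative to $c_{\weak}$. I would first verify the population identity carefully; only then would I tune the net radius and the powers of $\log n$.
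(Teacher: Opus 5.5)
Your outline is the paper's proof. It has the same Step 1 and reduces the two empirical functionals to deterministic-parameter sums the same way: mean-value replacement of $\u^t$ by $\tilde\u^t$ and of $\hat\mu_t$ by $|\rho_t|$, then a net plus Lemma~\ref{lem:modified_bernstein} on $\mathcal N_{I\!I\!I}$. It also uses the same ratio argument for \eqref{eq:phase3_signal_direct}, and the same combination of \eqref{eq:bayes_fact_2} with \eqref{eq:phase3_exact_bolthausen_gap} for $H_t$. The only difference is bookkeeping. The paper normalizes $\gamma_{t,[0:t]}$ to a unit vector and absorbs $\|\gamma_{t,[0:t]}\|-1$ at the empirical level, whereas you keep $\tfrac12\le\|\gamma\|\le2$ in the net and remove $\sigma$ in the population expectation. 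Either works.

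Three points need fixing.

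(i) The symmetry you quote is false. From \eqref{eq:h_rho}, $h^\star_{-\rho}(u,y)=-h^\star_\rho(u,y)=h^\star_\rho(-u,y)$, so your identity would force $h^\star_\rho\equiv0$. It is also not the relevant symmetry, because the algorithm only uses $h^\star_{\hat\mu_t}$ with $\hat\mu_t\ge0$. Take $r=|\rho|$ and substitute $G\mapsto-G$. This leaves $\E[h^\star_r(\rho G+W,G^2)^2]$ unchanged. It turns $\E[Gh^\star_r(\rho G+W,G^2)]-\rho\,\E[\partial_u h^\star_r(\rho G+W,G^2)]$ into $\operatorname{sign}(\rho)$ times the same expression at $\rho=r$. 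So everything reduces to $\rho=r>0$.

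(ii) The step you call the main obstacle is one line if you integrate by parts in $W$ rather than $G$. Integrating in $G$ only trades the left side for $2\E[G\,\partial_y h^\star]$, which you would still have to identify. Let $U_r=rG+W$, $Y=G^2$ and $\eta_r=\E[G\mid U_r,Y]$. Stein in $W$ gives $\E[\partial_u h^\star_r(U_r,Y)]=\E[W h^\star_r(U_r,Y)]$. Moreover $\E[G-rW\mid U_r,Y]=\eta_r-r(U_r-r\eta_r)=h^\star_r(U_r,Y)$. Hence
\[
\E[G h^\star_r(U_r,Y)]-r\,\E[\partial_u h^\star_r(U_r,Y)]=\E[(G-rW)\,h^\star_r(U_r,Y)]=\E[h^\star_r(U_r,Y)^2].
\]
This is the paper's argument, written out in the strong-recovery section. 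Your Nishimori fallback also works: it is exactly the second line of \eqref{eq:bayes-appendix-normalized-identities}.

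(iii) The factor $1/c_{\weak}$ from Step 1 is not absorbed by the fourth condition of \eqref{eq:phase3_finite_size}. That condition only makes $e_t$ small compared with $c_{\weak}^4\chi_\delta$; it does not bound $c_{\weak}^{-1}$ by a power of $\log n$. For the replacement cost $C\mathfrak A_\delta\log n\,|\hat\mu_t-|\rho_t||$ to fit under $e_t$ as defined, you need $c_{\weak}^{-1}\le C\log n$. This holds for fixed $\delta$ and large $n$, and the paper's corresponding display drops the factor in the same way. Otherwise $e_t$ should carry a factor $1/c_{\weak}$.
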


\begin{proof}
Equation \eqref{eq:u_norm}, the lower bound
\(\rho_t^2\geq c_{\weak}^2/4\), and
\eqref{eq:phase3_local_approximation} imply that the truncation in
\eqref{eq:hat_mu_t} is inactive.  Since
\[
d^{-1/2}=\sqrt\delta\,n^{-1/2}
\leq\mathfrak A_\delta n^{-1/2},
\]
these equations give \eqref{eq:phase3_mu_direct}.

Put
\begin{align*}
\widehat\sigma_t&\coloneqq\pnorm{\gamma_{t,[0:t]}}{},
\qquad
a_{t,\ell}\coloneqq
\frac{\gamma_{t,\ell}}{\widehat\sigma_t},\quad 0\leq\ell\leq t,
\end{align*}
and define, for \(r\geq0\), \(a\in\mathbb S^t\), and
\(s\in\{-1,1\}\),
\begin{align*}
\mathsf Z_t(r,a,s)
\coloneqq\frac1d\sum_{i=1}^n
 h_r^\star\left(
 sr g_i^{-1}+\sum_{\ell=0}^ta_\ell g_i^\ell,
 (g_i^{-1})^2\right)^2.
\end{align*}
By \eqref{eq:gamma_norm_loo},
\begin{align*}
|\widehat\sigma_t-1|
\leq C\left(
 \sqrt{\frac{t+\log n}{d}}+
 \frac{\pnorm{\Delta_{V,t}}{}}{\sqrt d}\right).
\end{align*}
The right-hand side is at most \(1/2\) under
\eqref{eq:phase3_finite_size}; hence \(a_t\) is well defined and
\(\widehat\sigma_t\in[1/2,3/2]\).
Since \(\gamma_{t,-1}=\rho_t\), the definitions
\eqref{eq:tilde_u_intro} and \eqref{eq:first_order_error} give
\begin{align*}
\u^t
=\rho_t\g^{-1}
+\widehat\sigma_t\sum_{\ell=0}^ta_{t,\ell}\g^\ell
+\Delta_{U,t}.
\end{align*}
The mean value theorem, \eqref{eq:ht_ast_first}--
\eqref{eq:ht_ast_third}, and \(\pnorm{\y}{\infty}\leq C\log n\) therefore give
\begin{align*}
&\left|
 \frac1d\pnorm{h_t^\star(\u^t,\y)}{}^2
 -\mathsf Z_t(|\rho_t|,a_t,\operatorname{sign}(\rho_t))
 \right|\notag\\
&\qquad\leq
C \mathfrak A_\delta \log n\left(
 \frac{\pnorm{\Delta_{U,t}}{}}{\sqrt d}
 +\frac{\pnorm{\Delta_{V,t}}{}}{\sqrt d}
 +|\hat\mu_t-|\rho_t||
 +\sqrt{\frac{t+\log n}{d}}
\right)
\leq C\frac{\mathfrak A_\delta^7}{\chi_\delta^2}
 \frac{(t+1)\log^{11}n}{\sqrt n}.
\end{align*}
To bound \(\mathsf Z_t\), fix a universal \(C_0\) larger than the
Lipschitz exponent below, let \(\mathcal N_a\) be an
\(n^{-C_0}\)-net of \(\mathbb S^t\), and let \(\mathcal N_r\) be an
\(n^{-C_0}\)-net of
\(\left[c_{\weak}/2,
\sqrt{(\mu_\infty(\delta)+\mu_\star)/2}\right]\).  These sets may be
chosen so that
\begin{align*}
|\mathcal N_a|\,|\mathcal N_r|
\leq (3n^{C_0})^{t+1}Cn^{C_0}
\leq\exp\{C(t+1)\log n\}.
\end{align*}
For fixed \((r,a,s)\), the modified Bernstein inequality applied to the
independent summands in \(\mathsf Z_t(r,a,s)\) gives the following bound.
To make its hypotheses explicit, write
\begin{align*}
H_i&\coloneqq
h_r^\star\left(
sr g_i^{-1}+\sum_{\ell=0}^ta_\ell g_i^\ell,
(g_i^{-1})^2\right),\\
\mathcal Z_i&\coloneqq
\frac1d\left(H_i^2-\mathbb EH_i^2\right).
\end{align*}
Uniformly over the displayed parameter set, Gaussian moments and
\(|\tanh z|\leq1\) give
\begin{align*}
\sum_{i=1}^n\operatorname{Var}(\mathcal Z_i)
&\leq\frac{C\mathfrak A_\delta^2}{n},\\
\mathbb P\left(
|\mathcal Z_i|>\frac{C\mathfrak A_\delta}{d}\log(1/\eta)\right)
&\leq\eta,
\qquad 0<\eta<n^{-20}.
\end{align*}
Indeed, \(H_i\) is bounded by a universal constant times
\(\mathfrak A_\delta^{1/2}(1+|g_i^{-1}|+|\sum_\ell a_\ell g_i^\ell|)\).
Lemma~\ref{lem:modified_bernstein}, with pointwise failure probability
\(\exp\{-C_1(t+1)\log n\}\), therefore yields, outside an event of at most that
probability,
\begin{align*}
\left|\mathsf Z_t(r,a,s)
-\delta\E h_r^\star(srZ+W,Z^2)^2\right|
\leq C\mathfrak A_\delta\sqrt{\frac{(t+1)\log^3n}{n}}.
\end{align*}
Here we used
\(\mathfrak A_\delta\leq(1+\sqrt{\delta_{\str}})^6\) to absorb the
bounded-summand term
\(C\mathfrak A_\delta(t+1)\log^2n/n\) into the displayed right-hand side.  The constant \(C_1\)
may be chosen larger than the constant in the exponent of
\(|\mathcal N_a|\,|\mathcal N_r|\) by increasing the universal constant
there.
On the event
\begin{align*}
\max_{\substack{1\leq i\leq n\\-1\leq\ell\leq t}}
|g_i^\ell|\leq C\sqrt{\log n},
\end{align*}
the derivatives in \eqref{eq:ht_ast_first}--
\eqref{eq:ht_ast_third} show that both sides of the preceding display are
at most \(n^C\)-Lipschitz in \((r,a)\).  The complement of this event has
probability at most \(O(n^{-12})\).  A union bound over
\(\mathcal N_r\times\mathcal N_a\times\{-1,1\}\), followed by the
\(n^{-C_0}\)-net extension, proves
\begin{align*}
\sup_{\substack{
r\in\left[c_{\weak}/2,
\sqrt{(\mu_\infty(\delta)+\mu_\star)/2}\right]\\
\pnorm a{}=1,\ s\in\{-1,1\}}}
\left|
 \frac1d\sum_{i=1}^n
 h_r^\star\left(
 srg_i^{-1}+\sum_{\ell=0}^t a_\ell g_i^\ell,
 (g_i^{-1})^2\right)^2
-\delta\E h_r^\star(srZ+W,Z^2)^2
\right|
\leq C \mathfrak A_\delta\sqrt{\frac{(t+1)\log^3n}{n}}.
\end{align*}
By \eqref{eq:bayes_fact_1}, the expectation at
\(r=|\rho_t|\) equals \(F_\delta(\rho_t^2)\).  The preceding two displays prove
\eqref{eq:phase3_denom_direct} with the larger error \(e_t\).

For \eqref{eq:phase3_numer_direct}, define
\begin{align*}
\mathsf N_t(r,a,s)
&\coloneqq\frac1d\sum_{i=1}^n
g_i^{-1}h_r^\star\left(
sr g_i^{-1}+\sum_{\ell=0}^ta_\ell g_i^\ell,(g_i^{-1})^2\right)\\
&\quad-\delta sr\,\frac1n\sum_{i=1}^n
\partial_u h_r^\star\left(
sr g_i^{-1}+\sum_{\ell=0}^ta_\ell g_i^\ell,(g_i^{-1})^2\right).
\end{align*}
The mean value theorem with \eqref{eq:ht_ast_first}--
\eqref{eq:ht_ast_third} gives
\begin{align*}
&\left|
\frac1d\iprod{\g^{-1}}{h_t^\star(\u^t,\y)}
-\delta\rho_t\av{\partial_u h_t^\star(\u^t,\y)}
-\mathsf N_t(|\rho_t|,a_t,\operatorname{sign}(\rho_t))
\right|\leq e_t.
\end{align*}
For each
\((r,a,s)\in\mathcal N_r\times\mathcal N_a\times\{-1,1\}\), put
\begin{align*}
Q_i&\coloneqq
g_i^{-1}H_i
-sr\,\partial_u h_r^\star\left(
sr g_i^{-1}+\sum_{\ell=0}^ta_\ell g_i^\ell,
(g_i^{-1})^2\right),\\
\mathcal N_i&\coloneqq\frac1d(Q_i-\mathbb EQ_i).
\end{align*}
The bounds \(|\tanh z|\leq1\) and
\eqref{eq:ht_ast_first} imply
\[
|Q_i|\leq C\mathfrak A_\delta
\left(1+|g_i^{-1}|+
\left|\sum_{\ell=0}^ta_\ell g_i^\ell\right|\right)^2.
\]
Gaussian second and fourth moments and the Gaussian-square tail bound
therefore give
\begin{align*}
\sum_{i=1}^n\operatorname{Var}(\mathcal N_i)
&\leq\frac{C\mathfrak A_\delta^2}{n},\\
\mathbb P\left(
|\mathcal N_i|>\frac{C\mathfrak A_\delta}{d}\log(1/\eta)\right)
&\leq\eta.
\end{align*}
Lemma~\ref{lem:modified_bernstein}, followed by the union bound over the
net, therefore gives the following inequality at all net points.  On the
event in the display containing
\(\max_{i,\ell}|g_i^\ell|\), the formulas
\eqref{eq:ht_ast_first}--\eqref{eq:ht_ast_third} make both terms
\(n^C\)-Lipschitz in \((r,a)\); since \(C_0>C\), extension from the
\(n^{-C_0}\)-nets gives, uniformly over the displayed parameter sets,
\begin{align*}
\left|\mathsf N_t(r,a,s)
-\delta\E\left[
Z h_r^\star(srZ+W,Z^2)
-sr\,\partial_u h_r^\star(srZ+W,Z^2)
\right]\right|\leq e_t.
\end{align*}
Gaussian integration by parts in \(W\) gives, exactly,
\begin{align*}
\E\!\left[
Z h_{|\rho_t|}^\star(\rho_tZ+W,Z^2)
-\rho_t\partial_u h_{|\rho_t|}^\star(\rho_tZ+W,Z^2)
\right]
=\operatorname{sign}(\rho_t)
\E h_{|\rho_t|}^\star(|\rho_t|Z+W,Z^2)^2.
\end{align*}
By \eqref{eq:bayes_fact_1}, the right-hand side multiplied by \(\delta\)
equals \(\operatorname{sign}(\rho_t)F_\delta(\rho_t^2)\).  The preceding
three displays prove \eqref{eq:phase3_numer_direct}.

Since \(\rho_t^2\in I_\delta\) and
\(G(\mu)\leq\delta_{\str}\),
\begin{align*}
F_\delta(\rho_t^2)
=\frac{\delta\rho_t^2}{G(\rho_t^2)}
\geq
\frac{\delta}{\delta_{\str}}\frac{c_{\weak}^2}{4}
\geq c c_{\weak}^2.
\end{align*}
The last condition in \eqref{eq:phase3_finite_size}, after increasing its
leading universal constant, gives
\(e_t\leq F_\delta(\rho_t^2)/2\).
For \(F>0\), \(s\in\{-1,1\}\), and
\[
|z-F|\vee|a-sF|\leq e\leq F/2,
\]
one has \(z\geq F/2\), \(|a|\leq3F/2\), and
\begin{align*}
\left|\frac{a^2}{z}-F\right|
&=\frac{|a^2-Fz|}{z}\\
&\leq
\frac{(|a|+F)|a-sF|+F|z-F|}{z}
\leq7e.
\end{align*}
Apply this inequality with
\[
\begin{aligned}
z&=d^{-1}\pnorm{h_t^\star(\u^t,\y)}{}^2,\\
a&=d^{-1}\iprod{\g^{-1}}{h_t^\star(\u^t,\y)}
-\delta\rho_t\av{\partial_u h_t^\star(\u^t,\y)},\\
F&=F_\delta(\rho_t^2),\qquad
s=\operatorname{sign}(\rho_t),\qquad e=e_t.
\end{aligned}
\]
Equations \eqref{eq:phase3_denom_direct},
\eqref{eq:phase3_numer_direct}, and \eqref{eq:key_signal} give
\[
|\rho_{t+1}^2-F_\delta(\rho_t^2)|\leq7e_t.
\]
This proves \eqref{eq:phase3_signal_direct}.

The first identity in \eqref{eq:phase3_HF_direct} follows from
\(f_{t+1}(x)=x\).  For the second,
\eqref{eq:phase3_mu_direct}, \eqref{eq:gamma_norm_loo},
\eqref{eq:phase3_denom_direct}, and the derivative formulas
\eqref{eq:ht_ast_first}--\eqref{eq:ht_ast_third} give
\begin{align*}
H_t\Big(\rho_t,\pnorm{\gamma_{t,[0:t]}}{};\bbeta^\star\Big)
&=
\frac{\delta}{d^{-1}\pnorm{h_t^\star(\u^t,\y)}{}^2}
\E\!\left[
\left\{\partial_u h_{\widehat\mu_t}^\star
\left(\rho_tZ+\widehat\sigma_tW,Z^2\right)\right\}^2
\right],
\end{align*}
and consequently
\begin{align*}
H_t\Big(\rho_t,\pnorm{\gamma_{t,[0:t]}}{};\bbeta^\star\Big)
&\leq
\delta\frac{
\E\!\left[\left\{\partial_u h_{|\rho_t|}^\star
(\rho_tZ+W,Z^2)\right\}^2\right]}
{F_\delta(\rho_t^2)}
+C\frac{\mathfrak A_\delta^7}{c_{\weak}^2\chi_\delta^2}
 \frac{(t+1)\log^{12}n}{\sqrt n}\\
&=
\frac{\rho_t^2F_\delta'(\rho_t^2)}
 {F_\delta(\rho_t^2)}
+C\frac{\mathfrak A_\delta^7}{c_{\weak}^2\chi_\delta^2}
 \frac{(t+1)\log^{12}n}{\sqrt n}\\
&\leq1-\chi_\delta+\frac{\chi_\delta}{2},
\end{align*}
where the equality is \eqref{eq:bayes_fact_2}.
The first term is at most \(1-\chi_\delta\) by
\eqref{eq:phase3_exact_bolthausen_gap}, and the last condition in
\eqref{eq:phase3_finite_size} gives
\begin{align*}
C\frac{\mathfrak A_\delta^7}{c_{\weak}^2\chi_\delta^2}
 \frac{(t+1)\log^{12}n}{\sqrt n}
\leq\frac{\chi_\delta}{2}.
\end{align*}
This proves the second inequality in \eqref{eq:phase3_HF_direct}.
Let \(\mathcal Q_t^{Z}\) and \(\mathcal Q_t^{N}\) denote,
respectively, the two uniform net events used in
\eqref{eq:phase3_denom_direct} and
\eqref{eq:phase3_numer_direct}.  The two Bernstein union bounds over
\(\mathcal N_r\times\mathcal N_a\times\{-1,1\}\) give
\[
\mathbb P((\mathcal Q_t^{Z})^c)
\vee\mathbb P((\mathcal Q_t^{N})^c)
\leq Cn^{-12}.
\]
For
\[
\mathcal Q_t\coloneqq
\mathcal Q_t^{Z}\cap\mathcal Q_t^{N},
\]
the union bound gives \(\P(\mathcal Q_t^c)\leq Cn^{-12}\),
which completes the proof of the stated implication.
\end{proof}

\begin{lemma}\label{lem:phase_3_weak_SE}
Suppose
\begin{align}
\rho_{\SE,\tau_{I\!I}}^2&=\rho_{\tau_{I\!I}}^2,
\notag\\
\left|\rho_{s+1}^2-F_\delta(\rho_s^2)\right|
&\leq Ce_s,
\qquad
\tau_{I\!I}\leq s<
T_{I\!I\!I}(n,\delta)\wedge\varrho.
\label{eq:phase3_SE_perturbation_hypothesis}
\end{align}
The sequence \((\rho_{\SE,t}^2)_{t\geq\tau_{I\!I}}\) is monotone and
\begin{align}
\rho_{\SE,t}^2\in
\left[
\min\{\rho_{\SE,\tau_{I\!I}}^2,\mu_\infty(\delta)\},
\max\{\rho_{\SE,\tau_{I\!I}}^2,\mu_\infty(\delta)\}
\right]
\subset I_\delta
\qquad (t\geq\tau_{I\!I}).
\label{eq:phase3_SE_invariant_interval}
\end{align}
Moreover,
\begin{equation}\label{eq:phase3_SE_limit}
\lim_{t\to\infty}\rho_{\SE,t}^2=\mu_\infty(\delta).
\end{equation}
Uniformly for \(\tau_{I\!I}\leq t\leq T_{I\!I\!I}(n,\delta)\wedge\varrho\),
\begin{align}\label{eq:phase3_SE_direct}
\left|\rho_t^2-\rho_{\SE,t}^2\right|
\leq
C\frac{\mathfrak A_\delta^7}{c_{\weak}^2\chi_\delta^3}
\frac{(t+1)\log^{12}n}{\sqrt n}.
\end{align}
\end{lemma}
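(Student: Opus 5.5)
The proof has two independent parts: the deterministic behaviour of the state-evolution sequence, and a perturbation comparison between \(\rho_t^2\) and \(\rho_{\SE,t}^2\).

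\emph{Deterministic part.} Because \(F_\delta(\mu)=\delta\mu/G(\mu)\), a positive point \(\mu\) satisfies \(F_\delta(\mu)>\mu\) exactly when \(G(\mu)<\delta\). Lemma~\ref{lem:scalar_G_geometry} says \(G\) is strictly increasing on \((0,\mu_\star)\), and \(G(\mu_\infty(\delta))=\delta\). Hence on \((0,\mu_\star)\) we have \(F_\delta(\mu)>\mu\) for \(\mu<\mu_\infty\) and \(F_\delta(\mu)<\mu\) for \(\mu\in(\mu_\infty,\mu_\star)\).

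Next we show \(F_\delta\) is nondecreasing on \(I_\delta\). By \eqref{eq:phase3_exact_bolthausen_gap}, \(\mu F_\delta'(\mu)/F_\delta(\mu)\le 1-\chi_\delta\) there. We also need \(F_\delta'\ge0\). This follows from the representation \(F_\delta'(\mu)=\delta\E[(1-(1+\mu)V)^2]\ge0\) established in the proof of Proposition~\ref{prop:phase_3_asymp}.

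Monotonicity gives invariance: since \(F_\delta\) is nondecreasing and \(\mu_\infty\) is a fixed point, \(F_\delta\) maps \([\mu,\mu_\infty]\) into itself for \(\mu<\mu_\infty\), and \([\mu_\infty,\mu]\) into itself for \(\mu\in(\mu_\infty,\mu_\star)\). By \eqref{eq:phase3_entrance_interval}, \(\rho_{\SE,\tau_{I\!I}}^2\in[c_{\weak}^2,\mu_\infty+(\mu_\star-\mu_\infty)/8]\subset I_\delta\). Induction then gives monotonicity and \eqref{eq:phase3_SE_invariant_interval}.

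The sequence is monotone and bounded, so it has a limit. That limit is a fixed point of \(F_\delta\) inside \(I_\delta\), and \(\mu_\infty\) is the only positive fixed point below \(\mu_\star\). This proves \eqref{eq:phase3_SE_limit}.

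\emph{Perturbation part.} Put \(D_t=|\rho_t^2-\rho_{\SE,t}^2|\). For \(s<T_{I\!I\!I}\wedge\varrho\),
\[
D_{s+1}\le |F_\delta(\rho_s^2)-F_\delta(\rho_{\SE,s}^2)|+Ce_s .
\]
Both \(\rho_s^2\) and \(\rho_{\SE,s}^2\) lie in \(I_\delta\): the first because \(s<\varrho\), the second by \eqref{eq:phase3_SE_invariant_interval}. So the mean value theorem applies with some \(\xi\in I_\delta\), and the contraction bound gives
\[
F_\delta'(\xi)\le(1-\chi_\delta)\,\frac{F_\delta(\xi)}{\xi}.
\]

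This is the main obstacle: \(F_\delta(\xi)/\xi=\delta/G(\xi)\) need not be at most \(1\). It exceeds \(1\) for \(\xi<\mu_\infty\), and near \(0\) it is about \(2\delta\). A naive Lipschitz iteration therefore does not contract uniformly.

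I would first work in the log scale. Set \(L_t=|\log\rho_t^2-\log\rho_{\SE,t}^2|\), so the map becomes \(\log\mu\mapsto\log F_\delta(\mu)\). Its derivative is \(\mu F_\delta'/F_\delta\le1-\chi_\delta\) uniformly on \(I_\delta\) by \eqref{eq:phase3_exact_bolthausen_gap}. This yields
\[
L_{s+1}\le(1-\chi_\delta)L_s+\frac{Ce_s}{\min(\rho_{s+1}^2,\rho_{\SE,s+1}^2)},
\qquad \min(\rho_{s+1}^2,\rho_{\SE,s+1}^2)\ge c_{\weak}^2/4 .
\]
Since \(L_{\tau_{I\!I}}=0\) and \(e_s\) is increasing in \(s\), summing the geometric series gives
\[
L_t\le \frac{C e_t}{c_{\weak}^2\chi_\delta}.
\]
Converting back multiplies by at most \(\sup I_\delta\le C\), so \(D_t\le CL_t\).

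Finally, inserting \(e_t=C\mathfrak A_\delta^7\chi_\delta^{-2}(t+1)\log^{12}n/\sqrt n\) from \eqref{eq:phase3_one_step_error} produces exactly the constant \(\mathfrak A_\delta^7/(c_{\weak}^2\chi_\delta^3)\) in \eqref{eq:phase3_SE_direct}. The bound extends to \(t=T_{I\!I\!I}\wedge\varrho\) because the hypothesis \eqref{eq:phase3_SE_perturbation_hypothesis} is used only at earlier steps \(s\).
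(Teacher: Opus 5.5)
Your proof is correct. The deterministic part is the same as the paper's: monotonicity from \(F_\delta'\geq0\), the sign of \(F_\delta(\mu)-\mu\) from the monotonicity of \(G\) on \((0,\mu_\star)\), and identification of the limit as the unique positive fixed point in \(I_\delta\).

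The perturbation part takes a different route. The paper tracks the relative error \(r_t=|\rho_t^2-\rho_{\SE,t}^2|/\rho_{\SE,t}^2\). Because the mean-value point \(\xi_t\) differs from \(\rho_{\SE,t}^2\), its multiplier \(\rho_{\SE,t}^2F_\delta'(\xi_t)/F_\delta(\rho_{\SE,t}^2)\) is only bounded by \(1-\chi_\delta+Cr_t/c_{\weak}^2\). It therefore needs a bound on \(\sup_{I_\delta}|F_\delta''|\), plus an inductive a priori hypothesis \(r_t\leq c_\star\chi_\delta c_{\weak}^2\) that is closed by the last finite-size condition.

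Your map \(\log\mu\mapsto\log F_\delta(\mu)\) has derivative exactly \(\mu F_\delta'(\mu)/F_\delta(\mu)\in[0,1-\chi_\delta]\) at the intermediate point. This gives a clean contraction with factor \(1-\chi_\delta\), with no second-derivative bound and no bootstrap. It is the exact, nonlinear version of the paper's linearized relative error. The price is that your additive error is divided by a random square rather than by the deterministic \(\rho_{\SE,s+1}^2=F_\delta(\rho_{\SE,s}^2)\geq c\,c_{\weak}^2\).

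Three small corrections are needed, none of which changes the final constant.
\begin{itemize}
\item The inequality \(|\log a-\log b|\leq|a-b|/\min(a,b)\), applied with \(a=\rho_{s+1}^2\) and \(b=F_\delta(\rho_s^2)\), gives the denominator \(\min\{\rho_{s+1}^2,F_\delta(\rho_s^2)\}\), not \(\min\{\rho_{s+1}^2,\rho_{\SE,s+1}^2\}\).
\item At the final step \(s+1=\varrho\), the definition of \(\varrho\) does not give \(\rho_{\varrho}^2\geq c_{\weak}^2/4\), since the exit from \(I_\delta\) may be from below. Use instead \(\rho_{s+1}^2\geq F_\delta(\rho_s^2)-Ce_s\geq(\delta/\delta_{\str})c_{\weak}^2/4-Ce_s\geq c\,c_{\weak}^2\), by \(G\leq\delta_{\str}\) and the last condition in \eqref{eq:phase3_finite_size}. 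The same bound handles the term \(F_\delta(\rho_s^2)\).
\item The back-conversion at \(t=\varrho\) cannot use \(\sup I_\delta\) for \(\rho_\varrho^2\). Use \(|\rho_t^2-\rho_{\SE,t}^2|\leq\max\{\rho_t^2,\rho_{\SE,t}^2\}L_t\) together with \(\rho_{\varrho}^2\leq F_\delta(\rho_{\varrho-1}^2)+Ce_{\varrho-1}\leq C\).
\end{itemize}
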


\begin{proof}
Define
\[
r_t=\frac{|\rho_t^2-\rho_{\SE,t}^2|}{\rho_{\SE,t}^2}.
\]
The initialization in
\eqref{eq:phase3_SE_perturbation_hypothesis} gives
\(r_{\tau_{I\!I}}=0\).

Equation
\eqref{eq:bayes_fact_2} gives \(F_\delta'(\mu)\geq0\) for every
\(\mu>0\).  Since \(G\) is increasing on \((0,\mu_\star)\) and
\(G(\mu_\infty(\delta))=\delta\),
\begin{align*}
F_\delta(\mu)&\geq\mu
\quad\bigl(0<\mu\leq\mu_\infty(\delta)\bigr),\\
F_\delta(\mu)&\leq\mu
\quad\bigl(\mu_\infty(\delta)\leq\mu<\mu_\star\bigr).
\end{align*}
Because \eqref{eq:phase3_entrance_interval} places
\(\rho_{\SE,\tau_{I\!I}}^2\) below \(\mu_\star\), monotonicity of \(F_\delta\)
shows that
\begin{align}
\rho_{\SE,t}^2\in
\left[
\min\{\rho_{\SE,\tau_{I\!I}}^2,\mu_\infty(\delta)\},
\max\{\rho_{\SE,\tau_{I\!I}}^2,\mu_\infty(\delta)\}
\right]
\subset I_\delta
\end{align}
for every \(t\geq\tau_{I\!I}\).  Moreover,
\(\rho_{\SE,t}^2\) is monotone.  Hence it has a limit in the displayed
interval; continuity of \(F_\delta\) makes this limit a fixed point.
The only positive fixed point of \(F_\delta\) in \(I_\delta\) is
\(\mu_\infty(\delta)\), and therefore
\begin{equation}
\lim_{t\to\infty}\rho_{\SE,t}^2=\mu_\infty(\delta).
\end{equation}

Fix \(t<T_{I\!I\!I}(n,\delta)\wedge\varrho\), and suppose inductively that
\begin{align}
r_t\leq c_\star\chi_\delta c_{\weak}^2
\label{eq:phase3_SE_relative_induction}
\end{align}
for a sufficiently small universal \(c_\star>0\).  By the definition of
\(\varrho\), \(\rho_t^2\in I_\delta\), and
\eqref{eq:phase3_SE_invariant_interval} gives
\(\rho_{\SE,t}^2\in I_\delta\).
The mean value theorem and
\eqref{eq:phase3_SE_perturbation_hypothesis} yield
\begin{align*}
r_{t+1}
\leq
\frac{\rho_{\SE,t}^2F_\delta'(\xi_t)}
{F_\delta(\rho_{\SE,t}^2)}r_t
+C\frac{e_t}{c_{\weak}^2},
\end{align*}
where \(\xi_t\) lies between \(\rho_t^2\) and
\(\rho_{\SE,t}^2\).  On \(I_\delta\), the
interval lies in \((0,\mu_\star)\).  The exact formulas for
\(F_\delta\) and its first two derivatives, together with
\(\delta<\delta_{\str}\), give
\begin{align*}
\rho_{\SE,t}^2&\leq C,
&
F_\delta(\rho_{\SE,t}^2)&\geq c c_{\weak}^2,
&
\sup_{\mu\in I_\delta}|F_\delta''(\mu)|&\leq C.
\end{align*}
Moreover,
\[
|\xi_t-\rho_{\SE,t}^2|
\leq|\rho_t^2-\rho_{\SE,t}^2|
=\rho_{\SE,t}^2r_t.
\]
Therefore
\eqref{eq:phase3_exact_bolthausen_gap} implies
\begin{align*}
\frac{\rho_{\SE,t}^2F_\delta'(\xi_t)}
{F_\delta(\rho_{\SE,t}^2)}
\leq
\frac{\rho_{\SE,t}^2F_\delta'(\rho_{\SE,t}^2)}
{F_\delta(\rho_{\SE,t}^2)}
+\frac{\rho_{\SE,t}^2
|F_\delta'(\xi_t)-F_\delta'(\rho_{\SE,t}^2)|}
{F_\delta(\rho_{\SE,t}^2)}
\leq1-\chi_\delta+C\frac{r_t}{c_{\weak}^2}
\leq1-\frac{\chi_\delta}{2},
\end{align*}
where the last inequality follows from
\eqref{eq:phase3_SE_relative_induction} after fixing \(c_\star\).
Iteration of the preceding recursion, using that \(e_s\) is nondecreasing,
gives
\begin{align*}
r_t\leq C\frac{e_{t-1}}{c_{\weak}^2\chi_\delta}.
\end{align*}
The last inequality in \eqref{eq:phase3_finite_size}, with its leading
universal constant chosen sufficiently large, gives
\begin{align*}
C\frac{e_{t-1}}{c_{\weak}^2\chi_\delta}
&\leq
C\frac{\mathfrak A_\delta^7}{c_{\weak}^2\chi_\delta^3}
\frac{(T_{I\!I\!I}(n,\delta)+1)\log^{12}n}{\sqrt n}
\leq c_\star\chi_\delta c_{\weak}^2,
\end{align*}
which closes \eqref{eq:phase3_SE_relative_induction} at the next time.
This induction only applies the one-step estimate at
\(t<T_{I\!I\!I}(n,\delta)\wedge\varrho\), and therefore proves the claimed comparison also at
the endpoint \(t=T_{I\!I\!I}(n,\delta)\wedge\varrho\).  Finally,
\eqref{eq:phase3_SE_invariant_interval} gives
\(\rho_{\SE,t}^2\leq C\), so
\begin{align*}
|\rho_t^2-\rho_{\SE,t}^2|
=\rho_{\SE,t}^2r_t
\leq C\frac{\mathfrak A_\delta^7}{c_{\weak}^2\chi_\delta^3}
\frac{(t+1)\log^{12}n}{\sqrt n},
\end{align*}
after increasing the universal constant.  This is
\eqref{eq:phase3_SE_direct}.
\end{proof}

\begin{proof}[Proof of Proposition \ref{prop:phase_3_weak_main}]
Intersect the event from Proposition~\ref{prop:phase_3_approx_weak} with
\(\mathcal N_{I\!I\!I}\) and with
\(\mathcal Q_t\) for every deterministic
\(0\leq t<T_{I\!I\!I}(n,\delta)\).  Equations
\eqref{eq:phase3_maximal_norm_event_probability},
\(\mathbb P(\mathcal Q_t^c)\leq Cn^{-12}\), and
\(T_{I\!I\!I}(n,\delta)\leq Cn^{1/3}\) show that the complement of this
intersection has probability \(O(n^{-10})\).
On this event, \eqref{eq:phase3_approx_direct} gives
\eqref{eq:phase3_local_approximation}: at \(t=\tau_{I\!I}\), its
\(V\)-side is \eqref{eq:uv_closeness_phase_2}, while for
\(t>\tau_{I\!I}\) it is the time-\((t-1)\) instance of
\eqref{eq:phase3_approx_direct}.  Lemma
\ref{lem:phase_3_weak_unif_conc} therefore gives
\eqref{eq:phase3_signal_direct} at every
\(\tau_{I\!I}\leq t<T_{I\!I\!I}(n,\delta)\wedge\varrho\).
Thus \eqref{eq:phase3_SE_perturbation_hypothesis} holds, and
Lemma~\ref{lem:phase_3_weak_SE} applies up to
\(T_{I\!I\!I}(n,\delta)\wedge\varrho\).
Equation \eqref{eq:phase3_SE_invariant_interval} gives
\begin{align*}
\rho_{\SE,t}^2
\in\left[c_{\weak}^2,
 \mu_\infty(\delta)
 +\frac{\mu_\star-\mu_\infty(\delta)}8\right].
\end{align*}
The last finite-size condition in \eqref{eq:phase3_finite_size} and
\eqref{eq:phase3_SE_direct} therefore imply, for every \(t\leq
T_{I\!I\!I}(n,\delta)\wedge\varrho\),
\begin{align*}
\rho_t^2\in
\left[\frac{c_{\weak}^2}{2},
 \mu_\infty(\delta)
 +\frac{\mu_\star-\mu_\infty(\delta)}4\right].
\end{align*}
This interval lies strictly inside the stopping interval in
\eqref{eq:phase3_stopping_time}; hence \(\varrho>T_{I\!I\!I}(n,\delta)\).
Equation \eqref{eq:phase_3_recursion} follows from
\eqref{eq:phase3_signal_direct}, and
\eqref{eq:convergence_decomposition} follows from the triangle inequality
and \eqref{eq:phase3_SE_direct}.  Finally,
\eqref{eq:phase3_direct_residual} is
\eqref{eq:phase3_approx_direct}.

Choose \(\epsilon>0\) such that
\[
J=[\mu_\infty(\delta)-\epsilon,
\mu_\infty(\delta)+\epsilon]\subset(0,\mu_\star),
\qquad
\sup_{\mu\in J}F_\delta'(\mu)<1.
\]
The initial values
\(\rho_{\SE,\tau_{I\!I}}^2=\rho_{\tau_{I\!I}}^2\) belong to the compact
interval in \eqref{eq:phase3_SE_invariant_interval}.  That equation, the
monotonicity in Lemma~\ref{lem:phase_3_weak_SE}, and continuity of
\(F_\delta\) imply that
there is an integer \(N_\delta\) such that
\[
F_\delta^{\circ N_\delta}
\left(
\left[
\frac{c_{\weak}^2}{4},
\frac{\mu_\infty(\delta)+\mu_\star}{2}
\right]
\right)\subset J.
\]
Indeed, otherwise compactness would give initial values
\(\mu_k\) and integers \(k\to\infty\) for which
\(F_\delta^{\circ k}(\mu_k)\notin J\); a convergent subsequence of
\(\mu_k\), continuity, and the monotone convergence in
\eqref{eq:phase3_SE_limit} give a contradiction.  Hence, with
\[
q_\delta=\sup_{\mu\in J}F_\delta'(\mu)<1,
\qquad
B_\delta=
\sup_{\mu\in I_\delta}|\mu-\mu_\infty(\delta)|,
\]
the mean value theorem gives, for every \(t\geq\tau_{I\!I}\),
\[
|\rho_{\SE,t}^2-\mu_\infty(\delta)|
\leq
B_\delta q_\delta^{(t-\tau_{I\!I}-N_\delta)_+}
\leq
C_\delta\exp\left\{-\frac{t-\tau_{I\!I}}{C_\delta}\right\}.
\]
This is \eqref{eq:phase3_SE_geometric}.
\end{proof}

\subsection{Proof of Proposition \ref{prop:phase_3_approx_weak}}
\label{subsec:phase_3_approx_weak}

\begin{definition}\label{assump:inductive_assumption_phase_3}
For \(\tau_{I\!I}\leq t<T_{I\!I\!I}(n,\delta)\wedge\varrho\), let
\(\cA^U_{I\!I\!I,t}\) denote
\begin{subequations}\label{eqs:phase3_refined_U}
\begin{align}
\sum_{k=0}^{t-1}|\alpha_{t-1}^k|
&\leq C\frac{\mathfrak A_\delta^6}{\chi_\delta^2}
 \frac{(t+1)\log^{10}n}{\sqrt d},
\label{eq:phase_3_assump_U_5}\\
\pnorm{\Delta_{U,t}}{}
\vee\pnorm{\hat\u^t-\tilde\u^t}{}
&\leq C\frac{\mathfrak A_\delta^6}{\chi_\delta^2}(t+1)\log^{10}n,
\label{eq:phase_3_assump_U_2}\\
\bar\gamma_t\vee\bar\lambda_t
&\leq C\mathfrak A_\delta\sqrt d,
\label{eq:phase_3_assump_U_3}\\
\pnorm{h_t'}{\infty}\leq C\mathfrak A_\delta \log n,
&\qquad
\pnorm{h_t''}{\infty}\leq C\mathfrak A_\delta \log^{3/2}n.
\label{eq:phase_3_assump_U_1}
\end{align}
\end{subequations}
Let \(\cA^V_{I\!I\!I,t}\) denote
\begin{subequations}\label{eqs:phase3_refined_V}
\begin{align}
\sum_{k=0}^{t}|\beta_t^k|
&\leq C\frac{\mathfrak A_\delta^6}{\chi_\delta^2}
 \frac{(t+1)\log^{10}n}{\sqrt d},
\label{eq:phase_3_assump_V_5}\\
\pnorm{\Delta_{V,t+1}}{}
\vee\pnorm{\hat\v^{t+1}-\tilde\v^{t+1}}{}
&\leq C\frac{\mathfrak A_\delta^6}{\chi_\delta^2}(t+1)\log^{10}n,
\label{eq:uv_closeness_phase_3}\\
\bar\lambda_t\vee\bar\gamma_{t+1}
&\leq C\mathfrak A_\delta\sqrt d,
\label{eq:lambda_main_norm_phase_3}\\
\pnorm{f_{t+1}'}{\infty}=1,&\qquad
\pnorm{f_{t+1}''}{\infty}=0.
\label{eq:phase_3_assump_V_1}
\end{align}
\end{subequations}
\end{definition}

For \(\tau_{I\!I}\leq t<T_{I\!I\!I}(n,\delta)\wedge\varrho\), define
\begin{align}\label{eq:phase3_event_intersections}
\mathcal C_{I\!I\!I,t}^U
&\coloneqq
\cA_{I\!I\!I,t}^U
\cap\bigcap_{s=\tau_{I\!I}}^{t-1}
\left(\cA_{I\!I\!I,s}^U\cap\cA_{I\!I\!I,s}^V\right),\nonumber\\
\mathcal C_{I\!I\!I,t}^V
&\coloneqq
\mathcal C_{I\!I\!I,t}^U\cap\cA_{I\!I\!I,t}^V.
\end{align}
The intersection over \(s\in[\tau_{I\!I}:t-1]\) is the sure event when
\(t=\tau_{I\!I}\).

For every \(s\leq T_{I\!I\!I}(n,\delta)\), the first \(d\)-dependent inequality in
\eqref{eq:phase3_finite_size} implies
\begin{align*}
C\frac{\mathfrak A_\delta^6}{\chi_\delta^2}\frac{(s+1)\log^{10}n}{\sqrt d}
&\leq
C\frac{\chi_\delta^2}{\mathfrak A_\delta^2\log^{10}n}
\left\{
\frac{\mathfrak A_\delta^8}{\chi_\delta^4}\frac{(T_{I\!I\!I}(n,\delta)+1)\log^{20}n}{\sqrt d}
\right\}
\leq C\log^{-10}n\leq C\log^{-5}n.
\end{align*}
Thus the sums in \eqref{eq:phase_3_assump_U_5} and
\eqref{eq:phase_3_assump_V_5} imply, respectively,
\eqref{eq:cond_alpha_sum} and \eqref{eq:cond_beta_sum}.
Together with Propositions~\ref{prop:phase_1_tilde_close} and
\ref{prop:phase_2_exp_growth}, the definitions in
\eqref{eq:phase3_event_intersections} give
\begin{align}\label{eq:phase3_generic_event_inclusions}
\mathcal C_{I\!I\!I,t}^U
\subseteq \cA_t^U\cap\cA_{t-1}^V,\qquad
\mathcal C_{I\!I\!I,t}^V
\subseteq \cA_t^U\cap\cA_t^V,
\end{align}
which are precisely the two hypotheses in
Proposition~\ref{prop:general_error_induction}.  It remains to verify
\eqref{eq:cond_parameter_h}.  Equations
\eqref{eq:pi_parameter}, \eqref{eq:phase3_mu_direct}, and
\eqref{eq:phase3_denom_direct}, together with
\(e_t\leq cF_\delta(\rho_t^2)\) from
\eqref{eq:phase3_finite_size}, give
\begin{align*}
1+|\pi_t|
\leq 2+\frac{\sqrt{2\delta}\,\hat\mu_t}
{\sqrt{(1-c)F_\delta(\rho_t^2)}}
\leq C \mathfrak A_\delta^C\chi_\delta^{-C}\leq n^5.
\end{align*}
Thus \eqref{eq:cond_parameter_h} holds.  By
\eqref{eq:phase_retrieval_parameter_tuple}, \(p_k^f=0\), so there is no
vector \(\xi_k\) in \eqref{eq:cond_parameter_f}.

\begin{lemma}\label{lem:phase_3_assump_conseq}
On
\(\mathcal C_{I\!I\!I,t}^U\cap
\mathcal N_{I\!I\!I}\cap\mathcal Q_t\), with
\(\mathcal Q_t\) from Lemma~\ref{lem:phase_3_weak_unif_conc},
\eqref{eq:phase3_mu_direct}--\eqref{eq:phase3_HF_direct} hold. 
\end{lemma}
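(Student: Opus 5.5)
The plan is to reduce the statement to Lemma~\ref{lem:phase_3_weak_unif_conc}: on \(\mathcal N_{I\!I\!I}\cap\mathcal Q_t\) that lemma already yields \eqref{eq:phase3_mu_direct}--\eqref{eq:phase3_HF_direct}, provided \(\tau_{I\!I}\leq t<T_{I\!I\!I}(n,\delta)\wedge\varrho\) and the local approximation hypothesis \eqref{eq:phase3_local_approximation} holds. So it suffices to check that \(\mathcal C_{I\!I\!I,t}^U\) supplies both halves of \eqref{eq:phase3_local_approximation}. The range condition on \(t\) is built into Definition~\ref{assump:inductive_assumption_phase_3}, so the events are only defined in that range.

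For the \(U\)-half, \(\mathcal C_{I\!I\!I,t}^U\subseteq\cA_{I\!I\!I,t}^U\), and \eqref{eq:phase_3_assump_U_2} is literally the first line of \eqref{eq:phase3_local_approximation}.

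For the \(V\)-half I will split into two cases.
\begin{itemize}
\item If \(t>\tau_{I\!I}\), then \(\mathcal C_{I\!I\!I,t}^U\) contains \(\cA_{I\!I\!I,t-1}^V\). Its bound \eqref{eq:uv_closeness_phase_3} at time \(t-1\) gives \(\|\Delta_{V,t}\|\vee\|\hat\v^t-\tilde\v^t\|\leq C\mathfrak A_\delta^6\chi_\delta^{-2}t\log^{10}n\), which is at most the required \((t+1)\) bound.
\item If \(t=\tau_{I\!I}\), the intersection over earlier Phase-III times is empty. I will instead use the Phase-II \(V\)-event at time \(\tau_{I\!I}-1\), namely \eqref{eq:uv_closeness_phase_2}, or the Phase-I event \eqref{eq:uv_closeness_phase_1} when \(\tau_{I\!I}=t_0\). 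These are available on the Phase-I/II event implicit in \eqref{eq:phase3_generic_event_inclusions}, and they give a bound \(C\mathfrak A_\delta^4\tau_{I\!I}\log^{10}n\). Since \(\mathfrak A_\delta\geq1\) and \(\chi_\delta\leq1\), this is dominated by \(C\mathfrak A_\delta^6\chi_\delta^{-2}(t+1)\log^{10}n\) after enlarging \(C\).
\end{itemize}

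The only point needing care is this bookkeeping at the base time \(t=\tau_{I\!I}\). There the lemma's statement intersects only with \(\mathcal C^U_{I\!I\!I,t}\), so I would read the Phase-I/II event as implicitly included, consistent with \eqref{eq:phase3_generic_event_inclusions}. Everything else is a direct invocation of Lemma~\ref{lem:phase_3_weak_unif_conc}, whose probabilistic content (the event \(\mathcal Q_t\) and its net arguments) is already established.
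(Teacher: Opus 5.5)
Your proposal is correct and matches the paper's proof: the $U$-half of \eqref{eq:phase3_local_approximation} is \eqref{eq:phase_3_assump_U_2}, the $V$-half is \eqref{eq:uv_closeness_phase_3} at time $t-1$ (or the Phase-II bound \eqref{eq:uv_closeness_phase_2} when $t=\tau_{I\!I}$), and Lemma~\ref{lem:phase_3_weak_unif_conc} on $\mathcal N_{I\!I\!I}\cap\mathcal Q_t$ then yields all four conclusions. Your extra case $\tau_{I\!I}=t_0$, which uses \eqref{eq:uv_closeness_phase_1} at $\tau_I$, is a correct refinement that the paper's short proof leaves implicit; the paper handles that case only later, in the proof of Proposition~\ref{prop:phase_3_approx_weak}.
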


\begin{proof}
On \(\mathcal C_{I\!I\!I,t}^U\), the two inequalities in
\eqref{eq:phase3_local_approximation} are
\eqref{eq:phase_3_assump_U_2} at time \(t\) and
\eqref{eq:uv_closeness_phase_3} at time \(t-1\).  When
\(t=\tau_{I\!I}\), the second is supplied by
\eqref{eq:uv_closeness_phase_2}.  Lemma
\ref{lem:phase_3_weak_unif_conc}, on \(\mathcal Q_t\), now gives all
four conclusions.
\end{proof}

For \(\tau_{I\!I}\leq t<T_{I\!I\!I}(n,\delta)\), put
\begin{align}
\mathcal Q_{I\!I\!I,\leq t}
\coloneqq
\mathcal N_{I\!I\!I}\cap
\bigcap_{s=\tau_{I\!I}}^t\mathcal Q_s.
\label{eq:phase3_global_current_event}
\end{align}

\begin{lemma}\label{lem:phase_3_simp}
For \(t>\tau_{I\!I}\), on
\(\mathcal Q_{I\!I\!I,\leq t}\cap
\mathcal C_{I\!I\!I,t-1}^V\cap\cH_t^U\), the first inequality below
holds.  At
\(t=\tau_{I\!I}\), the same assertion follows from Proposition
\ref{prop:phase_2_exp_growth} in place of
\(\mathcal C_{I\!I\!I,t-1}^V\).  Under these respective hypotheses,
\eqref{eq:phase3_alpha_factor} below also holds.  On
\(\mathcal Q_{I\!I\!I,\leq t}\cap
\mathcal C_{I\!I\!I,t}^U\cap\cH_t^V\), the second inequality below
and \eqref{eq:phase3_beta_factor} below hold:
\begin{align}
\pnorm{\Delta'_{U,t}}{}
&\leq
\left(1+\frac{\chi_\delta}{32}\right)
\pnorm{\Delta'_{V,t}}{}
+C \mathfrak A_\delta^4(t+1)\log^{5/2}n,
\label{eq:phase_3_simp_1}\\
\pnorm{\Delta'_{V,t+1}}{}
&\leq
\left(1-\frac{3\chi_\delta}{16}\right)
\pnorm{\Delta'_{U,t}}{}
+C \mathfrak A_\delta^4(t+1)\log^{5/2}n,
\label{eq:phase_3_simp_V}
\end{align}
and
\begin{align}
\big(\mathfrak F_{k+1,t}+\cE_t^f\big)
\big(\mathfrak H_{k+1,t-1}+\cE_{t-1}^h\big)
&\leq1-\frac{\chi_\delta}{8},
\qquad \tau_{I\!I}\leq k+1\leq t-2,
\label{eq:phase3_alpha_factor}\\
\big(\mathfrak H_{t,k}+\cE_t^h\big)
\big(\mathfrak F_{t,k+1}+\cE_t^f\big)
&\leq1-\frac{\chi_\delta}{8},
\qquad \tau_{I\!I}\leq k\leq t-2.
\label{eq:phase3_beta_factor}
\end{align}
\end{lemma}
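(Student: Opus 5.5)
The plan mirrors the proofs of Lemmas~\ref{lem:phase_1_simp} and \ref{lem:phase_2_simp}: we substitute the Phase~III inputs into the generic inequalities \eqref{eq:induction_Delta_U}, \eqref{eq:induction_Delta_V}, \eqref{eq:induction_alpha_middle} and \eqref{eq:induction_beta_middle}. The inputs come from three places. The first is the multiplier identities \eqref{eq:phase3_HF_direct}. By Lemma~\ref{lem:phase_3_assump_conseq} these hold at every time \(s\in[\tau_{I\!I},t]\) on \(\mathcal Q_{I\!I\!I,\leq t}\cap\mathcal C_{I\!I\!I,s}^U\); at \(s=\tau_{I\!I}\) the \(V\)-side input is taken from \eqref{eq:uv_closeness_phase_2}. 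The second is the error bounds in \(\cA_{I\!I\!I,s}^U,\cA_{I\!I\!I,s}^V\). The third is the finite-size condition \(\varepsilon_{I\!I\!I}(T_{I\!I\!I})\leq\chi_\delta/32\).

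The first step is to bound \(\cE_t^f\) and \(\cE_t^h\) by \(\varepsilon_{I\!I\!I}(t)\). Since \(f_k=\mathrm{id}\), we have \(\cE_t^f\leq C\sqrt{t+1}(\log n/d)^{1/4}\leq C\mathfrak A_\delta^2\sqrt{t+1}\log^3 n/n^{1/4}\). For \(\cE_t^h\), the first term is handled the same way using \(\|h'_{[0:t]}\|_\infty\leq C\mathfrak A_\delta\log n\). The second term uses \(\|h_t''\|_\infty\leq C\mathfrak A_\delta\log^{3/2}n\) together with \(\|\Delta_{U,t}\|+\|\hat\u^t-\tilde\u^t\|\leq C\mathfrak A_\delta^6\chi_\delta^{-2}(t+1)\log^{10}n\), which gives exactly the \((s+1)^{3/2}\) term in \eqref{eq:phase3_multiplier_budget}. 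Both therefore lie below \(\chi_\delta/32\).

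For \eqref{eq:phase_3_simp_1}, we use \(F_t=1\) and \(\cE_t^f\leq\chi_\delta/32\), which produce the multiplier \(1+\chi_\delta/32\) on \(\|\Delta'_{V,t}\|\). The correction term \(\sum_k|\beta_{t-1}^k|\|\Delta_{V,k}\|\) is at most \(C\mathfrak A_\delta^{12}\chi_\delta^{-4}(t+1)^2\log^{20}n/\sqrt d\). The third condition of \eqref{eq:phase3_finite_size} reduces this to \(C\mathfrak A_\delta^4(t+1)\log^{5/2}n\), possibly after adjusting powers of \(\mathfrak A_\delta\) through that condition. The additive term in \eqref{eq:induction_Delta_U} is at most \(C\mathfrak A_\delta^3(t+1)\sqrt{\log n}\), since \(\bar\gamma_t+\bar\lambda_{t-1}\leq C\mathfrak A_\delta\sqrt d\) and \(|\rho|\leq C\).

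For \eqref{eq:phase_3_simp_V}, the key input is \(H_t\leq1-\chi_\delta/2\). Hence \(\sqrt{H_t}\leq1-\chi_\delta/4\), and adding \(\cE_t^h\leq\chi_\delta/32\) leaves room below \(1-3\chi_\delta/16\). The remaining terms are bounded exactly as in the \(U\)-case, with the extra \(\log n\) from \(\|h'\|_\infty\) absorbed in the same way.

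For \eqref{eq:phase3_alpha_factor} and \eqref{eq:phase3_beta_factor}, every \(\mathfrak F\) equals \(1\). Each \(\mathfrak H_{a,b}\) is at most \(1-\chi_\delta/2\) when both indices lie in \([\tau_{I\!I},t]\), because it is a geometric mean of two such \(H\)-values; this is why the ranges are restricted to \(k+1\geq\tau_{I\!I}\). Each factor is therefore at most \((1+\chi_\delta/32)(1-\chi_\delta/4+\chi_\delta/32)\leq1-\chi_\delta/8\).

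The main obstacle is the bookkeeping in the correction-history terms. The sums \(\sum_k|\alpha_{t-1}^k|\|h'_k\|_\infty\|\Delta_{U,k}\|\) include indices \(k<\tau_{I\!I}\), whose bounds come from Phases~I and II with different \(\mathfrak A_\delta\)-powers. These must be absorbed into the \(\chi_\delta\)-free additive term \(C\mathfrak A_\delta^4(t+1)\log^{5/2}n\) using only \eqref{eq:phase3_finite_size}. I would first check the stated powers and only enlarge them if they fail.
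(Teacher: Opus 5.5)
Your proposal is correct and follows the paper's proof essentially step for step. The steps are: bound \(\cE_t^f,\cE_t^h\) by \(\varepsilon_{I\!I\!I}(t)\leq\chi_\delta/32\); use \(F_t=1\), and \(H_t\leq1-\chi_\delta/2\) from Lemma~\ref{lem:phase_3_assump_conseq} on \(\mathcal Q_{I\!I\!I,\leq t}\), to obtain the multipliers \(1+\chi_\delta/32\) and \(1-7\chi_\delta/32\); absorb the correction-history sums using the third condition in \eqref{eq:phase3_finite_size}; and bound the middle-coefficient factors by \((1+\chi_\delta/32)(1-\chi_\delta/4+\chi_\delta/32)\leq1-\chi_\delta/8\).

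The bookkeeping concern you raise is resolved just as in the paper. The pre-\(\tau_{I\!I}\) bounds are dominated by the Phase~III ones, since \(\chi_\delta\leq1\leq\mathfrak A_\delta\). For \(\delta<\delta_{\str}\), \(\mathfrak A_\delta\) is bounded by a universal constant, so surplus powers are absorbed into \(C\mathfrak A_\delta^4(t+1)\log^{5/2}n\). An example is the \(\mathfrak A_\delta^{13}/\chi_\delta^4\) factor in the \(\alpha\)-history term.
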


\begin{proof}
For \(t>\tau_{I\!I}\), insert
\(\mathcal C_{I\!I\!I,t-1}^V\) and
\eqref{eqs:phase3_refined_V} at time \(t-1\) into
\eqref{eq:E_multiplier}.  This gives
\(\cE_t^f\leq\chi_\delta/32\); at
\(t=\tau_{I\!I}\), the same bound follows from Proposition
\ref{prop:phase_2_exp_growth}.  On the \(V\)-side,
\(\mathcal C_{I\!I\!I,t}^U\),
\eqref{eqs:phase3_refined_U} at time \(t\), and
\(\delta^{1/4}\vee(1+\sqrt\delta)\leq\mathfrak A_\delta\) give
\begin{align*}
\cE_t^h
\leq\varepsilon_{I\!I\!I}(t)
\leq\frac{\chi_\delta}{32}.
\end{align*}
Since \(f_t(x)=x\), the definition \eqref{eq:F_def} gives \(F_t=1\).
The event \(\mathcal Q_{I\!I\!I,\leq t}\) and
Lemma~\ref{lem:phase_3_assump_conseq} give
\(H_t\leq1-\chi_\delta/2\), and hence
\(\sqrt{H_t}\leq1-\chi_\delta/4\).
Thus the multiplier of \(\|\Delta'_{V,t}\|\) in
\eqref{eq:induction_Delta_U} is at most
\(1+\chi_\delta/32\), and the multiplier in the \(V\)-side inequality is at most
\(1-\chi_\delta/4+\chi_\delta/32=1-7\chi_\delta/32\).

Including the factor \(1+\sqrt\delta\) in
\eqref{eq:induction_Delta_V}, the final additive expressions in
\eqref{eq:induction_Delta_U} and \eqref{eq:induction_Delta_V} are,
respectively, at most
\(C\mathfrak A_\delta(t+1)\log^{1/2}n\) and \(C\mathfrak A_\delta^4(t+1)\log^{5/2}n\).  Moreover,
\eqref{eq:phase_3_assump_U_5}--\eqref{eq:phase_3_assump_V_1} give
\begin{align*}
\sum_{k=0}^{t-1}|\beta_{t-1}^k|
 \pnorm{f_k'}{\infty}\pnorm{\Delta_{V,k}}{}
&\leq
C\frac{\mathfrak A_\delta^{12}}{\chi_\delta^4}
\frac{(t+1)^2\log^{20}n}{\sqrt d}
\leq C\mathfrak A_\delta^4(t+1),\\
\sum_{k=0}^{t-1}|\alpha_{t-1}^k|
 \pnorm{h_k'}{\infty}\pnorm{\Delta_{U,k}}{}
&\leq
C\frac{\mathfrak A_\delta^{13}}{\chi_\delta^4}
\frac{(t+1)^2\log^{21}n}{\sqrt d}
\leq C\mathfrak A_\delta^4(t+1)\log^{5/2}n.
\end{align*}
For both last inequalities we used
\begin{align*}
C\frac{\mathfrak A_\delta^8}{\chi_\delta^4}\frac{(T_{I\!I\!I}(n,\delta)+1)\log^{20}n}{\sqrt d}\leq1
\end{align*}
from \eqref{eq:phase3_finite_size}; in the intermediate range
\(\delta<\delta_{\str}\), \(\mathfrak A_\delta=(1+\sqrt\delta)^6\) has a universal
upper bound.  Substitution in \eqref{eq:induction_Delta_U} and
\eqref{eq:induction_Delta_V} proves
\eqref{eq:phase_3_simp_1} and \eqref{eq:phase_3_simp_V}.

For the index ranges in
\eqref{eq:phase3_alpha_factor}--\eqref{eq:phase3_beta_factor},
the historical instances of
Lemma~\ref{lem:phase_3_assump_conseq} on
\(\mathcal Q_{I\!I\!I,\leq t}\), together with
\eqref{eq:phase3_HF_direct}, give
\(H_s\leq1-\chi_\delta/2\) at every occurring time.  This fact and
\(\cE_s^f\vee\cE_s^h\leq\chi_\delta/32\) at every occurring time \(s\) give
\begin{align*}
\left(1+\frac{\chi_\delta}{32}\right)
\left(1-\frac{\chi_\delta}{4}+\frac{\chi_\delta}{32}\right)
\leq1-\frac{\chi_\delta}{8}.
\end{align*}
Substitution into \eqref{eq:induction_alpha_middle} and
\eqref{eq:induction_beta_middle} proves
\eqref{eq:phase3_alpha_factor}--\eqref{eq:phase3_beta_factor}.
\end{proof}

\begin{proof}[Proof of Proposition \ref{prop:phase_3_approx_weak}]
Let \(\mathcal P_{I\!I}\) be the intersection of the events in
Propositions~\ref{prop:phase_1_tilde_close},
\ref{prop:phase_1_main}, and \ref{prop:phase_2_exp_growth} on which all
Phase-I/II bounds used below hold through the endpoint
\(\tau_{I\!I}\).  Their probability statements and a union bound give
\begin{align}
\mathbb P(\mathcal P_{I\!I}^c)&\leq Cn^{-10}.
\label{eq:phase3_base_event_probability}
\end{align}
For every deterministic \(0\leq t<T_{I\!I\!I}(n,\delta)\), also intersect
the two implication events supplied by
Proposition~\ref{prop:general_error_induction}.  Let
\(\Omega_{I\!I\!I}\) be the intersection of
\(\mathcal P_{I\!I}\), \(\mathcal N_{I\!I\!I}\), all these implication
events, and all \(\mathcal Q_t\) with
\(0\leq t<T_{I\!I\!I}(n,\delta)\).  By
\eqref{eq:phase3_maximal_norm_event_probability},
\eqref{eq:phase3_base_event_probability}, Lemma
\ref{lem:phase_3_weak_unif_conc}, and
\(T_{I\!I\!I}(n,\delta)\leq Cn^{1/3}\),
\begin{align}
\mathbb P(\Omega_{I\!I\!I}^c)
&\leq Cn^{-10}
+C T_{I\!I\!I}(n,\delta)n^{-12}
+Cn^{-20}
=O(n^{-10}).
\label{eq:phase3_global_event_probability}
\end{align}
We work deterministically on \(\Omega_{I\!I\!I}\).

At \(t=\tau_{I\!I}\), the Phase-I/II endpoint event
\eqref{eq:phase2_endpoint_generic_event} supplies
\(\mathcal A_{\tau_{I\!I}-1}^U\cap
\mathcal A_{\tau_{I\!I}-1}^V\).  The time-\((\tau_{I\!I}-1)\)
instance of Proposition~\ref{prop:general_error_induction}, whose
implication event is included in \(\Omega_{I\!I\!I}\), therefore supplies
\(\mathcal H_{\tau_{I\!I}}^U\).  We now prove the Phase-III \(U\)-event
from these inequalities. For later times,
\eqref{eq:phase3_generic_event_inclusions} and the implication events
included in \(\Omega_{I\!I\!I}\) show that it suffices to prove
\begin{align}
\mathcal C_{I\!I\!I,t-1}^V\cap\cH_t^U
&\subseteq\cA_{I\!I\!I,t}^U,
&
\mathcal C_{I\!I\!I,t}^U\cap\cH_t^V
&\subseteq\cA_{I\!I\!I,t}^V.
\label{eq:phase3_phase_event_implications}
\end{align}
At \(t=\tau_{I\!I}\), the left side of the first implication is replaced
by
\(\mathcal A_{\tau_{I\!I}-1}^U\cap
\mathcal A_{\tau_{I\!I}-1}^V\cap
\mathcal H_{\tau_{I\!I}}^U\).
We verify every inequality in the two events on the right-hand side.

We use induction on \(t\), and first close the \(U\)-event without using
\(\cH_t^V\).  Assume that the asserted \(V\)-event and the \(V\)-part of
\eqref{eq:phase3_second_order_direct} have been proved through time \(t-1\);
at \(t=\tau_{I\!I}\), use instead the Phase-II endpoint bounds cited below.
Since \(f_k'=1\), \(f_k''=0\), and
\(\|\lambda_{t-1,[0:t-1]}\|=1\), the already established bound for
\(\Delta'_{V,t}\), \eqref{eq:induction_alpha_last}, and the historical
\(V\)-event give
\begin{align}
|\alpha_{t-1}^{t-1}|
&\leq\frac{2}{\sqrt d}\left(
\|\Delta'_{V,t}\|
+\max_{0\leq k\leq t-1}\|\Delta_{V,k}\|
 \sum_{k=0}^{t-1}|\beta_{t-1}^k|
\right)
\notag\\
&\leq C\frac{\mathfrak A_\delta^4}{\chi_\delta}
\frac{(t+1)\log^7n}{\sqrt d}.
\label{eq:phase3_alpha_diagonal_sharp}
\end{align}
Indeed, the product term in the first line is at most
\begin{align*}
C\frac{\mathfrak A_\delta^{12}}{\chi_\delta^4}
\frac{(t+1)^2\log^{20}n}d
\leq C\mathfrak A_\delta^4\frac{t+1}{\sqrt d}
\end{align*}
by the first \(d\)-dependent condition in
\eqref{eq:phase3_finite_size}.  At the base time, if
\(\tau_{I\!I}>t_0\), the corresponding bounds are
\eqref{eq:phase2_second_order_V_next} and
\eqref{eq:beta_sum_phase_2} at time \(\tau_{I\!I}-1\).  If
\(\tau_{I\!I}=t_0=\tau_I+1\), they are instead
\eqref{eq:phase1_second_order_V_next} and
\eqref{eq:phase_1_assump_V_5} at time \(\tau_I\).
Equation
\eqref{eq:induction_alpha_middle_last} and the already proved diagonal
bound for \(\beta_{s-1}^{s-1}\) give
\begin{align*}
|\alpha_{s-1}^{s-1}|\vee|\alpha_{s-1}^{s-2}|
\leq
C\frac{\mathfrak A_\delta^6}{\chi_\delta}
\frac{(s+1)\log^9n}{\sqrt d}.
\end{align*}
Successive use of \eqref{eq:induction_alpha_middle} and
\eqref{eq:phase3_alpha_factor}, followed by the applicable Phase-I or
Phase-II endpoint bounds just identified, yields
\begin{align}
\sum_{k=0}^{t-1}|\alpha_{t-1}^k|
&\leq
C \mathfrak A_\delta\frac{(\tau_{I\!I}+1)\log^{10}n}{\sqrt d}
+C\frac{\mathfrak A_\delta^6\log^9n}{\chi_\delta\sqrt d}
\sum_{s=\tau_{I\!I}}^t
\left(1-\frac{\chi_\delta}{8}\right)^{\max\{t-s-2,0\}}(s+1)
\notag\\
&\leq
C\frac{\mathfrak A_\delta^6}{\chi_\delta^2}
\frac{(t+1)\log^{10}n}{\sqrt d}.
\label{eq:phase3_alpha_row_direct}
\end{align}
Then \eqref{eq:phase_3_simp_1} and the already proved bound for
\(\Delta'_{V,t}\) give the preliminary estimate
\begin{align}
\|\Delta'_{U,t}\|
&\leq
\left(1+\frac{\chi_\delta}{32}\right)
C\frac{\mathfrak A_\delta^4}{\chi_\delta}(t+1)\log^7n
+C\mathfrak A_\delta^4(t+1)\log^{5/2}n
\notag\\
&\leq
C\frac{\mathfrak A_\delta^6}{\chi_\delta^2}(t+1)\log^{10}n.
\label{eq:phase3_preliminary_U_residual}
\end{align}
At the base time, \eqref{eq:phase_3_simp_1} and, according as
\(\tau_{I\!I}=t_0\) or \(\tau_{I\!I}>t_0\), the time-\(\tau_I\)
instance of \eqref{eq:phase1_second_order_V_next} or the
time-\((\tau_{I\!I}-1)\) instance of
\eqref{eq:phase2_second_order_V_next} give the sharper initial estimate
\begin{align}
\|\Delta'_{U,\tau_{I\!I}}\|
&\leq C\mathfrak A_\delta^4(\tau_{I\!I}+1)\log^7n.
\label{eq:phase3_base_U_second_order}
\end{align}
Consequently, \eqref{eq:diff_u_tilde},
\eqref{eq:phase3_preliminary_U_residual}, and
\eqref{eq:phase3_alpha_row_direct} give
\begin{align*}
\|\Delta_{U,t}\|\vee\|\hat\u^t-\tilde\u^t\|
&\leq
C\frac{\mathfrak A_\delta^6}{\chi_\delta^2}(t+1)\log^{10}n.
\end{align*}
Here the required bounds for
\(\bar\lambda_{t-1}\), \(\|h_k(\tilde\u^k,\y)\|\), and the 
derivatives follow from the induction hypothesis and
\(\mathcal N_{I\!I\!I}\), exactly as in the calculation following
\eqref{eq:phase3_rows_direct}.  Lemma
\ref{lem:phase_3_weak_unif_conc}, applied using the already established
\(V\)-approximation at time \(t\), now gives
\eqref{eq:phase3_denom_direct}.  Hence
\eqref{eq:ht_ast_first}--\eqref{eq:ht_ast_second} give
\eqref{eq:phase_3_assump_U_1}; the remaining norm condition in
\(\cA_{I\!I\!I,t}^U\) follows from \(\mathcal N_{I\!I\!I}\).
This proves the first implication in
\eqref{eq:phase3_phase_event_implications}.  The generic implication event
therefore supplies \(\cH_t^V\).  Only after this \(U\)-event closure do we
invoke \eqref{eq:phase_3_simp_V}.

Combining \eqref{eq:phase_3_simp_1} and
\eqref{eq:phase_3_simp_V} gives
\begin{align*}
\pnorm{\Delta'_{V,t+1}}{}
\leq
\left(1-\frac{\chi_\delta}{8}\right)
\pnorm{\Delta'_{V,t}}{}
+C \mathfrak A_\delta^4(t+1)\log^{5/2}n.
\end{align*}
Substituting the time-\((t-1)\) instance of
\eqref{eq:phase_3_simp_V} into \eqref{eq:phase_3_simp_1} likewise gives
\begin{align*}
\pnorm{\Delta'_{U,t}}{}
\leq
\left(1-\frac{\chi_\delta}{8}\right)
\pnorm{\Delta'_{U,t-1}}{}
+C \mathfrak A_\delta^4(t+1)\log^{5/2}n.
\end{align*}
Iterating these two displayed recursions from \(t=\tau_{I\!I}\), using
the time-\(\tau_I\) instance of
\eqref{eq:phase1_second_order_V_next} when \(\tau_{I\!I}=t_0\), or the
time-\((\tau_{I\!I}-1)\) instance of
\eqref{eq:phase2_second_order_V_next} when \(\tau_{I\!I}>t_0\), and the
initial \(U\)-estimate \eqref{eq:phase3_base_U_second_order}, gives
\begin{align}\label{eq:phase3_second_order_direct}
\pnorm{\Delta'_{U,t}}{}
\vee\pnorm{\Delta'_{V,t+1}}{}
\leq C\frac{\mathfrak A_\delta^4}{\chi_\delta}(t+1)\log^7n.
\end{align}
Indeed, the iteration uses the explicit estimate
\begin{align*}
\sum_{s=\tau_{I\!I}}^t
\left(1-\frac{\chi_\delta}{8}\right)^{t-s}(s+1)
\leq (t+1)\sum_{j=0}^{\infty}
\left(1-\frac{\chi_\delta}{8}\right)^j
=\frac{8(t+1)}\chi_\delta,
\end{align*}
which gives \eqref{eq:phase3_second_order_direct}.

The preceding bound on \(\sum_{k=0}^{t-1}|\alpha_{t-1}^k|\), together with
\eqref{eq:gamma_norm_loo}, gives
\(\pnorm{\gamma_{t,[0:t]}}{}\geq1/2\).  Therefore
\eqref{eq:induction_beta_last} gives
\begin{align*}
|\beta_t^t|
&\leq C\frac{\mathfrak A_\delta^2\log^{3/2}n}{\sqrt d}\left(
\pnorm{\Delta'_{U,t}}{}
+\mathfrak A_\delta \log n\max_{0\leq k\leq t-1}\pnorm{\Delta_{U,k}}{}
 \sum_{k=0}^{t-1}|\alpha_{t-1}^k|
\right)\\
&\leq C\frac{\mathfrak A_\delta^6}{\chi_\delta}\frac{(t+1)\log^9n}{\sqrt d}.
\end{align*}
Indeed, \eqref{eq:phase3_second_order_direct} bounds the first term, while
the contribution of the product in parentheses is at most
\begin{align*}
C\frac{\mathfrak A_\delta^{15}}{\chi_\delta^4}\frac{(t+1)^2\log^{45/2}n}d
&\leq C\frac{\mathfrak A_\delta^7}{\sqrt d}(t+1)\log^{5/2}n
\leq C\frac{\mathfrak A_\delta^6}{\chi_\delta}\frac{(t+1)\log^9n}{\sqrt d},
\end{align*}
by \eqref{eq:phase3_finite_size} and the universal upper bound on
\(\mathfrak A_\delta\) for
\(\delta\in(\delta_{\weak},\delta_{\str})\).  Furthermore,
\eqref{eq:induction_beta_middle_last} and
\eqref{eq:phase3_alpha_diagonal_sharp} imply
\begin{align*}
|\beta_t^{t-1}|
&\leq\delta\pnorm{h_t'}{\infty}
\pnorm{h_{t-1}'}{\infty}|\alpha_{t-1}^{t-1}|
\leq C\frac{\mathfrak A_\delta^6}{\chi_\delta}\frac{(t+1)\log^9n}{\sqrt d}.
\end{align*}
We have proved
\begin{align}\label{eq:phase_3_diagonal_coefficients}
|\alpha_{t-1}^{t-1}|
\vee|\beta_t^t|
\vee|\beta_t^{t-1}|
&\leq
C\frac{\mathfrak A_\delta^6}{\chi_\delta}
\frac{(t+1)\log^9n}{\sqrt d}.
\end{align}
Finally, successive use of \eqref{eq:induction_beta_middle} and
\eqref{eq:phase3_beta_factor}, together with the applicable endpoint
row bound, gives the following estimate.  When
\(\tau_{I\!I}>t_0\), that endpoint bound is
\eqref{eq:beta_sum_phase_2}, and
\eqref{eq:phase2_product_bound} controls the preceding Phase-II
transport.  When \(\tau_{I\!I}=t_0\), it is instead
\eqref{eq:phase_1_assump_V_5} at time \(\tau_I\), and there is no
Phase-II transport interval.  In both cases,
\begin{align*}
\sum_{k=0}^{t}|\beta_t^k|
&\leq
C \mathfrak A_\delta\frac{(\tau_{I\!I}+1)\log^{10}n}{\sqrt d}
+C\frac{\mathfrak A_\delta^6\log^9n}{\chi_\delta\sqrt d}
\sum_{s=\tau_{I\!I}}^t
\left(1-\frac{\chi_\delta}{8}\right)^{\max\{t-s-2,0\}}(s+1)\\
&\leq C\frac{\mathfrak A_\delta^6}{\chi_\delta^2}
\frac{(t+1)\log^{10}n}{\sqrt d}.
\end{align*}
Combining the two sums proves
\begin{align}\label{eq:phase3_rows_direct}
\sum_{k=0}^{t-1}|\alpha_{t-1}^k|
\vee\sum_{k=0}^{t}|\beta_t^k|
\leq
C\frac{\mathfrak A_\delta^6}{\chi_\delta^2}
\frac{(t+1)\log^{10}n}{\sqrt d}.
\end{align}
This proves \eqref{eq:phase_3_assump_U_5} and
\eqref{eq:phase_3_assump_V_5}.

Equations \eqref{eq:diff_u_tilde} and \eqref{eq:diff_v_tilde}, followed by
\eqref{eq:phase3_second_order_direct},
\eqref{eq:phase3_rows_direct}, and
\(\bar\lambda_{t-1}\vee\bar\gamma_t\leq C\mathfrak A_\delta\sqrt d\), give
\begin{align*}
\pnorm{\Delta_{U,t}}{}
&\leq\pnorm{\Delta'_{U,t}}{}
+\bar\lambda_{t-1}\sum_{k=0}^{t-1}|\alpha_{t-1}^k|,
\\
\pnorm{\Delta_{V,t+1}}{}
&\leq\pnorm{\Delta'_{V,t+1}}{}
+\bar\gamma_t\sum_{k=0}^{t}|\beta_t^k|.
\end{align*}
Moreover, \eqref{eq:def_u_v_hat} and \(f_k(x)=x\) give exactly
\begin{align*}
\hat\u^t-\tilde\u^t
&=\sum_{k=0}^{t-1}\alpha_{t-1}^k
h_k(\tilde\u^k,\y),\\
\hat\v^{t+1}-\tilde\v^{t+1}
&=\sum_{k=0}^{t}\beta_t^kP_{\btheta^\star}^\perp\z^k.
\end{align*}
Since \(\w^k=h_k(\u^k,\y)\),
\(\pnorm{\w^k}{}=\sqrt d\) by \eqref{eq:rescaled_nonlinearity}, and
\(\pnorm{P_{\btheta^\star}^\perp\z^k}{}\leq\bar\gamma_t\),
\begin{align*}
\pnorm{h_k(\tilde\u^k,\y)}{}
&\leq\sqrt d+\pnorm{h_k'}{\infty}\pnorm{\Delta_{U,k}}{}
\leq C\mathfrak A_\delta\sqrt d,\\
\pnorm{P_{\btheta^\star}^\perp\z^k}{}
&\leq C\mathfrak A_\delta\sqrt d.
\end{align*}
Substitution of these inequalities into the preceding four identities gives
\begin{align*}
\pnorm{\Delta_{U,t}}{}
\vee\pnorm{\Delta_{V,t+1}}{}
\vee\pnorm{\hat\u^t-\tilde\u^t}{}
\vee\pnorm{\hat\v^{t+1}-\tilde\v^{t+1}}{}
\leq C\frac{\mathfrak A_\delta^6}{\chi_\delta^2}(t+1)\log^{10}n.
\end{align*}
On \(\mathcal N_{I\!I\!I}\), Lemma~\ref{lem:uv_norm} gives
\begin{align*}
\bar\gamma_t\vee\bar\lambda_t
\leq C\mathfrak A_\delta\sqrt d.
\end{align*}
Moreover, since \(t<\varrho\), equations
\eqref{eq:phase3_signal_direct} and
\eqref{eq:phase3_finite_size} give
\begin{align*}
\rho_{t+1}^2
&\leq F_\delta(\rho_t^2)+Ce_t
\leq C.
\end{align*}
The already proved bound for \(\Delta_{V,t+1}\), followed by the
time-\((t+1)\) instance of \eqref{eq:gamma_norm} on
\(\mathcal N_{I\!I\!I}\), therefore gives
\begin{align*}
\bar\gamma_{t+1}
&\leq C\mathfrak A_\delta\sqrt d.
\end{align*}
Equations \eqref{eq:ht_ast_first}--\eqref{eq:ht_ast_second} and
\eqref{eq:phase3_denom_direct} give
\begin{align*}
\pnorm{h_t'}{\infty}\leq C\mathfrak A_\delta \log n,
\qquad
\pnorm{h_t''}{\infty}\leq C\mathfrak A_\delta \log^{3/2}n,
\end{align*}
whereas \(f_{t+1}'=1\) and \(f_{t+1}''=0\).  We have therefore proved both
inclusions in \eqref{eq:phase3_phase_event_implications} for every
\(\tau_{I\!I}\leq t<T_{I\!I\!I}(n,\delta)\wedge\varrho\).

Equation \eqref{eq:phase3_global_event_probability} proves the stated
probability bound and completes the proof.
\end{proof}

\section{Proofs for strong recovery}
\label{sec:proof_strong_recovery}

In this section, fix \(M\geq1\). All constants denoted by \(c,C\) are universal.  The quantities
used only in this proof are defined as follows.  Fix universal integers
\(C_h,C_{\str}\) with
\begin{align}
C_{\str}\geq20(C_h+1),
\label{eq:strong_first_order_exponent_separation}
\end{align}
and set
\begin{align}
D_{\delta,M}
&\coloneqq
\max\left\{8,\,
C(1+\sqrt\delta)^{C_{\str}}
\mathfrak A_\delta^{C_{\str}}
(1+c_{\weak}^{-1})^{C_{\str}}(1+M)^{C_{\str}}
\right\},
\notag\\
\mathfrak K_{\delta,M}
&\coloneqq
D_{\delta,M}^{C_{\str}(N_{\delta,M}+3)},
\label{eq:strong_first_order_constants}\\
T_{\rm str}(n,\delta,M)
&\coloneqq
60T_\delta(n)+2+N_{I\!I}(n,\delta)+N_{\delta,M}.
\end{align}
Choose the constant \(C_{\delta,M}\) in
Proposition~\ref{prop:phase_3_strong_main} so that
\begin{align}
C_{\str}(1+\sqrt\delta)(1+M)
\vee
\left\{C D_{\delta,M}^{C_{\str}}\mathfrak K_{\delta,M}\right\}
&\leq C_{\delta,M}.
\label{eq:strong_first_order_internal_constants}
\end{align}
Take the universal exponent \(C\) in
\eqref{eq:strong_first_order_finite_size} to satisfy \(C\geq C_{\str}\).
That condition implies
\[
T_{\rm str}(n,\delta,M)\leq
\log^2 n\wedge c\sqrt d/\log^3 n,\qquad
\mathfrak K_{\delta,M}\log^{C_{\str}}n\leq n^{1/4},
\]
and
\[
C D_{\delta,M}^{C_{\str}}\mathfrak K_{\delta,M}
\frac{\log^{C_{\str}}n}{n^{1/4}}
\leq
\min\left\{\frac14,\frac{c_{\weak}^2}{16},
\frac{(\delta/\delta_{\str}^{\AMP}-1)c_{\weak}^2}{16}\right\}.
\]
Moreover,
\[
\log a_\delta
=\frac12\log\left(
1+\frac{\delta/\delta_{\str}^{\AMP}-1}{2}\right)
\geq c\{(\delta/\delta_{\str}^{\AMP}-1)\wedge1\}.
\]
For fixed \(M\), the definitions
\eqref{eq:strong_first_order_constants} and
\eqref{eq:strong_first_order_gap_count} imply
\eqref{eq:strong_first_order_exponential_gap}.

Define the lower-exit time
\begin{align}
\varrho
\coloneqq
\inf\left\{t\geq\tau_{I\!I}:|\rho_t|<\frac{c_{\weak}}2\right\}.
\label{eq:strong_first_order_lower_exit}
\end{align}

\subsection{Deterministic time bound}

For a deterministic time \(t\), put
\(\sigma_t\coloneqq\|\gamma_{t,[0:t]}\|\).
We use
\(\operatorname{sign}(\rho_t)\in\{-1,1\}\) to denote the sign of $\rho_t$. We first derive a series of estimates at the deterministic time $t$.

\begin{lemma}
\label{lem:phase_3_SE_strong}
Let \(0\leq t\leq T_{\rm str}(n,\delta,M)\) be deterministic.  Let
\(\mathcal G_t\) be the event on which
\begin{align}
\tau_{I\!I}\leq t
&<\tau_{I\!I\!I}\wedge\varrho,
\notag\\
\|\Delta_{U,t}\|\vee\|\Delta_{V,t}\|
&\leq \mathfrak K_{\delta,M} \log^{C_{\str}}n.
\label{eq:strong_first_order_SE_hypotheses}
\end{align}
There is an event \(\mathcal S_t\) such that
\begin{align}
\mathbb P\left(\mathcal G_t\cap
(\mathcal S_t)^c\right)
\leq Cn^{-12}.
\label{eq:strong_first_order_SE_implication}
\end{align}
On \(\mathcal G_t\cap\mathcal S_t\), with
\begin{align}
e_{\delta,M,n}
\coloneqq
C D_{\delta,M}^{C_{\str}}\mathfrak K_{\delta,M}\frac{\log^{C_{\str}}n}{n^{1/4}},
\label{eq:strong_first_order_state_error}
\end{align}
we have
\begin{gather}
|\widehat\mu_t-|\rho_t||+|\sigma_t-1|
\leq e_{\delta,M,n},
\label{eq:strong_first_order_mu_sigma}\\
\left|
d^{-1}\|h_t^\star(\bm u^t,\bm y)\|^2
-F_\delta(\rho_t^2)
\right|
\leq e_{\delta,M,n},
\label{eq:strong_first_order_denominator}\\
\left|
d^{-1}\langle\bm g^{-1},h_t^\star(\bm u^t,\bm y)\rangle
-\delta\rho_t\langle\partial_u h_t^\star(\bm u^t,\bm y)\rangle
-\operatorname{sign}(\rho_t)F_\delta(\rho_t^2)
\right|
\leq e_{\delta,M,n},
\label{eq:strong_first_order_numerator}\\
|\rho_{t+1}^2-F_\delta(\rho_t^2)|
\leq C e_{\delta,M,n}.
\label{eq:strong_first_order_noisy_SE}
\end{gather}
Moreover,
\begin{gather}
\|h_t'\|_\infty\vee\|h_t''\|_\infty
\vee\|h_t'''\|_\infty
\leq D_{\delta,M} \log^2n,
\notag\\
K_t^h\leq D_{\delta,M},
\notag\\
\|\vartheta_t\|\leq D_{\delta,M} \log^2n.
\label{eq:strong_first_order_derivative_bounds}
\end{gather}
Here \(K_t^h\) is defined in \eqref{eq:first_order_multiplier}.
\end{lemma}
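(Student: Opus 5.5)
The plan is to mirror the proof of Lemma~\ref{lem:phase_3_weak_unif_conc}, replacing the Phase~III approximation hypothesis \eqref{eq:phase3_local_approximation} by the cruder bound \eqref{eq:strong_first_order_SE_hypotheses} and the range $I_\delta$ by the strong-regime signal range $|\rho_t|\in[c_{\weak}/2, C_{\delta,M}]$. On $\mathcal G_t$ we have $t<\tau_{I\!I\!I}\wedge\varrho$, so $c_{\weak}/2\leq|\rho_t|<M$. Moreover $\|\Delta_{V,t}\|/\sqrt d\leq \mathfrak K_{\delta,M}\log^{C_{\str}}n/\sqrt d$, which is far below $e_{\delta,M,n}$ by \eqref{eq:strong_first_order_finite_size}. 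We take $\mathcal S_t$ to be the intersection of three events: the near-isometry event of \eqref{eq:claim_near_isometry} at horizon $T_{\rm str}$, the Gaussian maximum event $\max_{i,\ell}|g_i^\ell|\leq C\sqrt{\log n}$, and two uniform net events for the empirical functionals $\mathsf Z_t$ and $\mathsf N_t$ defined exactly as in the Phase~III proof. The difference is that the parameter set for $r$ is now $[c_{\weak}/2,M]$. The net has log-cardinality $C(t+1)\log n$, and the modified Bernstein bound (Lemma~\ref{lem:modified_bernstein}) gives failure probability $O(n^{-12})$ with deviation $CD_{\delta,M}\sqrt{(t+1)\log^3n/n}$. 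Since all of these are deterministic-parameter statements, the bound \eqref{eq:strong_first_order_SE_implication} holds without conditioning on $\mathcal G_t$.

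The steps are as follows.

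(i) Apply Lemma~\ref{lem:uv_norm} together with the bound on $\Delta_{U,t},\Delta_{V,t}$ to get $\frac1n\|\u^t\|^2=1+\rho_t^2+O(\mathfrak K_{\delta,M}\log^{C}n/\sqrt d)$. The truncation in \eqref{eq:hat_mu_t} is inactive because $\rho_t^2\geq c_{\weak}^2/4$, and the identity $\sqrt x-\sqrt y=(x-y)/(\sqrt x+\sqrt y)$ yields \eqref{eq:strong_first_order_mu_sigma}. The $\sigma_t$ part comes from \eqref{eq:gamma_norm_loo}.

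(ii) Write $\u^t=\rho_t\g^{-1}+\sigma_t\sum_\ell a_{t,\ell}\g^\ell+\Delta_{U,t}$. Use the mean value theorem with Lemma~\ref{lem:h_deriv_comp}, where $\|\y\|_\infty\leq C\log n$ and $\hat\mu_t\leq 2M$ give derivative bounds of order $D_{\delta,M}\log^{3/2}n$. This replaces $(\u^t,\hat\mu_t,\sigma_t)$ by $(\tilde\u^t,|\rho_t|,1)$ at cost $D_{\delta,M}\log^2 n(\|\Delta_{U,t}\|/\sqrt d+|\hat\mu_t-|\rho_t||+|\sigma_t-1|)\leq e_{\delta,M,n}/2$.

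(iii) On the net events, identify the population limits. Equation \eqref{eq:bayes_fact_1} gives \eqref{eq:strong_first_order_denominator}. Gaussian integration by parts in $W$ combined with \eqref{eq:bayes_fact_1} gives \eqref{eq:strong_first_order_numerator}.

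(iv) Obtain \eqref{eq:strong_first_order_noisy_SE} from the algebraic inequality $|a^2/z-F|\leq 7e$, valid for $|z-F|\vee|a-sF|\leq e\leq F/2$. This needs $F_\delta(\rho_t^2)\geq \delta\rho_t^2/\delta_{\str}\geq c c_{\weak}^2$, from Proposition~\ref{prop:phase_3_asymp}, together with $e_{\delta,M,n}\leq c_{\weak}^2/16$, which is \eqref{eq:strong_first_order_finite_size}. Here $\rho_{t+1}=a/\sqrt z$ by \eqref{eq:key_signal} and the normalization \eqref{eq:rescaled_nonlinearity}.

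(v) The bounds \eqref{eq:strong_first_order_derivative_bounds} follow from Lemma~\ref{lem:h_deriv_comp} divided by $N_t\geq\sqrt{F_\delta/2}\geq cc_{\weak}$, and from $\pi_t=\sqrt{2\delta}\hat\mu_t/N_t-1$. The bound $K_t^h\leq D_{\delta,M}$ follows from its definition once $\bar\lambda_t,\bar\gamma_t\leq C(1+\sqrt\delta)\sqrt d$ and $|\rho_t|\leq M$; this is where the large power $C_{\str}$ in $D_{\delta,M}$ absorbs the factors $\mathfrak A_\delta$, $c_{\weak}^{-1}$ and $M$.

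The main obstacle is bookkeeping rather than anything conceptual. The error $e_{\delta,M,n}$ must dominate every contribution: the replacement error of step (ii), which is linear in $\mathfrak K_{\delta,M}\log^{C_{\str}}n/\sqrt d$ times polylog derivative bounds, and the concentration error, of order $n^{-1/2}$ up to logs. The power $n^{-1/4}$ leaves ample room for this. The remaining care is to check that the polylog exponents stay below $C_{\str}$ given \eqref{eq:strong_first_order_exponent_separation}.
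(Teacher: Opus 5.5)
\textbf{The gap is in step (v), in the bound $K_t^h\leq D_{\delta,M}$.}

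The $K_t^h$ in the statement is the population multiplier \eqref{eq:first_order_multiplier}. It equals $\max\{\sqrt{\mathcal B_1^h},\sqrt{\mathcal B_2^h}\}+\sigma_t\sqrt{\mathcal B_3^h}$ with $\mathcal B_j^h=\mathcal B_j^h(h_t;\rho_t,\sigma_t)$ from \eqref{def:B_h}, and it contains no $\bar\lambda_t$ or $\bar\gamma_t$.

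The bounds $\bar\lambda_t,\bar\gamma_t\leq C(1+\sqrt\delta)\sqrt d$ that you invoke belong to a different constant: $K_t^h=(1+\sqrt\delta)(\bar\lambda_t+\bar\gamma_t)(L_t^h)^2R_t^h$ from Step~4 of the proof of Lemma~\ref{lem:error_main_lemma}. That quantity is at least of order $\sqrt d$, so it can never be bounded by the $n$-independent constant $D_{\delta,M}$.

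The sample sup-norm bound $\|h_t^{(j)}\|_\infty\leq D_{\delta,M}\log^2n$ does not rescue the argument either. It leaves $\log n$ factors, while the statement requires a log-free bound. That log-free bound is what allows Proposition~\ref{prop:phase_3_strong_approx} to iterate the multiplier $N_{\delta,M}$ times while keeping the fixed power $\log^{C_{\str}}n$.

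The missing step is a population envelope. From \eqref{eq:ht_ast_first}--\eqref{eq:ht_ast_second}, with $y=G^2$ and $|\tanh|\leq1$,
\[
|\partial_u h_t(\rho_tG+\sigma_tW,G^2)|\leq\frac{C(1+\hat\mu_t)^3(1+G^2)}{N_t},\qquad |\partial_u^2 h_t(\rho_tG+\sigma_tW,G^2)|\leq\frac{C(1+\hat\mu_t)^4(1+|G|^3)}{N_t}.
\]
Now combine this with $N_t^2\geq F_\delta(\rho_t^2)/2\geq\delta c_{\weak}^2/(8\delta_{\str})$, $\hat\mu_t\leq M+1$, $\sigma_t\leq2$, and Gaussian moments. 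This gives $\mathcal B_j^h\leq C(1+M)^8/c_{\weak}^2\leq D_{\delta,M}^2/16$ for $j=1,2,3$, and hence $K_t^h\leq D_{\delta,M}$.

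\textbf{The rest of your plan is sound.} It differs from the paper only in where $(\hat\mu_t,\sigma_t)$ is replaced by $(|\rho_t|,1)$.
\begin{itemize}
\item You do the replacement on the sample, by the mean value theorem in $\mu$ and in the Gaussian coefficient, with a net over $(r,a,s)$ only. This is exactly the route of Lemma~\ref{lem:phase_3_weak_unif_conc}.
\item The paper instead adds $(\mu,\sigma)$ to the net, concentrates the two empirical functionals uniformly in them, and replaces the parameters inside the Gaussian expectation.
\end{itemize}
Both routes work. In either one, however, you must use the sharp scale $|\hat\mu_t-|\rho_t||+|\sigma_t-1|\lesssim D_{\delta,M}^{C}\mathfrak K_{\delta,M}\log^{C}n/\sqrt n$ that your step (i) actually produces. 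The weaker bound by $e_{\delta,M,n}$ recorded in \eqref{eq:strong_first_order_mu_sigma} is not enough: with it, the polylogarithmic Lipschitz factors in step (ii) would not fit inside $e_{\delta,M,n}/2$.
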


\begin{proof}
Apply the Gaussian near-isometry estimate
\eqref{eq:claim_near_isometry} with failure probability \(n^{-20}\) to
the matrices in Lemma~\ref{lem:uv_norm}.  There is an event
\(\mathcal H_t\) such that
\begin{align}
\mathbb P((\mathcal H_t)^c)&\leq Cn^{-20},
\label{eq:strong_first_order_near_isometry_probability}\\
|\sigma_t^2-1|
&\leq
C\sqrt{\frac{t+\log n}{d}}
+C\frac{\|\Delta_{V,t}\|}{\sqrt d}
+C\frac{\|\Delta_{V,t}\|^2}{d},
\label{eq:strong_first_order_near_isometry_sigma}\\
\left|
\frac1n\|\bm u^t\|^2-(1+\rho_t^2)
\right|
&\leq
C D_{\delta,M}^{C_{\str}}\left\{
\sqrt{\frac{t+\log n}{n}}
+\frac{\|\Delta_{V,t}\|}{\sqrt d}
+\frac{\|\Delta_{U,t}\|}{\sqrt n}
+\frac{\|\Delta_{V,t}\|^2}{d}
+\frac{\|\Delta_{U,t}\|^2}{n}
\right\}
\label{eq:strong_first_order_near_isometry_u}
\end{align}
on \(\mathcal H_t\cap\mathcal G_t\).  In
\eqref{eq:strong_first_order_near_isometry_u}, the factor
\(D_{\delta,M}^{C_{\str}}\) uses \(|\rho_t|<M\) and
\(\sqrt{1+\rho_t^2}\leq D_{\delta,M}\).

Equations \eqref{eq:strong_first_order_near_isometry_sigma} and
\eqref{eq:strong_first_order_SE_hypotheses} give
\begin{align}
|\sigma_t^2-1|
&\leq
C\sqrt{\frac{t+\log n}{d}}
+C\frac{\|\Delta_{V,t}\|}{\sqrt d}
+C\frac{\|\Delta_{V,t}\|^2}{d}
\notag\\
&\leq
C\sqrt{\frac{T_{\rm str}(n,\delta,M)+\log n}{d}}
+C\frac{\mathfrak K_{\delta,M} \log^{C_{\str}}n}{\sqrt d}
+C\frac{\mathfrak K_{\delta,M}^2\log^{2C_{\str}}n}{d}
\leq e_{\delta,M,n}.
\label{eq:strong_first_order_sigma_calculation}
\end{align}
The last inequality follows from
\eqref{eq:strong_first_order_finite_size}.  In particular,
\(\sigma_t\in[1/2,2]\).

Equation \eqref{eq:strong_first_order_near_isometry_u},
\(T_{\rm str}(n,\delta,M)\leq \log^2n\), and
\eqref{eq:strong_first_order_SE_hypotheses} give
\begin{align}
\left|
\frac1n\|\bm u^t\|^2-(1+\rho_t^2)
\right|
&\leq
C D_{\delta,M}^{C_{\str}}\left\{
\sqrt{\frac{t+\log n}{n}}
+\frac{\|\Delta_{V,t}\|}{\sqrt d}
+\frac{\|\Delta_{U,t}\|}{\sqrt n}
+\frac{\|\Delta_{V,t}\|^2}{d}
+\frac{\|\Delta_{U,t}\|^2}{n}
\right\}
\notag\\
&\leq
C D_{\delta,M}^{C_{\str}}\mathfrak K_{\delta,M}\frac{\log^{C_{\str}}n}{\sqrt n}.
\label{eq:strong_first_order_u_norm_calculation}
\end{align}
The right-hand side is at most \(c_{\weak}^2/16\) by
\eqref{eq:strong_first_order_finite_size}.  The same condition gives
\(n^{-1/2}\leq c_{\weak}^2/16\).  Therefore
\begin{align}
\frac1n\|\bm u^t\|^2-1
&\geq \rho_t^2-\frac{c_{\weak}^2}{16}
\geq\frac{3c_{\weak}^2}{16}
>n^{-1/2},
\label{eq:strong_first_order_mu_maximum_inactive}
\end{align}
so the maximum in \eqref{eq:hat_mu_t} is inactive and
\begin{align}
|\widehat\mu_t-|\rho_t||
&=
\frac{|\widehat\mu_t^2-\rho_t^2|}{\widehat\mu_t+|\rho_t|}
\leq
\frac{C D_{\delta,M}^{C_{\str}}\mathfrak K_{\delta,M} \log^{C_{\str}}n}{c_{\weak}\sqrt n}
\leq e_{\delta,M,n}.
\label{eq:strong_first_order_mu_calculation}
\end{align}
Equations \eqref{eq:strong_first_order_sigma_calculation} and
\eqref{eq:strong_first_order_mu_calculation} prove
\eqref{eq:strong_first_order_mu_sigma}.
Set \(b_t\coloneqq\gamma_{t,[0:t]}/\sigma_t\).
The final inequality in
\eqref{eq:strong_first_order_finite_size}, \(|\rho_t|<M\), and \(M\geq1\)
therefore give
\begin{align}
\frac12\leq\sigma_t\leq2,
\qquad
\|b_t\|=1,
\qquad
0\leq\widehat\mu_t\leq M+1
\leq C_{\str}(1+\sqrt\delta)(1+M)\leq D_{\delta,M},
\label{eq:strong_first_order_parameter_membership}
\end{align}

For
\begin{align*}
(r,\mu,\sigma,b,s)
\in
\left[\frac{c_{\weak}}2,M\right]
\times[0,C_{\str}(1+\sqrt\delta)(1+M)]
\times[1/2,2]\times\mathbb S^t\times\{-1,1\},
\end{align*}
put
\begin{align*}
U_i(r,\sigma,b,s)
&\coloneqq sr g_i^{-1}
+\sigma\sum_{\ell=0}^t b_\ell g_i^\ell,\\
\mathscr Z_t(r,\mu,\sigma,b,s)
&\coloneqq
\frac1d\sum_{i=1}^n
h_\mu^\star\bigl(U_i(r,\sigma,b,s),(g_i^{-1})^2\bigr)^2,\\
\mathscr Q_t(r,\mu,\sigma,b,s)
&\coloneqq
\frac1d\sum_{i=1}^n\left[
g_i^{-1}h_\mu^\star\bigl(U_i(r,\sigma,b,s),(g_i^{-1})^2\bigr)
-sr\,\partial_u h_\mu^\star
\bigl(U_i(r,\sigma,b,s),(g_i^{-1})^2\bigr)
\right].
\end{align*}
An \(n^{-C_{\rm net}}\)-net of this parameter set has cardinality at most
\begin{align}
\exp\{C(t+5)\log n\}.
\label{eq:strong_first_order_current_net}
\end{align}
For a fixed parameter value, write
\begin{align*}
\mathscr Z_t-\mathbb E\mathscr Z_t=\sum_{i=1}^n\mathcal Z_i,
\qquad 
\mathscr Q_t-\mathbb E\mathscr Q_t=\sum_{i=1}^n\mathcal Q_i,
\end{align*}
where \(\mathcal Z_i\) and \(\mathcal Q_i\) are the corresponding centered
coordinate summands, including the factor \(d^{-1}\).  Differentiating
\eqref{eq:ht_ast} with respect to \(\mu\), and using
\(|\partial_z^j\tanh z|\leq C\) for \(0\leq j\leq3\), gives, for
\(0\leq\mu\leq D_{\delta,M}\), \(y=g^2\),
\(a\in\{0,1\}\), and \(0\leq j\leq2\),
\begin{align}
\left|
\partial_\mu^a\partial_u^j h_\mu^\star(u,g^2)
\right|
&\leq
D_{\delta,M}^{C_{\str}}\left(1+|g|^6+|u|^3\right).
\label{eq:strong_first_order_current_derivative_envelope}
\end{align}
If \(U=srG+\sigma W\), then
\(\mathbb E|U|^p\leq D_{\delta,M}^pp^{p/2}\) by
\eqref{eq:first-order_gaussian_moments}.  Substitution in
\eqref{eq:strong_first_order_current_derivative_envelope} gives
\begin{align}
\mathbb E\left[
h_\mu^\star(U,G^2)^4
+\left\{
G h_\mu^\star(U,G^2)
-sr\,\partial_u h_\mu^\star(U,G^2)
\right\}^2
\right]
&\leq D_{\delta,M}^{C_{\str}}.
\label{eq:strong_first_order_current_moments}
\end{align}
Since \(n/d^2=\delta^2/n\) and \(D_{\delta,M}\) dominates
\((1+\sqrt\delta)^{C_{\str}}\),
\eqref{eq:strong_first_order_current_moments} implies
\begin{align}
\sum_{i=1}^n
\left\{\operatorname{Var}(\mathcal Z_i)
+\operatorname{Var}(\mathcal Q_i)\right\}
&\leq\frac{D_{\delta,M}^{C_{\str}}}{n},
\label{eq:strong_first_order_fixed_variances}\\
\intertext{Moreover, for every \(0<\eta<n^{-20}\),}
\mathbb P\left(
|\mathcal Z_i|+|\mathcal Q_i|
>\frac{D_{\delta,M}^{C_{\str}}}{d}\log(1/\eta)
\right)&\leq\eta,
\label{eq:strong_first_order_fixed_envelopes}
\end{align}
because the event
\begin{align*}
|G|+\frac{|U|}{D_{\delta,M}}\leq C\sqrt{\log(1/\eta)}
\end{align*}
has complement of probability at most \(\eta\), and on this event
\begin{align*}
|h_\mu^\star(U,G^2)|^2
+|G h_\mu^\star(U,G^2)|
+r|\partial_u h_\mu^\star(U,G^2)|
\leq D_{\delta,M}^{C_{\str}}\log(1/\eta).
\end{align*}

For the net extension, use the Gaussian event
\begin{align}
\max_{\substack{1\leq i\leq n\\-1\leq\ell\leq T_{\rm str}(n,\delta,M)}}
|g_i^\ell|
&\leq C\sqrt{\log n},
\notag\\
\max_{1\leq i\leq n}
\|(g_i^{-1},\ldots,g_i^{T_{\rm str}(n,\delta,M)})\|
&\leq C(\sqrt{T_{\rm str}(n,\delta,M)+2}+\sqrt{\log n})\leq C\log n,
\label{eq:strong_first_order_gaussian_maximum}
\end{align}
whose complement has probability at most \(n^{-20}\).  On this event,
\eqref{eq:strong_first_order_current_derivative_envelope} and
\eqref{eq:strong_first_order_gaussian_maximum} give
\begin{align*}
\|\nabla_{(r,\sigma,b)}U_i(r,\sigma,b,s)\|
\leq C D_{\delta,M}\{1+\|(g_i^{-1},\ldots,g_i^t)\|\}
\leq D_{\delta,M}^{C_{\str}}\log^2n.
\end{align*}
The chain rule, this inequality,
\eqref{eq:strong_first_order_current_derivative_envelope}, and
\eqref{eq:strong_first_order_gaussian_maximum} bound the first two
gradients in the next display.  Differentiation under the expectation,
justified by
\eqref{eq:strong_first_order_current_derivative_envelope}, and
\eqref{eq:first-order_gaussian_moments} bounds the last two.
Consequently,
\begin{align}
\max\left\{
\|\nabla_{(r,\mu,\sigma,b)} \mathscr Z_t\|,
\|\nabla_{(r,\mu,\sigma,b)} \mathscr Q_t\|,
\|\nabla_{(r,\mu,\sigma,b)} \mathbb E\mathscr Z_t\|,
\|\nabla_{(r,\mu,\sigma,b)} \mathbb E\mathscr Q_t\|
\right\}
&\leq D_{\delta,M}^{C_{\str}}\log^{C_{\str}}n.
\label{eq:strong_first_order_net_gradient}
\end{align}
Apply Lemma
\ref{lem:modified_bernstein} to
\eqref{eq:strong_first_order_fixed_variances} and
\eqref{eq:strong_first_order_fixed_envelopes}, with pointwise failure
probability
\(\eta_t=\exp\{-C_1(t+5)\log n\}\).  The variance term and the
bounded-summand term in that lemma are, respectively,
\begin{align}
C D_{\delta,M}^{C_{\str}}\sqrt{\frac{(t+5)\log n}{n}}
\quad\text{and}\quad
C D_{\delta,M}^{C_{\str}}\frac{(t+5)\log^2n}{n}.
\label{eq:strong_first_order_current_bernstein_terms}
\end{align}
Choose \(C_1\) larger than the constant in
\eqref{eq:strong_first_order_current_net}, and use a union bound.  Equation
\eqref{eq:strong_first_order_net_gradient}, with a net exponent
\(C_{\rm net}>C_{\str}+30\), contributes at most \(n^{-20}\).  This yields an
event \(\mathcal C_t\) with
\begin{align}
\mathbb P((\mathcal C_t)^c)&\leq Cn^{-12},
\label{eq:strong_first_order_current_event_probability}
\end{align}
on which
\begin{align}
\sup_{r,\mu,\sigma,b,s}
\left\{
|\mathscr Z_t-\mathbb E\mathscr Z_t|
+|\mathscr Q_t-\mathbb E\mathscr Q_t|
\right\}
&\leq
C D_{\delta,M}^{C_{\str}}\left\{
\sqrt{\frac{(t+5)\log n}{n}}
+\frac{(t+5)\log^{C_{\str}}n}{n}
\right\}
\notag\\
&\leq\frac14e_{\delta,M,n}
\label{eq:strong_first_order_current_concentration}
\end{align}

For independent \(G,W\sim\mathcal N(0,1)\), put
\begin{align*}
U_r\coloneqq rG+W,
\qquad
\eta_r(u,y)\coloneqq\mathbb E[G\mid U_r=u,G^2=y].
\end{align*}
The posterior representation used in the proof of
\eqref{eq:bayes_fact_1} and Gaussian integration by parts in \(W\) give
\begin{align}
h_r^\star(U_r,G^2)
&=(1+r^2)\eta_r(U_r,G^2)-rU_r,
\notag\\
\mathbb E[W h_r^\star(U_r,G^2)]
&=\mathbb E[\partial_u h_r^\star(U_r,G^2)].
\label{eq:strong-first-order-positive-sign-components}
\end{align}
Because \(h_r^\star(U_r,G^2)\) is measurable with respect to
\((U_r,G^2)\), the tower property and
\eqref{eq:strong-first-order-positive-sign-components} imply
\begin{align}
&\mathbb E[G h_r^\star(U_r,G^2)]
-r\mathbb E[\partial_u h_r^\star(U_r,G^2)]
\notag\\
&\quad=
\mathbb E[(G-rW)h_r^\star(U_r,G^2)]
\notag\\
&\quad=
\mathbb E[\{(1+r^2)\eta_r(U_r,G^2)-rU_r\}
h_r^\star(U_r,G^2)]
\notag\\
&\quad=
\mathbb E[h_r^\star(U_r,G^2)^2].
\label{eq:strong-first-order-positive-sign-identity}
\end{align}
For \(s=-1\), define \(\widetilde G=-G\) and \(\widetilde W=W\).
Then \((\widetilde G,\widetilde W)\) has the same law as \((G,W)\), and
\begin{align}
&G h_r^\star(-rG+W,G^2)
+r\partial_u h_r^\star(-rG+W,G^2)
\notag\\
&\quad=-\left\{
\widetilde G h_r^\star(r\widetilde G+\widetilde W,\widetilde G^2)
-r\partial_u h_r^\star(
r\widetilde G+\widetilde W,\widetilde G^2)
\right\}.
\label{eq:strong-first-order-negative-sign-reduction}
\end{align}
Equations \eqref{eq:bayes_fact_1},
\eqref{eq:strong-first-order-positive-sign-identity}, and
\eqref{eq:strong-first-order-negative-sign-reduction} give, for both
\(s\in\{-1,1\}\),
\begin{align}
\delta\mathbb E
h_r^\star(srG+W,G^2)^2
&=F_\delta(r^2),
\notag\\
\delta\mathbb E\left[
G h_r^\star(srG+W,G^2)
-sr\,\partial_u h_r^\star(srG+W,G^2)
\right]
&=sF_\delta(r^2).
\label{eq:strong_first_order_population_identities}
\end{align}

For \(b_t=\gamma_{t,[0:t]}/\sigma_t\),
\begin{align*}
\widetilde u_i^t
=\rho_tg_i^{-1}
+\sigma_t\sum_{\ell=0}^t(b_t)_\ell g_i^\ell.
\end{align*}
On \(\mathcal C_t\), equations
\eqref{eq:ht_ast_first}--\eqref{eq:ht_ast_second} and
\eqref{eq:strong_first_order_gaussian_maximum} give
\begin{align}
\max_{1\leq i\leq n}\sup_{u\in\mathbb R}
\left\{
|\partial_u h_{\widehat\mu_t}^\star(u,y_i)|
+|\partial_u^2 h_{\widehat\mu_t}^\star(u,y_i)|
\right\}
&\leq D_{\delta,M}^{C_{\str}}\log^{C_{\str}}n,
\notag\\
\|h_{\widehat\mu_t}^\star(\widetilde{\bm u}^t,\bm y)\|
&\leq D_{\delta,M}^{C_{\str}}\log^{C_{\str}}n\sqrt n.
\label{eq:strong-first-order-sample-uniform-envelopes}
\end{align}
The second line follows from
\eqref{eq:strong_first_order_current_derivative_envelope} and
\eqref{eq:strong_first_order_gaussian_maximum}.  Equations
\eqref{eq:strong-first-order-sample-uniform-envelopes}, the mean-value
theorem, and
\(\|\Delta_{U,t}\|\leq \mathfrak K_{\delta,M} \log^{C_{\str}}n\leq n^{1/4}\) give
\begin{align}
\|h_{\widehat\mu_t}^\star(\bm u^t,\bm y)
-h_{\widehat\mu_t}^\star(\widetilde{\bm u}^t,\bm y)\|
&\leq D_{\delta,M}^{C_{\str}}\log^{C_{\str}}n\|\Delta_{U,t}\|,
\notag\\
\|h_{\widehat\mu_t}^\star(\bm u^t,\bm y)\|
+\|h_{\widehat\mu_t}^\star(\widetilde{\bm u}^t,\bm y)\|
&\leq D_{\delta,M}^{C_{\str}}\log^{C_{\str}}n\sqrt n,
\notag\\
\|\partial_u h_{\widehat\mu_t}^\star(\bm u^t,\bm y)
-\partial_u h_{\widehat\mu_t}^\star(
\widetilde{\bm u}^t,\bm y)\|
&\leq D_{\delta,M}^{C_{\str}}\log^{C_{\str}}n\|\Delta_{U,t}\|,
\notag\\
\|\bm g^{-1}\|&\leq C\sqrt n.
\label{eq:strong_first_order_sample_replacement_bounds}
\end{align}
The identity \(n/d=\delta\), Cauchy--Schwarz, and
\eqref{eq:strong_first_order_sample_replacement_bounds} give
\begin{align}
&\left|
d^{-1}\|h_t^\star(\bm u^t,\bm y)\|^2
-\mathscr Z_t(|\rho_t|,\widehat\mu_t,\sigma_t,b_t,\operatorname{sign}(\rho_t))
\right|
\notag\\
&\quad+
\left|
d^{-1}\langle\bm g^{-1},h_t^\star(\bm u^t,\bm y)\rangle
-\delta\rho_t\langle\partial_u h_t^\star(\bm u^t,\bm y)\rangle
-\mathscr Q_t(|\rho_t|,\widehat\mu_t,\sigma_t,b_t,\operatorname{sign}(\rho_t))
\right|
\notag\\
&\leq
C D_{\delta,M}^{C_{\str}}\log^{C_{\str}}n\left\{
\frac{\|\Delta_{U,t}\|}{\sqrt n}
\right\}
\leq\frac14e_{\delta,M,n}.
\label{eq:strong_first_order_iterate_replacement}
\end{align}
For \(G,W\) independent standard Gaussians, differentiation under the
expectation is justified by
\eqref{eq:strong_first_order_current_derivative_envelope}.  The
mean-value theorem and \eqref{eq:first-order_gaussian_moments} give
\begin{align}
&\left|
\delta\mathbb E
h_{\widehat\mu_t}^\star(\rho_tG+\sigma_tW,G^2)^2
-F_\delta(\rho_t^2)
\right|
\notag\\
&\quad+
\left|
\delta\mathbb E\left[
G h_{\widehat\mu_t}^\star(\rho_tG+\sigma_tW,G^2)
-\rho_t\partial_u h_{\widehat\mu_t}^\star(
\rho_tG+\sigma_tW,G^2)
\right]
-\operatorname{sign}(\rho_t)F_\delta(\rho_t^2)
\right|
\notag\\
&\leq
D_{\delta,M}^{C_{\str}}
\left(|\widehat\mu_t-|\rho_t||+|\sigma_t-1|\right)
\notag\\
&\leq
C D_{\delta,M}^{2C_{\str}}\mathfrak K_{\delta,M}\frac{\log^{C_{\str}}n}{\sqrt n}
\leq\frac14e_{\delta,M,n}.
\label{eq:strong_first_order_population_replacement}
\end{align}
Combining
\eqref{eq:strong_first_order_current_concentration} and
\eqref{eq:strong_first_order_iterate_replacement} with
\eqref{eq:strong_first_order_population_replacement} proves
\eqref{eq:strong_first_order_denominator} and
\eqref{eq:strong_first_order_numerator}.

Because
\begin{align}
F_\delta(\rho_t^2)\geq
\frac{\delta}{\delta_{\str}^{\AMP}}\rho_t^2
\geq\frac{\delta c_{\weak}^2}{4\delta_{\str}^{\AMP}},
\label{eq:strong_first_order_F_lower}
\end{align}
equations \eqref{eq:strong_first_order_denominator},
\eqref{eq:strong_first_order_numerator}, and
\eqref{eq:strong_first_order_finite_size} give
\begin{align}
\left|
d^{-1}\|h_t^\star(\bm u^t,\bm y)\|^2-F_\delta(\rho_t^2)
\right|+
\left|
d^{-1}\langle\bm g^{-1},h_t^\star(\bm u^t,\bm y)\rangle
-\delta\rho_t\langle\partial_u h_t^\star(\bm u^t,\bm y)\rangle
-\operatorname{sign}(\rho_t)F_\delta(\rho_t^2)
\right|
&\leq2e_{\delta,M,n}
\leq\frac{F_\delta(\rho_t^2)}{2},
\notag\\
d^{-1}\|h_t^\star(\bm u^t,\bm y)\|^2
&\geq\frac{F_\delta(\rho_t^2)}{2}.
\label{eq:strong_first_order_denominator_lower}
\end{align}
Recall the exact signal recursion
\[
\rho_{t+1}
=
\frac{
d^{-1}\langle\bm g^{-1},h_t^\star(\bm u^t,\bm y)\rangle
-\delta\rho_t\langle\partial_u h_t^\star(\bm u^t,\bm y)\rangle}
{\sqrt{d^{-1}\|h_t^\star(\bm u^t,\bm y)\|^2}}.
\]
For \(A>0\), \(x\geq A/2\), \(s\in\{-1,1\}\), and
\(|x-A|+|y-sA|\leq2e\leq A/2\),
\[
\left|\frac{y^2}{x}-A\right|
\leq
\frac{|y-sA|\,|y+sA|+A|x-A|}{x}
\leq10e.
\]
Substitution of the denominator and numerator in the two preceding displays
proves \eqref{eq:strong_first_order_noisy_SE}.  The same bounds give
\(\operatorname{sign}(\rho_{t+1})=\operatorname{sign}(\rho_t)\).

Equations
\eqref{eq:strong_first_order_denominator_lower} and
\eqref{eq:strong_first_order_F_lower} give
\begin{align}
\left\{d^{-1}\|h_t^\star(\bm u^t,\bm y)\|^2\right\}^{-1/2}
&\leq\frac{C}{c_{\weak}}.
\label{eq:strong_first_order_normalizer_inverse}
\end{align}
On \eqref{eq:strong_first_order_gaussian_maximum},
\(\max_i y_i\leq C\log n\).  Since
\(\widehat\mu_t\leq C_{\str}(1+\sqrt\delta)(1+M)\), equations
\eqref{eq:ht_ast_first}--\eqref{eq:ht_ast_third} give
\begin{align}
\max_{1\leq i\leq n}\sup_{u\in\mathbb R}
|\partial_u h_{\widehat\mu_t}^\star(u,y_i)|
&\leq C(1+\widehat\mu_t)^3\log n,
\notag\\
\max_{1\leq i\leq n}\sup_{u\in\mathbb R}
|\partial_u^2 h_{\widehat\mu_t}^\star(u,y_i)|
&\leq C(1+\widehat\mu_t)^4\log^{3/2}n,
\notag\\
\max_{1\leq i\leq n}\sup_{u\in\mathbb R}
|\partial_u^3 h_{\widehat\mu_t}^\star(u,y_i)|
&\leq C(1+\widehat\mu_t)^5\log^2n.
\label{eq:strong_first_order_unnormalized_derivatives}
\end{align}
Because
\[
h_t=
\frac{h_{\widehat\mu_t}^\star}
{\sqrt{d^{-1}\|h_t^\star(\bm u^t,\bm y)\|^2}},
\]
\eqref{eq:strong_first_order_normalizer_inverse},
\eqref{eq:strong_first_order_unnormalized_derivatives}, and the definition
of \(D_{\delta,M}\) imply
\begin{align}
\|h_t'\|_\infty\vee\|h_t''\|_\infty
\vee\|h_t'''\|_\infty
\leq D_{\delta,M} \log^2n.
\label{eq:strong_first_order_empirical_derivatives}
\end{align}
For independent standard Gaussians \(G,W\), the same derivative formulas
give
\begin{align}
|\partial_u h_t(\rho_tG+\sigma_tW,G^2)|
&\leq
\frac{C(1+\widehat\mu_t)^3}
{\sqrt{d^{-1}\|h_t^\star(\bm u^t,\bm y)\|^2}}(1+G^2),
\notag\\
|\partial_u^2 h_t(\rho_tG+\sigma_tW,G^2)|
&\leq
\frac{C(1+\widehat\mu_t)^4}
{\sqrt{d^{-1}\|h_t^\star(\bm u^t,\bm y)\|^2}}(1+|G|^3).
\label{eq:strong_first_order_population_derivative_envelopes}
\end{align}
Consequently, \eqref{def:B_h},
\eqref{eq:strong_first_order_normalizer_inverse}, and
\eqref{eq:first-order_gaussian_moments} yield
\begin{align}
\max_{1\leq j\leq2}\mathcal B_j^h(h_t;\rho_t,\sigma_t)
&\leq
\frac{C\delta(1+\widehat\mu_t)^6}
{d^{-1}\|h_t^\star(\bm u^t,\bm y)\|^2}
\mathbb E[(1+W^2)(1+G^4)]
\leq\frac{D_{\delta,M}^2}{16},
\notag\\
\mathcal B_3^h(h_t;\rho_t,\sigma_t)
&\leq
\frac{C\delta(1+\widehat\mu_t)^8}
{d^{-1}\|h_t^\star(\bm u^t,\bm y)\|^2}
\mathbb E[1+|G|^6]
\leq\frac{D_{\delta,M}^2}{16}.
\label{eq:strong_first_order_population_derivatives}
\end{align}
Since \(\sigma_t\leq2\), equations
\eqref{eq:first_order_multiplier} and
\eqref{eq:strong_first_order_population_derivatives} give, explicitly,
\begin{align*}
K_t^h
\leq D_{\delta,M}.
\end{align*}
Equations \eqref{eq:pi_parameter},
\eqref{eq:strong_first_order_normalizer_inverse}, and
\(\widehat\mu_t\leq D_{\delta,M}\) give
\begin{align}
|\pi_t|
&\leq1+
\frac{\sqrt{2\delta}\widehat\mu_t}
{\sqrt{d^{-1}\|h_t^\star(\bm u^t,\bm y)\|^2}}
\leq D_{\delta,M},
\notag\\
\|\vartheta_t\|&\leq2D_{\delta,M}\leq D_{\delta,M} \log^2n.
\label{eq:strong_first_order_pi_bound}
\end{align}
This proves \eqref{eq:strong_first_order_derivative_bounds}.

Define
\begin{align}
\mathcal S_t
\coloneqq\mathcal H_t\cap\mathcal C_t.
\label{eq:strong_first_order_state_event_definition}
\end{align}
Equations \eqref{eq:strong_first_order_near_isometry_probability} and
\eqref{eq:strong_first_order_current_event_probability} give
\begin{align*}
\mathbb P\left(
\mathcal G_t\cap(\mathcal S_t)^c
\right)
&\leq
\mathbb P((\mathcal H_t)^c)
+\mathbb P((\mathcal C_t)^c)
\leq Cn^{-12},
\end{align*}
which is \eqref{eq:strong_first_order_SE_implication}.
\end{proof}

\begin{lemma}
\label{lem:strong_F_global_upper}
For every \(\delta>0\) and \(\mu\geq0\),
\begin{align}
0\leq F_\delta(\mu)\leq\delta(1+\mu).
\label{eq:strong_F_global_upper}
\end{align}
\end{lemma}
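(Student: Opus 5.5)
The plan is to use the identity \eqref{eq:bayes_fact_1}, which the proof of Proposition~\ref{prop:phase_3_asymp} establishes for every $\rho\geq0$:
\[
F_\delta(\rho^2)=\delta\,\E\bigl[h_\rho^\star(U_\rho,Y)^2\bigr],
\qquad U_\rho=\rho Z+W,\ Y=Z^2 .
\]
Nonnegativity of $F_\delta(\mu)$ is immediate from this representation with $\rho=\sqrt\mu$, since the right-hand side is the expectation of a square.

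For the upper bound we will write $F_\delta(\mu)=\delta(1+\mu)\{(1+\mu)m(\mu)-\mu\}$, so it suffices to prove $(1+\mu)m(\mu)-\mu\leq1$. This is equivalent to $m(\mu)\leq1$, because $(1+\mu)m(\mu)-\mu\leq(1+\mu)-\mu=1$. To get $m(\mu)\leq1$ we use the posterior-mean representation recorded in the proof of Proposition~\ref{prop:phase_3_asymp}: $m(\mu)=\E[\eta_{\sqrt\mu}(U_{\sqrt\mu},Y)^2]$ with $\eta_r=\E[Z\mid U_r,Y]$. By conditional Jensen this is at most $\E[Z^2]=1$. Equivalently, one can use the identity $1=m(\mu)+\E[V_{\sqrt\mu}]$ with $V\geq0$, which is also derived there. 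The bound can also be read off directly from \eqref{eq:m_function}: $\tanh^2\leq1$ gives $m(\mu)\leq\E G^2=1$.

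The only point needing care is the endpoint $\mu=0$. There $m(0)=0$ and $F_\delta(0)=0$, so both inequalities hold trivially. There is no real obstacle; the whole argument is an application of these already-established identities.
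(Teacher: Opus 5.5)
Your proof is correct and matches the paper's argument: nonnegativity comes from the representation \eqref{eq:bayes_fact_1} of $F_\delta(\rho^2)$ as $\delta$ times an expected square, and the upper bound comes from substituting $m(\mu)\leq\mathbb E G^2=1$ (via $\tanh^2\leq1$) into $F_\delta(\mu)=\delta(1+\mu)\{(1+\mu)m(\mu)-\mu\}$. The alternative justifications you give for $m(\mu)\leq1$, namely conditional Jensen or the identity $1=m(\mu)+\mathbb E[V_{\sqrt\mu}]$, are also valid but not needed.
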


\begin{proof}
The lower bound is \eqref{eq:bayes_fact_1}.  Since
\(0\leq m(\mu)\leq\mathbb EG^2=1\),
\begin{align*}
F_\delta(\mu)
&=\delta(1+\mu)\{(1+\mu)m(\mu)-\mu\}\\
&\leq\delta(1+\mu)\{(1+\mu)-\mu\}
=\delta(1+\mu).
\end{align*}
\end{proof}

\subsection{\texorpdfstring{Extension to \(\tau_{I\!I\!I}\)}
{Extenstion to tau III}}

\begin{proposition}[Bounds up to \(\tau_{I\!I\!I}\)]
\label{prop:phase_3_strong_approx}
Suppose \(\delta>\delta_{\str}\), \(M\geq1\), and
\eqref{eq:phase1_finite_size},
\eqref{eq:phase1_crossing_finite_size},
\eqref{eq:phase2_finite_size},
\eqref{eq:phase2_horizon_condition}, and
\eqref{eq:strong_first_order_finite_size} hold.  With probability
\(1-O(n^{-10})\), for every
\(\tau_{I\!I}\leq t\leq\tau_{I\!I\!I}\),
\begin{align}
\|\Delta_{U,t}\|\vee\|\Delta_{V,t}\|
&\leq \mathfrak K_{\delta,M} \log^{C_{\str}}n,
\label{eq:strong_first_order_residual_bound}
\end{align}
and for every
\(\tau_{I\!I}\leq t<\tau_{I\!I\!I}\),
\begin{align}
|\rho_{t+1}^2-F_\delta(\rho_t^2)|
&\leq C e_{\delta,M,n},
\label{eq:strong_first_order_state_bound}\\
|\rho_{t+1}|&\geq a_\delta|\rho_t|.
\label{eq:strong_first_order_growth}
\end{align}
Consequently,
\begin{align}
\tau_{I\!I\!I}-\tau_{I\!I}
&\leq N_{\delta,M},
\notag\\
M\leq|\rho_{\tau_{I\!I\!I}}|
&\leq C_{\str}(1+\sqrt\delta)(1+M).
\label{eq:strong_first_order_hitting_bounds}
\end{align}
\end{proposition}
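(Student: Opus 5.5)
The plan is an induction over the deterministic times \(t=\tau_{I\!I},\tau_{I\!I}+1,\ldots\), capped at \(T_{\rm str}(n,\delta,M)\). It runs on the intersection of four events: the Phase-I/II event from Propositions~\ref{prop:phase_1_tilde_close}, \ref{prop:phase_1_main} and \ref{prop:phase_2_exp_growth}; the events \(\mathcal S_t\) of Lemma~\ref{lem:phase_3_SE_strong}; the Gaussian maximum event \eqref{eq:strong_first_order_gaussian_maximum}; and the first-order error-recursion events from Appendix~\ref{sec:first_order_analysis}. Since \(T_{\rm str}\leq\log^2 n\) and each event fails with probability \(O(n^{-12})\), a union bound gives total failure probability \(O(n^{-10})\). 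Lemma~\ref{lem:X_coupling} lets us place all times on one probability space.

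\textbf{Base case.} At \(t=\tau_{I\!I}\), the Phase-II endpoint bound \eqref{eq:main_weak_error_comparison} gives \(\|\Delta_{V,\tau_{I\!I}}\|\leq C\mathfrak A_\delta^4\log^{12}n\). Together with \eqref{eq:u_closeness_phase_2}, this yields \eqref{eq:strong_first_order_residual_bound} at \(\tau_{I\!I}\), because \(\mathfrak K_{\delta,M}\geq D_{\delta,M}\geq C\mathfrak A_\delta^{C_{\str}}\) and \(C_{\str}\geq12\). Also \(|\rho_{\tau_{I\!I}}|\geq c_{\weak}\) by definition of \(\tau_{I\!I}\). If \(|\rho_{\tau_{I\!I}}|\geq M\), then \(\tau_{I\!I\!I}=\tau_{I\!I}\), and \eqref{eq:phase2_endpoint_rho_control} gives \(|\rho_{\tau_{I\!I}}|\leq C\mathfrak A_\delta\leq C_{\str}(1+\sqrt\delta)(1+M)\).

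\textbf{Inductive step.} Fix \(\tau_{I\!I}\leq t<\tau_{I\!I\!I}\wedge\varrho\) with \eqref{eq:strong_first_order_residual_bound} holding at \(t\). Then \(\mathcal G_t\) holds, and Lemma~\ref{lem:phase_3_SE_strong} yields \eqref{eq:strong_first_order_state_bound}, the derivative and multiplier bounds \(K_t^h\leq D_{\delta,M}\), and the parameter bound. The growth bound follows from Proposition~\ref{prop:phase_3_asymp}, which gives \(F_\delta(\rho_t^2)\geq(\delta/\delta_{\str})\rho_t^2\). Combined with \(Ce_{\delta,M,n}\leq(\delta/\delta_{\str}-1)c_{\weak}^2/16\leq\tfrac12(\delta/\delta_{\str}-1)\rho_t^2\), this gives \(\rho_{t+1}^2\geq\tfrac12(1+\delta/\delta_{\str})\rho_t^2=a_\delta^2\rho_t^2\), which is \eqref{eq:strong_first_order_growth}. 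In particular \(|\rho_{t+1}|\geq c_{\weak}\), so \(\varrho\) is never reached. Lemma~\ref{lem:strong_F_global_upper} gives \(\rho_{t+1}^2\leq\delta(1+M^2)+1\). This proves the upper bound in \eqref{eq:strong_first_order_hitting_bounds} and keeps \(\bar\gamma_{t+1}\) under control through Lemma~\ref{lem:uv_norm}.

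\textbf{Main obstacle: propagating the residual.} I will use the first-order recursion of Appendix~\ref{sec:first_order_analysis} rather than the second-order one, since there is no contraction here. That recursion has the form
\[
\|\Delta_{U,t+1}\|\vee\|\Delta_{V,t+1}\|\leq K\bigl(\|\Delta_{U,t}\|\vee\|\Delta_{V,t}\|\bigr)+K\log^{C}n,
\]
with multiplier \(K\) a fixed power of \(K_t^h\) and the derivative norms, hence \(K\leq D_{\delta,M}^{C_{\str}}\). Over at most \(N_{\delta,M}\) steps, and starting from the endpoint bound \(D_{\delta,M}^{C_{\str}}\log^{12}n\), iteration gives
\[
\|\Delta_{U,t}\|\vee\|\Delta_{V,t}\|\leq D_{\delta,M}^{C_{\str}(t-\tau_{I\!I}+2)}\log^{C_{\str}}n\leq\mathfrak K_{\delta,M}\log^{C_{\str}}n.
\]
This is exactly why \(\mathfrak K_{\delta,M}\) carries the exponent \(N_{\delta,M}+3\). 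The delicate part is checking that the exponent separation \eqref{eq:strong_first_order_exponent_separation} absorbs the \(\log^{C_h}\) factors coming from the derivative bounds into \(\log^{C_{\str}}n\), and that the horizon condition \(T_{\rm str}\leq c\sqrt d/\log^3n\) keeps the concentration lemmas of Section~\ref{sec:main_proof} valid.

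\textbf{Hitting time.} By growth, \(|\rho_{\tau_{I\!I}+k}|\geq a_\delta^k c_{\weak}\). Hence \(|\rho_t|\geq M\) once \(k\geq N_{\delta,M}\), which gives \(\tau_{I\!I\!I}-\tau_{I\!I}\leq N_{\delta,M}\). The total horizon stays within \(T_{\rm str}\) by \eqref{eq:strong_first_order_finite_size}, so the induction never leaves the range where the lemmas apply.
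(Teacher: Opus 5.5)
Your plan follows the paper's proof in every essential respect. It uses the same intersection of the Phase-I/II event with the per-time events of Lemmas~\ref{lem:phase_3_SE_strong} and~\ref{lem:first_order_residual_recursion}. It iterates the first-order recursion for at most $N_{\delta,M}$ steps, gets growth from $F_\delta(\mu)\geq(\delta/\delta_{\str})\mu$ with $\rho_t^2\geq c_{\weak}^2/4$, and takes the upper bound from Lemma~\ref{lem:strong_F_global_upper}. Your direct induction $|\rho_{\tau_{I\!I}+k}|\geq a_\delta^k c_{\weak}$ excludes the lower exit and forces $k<N_{\delta,M}$ before $\tau_{I\!I\!I}$; it is a tidier version of the paper's first-stop argument for $\tau_{I\!I\!I}\wedge\varrho$. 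However, two steps of your base case do not work as written.

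First, nothing you cite controls $\Delta_{U,\tau_{I\!I}}$. The bound \eqref{eq:u_closeness_phase_2} is only established for $t_0\leq t<\tau_{I\!I}$, and is unavailable when $\tau_{I\!I}=t_0$. The bound \eqref{eq:main_weak_error_comparison} is a $V$-side bound. You need one application of \eqref{eq:first_order_U_recursion} at $t=\tau_{I\!I}$, after bounding $\|\gamma_{\tau_{I\!I},[-1:\tau_{I\!I}]}\|\leq D_{\delta,M}$ through \eqref{eq:gamma_norm}. This is how the paper obtains \eqref{eq:strong_first_order_base}.

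Second, the chain $|\rho_{\tau_{I\!I}}|\leq C\mathfrak A_\delta\leq C_{\str}(1+\sqrt\delta)(1+M)$ fails for large $\delta$. Here $\mathfrak A_\delta=(1+\sqrt\delta)^6$, whereas $C_{\str}$ is universal and $M$ is fixed. The fix is to keep the unsimplified form of \eqref{eq:phase2_endpoint_rho_control}:
\begin{itemize}
\item The definition of $c_{\weak}$ gives $C\mathfrak A_\delta^4c_{\weak}^2\leq(\sqrt{2\delta}-1)/4$.
\item The last condition in \eqref{eq:phase2_finite_size} bounds the $\mathfrak A_\delta^6\log^{14}n/\sqrt d$ term by $(\sqrt{2\delta}-1)c_{\weak}/4$.
\item Then \eqref{eq:phase2_cubic_recursion} yields $|\rho_{\tau_{I\!I}}|\leq\{\sqrt{2\delta}+(\sqrt{2\delta}-1)/2\}c_{\weak}\leq C(1+\sqrt\delta)$.
\item When $\tau_{I\!I}=t_0$, the Phase-I recursion \eqref{eq:recursion_phase_1} gives $|\rho_{\tau_{I\!I}}|\leq1$ instead.
\end{itemize}

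With these two repairs the argument goes through. It also relies on the fact, implicit in your inductive hypothesis, that the multiplier $K_t^h+\eta_t^h(D,E)$ in \eqref{eq:first_order_V_recursion} is at most $D_{\delta,M}+1$. That holds only because $E$ is already capped at $\mathfrak K_{\delta,M}\log^{C_{\str}}n$, so that \eqref{eq:strong_first_order_finite_size} forces $\eta_t^h\leq1$.
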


\begin{proof}
Let \(\mathcal P_{I\!I}\) be the event on which
\begin{align}
\tau_I&\leq60T_\delta(n),
\notag\\
\tau_{I\!I}-(\tau_I+1)&\leq N_{I\!I}(n,\delta),
\notag\\
|\rho_{\tau_{I\!I}}|&\geq c_{\weak},
\label{eq:strong-first-order-phase-event-times}
\end{align}
and the following bounds hold:
\begin{gather}
\|\Delta_{V,\tau_I+1}\|
\leq C\mathfrak A_\delta \log^{11}n,
\notag\\
|\varphi_{\tau_I}|
\leq C\mathfrak A_\delta\sqrt{\frac {\log n}{n}},
\notag\\
|\rho_{\tau_I+1}-\sqrt{2\delta}\rho_{\tau_I}
-\varphi_{\tau_I}|
\leq C\mathfrak A_\delta^4\frac{\log^{34}n}{n^{3/4}},
\label{eq:strong-first-order-phase-event-phase1}\\
\max_{\tau_I+1\leq j<\tau_{I\!I}}
\frac{\|\Delta_{V,j+1}\|}
{C\mathfrak A_\delta^4(j+1)\log^{10}n}
\leq1,
\notag\\
\max_{\tau_I+1\leq j<\tau_{I\!I}}
\frac{|\rho_{j+1}-\sqrt{2\delta}\rho_j|}
{C\left(
\mathfrak A_\delta^4|\rho_j|^3+
\mathfrak A_\delta^6\log^{14}n/\sqrt d\right)}
\leq1.
\label{eq:strong-first-order-phase-event-phase2}
\end{gather}
The maxima in \eqref{eq:strong-first-order-phase-event-phase2} are zero
when \(\tau_{I\!I}=\tau_I+1\).  Propositions
\ref{prop:phase_1_tilde_close}, \ref{prop:phase_1_main}, and
\ref{prop:phase_2_exp_growth}, together with
\eqref{eq:phase1_explicit_length},
\eqref{eq:phase2_explicit_length},
\eqref{eq:phase1_varphi_upper}, and a union bound, give
\begin{align}
\mathbb P((\mathcal P_{I\!I})^c)
&\leq Cn^{-10}.
\label{eq:strong-first-order-phase-event-probability}
\end{align}

For \(0\leq t\leq T_{\rm str}(n,\delta,M)\), define the implication event
\begin{align}
\mathcal I_t
\coloneqq
(\mathcal G_t)^c\cup\mathcal S_t.
\label{eq:strong_first_order_implication_event}
\end{align}
Lemma~\ref{lem:phase_3_SE_strong} gives
\begin{align}
\mathbb P((\mathcal I_t)^c)
=
\mathbb P\left(
\mathcal G_t\cap(\mathcal S_t)^c
\right)
\leq Cn^{-12}.
\label{eq:strong_first_order_implication_failure}
\end{align}
Intersect \(\mathcal P_{I\!I}\), the events
\(\mathcal E_t\) from
Lemma~\ref{lem:first_order_residual_recursion}, and
\(\mathcal I_t\), for every deterministic
\(0\leq t\leq T_{\rm str}(n,\delta,M)\).  The union of the failed events has probability
\begin{align}
O(n^{-10})+C(T_{\rm str}(n,\delta,M)+1)n^{-12}=O(n^{-10}),
\label{eq:strong_first_order_union_probability}
\end{align}
because \eqref{eq:strong_first_order_finite_size} gives \(T_{\rm str}(n,\delta,M)\leq \log^2n\).
We argue deterministically on this intersection.

If
\(\tau_{I\!I}=t_0=\tau_I+1\), then
\eqref{eq:phase1_finite_size} and
\eqref{eq:phase1_crossing_finite_size} imply
\begin{align}
\sqrt{2\delta}\,b_n
&\leq C\log^{-30}n\leq\frac13,
\notag\\
C\mathfrak A_\delta\sqrt{\frac{\log n}{n}}
&\leq C\log^{-159/2}n\leq\frac13,
\notag\\
C\mathfrak A_\delta^4\frac{\log^{34}n}{n^{3/4}}
&\leq C\log^{-86}n\leq\frac13.
\label{eq:strong_first_order_phase1_endpoint_terms}
\end{align}
Thus
\eqref{eq:recursion_phase_1}, \eqref{eq:phase1_varphi_upper}, and
\(|\rho_{\tau_I}|\leq b_n\) give
\begin{align}
|\rho_{\tau_{I\!I}}|
&\leq
\sqrt{2\delta}\,b_n
+C\mathfrak A_\delta\sqrt{\frac{\log n}{n}}
+C\mathfrak A_\delta^4\frac{\log^{34}n}{n^{3/4}}
\leq1.
\label{eq:strong_first_order_phase_endpoint_upper_case1}
\end{align}
If \(\tau_{I\!I}>t_0\), then
\(|\rho_{\tau_{I\!I}-1}|<c_{\weak}\), and
\eqref{eq:phase2_finite_size}, \eqref{eq:phase2_entrance_size}, and the
definition of \(c_{\weak}\) give
\begin{align}
C\mathfrak A_\delta^4 c_{\weak}^2
&\leq\frac{\sqrt{2\delta}-1}{4},
\notag\\
C\mathfrak A_\delta^6\frac{\log^{14}n}{\sqrt d}
&\leq
\frac{\sqrt{2\delta}-1}{4}b_n
\leq
\frac{\sqrt{2\delta}-1}{4}c_{\weak}.
\label{eq:strong_first_order_phase2_endpoint_terms}
\end{align}
Consequently,
\eqref{eq:phase2_cubic_recursion} gives
\begin{align}
|\rho_{\tau_{I\!I}}|
&\leq
\sqrt{2\delta}\,c_{\weak}
+C\left(
\mathfrak A_\delta^4c_{\weak}^3+
\mathfrak A_\delta^6\frac{\log^{14}n}{\sqrt d}\right)
\notag\\
&\leq
\left\{\sqrt{2\delta}
+\frac{\sqrt{2\delta}-1}{2}\right\}c_{\weak}
\leq C(1+\sqrt\delta).
\label{eq:strong_first_order_phase_endpoint_upper_case2}
\end{align}
The choice of \(C_{\str}\) in
\eqref{eq:strong_first_order_constants} therefore gives
\begin{align}
|\rho_{\tau_{I\!I}}|
\leq C_{\str}(1+\sqrt\delta)(1+M).
\label{eq:strong_first_order_phase_endpoint_upper}
\end{align}

At \(t=\tau_{I\!I}\), the Phase-I/II endpoint bound gives
\begin{align}
\|\Delta_{V,\tau_{I\!I}}\|
\leq D_{\delta,M} \log^{C_{\str}}n.
\label{eq:strong_first_order_V_base}
\end{align}
Indeed, if \(\tau_{I\!I}=t_0\), this is
\eqref{eq:Delta_bound_phase1} at \(t=\tau_I\); otherwise it is
\eqref{eq:uv_closeness_phase_2} at \(t=\tau_{I\!I}-1\).
Equations \eqref{eq:gamma_norm},
\eqref{eq:strong_first_order_phase_endpoint_upper},
\eqref{eq:strong_first_order_V_base}, and
\eqref{eq:strong_first_order_finite_size} give
\begin{align}
\|\gamma_{\tau_{I\!I},[-1:\tau_{I\!I}]}\|
&\leq
2\sqrt{1+C_{\str}^2(1+\sqrt\delta)^2(1+M)^2}
+C\frac{D_{\delta,M} \log^{C_{\str}}n}{\sqrt d}
\leq D_{\delta,M}.
\label{eq:strong_first_order_gamma_base}
\end{align}
Equation \eqref{eq:first_order_U_recursion}, followed by
\eqref{eq:lambda_norm_main},
\eqref{eq:strong_first_order_gamma_base}, and
\(\tau_{I\!I}\leq T_{\rm str}(n,\delta,M)\leq \log^2n\), therefore gives
\begin{align}
\|\Delta_{U,\tau_{I\!I}}\|
\vee\|\Delta_{V,\tau_{I\!I}}\|
&\leq4D_{\delta,M} \log^{C_{\str}}n.
\label{eq:strong_first_order_base}
\end{align}

Suppose that, for \(s=t-\tau_{I\!I}\),
\begin{align}
0\leq s&\leq N_{\delta,M},
\notag\\
\tau_{I\!I}\leq t&<\tau_{I\!I\!I}\wedge\varrho,
\notag\\
\|\Delta_{U,t}\|\vee\|\Delta_{V,t}\|
&\leq D_{\delta,M}^{6C_h(s+1)}\log^{C_{\str}}n
\leq \mathfrak K_{\delta,M} \log^{C_{\str}}n,
\label{eq:strong-first-order-induction-hypothesis}
\end{align}
The second inequality in
\eqref{eq:strong-first-order-induction-hypothesis} follows from
\begin{align}
6C_h(s+1)
&\leq C_{\str}(N_{\delta,M}+3),
\label{eq:strong-first-order-induction-exponent}
\end{align}
which is a consequence of
\eqref{eq:strong_first_order_exponent_separation} and
\(s\leq N_{\delta,M}\).  The base case follows from
\eqref{eq:strong_first_order_base}, because
\(4D_{\delta,M}\leq D_{\delta,M}^{6C_h}\).
Equations
\eqref{eq:gamma_norm},
\eqref{eq:strong-first-order-induction-hypothesis}, and
\eqref{eq:strong_first_order_finite_size} give
\begin{align}
\|\gamma_{t,[-1:t]}\|
&\leq
\left\{1+C\sqrt{\frac{t+\log n}{d}}\right\}\sqrt{1+\rho_t^2}
+C\frac{\|\Delta_{V,t}\|}{\sqrt d}
\notag\\
&\leq
2\sqrt{1+C_{\str}^2(1+\sqrt\delta)^2(1+M)^2}
+C\frac{\mathfrak K_{\delta,M} \log^{C_{\str}}n}{\sqrt d}
\leq D_{\delta,M}.
\label{eq:strong_first_order_gamma_induction}
\end{align}
Equation \eqref{eq:first_order_U_recursion}, the identity
\(\|\lambda_{t-1,[0:t-1]}\|=1\), and
\eqref{eq:strong_first_order_gamma_induction} give
\begin{align}
\|\Delta_{U,t}\|
&\leq2\|\Delta_{V,t}\|+D_{\delta,M} \log^{C_{\str}}n.
\label{eq:strong_first_order_U_induction}
\end{align}

Lemma~\ref{lem:phase_3_SE_strong} now applies and proves
\eqref{eq:strong_first_order_state_bound} and
\eqref{eq:strong_first_order_derivative_bounds}.  Substitution of the latter
in \eqref{eq:first_order_multiplier_error}, together with
\(t\leq T_{\rm str}(n,\delta,M)\leq \log^2n\), gives
\begin{align}
\eta_t^h(D_{\delta,M} \log^2n,\mathfrak K_{\delta,M} \log^{C_{\str}}n)
&\leq
C(1+\sqrt\delta)^{C_h}\left\{
\frac{D_{\delta,M}^{C_h}\log^{5C_h}n}{n^{1/4}}
+\frac{D_{\delta,M} \mathfrak K_{\delta,M} \log^{C_{\str}+4}n}{\sqrt d}
+\frac{D_{\delta,M}^2\mathfrak K_{\delta,M} \log^{C_{\str}+4}n}{\sqrt d}
\right\}
\leq1.
\label{eq:strong_first_order_remainder_absorption}
\end{align}
The inequalities
\(|\rho_t|<M\), \(\sigma_t\leq2\),
\(\|\lambda_{t,[0:t]}\|=1\), and
\eqref{eq:strong_first_order_exponent_separation} similarly give
\begin{align}
\mathcal R_{V,t}(D_{\delta,M} \log^2n)
&\leq
C(1+\sqrt\delta)^{C_h}
(D_{\delta,M} \log^2n)^{C_h}(M+3)\log^{3C_h}n
+C\log n
\notag\\
&\leq D_{\delta,M}^{2C_h}\log^{C_{\str}}n.
\label{eq:strong_first_order_remainder_deterministic}
\end{align}
The final inequalities in
\eqref{eq:strong_first_order_remainder_absorption} and
\eqref{eq:strong_first_order_remainder_deterministic} follow from
\eqref{eq:strong_first_order_finite_size} and the definition of \(D_{\delta,M}\).
Hence
\eqref{eq:first_order_V_recursion} yields
\begin{align}
\|\Delta_{V,t+1}\|
&\leq(D_{\delta,M}+1)\|\Delta_{U,t}\|
+D_{\delta,M}^{2C_h}\log^{C_{\str}}n.
\label{eq:strong_first_order_V_induction}
\end{align}
Before the target is reached, \(|\rho_t|<M\).  Equations
\eqref{eq:strong_first_order_state_bound},
\eqref{eq:strong_first_order_finite_size}, and
\eqref{eq:strong_F_global_upper} therefore give
\begin{align}
\rho_{t+1}^2
&\leq F_\delta(\rho_t^2)+1
\leq\delta(1+M^2)+1
\leq C_{\str}^2(1+\sqrt\delta)^2(1+M)^2.
\label{eq:strong_first_order_one_step_upper}
\end{align}
Before applying \eqref{eq:first_order_U_recursion} at time \(t+1\), we
verify its coefficient bound.  Equation
\eqref{eq:strong_first_order_V_induction} and
\eqref{eq:strong-first-order-induction-hypothesis} give
\begin{align}
\|\Delta_{V,t+1}\|
&\leq
(D_{\delta,M}+1)\mathfrak K_{\delta,M} \log^{C_{\str}}n+D_{\delta,M}^{2C_h}\log^{C_{\str}}n
\leq D_{\delta,M}^{2C_h}\mathfrak K_{\delta,M} \log^{C_{\str}}n.
\label{eq:strong-first-order-next-V-preliminary}
\end{align}
Equations \eqref{eq:gamma_norm},
\eqref{eq:strong_first_order_one_step_upper},
\eqref{eq:strong-first-order-next-V-preliminary}, and
\eqref{eq:strong_first_order_finite_size} yield
\begin{align}
\|\gamma_{t+1,[-1:t+1]}\|
&\leq
\left\{1+C\sqrt{\frac{t+1+\log n}{d}}\right\}
\sqrt{1+\rho_{t+1}^2}
+C\frac{\|\Delta_{V,t+1}\|}{\sqrt d}
\notag\\
&\leq
2\sqrt{1+C_{\str}^2(1+\sqrt\delta)^2(1+M)^2}
+C\frac{D_{\delta,M}^{2C_h}\mathfrak K_{\delta,M} \log^{C_{\str}}n}{\sqrt d}
\leq D_{\delta,M}.
\label{eq:strong-first-order-next-gamma}
\end{align}
The last term in the preceding line is at most one, because
\begin{align}
C\frac{D_{\delta,M}^{2C_h}\mathfrak K_{\delta,M} \log^{C_{\str}}n}{\sqrt d}
&\leq
C D_{\delta,M}^{C_{\str}}\mathfrak K_{\delta,M}\frac{\log^{C_{\str}}n}{n^{1/4}}
\leq1
\label{eq:strong-first-order-next-gamma-error}
\end{align}
by \eqref{eq:strong_first_order_exponent_separation} and
\eqref{eq:strong_first_order_finite_size}.  Hence
\eqref{eq:first_order_U_recursion} at time \(t+1\), together with
\eqref{eq:lambda_norm_main} and
\eqref{eq:strong-first-order-next-gamma}, gives
\begin{align}
\|\Delta_{U,t+1}\|
&\leq2\|\Delta_{V,t+1}\|+D_{\delta,M} \log^{C_{\str}}n.
\label{eq:strong_first_order_next_U_induction}
\end{align}
Combining \eqref{eq:strong_first_order_V_induction} and
\eqref{eq:strong_first_order_next_U_induction} gives
\begin{align}
\|\Delta_{U,t+1}\|\vee\|\Delta_{V,t+1}\|
&\leq
2(D_{\delta,M}+1)
(\|\Delta_{U,t}\|\vee\|\Delta_{V,t}\|)
\notag\\
&\quad
+(2D_{\delta,M}^{2C_h}+D_{\delta,M})\log^{C_{\str}}n
\notag\\
&\leq
D_{\delta,M}^{3C_h}
\{\|\Delta_{U,t}\|\vee\|\Delta_{V,t}\|+\log^{C_{\str}}n\}.
\label{eq:strong_first_order_scalar_residual_recursion}
\end{align}
Equations \eqref{eq:strong-first-order-induction-hypothesis} and
\eqref{eq:strong_first_order_scalar_residual_recursion} give
\begin{align}
\|\Delta_{U,t+1}\|\vee\|\Delta_{V,t+1}\|
&\leq
D_{\delta,M}^{3C_h}
\{D_{\delta,M}^{6C_h(s+1)}+1\}\log^{C_{\str}}n
\leq D_{\delta,M}^{6C_h(s+2)}\log^{C_{\str}}n.
\label{eq:strong-first-order-induction-step}
\end{align}
Thus, for every
\begin{align*}
0\leq s\leq N_{\delta,M}+1,
\qquad
\tau_{I\!I}+s\leq\tau_{I\!I\!I}\wedge\varrho,
\end{align*}
induction from \eqref{eq:strong_first_order_base} gives
\begin{align}
\|\Delta_{U,\tau_{I\!I}+s}\|
\vee\|\Delta_{V,\tau_{I\!I}+s}\|
&\leq
D_{\delta,M}^{6C_h(s+1)}\log^{C_{\str}}n
\leq
D_{\delta,M}^{C_{\str}(N_{\delta,M}+3)}\log^{C_{\str}}n
=\mathfrak K_{\delta,M} \log^{C_{\str}}n.
\label{eq:strong_first_order_residual_iteration}
\end{align}
This proves the claimed bounds for
\(\Delta_{U,t}\) and \(\Delta_{V,t}\).

To prove \(\varrho>\tau_{I\!I\!I}\) and
\(\tau_{I\!I\!I}-\tau_{I\!I}\leq N_{\delta,M}\), first note that the
preceding induction permits the following estimate only when
\(t=\tau_{I\!I}+s\), \(0\leq s\leq N_{\delta,M}\), and
\(t<\tau_{I\!I\!I}\wedge\varrho\).  At every such time, use
\eqref{eq:strong_first_order_noisy_SE},
\eqref{eq:strong_first_order_finite_size}, and
\(F_\delta(r^2)\geq
(\delta/\delta_{\str}^{\AMP})r^2\),
\begin{align}
\rho_{t+1}^2
\geq \frac{\delta}{\delta_{\str}^{\AMP}}\rho_t^2
-C e_{\delta,M,n}
\geq
\left\{\frac{\delta}{\delta_{\str}^{\AMP}}
-\frac{\delta/\delta_{\str}^{\AMP}-1}{2}\right\}\rho_t^2
=a_\delta^2\rho_t^2.
\label{eq:strong_first_order_growth_calculation}
\end{align}
Here we used \(\rho_t^2\geq c_{\weak}^2/4\) and the third inequality in
\eqref{eq:strong_first_order_finite_size}.

Put
\[
\kappa\coloneqq\tau_{I\!I\!I}\wedge\varrho.
\]
If \(N_{\delta,M}=0\), then its definition gives
\(M\leq c_{\weak}\), and
\(|\rho_{\tau_{I\!I}}|\geq c_{\weak}\) implies
\(\tau_{I\!I\!I}=\tau_{I\!I}<\varrho\).  Suppose henceforth that
\(N_{\delta,M}\geq1\).  We first prove
\begin{align}
\kappa-\tau_{I\!I}
&\leq N_{\delta,M}.
\label{eq:strong_first_order_first_stop_bound}
\end{align}
If not, \eqref{eq:strong_first_order_growth_calculation} is valid
successively at
\(t=\tau_{I\!I},\ldots,\tau_{I\!I}+N_{\delta,M}-1\), and therefore
\[
|\rho_{\tau_{I\!I}+N_{\delta,M}}|
\geq a_\delta^{N_{\delta,M}}c_{\weak}\geq M.
\]
This gives
\(\tau_{I\!I\!I}\leq\tau_{I\!I}+N_{\delta,M}<\kappa\), contradicting
\(\kappa\leq\tau_{I\!I\!I}\).

We next exclude the lower exit as the first stop.  If
\(\kappa=\varrho\leq\tau_{I\!I\!I}\), then
\(\varrho>\tau_{I\!I}\), because
\(|\rho_{\tau_{I\!I}}|\geq c_{\weak}\).  By
\eqref{eq:strong_first_order_first_stop_bound}, the time
\(t=\varrho-1\) has offset at most \(N_{\delta,M}-1\), so
\eqref{eq:strong_first_order_growth_calculation} is valid there.  By the
minimality of \(\varrho\),
\[
|\rho_{\varrho}|
\geq a_\delta|\rho_{\varrho-1}|
\geq\frac{a_\delta c_{\weak}}2
>\frac{c_{\weak}}2,
\]
contrary to \eqref{eq:strong_first_order_lower_exit}.  Hence
\(\kappa=\tau_{I\!I\!I}<\varrho\), and
\eqref{eq:strong_first_order_first_stop_bound} proves
\[
\varrho>\tau_{I\!I\!I},
\qquad
\tau_{I\!I\!I}-\tau_{I\!I}\leq N_{\delta,M}.
\]

Repeated application of
\eqref{eq:strong_first_order_growth_calculation} gives
\begin{align}
|\rho_{\tau_{I\!I}+s}|
\geq a_\delta^s c_{\weak}
\label{eq:strong_first_order_growth_iteration}
\end{align}
for every integer \(0\leq s\leq N_{\delta,M}\) with
\(\tau_{I\!I}+s\leq\tau_{I\!I\!I}\).
In particular, the first-stop argument proves both finiteness and the time
bound in \eqref{eq:strong_first_order_hitting_bounds}.
Together with \(\varrho>\tau_{I\!I\!I}\), the stopped bound
\eqref{eq:strong_first_order_residual_iteration} now gives
\begin{align}
\|\Delta_{U,t}\|\vee\|\Delta_{V,t}\|
&\leq \mathfrak K_{\delta,M} \log^{C_{\str}}n,
\qquad
\tau_{I\!I}\leq t\leq\tau_{I\!I\!I}.
\label{eq:strong-first-order-unstopped-residual}
\end{align}

Finally, if \(\tau_{I\!I\!I}=\tau_{I\!I}\), then
\eqref{eq:strong_first_order_phase_endpoint_upper} gives
\begin{align}
|\rho_{\tau_{I\!I\!I}}|
\leq C_{\str}(1+\sqrt\delta)(1+M).
\label{eq:strong_first_order_immediate_hitting_upper}
\end{align}
If \(\tau_{I\!I\!I}>\tau_{I\!I}\), then
\(|\rho_{\tau_{I\!I\!I}-1}|<M\).  Lemma
\ref{lem:strong_F_global_upper} and
\eqref{eq:strong_first_order_state_bound} give
\begin{align}
|\rho_{\tau_{I\!I\!I}}|^2
&\leq
F_\delta(\rho_{\tau_{I\!I\!I}-1}^2)
+C e_{\delta,M,n}
\leq\delta(1+M^2)+1
\leq C_{\str}^2(1+\sqrt\delta)^2(1+M)^2.
\label{eq:strong_first_order_nonimmediate_hitting_upper}
\end{align}
The lower bound is the definition of \(\tau_{I\!I\!I}\), completing the
proof.
\end{proof}

\begin{proof}[Proof of Proposition \ref{prop:phase_3_strong_main}]
By \eqref{eq:strong_first_order_internal_constants} and \(C\geq C_{\str}\),
\begin{align*}
\mathfrak K_{\delta,M}\log^{C_{\str}}n
&\leq C_{\delta,M}\log^Cn,\\
C e_{\delta,M,n}
&=
C D_{\delta,M}^{C_{\str}}\mathfrak K_{\delta,M}
\frac{\log^{C_{\str}}n}{n^{1/4}}
\leq C_{\delta,M}\frac{\log^Cn}{n^{1/4}},\\
C_{\str}(1+\sqrt\delta)(1+M)&\leq C_{\delta,M}.
\end{align*}
Substitution of these three inequalities into Proposition
\ref{prop:phase_3_strong_approx} gives
\eqref{eq:strong_relative_residual},
\eqref{eq:strong_recovery_recursion},
\eqref{eq:strong_total_iteration_bound}, and
\eqref{eq:strong_polynomial_snr}.
\end{proof}

\appendix

\section{Review of Bayes-optimal AMP}
\label{sec:bayes_amp}

Consider \(\bm y=\varphi(\bm X\boldsymbol\theta^\star)\), where
\(X_{ij}\stackrel{\mathrm{i.i.d.}}{\sim}\mathcal N(0,1/d)\),
\(d^{-1}\|\boldsymbol\theta^\star\|^2\to1\), and the empirical law of
\(\boldsymbol\theta^\star\) converges in \(W_2\).  Under
\cite[Assumption B.1]{mondelli_optimal_2022}, the AMP recursion
\eqref{eq:main_amp} satisfies, for every fixed \(t\),
\begin{align}
\frac1d\sum_{j=1}^d\delta_{v_j^{t+1}}
&\overset{W_2}{\longrightarrow}\mathcal L(V_{t+1}),
\notag\\
\frac1n\sum_{i=1}^n\delta_{u_i^t}
&\overset{W_2}{\longrightarrow}\mathcal L(U_t),
\label{eq:dist_charaterization}
\end{align}
where
\begin{align}
V_{t+1}
&=\mu_{V,t+1}\theta^\star+\sigma_{V,t+1}W_{V,t+1},
\notag\\
U_t
&=\mu_{U,t}\eta^\star+\sigma_{U,t}W_{U,t},
\label{eq:bayes-appendix-scalar-channels}
\end{align}
and
\begin{align}
\mu_{U,t}
&=\mathbb E[\theta^\star f_t(V_t)],
\notag\\
\sigma_{U,t}^2
&=\mathbb E[f_t(V_t)^2]-\mu_{U,t}^2,
\notag\\
\mu_{V,t+1}
&=\delta\left\{
\mathbb E[\eta^\star h_t(U_t,Y)]
-\mu_{U,t}\mathbb E[\partial_u h_t(U_t,Y)]
\right\},
\notag\\
\sigma_{V,t+1}^2
&=\delta\mathbb E[h_t(U_t,Y)^2].
\label{eq:bayes-appendix-state-evolution}
\end{align}
Here \(\eta^\star,W_{U,t},W_{V,t+1}\) are independent standard Gaussian
variables, independent of \(\theta^\star\), and
\(Y=\varphi(\eta^\star)\).  The variable \(\theta^\star\) has the
\(W_2\)-limit of the empirical law of
\(\boldsymbol\theta^\star\).

\subsection{Bayes-optimal denoiser}

Fix \(a\in\mathbb R\), \(b>0\), and let
\[
U=a\eta^\star+bW,\qquad Y=\varphi(\eta^\star),
\qquad
q(U,Y)
\coloneqq
\frac{\mathbb E[\eta^\star\mid U,Y]-\mathbb E[\eta^\star\mid U]}
{\operatorname{Var}(\eta^\star\mid U)}.
\]
For every \(h\in L^2(\mathcal L(U,Y))\) with a weak derivative in
\(u\) such that
\(\mathbb E|\partial_u h(U,Y)|<\infty\), Gaussian integration by parts gives
\begin{align}
\mathbb E[\eta^\star h(U,Y)]-a\mathbb E[\partial_u h(U,Y)]
&=\mathbb E[h(U,Y)q(U,Y)].
\label{eq:bayes-output-score-identity}
\end{align}
Indeed,
\[
\mathbb E[\eta^\star\mid U]
=\frac{a}{a^2+b^2}U,
\qquad
\operatorname{Var}(\eta^\star\mid U)
=\frac{b^2}{a^2+b^2},
\]
and
\(\mathbb E[Uh(U,Y)]
=a\mathbb E[\eta^\star h(U,Y)]+b^2\mathbb E[\partial_u h(U,Y)]\).
For \(q\in L^2(\mathcal L(U,Y))\), Cauchy--Schwarz gives
\begin{align}
\sup_{\substack{h\in L^2(\mathcal L(U,Y))\\
0<\mathbb E[h(U,Y)^2]<\infty}}
\frac{\delta\{\mathbb E[h(U,Y)q(U,Y)]\}^2}
{\mathbb E[h(U,Y)^2]}
&=\delta\mathbb E[q(U,Y)^2].
\label{eq:bayes-output-optimization}
\end{align}
If \(\mathbb E[q(U,Y)^2]>0\), equality holds precisely for
\(h=cq\) almost surely, with \(c\ne0\).  If \(q=0\) almost surely, the
ratio in \eqref{eq:bayes-output-optimization} is zero for every admissible
nonzero \(h\).  When \(\mathbb E[q(U,Y)^2]>0\), taking \(h=q\) gives
\begin{align}
h^\star(u,y)
&=
\frac{(a^2+b^2)\mathbb E[\eta^\star\mid U=u,Y=y]-au}{b^2}.
\label{eq:bayes-output-function}
\end{align}

\subsection{Noiseless phase retrieval}

Let \(Y=(\eta^\star)^2\).  Conditional on \(Y=y\), the variable
\(\eta^\star\) takes the values \(\pm\sqrt y\).  Bayes' formula therefore
gives
\begin{align}
\widehat\eta_{a,b}(u,y)
\coloneqq\mathbb E[\eta^\star\mid U=u,Y=y]
&=\sqrt y\,\tanh\left(\frac{au\sqrt y}{b^2}\right),
\label{eq:mean_noiseless}\\
V_{a,b}(u,y)
\coloneqq\operatorname{Var}(\eta^\star\mid U=u,Y=y)
&=y\,\operatorname{sech}^2\left(\frac{au\sqrt y}{b^2}\right).
\label{eq:var_noiseless}
\end{align}
Differentiation yields
\begin{align}
\partial_u\widehat\eta_{a,b}(u,y)
&=\frac{a}{b^2}V_{a,b}(u,y).
\label{eq:post_mean_derivative}
\end{align}
Substitution in \eqref{eq:bayes-output-function} gives
\[
h^\star_{a,b}(u,y)
=\frac{(a^2+b^2)\widehat\eta_{a,b}(u,y)-au}{b^2}.
\]

For \(\mu\geq0\), set
\[
U_\mu=\mu\eta^\star+W,\qquad
\widehat\eta_\mu(u,y)=
\mathbb E[\eta^\star\mid U_\mu=u,Y=y],
\qquad
h_\mu^\star(u,y)=(1+\mu^2)\widehat\eta_\mu(u,y)-\mu u.
\]
For \(\mu>0\), define
\[
h_\mu(u,y)=
\frac{h_\mu^\star(u,y)}
{\{\delta\mathbb E[h_\mu^\star(U_\mu,Y)^2]\}^{1/2}}.
\]
The expansion \(\tanh z=z+O(z^3)\) gives, in
\(L^2(\mathcal L(W,Y))\),
\[
\frac{h_\mu^\star(U_\mu,Y)}{\mu}
\longrightarrow (Y-1)W,
\qquad
\frac{\delta\mathbb E[h_\mu^\star(U_\mu,Y)^2]}{\mu^2}
\longrightarrow2\delta.
\]
We therefore define the continuous normalized limit by
\begin{align}
h_0(u,y)=\frac{(y-1)u}{\sqrt{2\delta}}.
\label{eq:bayes-output-zero-limit}
\end{align}
It satisfies
\[
\delta\mathbb E[h_0(W,Y)^2]=1,
\qquad
\delta\mathbb E[\eta^\star h_0(W,Y)]=0.
\]

The initialization \(\bm v^0\sim\mathcal N(0,\bm I_d)\) thus gives
\(\mu_{U,0}=0\), \(\sigma_{U,0}^2=1\), and
\(\mu_{V,1}=0\), \(\sigma_{V,1}^2=1\).
If \(\sigma_{U,t}^2=1\) and \(\mu_{U,t}>0\), normalization by
\(\{\delta\mathbb E[(h_{\mu_{U,t}}^\star)^2]\}^{1/2}\) gives
\(\sigma_{V,t+1}^2=1\).  Since \(f_{t+1}(x)=x\) and
\(\mathbb E[(\theta^\star)^2]=1\),
\[
\mu_{U,t+1}=\mu_{V,t+1},
\qquad
\sigma_{U,t+1}^2=\sigma_{V,t+1}^2=1.
\]
The case \(\mu_{U,t}=0\) follows from
\eqref{eq:bayes-output-zero-limit}.  Thus these identities hold
inductively for all \(t\geq0\), with the \(V\)-side indexed by \(t+1\).

By the tower property,
\begin{gather}
\mathbb E[\eta^\star\widehat\eta_{\mu_t}]
=\mathbb E[\widehat\eta_{\mu_t}^2],\qquad
\mathbb E[U_{\mu_t}\widehat\eta_{\mu_t}]
=\mathbb E[U_{\mu_t}\eta^\star]=\mu_t,
\notag\\
\mathbb E[\operatorname{Var}(\eta^\star\mid U_{\mu_t},Y)]
=1-\mathbb E[\widehat\eta_{\mu_t}^2].
\label{eq:bayes-appendix-posterior-identities}
\end{gather}
Equation \eqref{eq:mean_noiseless} and the definition
\eqref{eq:m_function} give
\[
\mathbb E[\widehat\eta_{\mu_t}^2]=m(\mu_t^2).
\]
Using \eqref{eq:post_mean_derivative} and
\eqref{eq:bayes-appendix-posterior-identities},
\begin{align}
\mathbb E[(h_{\mu_t}^\star)^2]
&=(1+\mu_t^2)
\{(1+\mu_t^2)m(\mu_t^2)-\mu_t^2\},
\notag\\
\mathbb E[\eta^\star h_{\mu_t}^\star]
-\mu_t\mathbb E[\partial_u h_{\mu_t}^\star]
&=(1+\mu_t^2)m(\mu_t^2)-\mu_t^2
\notag\\
&\quad
-\mu_t^2\{(1+\mu_t^2)(1-m(\mu_t^2))-1\}
\notag\\
&=(1+\mu_t^2)
\{(1+\mu_t^2)m(\mu_t^2)-\mu_t^2\}.
\label{eq:bayes-appendix-normalized-identities}
\end{align}
The first line and \eqref{eq:def_F_delta} give the exact identity
\begin{align}
\delta\mathbb E[(h_{\mu_t}^\star)^2]
&=F_\delta(\mu_t^2).
\label{eq:bayes-appendix-population-normalizer}
\end{align}
For \(\mu_t>0\), dividing the square of the second line of
\eqref{eq:bayes-appendix-normalized-identities} by
\eqref{eq:bayes-appendix-population-normalizer} gives
\begin{align}
\mu_{t+1}^2
&=F_\delta(\mu_t^2).
\label{eq:SE_norm}
\end{align}
For \(\mu_t=0\), the same identity follows from
\eqref{eq:bayes-output-zero-limit}.

\section{Connection to Bolthausen condition}
\label{sec:bolthausen}

Let
\begin{equation}\label{eq:artificial_amp}
\begin{aligned}
\v^{t+1} &= \X^\top h(\u^t) - \hat c_t f(\v^t),\\
\u^{t+1} &= \X f(\v^{t+1}) - \hat b_{t+1}h(\u^t),
\end{aligned}
\end{equation}
where $\u^0=\X f(\v^0)$, \(\X_{ij}\overset{\mathrm{iid}}{\sim}
\N(0,1/d)\), \(n/d=\delta\), and
\begin{align*}
\hat c_t = \frac{1}{d}\sum_{i=1}^n h'(u^t_i), \qquad
\hat b_{t+1} = \frac{1}{d}\sum_{j=1}^d f'(v^{t+1}_j).
\end{align*}
Assume
\(\frac{1}{d}\sum_{j=1}^d\delta_{v_j^0}
\overset{W_2}{\longrightarrow}\sP(V_0)\), and that \(f,h\) are twice
continuously differentiable with bounded first and second derivatives.
For every fixed \(t\), state evolution gives
\begin{align*}
\frac{1}{n}\sum_{i=1}^n \delta_{(u^0_i,\ldots,u^t_i)} \overset{W_2}{\to} \sP(U_0,\ldots,U_t), \quad \frac{1}{d}\sum_{j=1}^d \delta_{(v^0_j,v^1_j,\ldots, v^{t+1}_j)} \overset{W_2}{\to} \sP(V_0,V_1,\ldots,V_{t+1}),
\end{align*}
where $(U_0,\ldots,U_t)$ and $(V_1,\ldots,V_{t+1})$ are two $(t+1)$-dimensional mean zero Gaussian vectors independent of $V_0$, and the covariance is recursively defined by
\begin{align}\label{eq:state_evolution_app}
\E[U_tU_s] = \E[f(V_t)f(V_s)], \quad \E[V_{t+1}V_{s+1}] = \delta\E[h(U_t)h(U_s)],
\end{align}
with initialization $\E[U_0^2] = \lim_{n,d\rightarrow\infty} \frac{1}{d}\pnorm{f(\v^0)}{}^2 =  \E[f(V_0)^2]$. We define the marginals of (\ref{eq:state_evolution_app}) by
\begin{align*}
\sigma_{U,t}^2 &= \E[f^2(V_t)],\\
\sigma_{V,t+1}^2 &= \delta\E[h^2(U_t)] = \delta\E_{G\sim \N(0,1)}[h^2(\sigma_{U,t}G)].
\end{align*}
For $t\geq1$, $V_t\sim\N(0,\sigma_{V,t}^2)$, and hence
\(\sigma_{U,t}^2=\E[f^2(\sigma_{V,t}G)]\).  The empirical Onsager
coefficients have the deterministic limits
\begin{align*}
\hat c_t&\xrightarrow{\mathrm{a.s.}}
c_t\coloneqq\delta\E[h'(\sigma_{U,t}G)],
\notag\\
\hat b_{t+1}&\xrightarrow{\mathrm{a.s.}}
b_{t+1}\coloneqq\E[f'(\sigma_{V,t+1}G)].
\end{align*}

Suppose that
\begin{align}\label{eq:var_fixed_point}
\sigma_U^2 = \E_{G\sim \N(0,1)}[f^2(\sigma_V G)], \quad \sigma_V^2 = \delta\E_{G\sim \N(0,1)} [h^2(\sigma_UG)]
\end{align}
has a fixed point $(\sigma_{U,\infty}, \sigma_{V,\infty})\in\R_+^2$,
and initialize \eqref{eq:artificial_amp} by
\begin{align}\label{eq:stable_init}
\z^0 \coloneqq f(\v^0) = \sigma_{U,\infty}\bm{1}_d.
\end{align}
\begin{proposition}\label{prop:bolthausen_convergence}
Assume that state evolution \eqref{eq:state_evolution_app} holds for every
fixed \(t\) and that
\begin{align}\label{eq:bolthausen_app}
\E\Big[\big(f'(\sigma_{V,\infty}G)\big)^2\Big] \cdot \delta\E\Big[\big(h'(\sigma_{U,\infty}G)\big)^2\Big] < 1,
\end{align} 
where $G\sim\N(0,1)$.  Then, almost surely,
\begin{align}\label{eq:amp_converge_app}
\lim_{t\rightarrow\infty}\lim_{n,d\rightarrow\infty}\frac{1}{d}\pnorm{f(\v^{t}) - f(\v^{t+1})}{}^2 = 0, \quad \lim_{t\rightarrow\infty}\lim_{n,d\rightarrow\infty}\frac{1}{d}\pnorm{h(\u^{t}) - h(\u^{t+1})}{}^2 = 0.
\end{align}
\end{proposition}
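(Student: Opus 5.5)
The plan is to reduce the convergence statement to a contraction estimate on the limiting Gaussian covariance structure given by \eqref{eq:state_evolution_app}. Because of the stable initialization \eqref{eq:stable_init}, $\E[U_0^2]=\sigma_{U,\infty}^2$. By induction, every marginal then stays at the fixed point: $\sigma_{U,t}=\sigma_{U,\infty}$ and $\sigma_{V,t+1}=\sigma_{V,\infty}$ for all $t$. By state evolution, for each fixed $t$ the inner limits in \eqref{eq:amp_converge_app} equal
\[
\E[(f(V_t)-f(V_{t+1}))^2]=2\sigma_{U,\infty}^2-2\E[U_tU_{t+1}]
\]
and
\[
\delta\E[(h(U_t)-h(U_{t+1}))^2]=2\sigma_{V,\infty}^2-2\E[V_{t+1}V_{t+2}]
\]
(the latter up to the factor $\delta$). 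It therefore suffices to show that the correlations $\E[U_tU_{t+1}]/\sigma_{U,\infty}^2$ and $\E[V_{t+1}V_{t+2}]/\sigma_{V,\infty}^2$ tend to $1$.

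Introduce the one-step covariance maps
\[
\Phi_f(c)=\E[f(X)f(Y)],\qquad (X,Y) \text{ centered Gaussian, variances } \sigma_{V,\infty}^2,\ \text{covariance } c,
\]
and $\Phi_h(c)=\delta\E[h(X)h(Y)]$ analogously with variances $\sigma_{U,\infty}^2$. Then the adjacent covariances satisfy
\[
a_{t+1}\coloneqq\E[U_{t+1}U_{t+2}]=\Phi_f(b_{t+1}),\qquad b_{t+1}\coloneqq\E[V_{t+1}V_{t+2}]=\Phi_h(a_t).
\]
A standard Gaussian interpolation (the Hermite or Mehler representation) gives
\[
\Phi_f'(c)=\E[f'(X)f'(Y)],
\]
and Cauchy--Schwarz bounds this by $\E[f'(\sigma_{V,\infty}G)^2]$. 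Similarly, $\Phi_h'\le \delta\E[h'(\sigma_{U,\infty}G)^2]$. Both maps fix the full-variance endpoints: $\Phi_f(\sigma_{V,\infty}^2)=\sigma_{U,\infty}^2$ and $\Phi_h(\sigma_{U,\infty}^2)=\sigma_{V,\infty}^2$. The composite $\Phi_f\circ\Phi_h$ is therefore Lipschitz with constant $\kappa<1$ by \eqref{eq:bolthausen_app}, and
\[
\sigma_{U,\infty}^2-a_{t+1}\le \kappa\,(\sigma_{U,\infty}^2-a_t).
\]
This gap recursion forces $a_t\to\sigma_{U,\infty}^2$ geometrically, and then $b_t\to\sigma_{V,\infty}^2$ by the Lipschitz bound on $\Phi_h$.

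Two points need care. First, the initial covariance $a_0=\E[U_0U_1]$ must be a legitimate starting value, which requires checking the first step, where $V_0$ is deterministic ($f(V_0)$ is constant). Second, one must verify that the covariance of the pair $(U_t,U_{t+1})$ depends only on $b_t$, i.e.\ that the joint law is determined by the marginals together with the adjacent covariance. The latter holds because $(V_t,V_{t+1})$ is a bivariate Gaussian; I expect this bookkeeping to be the main, though mild, obstacle, alongside justifying differentiation under the expectation using the bounded derivatives of $f$ and $h$.
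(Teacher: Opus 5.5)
Your argument is correct, and it shares the paper's overall strategy. State evolution turns the inner limits into $2(\sigma_{U,\infty}^2-e_t)$ and $2(\sigma_{V,\infty}^2-d_t)$, and one then shows that the scalar covariance recursion $e_{t+1}=T_f(T_h(e_t))$ contracts to its fixed point at a rate given by the product in \eqref{eq:bolthausen_app}. However, you carry out both key steps differently, and your version is leaner.

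\textbf{How the paper does it.} It first proves a stronger stationarity statement: $\lim d^{-1}\langle\z^\ell,\z^t\rangle=e_\ell$ does not depend on $t>\ell$. This uses Gram--Schmidt coordinates and the surrogate vectors $\tilde\u_{\mathsf{SE}}^t$. It then derives the contraction from monotonicity and convexity of $T_f,T_h$ on $[0,\sigma^2]$, with $T''\geq0$ obtained by a second Gaussian integration by parts, which gives $b-T(x)\leq T'(b)(b-x)$. Because of this, its lemma assumes $e_0\in[0,\sigma_{U,\infty}^2]$, and the proof of the proposition needs separate cases ($e_0=\sigma_{U,\infty}^2$, and a vanishing product).

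\textbf{How you do it.} You track only adjacent covariances, which is enough because only $\z^t-\z^{t+1}$ and $\w^t-\w^{t+1}$ appear. You bound $|\E f'(X)f'(Y)|\leq\E f'(\sigma_{V,\infty}G)^2$ for every admissible covariance by Cauchy--Schwarz. Hence $\Phi_f\circ\Phi_h$ is a global $\kappa$-Lipschitz self-map of $[-\sigma_{U,\infty}^2,\sigma_{U,\infty}^2]$ with fixed point $\sigma_{U,\infty}^2$. This needs no convexity and no case split, and it also covers $\E f(\sigma_{V,\infty}G)<0$ (i.e.\ $e_0<0$), which the paper's lemma excludes.

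\textbf{What the paper's route buys.} It yields monotonicity of $e_t,d_t$ and control of the Gram--Schmidt coefficients ($\sum_s\lambda_s^2\to\sigma_{V,\infty}^2$). This is the connection the appendix draws to the innovation structure of the main error recursion, but it is not needed for the proposition itself.
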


For \(t\geq0\), define
\begin{align}\label{def:z_and_w}
\z^{t} \coloneqq f(\v^{t}), \quad \w^t \coloneqq h(\u^t),
\end{align}
and let
\begin{equation}\label{def:q_and_r}
\begin{gathered}
\q^0 \coloneqq \frac{\z^0}{\pnorm{\z^0}{}}, \quad \q^\ell \coloneqq \frac{P_{\Q^{\ell-1}}^\perp\z^\ell}{\pnorm{P_{\Q^{\ell-1}}^\perp\z^\ell}{}}, \quad 1\leq \ell\leq t,\\
\r^0 \coloneqq \frac{\w^0}{\pnorm{\w^0}{}}, \quad \r^\ell \coloneqq \frac{P^\perp_{\bR^{\ell-1}}\w^\ell}{\pnorm{P^\perp_{\bR^{\ell-1}}\w^\ell}{}}, \quad 1\leq \ell\leq t,
\end{gathered}
\end{equation}
where $\Q^{\ell-1} = [\q^0,\ldots,\q^{\ell-1}]\in\R^{d\times\ell}$, and $\bR^{\ell-1} = [\r^0,\ldots, \r^{\ell-1}] \in \R^{n\times \ell}$. Similar to (\ref{eq:inner_product}), let
\begin{align*}
\gamma_{t,\ell} \coloneqq \frac{1}{\sqrt{d}}\iprod{\q^\ell}{\z^t}, \quad \lambda_{t,\ell} \coloneqq \frac{1}{\sqrt{d}}\iprod{\r^\ell}{\w^t}, \quad 0\leq \ell \leq t. 
\end{align*}

\begin{lemma}\phantomsection\label{lem:stable_se}
\begin{enumerate}
\item[(1)] For all $t\geq 0$, it holds that
\begin{equation*}
\begin{gathered}
    \sigma_{U,t}^2 = \sigma_{U,\infty}^2, \qquad
    \sigma_{V,t+1}^2 = \sigma_{V,\infty}^2, \\
    c_t = c_\infty \coloneqq \delta\E_{Z\sim \N(0,1)}[h'(\sigma_{U,\infty}Z)], \quad b_{t+1} = b_\infty \coloneqq \E_{Z\sim \N(0,1)}[f'(\sigma_{V,\infty}Z)].
\end{gathered}
\end{equation*}
\item[(2)] There exist deterministic real sequences
$\{\gamma_\ell\}_{\ell\geq 0}$, $\{\lambda_\ell\}_{\ell\geq 0}$,
$\{e_\ell\}_{\ell\geq 0}$, and $\{d_\ell\}_{\ell\geq 0}$ such that,
almost surely, for every fixed \(t\geq1\),
\begin{align*}
\lim_{n,d\rightarrow\infty}\gamma_{t,\ell} &= \gamma_\ell,
\quad 0\leq\ell<t, \qquad
\lim_{n,d\rightarrow\infty}\gamma_{t,t}
= \sqrt{\sigma_{U,\infty}^2 - \sum_{\ell=0}^{t-1}\gamma_\ell^2},\\
\lim_{n,d\rightarrow\infty}\lambda_{t,\ell} &= \lambda_\ell,
\quad 0\leq\ell<t, \qquad
\lim_{n,d\rightarrow\infty}\lambda_{t,t}
 = \sqrt{\sigma_{V,\infty}^2 - \sum_{\ell=0}^{t-1} \lambda_\ell^2},
\end{align*}
and
\begin{align*}
\lim_{n,d\rightarrow\infty} \frac{1}{d}\z^{\ell\top}\z^t &= e_\ell, \quad \ell <t, \quad \lim_{n,d\rightarrow\infty} \frac{1}{d}\pnorm{\z^t}{}^2 = \sigma_{U,\infty}^2,\\
\lim_{n,d\rightarrow\infty} \frac{1}{d}\w^{\ell\top}\w^t &= d_\ell, \quad \ell <t, \quad \lim_{n,d\rightarrow\infty} \frac{1}{d}\pnorm{\w^t}{}^2 = \sigma_{V,\infty}^2.
\end{align*}
\end{enumerate}
\end{lemma}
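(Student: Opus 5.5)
The proof proceeds by induction on \(t\), handling parts (1) and (2) together, and relies on the fixed-time state evolution \eqref{eq:state_evolution_app}, which we assume. For part (1), the initialization \eqref{eq:stable_init} gives \(\sigma_{U,0}^2=\lim d^{-1}\pnorm{\z^0}{}^2=\sigma_{U,\infty}^2\). Then \(\sigma_{V,1}^2=\delta\E[h^2(\sigma_{U,\infty}G)]=\sigma_{V,\infty}^2\) by the fixed-point equations \eqref{eq:var_fixed_point}. Since \(V_t\sim\N(0,\sigma_{V,t}^2)\) for \(t\ge1\), we get \(\sigma_{U,t}^2=\E[f^2(\sigma_{V,\infty}G)]=\sigma_{U,\infty}^2\), and an induction closes. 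Once the variances are fixed, the formulas \(c_t=\delta\E[h'(\sigma_{U,t}G)]\) and \(b_{t+1}=\E[f'(\sigma_{V,t+1}G)]\) give \(c_t=c_\infty\) and \(b_{t+1}=b_\infty\).

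For part (2), we first identify the limits of the Gram matrices. By state evolution together with \(W_2\) convergence and the boundedness of \(f',h'\), which makes \(f,h\) Lipschitz with quadratic growth so the test functions are admissible, we have
\[
d^{-1}\z^{s\top}\z^t\to\E[f(V_s)f(V_t)],\qquad d^{-1}\w^{s\top}\w^t\to\delta\E[h(U_s)h(U_t)].
\]
The key claim is that these limits depend only on \(t-s\) when \(s\ge1\) (and on \(t\) alone when \(s=0\)), since the recursion \eqref{eq:state_evolution_app} is time-homogeneous once the diagonal is stationary. More precisely, define \(e_\ell\) and \(d_\ell\) as the stationary covariances: \(e_\ell\) for the pair \((\ell,t)\) is obtained through the recursion from the initial column \(\E[f(V_0)f(V_t)]=\sigma_{U,\infty}\E[f(V_t)]\). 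We need to check that \(\E[f(V_0)f(V_t)]\) and \(\E[U_0U_t]\) are the same for every \(t\ge1\). Because \(f(V_0)\) is the constant \(\sigma_{U,\infty}\), this reduces to \(\E f(V_t)\) being constant in \(t\), which holds because \(V_t\sim\N(0,\sigma_{V,\infty}^2)\). Then \(\E[U_0U_t]=\sigma_{U,\infty}\E f(\sigma_{V,\infty}G)\) for all \(t\ge1\), and the correlations \(\E[V_1V_{t+1}]=\delta\E[h(U_0)h(U_t)]\) are determined by \((U_0,U_t)\), whose joint law is constant in \(t\ge1\). Propagating, \((V_{s},V_{t})\) and \((U_s,U_t)\) have joint laws depending only on whether \(s=0\) and on \(t-s\), so the Gram limits \(e_\ell,d_\ell\) are deterministic and stationary in the appropriate sense. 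Finally, since the Gram–Schmidt coefficients \(\gamma_{t,\ell}\) and \(\lambda_{t,\ell}\) are continuous functions of the Gram matrix wherever the Gram matrix is nondegenerate (that is, where the projections \(P^\perp\z^\ell\) have nonvanishing limiting norm), \(\gamma_{t,\ell}\to\gamma_\ell\) and \(\lambda_{t,\ell}\to\lambda_\ell\) for \(\ell<t\). The last coordinate follows from \(\sum_{\ell\le t}\gamma_{t,\ell}^2=d^{-1}\pnorm{\z^t}{}^2\to\sigma_{U,\infty}^2\), and likewise on the \(\w\) side.

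The main obstacle is justifying this stationarity precisely. We must show that the limit of \(\gamma_{t,\ell}\) for \(\ell<t\) is independent of \(t\), which requires the Cholesky factors of the limiting covariance matrices to be nested: the leading \((t)\times(t)\) block of the \((t+1)\)-step covariance must equal the \(t\)-step covariance, and the new row's entries, indexed by their position, must not depend on \(t\). Nesting is automatic because state evolution defines a single infinite covariance sequence. The row-independence, however, needs the covariance to have Toeplitz-type structure with the special index \(0\). Careful bookkeeping may show that \(e_\ell\) should be read as a function of the pair of positions rather than of the lag alone, and in that case the deterministic sequences would be defined through the Cholesky recursion of this fixed infinite matrix. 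We also need nondegeneracy, \(\gamma_{t,t}>0\) in the limit, which should follow from the same Cholesky structure under \eqref{eq:bolthausen_app}; if it fails, we fall back on the paper's completion convention for zero projections.
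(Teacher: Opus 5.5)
Part~(1) and your identification of the Gram limits through state evolution are fine; the gap is in the structural claim for part~(2). You assert that the limiting entries $\lim d^{-1}\langle\z^s,\z^t\rangle$ and $\lim d^{-1}\langle\w^s,\w^t\rangle$ depend only on the lag $t-s$ (with $s=0$ special). That is false in general, and if it held it would contradict the lemma. Under a lag structure, $\gamma_{t,1}$ would converge to $(C_{1,t}-\gamma_0^2)/\sqrt{\sigma_{U,\infty}^2-\gamma_0^2}$, where $C_{1,t}=\lim d^{-1}\langle\z^1,\z^t\rangle$ varies with $t$.

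What actually holds, and what the notation $e_\ell$, $d_\ell$ for $\ell<t$ in the statement already says, is that for $t>\ell$ these limits depend only on the \emph{earlier} index $\ell$. Your first steps are right: $\E[U_0U_t]$, hence the law of $(U_0,U_t)$, hence $\E[V_1V_{t+1}]$, are constant in $t\geq1$. Carried forward correctly, they show that the laws of $(V_{\ell+1},V_{t+1})$ and $(U_{\ell+1},U_{t+1})$ are constant in $t>\ell$, not functions of $t-\ell$. So your remark that $t$-independence of the Gram--Schmidt coefficients ``needs Toeplitz-type structure'' is backwards. Your fallback, a general pair-dependent covariance and the Cholesky factor of that infinite matrix, produces deterministic limits of $\gamma_{t,\ell}$ but not limits independent of $t$, so it does not prove the statement.

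The missing argument is an induction on $\ell$, as in the paper. For the base case, $\lim d^{-1}\langle\z^0,\z^t\rangle=\sigma_{U,\infty}\E f(\sigma_{V,\infty}G)\eqqcolon e_0$ for all $t>0$. For the step, suppose $\lim d^{-1}\langle\z^\ell,\z^t\rangle=e_\ell$ for all $t>\ell$. Then $(U_\ell,U_t)$ is a centered Gaussian pair with both variances $\sigma_{U,\infty}^2$ (by part~(1)) and covariance $e_\ell$, so $d_\ell\coloneqq\delta\E[h(U_\ell)h(U_t)]$ does not depend on $t$. Next, $(V_{\ell+1},V_{t+1})$ has variances $\sigma_{V,\infty}^2$ and covariance $d_\ell$, so $e_{\ell+1}\coloneqq\E[f(V_{\ell+1})f(V_{t+1})]$ does not depend on $t$ either.

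With this earlier-index structure, a second induction on $\ell$ gives, for every $t>\ell$,
\[
\lim_{n,d\to\infty}\gamma_{t,\ell}
=\frac{e_\ell-\sum_{s<\ell}\gamma_s^2}{\sqrt{\sigma_{U,\infty}^2-\sum_{s<\ell}\gamma_s^2}}.
\]
This is independent of $t$ because in the numerator $d^{-1}\langle\z^\ell,\z^t\rangle-\sum_{s<\ell}\gamma_{\ell,s}\gamma_{t,s}$, both $\gamma_{\ell,s}$ and $\gamma_{t,s}$ tend to the same $\gamma_s$. Equivalently, each column of the limiting Cholesky factor is constant below the diagonal. The $\lambda$ side is identical, and the diagonal entries follow from the Pythagorean identity as you say. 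Positivity of the denominators is not justified in the paper's proof of this lemma either; it only appears later, under additional hypotheses, in Lemma~\ref{lem:bolthause_convexity}.
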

\begin{proof}
(1) holds since $\sigma_{U,0}^2 = \sigma_{U,\infty}^2$ by
\eqref{eq:stable_init}, and the rest follows from
\eqref{eq:var_fixed_point}.  For (2), the claims concerning the diagonal
coefficients, \(d^{-1}\|\z^t\|^2\), and \(d^{-1}\|\w^t\|^2\) follow from
(1) and the Pythagorean identities associated with
\eqref{def:q_and_r}.

For the off-diagonal limits, put
\[
C^z_{\ell,t}\coloneqq
\lim_{n,d\to\infty}d^{-1}\langle\z^\ell,\z^t\rangle,
\qquad
C^w_{\ell,t}\coloneqq
\lim_{n,d\to\infty}d^{-1}\langle\w^\ell,\w^t\rangle.
\]
The initialization \eqref{eq:stable_init} and marginal stationarity from
part \textnormal{(1)} give, for every \(t>0\),
\begin{align*}
C^z_{0,t}
&=\sigma_{U,\infty}\E[f(\sigma_{V,\infty}G)]
\eqqcolon e_0.
\end{align*}
Suppose \(C^z_{\ell,t}=e_\ell\) for every \(t>\ell\).  Joint state
evolution gives
\begin{align*}
\Cov(U_\ell,U_t)&=C^z_{\ell,t}=e_\ell,\\
C^w_{\ell,t}
&=\delta\E[h(U_\ell)h(U_t)]
\eqqcolon d_\ell,
\qquad t>\ell.
\end{align*}
The marginal variances of \(U_\ell,U_t\) are both
\(\sigma_{U,\infty}^2\), so the last expectation depends only on
\(e_\ell\), not on \(t\).  Applying the other half of
\eqref{eq:state_evolution_app} yields
\begin{align*}
\Cov(V_{\ell+1},V_{t+1})&=C^w_{\ell,t}=d_\ell,\\
C^z_{\ell+1,t+1}
&=\E[f(V_{\ell+1})f(V_{t+1})]
\eqqcolon e_{\ell+1},
\qquad t>\ell,
\end{align*}
and the fixed marginal variance \(\sigma_{V,\infty}^2\) again makes the
last quantity independent of \(t\).  This proves by induction that
\(C^z_{\ell,t}=e_\ell\) and \(C^w_{\ell,t}=d_\ell\) whenever \(t>\ell\).

It remains to transfer these covariance limits to Gram--Schmidt
coordinates.  For \(t>\ell\), \eqref{def:q_and_r} gives
\begin{align*}
\lim_{n,d\to\infty}\gamma_{t,\ell}
&=
\frac{e_\ell-\sum_{s=0}^{\ell-1}\gamma_s^2}
{\sqrt{\sigma_{U,\infty}^2-\sum_{s=0}^{\ell-1}\gamma_s^2}},
\notag\\
\lim_{n,d\to\infty}\lambda_{t,\ell}
&=
\frac{d_\ell-\sum_{s=0}^{\ell-1}\lambda_s^2}
{\sqrt{\sigma_{V,\infty}^2-\sum_{s=0}^{\ell-1}\lambda_s^2}}.
\end{align*}
Both right-hand sides are independent of \(t\); define them as
\(\gamma_\ell\) and \(\lambda_\ell\), respectively.  For \(\ell=0\), the
first identity reduces to
\begin{align*}
\lim_{n,d\rightarrow\infty}\gamma_{t,0}
&= \lim_{n,d\rightarrow\infty} \frac{1}{\sqrt d}\iprod{\q^0}{\z^t}
= \lim_{n,d\rightarrow\infty}
\frac{d^{-1}\iprod{\z^0}{\z^t}}{d^{-1/2}\pnorm{\z^0}{}}\\
&= \frac{\sigma_{U,\infty}\E[f(\sigma_{V,\infty}G)]}
{\sigma_{U,\infty}}
= \E[f(\sigma_{V,\infty}G)],
\end{align*}
which is independent of $t$.
Finally, the diagonal limits in the statement follow by subtracting the
squared earlier coordinates from the stationary marginal norms.  This
proves part \textnormal{(2)}.
\end{proof}

\begin{lemma}
Define the initialization
\begin{align}\label{eq:e0_def}
e_0 = \sigma_{U,\infty}\E[f(\sigma_{V,\infty}Z)].
\end{align}
Assume \(e_0\in[0,\sigma_{U,\infty}^2]\).  Then
\(\{e_\ell\}_{\ell\geq 0}\) and \(\{d_\ell\}_{\ell\geq 0}\) are recursively
given by
\begin{align}
d_\ell &= \delta\E\Big[h\Big(\sqrt{e_\ell}G_1 + \sqrt{\sigma_{U,\infty}^2 - e_\ell}G_2\Big)h\Big(\sqrt{e_\ell}G_1 + \sqrt{\sigma_{U,\infty}^2 - e_\ell}G_3\Big)\Big], \label{eq:d_recursion}\\
e_{\ell+1} &= \E\Big[f\Big(\sqrt{d_\ell}G_1 + \sqrt{\sigma_{V,\infty}^2 - d_\ell}G_2\Big)f\Big(\sqrt{d_\ell}G_1 + \sqrt{\sigma_{V,\infty}^2 - d_\ell}G_3\Big)\Big],\label{eq:e_recursion}
\end{align}
where $G_1,G_2,G_3$ are i.i.d. $\N(0,1)$ variables. Moreover, let
\begin{align*}
\gamma_0 = \E[f(\sigma_{V,\infty}G)], \quad \lambda_0 = \frac{d_0}{\sigma_{V,\infty}},
\end{align*}
and the rest of $\{\gamma_\ell\}_{\ell\geq 0}$ and $\{\lambda_\ell\}_{\ell\geq 0}$ can be computed from $\{d_\ell\}_{\ell\geq 0}$ and $\{e_\ell\}_{\ell\geq 0}$ via
\begin{align}\label{eq:gamma_lambda_proof}
\gamma_{\ell+1} = \frac{e_{\ell+1} - \sum_{s=0}^\ell \gamma_s^2}{\sqrt{\sigma_{U,\infty}^2 - \sum_{s=0}^\ell \gamma_s^2}}, \quad \lambda_\ell = \frac{d_\ell - \sum_{s=0}^{\ell-1} \lambda_s^2}{\sqrt{\sigma_{V,\infty}^2 - \sum_{s=0}^{\ell-1}\lambda_s^2}}.
\end{align}
\end{lemma}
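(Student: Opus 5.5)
The plan is to deduce both recursions directly from the limiting covariance identities already established in the proof of Lemma~\ref{lem:stable_se}(2), and then obtain the Gram--Schmidt coefficients from the Cholesky-type relation between the Gram matrix and the triangular coordinates. Recall from that proof that, for $t>\ell$, $C^z_{\ell,t}=e_\ell$ and $C^w_{\ell,t}=d_\ell$. Joint state evolution gives $\Cov(U_\ell,U_t)=e_\ell$ with both marginal variances equal to $\sigma_{U,\infty}^2$, and similarly $\Cov(V_{\ell+1},V_{t+1})=d_\ell$ with both variances $\sigma_{V,\infty}^2$. The base value $e_0=\sigma_{U,\infty}\E[f(\sigma_{V,\infty}Z)]$ is exactly the quantity computed there from the initialization \eqref{eq:stable_init}.

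First, I would represent a centered Gaussian pair with common variance $\sigma^2$ and covariance $c\in[0,\sigma^2]$ as $(\sqrt c\,G_1+\sqrt{\sigma^2-c}\,G_2,\ \sqrt c\,G_1+\sqrt{\sigma^2-c}\,G_3)$. This representation is valid precisely when $0\le c\le\sigma^2$. Substituting it into $d_\ell=\delta\E[h(U_\ell)h(U_t)]$ gives \eqref{eq:d_recursion}, and substituting it into $e_{\ell+1}=\E[f(V_{\ell+1})f(V_{t+1})]$ gives \eqref{eq:e_recursion}.

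The main obstacle is admissibility: I must show by induction that $e_\ell\in[0,\sigma_{U,\infty}^2]$ and $d_\ell\in[0,\sigma_{V,\infty}^2]$ for every $\ell$. The upper bounds follow from Cauchy--Schwarz, for example $|d_\ell|\le\delta\E[h(U_\ell)^2]=\sigma_{V,\infty}^2$. For nonnegativity, note that if $c\ge0$ then conditioning on $G_1$ gives
\[
\E[h(\cdot)h(\cdot)]=\E\bigl[(\E[h\mid G_1])^2\bigr]\ge0,
\]
because $G_2$ and $G_3$ are i.i.d. The base case is the hypothesis $e_0\in[0,\sigma_{U,\infty}^2]$, and the induction then alternates between $d_\ell$ and $e_{\ell+1}$.

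Finally, for the coefficients, I would use $\gamma_0=\lim\gamma_{t,0}=\E[f(\sigma_{V,\infty}G)]$, which is computed in Lemma~\ref{lem:stable_se}. For $\lambda_0$, note that $\r^0=\w^0/\pnorm{\w^0}{}$ with $d^{-1}\pnorm{\w^0}{}^2\to\sigma_{V,\infty}^2$, so $\lambda_{t,0}\to d_0/\sigma_{V,\infty}$. The general formula \eqref{eq:gamma_lambda_proof} is the limit displayed at the end of that proof. It comes from writing $d^{-1}\langle\z^\ell,\z^t\rangle=\sum_{s\le\ell}\gamma_{\ell,s}\gamma_{t,s}$ and solving for the $\ell$-th coordinate. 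In that identity the earlier coordinates of $\z^\ell$ and $\z^t$ share the common limits $\gamma_s$, and the diagonal coordinate is $\sqrt{\sigma_{U,\infty}^2-\sum_{s<\ell}\gamma_s^2}$. The $w$-side is handled identically, with the index shifted by one.
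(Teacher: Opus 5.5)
Your argument is correct, but it reaches \eqref{eq:d_recursion}--\eqref{eq:e_recursion} by a different route from the paper's.

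The paper proceeds in four steps. It first derives \eqref{eq:gamma_lambda_proof} by expanding the Gram--Schmidt projections. It then introduces explicit surrogates $\tilde\v_{\mathsf{SE}}^{t+1}=\sum_{\ell<t}\lambda_\ell\s^\ell+(\sigma_{V,\infty}^2-\sum_{\ell<t}\lambda_\ell^2)^{1/2}\s^t$ and $\tilde\u_{\mathsf{SE}}^{t}$, built from the limiting coefficients, and asserts that $\v^{t+1}$ and $\u^t$ are asymptotically close to them. Next it computes $d_\ell$ and $e_{\ell+1}$ as limits of $d^{-1}\langle h(\tilde\u_{\mathsf{SE}}^\ell),h(\tilde\u_{\mathsf{SE}}^t)\rangle$ and its $f$-analogue. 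Finally it identifies the covariance of the two scalar Gaussian arguments as $e_\ell$ (resp.\ $d_\ell$) through those coefficient recursions.

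You stay entirely at the level of the joint state-evolution law. The covariances $\Cov(U_\ell,U_t)=e_\ell$ and $\Cov(V_{\ell+1},V_{t+1})=d_\ell$, with stationary marginal variances, are already in the proof of Lemma~\ref{lem:stable_se}. The canonical representation of an equal-variance Gaussian pair converts them directly into the two recursions, and the coefficient formulas are read off separately afterwards.

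Your route is shorter and uses only the assumed joint state evolution. In particular, it bypasses the closeness of the iterates to the surrogates. That closeness does not follow from the coefficient limits of Lemma~\ref{lem:stable_se} alone; it rests on a coupling of the kind constructed in Section~\ref{subsec:couple_amp}. Your induction that $e_\ell\in[0,\sigma_{U,\infty}^2]$ and $d_\ell\in[0,\sigma_{V,\infty}^2]$ also justifies the square roots, which the paper uses for $\ell\geq1$ without comment. It uses Cauchy--Schwarz for the upper bounds and $\E[(\E[h\mid G_1])^2]\geq0$ for nonnegativity.

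What the paper's route buys in exchange is an explicit picture of the iterates as Gaussian sums in Gram--Schmidt coordinates. This parallels the decomposition \eqref{eq:tilde_u_intro} that the appendix is meant to motivate.
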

\begin{proof}
We first prove (\ref{eq:gamma_lambda_proof}). Fix \(t>\ell\). Using Lemma \ref{lem:stable_se}, $\gamma_{\ell+1}$ can be computed as
\begin{align}\label{eq:recursion_gamma}
\notag\gamma_{\ell+1} &= \lim_{n,d\rightarrow\infty} \frac{1}{\sqrt{d}}\q^{\ell+1\top}\z^{t+1} = \lim_{n,d\rightarrow\infty} \frac{1}{\sqrt{d}} \frac{(P_{\Q^{\ell}}^\perp\z^{\ell+1})^\top \z^{t+1}}{\pnorm{P_{\Q^\ell}^\perp \z^{\ell+1}}{}}\\
\notag&= \lim_{n,d\rightarrow\infty} \frac{\z^{\ell+1\top}\z^{t+1}/d - \sum_{s=0}^\ell \q^{s\top}\z^{t+1}/\sqrt{d} \cdot \q^{s\top}\z^{\ell+1}/\sqrt{d}}{\pnorm{P_{\Q^\ell}^\perp \z^{\ell+1}}{}/\sqrt{d}}\\
&= \frac{e_{\ell+1} - \sum_{s=0}^\ell \gamma_s^2}{\sqrt{\sigma_{U,\infty}^2 - \sum_{s=0}^\ell \gamma_s^2}}.
\end{align}
This proves the identity for \(\gamma_{\ell+1}\).  The same calculation gives
\begin{align}\label{eq:recursion_lambda}
\lambda_\ell &= \lim_{n,d\rightarrow\infty} \frac{1}{\sqrt{d}}\r^{\ell\top}\w^t = \lim_{n,d\rightarrow\infty} \frac{(P_{\bR^{\ell-1}}^\perp\w^\ell)^\top \w^t/d}{\pnorm{P_{\bR^{\ell-1}}^\perp\w^\ell}{}/\sqrt{d}} = \frac{d_\ell - \sum_{s=0}^{\ell-1} \lambda_s^2}{\sqrt{\sigma_{V,\infty}^2 - \sum_{s=0}^{\ell-1}\lambda_s^2}}.
\end{align}
To prove \eqref{eq:d_recursion} and \eqref{eq:e_recursion}, define
\begin{align*}
\tilde\v_{\mathsf{SE}}^{t+1} &\coloneqq
\sum_{\ell=0}^{t-1} \lambda_\ell \s^\ell
+ \sqrt{\sigma_{V,\infty}^2 - \sum_{\ell=0}^{t-1}\lambda_\ell^2}\s^t,\\
\tilde\u_{\mathsf{SE}}^t &\coloneqq
\sum_{\ell=0}^{t-1} \gamma_\ell\g^\ell
+ \sqrt{\sigma_{U,\infty}^2 - \sum_{\ell=0}^{t-1}\gamma_\ell^2}\g^t,
\end{align*}
where $\{\g^\ell\}_{\ell=0}^t$ (resp. $\{\s^\ell\}_{\ell=0}^t$) are i.i.d. $\N(0,\I_n)$ (resp. $\N(0,\I_d)$) vectors.  Lemma
\ref{lem:stable_se} gives, for every fixed \(t\), almost surely,
\begin{align*}
\lim_{n,d\rightarrow\infty}\frac{1}{\sqrt{d}}\pnorm{\v^{t+1}-\tilde\v_{\mathsf{SE}}^{t+1}}{} = \lim_{n,d\rightarrow\infty}\frac{1}{\sqrt{d}}\pnorm{\u^{t}-\tilde\u_{\mathsf{SE}}^{t}}{} = 0,
\end{align*}
as \(n,d\to\infty\) with \(n/d\to\delta\).
Let \(\tilde\g\sim\N(0,\I_n)\) be independent of
\(\g^0,\ldots,\g^t\).  For \(\ell<t\),
\begin{align*}
\notag d_\ell &= \lim_{n,d\rightarrow\infty} \frac{1}{d}\iprod{\w^\ell}{\w^t} = \lim_{n,d\rightarrow\infty} \frac{1}{d}\iprod{h(\tilde\u_{\mathsf{SE}}^\ell)}{h(\tilde \u_{\mathsf{SE}}^t)}\\
\notag &= \lim_{n,d\rightarrow\infty} \frac{1}{d}\iprod{ h\Big(\sum_{j=0}^{\ell-1}\gamma_j\g^j + \sqrt{\sigma_{U,\infty}^2 - \sum_{j=0}^{\ell-1}\gamma_j^2}\g^\ell\Big)}{h\Big(\sum_{j=0}^{\ell-1}\gamma_j\g^j + \gamma_{\ell}\g^{\ell} + \sqrt{\sigma_{U,\infty}^2 - \sum_{j=0}^{\ell}\gamma_j^2}\tilde{\g}\Big)}\\
&= \delta\E\Big[h\Big(\sum_{s=0}^{\ell-1}\gamma_sg^s + \sqrt{\sigma_{U,\infty}^2 - \sum_{s=0}^{\ell-1}\gamma_s^2}g^\ell\Big)h\Big(\sum_{s=0}^{\ell}\gamma_sg^s + \sqrt{\sigma_{U,\infty}^2 - \sum_{s=0}^\ell \gamma_s^2}\tilde{g}\Big)\Big].
\end{align*}
Since
\begin{align*}
\Cov\Big(\sum_{s=0}^{\ell-1}\gamma_sg^s + \sqrt{\sigma_{U,\infty}^2 - \sum_{s=0}^{\ell-1}\gamma_s^2}g^\ell, \sum_{s=0}^{\ell}\gamma_sg^s + \sqrt{\sigma_{U,\infty}^2 - \sum_{s=0}^\ell \gamma_s^2}\tilde{g}\Big) = \sum_{s=0}^{\ell-1}\gamma_s^2 + \gamma_\ell\sqrt{\sigma_{U,\infty}^2 - \sum_{s=0}^{\ell-1}\gamma_s^2} = e_\ell, 
\end{align*}
where the last step follows from (\ref{eq:recursion_gamma}), we have
\begin{align*}
d_\ell = \delta\E\Big[h\Big(\sqrt{e_\ell}G_1 + \sqrt{\sigma_{U,\infty}^2 - e_\ell}G_2\Big)h\Big(\sqrt{e_\ell}G_1 + \sqrt{\sigma_{U,\infty}^2 - e_\ell}G_3\Big)\Big],
\end{align*}
which proves \eqref{eq:d_recursion}.  Let
\(\tilde\s\sim\N(0,\I_d)\) be independent of
\(\s^0,\ldots,\s^t\).  For \(\ell<t\),
\begin{align*}
\notag e_{\ell+1} &= \lim_{n,d\rightarrow\infty} \frac{1}{d}\iprod{\z^{\ell+1}}{\z^{t+1}} = \lim_{n,d\rightarrow\infty} \frac{1}{d}\iprod{f(\tilde\v_{\mathsf{SE}}^{\ell+1})}{f(\tilde\v_{\mathsf{SE}}^{t+1})}\\
\notag&=\lim_{n,d\rightarrow\infty} \frac{1}{d}\iprod{ f\Big(\sum_{j=0}^{\ell-1}\lambda_j \s^j + \sqrt{\sigma_{V,\infty}^2 - \sum_{j=0}^{\ell-1}\lambda_j^2}\s^\ell\Big)}{f\Big(\sum_{j=0}^{\ell-1}\lambda_j \s^j + \lambda_\ell\s^\ell + \sqrt{\sigma_{V,\infty}^2 - \sum_{j=0}^{\ell}\lambda_j^2}\tilde{\s}\Big)}\\
&= \E\Big[f\Big(\sum_{j=0}^{\ell-1}\lambda_j s^j + \sqrt{\sigma_{V,\infty}^2 - \sum_{j=0}^{\ell-1}\lambda_j^2}s^\ell\Big)f\Big(\sum_{j=0}^\ell \lambda_j s^j + \sqrt{\sigma_{V,\infty}^2 - \sum_{j=0}^\ell \lambda_j^2}\tilde s\Big)\Big].
\end{align*}
By (\ref{eq:recursion_lambda}), we have
\begin{align*}
\Cov\Big(\sum_{j=0}^{\ell-1}\lambda_j s^j + \sqrt{\sigma_{V,\infty}^2 - \sum_{j=0}^{\ell-1}\lambda_j^2}s^\ell, \sum_{j=0}^\ell \lambda_js^j  
 + \sqrt{\sigma_{V,\infty}^2 - \sum_{j=0}^\ell \lambda_j^2}\tilde s\Big) = \sum_{j=0}^{\ell-1} \lambda_j^2 + \lambda_\ell\sqrt{\sigma_{V,\infty}^2 - \sum_{j=0}^{\ell-1}\lambda_j^2} = d_\ell, 
\end{align*}
and therefore
\begin{align*}
e_{\ell+1} = \E\Big[f\Big(\sqrt{d_\ell}G_1 + \sqrt{\sigma_{V,\infty}^2 - d_\ell}G_2\Big)f\Big(\sqrt{d_\ell}G_1 + \sqrt{\sigma_{V,\infty}^2 - d_\ell}G_3\Big)\Big].
\end{align*}
\end{proof}

With $G_1,G_2,G_3$ denoting i.i.d. $\N(0,1)$ variables, let
\begin{equation}\label{eq:def_Tf}
\begin{aligned}
T_f(x) &\coloneqq \E\Big[f(\sqrt{x}G_1 + \sqrt{\sigma_{V,\infty}^2 - x}G_2)f(\sqrt{x}G_1 + \sqrt{\sigma_{V,\infty}^2 - x}G_3)\Big], \quad x \in [0,\sigma_{V,\infty}^2],\\
T_h(x) &\coloneqq \delta\E\Big[h(\sqrt{x}G_1 + \sqrt{\sigma_{U,\infty}^2 - x}G_2)h(\sqrt{x}G_1 + \sqrt{\sigma_{U,\infty}^2 - x}G_3)\Big], \quad x\in [0,\sigma_{U,\infty}^2].
\end{aligned}
\end{equation}
Then the recursions (\ref{eq:d_recursion}) and (\ref{eq:e_recursion}) can be written as
\begin{align*}
e_{\ell+1} = T_f(d_\ell), \quad d_\ell = T_h(e_\ell),
\end{align*}
with initialization $e_0$ given by (\ref{eq:e0_def}). In view of (\ref{eq:var_fixed_point}), the above recursion admits a fixed point $(d_\infty,e_\infty)$ given by
\begin{align*}
d_\infty = \sigma_{V,\infty}^2, \quad e_\infty = \sigma_{U,\infty}^2.
\end{align*}
Condition \eqref{eq:bolthausen_app} is equivalent to
\begin{align*}
T_f'(\sigma_{V,\infty}^2)\cdot T_h'(\sigma_{U,\infty}^2) < 1.
\end{align*}
\begin{lemma}\phantomsection\label{lem:bolthause_convexity}
\begin{itemize}
\item[(1)] $T_f$, $T_h$, $T_1\coloneqq T_f \circ T_h$, and $T_2 \coloneqq T_h \circ T_f$ are non-decreasing and convex.
\item[(2)] Suppose that
\begin{align*}
e_0&<\sigma_{U,\infty}^2,
&
d_0&<\sigma_{V,\infty}^2,
&
q&\coloneqq
T_f'(\sigma_{V,\infty}^2)T_h'(\sigma_{U,\infty}^2)
\in(0,1).
\end{align*}
For all $t\geq 0$, it holds that
\begin{align*}
\sum_{s=0}^{t-1} \lambda_s^2
&\leq d_t<\sigma_{V,\infty}^2,
\qquad
\sum_{s=0}^t \gamma_s^2
\leq e_{t+1}<\sigma_{U,\infty}^2.
\end{align*}
\item[(3)] Under the assumptions of part \textnormal{(2)}, the following
limits exist and hold:
\begin{align*}
\sum_{s=0}^\infty \lambda_s^2 = \lim_{t\rightarrow\infty} d_t = \sigma_{V,\infty}^2, \quad \sum_{s=0}^\infty \gamma_s^2 = \lim_{t\rightarrow\infty} e_t = \sigma_{U,\infty}^2.
\end{align*}
Moreover, for any $t\geq 0$,
\begin{align*}
\sigma_{V,\infty}^2 - d_{t+1} &\leq T_f'(\sigma_{V,\infty}^2)T_h'(\sigma_{U,\infty}^2)\big(\sigma_{V,\infty}^2 - d_t\big),\\
\sigma_{U,\infty}^2 - e_{t+1} &\leq T_f'(\sigma_{V,\infty}^2)T_h'(\sigma_{U,\infty}^2)\big(\sigma_{U,\infty}^2 - e_t\big). 
\end{align*}
\end{itemize}
\end{lemma}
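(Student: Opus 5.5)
For part (1), I would use Mehler's expansion. Write \(f(\sigma_{V,\infty}G)=\sum_k a_k H_k(G)\) in normalized Hermite polynomials. For a pair of standard Gaussians with correlation \(\rho=x/\sigma_{V,\infty}^2\), Mehler's formula gives
\[
T_f(x)=\sum_{k\geq0}a_k^2\,(x/\sigma_{V,\infty}^2)^k .
\]
This is a power series with nonnegative coefficients on \([0,\sigma_{V,\infty}^2]\), so it is non-decreasing and convex there. The same argument handles \(T_h\), with the factor \(\delta\) absorbed into the coefficients.

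For the compositions \(T_1=T_f\circ T_h\) and \(T_2=T_h\circ T_f\), note that a composition of non-decreasing convex maps is non-decreasing and convex. The ranges are compatible: \(T_h\) maps \([0,\sigma_{U,\infty}^2]\) into \([0,T_h(\sigma_{U,\infty}^2)]=[0,\sigma_{V,\infty}^2]\) by \eqref{eq:var_fixed_point}, and likewise for \(T_f\). Nonnegativity of \(T_h(0)=\delta\,\E[h(\sigma_{U,\infty}G)]^2\) also follows from Mehler, so the ranges genuinely sit in these intervals.

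For part (2), I would argue by induction using \(d_t=T_h(e_t)\) and \(e_{t+1}=T_f(d_t)\). The strict upper bounds need strict monotonicity near the fixed point. If \(T_h\) were constant on an interval ending at \(\sigma_{U,\infty}^2\), then \(T_h'(\sigma_{U,\infty}^2)=0\), contradicting \(q>0\). Since \(T_h\) is a power series with a positive coefficient of index \(\geq1\), it is strictly increasing, so \(e_t<\sigma_{U,\infty}^2\) gives \(d_t<T_h(\sigma_{U,\infty}^2)=\sigma_{V,\infty}^2\); similarly for \(T_f\).

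The lower bounds \(\sum_{s<t}\lambda_s^2\leq d_t\) follow from \eqref{eq:gamma_lambda_proof}. Write
\[
d_t-\sum_{s<t}\lambda_s^2=\lambda_t\sqrt{\sigma_{V,\infty}^2-\sum_{s<t}\lambda_s^2},
\]
so it suffices to show \(\lambda_t\geq0\), i.e. \(d_t\geq\sum_{s<t}\lambda_s^2\). I would prove this jointly with the monotonicity \(d_t\geq d_{t-1}\). Since \(\sum_{s<t}\lambda_s^2=d_{t-1}\)-type quantities arise from the Pythagorean identities, the statement reduces to monotone increase of \(d_t\) and \(e_t\). That increase follows from \(e_0\leq e_1\) (to be checked directly from the initialization, perhaps using Jensen: \(e_1=T_f(d_0)\geq\) the value at \(0\), which equals \(e_0\), because \(T_f(0)=\E[f]^2\) while \(e_0=\sigma_{U,\infty}\E f\) and \(\sigma_{U,\infty}=\|f\|_{L^2}\)). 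Careful bookkeeping is needed here, and this initial-step comparison is the delicate point, since \(e_0\) and \(T_f(0)\) differ. Once the sequences are monotone, they are bounded by the fixed point.

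For part (3), monotone bounded sequences converge. The limit is a fixed point of \(T_1\), and convexity together with \(T_1'(\sigma_{V,\infty}^2)=q<1\) rules out any other fixed point below \(\sigma_{V,\infty}^2\). Indeed, a convex function touching the diagonal at two points with slope \(<1\) at the right one is impossible, because the chord slope from the left fixed point equals \(1\) yet must be at most the right derivative \(q\). The contraction bound comes from convexity: \(\sigma_{V,\infty}^2-T_1(d)\leq T_1'(\sigma_{V,\infty}^2)(\sigma_{V,\infty}^2-d)=q(\sigma_{V,\infty}^2-d)\), since a convex function lies above its tangent, used in the reverse direction via the supporting line at the right endpoint. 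Identifying \(\sum\lambda_s^2\) with \(\lim d_t\) uses the identity \(d_t-\sum_{s<t}\lambda_s^2=\lambda_t\sqrt{\cdots}\) together with \(\lambda_t\to0\), which holds because \(\sum\lambda_s^2\) converges.

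The main obstacle is the base step of the monotonicity in (2). I would first try to show that \(e_0\leq e_1\) follows from Cauchy–Schwarz, \(e_0=\|f\|\,\E f\leq\ldots\), and fall back on using \(q>0\) plus convexity if that fails.
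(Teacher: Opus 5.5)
Your Mehler-expansion proof of part (1) is valid and differs from the paper's route. The paper differentiates under the expectation and uses Gaussian integration by parts to write $T_h'$ and $T_h''$ as $\delta$ times the expected square of a conditional expectation of $h'$, resp.\ $h''$. Your nonnegative-coefficient power series packages both facts at once, and with $q>0$ it also gives strict monotonicity. Your strict upper bounds in (2) and the contraction and uniqueness arguments in (3) are fine.

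The genuine gap is the monotonicity of $(e_t)$ and $(d_t)$. Your lower bounds $\sum_{s<t}\lambda_s^2\le d_t$ and $\sum_{s\le t}\gamma_s^2\le e_{t+1}$ depend on it, and reducing them to monotonicity is the right move. You try to propagate monotonicity from a base comparison $e_0\le e_1$, but the route you sketch fails. From $e_0\ge0$ and $\sigma_{U,\infty}^2=\E f^2(\sigma_{V,\infty}G)$ we have $0\le\E f(\sigma_{V,\infty}G)\le\sigma_{U,\infty}$. Cauchy--Schwarz then gives $e_0=\sigma_{U,\infty}\E f\ge(\E f)^2=T_f(0)$, which is the wrong direction. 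So $e_1=T_f(d_0)\ge T_f(0)$ says nothing about $e_1$ versus $e_0$.

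The missing observation is that no base step is needed. By the chain rule and the fixed-point identities, $T_1'(\sigma_{U,\infty}^2)=T_2'(\sigma_{V,\infty}^2)=q$. You already use the supporting-line inequality $T(x)\ge b-q(b-x)$ in (3), for $(T,b)\in\{(T_1,\sigma_{U,\infty}^2),(T_2,\sigma_{V,\infty}^2)\}$. It rearranges to
\[
T(x)-x\ge(1-q)(b-x)>0,\qquad x<b .
\]
Since $e_{t+1}=T_1(e_t)$, $d_{t+1}=T_2(d_t)$, and the iterates stay strictly below the fixed points, this gives $e_t<e_{t+1}$ and $d_t<d_{t+1}$ at every step, including the first. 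It is $q<1$, not $q>0$, that does the work. This is how the paper argues.

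The Gram--Schmidt step is also not actually carried out. Saying ``it suffices to show $\lambda_t\ge0$, i.e.\ $d_t\ge\sum_{s<t}\lambda_s^2$'' is circular. Also, $\sum_{s<t}\lambda_s^2$ is not equal to a $d_{t-1}$-type quantity; it is only bounded by one. What you need is the following induction. If $S_t\coloneqq\sum_{s<t}\lambda_s^2\le d_t<\sigma_{V,\infty}^2$, then by \eqref{eq:gamma_lambda_proof}
\[
S_{t+1}=S_t+\frac{(d_t-S_t)^2}{\sigma_{V,\infty}^2-S_t}\le d_t<d_{t+1},
\]
because $0\le d_t-S_t\le\sigma_{V,\infty}^2-S_t$. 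The $\gamma$-side is identical, with base case $\gamma_0^2=T_f(0)\le T_f(d_0)=e_1$; this is where your computation of $T_f(0)$ actually belongs.

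A minor point: in (3) the $d_t$ recursion is governed by $T_2$ at $\sigma_{V,\infty}^2$, not by $T_1$.
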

\begin{proof}
Write \(u_\infty=\sigma_{U,\infty}^2\) and
\(v_\infty=\sigma_{V,\infty}^2\).  Gaussian integration by parts gives
\begin{align}\label{eq:Th_derivative}
\notag T_h'(x) &= 2\delta\E\Big[h'(\sqrt{x}G_1 + \sqrt{\sigma_{U,\infty}^2 - x}G_2)\Big(\frac{1}{2\sqrt{x}}G_1 - \frac{1}{2\sqrt{\sigma_{U,\infty}^2 - x}}G_2\Big)h(\sqrt{x}G_1 + \sqrt{\sigma_{U,\infty}^2 - x}G_3)\Big]\\
\notag &= \delta\E\Big[h'(\sqrt{x} G_1 + \sqrt{\sigma_{U,\infty}^2 - x}G_2)h'(\sqrt{x}G_1 + \sqrt{\sigma_{U,\infty}^2 - x}G_3)\Big]\\
&= \delta\E \Big(\E\Big[h'(\sqrt{x}G + \sqrt{\sigma_{U,\infty}^2 - x}\tilde{G})|G\Big]\Big)^2 \geq 0,
\end{align}
where the first equality uses symmetry between \(G_2\) and \(G_3\).
These identities hold for
\(x\in(0,\sigma_{U,\infty}^2)\).  Boundedness of \(h'\) and \(h''\) and
dominated convergence extend \(T_h'\) and \(T_h''\) continuously to both
endpoints.  In particular,
\[
T_h'(\sigma_{U,\infty}^2)
=\delta\E[h'(\sigma_{U,\infty}G)^2].
\]
Differentiating once more and applying the same integration-by-parts
identity gives
\begin{align*}
T_h''(x) &= \delta\E\Big[h''(\sqrt{x} G_1 + \sqrt{\sigma_{U,\infty}^2 - x}G_2)h''(\sqrt{x}G_1 + \sqrt{\sigma_{U,\infty}^2 - x}G_3)\Big]\\
&= \delta\E \Big(\E\Big[h''(\sqrt{x}G + \sqrt{\sigma_{U,\infty}^2 - x}\tilde G)|G\Big]\Big)^2 \geq 0.
\end{align*}
The identical calculation without the factor \(\delta\) applies to \(T_f\);
in particular,
\[
T_f'(\sigma_{V,\infty}^2)
=\E[f'(\sigma_{V,\infty}G)^2].
\]
Hence \(T_f\) and \(T_h\) are non-decreasing and convex.  If \(A,B\) have
these two properties, then for \(x,y\) in their common domain and
\(\alpha\in[0,1]\),
\begin{align*}
A\bigl(B(\alpha x+(1-\alpha)y)\bigr)
&\leq
A\bigl(\alpha B(x)+(1-\alpha)B(y)\bigr)\\
&\leq
\alpha A(B(x))+(1-\alpha)A(B(y)).
\end{align*}
Thus \(T_1=T_f\circ T_h\) and \(T_2=T_h\circ T_f\) are also
non-decreasing and convex, proving part \textnormal{(1)}.

For parts \textnormal{(2)} and \textnormal{(3)}, note that
\[
e_{t+1}=T_1(e_t),\qquad
d_{t+1}=T_2(d_t),\qquad
T_1(u_\infty)=u_\infty,\qquad
T_2(v_\infty)=v_\infty,
\]
and \(T_1'(u_\infty)=T_2'(v_\infty)=q\).
For either pair \((T,b)=(T_1,u_\infty)\) or
\((T,b)=(T_2,v_\infty)\), convexity gives, for \(x<b\),
\begin{align}\label{eq:bolthausen_scalar_contraction}
0<b-T(x)
&\leq q(b-x),
\notag\\
T(x)-x
&\geq(1-q)(b-x)>0.
\end{align}
Indeed, \(T(b)-T(x)\leq T'(b)(b-x)=q(b-x)\).
Starting from \(e_0<u_\infty\) and \(d_0<v_\infty\), induction in
\eqref{eq:bolthausen_scalar_contraction} proves
\[
e_t<e_{t+1}<u_\infty,\qquad
d_t<d_{t+1}<v_\infty,
\]
as well as the two contraction inequalities in part \textnormal{(3)}.

It remains to compare these covariances with the Gram--Schmidt
coefficients.  Put
\[
S_t^\lambda\coloneqq\sum_{s=0}^{t-1}\lambda_s^2,
\qquad
S_t^\gamma\coloneqq\sum_{s=0}^{t}\gamma_s^2.
\]
If \(S_t^\lambda\leq d_t<v_\infty\), then
\begin{align*}
S_{t+1}^\lambda
&=S_t^\lambda+
\frac{(d_t-S_t^\lambda)^2}{v_\infty-S_t^\lambda}
\leq d_t<d_{t+1}.
\end{align*}
The base case is \(S_0^\lambda=0\leq d_0\).  Moreover,
\(S_0^\gamma=\{\E f(\sigma_{V,\infty}G)\}^2=T_f(0)\leq e_1\).
The formula for \(\gamma_{t+1}\) in
\eqref{eq:gamma_lambda_proof} similarly yields
\begin{align*}
S_{t+1}^\gamma
&=S_t^\gamma+
\frac{(e_{t+1}-S_t^\gamma)^2}{u_\infty-S_t^\gamma}
\leq e_{t+1}<e_{t+2}.
\end{align*}
This proves part \textnormal{(2)}.

Iteration of \eqref{eq:bolthausen_scalar_contraction} gives
\[
0\leq v_\infty-d_t\leq q^t(v_\infty-d_0),
\qquad
0\leq u_\infty-e_t\leq q^t(u_\infty-e_0),
\]
so \(d_t\to v_\infty\) and \(e_t\to u_\infty\).
The non-decreasing coefficient sums are bounded by part
\textnormal{(2)}.  If
\(S_\infty^\lambda\coloneqq\lim_tS_t^\lambda<v_\infty\), then
\eqref{eq:gamma_lambda_proof} and \(d_t\to v_\infty\) give
\[
\lambda_t\longrightarrow\sqrt{v_\infty-S_\infty^\lambda}>0,
\]
contradicting convergence of \(\sum_t\lambda_t^2\).
Thus \(S_\infty^\lambda=v_\infty\); the same argument gives
\(S_\infty^\gamma=u_\infty\).  Finally, the first inequality in
\eqref{eq:bolthausen_scalar_contraction}, applied to \(T_2\), reads
\begin{align*}
v_\infty-d_{t+1}
&\leq q(v_\infty-d_t),
\end{align*}
and its \(T_1\) counterpart gives the displayed \(e_t\) inequality.
This proves part \textnormal{(3)}.
\end{proof}

\begin{proof}[Proof of Proposition \ref{prop:bolthausen_convergence}]
If \(e_0=\sigma_{U,\infty}^2\) or
\(d_0=\sigma_{V,\infty}^2\), the covariance recursion reaches
\((\sigma_{U,\infty}^2,\sigma_{V,\infty}^2)\) after at most one further
update.  If the product in \eqref{eq:bolthausen_app} is zero, let
\((T,b)\) denote either \((T_1,u_\infty)\) or
\((T_2,v_\infty)\).  Convexity and \(T'(b)=0\) give, for every \(x<b\),
\[
0\leq T(b)-T(x)\leq T'(b)(b-x)=0.
\]
Thus \(T(x)=b\), so both covariance sequences reach their endpoints
after one update.  If the product lies in \((0,1)\) and both initial
inequalities are strict, Lemma~\ref{lem:bolthause_convexity} applies.
In either case, joint state evolution and stationarity
of the marginal second moments give
\begin{align*}
\lim_{n,d\to\infty}\frac1d\|\z^t-\z^{t+1}\|^2
&=2(u_\infty-e_t),\\
\lim_{n,d\to\infty}\frac1d\|\w^t-\w^{t+1}\|^2
&=2(v_\infty-d_t).
\end{align*}
Both right-hand sides converge to zero by part \textnormal{(3)}.
\end{proof}

\section{\texorpdfstring{First-order analysis}{First-order analysis}}
\label{sec:first_order_analysis}

Assume \(f_t(x)=x\) and let \(h_t\) be defined by
\eqref{eq:rescaled_nonlinearity}.  This section bounds the two differences
defined in \eqref{eq:first_order_error}.
Fix a universal integer \(C_h\geq20\).

For a scalar function \(h\), \(\rho\in\R\), and \(\sigma\geq0\), define
\begin{align}
\mathcal B^h_1(h;\rho,\sigma)
&\coloneqq\delta\E\!\left[
  \partial_u h(\rho G+\sigma W,G^2)^2\right],
\notag\\
\mathcal B^h_2(h;\rho,\sigma)
&\coloneqq\delta\E\!\left[
  \{W\partial_u h(\rho G+\sigma W,G^2)\}^2\right],
\notag\\
\mathcal B^h_3(h;\rho,\sigma)
&\coloneqq\delta\E\!\left[
  \partial_u^2 h(\rho G+\sigma W,G^2)^2\right],
\label{def:B_h}
\end{align}
where \(G,W\stackrel{\mathrm{i.i.d.}}{\sim}\mathcal N(0,1)\). Put
\begin{align}
K_t^h
\coloneqq
\max\left\{
\sqrt{\mathcal B^h_1(h_t;\rho_t,\sigma_t)},
\sqrt{\mathcal B^h_2(h_t;\rho_t,\sigma_t)}
\right\}
+\sigma_t\sqrt{\mathcal B^h_3(h_t;\rho_t,\sigma_t)},
\qquad
\sigma_t\coloneqq\|\gamma_{t,[0:t]}\|.
\label{eq:first_order_multiplier}
\end{align}
\begin{lemma}[Recursion for
\(\Delta_{U,t}\) and \(\Delta_{V,t+1}\)]
\label{lem:first_order_residual_recursion}
Let \(t\geq0\) be deterministic, and suppose
\begin{align}
t+1&\leq \frac{c\sqrt d}{\log^3n}.
\label{eq:first_order_parameter_range}
\end{align}
Write \(\vartheta_t=(\mu_t,\pi_t)\), where \(\mu_t\geq0\).
There is an event \(\mathcal E_t\), whose complement has probability
at most \(Cn^{-12}\), on which the following implications hold.
For \(1\leq j\leq3\), the derivative norm in this lemma is
\begin{align}
\|h_t^{(j)}\|_\infty
\coloneqq
\max_{1\leq i\leq n}\sup_{u\in\mathbb R}
|\partial_u^j h_t(u,y_i)|.
\label{eq:first_order_sample_derivative_norm}
\end{align}

If \(t\geq1\), then
\begin{align}
\|\Delta_{U,t}\|
&\leq
\left\{1+C\sqrt{\frac{t+\log n}{d}}
+C\frac{t+\log n}{d}\right\}\|\Delta_{V,t}\|
\notag\\
&\quad+C\bigl(|\rho_t|+\|\lambda_{t-1,[0:t-1]}\|+1\bigr)
\left\{\sqrt{t+\log n}+\frac{t+\log n}{\sqrt d}\right\}
\notag\\
&\quad+C(\sqrt{t+1}+\sqrt{\log n})\|\gamma_{t,[-1:t]}\|.
\label{eq:first_order_U_recursion}
\end{align}
On the event
\begin{align}
\|h_t'\|_\infty\vee\|h_t''\|_\infty
\vee\|h_t'''\|_\infty&\leq D,
\notag\\
\left|\rho_t\right|\vee\sigma_t\vee\|\vartheta_t\|
&\leq D,
\notag\\
\max_{1\leq j\leq3}\mathcal B_j^h(h_t;\rho_t,\sigma_t)
&\leq D^2,
\notag\\
\|\Delta_{U,t}\|&\leq E,
\label{eq:first_order_hypotheses}
\end{align}
where \(1\leq D,E\leq n^5\), one has
\begin{align}
\|\Delta_{V,t+1}\|
&\leq\{K_t^h+\eta_t^h(D,E)\}\|\Delta_{U,t}\|
+\mathcal R_{V,t}(D),
\label{eq:first_order_V_recursion}\\
\eta_t^h(D,E)
&\coloneqq
C(1+\sqrt\delta)^{C_h}\left\{
D^{C_h}\frac{\{(t+4)\log n\}^{C_h}}{d^{1/4}}
+D\sqrt{\frac{(t+1)\log n}{d}}E
+\frac{D^2}{\sqrt d}E
\right\},
\label{eq:first_order_multiplier_error}\\
\mathcal R_{V,t}(D)
&\coloneqq
C(1+\sqrt\delta)^{C_h}D^{C_h}
\bigl(|\rho_t|+\sigma_t+1\bigr)
\{(t+4)\log n\}^{C_h}
\notag\\
&\quad+C(\sqrt{t+1}+\sqrt{\log n})\|\lambda_{t,[0:t]}\|.
\label{eq:first_order_V_remainder}
\end{align}
Every implication is simultaneous over the random quantities satisfying its
displayed hypotheses.
\end{lemma}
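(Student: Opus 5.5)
The plan is to start from the exact decomposition \eqref{eq:construct_uv} rather than the second-order surrogates. Since \(f_t(x)=x\), we have \(\hat b_t=1\) and \(\z^t=\v^t\). I will prove the two recursions separately.

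\emph{The \(U\)-recursion.} From \eqref{eq:error-decompose-u-start}, using \(f_t=\mathrm{id}\),
\[
\Delta_{U,t}=\Delta_{U,t}^{\low}+\sum_{\ell=0}^{t-1}\frac{\r^\ell}{\sqrt d}\Bigl(\iprod{\s^\ell}{\v^t}-d\lambda_{t-1,\ell}\Bigr).
\]
Substituting \(\v^t=\rho_t\btheta^\star+\sum_k\lambda_{t-1,k}\s^k+\Delta_{V,t}\), the bracket splits into three pieces:
\begin{itemize}
\item \(\rho_t\iprod{\s^\ell}{\btheta^\star}\);
\item \(\sum_k\lambda_{t-1,k}(\iprod{\s^\ell}{\s^k}-d\delta_{k\ell})\);
\item \(\iprod{\s^\ell}{\Delta_{V,t}}\).
\end{itemize}
Orthonormality of the \(\r^\ell\) turns the norm into the Euclidean norm of the coefficient vector. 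The near-isometry \eqref{eq:claim_near_isometry}, applied to \(\Phi=(\btheta^\star/\sqrt d,\s^0/\sqrt d,\dots)\), bounds \(\|\bS^\top\bS/d-\I\|_{\op}\) and \(\|\bS^\top\btheta^\star\|/d\) by \(C\{\sqrt{(t+\log n)/d}+(t+\log n)/d\}\).

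The first two pieces then give the middle line of \eqref{eq:first_order_U_recursion} after multiplying by \(\sqrt d\). The third gives \(\|\bS\|_{\op}\|\Delta_{V,t}\|/\sqrt d\), which is the first line. Finally, Lemma~\ref{lem:bound_lowd_projection} supplies the \(\gamma\)-term. The failure probability is \(O(n^{-20})\).

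\emph{The \(V\)-recursion.} Here
\[
\Delta_{V,t+1}=\Delta^{\low}_{V,t+1}+\sum_{\ell=0}^t\sqrt d\,\q^\ell\Bigl\{\tfrac1d\iprod{\g^\ell}{h_t(\u^t,\y)}-\delta\av{h_t'(\u^t,\y)}\gamma_{t,\ell}\Bigr\}.
\]
I will replace \(\u^t\) by \(\tilde\u^t\) plus a linearization. The difference \(\frac1d\sum_\ell\omega_\ell\iprod{\g^\ell}{h_t(\u^t)-h_t(\tilde\u^t)}\) is handled as in Lemma~\ref{lem:key_estimate_3}. By duality and the integral mean-value form, it is bounded by
\[
\sup_{\omega}\frac{1}{\sqrt d}\Bigl\|\sum_\ell\omega_\ell\g^\ell\circ \a\Bigr\|\,\|\Delta_{U,t}\|.
\]
Now \(\omega\) is not restricted to be orthogonal to \(\gamma_{t,[0:t]}\), so I split \(\omega\) into its component along \(\gamma/\sigma_t\) and the orthogonal remainder.
\begin{itemize}
\item On the orthogonal part, the concentration \eqref{eq:G_concentration} gives \(\sqrt{\mathcal B^h_1}\).
\item The parallel part correlates with \(\tilde\u^t\). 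Writing \(\sum\gamma_\ell\g^\ell=\sigma_t W\)-type, I will bound it by \(\sqrt{\mathcal B^h_2}\) via the analogous Bernstein-plus-net argument.
\end{itemize}
The Onsager difference \(\delta\av{h_t'(\u^t)-h_t'(\tilde\u^t)}\gamma_{t,\ell}\) contributes \(\sigma_t\sqrt{\mathcal B_3^h}\|\Delta_{U,t}\|\) by Cauchy--Schwarz, after concentration of \(\frac1d\|h_t''(\tilde\u^t)\|^2\). Together these give \(K^h_t\).

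The errors come from three sources:
\begin{itemize}
\item replacing \(\a\) by \(h_t'(\tilde\u^t)\) (second derivative, giving the \(DE\) terms);
\item Taylor remainders;
\item concentration rates from a net of dimension \(O(t)\) over \((\rho,\sigma,\vartheta,\omega,\gamma/\sigma)\).
\end{itemize}
I will handle concentration with Lemma~\ref{lem:modified_bernstein} on the event \(\max|g_i^\ell|\le C\sqrt{\log n}\), with Lipschitz dependence on \(\vartheta\) from Lemma~\ref{lem:phase_retrieval_parameter_lipschitz}. This yields \(\eta_t^h\), whose \(d^{-1/4}\) arises from taking square roots of \(\sqrt{\log n/d}\)-size deviations.

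The remaining term at \(\tilde\u^t\) is
\[
\frac1d\iprod{\g^\ell}{h_t(\tilde\u^t)}-\delta\av{h_t'(\tilde\u^t)}\gamma_{t,\ell}.
\]
It is centered by Gaussian integration by parts (Stein's lemma in \(\g^\ell\) with the coefficients frozen). Because the coefficients are not small here, unlike in Lemma~\ref{lem:error_main_lemma}, I will just use the crude bound: uniformly over the net of \((\rho,\gamma,\vartheta)\), the deviation is \(O(D^{C_h}\sqrt{t\log n/d})\) per coordinate. Multiplying by \(\sqrt d\) and the \(\sqrt{t+1}\) coordinates gives \(\mathcal R_{V,t}\), and adding \(\Delta^{\low}_{V,t+1}\) completes it.

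\emph{Main obstacle.} The hardest step is the uniformity over the random, non-orthogonal direction \(\omega\). This is where \(\mathcal B_2\) enters: the component of \(\g\) along \(\tilde\u^t\) is not independent of \(h_t'(\tilde\u^t)\). I would first try to write \(\sum\omega_\ell\g^\ell=c\,W+\zeta\), with \(W\) the normalized noise direction of \(\tilde\u^t\) and \(\zeta\) independent, and bound the two pieces by \(\sqrt{\mathcal B_2}\) and \(\sqrt{\mathcal B_1}\). Taking the maximum rather than the sum would rely on \(c^2+\|\zeta\|^2/d\approx1\) and Minkowski in \(L^2\). If that fails, I would accept \(\sqrt{\mathcal B_1}+\sqrt{\mathcal B_2}\), which changes only constants in the strong-recovery use.
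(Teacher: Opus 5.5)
The $U$-side of your plan is exactly the paper's three-term bound: the exact formula from \eqref{eq:construct_uv}, near-isometry for $\btheta^\star$ and $\s^0,\dots,\s^{t-1}$, and Lemma~\ref{lem:bound_lowd_projection}. The $V$-side uses the same four pieces as the paper:
\begin{itemize}
\item the difference $h_t(\u^t,\y)-h_t(\tilde\u^t,\y)$, handled by duality over $\omega\in\mathbb S^t$ and Taylor expansion;
\item the Onsager difference $(\hat c_t-\tilde c_t)P^\perp_{\btheta^\star}\z^t$, with $\tilde c_t=\delta\av{h_t'(\tilde\u^t,\y)}$;
\item the Stein-centered term at $\tilde\u^t$;
\item $\Delta^{\low}_{V,t+1}$.
\end{itemize}

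\emph{The gap is the maximum in $K_t^h$.} Split $Z_\omega=\sum_\ell\omega_\ell\g^\ell$ as $rW+\zeta$ with $r=\iprod{\omega}{\gamma_{t,[0:t]}}/\sigma_t$. Bounding the two pieces by Minkowski gives $|r|\sqrt{\mathcal B_2^h}+\sqrt{1-r^2}\sqrt{\mathcal B_1^h}$, which can exceed $\max\{\sqrt{\mathcal B_1^h},\sqrt{\mathcal B_2^h}\}$ by a factor up to $\sqrt2$. Your fallback $\sqrt{\mathcal B_1^h}+\sqrt{\mathcal B_2^h}$ therefore proves a weaker statement than \eqref{eq:first_order_V_recursion} with $K_t^h$ as in \eqref{eq:first_order_multiplier}. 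It would be harmless in Section~\ref{sec:proof_strong_recovery}, but it is not the lemma as stated.

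The missing idea is to work with the square and use orthogonality, not Minkowski. Concentrate $d^{-1}\sum_i Z_{\omega,i}^2\,\partial_u h_t(\tilde u_i^t,y_i)^2$ uniformly in $\omega$, jointly with $\rho,\gamma,\vartheta$. Then compute its mean via the Gaussian regression $Z_\omega=rW+\sqrt{1-r^2}\,W'$, with $W'$ independent of $(G,W)$. The cross term $\E[WW'(\partial_uH)^2]$ vanishes, so the population value is the convex combination $(1-r^2)\mathcal B_1^h+r^2\mathcal B_2^h\le\max\{\mathcal B_1^h,\mathcal B_2^h\}$. Taking square roots gives exactly the first term of $K_t^h$; this is what the paper does.

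\emph{Two smaller points.}

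First, you cannot cite \eqref{eq:G_concentration} for the orthogonal part. It is proved only uniformly over coefficients obeying \eqref{eqs:AtU}, which require derivative bounds $C\log^5 n$ at all past times rather than $D\le n^5$ at time $t$. It also centers at the conditional quantity $H_t(\rho_t,\sigma_t;\bbeta^\star)$, not at $\mathcal B_1^h$.

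Second, the summands here grow like high powers of $g_i^{-1}$ and so have only stretched-exponential tails. The envelope hypothesis of Lemma~\ref{lem:modified_bernstein} therefore fails. You must truncate each coordinate, preserving independence across $i$, and control the truncation bias. Arguing ``on the event'' $\max_{i,\ell}|g_i^\ell|\le C\sqrt{\log n}$ is not enough; that event is only useful for the net extension.

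The paper handles all three concentrations in one truncated Bernstein-plus-net statement, Lemma~\ref{lem:first_order_uniform_concentration}. The three are the weighted squared norm, $d^{-1}\|h_t''(\tilde\u^t,\y)\|^2$, and the Stein-centered sum.
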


\begin{proof}[Proof of \eqref{eq:first_order_U_recursion}]
For \(t\geq1\), let \(\mathcal M_t\) be the intersection of the
Gaussian matrix events used in
\eqref{eq:first_order_U_matrix_bounds} and the event in
Lemma~\ref{lem:bound_lowd_projection}.  The standard Gaussian
operator-norm and sample-covariance tail bounds and that lemma give
\begin{align}
\mathbb P((\mathcal M_t)^c)
&\leq Cn^{-20}.
\label{eq:first-order-matrix-event-probability}
\end{align}
The identity input map and \eqref{eq:construct_uv} give the exact formula
\begin{align}
\Delta_{U,t}
&=\sum_{j=0}^{t-1}\frac{\bm r^j}{\sqrt d}
\langle\bm s^j,\bm v^t\rangle-\bm w^{t-1}
+\Delta_{U,t}^{\mathsf{low}}.
\label{eq:first_order_U_exact}
\end{align}
Write \(S_t=[\bm s^0,\ldots,\bm s^{t-1}]\) and
\(c_t=\lambda_{t-1,[0:t-1]}\).  Since
\begin{align*}
\widetilde{\bm v}^{t}
=\rho_t\boldsymbol\theta^\star+S_tc_t,
\qquad
\bm w^{t-1}
=\sum_{j=0}^{t-1}\sqrt d\,(c_t)_j\bm r^j,
\end{align*}
orthonormality of \(\bm r^0,\ldots,\bm r^{t-1}\) implies
\begin{align}
\|\Delta_{U,t}\|
&\leq
\left\|d^{-1/2}S_t^\top\Delta_{V,t}\right\|
+\sqrt d\left\|d^{-1}S_t^\top
\rho_t\boldsymbol\theta^\star
+\{d^{-1}S_t^\top S_t-I_t\}c_t\right\|
+\|\Delta_{U,t}^{\mathsf{low}}\|.
\label{eq:first_order_U_three_terms}
\end{align}
On \(\mathcal M_t\), the defining Gaussian matrix bounds give,
simultaneously for every vector multiplying the displayed matrices,
\begin{align}
\|d^{-1/2}S_t^\top\|_{\rm op}
&\leq1+C\sqrt{\frac{t+\log n}{d}}+C\frac{t+\log n}{d},
\notag\\
\left\|d^{-1}S_t^\top\boldsymbol\theta^\star\right\|
&\leq C\sqrt{\frac{t+\log n}{d}},
\notag\\
\left\|d^{-1}S_t^\top S_t-I_t\right\|_{\rm op}
&\leq C\sqrt{\frac{t+\log n}{d}}+C\frac{t+\log n}{d}.
\label{eq:first_order_U_matrix_bounds}
\end{align}
The event from Lemma~\ref{lem:bound_lowd_projection}, which is included in
\(\mathcal M_t\), gives
\begin{align}
\|\Delta_{U,t}^{\mathsf{low}}\|
\leq C(\sqrt{t+1}+\sqrt{\log n})\|\gamma_{t,[-1:t]}\|.
\label{eq:first_order_U_low_bound}
\end{align}
Substitution of \eqref{eq:first_order_U_matrix_bounds} and
\eqref{eq:first_order_U_low_bound} into
\eqref{eq:first_order_U_three_terms} proves
\eqref{eq:first_order_U_recursion}.

For \(t=0\), let \(\mathcal M_0\) be the event from
Lemma~\ref{lem:bound_lowd_projection}.  Thus
\begin{align}
\mathbb P((\mathcal M_t)^c)
&\leq Cn^{-20},
\qquad 0\leq t\leq \frac{c\sqrt d}{\log^3n}-1.
\label{eq:first-order-all-matrix-event-probability}
\end{align}
\end{proof}

\begin{lemma}[Uniform concentration for phase-retrieval]
\label{lem:first_order_uniform_concentration}
Let \(t\geq0\) be deterministic, and suppose
\eqref{eq:first_order_parameter_range}.  There is an event
\(\mathcal C_t\), with
\begin{align}
\mathbb P((\mathcal C_t)^c)&\leq Cn^{-12},
\label{eq:first-order-concentration-event-probability}
\end{align}
on which the following inequalities hold simultaneously for every $\omega \in \mathbb S^t$, $\gamma \in \mathbb R^{t+1}$, $1 \le D \le n^5$, and
\begin{align*}
|\rho|\vee\|\gamma\|\vee\|\vartheta\|&\leq D,
\\
\vartheta=(\mu,\pi)&\in[0,n^5]\times[-n^5,n^5]
\end{align*}
satisfying
\begin{align}
\max_{1\leq j\leq3}
\mathcal B_j^h(\mathsf h(\,\cdot\,;\vartheta);\rho,\|\gamma\|)
&\leq D^2.
\label{eq:first-order_concentration_envelope}
\end{align}
For independent \(G_{-1},G_0,\ldots,G_t\sim\mathcal N(0,1)\), define
\begin{align*}
Z_\omega\coloneqq\sum_{\ell=0}^t\omega_\ell G_\ell,
\qquad
Z_\gamma\coloneqq\sum_{\ell=0}^t\gamma_\ell G_\ell,
\qquad
H&\coloneqq
\mathsf h(\rho G_{-1}+Z_\gamma,G_{-1}^2;\vartheta),
\end{align*}
and define \(Z_{\omega,i},Z_{\gamma,i},H_i\) by replacing
\((G_{-1},G_0,\ldots,G_t)\) with
\((g_i^{-1},g_i^0,\ldots,g_i^t)\).  With \(x_t=(t+4)\log n\),
\begin{align}
\sup_{\omega\in\mathbb S^t}
\left|
\frac1d\sum_{i=1}^n
Z_{\omega,i}^2(\partial_uH_i)^2
-\delta\mathbb E[Z_\omega^2(\partial_uH)^2]
\right|
&\leq
C(1+\sqrt\delta)^{C_h}D^{C_h}
\frac{x_t^{C_h}}{\sqrt d},
\notag\\
\left|
\frac1d\sum_{i=1}^n(\partial_u^2H_i)^2
-\delta\mathbb E[(\partial_u^2H)^2]
\right|
&\leq
C(1+\sqrt\delta)^{C_h}D^{C_h}
\frac{x_t^{C_h}}{\sqrt d},
\notag\\
\sup_{\omega\in\mathbb S^t}
\frac1{\sqrt d}
\left|
\sum_{i=1}^n
\left\{Z_{\omega,i}H_i
-\langle\omega,\gamma\rangle\partial_uH_i\right\}
\right|
&\leq
C(1+\sqrt\delta)^{C_h}D^{C_h}
(|\rho|+\|\gamma\|+1)x_t^{C_h}.
\label{eq:first-order-uniform-concentration}
\end{align}
\end{lemma}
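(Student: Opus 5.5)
The plan is a fixed-parameter concentration bound followed by a net-and-Lipschitz extension over the parameter tuple \(\Theta\coloneqq(\omega,\gamma,\rho,\vartheta)\). This is the same scheme as in Lemma~\ref{lem:error_main_lemma}, Lemma~\ref{lem:F_concentration} and the Phase~III uniform estimates. I would work on a Gaussian truncation event
\[
\mathcal T_t\coloneqq\Big\{\max_{i\le n,\,-1\le\ell\le t}|g_i^\ell|\le C\sqrt{\log n},\ \|[\g^0,\ldots,\g^t]\|_{\op}\le C(\sqrt n+\sqrt{t+1})\Big\},
\]
whose complement has probability \(O(n^{-20})\). The statement has no \(\mathcal A_t\)-type hypothesis, because the parameters range over a deterministic set. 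So \(\mathcal C_t\) will be \(\mathcal T_t\) intersected with the union-bound events; this automatically makes the estimates simultaneous over random parameter values.

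\emph{Step 1: envelopes.} For fixed \(\Theta\), I would bound the summands using the parametrized form \eqref{eq:h_parameterization}, Lemma~\ref{lem:h_deriv_comp}, and the integral representation \eqref{eq:tanh_parameter_integral}. This gives \(|\partial_u^jH_i|\le C(1+\sqrt\delta)^{C}D^{C}(1+|U_i|^5+y_i^4)\) for \(j\le3\), where \(U_i=\rho g_i^{-1}+Z_{\gamma,i}\). On \(\mathcal T_t\) we have \(|Z_{\omega,i}|\le C\sqrt{(t+1)\log n}\), \(|U_i|\le CD\sqrt{(t+1)\log n}\) and \(y_i\le C\log n\). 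Hence every summand \(X_i\) in the three displays satisfies \(|X_i|\le (1+\sqrt\delta)^CD^Cx_t^{C}\) with \(C<C_h\).

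\emph{Step 2: fixed-point concentration.} For the two quadratic displays, I would apply Lemma~\ref{lem:modified_bernstein} to the centered summands \(d^{-1}\{X_i-\E X_i\}\), after truncating on the per-coordinate Gaussian event; the truncation bias is \(n^{-20}\). The variance is \(\sum_i\Var\le n\cdot d^{-2}(1+\sqrt\delta)^CD^C\), and the pointwise failure probability is \(\eta=\exp\{-C_1(t+4)^2\log n\}\). This yields deviation \((1+\sqrt\delta)^CD^Cx_t^C/\sqrt d\). Note that hypothesis \eqref{eq:first-order_concentration_envelope} controls \(\E X_i\) but is not needed for the deviation itself. The third display needs care: it is normalized by \(d^{-1/2}\), not \(d^{-1}\), and claims only an \(O(1)\)-scale bound. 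For that display I would use Gaussian integration by parts in \(Z_\omega\): write \(Z_\omega=\langle\omega,\gamma\rangle\|\gamma\|^{-2}Z_\gamma+Z_\perp\) with \(Z_\perp\) independent of \((G_{-1},Z_\gamma)\). This shows \(\E[Z_\omega H-\langle\omega,\gamma\rangle\partial_uH]=0\), so the summands are centered. The same Bernstein bound then gives a deviation of order \(\sqrt n\cdot d^{-1/2}(1+\sqrt\delta)^CD^C(|\rho|+\|\gamma\|+1)x_t^C\), which is \(O((1+\sqrt\delta)^{C+1}\cdots)\), matching the claim. The factor \(|\rho|+\|\gamma\|+1\) comes from the linear growth of \(H\) in \(U\).

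\emph{Step 3: net extension.} \(\Theta\) has dimension \(2(t+1)+3\) and lives in a box of radius \(n^5\). An \(n^{-C_0}\)-net therefore has log-cardinality at most \(C(t+4)\log n\), which the chosen \(\eta\) absorbs. On \(\mathcal T_t\), \eqref{eq:parameter_lipschitz_assumption} (verified in Lemma~\ref{lem:phase_retrieval_parameter_lipschitz}) and the chain rule show that each empirical average and its expectation is \(n^{C}\)-Lipschitz in \(\Theta\). The expectation is differentiated under the integral with polynomial envelopes. Choosing \(C_0\ge C+20\) then extends the net bounds to all parameters. For the \(D\)-dependence I would localize \(D\) to dyadic shells, which is a union over \(O(\log n)\) values, so the bound holds simultaneously for every admissible \(D\).

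The main obstacle is Step~1/2 bookkeeping: keeping the exponents below the fixed universal \(C_h\ge20\) while \(\mu\) may be as large as \(D\). The \(\mu\)-derivatives of \(\tanh(\mu u\sqrt y)/\mu\) produce polynomial powers of \(D\), \(u\) and \(y\), and I must check that these stay within the \(D^{C_h}x_t^{C_h}\) envelope. Together with confirming the exact centering in the third display, this is the step I would verify most carefully.
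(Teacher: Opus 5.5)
Your proposal is correct and follows essentially the same route as the paper. Both arguments use polynomial envelopes for $H,\partial_uH,\partial_u^2H$, coordinatewise truncation with Bernstein at fixed parameters, and the Stein identity $\E[Z_\omega H]=\langle\omega,\gamma\rangle\E[\partial_uH]$ to center the third display; both then take an $n^{-C}$-net of log-cardinality $O((t+4)\log n)$ with dyadic localization of $D$ and extend off the net via Lemma~\ref{lem:phase_retrieval_parameter_lipschitz} on the Gaussian maximum event.

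The differences are only cosmetic. You truncate the raw coordinates $g_i^\ell$, which makes the truncation event parameter-free, whereas the paper truncates $g_i^{-1},Z_{\omega,i},Z_{\gamma,i}$ at each net point. Your envelope from the $\mu$-integral representation is also cruder than the paper's explicit formulas, which give $|\partial_u^jH|\le CD^5(1+|G_{-1}|^4)$ independently of $U$. As a result you may need a larger universal $C_h$, which the appendix allows, since $C_h$ is only fixed as some universal integer $\ge 20$.
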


\begin{proof}
For deterministic
\begin{align*}
\omega\in\mathbb S^t,\qquad
|\rho|\leq n^5,\qquad
\gamma\in\R^{t+1},\qquad
\vartheta=(\mu,\pi)\in[0,n^5]\times[-n^5,n^5],\qquad
\|\gamma\|\vee\|\vartheta\|\leq n^5,
\end{align*}
set
\begin{align*}
Z_\omega&\coloneqq\sum_{\ell=0}^t\omega_\ell G_\ell,\qquad
Z_\gamma\coloneqq\sum_{\ell=0}^t\gamma_\ell G_\ell,\qquad
H\coloneqq\mathsf h(\rho G_{-1}+Z_\gamma,G_{-1}^2;\vartheta).
\end{align*}
The Gaussian regression formula gives
\begin{align}
\delta\E[Z_\omega^2(\partial_uH)^2]
&=(1-r^2)
\mathcal B^h_1(\mathsf h(\,\cdot\,;\vartheta);\rho,\|\gamma\|)+r^2
\mathcal B^h_2(\mathsf h(\,\cdot\,;\vartheta);\rho,\|\gamma\|),
\qquad
r\coloneqq\frac{\langle\omega,\gamma\rangle}{\|\gamma\|},
\label{eq:first_order_gaussian_regression}\\
\E[Z_\omega H]
&=\langle\omega,\gamma\rangle\E[\partial_uH].
\label{eq:first_order_stein_centering}
\end{align}
When \(\gamma=0\), set \(r=0\); both formulas remain valid.

For the \(i\)-th independent coordinate, write
\begin{align*}
Z_{\omega,i}\coloneqq\sum_{\ell=0}^t\omega_\ell g_i^\ell,\qquad
Z_{\gamma,i}\coloneqq\sum_{\ell=0}^t\gamma_\ell g_i^\ell,\qquad
H_i\coloneqq
\mathsf h(\rho g_i^{-1}+Z_{\gamma,i},(g_i^{-1})^2;\vartheta).
\end{align*}
We prove the concentration estimates by coordinatewise truncation.  This is
necessary because the derivatives of \(\mathsf h\) are polynomially
unbounded in \(g_i^{-1}\).  Write
\begin{align*}
x_t&\coloneqq(t+4)\log n,
\notag\\
a&\coloneqq\frac{1+\pi}{\sqrt{2\delta}},
\notag\\
U&\coloneqq\rho G_{-1}+Z_\gamma,
\notag\\
T&\coloneqq\tanh(\mu U|G_{-1}|),
\end{align*}
where \(\vartheta=(\mu,\pi)\).  Direct differentiation of
\eqref{eq:h_parameterization} gives
\begin{align}
H
&=a\left\{(1+\mu^2)|G_{-1}|
\frac{T}{\mu}-U\right\},
\notag\\
\partial_uH
&=a\left\{(1+\mu^2)G_{-1}^2(1-T^2)-1\right\},
\notag\\
\partial_u^2H
&=-2a(1+\mu^2)\mu|G_{-1}|^3T(1-T^2),
\notag\\
\partial_u^3H
&=-2a(1+\mu^2)\mu^2G_{-1}^4
(1-T^2)(1-3T^2).
\label{eq:first-order_explicit_phase_derivatives}
\end{align}
The first line is defined at \(\mu=0\) by continuity.  Since
\(|\tanh z|\leq|z|\), \(|T|\leq1\), and
\(|1-3T^2|\leq2\), \eqref{eq:first-order_explicit_phase_derivatives}
implies, whenever
\(|\rho|\vee\|\gamma\|\vee\|\vartheta\|\leq D\),
\begin{align}
|H|
&\leq
C(1+\sqrt\delta)^{C_h}D^5
(1+G_{-1}^2)|U|,
\notag\\
|\partial_uH|+|\partial_u^2H|+|\partial_u^3H|
&\leq
C(1+\sqrt\delta)^{C_h}D^5
(1+|G_{-1}|^4).
\label{eq:first-order_phase_polynomial_bounds}
\end{align}

For a fixed parameter value and coordinate \(i\), define
\begin{align}
\mathcal T_i
\coloneqq
\left\{
|g_i^{-1}|\vee|Z_{\omega,i}|
\vee\frac{|Z_{\gamma,i}|}{\|\gamma\|\vee1}
\leq A\sqrt{x_t}
\right\},
\label{eq:first-order_coordinate_truncation}
\end{align}
where \(A\) is a sufficiently large universal constant.  The three random
variables in \eqref{eq:first-order_coordinate_truncation} are centered
Gaussian with variances at most one.  Therefore
\begin{align}
\mathbb P(\mathcal T_i^c)
\leq6\exp\left\{-\frac{A^2x_t}{2}\right\}.
\label{eq:first-order_truncation_probability}
\end{align}
Put
\begin{align*}
X_{1,i}\coloneqq Z_{\omega,i}^2(\partial_uH_i)^2,\qquad
X_{2,i}\coloneqq(\partial_u^2H_i)^2,\qquad
X_{3,i}\coloneqq
Z_{\omega,i}H_i
-\langle\omega,\gamma\rangle\partial_uH_i.
\end{align*}
Equations \eqref{eq:first-order_phase_polynomial_bounds} and
\eqref{eq:first-order_coordinate_truncation} give
\begin{align}
\max\{|X_{1,i}|,|X_{2,i}|\}\bm1_{\mathcal T_i}
&\leq
(1+\sqrt\delta)^{C_h}D^{C_h}x_t^{C_h},
\notag\\
|X_{3,i}|\bm1_{\mathcal T_i}
&\leq
(1+\sqrt\delta)^{C_h}D^{C_h}
(|\rho|+\|\gamma\|+1)x_t^{C_h}.
\label{eq:first-order_truncated_coordinate_bounds}
\end{align}
For every integer \(p\geq1\), a standard Gaussian \(Z\) satisfies
\begin{align}
\mathbb E|Z|^p\leq p^{p/2}.
\label{eq:first-order_gaussian_moments}
\end{align}
Applying \eqref{eq:first-order_gaussian_moments} to
\eqref{eq:first-order_phase_polynomial_bounds} gives
\begin{align}
\operatorname{Var}(X_{1,i})+\operatorname{Var}(X_{2,i})
&\leq
C(1+\sqrt\delta)^{C_h}D^{C_h},
\notag\\
\operatorname{Var}(X_{3,i})
&\leq
C(1+\sqrt\delta)^{C_h}D^{C_h}
(|\rho|+\|\gamma\|+1)^2.
\label{eq:first-order-coordinate-variances}
\end{align}
The summands \(X_{3,i}\) are centered by
\eqref{eq:first_order_stein_centering}.

Fix parameters satisfying
\eqref{eq:first-order_concentration_envelope}.
An \(n^{-C_{\rm net}}\)-net of
\begin{align*}
\mathbb S^t\times
[-n^5,n^5]\times
\{\gamma:\|\gamma\|\leq n^5\}\times
\{(\mu,\pi):0\leq\mu\leq n^5,\ |\pi|\leq n^5\}
\end{align*}
has cardinality at most
\begin{align}
\exp\{C(t+4)\log n\}.
\label{eq:first_order_net_size}
\end{align}
For \(k\in\{1,2,3\}\), center the truncated variable:
\begin{align*}
\overline X_{k,i}
\coloneqq
X_{k,i}\bm1_{\mathcal T_i}
-\mathbb E[X_{k,i}\bm1_{\mathcal T_i}].
\end{align*}
At every fixed net point, the variables
\(\overline X_{k,1},\ldots,\overline X_{k,n}\) are independent.  The
Cauchy--Schwarz inequality,
\eqref{eq:first-order_truncation_probability}, and
\eqref{eq:first-order-coordinate-variances} give, after increasing \(A\),
\begin{align}
\max_{1\leq k\leq3}
\left|\mathbb E[X_{k,i}\bm1_{\mathcal T_i^c}]\right|
&\leq n^{-40}.
\label{eq:first-order_truncation_bias}
\end{align}
Moreover, since \(x_t\geq4\log n\), increasing \(A\) gives
\begin{align}
\mathbb P\left(\bigcup_{i=1}^n\mathcal T_i^c\right)
&\leq6n\exp\left\{-\frac{A^2x_t}{2}\right\}
\leq\exp\{-C_1x_t\}.
\label{eq:first-order_all_coordinates_truncated}
\end{align}
Bernstein's inequality, applied with
\eqref{eq:first-order_truncated_coordinate_bounds} and
\eqref{eq:first-order-coordinate-variances}, gives at each net point,
with failure probability at most \(\exp\{-C_1x_t\}\),
\begin{align}
\left|\frac1d\sum_{i=1}^n
\{X_{k,i}-\mathbb EX_{k,i}\}\right|
&\leq
C(1+\sqrt\delta)^{C_h}D^{C_h}
\frac{x_t^{C_h}}{\sqrt d},
\qquad k\in\{1,2\},
\notag\\
\frac1{\sqrt d}\left|
\sum_{i=1}^nX_{3,i}\right|
&\leq
C(1+\sqrt\delta)^{C_h}D^{C_h}
(|\rho|+\|\gamma\|+1)x_t^{C_h}.
\label{eq:first-order_fixed_parameter_concentration}
\end{align}
Indeed, the square-root terms in Bernstein's inequality are bounded by
\begin{align}
C(1+\sqrt\delta)^{C_h}D^{C_h}
\sqrt{\frac{x_t}{d}}
&\quad\text{and}\quad
C(1+\sqrt\delta)^{C_h}D^{C_h}
(|\rho|+\|\gamma\|+1)\sqrt{x_t},
\notag\\
\intertext{whereas the bounded-summand terms are bounded by}
C(1+\sqrt\delta)^{C_h}D^{C_h}
\frac{x_t^{C_h+1}}{d}
&\quad\text{and}\quad
C(1+\sqrt\delta)^{C_h}D^{C_h}
(|\rho|+\|\gamma\|+1)
\frac{x_t^{C_h+1}}{\sqrt d}.
\label{eq:first-order_bernstein_terms}
\end{align}
Each quantity in \eqref{eq:first-order_bernstein_terms} is no larger than
the corresponding right-hand side in
\eqref{eq:first-order_fixed_parameter_concentration}, because
\eqref{eq:first_order_parameter_range} gives
\begin{align}
x_t=(t+4)\log n
&\leq C\frac{\sqrt d}{\log^2n},\qquad
\frac{x_t}{\sqrt d}\leq1.
\label{eq:first-order_bernstein_absorption}
\end{align}
Equations
\eqref{eq:first-order_truncation_bias} and
\eqref{eq:first-order_all_coordinates_truncated} contribute at most
\(n^{-30}\) to the bound and \(\exp\{-C_1x_t\}\) to the failure
probability, respectively.

Choose \(C_1\) larger than the constant in
\eqref{eq:first_order_net_size}.  A union bound over the net and over the
\(O(\log n)\) dyadic values of \(D\in[1,n^5]\) gives failure probability at
most \(Cn^{-12}\).  For a value of \(D\) between two consecutive dyadic
values, use the larger dyadic value; this changes every factor
\(D^{C_h}\) by at most \(2^{C_h}\).  To extend from the net,
use the event
\begin{align}
\max_{\substack{1\leq i\leq n\\-1\leq\ell\leq t}}
|g_i^\ell|
&\leq C\sqrt{\log n},
\notag\\
\max_{1\leq i\leq n}
\|(g_i^{-1},\ldots,g_i^t)\|
&\leq C(\sqrt{t+2}+\sqrt{\log n}).
\label{eq:first-order_net_extension_event}
\end{align}
Its complement has probability at most \(n^{-20}\).  Equations
\eqref{eq:phase_retrieval_parameter_lipschitz} and
\eqref{eq:first-order_net_extension_event} control the empirical
parameter gradients.  More explicitly, define
\begin{gather*}
\widehat\Phi_{1,t}\coloneqq\frac1d\sum_{i=1}^nX_{1,i}, \qquad \widehat\Phi_{2,t}\coloneqq\frac1d\sum_{i=1}^nX_{2,i}, \qquad \widehat\Phi_{3,t}\coloneqq\frac1{\sqrt d}\sum_{i=1}^nX_{3,i}
\notag\\
\Phi_{1,t}\coloneqq\delta\mathbb E X_{1,1}, \qquad \Phi_{2,t}\coloneqq\delta\mathbb E X_{2,1},\qquad \Phi_{3,t}\coloneqq0.
\end{gather*}
Here \(\Phi_{3,t}=0\) is
\eqref{eq:first_order_stein_centering}.  Equations
\eqref{eq:first-order_phase_polynomial_bounds},
\eqref{eq:first-order_gaussian_moments}, and
\eqref{eq:phase_retrieval_parameter_lipschitz} give, uniformly over the
deterministic parameter set in \eqref{eq:first_order_net_size},
\begin{align}
\max_{1\leq k\leq3}
\left\{
\|\nabla\widehat\Phi_{k,t}\|,
\|\nabla\Phi_{k,t}\|
\right\}
&\leq n^C
\label{eq:first-order-net-population-gradients}
\end{align}
on \eqref{eq:first-order_net_extension_event}.  The gradients in
\eqref{eq:first-order-net-population-gradients} are taken with respect to
\((\omega,\rho,\gamma,\vartheta)\); the population bounds follow by
differentiating under the expectation, justified by
\eqref{eq:first-order_phase_polynomial_bounds} and
\eqref{eq:first-order_gaussian_moments}.
Choose the net exponent \(C_{\rm net}>C+30\).  The extension error is then at
most \(n^{-20}\).  Consequently, there is an event
\(\mathcal C_t\) such that
\begin{align*}
\mathbb P((\mathcal C_t)^c)
&\leq Cn^{-12},
\end{align*}
and, simultaneously on \(\mathcal C_t\) over
\begin{align*}
|\rho|\vee\|\gamma\|\vee\|\vartheta\|&\leq D,\qquad
\max_{1\leq j\leq3}
\mathcal B_j^h(\mathsf h(\,\cdot\,;\vartheta);\rho,\|\gamma\|)
\leq D^2,
\end{align*}
\begin{align}
\sup_{\omega\in\mathbb S^t}
\left|
\frac1d\sum_{i=1}^n
Z_{\omega,i}^2(\partial_uH_i)^2
-\delta\mathbb E[Z_\omega^2(\partial_uH)^2]
\right|
&\leq
C(1+\sqrt\delta)^{C_h}D^{C_h}
\frac{x_t^{C_h}}{\sqrt d},
\notag\\
\left|
\frac1d\sum_{i=1}^n(\partial_u^2H_i)^2
-\delta\mathbb E[(\partial_u^2H)^2]
\right|
&\leq
C(1+\sqrt\delta)^{C_h}D^{C_h}
\frac{x_t^{C_h}}{\sqrt d},
\notag\\
\sup_{\omega\in\mathbb S^t}
\frac1{\sqrt d}
\left|
\sum_{i=1}^n
\left\{Z_{\omega,i}H_i
-\langle\omega,\gamma\rangle\partial_uH_i\right\}
\right|
&\leq
C(1+\sqrt\delta)^{C_h}D^{C_h}
(|\rho|+\|\gamma\|+1)x_t^{C_h}.
\notag
\end{align}
\end{proof}

\begin{proof}[Proof of \eqref{eq:first_order_V_recursion}--
\eqref{eq:first_order_V_remainder}]
Let \(\mathcal M_t\) be the event constructed in the proof of
\eqref{eq:first_order_U_recursion}, and let \(\mathcal C_t\) be the event in
Lemma~\ref{lem:first_order_uniform_concentration}.  Put
\begin{align*}
\widetilde c_t
\coloneqq\delta\langle h_t'(\widetilde{\bm u}^t,\bm y)\rangle,
\qquad
\overline{\bm z}^{t}
\coloneqq P_{\boldsymbol\theta^\star}^{\perp}\bm z^t.
\end{align*}
Equation~\eqref{eq:construct_uv} gives
\begin{align}
\Delta_{V,t+1}
&=\sum_{\ell=0}^{t}\frac{\bm q^\ell}{\sqrt d}
\langle\bm g^\ell,
h_t(\bm u^t,\bm y)-h_t(\widetilde{\bm u}^t,\bm y)\rangle-(\widehat c_t-\widetilde c_t)\overline{\bm z}^{t}
\notag\\
&\quad+\sum_{\ell=0}^{t}\sqrt d\,\bm q^\ell
\left\{d^{-1}\langle\bm g^\ell,
h_t(\widetilde{\bm u}^t,\bm y)\rangle
-\widetilde c_t\gamma_{t,\ell}\right\}
+\Delta_{V,t+1}^{\mathsf{low}}.
\label{eq:first_order_V_exact}
\end{align}
By duality over \(\omega\in\mathbb S^t\),
\eqref{eq:first_order_gaussian_regression}, and the inequality
\(\sqrt{x+y}\leq\sqrt x+\sqrt y\), the first term of
\eqref{eq:first_order_V_exact} is at most
\begin{align}
\left[
\max\left\{
\sqrt{\mathcal B^h_1(h_t;\rho_t,\sigma_t)},
\sqrt{\mathcal B^h_2(h_t;\rho_t,\sigma_t)}
\right\}
+C(1+\sqrt\delta)^{C_h}D^{C_h}
\frac{x_t^{C_h}}{d^{1/4}}
+CD\sqrt{\frac{(t+1)\log n}{d}}E
\right]E.
\label{eq:first_order_V_linearized_bound}
\end{align}
Indeed, the coordinatewise integral Taylor formula gives
\begin{align*}
h_t(\bm u^t,\bm y)-h_t(\widetilde{\bm u}^t,\bm y)
&=h_t'(\widetilde{\bm u}^t,\bm y)\circ\Delta_{U,t}
\notag\\
&\quad+\Delta_{U,t}^{\circ2}\circ
\int_0^1(1-s)h_t''(
\widetilde{\bm u}^t+s\Delta_{U,t},\bm y)\,\mathrm ds.
\end{align*}
\begin{align}
\mathcal R_t
\coloneqq
\left\{
\max_{1\leq i\leq n}
\left\|(g_i^0,\ldots,g_i^t)\right\|
\leq C(\sqrt{t+1}+\sqrt{\log n})
\right\}.
\label{eq:first-order-row-norm-event}
\end{align}
Gaussian concentration and a union bound give
\begin{align}
\mathbb P(\mathcal R_t^c)&\leq n^{-20}.
\label{eq:first-order-row-norm-probability}
\end{align}
On \(\mathcal R_t\), uniformly over \(\omega\in\mathbb S^t\),
\begin{align*}
&\frac1{\sqrt d}\left|
\left\langle Z_\omega,
\Delta_{U,t}^{\circ2}\circ
\int_0^1(1-s)h_t''(
\widetilde{\bm u}^t+s\Delta_{U,t},\bm y)\,\mathrm ds
\right\rangle\right|
\notag\\
&\qquad\leq
\frac{D\|Z_\omega\|_\infty}{2\sqrt d}
\|\Delta_{U,t}\|^2
\leq
CD\sqrt{\frac{(t+1)\log n}{d}}E^2.
\end{align*}
Combining this bound with the first line of
\eqref{eq:first-order-uniform-concentration} proves
\eqref{eq:first_order_V_linearized_bound}.

For the second term of \eqref{eq:first_order_V_exact}, Taylor's theorem and
Cauchy--Schwarz give
\begin{align*}
\widehat c_t-\widetilde c_t
&=\frac{\delta}{n}
\left\langle h_t''(\widetilde{\bm u}^t,\bm y),
\Delta_{U,t}\right\rangle+R_{c,t},
\notag\\
|R_{c,t}|&\leq\frac{\delta D}{2n}E^2.
\end{align*}
Since
\(\|\overline{\bm z}^t\|=\sqrt d\,\sigma_t\), the second line of
\eqref{eq:first-order-uniform-concentration} implies
\begin{align}
|\widehat c_t-\widetilde c_t|\,\|\overline{\bm z}^{t}\|
&\leq
\sigma_t\left\{
\sqrt{\mathcal B^h_3(h_t;\rho_t,\sigma_t)}
+C(1+\sqrt\delta)^{C_h}D^{C_h}
\frac{x_t^{C_h}}{d^{1/4}}
+\frac{CD}{\sqrt d}E
\right\}E.
\label{eq:first_order_onsager_difference_bound}
\end{align}
Indeed, the linear term is at most
\begin{align*}
\frac1{\sqrt d}\|h_t''(\widetilde{\bm u}^t,\bm y)\|
\sigma_t E,
\end{align*}
and the Taylor remainder is at most
\(D\sigma_t E^2/(2\sqrt d)\).  These are the three terms on the
right-hand side of \eqref{eq:first_order_onsager_difference_bound}.

For the third term of \eqref{eq:first_order_V_exact}, orthonormality of
\(\bm q^0,\ldots,\bm q^t\) and duality give
\begin{align*}
&\left\|
\sum_{\ell=0}^{t}\sqrt d\,\bm q^\ell
\left\{d^{-1}\langle\bm g^\ell,
h_t(\widetilde{\bm u}^t,\bm y)\rangle
-\widetilde c_t\gamma_{t,\ell}\right\}
\right\|
\notag\\
&\qquad=
\sup_{\omega\in\mathbb S^t}\frac1{\sqrt d}
\left|
\sum_{i=1}^n
\left\{Z_{\omega,i}H_i
-\langle\omega,\gamma_{t,[0:t]}\rangle
\partial_uH_i\right\}
\right|.
\end{align*}
The third line of \eqref{eq:first-order-uniform-concentration} gives
\begin{align*}
&\left\|
\sum_{\ell=0}^{t}\sqrt d\,\bm q^\ell
\left\{d^{-1}\langle\bm g^\ell,
h_t(\widetilde{\bm u}^t,\bm y)\rangle
-\widetilde c_t\gamma_{t,\ell}\right\}
\right\|\\
&\qquad\leq
C(1+\sqrt\delta)^{C_h}D^{C_h}
(|\rho_t|+\sigma_t+1)\{(t+4)\log n\}^{C_h}.
\end{align*}
The event from
Lemma~\ref{lem:bound_lowd_projection}, which is included in
\(\mathcal M_t\), gives
\begin{align}
\|\Delta_{V,t+1}^{\mathsf{low}}\|
\leq C(\sqrt{t+1}+\sqrt{\log n})\|\lambda_{t,[0:t]}\|.
\label{eq:first_order_V_low_bound}
\end{align}
Finally, substitute \eqref{eq:first_order_V_linearized_bound},
\eqref{eq:first_order_onsager_difference_bound}, and
\eqref{eq:first_order_V_low_bound} into
\eqref{eq:first_order_V_exact}.  The definition
\eqref{eq:first_order_multiplier} then gives
\eqref{eq:first_order_V_recursion}--
\eqref{eq:first_order_V_remainder}.
Define
\begin{align}
\mathcal E_t
&\coloneqq\mathcal M_t\cap\mathcal C_t\cap\mathcal R_t.
\label{eq:first-order-residual-event-definition}
\end{align}
Equations \eqref{eq:first-order-all-matrix-event-probability} and
\eqref{eq:first-order-concentration-event-probability}, together with
\eqref{eq:first-order-row-norm-probability}, give
\begin{align}
\mathbb P((\mathcal E_t)^c)
&\leq Cn^{-12}.
\label{eq:first-order-residual-event-probability}
\end{align}
The bounds \eqref{eq:first_order_U_recursion} and
\eqref{eq:first_order_V_recursion}--
\eqref{eq:first_order_V_remainder} were proved on
\(\mathcal E_t\).  This completes the proof.
\end{proof}

\bibliographystyle{alpha}
\bibliography{PR_amp}
\end{document}